\DeclareMathOperator{\cHom}{\mathscr{H}\text{\kern -3pt {\calligra\large om}}\,}
\numberwithin{equation}{section}
\newtheorem{Theorem}[equation]{Theorem}
\newtheorem{Proposition}[equation]{Proposition} 
\newtheorem{Lemma}[equation]{Lemma}
\newtheorem{Corollary}[equation]{Corollary}
\newtheorem{Conjecture}[equation]{Conjecture}
\theoremstyle{definition}
\newtheorem{Remark}[equation]{Remark}
\newtheorem{Example}[equation]{Example}
\newtheorem{Definition}[equation]{Definition}
\numberwithin{figure}{section}
\def\uk{\underline{k}}
\def\n{{\bf{n}}}
\def\bL{{\mathbb{L}}}
\def\Spec{{\rm{Spec}}}
\def\ue{{\underline{e}}}
\def\uj{{\underline{j}}}
\def\ux{{\underline{x}}}
\def\la{\langle}
\def\ra{\rangle}
\def\tD{\widetilde{D}}
\def\om{\overline{m}}
\def\l{\lambda}
\def\ttimes{\widetilde{\times}}
\def\tbox{\widetilde{\boxtimes}}
\def\bbox{{\boxtimes}}
\def\O{\mathcal{O}}
\newcommand{\gr}{\mathrm{gr}}
\newcommand{\mb}[1]{\mathbf{#1}}
\newcommand{\fsl}{\mathfrak{sl}}
\newcommand{\fg}{\mathfrak{g}}
\newcommand{\fn}{\mathfrak{n}}
\newcommand{\A}{\mathbb{A}}
\newcommand{\C}{\mathbb{C}}
\newcommand{\D}{\mathbb{D}}
\newcommand{\E}{\mathbb{E}}
\newcommand{\G}{\mathbb{G}}
\newcommand{\N}{\mathbb{N}}
\renewcommand{\P}{\mathbb{P}}
\newcommand{\Q}{\mathbb{Q}}
\newcommand{\R}{\mathbb{R}}
\newcommand{\Z}{\mathbb{Z}}
\newcommand{\cA}{\mathcal{A}}
\newcommand{\cB}{\mathcal{B}}
\newcommand{\cC}{\mathcal{C}}
\newcommand{\cCh}{\mathcal{C}^{ch}}
\newcommand{\cF}{\mathcal{F}}
\newcommand{\cG}{\mathcal{G}}
\newcommand{\cK}{\mathcal{K}}
\newcommand{\cL}{\mathcal{L}}
\newcommand{\cM}{\mathcal{M}}
\newcommand{\cN}{\mathcal{N}}
\newcommand{\cO}{\mathcal{O}}
\newcommand{\cP}{\mathcal{P}}
\newcommand{\cR}{\mathcal{R}}
\newcommand{\cT}{\mathcal{T}}
\newcommand{\cU}{\mathcal{U}}
\newcommand{\cV}{\mathcal{V}}
\newcommand{\shA}{\mathcal{A}}
\newcommand{\shB}{\mathcal{B}}
\newcommand{\shF}{\mathcal{F}}
\newcommand{\shG}{\mathcal{G}}
\newcommand{\coeffs}{\C}
\newcommand{\La}{\Lambda}
\newcommand{\lmat}{\lambda}
\newcommand{\wt}{\widetilde}
\newcommand{\grl}{\widehat} 
\newcommand{\Gm}{\G_m}
\newcommand{\conv}{{\mathbin{\scalebox{1.1}{$\mspace{1.5mu}*\mspace{1.5mu}$}}}}
\newcommand{\hconv}{\mathbin{\scalebox{.9}{$\nabla$}}}
\newcommand{\smallhconv}{\mathbin{\scalebox{.5}{$\nabla$}}}
\newcommand{\sconv}{\mathbin{\scalebox{.9}{$\Delta$}}}
\newcommand{\bigconv}{\mathop{\mbox{\huge$*$}}\limits}
\newcommand{\into}{\hookrightarrow}
\newcommand{\onto}{\twoheadrightarrow}
\newcommand{\im}{\mathrm{im}}
\newcommand{\id}{{id}}
\newcommand{\Id}{\mathrm{Id}}
\newcommand{\Gr}{\mathrm{Gr}}
\newcommand{\oGr}{\overline{\mathrm{Gr}}}
\newcommand{\Ran}{\mathrm{Ran}}
\newcommand{\QCoh}{D_{qcoh}}
\newcommand{\IndCoh}{\mathrm{IndCoh}}
\newcommand{\Rep}{\mathrm{Rep}}
\newcommand{\sodot}{\mathop{\mbox{\normalsize$\bigodot$}}\limits}
\DeclareRobustCommand{\SkipTocEntry}[5]{}
\DeclareMathOperator{\colim}{colim}
\DeclareMathOperator{\Hom}{Hom}
\DeclareMathOperator{\End}{End}
\DeclareMathOperator{\udim}{\underline{dim}}
\newcommand{\rmat}[1]{\mathbf{r}_{#1}}
\newcommand{\newword}[1]{\textbf{\emph{#1}}}
\newcommand{\arrtip}{latex'}
\begin{document}
\title{Cluster Theory of the Coherent Satake Category}

\author[Sabin Cautis]{Sabin Cautis}
\address[Sabin Cautis]{University of British Columbia \\ Vancouver BC, Canada}
\email{cautis@math.ubc.ca}

\author[Harold Williams]{Harold Williams}
\address[Harold Williams]{University of California \\ Davis CA, USA}
\email{hwilliams@math.ucdavis.edu}

\begin{abstract}
We study the category of $G(\cO)$-equivariant perverse coherent sheaves on the affine Grassmannian $\Gr_G$. This coherent Satake category is not semisimple and its convolution product is not symmetric, in contrast with the usual constructible Satake category. Instead, we use the Beilinson-Drinfeld Grassmannian to construct renormalized $r$-matrices. These are canonical nonzero maps between convolution products which satisfy axioms weaker than those of a braiding.

We also show that the coherent Satake category is rigid, and that together these results strongly constrain its convolution structure. In particular, they can be used to deduce the existence of (categorified) cluster structures. We study the case $G = GL_n$ in detail and prove that the $\Gm$-equivariant coherent Satake category of $GL_n$ is a monoidal categorification of an explicit quantum cluster algebra. 

More generally, we construct renormalized $r$-matrices in any monoidal category whose product is compatible with an auxiliary chiral category, and explain how the appearance of cluster algebras in 4d $\cN=2$ field theory may be understood from this point of view.
\end{abstract}

\maketitle

\setcounter{tocdepth}{1}

\tableofcontents

\section{Introduction}
\thispagestyle{empty}

Let $G$ be a complex reductive group. The Satake category $\cP^{G(\cO)}(\Gr_G)$ of $G(\cO)$-equivariant perverse sheaves on the affine Grassmannian $\Gr_G$ plays a fundamental role in geometric representation theory and, in particular, the geometric Langlands program. Its structure is well understood via the geometric Satake equivalence of \cite{Lus83,Gin95,MV}, which states that $\cP^{G(\cO)}(\Gr_G)$ is monoidally equivalent to the representation category of the Langlands dual group $G^\vee$. 

Like its constructible counterpart, the derived category of $G(\cO)$-equivariant coherent sheaves on $\Gr_G$ has a perverse $t$-structure \cite{AB10, BFM} which is stable under convolution. This gives us the {\it coherent} Satake category $\cP_{coh}^{G(\cO)}(\Gr_G)$. In contrast with $\cP^{G(\cO)}(\Gr_G)$ this monoidal category is not semi-simple and is poorly understood.

The purpose of this paper is to establish the existence of duals and renormalized $r$-matrices in $\cP_{coh}^{G(\cO)}(\Gr_G)$, and to explain how these lead to its structure being governed by the theory of cluster algebras. This in turn makes visible the physical meaning of $\cP_{coh}^{G(\cO)}(\Gr_G)$: its simple objects correspond to Wilson-'t Hooft line operators in 4d $\cN=2$ gauge theory, just as those of $\cP^{G(\cO)}(\Gr_G)$ correspond to 't Hooft operators in 4d $\cN=4$ gauge theory \cite{KW07}. 

\addtocontents{toc}{\SkipTocEntry}
\subsection*{The coherent Satake category}

Our starting point is the fact that $\cP_{coh}^{G(\cO)}(\Gr_G)$ is finite length, as is the $\Gm$-equivariant coherent Satake category $\cP_{coh}^{G(\cO) \rtimes \Gm}(\Gr_G)$ in which we further consider the action of loop rotation on $\Gr_G$ \cite{AB10}. In either setting the simple objects can be explicitly enumerated: they are in bijection with isomorphism classes of simple equivariant vector bundles on $G(\cO)$-orbits. Recall that $G(\O)$-orbits $\Gr_G^{\l^\vee}$ in $\Gr_G$ are indexed by coweights $\l^\vee$ of $G$. The most basic simple objects are the restrictions of line bundles on $\Gr_G$ to some $\oGr_G^{\l^\vee}$, shifted to lie in cohomological degree $- \frac12 \dim \Gr_G^{\l^\vee}$ \cite{Mir} (as in \cite{BFM} we allow formal half-integer degrees). 


In general, a simple object with support $\oGr_G^{\l^\vee}$ restricts to a shifted equivariant vector bundle on $\Gr_G^{\l^\vee}$ (one direction of the above bijection) but a priori lacks an elementary description itself.
However, we will be able to deduce many nontrivial properties of convolution products of simple objects using the following structure.

\begin{Theorem}[c.f. Theorem \ref{thm:satakermats}]\label{thm:intrormats}
Given nonzero objects $\cF, \cG \in D^{G(\O)}_{coh}(\Gr_G)$ there is a canonical nonzero map $\rmat{\cF,\cG}: \cF \conv \cG \to \cG \conv \cF$, which we call a renormalized $r$-matrix. These maps are not isomorphisms in general and are not natural in the categorical sense, but satisfy a collection of axioms which control their failure to constitute a braiding.
\end{Theorem}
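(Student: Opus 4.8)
The plan is to obtain $\rmat{\cF,\cG}$ by degenerating, inside a one–parameter family governed by the Beilinson–Drinfeld Grassmannian, the tautological commutativity isomorphism of the external product $\cF\bbox\cG$ on $\Gr_G\times\Gr_G$. Concretely, let $\Gr_{G,\A^2}$ be the two–point BD Grassmannian over $\A^2$ and $\wt\Gr_{G,\A^2}$ its convolution version, with the projection $m\colon\wt\Gr_{G,\A^2}\to\Gr_{G,\A^2}$ that is an isomorphism over $\A^2\setminus\Delta$ and restricts over $\Delta$ to the convolution map $\Gr_G\ttimes\Gr_G\to\Gr_G$. Spreading $\cF$ and $\cG$ out along the two marked points and forming the global twisted external product on $\wt\Gr_{G,\A^2}$ (still denoted $\cF\tbox\cG$), I would set $\cF\conv_z\cG:=m_*(\cF\tbox\cG)$ restricted to the line $\{z_1=0\}\subset\A^2$ with coordinate $z=z_2$. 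By construction its fiber at $z\neq 0$ is $\cF\bbox\cG$ on $\Gr_G\times\Gr_G$, and, by proper base change together with the flatness discussed at the end, its fiber at $z=0$ is $\cF\conv\cG$. The inputs here that I would take from the earlier sections and from \cite{AB10,BFM} are that this pushforward remains in the bounded coherent derived category and that the perverse–coherent $t$–structure is stable under convolution.

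Running the same construction with the roles of $\cF$ and $\cG$ exchanged produces a second family $\cG\conv_z\cF$ over $\A^1_z$, with generic fiber $\cG\bbox\cF$ and fiber $\cG\conv\cF$ at $z=0$. Over $\A^1_z\setminus\{0\}$ the two families are canonically identified through the swap isomorphism $\cF\bbox\cG\congto\cG\bbox\cF$ — this is the only choice involved, and it is entirely canonical — giving $\phi\colon(\cF\conv_z\cG)|_{z\neq 0}\congto(\cG\conv_z\cF)|_{z\neq 0}$. The two families agree generically but differ over $z=0$ in general, precisely because convolution is not commutative. I would then renormalize: over the generic point one has the family of maps $z^{k}\phi$ for $k\in\Z$, and since $\cG\conv_z\cF$ is (pseudo)coherent over the regular base $\A^1_z$, $z^{k}\phi$ extends to a map of families over $\A^1_z$ for $k\gg 0$; let $k_0$ be the least such $k$. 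Minimality of $k_0$ means $z^{k_0}\phi$ is not divisible by $z$ as a map over $\A^1_z$, and hence — using that the target is a flat family — its restriction to the fiber over $0$, $\rmat{\cF,\cG}:=(z^{k_0}\phi)|_{z=0}\colon\cF\conv\cG\to\cG\conv\cF$, is nonzero. It is canonical because $\phi$ and the whole family are. This is exactly the mechanism by which normalized $R$–matrices for quantum affine algebras are produced: the minimal power of the spectral parameter needed to clear poles gives a canonical, and canonically nonzero, specialization.

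For the collection of axioms I would argue in the same way over configuration spaces of three (or more) marked points. On the open locus where all points are distinct the BD Grassmannian supplies the commutativity data in a setting that is formally identical to the constructible fusion product and is therefore strictly symmetric, so the relations one wants hold exactly away from the diagonals; what must be analyzed is how the renormalization exponents $k_0$ combine upon specialization. This should yield the ``shifted'' replacement for the hexagon (relating $\rmat{\cF,\cG\conv\cK}$ to $(\id\conv\rmat{\cF,\cK})\circ(\rmat{\cF,\cG}\conv\id)$ up to a canonical multiple), the identification of $\rmat{\cG,\cF}\circ\rmat{\cF,\cG}$ with a canonical ``spectral'' endomorphism, and compatibility with the duality functor constructed elsewhere in the paper. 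Naturality fails for the structural reason that $k_0$ can jump along families of objects, so a morphism $\cF\to\cF'$ need not intertwine $\rmat{\cF,\cG}$ with $\rmat{\cF',\cG}$.

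The hard part will be the first step: making precise that $m_*(\cF\tbox\cG)$ is a genuine object of the $\A^1$–family of coherent derived categories, and in fact a \emph{flat family of perverse coherent sheaves} over $\A^1_z$ near $z=0$, so that proper base change identifies the special fiber with $\cF\conv\cG$ and so that the ``clear denominators, then restrict'' operation of the renormalization is meaningful for complexes and not merely for sheaves. This is exactly where the perverse–coherent $t$–structure and its stability under convolution enter essentially, and where the non–semisimplicity of $\cP^{G(\O)}_{coh}(\Gr_G)$ — the very reason $\rmat{\cF,\cG}$ is not invertible — makes the family analysis substantially more delicate than in the constructible case, in which exactness of nearby cycles makes the family problem disappear.
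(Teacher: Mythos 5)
Your construction is essentially the same as the paper's: form the family over $\A^1$ from the two-point Beilinson--Drinfeld Grassmannian, identify the two families away from $0$ via the factorization structure, clear denominators by the minimal power of $z$, then restrict to the special fiber. The paper (Section 5) carries this out at the level of abstract chiral categories, with the BD Grassmannian supplying the main example in Theorems \ref{thm:rmat} and \ref{thm:satakermats}; your phrasing over $\wt\Gr_{G,\A^2}\to\Gr_{G,\A^2}$ is a translation of the same steps back into the geometric setting.

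The one place where your argument is not yet closed is precisely the ``hard part'' you flag, and the correct resolution is somewhat different from what you propose. You want to show that a minimal $k_0$ exists and that the specialization of $z^{k_0}\phi$ at $z=0$ is nonzero, and you attribute both to flatness of the family (and invoke perversity and stability of the $t$-structure under convolution). In the paper neither flatness of the family nor the perverse $t$-structure is used in the construction of the $r$-matrices --- they are defined on all of $D^{G(\O)}_{coh}(\Gr_G)$, and perversity enters only afterward, via Corollary \ref{cor:adjoints}, to show the $r$-matrices restrict nontrivially to $\cP^{G(\O)}_{coh}(\Gr_G)$. The two points you need are resolved instead as follows: the compactness of $C_{M,N}$ (forced by conditions (\ref{co:1})--(\ref{co:3}) in Section \ref{sec:chiralrmat}) guarantees that $\Hom(j^*C_{M,N},j^*C'_{N,M}) \cong \Hom(C_{M,N},C'_{N,M})\otimes_{\C[t]}\C[t^{\pm1}]$ and that the rational section factors through a finite step $[V_{M,N}]^k$ of the filtration, giving the integer $\La(M,N)$. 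Nonzeroness of the specialization then follows from the long exact sequence on $H^0$ associated to $0\to F^{k-1}\to F^k\to \C_0\otimes F^k\to 0$ (equivalently, the $\times z$ triangle on $V_{M,N}$), using only that $F^k$ is free: a class of $H^0([V_{M,N}]^{\La(M,N)})$ mapping to zero would be in the image of $H^0([V_{M,N}]^{\La(M,N)-1})$, contradicting minimality. So the invocation of flatness of the \emph{target} family, and of the perverse-coherent $t$-structure, should be replaced by compactness and the $H^0$ argument. (Flatness of the BD family over $\A^1$ does play a role, as you note, in identifying the special fiber of $\cF\conv_z\cG$ with $\cF\conv\cG$ --- in the chiral-category framework this identification is built into the data of the diagonal equivalences (\ref{eq:diagonal}), so no separate proof of flatness is needed there either.)
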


The construction and characteristic properties of the maps $\rmat{\cF,\cG}$ are similar to those of the renormalized $r$-matrices which appear in the representation theory of quantum loop algebras \cite{Kas02} and KLR (or quiver Hecke) algebras \cite{KKK13}, hence our terminology. Their existence is closely related to the commutativity constraint which appears in the proof of the geometric Satake equivalence. 

Adding in loop rotations, there exist $\La(\cF,\cG) \in \Z$ so that these $r$-matrices lift to maps 
$$\rmat{\cF,\cG}: \cF \conv \cG \to \cG \conv \cF \{ \La(\cF,\cG) \}$$
in $D^{G(\O) \rtimes \Gm}_{coh}(\Gr_G)$, where $\{ \cdot\, \}$ denotes a $\Gm$-equivariance shift. The axioms alluded to above can be summarized as saying that renormalized $r$-matrices deviate from satisfying the axioms of a braiding in a precise way controlled by certain inequalities among these degrees (see Definition \ref{def:rmat}). Note that it would be impossible to define a true braiding on $D^{G(\O)}_{coh}(\Gr_G)$ since $\cF \conv \cG$ and $\cG \conv \cF$ are generally not isomorphic. 

\begin{Theorem}[c.f. Theorem \ref{thm:adjoints}]\label{thm:introduals}
The monoidal category $D^{G(\O) \rtimes \Gm}_{coh}(\Gr_G)$ is rigid. The left and right duals of $\cF \in D_{coh}^{G(\O) \rtimes \Gm}(\Gr_G)$ are $\cF^L = \D(\cF^*)$ and $\cF^R = (\D\cF)^*$, where $\cF \mapsto \cF^*$ is an involution and $\D\cF = \cHom(\cF, \omega)$, $\omega$ denoting the dualizing sheaf. The duals of a perverse coherent sheaf are again perverse, so $\cP_{coh}^{G(\cO) \rtimes \Gm}(\Gr_G)$ is also rigid. 
\end{Theorem}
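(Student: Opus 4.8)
The plan is to produce the dualities explicitly and verify the triangle identities, reducing everything to properties of the two operations $\cF \mapsto \cF^*$ and $\D$. First I would recall the relevant formalism. The functor $\D = \cHom(-,\omega)$ is the Grothendieck–Serre duality functor; on $D^{G(\O)\rtimes\Gm}_{coh}(\Gr_G)$ it is a contravariant autoequivalence with $\D^2 \cong \Id$, and by \cite{AB10} it is $t$-exact for the perverse coherent $t$-structure, hence preserves $\cP_{coh}$. The operation $\cF\mapsto\cF^*$ I expect to be built from the inversion map $\iota\colon \Gr_G \to \Gr_G$, $gG(\O)\mapsto g^{-1}G(\O)$ (covariant, since $\iota$ is an isomorphism), possibly composed with an automorphism of $G$; the key input is that convolution intertwines these operations with a swap of factors, i.e. there are canonical isomorphisms $(\cF\conv\cG)^* \cong \cG^*\conv\cF^*$ and $\D(\cF\conv\cG)\cong \D\cF\conv\D\cG$ (the latter because $\omega_{\Gr}$ convolves to $\omega_{\Gr}$, using that the convolution morphism $\Gr_G\tilde\times\Gr_G\to\Gr_G$ is proper and the relevant base changes). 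Composing, both $\cF^L=\D(\cF^*)$ and $\cF^R=(\D\cF)^*$ send $\cF\conv\cG$ to $\cG^{L/R}\conv\cF^{L/R}$, which is exactly the compatibility a dual must have.

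Next I would construct the evaluation and coevaluation maps. The unit object of $(\cP_{coh},\conv)$ is the skyscraper $\O_{\Gr_G^0}$ at the base point. For $\cF^L$ I need $\mathrm{ev}\colon \cF^L\conv\cF \to \O_{\Gr^0}$ and $\mathrm{coev}\colon \O_{\Gr^0}\to\cF\conv\cF^L$. The natural way to get these is via adjunction: convolution $\cF\conv(-)$ has a right adjoint computable through $\cHom$ and pushforward along the proper convolution map, and one shows this right adjoint is canonically $\cF^L\conv(-)$ (a projection-formula / Serre-duality computation identifying the internal Hom into $\omega$). Once $\cF^L\conv(-) \dashv \cF\conv(-)$... wait, I should orient this correctly: rigidity asks for $\cF^L$ with $\cF^L\conv(-)$ left adjoint to $\cF\conv(-)$; the (co)unit of that adjunction applied to $\O_{\Gr^0}$ supplies $\mathrm{ev}$ and $\mathrm{coev}$, and the triangle identities for the dual follow formally from those of the adjunction. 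The right dual $\cF^R$ is handled symmetrically using the other-sided adjoint, which explains the asymmetric formulas $\D(\cF^*)$ versus $(\D\cF)^*$ — the two operations do not commute in general, and swapping their order swaps left and right duals.

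The main obstacle, I expect, is the identification of the adjoint of $\cF\conv(-)$ with $\cF^L\conv(-)$ at the level of coherent (not merely quasi-coherent or ind-coherent) complexes, i.e. making sure all the Serre-duality and base-change manipulations stay inside $D^{G(\O)\rtimes\Gm}_{coh}(\Gr_G)$ and are $G(\O)\rtimes\Gm$-equivariant and compatible with the associativity constraint of $\conv$. Two technical points deserve care: (i) $\Gr_G$ is an ind-scheme, so $\omega$, $\cHom(-,\omega)$, properness of the convolution morphism, and the projection formula must be invoked on finite-dimensional closed subvarieties $\oGr^{\l^\vee}$ and shown to be compatible with the colimit — here the finite-length property and the fact that any fixed $\cF,\cG$ are supported on some $\oGr^{\l^\vee}$ make this manageable; (ii) the $\Gm$-equivariance shift: Serre duality on $\Gr_G^{\l^\vee}$ introduces a twist by $\omega_{\Gr^{\l^\vee}}$ and a cohomological/equivariant shift by $\dim\Gr^{\l^\vee}$, and one must track these normalizations so that $\D$ lands in the stated form and the half-integer degree conventions of \cite{BFM} are respected. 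Finally, to conclude rigidity of $\cP_{coh}$ itself rather than the derived category, I would invoke Theorem \ref{thm:introduals}'s assertion that $\D$ and $*$ preserve perversity (the former by $t$-exactness of Grothendieck duality in the perverse coherent $t$-structure per \cite{AB10}, the latter since $\iota$ is an isomorphism hence exact), so $\cF^L,\cF^R$ are perverse whenever $\cF$ is, and the $\mathrm{ev},\mathrm{coev}$ maps, being maps in the abelian category, witness the duality there.
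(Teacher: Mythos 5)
Two issues stand out. First, your description of $\cF\mapsto\cF^*$ is incorrect: the inversion $g\mapsto g^{-1}$ does not descend to a map $\Gr_G\to\Gr_G$, since $G(\cO)$ acts on the right on $G(\cK)$ but on the left on $g^{-1}$. The paper's $\cF^*$ is not a pullback along any automorphism of $\Gr_G$; it is defined as $s^*(\cO\,\tbox\,\cF)$, where $s:\Gr_G\to (G(\cK)\rtimes\Gm)\times_{G(\cO)\rtimes\Gm}\Gr_G$ is the anti-diagonal section $[g]\mapsto(g,[g^{-1}])$ into the convolution space. You may be thinking of the genuine automorphism $[g]\mapsto[(g^T)^{-1}]$ which the paper later uses (as $\cF\mapsto\cF'$, in the bar involution of Section 6.2), but that is a different functor, sending $\cP_{\l^\vee\!,\,\mu}$ to $\cP_{-\l^\vee\!,\,-\mu}$, whereas $\cP_{\l^\vee\!,\,\mu}^*\cong\cP_{-\l^\vee\!,\,\mu}$. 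Conflating them would give the wrong candidate for the dual.

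Second, and more seriously, the actual content of the proof is elided. You propose to identify the right adjoint of $\cF\conv(-)$ with $\cF^L\conv(-)$ by ``a projection-formula / Serre-duality computation'' and then read off the triangle identities formally. But establishing that adjunction \emph{is} the theorem, and it is precisely where all the difficulty lies: the a priori right adjoint (built from $\cHom$ and $\om^!$) lands in $D_{qcoh}$ of singular convolution varieties, and showing it coincides with the compactly supported $\cF^L\conv(-)$ requires the machinery of Sections 3.5--3.7 and the appendix (independent base-change squares, the transformations $\eta,\theta,\phi$ between $!$- and $*$-functors, and their interplay with dualization). Your sketch correctly names $\cF^L=\D(\cF^*)$ and $\cF^R=(\D\cF)^*$ and correctly predicts that the asymmetry $\cF^L\not\cong\cF^R$ comes from $\D$ and $(\cdot)^*$ failing to commute, but it offers no argument for the identity that actually needs proving. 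Worth noting: the paper also observes that one need not make the composition $\cF\to\cF*\cF^L*\cF\to\cF$ the literal identity; it suffices to show it is an isomorphism and then rescale the coevaluation, which simplifies the verification and is absent from your plan.
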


This result turns out to be more complicated than its counterpart in the constructible case, where the existence of a fiber functor and commutativity constraint reduce rigidity to an elementary check \cite{DM82,BR17}. 

As in the setting of quantum loop algebras and KLR algebras \cite{KKKO15a}, renormalized $r$-matrices together with certain properties implied by rigidity strongly constrain the behavior of real simple objects -- that is, objects whose convolution square is again simple (see Theorem~\ref{thm:consequences}). 
On the other hand, the coherent Satake category has an abundance of real objects, namely the shifted restrictions of equivariant line bundles discussed above. 

Since $\cP_{coh}^{G(\cO) \rtimes \Gm}(\Gr_G)$ is finite length, the classes of simple objects form a canonical basis in the Grothendieck ring  $K^{G(\cO) \rtimes \Gm}(\Gr_{G})$. Following the framework developed in \cite{KKKO18} to study the upper global or dual canonical bases of quantum groups \cite{Lus90, Kas91}, Theorems~\ref{thm:intrormats} and \ref{thm:introduals} provide a mechanism by which this basis may come to include the structure of a quantum cluster algebra.

\addtocontents{toc}{\SkipTocEntry}
\subsection*{Cluster algebras} A cluster algebra $\cA_{\wt{B}}$ is a ring with a partial basis of a certain form: it contains special elements (cluster variables) grouped into overlapping subsets (clusters) such that the monomials in any cluster are again basis elements \cite{FZ02}.  Any two clusters are connected by a sequence of mutations, an operation which creates a new cluster by exchanging a single cluster variable for a new one. The form of these exchanges, hence the entire structure, is encoded by a skew-symmetrizable matrix $\wt{B}$. 
In \cite{FKRD15} it was shown that $\C \otimes K^{GL_n(\cO)}(\Gr_{GL_n})$ is isomorphic with the coordinate ring of a unipotent cell in $LSL_2$, hence possesses a cluster structure following \cite{GLS11}. 

Monoidal categorifications of cluster algebras were considered in \cite{HL10} and its successors \cite{HL13, Nak11, KQ14, Qin17} in the context of quantum loop algebras. The terminology is meant to distinguish from descriptions of cluster algebras via homological (not monoidal) properties of quiver representations \cite{CC06,BMRRT06,GLS06,CK06,Pla11,Ami09}. 


The algebra $\cA_{\wt{B}}$ has a family of $q$-deformations $\cA_{(\wt{B},L)}$ (quantum cluster algebras) depending on an auxiliary matrix $L$, and in which the elements of any cluster have distinguished pairwise $q$-commuting lifts called quantum cluster variables \cite{BZ05}. The proof in \cite{KKKO18} of the existence of a quantum cluster structure on the upper global basis of a quantum unipotent coordinate algebra proceeds by establishing that one obtains monoidal quantum cluster categorifications from module categories of KLR algebras \cite{KL09,Rou08}.

\addtocontents{toc}{\SkipTocEntry}
\subsection*{Coherent Satake as a monoidal cluster categorification}

For any particular $G$, to deduce a categorified cluster structure on $\cP_{coh}^{G(\cO) \rtimes \Gm}(\Gr_{G})$ from Theorems~\ref{thm:intrormats} and \ref{thm:introduals} one must further show, by hand, that a finite subset of would-be cluster variables and mutations in $K^{G(\cO) \rtimes \Gm}(\Gr_{G})$ lift to suitable real simple objects and exact sequences. In this paper we carry this out fully in the case $G = GL_n$, discussing the expected extension to other $G$ later in the introduction.


\begin{Theorem}[c.f. Theorem \ref{thm:mainfinal}]\label{thm:mainthmintro}
The coherent Satake category $\cP_{coh}^{GL_n(\cO) \rtimes \Gm}(\Gr_{GL_n})$ is a monoidal categorification of a quantum cluster algebra $\cA^{\,loc}_{(L_n,\wt{B}_n)}$ with localized frozen variables. That is, there is a $\Z[q^{\pm 1/2}]$-algebra isomorphism of $\cA^{\,loc}_{(L_n,\wt{B}_n)}$ with $K^{GL_n(\cO) \rtimes \Gm}(\Gr_{GL_n})$ such that
\begin{enumerate}
\item quantum cluster variables are classes of certain real simple perverse coherent sheaves,
\item the convolution of two such sheaves in the same cluster is again simple and its isomorphism class is independent of the order up to a grading shift, and
\item the convolution in either order of such a sheaf and its mutation is the middle term of an exact sequence categorifying the relevant exchange relation.
\end{enumerate}
Here $\wt{B}_n$ is the signed adjacency matrix of the quiver in Figure \ref{fig1}. We may take as the initial cluster the classes of certain perverse coherent sheaves $\cP_{k,\ell}$ for $1 \leq k \leq n$ and $0 \leq \ell \leq 1$. These sheaves are the restrictions of the line bundles $\cO(\ell)$ to the closed orbits $\Gr_{GL_n}^{\omega_k^\vee}$ with a cohomological/equivariant shift of $[\frac12 \dim \Gr_{GL_n}^{\omega_k^\vee}]\{-\frac12 \dim \Gr_{GL_n}^{\omega_k^\vee}-k\ell\}$. 
\end{Theorem}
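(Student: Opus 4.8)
The plan is to prove Theorem~\ref{thm:mainthmintro} by combining the abstract machinery of Theorems~\ref{thm:intrormats} and~\ref{thm:introduals} with an explicit computation of the relevant subring of $K^{GL_n(\cO)\rtimes\Gm}(\Gr_{GL_n})$. The first step is to pin down the Grothendieck ring concretely: using the identification of $\C\otimes K^{GL_n(\cO)}(\Gr_{GL_n})$ with the coordinate ring of a unipotent cell in $LSL_2$ from \cite{FKRD15} together with \cite{GLS11}, I would write down the quantum cluster algebra $\cA_{(L_n,\wt B_n)}$ explicitly, read off the quiver of Figure~\ref{fig1}, and then localize at the frozen variables to obtain $\cA^{\,loc}_{(L_n,\wt B_n)}$. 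The $L$-matrix (the $q$-commutation data) must be computed from the $\Gm$-equivariant grading, i.e.\ from the integers $\La(\cF,\cG)$ appearing in Theorem~\ref{thm:intrormats}; so in parallel I would compute $\La(\cP_{k,\ell},\cP_{k',\ell'})$ for the proposed initial cluster objects and check that these match the prescribed $L_n$.

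The second step is to identify the initial cluster geometrically. I would verify that the sheaves $\cP_{k,\ell}$ — restrictions of $\cO(\ell)$ to the closed orbits $\Gr_{GL_n}^{\omega_k^\vee}$ with the stated shift $[\tfrac12\dim\Gr_{GL_n}^{\omega_k^\vee}]\{-\tfrac12\dim\Gr_{GL_n}^{\omega_k^\vee}-k\ell\}$ — are simple perverse coherent sheaves, and that they are \emph{real}: since $\Gr_{GL_n}^{\omega_k^\vee}$ is a partial flag variety (a Grassmannian), its convolution square can be computed directly, and the self-convolution of a shifted line bundle restricted to a closed orbit should again be a shifted line bundle on a closed orbit, hence simple. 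Commutativity up to grading shift (part (2)) then follows from applying the renormalized $r$-matrix of Theorem~\ref{thm:intrormats} together with the general consequences of rigidity for real objects (Theorem~\ref{thm:consequences}): once one knows $\rmat{}$ is an isomorphism on these objects, the independence of order up to $\{\La\}$ is automatic. I would also need the frozen variables — presumably the determinant-type objects $\cP_{n,\ell}$ and perhaps $\cP_{k,0}\conv\cP_{k,1}$-type combinations — to be invertible in $K$, which is exactly why the localized algebra appears.

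The third and most substantial step is to categorify the exchange relations, part (3). Starting from the initial seed, for each mutable vertex of the quiver I must produce the mutated cluster variable as the class of a real simple perverse coherent sheaf and exhibit a short exact sequence in $\cP_{coh}^{GL_n(\cO)\rtimes\Gm}(\Gr_{GL_n})$ whose outer terms are the two monomials of the exchange relation and whose middle term is $\cF\conv\cF'$ (in either order, up to shift). The strategy here follows \cite{KKKO18}: the renormalized $r$-matrix $\rmat{\cF,\cF'}\colon \cF\conv\cF'\to\cF'\conv\cF\{\La\}$ has a nonzero cokernel precisely when $\cF,\cF'$ fail to commute, and rigidity plus the $r$-matrix axioms force the kernel and cokernel to be (shifts of) simple objects realizing the two sides of the exchange relation. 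Concretely, the exchange relation for adjacent $\cP_{k,\ell}$'s should read $[\cP_{k,0}]\cdot[\cP_{k,1}]\sim q^{?}[\cP_{k-1,?}]\cdot[\cP_{k+1,?}] + q^{?}[\text{frozen}]$, and the corresponding sequence is built from the convolution of line bundles on nested orbits $\Gr^{\omega_{k-1}^\vee}\subset\Gr^{\omega_k^\vee}\subset\Gr^{\omega_{k+1}^\vee}$ using the Beilinson–Drinfeld degeneration. Then one invokes the monoidal-categorification criterion (real initial seed + categorified exchange relations + the $r$-matrix/rigidity formalism, as in \cite{KKKO18}) to conclude that \emph{every} cluster variable — not just those one mutation away — is a real simple object and every exchange relation is categorified, completing the proof.

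**The hard part** will be step three: constructing the exchange exact sequences explicitly and, above all, proving that the mutated objects remain \emph{simple perverse coherent sheaves} rather than merely objects with the right class in $K$. This requires genuine control over convolution of non-closed-orbit perverse coherent sheaves on $\Gr_{GL_n}$ — in particular a vanishing statement ensuring the relevant cone has cohomology concentrated in one perverse degree — and it is here that the interplay between the perverse $t$-structure of \cite{AB10,BFM}, rigidity, and the $r$-matrix inequalities of Definition~\ref{def:rmat} must be used in full force, mirroring but not directly importing the KLR arguments of \cite{KKKO18}. A secondary difficulty is purely combinatorial bookkeeping: matching the equivariant grading shifts $\{-\tfrac12\dim\Gr^{\omega_k^\vee}-k\ell\}$ against the quantum parameters so that the isomorphism is one of $\Z[q^{\pm1/2}]$-algebras and not merely of $\Z$-algebras, which pins down $L_n$ uniquely.
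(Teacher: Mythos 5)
Your high-level architecture matches the paper: establish the abstract $r$-matrix and rigidity formalism, then verify the initial seed and mutations for $GL_n$ by hand, then invoke the iteration criterion from \cite{KKKO18}. But two of your intermediate steps contain genuine gaps.

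First, you propose to obtain the one-step exchange sequences from the renormalized $r$-matrix itself, writing that ``rigidity plus the $r$-matrix axioms force the kernel and cokernel to be (shifts of) simple objects realizing the two sides of the exchange relation.'' This is not what the formalism gives you. The axioms of Definition~\ref{def:rmat} (via Propositions~\ref{prop:KKKO3.2} and~\ref{prop:KKKO3.1}) tell you that $M \conv N$ has simple head and socle and that these appear with multiplicity one, but they do not identify the head and socle in terms of cluster monomials, and they do not produce a two-term filtration matching an exchange relation. The machinery of Theorem~\ref{thm:K3Omain} takes as \emph{input} an already-constructed one-step mutation in every unfrozen direction (together with an embedding of the cluster algebra into $K_0$) and converts it into arbitrary iterated mutations; it does not manufacture the initial exact sequences. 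In the paper those are built by elementary geometry on the convolution variety $\Gr^{(k,k)}_{GL_n}$: the map $t\colon L_0/L_1 \to L_1/L_2\{2\}$ degenerates along a divisor $D$, the resulting triangle is pushed forward by $\om_*$, and a partial resolution $\widetilde D$ identifies $\om_*\O_D$ with the expected outer term (Proposition~\ref{prop:mutation}). This has nothing to do with the BD degeneration, which the paper uses only for the cohomology comparison of Proposition~\ref{prop:cohomology}. Your $q$-commutativity argument for the initial cluster has a related issue: saying ``once one knows $\rmat{}$ is an isomorphism'' is circular, since an invertible $r$-matrix on a pair of simples \emph{is} the commutativity (Proposition~\ref{prop:KKKO2.11}); the paper instead proves $q$-commutativity directly by a geometric argument (Proposition~\ref{prop:comm}), again via a divisor degeneracy locus.

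Second, you never address why the induced map $\cA^{loc}_{(L_n,\wt B_n)} \into K^{GL_n(\cO)\rtimes\Gm}(\Gr_{GL_n})$ is \emph{surjective}. Verifying the initial seed and its mutations only gives an embedding of the cluster algebra into $K_0$, not an isomorphism. The paper needs a separate generating-set argument for $K_0$ (Corollary~\ref{cor:K0generators}, via the Koszul recursion of Lemma~\ref{lem:recursion} showing that the classes $[\cP_{1,\ell}]$ and $[\cP_n]^{-1}$ generate), a matching generating set for $\cA^{loc}_{(L_n,\wt B_n)}$ coming from the identification with $A_q(\fn((s_0s_1)^n))$ via \cite{GLS13} (Lemma~\ref{lem:keyminors}), and then a delicate bookkeeping lemma (Lemma~\ref{lem:XtoP}) showing that every $[\cP_{k,\ell}]$ for $k \in [1,n-1]$, $\ell \in \Z$ is a quantum cluster variable up to frozen variables and powers of $q^{1/2}$. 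This is a substantial part of the proof that your outline omits entirely. Your ``hard part'' paragraph correctly senses that controlling convolution of general perverse coherent sheaves is the issue, but misplaces where that difficulty is discharged: the abstract iteration theorem handles it automatically once the one-step sequences and the surjectivity are in hand, and both of those are settled by concrete geometry rather than by deeper use of the $r$-matrix inequalities.
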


\begin{figure}
\begin{tikzpicture}[thick,>=\arrtip]
\newcommand*{\Ddotsdist}{2}
\newcommand*{\shft}{1}
\newcommand*{\DrawDots}[1]{
  \fill ($(#1) + .25*(\Ddotsdist,0)$) circle (.03);
  \fill ($(#1) + .5*(\Ddotsdist,0)$) circle (.03);
  \fill ($(#1) + .75*(\Ddotsdist,0)$) circle (.03);
}
\node [matrix] (Q) at (0,0)
{
\coordinate [label={below:$\cP_{1,0}$}] (2) at (0,0);
\coordinate [label={below:$\cP_{2,0}$}] (4) at (2,0);
\coordinate (6) at (4,0);
\coordinate (2n-4) at (6-\shft,0);
\coordinate [label={below:$\cP_{n-1,0}$}] (2n-2) at (8-\shft,0);
\coordinate [label={below:$\cP_{n,0}$}] (2n) at (10-\shft,0);

\coordinate [label={$\cP_{1,1}$}] (1) at (0,2);
\coordinate [label={$\cP_{2,1}$}] (3) at (2,2);
\coordinate (5) at (4,2);
\coordinate (2n-5) at (6-\shft,2);
\coordinate [label={$\cP_{n-1,1}$}] (2n-3) at (8-\shft,2);
\coordinate [label={$\cP_{n,1}$}] (2n-1) at (10-\shft,2);

\foreach \v in {1,2,3,4,2n-3,2n-2,2n-1,2n} {\fill (\v) circle (.06);};
\foreach \s/\t in {1/2,3/4,2n-3/2n-2} {
  \draw [->,shorten <=1.7mm,shorten >=1.7mm] ($(\s)+(0.06,0)$) to ($(\t)+(0.06,0)$);
  \draw [->,shorten <=1.7mm,shorten >=1.7mm] ($(\s)-(0.06,0)$) to ($(\t)-(0.06,0)$);
};
\foreach \s/\t in {2/3,4/5,2n-4/2n-3,2n-2/2n-1} {
  \draw [->,shorten <=1.7mm,shorten >=1.7mm] ($(\s)$) to ($(\t)$);
};
\foreach \s/\t in {4/1,6/3,2n-2/2n-5,2n/2n-3} {
  \draw [shorten <=1.7mm,shorten >=1.7mm] ($(\s)$) to ($(\s)!.5!(\t)$);
  \draw [->,shorten <=1.7mm,shorten >=1.7mm] ($(\s)!.5!(\t)$) to ($(\t)$);
};
\DrawDots{4-\shft*.5,1};\\
};
\node (equals) [left=0mm of Q] {$\wt{Q}_n = $};
\node (equalsr) [right=0mm of Q] {\quad\quad\quad};
\end{tikzpicture}
\caption{}\label{fig1}
\end{figure}

The $GL_n$ case is of particular interest because $K^{GL_n(\cO) \rtimes \Gm}(\Gr_{GL_n})$ is also the simplest ($A_1$) example of a quantized Coulomb branch of a 4d $\cN=2$ quiver gauge theory \cite{Nak16,BFN}. The complexified classical limits of such spaces are Poisson isomorphic to coordinate rings of open Richardson varieties in the affine flag variety \cite{FKRD15}, or equivalently unipotent cells of $LSL_2$ in the $A_1$ case. Given results of \cite{GLS13, KO17}, Theorem \ref{thm:mainthmintro} implies that moreover $K^{GL_n(\cO) \rtimes \Gm}(\Gr_{GL_n})$ and the quantized unipotent cell $A_q(N^{(s_0s_1)^n})$ are isomorphic after tensoring both with $\Q(q^{1/2})$. 

It follows that a subalgebra of $\Q(q^{1/2}) \otimes_{\Z[q^{\pm 1/2}]} K^{GL_n(\cO) \rtimes \Gm}(\Gr_{GL_n})$ also admits a categorification via KLR algebras of type $A_1^{(1)}$. At present we do not know a precise relationship between this categorification and the category of perverse coherent sheaves.

From another point of view, Theorem \ref{thm:mainthmintro} is significant in that it asserts a precise way in which the geometry of the affine Grassmannian, viewed through the lens of the coherent Satake category, reflects enumerative aspects of an a priori unrelated 3-Calabi-Yau category. Indeed, the cluster algebra of any skew-symmetric matrix encodes (and is entirely determined by) invariants of moduli spaces related to the representation theory of the associated quiver, equipped with a suitable potential \cite{CC06,CK06,Ami09,Pla11}. 

In our case the relevant quiver with potential (without frozen variables) describes coherent sheaves on a particular toric Calabi-Yau 3-fold \cite{CDMMS}, or in principle the Fukaya category of its mirror, a 3-fold related to the periodic Toda system \cite{KS14,Smi15}. In this way Theorem~\ref{thm:mainthmintro} provides a concrete mathematical manifestation of geometric engineering \cite{KKV97}: the affine Grassmannian encodes various structures in pure 4d $\cN=2$ gauge theory \cite{Tel14,Cos14,BFN}, hence should in turn encode properties of the Calabi-Yau that engineers the theory.  

\addtocontents{toc}{\SkipTocEntry}
\subsection*{Example: $G = GL_2$}\label{ex:GL_2}
In this case Theorem \ref{thm:mainthmintro} appears implicitly in \cite{BFM}. The closed orbit $\Gr_{GL_2}^{\omega_1^\vee} \cong \P^1$ parametrizes $\C[[t]]$-lattices $L_1 \subset L_0:= \C^2[[t]]$ with one-dimensional quotients. There is a natural line bundle on this space whose fiber over the point $[L_1 \subset L_0]$ is $L_0/L_1$. We abuse notation a bit and denote this line bundle by $L_0/L_1$. 

Convolutions are defined via the variety $\Gr_{GL_2}^{\omega_1^\vee} \widetilde{\times} \Gr_{GL_2}^{\omega_1^\vee}$ of nested subspaces $L_2 \subset L_1 \subset L_0:= \C^2[[t]]$ in which both inclusions have one-dimensional cokernels. Forgetting $L_1$ defines the convolution map
$$\overline{m}: \Gr_{GL_2}^{\omega_1^\vee} \widetilde{\times} \Gr_{GL_2}^{\omega_1^\vee} \to \oGr_{GL_2}^{2\omega_1^\vee},$$
where $\oGr_{GL_2}^{2\omega_1^\vee}$ parametrizes lattices $L_2 \subset L_0$ with two-dimensional quotient. Note that 
$$\oGr_{GL_2}^{2\omega_1^\vee} = \Gr_{GL_2}^{2\omega_1^\vee} \sqcup \Gr_{GL_2}^{\omega_2^\vee}$$
where $\Gr_{GL_2}^{\omega_2^\vee}$ is the point orbit $[tL_0 \subset L_0]$. 

The sheaves appearing in Theorem \ref{thm:mainthmintro} are the $\ell = 0,1$ cases of 
$$\cP_{1,\ell} := \O_{\Gr_{GL_2}^{\omega_1^\vee}} \otimes (L_0/L_1)^{\ell}\, [\frac 12] \{- \frac12-\ell \} \in \cP_{coh}^{GL_2(\cO) \rtimes \Gm}(\Gr_{GL_2}).$$
The convolution $\cP_{1,1} \conv \cP_{1,-1}$ is the pushforward along $\overline{m}$ of the twisted outer product 
$$\cP_{1,1}\, \tbox\, \cP_{1,-1} \cong \O_{\Gr_{GL_2}^{\omega_1^\vee} \widetilde{\times} \Gr_{GL_2}^{\omega_1^\vee}} \otimes (L_0/L_1) \otimes (L_1/L_2)^{-1} [1]\{-1\}.$$ 
Now, let $D \subset \Gr_{GL_2}^{\omega_1^\vee} \widetilde{\times} \Gr_{GL_2}^{\omega_1^\vee}$ denote the locus where $L_2=tL_0$. This is a divisor and we have the standard short exact sequence 
$$0 \rightarrow \O_{\Gr_{GL_2}^{\omega_1^\vee} \widetilde{\times} \Gr_{GL_2}^{\omega_1^\vee}}(-D) \rightarrow \O_{\Gr_{GL_2}^{\omega_1^\vee} \widetilde{\times} \Gr_{GL_2}^{\omega_1^\vee}} \rightarrow \O_D \rightarrow 0.$$
We can rewrite the first term as
$$0 \rightarrow \O_{\Gr_{GL_2}^{\omega_1^\vee} \widetilde{\times} \Gr_{GL_2}^{\omega_1^\vee}} \otimes (L_0/L_1) \otimes (L_1/L_2)^{-1} \{-2\} \rightarrow \O_{\Gr_{GL_2}^{\omega_1^\vee} \widetilde{\times} \Gr_{GL_2}^{\omega_1^\vee}} \rightarrow \O_D \rightarrow 0,$$
using the convention that $t$ has weight 2 (our $\Gm$ will act by a cover of ordinary loop rotation). 
Shifting by $[1]\{-1\}$ and applying $\om_*$ we get an exact triangle
$$\cP_{1,1} \conv \cP_{1,-1} \{-2\} \to \cP_{1,0} \conv \cP_{1,0} \to \om_*(\O_D) [1]\{-1\}.$$
But $\om_*(\O_D)\cong \O_{\Gr_{GL_2}^{\omega_2^\vee}}$, which is the sheaf $\cP_{2,0}$ appearing in Theorem \ref{thm:mainthmintro}. This gives us the exact sequence 
\begin{equation}\label{eq:ses}
0 \to \cP_{2,0} \{-1\} \to \cP_{1,1} \conv \cP_{1,-1} \{-2\} \to \cP_{1,0} \conv \cP_{1,0} \to 0
\end{equation}
in $\cP_{coh}^{GL_2(\cO) \rtimes \Gm}(\Gr_{GL_2})$.
Up to convolution with invertible objects, the duality functor $\D$ (or the bar involution) takes this sequence to
\begin{equation}\label{eq:ses2}
0 \to \cP_{1,0} \conv \cP_{1,0} \to \cP_{1,-1} \conv \cP_{1,1} \{2\} \to  \cP_{2,0}\{1\} \to 0.
\end{equation}
As these sequences are not split, $\cP_{1,1} \conv \cP_{1,-1}$ and $\cP_{1,-1} \conv \cP_{1,1}$ are not isomorphic, even up to an equivariant shift.
However, rearranging the four maps above we obtain nonzero compositions
\begin{align*}
\rmat{\cP_{1,1}, \cP_{1,-1}}&: \cP_{1,1} \conv \cP_{1,-1} \to \cP_{1,0} \conv\cP_{1,0} \{2\}  \to \cP_{1,-1} \conv \cP_{1,1} \{4\},\\
\rmat{\cP_{1,-1}, \cP_{1,1}}&: \cP_{1,-1} \conv \cP_{1,1} \to \cP_{2,0}\{-1\} \to \cP_{1,1} \conv \cP_{1,-1} \{-2\}.
\end{align*}
These are basic examples of renormalized $r$-matrices (though this does not reflect the way they are defined in general).

Either of (\ref{eq:ses}) or (\ref{eq:ses2}) implies that the classes of the relevant sheaves in $K^{GL_2(\cO) \rtimes \Gm}(\Gr_{GL_2})$ satisfy the exchange relation associated to the mutation of the quiver
\[
\begin{tikzpicture}[thick,>=\arrtip]
\newcommand*{\Ddotsdist}{2}
\newcommand*{\shft}{1}
\newcommand*{\DrawDots}[1]{
  \fill ($(#1) + .25*(\Ddotsdist,0)$) circle (.03);
  \fill ($(#1) + .5*(\Ddotsdist,0)$) circle (.03);
  \fill ($(#1) + .75*(\Ddotsdist,0)$) circle (.03);
}
\node [matrix] (Q) at (0,0)
{
\coordinate [label={below:$\cP_{1,0}$}] (2) at (0,0);
\coordinate [label={below:$\cP_{2,0}$}] (4) at (2,0);
\coordinate (6) at (4,0);

\coordinate [label={$\cP_{1,1}$}] (1) at (0,2);
\coordinate [label={$\cP_{2,1}$}] (3) at (2,2);
\coordinate (5) at (4,2);

\foreach \v in {1,2,3,4} {\fill (\v) circle (.06);};
\foreach \s/\t in {1/2} {
  \draw [->,shorten <=1.7mm,shorten >=1.7mm] ($(\s)+(0.06,0)$) to ($(\t)+(0.06,0)$);
  \draw [->,shorten <=1.7mm,shorten >=1.7mm] ($(\s)-(0.06,0)$) to ($(\t)-(0.06,0)$);
};
\foreach \s/\t in {2/3} {
  \draw [->,shorten <=1.7mm,shorten >=1.7mm] ($(\s)$) to ($(\t)$);
};
\foreach \s/\t in {4/1} {
  \draw [shorten <=1.7mm,shorten >=1.7mm] ($(\s)$) to ($(\s)!.5!(\t)$);
  \draw [->,shorten <=1.7mm,shorten >=1.7mm] ($(\s)!.5!(\t)$) to ($(\t)$);
}; \\
};

\node (equals) [left=0mm of Q] {$\wt{Q}_2 = $};
\node (equalsr) [right=0mm of Q] {\quad\quad\quad};
\end{tikzpicture}
\]
at the vertex labeled by $\cP_{1,1}$, together with a suitable coefficient matrix. 
For $GL_2$ all unfrozen cluster variables turn out to be classes of sheaves supported on closed orbits (i.e. of the form $\cP_{1,\ell}$ for some $\ell \in \Z$). However, for $GL_n$ with $n > 2$ this is far from being the case. 

\addtocontents{toc}{\SkipTocEntry}
\subsection*{Chiral categories and renormalized $r$-matrices} 
The origin of renormalized $r$-matrices in $\cP_{coh}^{G(\cO) \rtimes \Gm}(\Gr_G)$ is the Beilinson-Drinfeld (BD) Grassmannian \cite{BD}. The BD Grassmannians of $\A^1$ and its powers are ind-schemes $\Gr_{G,\A^n}$ over $\A^n$ for $n > 0$. Collectively they form the prototypical example of a factorization space, meaning they satisfy certain compatibility conditions on their restrictions to diagonals and disjoint loci. 

To construct $r$-matrices we consider $\Gr_{G,\A^2}$. The factorization property in this case is that the fibers of $\Gr_{G,\A^2} \to \A^2$ are isomorphic to $\Gr_G$ over the diagonal and to $\Gr_G \times \Gr_G$ away from it. Given $\cF$, $\cG \in D_{coh}^{G(\cO) \rtimes \Gm}(\Gr_G)$, the factorization structure allows one to produce a pair of sheaves on $\Gr_{G,\A^2}$ which
\begin{enumerate}
\item over the diagonal of $\A^2$ have fibers isomorphic to $\cF \conv \cG$ and $\cG \conv \cF$, respectively, and
\item over the complement of the diagonal each have fibers isomorphic to $\cF \boxtimes \cG$.
\end{enumerate}
Restricting to $\Spec\, \C[t] \cong \A^1 \times \{0\} \subset \A^2$ one can consider the identity endomorphism of $\cF \boxtimes \cG$ over $\A^1 \smallsetminus \{0\}$. This defines a rational section of $\Hom$ between these sheaves. After multiplying by a suitable power of $t$ and restricting to the fiber over $0 \in \A^1$ we obtain a canonical homomorphism $\rmat{\cF, \cG}: \cF \conv \cG \to \cG \conv \cF$. This construction can be thought of as a meromorphic analogue of the Eckmann-Hilton argument.

We emphasize that it is only the factorization structure of the BD Grassmannian which is essential for defining these $r$-matrices. Abstractly, we only use the fact that we have a monoidal category $\cC$ which can be recovered from a suitably compatible chiral category on $\A^1$ as a subcategory of its fiber at $0$. A chiral category consists of a collection of sheaves of categories over the powers of $\A^1$ which satisfy analogous compatibilities to those of factorization spaces or chiral/factorization algebras \cite{Ras14}. 

In Section~\ref{sec:chiral} we describe the construction of renormalized $r$-matrices in this level of generality (c.f. Theorem \ref{thm:rmat}). In Section~\ref{sec:r-matrix} we distill from \cite{KKKO15a,KKKO18} a list of axioms (Definition \ref{def:rmat}) required of such a collection of morphisms for the proofs of the main results of loc. cited to carry over -- we call such a collection a system of renormalized $r$-matrices. We then recall following \cite{KKKO18} how renormalized $r$-matrices reduce the existence of iterated mutations to the existence of one-step mutations given the presence of a quantum cluster structure on the Grothendieck ring. After showing that the chiral $r$-matrices of Section \ref{sec:chiral} satisfy the conditions in Definition \ref{def:rmat} we thus obtain the following result.

\begin{Theorem}[c.f. Theorem \ref{thm:K3Omain}, Theorem \ref{thm:rmat}]\label{thm:chiralintro}
Let $\grl{\cC}$ be a graded monoidal abelian category contained in the subcategory of compact objects in the fiber at $0$ of a compatible $\Gm$-equivariant chiral category on $\A^1$. Let $(\{\cF_i\}_{i \in I}, \wt{B})$ be a quantum monoidal seed in $\grl{\cC}$ with coefficient matrix $L$. Suppose that 
\begin{enumerate}
\item $\grl{\cC}$ is rigid,
\item $(\{{\cF_i}\}_{i \in I}, \wt{B})$ admits a mutation in every unfrozen direction, and
\item there is an isomorphism $K_0(\grl{\cC}) \cong \cA_{(L,\wt{B})}$ taking the classes $\{[\cF_i]\}_{i \in I}$ to the initial quantum cluster variables.
\end{enumerate}
Then $\grl{\cC}$ is a monoidal categorification of the quantum cluster algebra $\cA_{(L,\wt{B})}$.
\end{Theorem}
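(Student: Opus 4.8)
The plan is to show that the three hypotheses, together with the general machinery of renormalized $r$-matrices, reproduce exactly the inductive argument of \cite{KKKO18} establishing that a category with a quantum monoidal seed is a monoidal cluster categorification. The argument has two logically separate ingredients. First, one must know that the ambient category $\grl{\cC}$ carries a \emph{system of renormalized $r$-matrices} in the sense of Definition~\ref{def:rmat}; this is where the chiral hypothesis enters. Second, granting such a system plus rigidity (hypothesis (1)) and the existence of one-step mutations (hypothesis (2)), one runs the combinatorial induction of \cite{KKKO18} on the mutation graph, whose base case is supplied by hypothesis (3).

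For the first ingredient I would invoke Theorem~\ref{thm:rmat}: since $\grl{\cC}$ sits inside the compact objects in the fiber at $0$ of a compatible $\Gm$-equivariant chiral category on $\A^1$, the chiral construction of Section~\ref{sec:chiral} — the meromorphic Eckmann--Hilton argument producing $\rmat{\cF,\cG}:\cF\conv\cG\to\cG\conv\cF\{\La(\cF,\cG)\}$ from the identity of $\cF\bbox\cG$ over the punctured line — yields morphisms that satisfy all the axioms of Definition~\ref{def:rmat}. Thus $\grl{\cC}$ is a graded monoidal rigid abelian category equipped with a system of renormalized $r$-matrices, i.e. it is precisely the abstract setting in which the results of \cite{KKKO15a,KKKO18} were shown (in Section~\ref{sec:r-matrix}) to hold.

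With that in place, the core of the proof is the induction carried over from \cite{KKKO18}. Starting from the initial quantum monoidal seed $(\{\cF_i\}_{i\in I},\wt{B})$ with coefficient matrix $L$, one shows that whenever a quantum monoidal seed has been constructed and $k$ is an unfrozen direction, the mutation $\mu_k$ of the seed again yields a quantum monoidal seed: the $r$-matrix $\rmat{\cF_k',\cF_k}$ (where $\cF_k'$ is the candidate mutated object supplied by hypothesis (2)) fits into a short exact sequence $0\to \bigconv_{b_{ik}>0}\cF_i^{\conv b_{ik}}\{\ast\}\to \cF_k'\conv\cF_k\{\ast\}\to \bigconv_{b_{ik}<0}\cF_i^{\conv(-b_{ik})}\{\ast\}\to 0$ categorifying the quantum exchange relation, and the remaining cluster members pairwise commute up to grading shift by the $q$-commutation axiom for $r$-matrices; rigidity guarantees the mutated objects are again real simple and that the exchange sequence is the unique nonsplit one. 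Iterating along any path in the mutation graph, and using that one-step mutations exist in \emph{every} unfrozen direction from \emph{every} seed reached (this is the content of hypothesis (2) once one knows \cite{KKKO18}'s reduction that iterated mutability follows from one-step mutability in the presence of $r$-matrices), one obtains a quantum monoidal seed for every cluster of $\cA_{(L,\wt{B})}$. Finally hypothesis (3), the identification $K_0(\grl{\cC})\cong\cA_{(L,\wt{B})}$ matching initial clusters, propagates to an identification of \emph{all} clusters, so every cluster monomial is the class of a real simple object and $\grl{\cC}$ is a monoidal categorification.

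The main obstacle is the inductive step establishing that mutation preserves the property of being a quantum monoidal seed — specifically, extracting from the abstract $r$-matrix axioms of Definition~\ref{def:rmat} both the exact sequence categorifying each exchange relation and the requisite $q$-commutativity and real-simplicity of the new cluster variables. This is exactly the technical heart of \cite{KKKO18}, and our contribution here is to check that every place that proof uses properties of KLR-module categories it in fact uses only the axioms in Definition~\ref{def:rmat}; I would phrase the bulk of this step as an appeal to the results assembled in Section~\ref{sec:r-matrix}, where that translation has been carried out, rather than re-deriving it. The remaining steps — verifying the chiral hypotheses give a system of $r$-matrices (Theorem~\ref{thm:rmat}), and the bookkeeping of grading shifts so that the categorified relations match the \emph{quantum} (not just classical) exchange relations — are comparatively routine.
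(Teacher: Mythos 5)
Your plan correctly identifies the two main ingredients — Theorem~\ref{thm:rmat} to obtain a system of renormalized $r$-matrices from the chiral structure, and the KKKO-style mutation induction (Theorem~\ref{thm:K3Omain} together with its Corollary) — and it correctly identifies that the technical heart of the argument lives in Section~\ref{sec:r-matrix}. But the way you describe the role of hypotheses (2) and (3) contains a genuine gap.

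You write that once one has $r$-matrices, ``iterated mutability follows from one-step mutability'' and that hypothesis (3) merely ``propagates'' an identification after the fact. This is not how the argument works, and if followed literally it would stall. Hypothesis (2) supplies mutations only of the \emph{initial} seed; nothing in the abstract $r$-matrix axioms guarantees that after performing $\mu_x$ one can again mutate at $y$. The obstruction occurs precisely when $x$ and $y$ lie on a common oriented 3-cycle: the natural candidate for the mutated object $M''_y$, obtained by commuting terms of an existing exchange sequence past $(M'_x)^{b_{xy}}$ and taking heads via $r$-matrices, produces an exact sequence whose outer terms carry a spurious factor $A$ (a product of objects $M_i$, $i\notin\{x,y\}$, appearing precisely because of the 3-cycle). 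Removing this factor to get a genuine mutation exact sequence is what requires the global input from hypothesis (3): one uses the isomorphism $K_0(\grl{\cC})\cong\cA_{(L,\wt{B})}$ to know that the candidate quantum cluster variable $\Phi$ exists in $K_0$, and then Theorem~\ref{thm:KKKO3.3} (which upgrades $q$-commutation in $K_0$ to $q$-commutation of simple objects) to recognize $\Phi$ as the class of a single simple object which $q$-commutes with $A$, so that one can cancel $[\grl{A}]$ in the Grothendieck ring. Hypothesis (3) is therefore consumed at \emph{every} mutation step that meets a 3-cycle, not just at the end, and this is exactly the content of case (ii) of Theorem~\ref{thm:K3Omain}.

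Two smaller points. First, the inferences you attribute to ``rigidity'' need to be unpacked a bit: rigidity is used via Proposition~\ref{prop:rigid-triple} to get separable triple products and via Corollary~\ref{cor:rmat} to get nonvanishing of all $r$-matrices, which are standing hypotheses throughout Section~\ref{sec:rmatimplies}; the reality of mutated objects comes from Proposition~\ref{prop:KKKO2.21} applied to the exchange sequence, not directly from rigidity. Second, you call the tracking of grading shifts ``comparatively routine,'' but matching the quantum exchange relations is a nontrivial bookkeeping exercise in the proof of Theorem~\ref{thm:K3Omain} — it is part of what distinguishes the quantum statement from its classical specialization and must be done carefully in tandem with the cancellation of the factor $[\grl{A}]$.
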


Here a quantum monoidal seed is a collection of $q$-commuting real simple objects (that is, the products of any two in either order are isomorphic up to a grading shift) together with an exchange matrix (see Section \ref{sec:monoidalseeds}). Compatibility means in particular that the sections over $\A^1$ and $\A^2$ of the relevant chiral category possess monoidal structures compatible with the factorization structure and the monoidal structure on $\grl{\cC}$ (in natural examples one will in fact have a chiral $\E_1$-category, hence this condition will hold over $\A^n$ for all $n$).

While we believe formulating our construction at this level of abstraction (rather than in the specific context of the BD Grassmannian) provides more conceptual clarity, we also do so with an eye towards laying a foundation for future applications. In particular, one expects analogues of Theorem \ref{thm:mainthmintro} involving the $K$-theoretic Coulomb branches of \cite{BFN} and suitable abelian categories when the associated 4d $\cN=2$ theory possesses a BPS quiver \cite{ACCERV14}. While it is not clear how to define the correct $t$-structures, it is clear that a factorization structure is present and will be crucial for generalizing Theorem \ref{thm:mainthmintro}.

\addtocontents{toc}{\SkipTocEntry}
\subsection*{Computations for $GL_n$} 
Given Theorems \ref{thm:intrormats}, \ref{thm:introduals}, and \ref{thm:chiralintro}, the proof of Theorem \ref{thm:mainthmintro} reduces to two remaining steps. First, one needs to verify that the stated initial cluster consists of $q$-commuting real objects and to produce its mutations in all unfrozen directions.
Given $k \in [1,n], \ell \in \Z$ we let $\cP_{k,\ell} \in \cP_{coh}^{GL_n(\cO) \rtimes \Gm}(\Gr_{GL_n})$ denote the restriction of $\cO(\ell)$ to $\Gr_{GL_n}^{\omega_k^\vee}$, shifted by $\la \frac12 \dim \Gr_{GL_n}^{\omega_k^\vee} \ra\{-k\ell\}$ (where $\la 1 \ra$ is short for $[1]\{-1\}$). 

\begin{Theorem}[c.f. Propositions \ref{prop:comm} and \ref{prop:mutation}]\label{thm:introGL_ncomps}
The perverse coherent sheaves $\cP_{k,\ell}$ are simple and real for all $k \in [1,n], \ell \in \Z$. If $|\ell_1-\ell_2| \le 1$ then $\cP_{k_1,\ell_1}$ and $\cP_{k_2,\ell_2}$ $q$-commute. Moreover, there exist exact sequences
\begin{gather*}
0 \rightarrow \cP_{k-1,\ell} \conv \cP_{k+1,\ell} \{-1\} \rightarrow \cP_{k,\ell+1} \conv \cP_{k,\ell-1} \{-2k\} \rightarrow \cP_{k,\ell} \conv \cP_{k,\ell} \rightarrow 0 \\
0 \rightarrow \cP_{k,\ell} \conv \cP_{k,\ell} \rightarrow \cP_{k,\ell-1} \conv \cP_{k,\ell+1} \{2k\} \rightarrow \cP_{k+1,\ell} \conv \cP_{k-1,\ell} \{1\} \rightarrow 0.
\end{gather*}
In particular, letting $\wt{B}_n$ denote the signed adjacency matrix of the quiver in Figure \ref{fig1}, the pair $(\{\cP_{k,\ell}\}_{k \in [1,n], \ell \in \{0,1\}}, \wt{B}_n)$ is a quantum monoidal seed admitting a mutation in all unfrozen directions.
\end{Theorem}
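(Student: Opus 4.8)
The plan is to translate everything into the explicit lattice geometry of the closed orbits and to reduce each assertion to the behaviour of pushforwards of line bundles along convolution maps between iterated Grassmannian bundles. Recall that $\Gr_{GL_n}^{\omega_k^\vee}$ parametrizes lattices $L'$ with $tL_0\subset L'\subset L_0$ and $\dim L_0/L'=k$, so it is the Grassmannian $\mathrm{Gr}(k,n)$ — closed, smooth, and homogeneous under $GL_n(\cO)$, with $\cO(1)$ restricting to it as the Plücker bundle $\det(L_0/L')$. Since a line bundle on a homogeneous space is an irreducible equivariant vector bundle and the support is already closed, the classification of simple perverse coherent sheaves recalled in the introduction gives at once that $\cP_{k,\ell}$ is simple. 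For the remaining points I would work with the convolution maps $\om_{a,b}\colon\Gr^{\omega_a^\vee}\widetilde{\times}\Gr^{\omega_b^\vee}\to\oGr^{\omega_a^\vee+\omega_b^\vee}$ for $a,b\in[1,n]$. The source is a $\mathrm{Gr}(b,n)$-bundle over $\mathrm{Gr}(a,n)$, hence smooth, of dimension $a(n-a)+b(n-b)=\langle\omega_a^\vee+\omega_b^\vee,2\rho\rangle=\dim\oGr^{\omega_a^\vee+\omega_b^\vee}$; since the $\omega^\vee_\bullet$ are minuscule the generic fibre of $\om_{a,b}$ is a point, so it is a resolution of the affine Schubert variety $\oGr^{\omega_a^\vee+\omega_b^\vee}$, which is normal, Cohen–Macaulay and has rational singularities. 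Because the perverse $t$-structure is stable under convolution, every $\cP_{a,\ell}\conv\cP_{b,m}$ is automatically perverse; being $R(\om_{a,b})_*$ of a twist of a line bundle on a smooth variety over a rational-singularities base it is also torsion-free, hence pinned down by its restriction to the open orbit $\Gr^{\omega_a^\vee+\omega_b^\vee}$ together with its $\Gm$-degree, once one knows that no higher direct images intervene.

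\textbf{Reality and $q$-commutation.} The key technical input is a vanishing statement: I would show that whenever $|\ell_1-\ell_2|\le 1$ the relevant line bundle on $\Gr^{\omega_{k_1}^\vee}\widetilde{\times}\Gr^{\omega_{k_2}^\vee}$ has vanishing higher direct images along $\om_{k_1,k_2}$, by a Borel–Weil–Bott / relative Kawamata–Viehweg estimate on the Grassmannian-bundle fibres (in this range the bundle is convex enough along the fibres; for $|\ell_1-\ell_2|\ge 2$ it is not, which is why $q$-commutation breaks down, as for $\cP_{1,1}\conv\cP_{1,-1}$ in the $GL_2$ example). Granting this, $\cP_{k_1,\ell_1}\conv\cP_{k_2,\ell_2}$ is the torsion-free extension of a line bundle on the open orbit whose isomorphism class does not depend on the order of the factors, so the two orders agree up to the shift $\La(\cP_{k_1,\ell_1},\cP_{k_2,\ell_2})$, which is read off from a dimension/degree count. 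Taking $\ell_1=\ell_2=\ell$ also handles reality: then $\cP_{k,\ell}\conv\cP_{k,\ell}$ is perverse, torsion-free, and restricts on the open orbit of $\oGr^{2\omega_k^\vee}$ to $\det(L_0/L')^{\otimes 2\ell}$ in perverse degree — the corresponding simple equivariant line bundle — and a local analysis at the boundary orbits, using the explicit geometry of $\om_{k,k}$, rules out constituents supported there, so $\cP_{k,\ell}\conv\cP_{k,\ell}$ is the intersection-cohomology extension of this datum and in particular simple; thus $\cP_{k,\ell}$ is real.

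\textbf{The exact sequences.} Here I would generalize the $GL_2$ computation of the introduction. On $X:=\Gr^{\omega_k^\vee}\widetilde{\times}\Gr^{\omega_k^\vee}$, the variety of chains $L_2\subset L_1\subset L_0$ with $tL_0\subset L_1$, $tL_1\subset L_2$ and both steps of colength $k$, let $D\subset X$ be the degeneracy locus of the natural map $tL_0/tL_1\to L_1/L_2$ between rank-$k$ bundles. This is an effective divisor — precisely the preimage of the codimension-$2$ subvariety $\oGr^{\omega_{k-1}^\vee+\omega_{k+1}^\vee}\subset\oGr^{2\omega_k^\vee}$ — and a line-bundle computation (the origin of the $\{-2k\}$ shift, using the weight-$2$ convention for $t$ and $\det$ of a rank-$k$ bundle) identifies $\cO_X(-D)\cong\det(L_0/L_1)\otimes\det(L_1/L_2)^{-1}\{-2k\}$. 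Tensoring $0\to\cO_X(-D)\to\cO_X\to\cO_D\to0$ by the twisted outer product $\cP_{k,\ell}\,\tbox\,\cP_{k,\ell}$ and applying $R(\om_{k,k})_*$ produces a distinguished triangle whose first two terms are $\cP_{k,\ell+1}\conv\cP_{k,\ell-1}\{-2k\}$ and $\cP_{k,\ell}\conv\cP_{k,\ell}$, both perverse by the previous step. For the third term, $\om_{k,k}$ restricts on $D$ to a map onto $\oGr^{\omega_{k-1}^\vee+\omega_{k+1}^\vee}$ with generic fibre $\P^1$, along which $\det(L_0/L_1)\otimes\det(L_1/L_2)$ is trivial (being the restriction of the fibrewise-constant $\det(L_0/L_2)$); so the derived pushforward of the relevant line bundle is again a line bundle, and keeping track of the cohomological shift forced by the codimension-$2$ drop in support, this term equals $\cP_{k-1,\ell}\conv\cP_{k+1,\ell}\{-1\}[1]$. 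Rotating the triangle and taking perverse cohomology then yields the first displayed exact sequence — the $[1]$ is exactly what places $\cP_{k-1,\ell}\conv\cP_{k+1,\ell}\{-1\}$ in the kernel, just as $\cP_{2,0}$ appears in (\ref{eq:ses}). The second displayed sequence I would obtain from the first by applying the duality $\D$ of Theorem~\ref{thm:introduals} (which sends $\cP_{k,\ell}$ to $\cP_{k,-\ell}$ up to an invertible twist and reverses the order of convolution), or by the symmetric computation on the divisor where $tL_1=L_2\cap tL_0$ degenerates.

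\textbf{Assembling the seed.} Specializing to $\ell\in\{0,1\}$, the $\cP_{k,\ell}$ are real simple and pairwise $q$-commute since then $|\ell_1-\ell_2|\le1$; the shifts $\La(\cP_{k,\ell},\cP_{k',\ell'})$ computed above assemble into the matrix $L_n$, and together with the shifts $\{-2k\},\{-1\},\{1\}$ of the exact sequences one verifies the compatibility of $(L_n,\wt{B}_n)$ required of a quantum seed — a finite check. Matching the monomial exponents in the displayed sequences to the arrows of the quiver in Figure~\ref{fig1} identifies $\wt{B}_n$ as the exchange matrix: mutation at $\cP_{k,1}$ (incident to $\cP_{k,0}$ with multiplicity two and to $\cP_{k\pm1,0}$) is categorified by the two sequences at $\ell=0$, with mutated variable $\cP_{k,-1}$, and mutation at $\cP_{k,0}$ (incident to $\cP_{k,1}$ with multiplicity two and to $\cP_{k\pm1,1}$) by those at $\ell=1$, with mutated variable $\cP_{k,2}$; here one uses the conventions $\cP_{0,\cdot}=\mathbf{1}$ and that $\cP_{n,\cdot}$ is invertible (so those two vertices are frozen), together with the fact that $\cP_{k,-1}$ and $\cP_{k,2}$ are again of the form $\cP_{k,\ell}$ — hence real simple — which is exactly why the statement is made for all $\ell\in\Z$. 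This supplies both exact sequences required in each unfrozen direction, so $(\{\cP_{k,\ell}\}_{k\in[1,n],\,\ell\in\{0,1\}},\wt{B}_n)$ is a quantum monoidal seed admitting a mutation in every unfrozen direction. I expect the main obstacle to be the geometric package of the third step: pinning down $D$ and the isomorphism $\cO_X(-D)\cong\det(L_0/L_1)\otimes\det(L_1/L_2)^{-1}\{-2k\}$; proving the higher-direct-image vanishing for $|\ell_1-\ell_2|\le1$ and locating its precise failure beyond that; and — most delicately — identifying the third pushforward term with $\cP_{k-1,\ell}\conv\cP_{k+1,\ell}$ carrying the correct cohomological and $\Gm$-shifts, which requires a careful study of $\om_{k,k}|_D$ and its fibres alongside scrupulous bookkeeping of the perversity normalization as the support dimension drops. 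The $\La$-compatibility, though finite, likewise demands tracking every shift.
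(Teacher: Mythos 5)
Your outline of the simplicity claim, the reality argument, and (modulo bookkeeping) the two exact sequences is essentially the paper's. For the exact sequences you have identified the correct degeneracy divisor $D \subset \Gr^{(k,k)}_{GL_n}$ (the locus where $t: L_0/L_1 \to L_1/L_2\{2\}$ drops rank), the correct short exact sequence $\O(-D) \to \O \to \O_D$, and the correct plan of applying duality $\D$ to get the second sequence. The paper's identification of the third pushforward term with $\cP_{k-1,\ell}\conv\cP_{k+1,\ell}$ is a little more careful than your $\P^1$-fibre heuristic — it passes through a resolution $\tD \to D$ and factors through $\Gr^{(k-1,k+1)}_{GL_n}$ — but the idea is the same.

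The gap is in your argument for the $q$-commutation when $\ell_1 \neq \ell_2$. You assert that a higher-direct-image vanishing lets you conclude "$\cP_{k_1,\ell_1}\conv\cP_{k_2,\ell_2}$ is the torsion-free extension of a line bundle on the open orbit whose isomorphism class does not depend on the order of the factors," and hence that the two orders agree. Torsion-freeness does not pin down a coherent sheaf from its restriction to the open orbit, even when the boundary has codimension $\geq 2$: the ideal sheaf $\mathfrak{m}_p$ and $\O_{\P^2}$ are both torsion-free and restrict to the same thing on $\P^2 \smallsetminus \{p\}$. What one would actually need is that the convolution is the intermediate (IC) extension in the perverse coherent sense, i.e., that it has no simple subobject or quotient supported on the boundary — and this is precisely what is in question, not something you can read off from the vanishing you posit. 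Indeed the two candidate line bundles $\det(L_0/L_1)$ and $\det(L_1/L_2)$ genuinely restrict to different line bundles on fibres of $\om$ over boundary strata (already for $GL_2$, $\O(1)$ versus $\O(-1)$ on the exceptional $\P^1$), so it is not formal that their pushforwards coincide; the isomorphism only emerges after a calculation. The paper handles this by introducing a larger correspondence $W$ parametrizing chains $L_2 \subset L_1 \subset L_1' \subset L_0$, relating $\om_{1*}(\O\otimes\det(L_1/L_2))$ and $\om_{2*}(\O\otimes\det(L_0/L_1))$ by a degeneracy divisor $D\subset W$ of the map $t: L_0/L_1' \to L_1/L_2\{2\}$, and proving directly that $\pi_*(\O_D\otimes\det(L_1/L_2)) = 0$ via a resolution $\tD \to D$ that is (or iterates to) a $\P^{k_2-k_1}$-bundle on which $\det(L_1/L_2)$ restricts to $\O(-1)$. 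This vanishing — a full pushforward, not just higher direct images — is the real content and is absent from your sketch. Your parenthetical remark that $q$-commutation fails for $|\ell_1-\ell_2|\geq 2$ "because the bundle is not convex enough" is likewise not what is going on: in that regime the two convolutions simply are not isomorphic, as the $GL_2$ example $\cP_{1,1}\conv\cP_{1,-1} \not\cong \cP_{1,-1}\conv\cP_{1,1}$ shows, and this is a statement about the pushforwards themselves rather than a cohomology vanishing.

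A smaller point: for reality, the paper's Lemma~\ref{lem:real} gives a one-line proof — the line bundle is $\om^*$ of an equivariant line bundle on $\oGr^{2\l^\vee}$, so the projection formula and rational singularities immediately give $\cP\conv\cP \cong \O_{\oGr^{2\l^\vee}}\otimes\cL$ up to shift, which is simple by construction. Your "local analysis ruling out boundary constituents" reaches the same conclusion but would need the same IC-extension input I flagged above to be made rigorous; the projection-formula route sidesteps it entirely.
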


Next, we must establish the third property listed in Theorem \ref{thm:chiralintro}. We do this in Section~\ref{sec:K0asclusteralgebra} as part of the following result.

\begin{Theorem}[c.f. Proposition \ref{prop:K0containment}, Lemma \ref{lem:XtoP}]\label{thm:introiso}
There is a $\Z[q^{\pm 1/2}]$-algebra isomorphism between $K^{GL_n(\cO) \rtimes \Gm}(\Gr_{GL_n})$ and $\cA^{\,loc}_{(L_n,\wt{B}_n)}$. The initial quantum cluster variables are the images of the classes $\{[\cP_{k,\ell}]\}_{k \in [1,n], \ell \in \{0,1\}}$. Moreover, $[\cP_{k,\ell}]$ is a quantum cluster variable for all $k \in [1,n], \ell \in \Z$ (up to multiplication by powers of $q$ and frozen variables). 
\end{Theorem}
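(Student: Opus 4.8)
The plan is to build the isomorphism as a map $\Phi$ \emph{from} the cluster algebra \emph{to} the Grothendieck ring, sending the initial quantum cluster variable at the vertex $(k,\ell)$ of Figure~\ref{fig1} to the class $[\cP_{k,\ell}]$ for $\ell\in\{0,1\}$ and the inverses of the two frozen variables to $[\cP_{n,0}]^{-1}$ and $[\cP_{n,1}]^{-1}$ (these make sense because $\Gr_{GL_n}^{\omega_n^\vee}$ is a point, so $\cP_{n,\ell}$ is $\conv$-invertible). I would then show $\Phi$ is well defined and injective (Proposition~\ref{prop:K0containment}) and surjective (Lemma~\ref{lem:XtoP}). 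The identification of $[\cP_{k,\ell}]$ with a cluster variable for arbitrary $\ell$ drops out of the well-definedness step.

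\textbf{Well-definedness.} By Theorem~\ref{thm:introGL_ncomps} the initial classes form a quantum monoidal seed with exchange matrix $\wt{B}_n$; pinning down the precise equivariance shift in each isomorphism $\cP_{k_1,\ell_1}\conv\cP_{k_2,\ell_2}\cong\cP_{k_2,\ell_2}\conv\cP_{k_1,\ell_1}\{\cdot\}$ with $|\ell_1-\ell_2|\le 1$ fixes the $q$-commutation matrix, which we take to be $L_n$. To extend $\Phi$ over all of $\cA^{\,loc}_{(L_n,\wt{B}_n)}$ it suffices to realize every quantum cluster variable by an honest class: starting from this seed, the one-step mutation construction of \cite{KKKO18}, fed by the renormalized $r$-matrices of Theorem~\ref{thm:intrormats} and the rigidity of Theorem~\ref{thm:introduals}, produces in every unfrozen direction a real simple object together with a short exact sequence realizing the corresponding quantum exchange relation in $K^{GL_n(\cO)\rtimes\Gm}(\Gr_{GL_n})$; iterating, every seed of $\cA^{\,loc}_{(L_n,\wt{B}_n)}$ is realized by a quantum monoidal seed in the category, and since $\cA^{\,loc}_{(L_n,\wt{B}_n)}$ is generated by all quantum cluster variables together with inverse frozen variables, $\Phi$ is well defined. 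For the columns of Figure~\ref{fig1} I would check, by a direct combinatorial computation, that mutating along a column returns the quiver $\wt{Q}_n$ to itself while shifting the $\ell$-index; matched against the exact sequences of Theorem~\ref{thm:introGL_ncomps} (which are stated for all $\ell\in\Z$), this shows the realizing simple in that direction is $\cP_{k,\ell}$ up to a shift, so $[\cP_{k,\ell}]$ is a quantum cluster variable for every $\ell\in\Z$ up to a monomial in $q$ and the frozen classes --- the fudge factor recording the discrepancy between the normalization $\la\tfrac12\dim\Gr_{GL_n}^{\omega_k^\vee}\ra\{-k\ell\}$ built into $\cP_{k,\ell}$ and the frozen-coefficient conventions of $\cA^{\,loc}_{(L_n,\wt{B}_n)}$.

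\textbf{Injectivity and surjectivity.} For injectivity: by the quantum Laurent phenomenon $\cA^{\,loc}_{(L_n,\wt{B}_n)}$ embeds into the quantum torus of its initial seed, and this torus embeds into a localization of $K^{GL_n(\cO)\rtimes\Gm}(\Gr_{GL_n})$ because the $2n$ initial classes $[\cP_{k,\ell}]$ are algebraically independent in the appropriate quantum sense: their classical limits generate $\C\otimes K^{GL_n(\cO)}(\Gr_{GL_n})$, the coordinate ring of the unipotent cell $N^{(s_0s_1)^n}$ in $LSL_2$, an integral domain of dimension $2n$ \cite{FKRD15}, and a filtered/specialization argument transports independence to the quantized ring. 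For surjectivity (Lemma~\ref{lem:XtoP}) it is enough to show that the classes $[\cP_{k,\ell}]$ for $k\in[1,n]$, $\ell\in\Z$, together with $[\cP_{n,0}]^{-1}$ and $[\cP_{n,1}]^{-1}$, generate $K^{GL_n(\cO)\rtimes\Gm}(\Gr_{GL_n})$, since by the previous step $\Phi$ then hits a generating set. As the simple perverse coherent sheaves form a $\Z[q^{\pm 1/2}]$-basis, this reduces to writing the class of each simple object with support $\oGr_{GL_n}^{\lambda^\vee}$ as a polynomial in the $[\cP_{k,\ell}]$, which I would do by induction on the dominance order on $\lambda^\vee$: writing $\lambda^\vee$ as a nonnegative combination of fundamental coweights, a suitable convolution of the sheaves $\cP_{k,\ell}$ with the correct line-bundle twists has support $\oGr_{GL_n}^{\lambda^\vee}$ and the given simple as its top composition factor, the remaining factors being supported on strictly smaller orbits and handled inductively. (Alternatively one reads this generation statement off an explicit presentation of $K^{GL_n(\cO)\rtimes\Gm}(\Gr_{GL_n})$.)

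\textbf{Main obstacle.} I expect the hardest point to be surjectivity: the simple perverse coherent sheaves are numerous and, as emphasized in the introduction, lack an elementary description, so controlling all of their classes in terms of the handful of sheaves $\cP_{k,\ell}$ requires the triangularity of convolution with respect to the dominance order and a careful identification of leading composition factors, not just formal manipulation. A pervasive secondary difficulty is the bookkeeping of cohomological and $\Gm$-equivariance shifts needed to fix $L_n$ exactly and to make the phrase ``up to powers of $q$ and frozen variables'' precise; this is where the explicit normalizations in Theorem~\ref{thm:introGL_ncomps} and of the exchange sequences are essential.
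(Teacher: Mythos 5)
There is a genuine circularity in your well-definedness argument, and it is the crux of the whole proof. You write that the one-step mutation produced by the renormalized $r$-matrices and rigidity may be ``iterated'' to realize every seed of $\cA^{\,loc}_{(L_n,\wt{B}_n)}$ by a quantum monoidal seed. But this is false in general: as the paper emphasizes in Section \ref{sec:monoidalseeds} and Conjecture \ref{con:potentials}, a monoidal seed need \emph{not} admit arbitrary sequences of mutations. The obstruction arises when mutating at a vertex of an oriented 3-cycle, and the quiver $\wt{Q}_n$ is full of such vertices. The only general tool for propagating mutations past 3-cycles is Theorem \ref{thm:K3Omain}(ii) --- and its hypothesis is precisely that the embedding $\Q(q^{1/2}) \otimes \cA_{(L,\wt{B})} \into \Q(q^{1/2}) \otimes K_0(\grl{\cC})$ already exists, which is the thing you are trying to establish. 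Your plan quantifies in the wrong order.

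The paper escapes the circularity by not relying on iterated $r$-matrix mutation at all in this step. Instead it (a) proves the exchange sequences of Proposition \ref{prop:mutation} \emph{geometrically}, uniformly for all $\ell \in \Z$, using Koszul resolutions and divisor computations on the convolution varieties; (b) invokes the GLS theory of quantum unipotent cells (Theorem \ref{thm:GLSunipotent}, Proposition \ref{prop:mutequiv}, Lemma \ref{lem:keyminors}) to establish that a specific finite subset $\{X_{1,\ell}\}_{\ell \in [1-n,n]}$ of cluster variables generates $\Q(q^{1/2}) \otimes \cA_{(L_n,\wt{B}_n)}$, and verifies that exactly these variables are reachable using the geometrically-proven sequences; only then (c) embeds everything in an Ore localization of $K_0$ and matches the two sides (Proposition \ref{prop:K0containment}), after which Theorem \ref{thm:K3Omain}(ii) can be legitimately applied to conclude that all mutations exist (Corollary \ref{cor:moncatGLn}). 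Your proposal omits the GLS identification entirely, yet without some substitute for it there is no way to confine attention to a finite set of cluster variables, and hence no non-circular way to define $\Phi$ on all of $\cA^{\,loc}_{(L_n,\wt{B}_n)}$.

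Two secondary points. First, your injectivity argument via the quantum Laurent phenomenon plus a classical-limit specialization to \cite{FKRD15} is in the right spirit, but the paper's route --- embedding $K_0$ directly into a division ring via the Ore property of the dimension-of-support filtration (Lemma \ref{lem:Ore}) and comparing with $\cF_q(L_n)$ inside it --- is cleaner and avoids having to transport independence across a degeneration. Second, your surjectivity sketch by induction on dominance order does not obviously produce \emph{all} simple perverse coherent sheaves, since for a fixed orbit $\oGr^{\l^\vee}_{GL_n}$ the simples are indexed by all simple equivariant bundles, not just determinant twists, and convolutions of the $\cP_{k,\ell}$ control only the leading bundle on the open stratum; the paper instead reduces to explicit generation of the $K$-theory of finite Grassmannians via the Koszul computation of Lemma \ref{lem:recursion} and Corollary \ref{cor:K0generators}.
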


Here we use the aforementioned construction in \cite{GLS13} of quantum 
cluster structures on quantum unipotent coordinate rings. Since $\cA_{(L_n,\wt{B}_n)}$ also appears in this context, we obtain a finite set of generators for $\cA^{loc}_{(L_n,\wt{B}_n)}$ from known generating sets of quantum coordinate rings. Using these generators we establish the isomorphism in the Theorem by showing in Section \ref{sec:K0generators} that a certain subset of the $[\cP_{k,\ell}]$ generate $K^{GL_n(\cO) \rtimes \Gm}(\Gr_{GL_n})$ (a~similar result was recently obtained in \cite{FT17}). 

\addtocontents{toc}{\SkipTocEntry}
\subsection*{Categorification of the quantum twist}

Many cluster varieties appearing in Lie theory have distinguished automorphisms called twists \cite{BZ97,FZ99,GY16,MaSc16,MuSp16}. These were introduced for applications in total positivity \cite{BFZ96}, but have since been understood as significant from various categorical viewpoints \cite{GLS12,KO17}. In particular, after erasing frozen variables they become special cases \cite{GLS12,Wen16a,Wen16b} of the operators introduced in \cite{KS08} to encode the wall-crossing of generalized Donaldson-Thomas invariants -- these operators have been referred to as spectrum generators \cite{GMN13a,GMN13b}, (half) monodromy operators \cite{ACCERV14}, DT transformations \cite{GS16}, or total DT invariants \cite{Kel11}.

In our case we obtain the following interpretation in terms of convolution. 

\begin{Theorem}[c.f. Theorem \ref{thm:twist}, Corollary \ref{cor:twist}]\label{thm:introtwist}
The isomorphism 
$$\Q(q^{1/2}) \otimes_{\Z[q^{\pm 1/2}]} K^{GL_n(\cO) \rtimes \Gm}(\Gr_{GL_n}) \cong \Q(q^{1/2}) \otimes_{\Z[q^{\pm 1}]} A_q(N^{(s_0s_1)^n})$$
identifies the anti-automorphism $[\cF] \mapsto [\cF^L]$ sending a sheaf to its left dual with the composition $X \mapsto \overline{\eta_{(s_0s_1)^n}(X)}$ of the quantum twist and the bar involution of $\cA^{loc}_{(L_n,\wt{B}_n)}$. Specializing frozen variables (hence necessarily $q$) to 1 identifies $[\cF] \mapsto [\cF^L]$ with the total DT invariant of $\cA_{B_n}$.
\end{Theorem}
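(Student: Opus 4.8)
The plan is to reduce the statement to a comparison of two anti-automorphisms that are each determined by their values on an initial cluster. After Theorem~\ref{thm:introiso} we have a $\Z[q^{\pm 1/2}]$-algebra isomorphism $\phi\colon K^{GL_n(\cO)\rtimes\Gm}(\Gr_{GL_n})\congto \cA^{loc}_{(L_n,\wt B_n)}$ carrying $[\cP_{k,\ell}]$ to the appropriate (frozen-twisted) quantum cluster variables; composing with the isomorphism $\cA^{loc}_{(L_n,\wt B_n)}\otimes\Q(q^{1/2})\cong A_q(N^{(s_0s_1)^n})\otimes\Q(q^{1/2})$ of \cite{GLS13,KO17} gives the displayed isomorphism. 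First I would record that $\cF\mapsto\cF^L$ is an anti-automorphism of the Grothendieck ring: this follows from Theorem~\ref{thm:introduals} (rigidity of $D^{G(\O)\rtimes\Gm}_{coh}(\Gr_G)$), since left duality reverses convolution and is additive on exact triangles. Likewise, $X\mapsto\overline{\eta_{(s_0s_1)^n}(X)}$ is an anti-automorphism of $A_q(N^{(s_0s_1)^n})$, the bar involution being the standard $\Z$-algebra anti-involution and the quantum twist $\eta_{w}$ of \cite{KO17} being an algebra automorphism. So both sides of the claimed identity are anti-automorphisms of the same $\Q(q^{1/2})$-algebra.

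The core of the argument is then to check the two anti-automorphisms agree on enough elements to conclude they coincide. Since the quantum cluster algebra $\cA^{loc}_{(L_n,\wt B_n)}$ is generated (over $\Q(q^{1/2})$) by a single cluster together with the inverses of the frozen variables, it suffices to verify the identity on the initial cluster $\{[\cP_{k,\ell}]\}_{k\in[1,n],\,\ell\in\{0,1\}}$ and on the frozen variables. On the side of $\Gr_{GL_n}$ I would compute $\cP_{k,\ell}^L$ explicitly: using $\cF^L=\D(\cF^*)$ from Theorem~\ref{thm:introduals}, and the fact that each $\cP_{k,\ell}$ is the (shifted) restriction of the line bundle $\cO(\ell)$ to the closed orbit $\Gr_{GL_n}^{\omega_k^\vee}$, the dualizing sheaf of the projective variety $\Gr_{GL_n}^{\omega_k^\vee}$ and the behavior of the $*$-involution on line bundles on closed orbits pin down $[\cP_{k,\ell}^L]$ as a specific $\cP_{k,\ell'}$ up to a monomial in $q$ and the frozen classes. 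On the side of $A_q(N^{(s_0s_1)^n})$, the effect of $\eta_{(s_0s_1)^n}$ followed by the bar involution on the initial quantum cluster variables is computed from the defining formula of the quantum twist in \cite{KO17} (it sends the quantum unipotent minor $D_{w\varpi,\varpi}$-type generators to explicitly related generators). Matching these two computations through $\phi$ is then a finite bookkeeping check, where the grading shifts $\{-k\ell\}$ built into the definition of $\cP_{k,\ell}$ are exactly what make the $q$-powers line up.

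The main obstacle I expect is precisely this bookkeeping: tracking the half-integer cohomological shifts, the $\Gm$-equivariance shifts $\{\cdot\}$ (with the convention that $t$ has weight $2$), and the normalizations of the frozen variables simultaneously on both sides, so that the identification is on the nose and not merely up to an undetermined central unit. A secondary subtlety is that one works in the localized algebra $\cA^{loc}_{(L_n,\wt B_n)}$, so one must confirm that left duality is compatible with the localization at frozen variables -- this is automatic since invertible objects (the shifted line bundles $\cP_{k,0}^{\pm}$ and their convolution powers) have invertible duals, and $\D$, $(-)^*$ both send invertible perverse coherent sheaves to invertible ones. Finally, the last sentence of the theorem follows by specialization: setting the frozen variables (hence $q^{1/2}$) to $1$ degenerates $\eta_{(s_0s_1)^n}$ composed with bar to the DT transformation of the cluster algebra $\cA_{B_n}$ by \cite{GLS12,Wen16a,Wen16b,KS08}, and the classical limit of $[\cF]\mapsto[\cF^L]$ inherits this identification from the $q$-deformed statement, so no further work is needed beyond noting that specialization is well defined on both sides.
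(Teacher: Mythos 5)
Your high-level strategy is correct and matches the paper: both sides are anti-automorphisms of the same division ring, so one may check equality on a single cluster together with the inverted frozens. The difficulty lies in which cluster, and your proposal is imprecise here in a way that masks where the real work is.

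You propose to verify the identity on the initial monoidal cluster $\{[\cP_{k,\ell}]\}_{k\in[1,n],\,\ell\in\{0,1\}}$. But then in the same breath you say that on the algebra side "the effect of $\eta$ \ldots on the initial quantum cluster variables is computed from the defining formula \ldots (it sends the quantum unipotent minor $D_{w\varpi,\varpi}$-type generators to explicitly related generators)." Those are two different clusters. The minors $D_{w_{\le k}\omega_{i_k},\,\omega_{i_k}} = D[k_{\min},k]$ correspond under Lemma~\ref{lem:keyminors} to the cluster $\{X_{k,n-k},X_{k,n-k+1}\}_{k}$ obtained from the initial one by the mutation sequence $\mu$ of Proposition~\ref{prop:mutequiv} -- not to $\{X_{k,\ell}\}_{\ell\in\{0,1\}}$. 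The paper deliberately verifies the identity on this ``far'' cluster (its variables are the $X_{V_j}$): there the twist formula of \cite{KO17} evaluates immediately from the exact sequence $0\to V_j\to V_{j_{\max}}\to T_j\to 0$, since $I(V_j)=V_{j_{\max}}$ and $\Omega_w^{-1}(V_j)=T_j$ are explicitly $M[j,j_{\max}]$; one then identifies the result with $[\cP_{n-k,-k}^L]$, $[\cP_{n-k,1-k}^L]$ via Lemma~\ref{lem:keyminors} and Proposition~\ref{prop:example3}. If you instead insist on the initial cluster, the objects $M[b,d]$ involved have $b=n+2-k-\ell$, $d=n+k-\ell$; you would then need to compute $I(M[b,d])$ and $\Omega_w^{-1}(M[b,d])$ for these general intervals, which is considerably more involved and is not covered by the exact sequence you would naturally reach for from \cite{GLS11}. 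So the gap is not a wrong idea, but a missing step: you have not said how $\eta$ actually acts on the variables of the cluster you chose, and the clean computation is only available on the other cluster. The resolution is simply to check the identity on $\{X_{V_j}\}$ and its frozens, as the paper does.

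One further remark: you correctly note that invertibility of the frozen objects makes left duality compatible with localization, and your treatment of the specialization statement (Corollary~\ref{cor:twist}) is essentially the paper's, relying on \cite{GLS12,KR07} for the identification of the classical twist (with frozens and $q$ set to $1$) with the total DT invariant.
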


Here $B_n$ is the matrix obtained by erasing the frozen rows of $\wt{B}_n$.
The bar involution of a quantum cluster algebra is an anti-automorphism which inverts $q$ and fixes the quantum cluster variables \cite{BZ05}. In Section \ref{sec:barinvolution} we lift the bar involution of $\cA^{loc}_{(L_n,\wt{B}_n)}$ to an autoequivalence of $D_{coh}^{GL_n(\cO) \rtimes \Gm}(\Gr_{GL_n})$, the result being similar in flavor to its analogues in the setting of the Steinberg variety and nilpotent cone \cite{Lus98,Ost00}. 

Theorem \ref{thm:introtwist} implies the squared twist is integrable: the double dual $[\cF] \mapsto [\cF^{LL}]$ preserves the symmetric monoidal subcategory $\Rep (GL_n(\cO) \rtimes \Gm)$, embedded as sheaves supported on the identity. In fact, the double dual is very close to being a power of the autoequivalence given by tensoring with $\cO(1)$, which itself manifestly leaves invariant objects supported on the identity. This provides a categorical interpretation of the discrete integrability of the $A_{n-1}^{(1)}$ $Q$-system \cite{Ked} (see also \cite{Wil16,CDZ14}), which we now recall.

\addtocontents{toc}{\SkipTocEntry}
\subsection*{Relation to $Q$-systems}
$Q$-systems are recursion relations that describe the characters of Kirillov-Reshetikhin (KR) modules, special Yangian modules labeled by multiples of fundamental weights \cite{KR87,HKOTY99}. In \cite{Ked} it was shown that the relations of the $Q$-system can be normalized to coincide with exchange relations in the cluster algebra $\cA_{B_n}$ (or another suitable cluster algebra in other affine types \cite{DFK,Wil15}).

This connection emerges as follows in our setup. Recall that $\Gr_{GL_n}^{\omega_k^\vee}$ is isomorphic to the finite Grassmannian $\Gr_{k,n}$ of subspaces in $\C^n$ of codimension $k$, and that $\cP_{k,\ell}$ is an equivariant shift of $\O_{\Gr_{k,n}}(\ell)[\frac12 \dim \Gr_{k,n}]$. Thus, for $\ell \geq 0$, the global sections functor recovers (up to the cohomological shift) the irreducible $GL_n$-representation of highest weight $\ell \omega_k$. These are exactly the classical limits of the KR modules in type $A_{n-1}^{(1)}$.

On the other hand, the mutations described in Section \ref{sec:mutrels} can be used for any $\ell \in \Z$ to produce the sheaf $\cP_{k,\ell}$ from the initial cluster $\{\cP_{k,\ell}\}_{k \in [1,n], \ell \in \{0,1\}}$. Given a suitable compatibility between convolution of sheaves and tensor products of global sections (see Proposition \ref{prop:cohomology}), it follows that the associated exchange relations describe characters of KR modules normalized by shifts dictated by the perverse $t$-structure. These exchange relations are exactly those that appear in \cite{Ked}. Since we work with sheaves in formal half-integer degrees, the degree normalizations will in general be 4th roots of unity, as found in \cite{DFK}. 

We also note that through the coherent Satake category we find a conceptual explanation for the appearance of identical quivers in the context of $Q$-systems and separately in the context of BPS quivers \cite{ACCERV14,CDZ12}. We turn to the latter topic next.

\addtocontents{toc}{\SkipTocEntry}
\subsection*{Perspective from gauge theory}
Coulomb branches of gauge theories with eight supercharges in 3 and 4 dimensions have recently been the subject of intense study in both physics and mathematics \cite{CHZ14,Nak16,BFN,BDG17,BDGH16,FT17}. In this language, the spectrum of $K^{G(\cO)}(\Gr_{G})$ is the Coulomb branch of pure 4d $\cN=2$ gauge theory on $\R^3 \times S^1$ with compact gauge group $G_c$. 

Of relevance to us is the monoidal category of line operators in the holomorphic-topological twist of \cite{Kap06} on $\C \times \R^2$, with monoidal structure given by collision of lines in $\R^2$ \cite{KS09} -- expectation values of such operators wrapped on $S^1$ give functions on the Coulomb branch. It was proposed by Costello that this category should be $D_{coh}^{G(\O)}(\Gr_G)$ -- this is the higher geometric quantization of the shifted cotangent bundle of $G(\O) \backslash \Gr_G$, which appears as a derived space of solutions to the equations of motion of the twisted theory \cite{Cos14,EY15}.

It is expected that the Grothendieck ring of the category of line operators has a basis consisting of Wilson-'t Hooft loop operators \cite{KS09}. Mathematically, the most natural origin of a basis in a Grothendieck ring would be a finite-length $t$-structure. It was suggested in \cite{KS09} that Wilson-'t Hooft line operators in 4d $\cN=4$ gauge theory correspond to suitable analogues of perverse coherent sheaves on the affine Grassmannian Steinberg variety.

Theorem~\ref{thm:mainthmintro} implicitly claims that Wilson-'t Hooft line operators in pure 4d $\cN=2$ gauge theory are simple perverse coherent sheaves on the affine Grassmannian itself. Indeed, irreducible line operators in 4d $\cN=2$ theories satisfying certain finiteness conditions are partially described by the cluster algebras of their BPS quivers \cite{GMN13,CN14,CiDZ17}. The quiver $\wt{Q}_n$ from Figure~\ref{fig1} is exactly the BPS quiver of pure $SU(n)$ gauge theory \cite{ACCERV14} (up to frozen vertices present because we focus on $GL_n$ not $SL_n$). Thus Theorem~\ref{thm:mainthmintro} asserts that simple perverse coherent sheaves have the cluster-theoretic properties expected of Wilson-'t Hooft operators.

Theorem~\ref{thm:chiralintro}, on the other hand, provides an illuminating conceptual perspective on why cluster algebras should appear in the context of line operators of $\mathcal{N} = 2$ theories at all, complementary to any considerations involving (framed) BPS states or specific properties of class $S$ theories. Indeed, the language of chiral categories succinctly captures the holomorphic dependence of categories of line operators in 4d holomorphic-topological field theories. In this sense Theorem~\ref{thm:chiralintro} says that the phenomenon of iterated mutation of clusters is simply a structural consequence of the basic formal properties of holomorphic-topological field theory. Furthermore, we expect the least satisfying aspect of the Theorem (the need to know in advance a cluster structure on the algebra of loop operators) can ultimately be removed, see Conjecture~\ref{con:potentials}.

We note that relations between $r$-matrices and holomorphic-topological field theory are also explored in \cite{Cos13,Cos14b,CWY17,CWY18}, which establish a direct connection between affine quantum groups and deformed $\cN=1$ gauge theory. Of course, it is quantum loop algebras which provided the examples of monoidal cluster categorifications originally considered in \cite{HL10}.

\addtocontents{toc}{\SkipTocEntry}
\subsection*{Beyond $GL_n$} We expect the following extension of Theorem \ref{thm:mainthmintro} to other types.

\begin{Conjecture}\label{conj:othertypes}
Let $C$ be a finite-type Cartan matrix, $G$ the associated simply-connected simple algebraic group, and $G_{Ad}$ its adjoint form. Then $\cP_{coh}^{G(\cO) \rtimes \Gm}(\Gr_{G_{Ad}})$ is a monoidal categorification of the quantum cluster algebra $\cA^{\,loc}_{(L_C,\wt{B}_C)}$, where
\[
\wt{B}_C = \begin{pmatrix} C^T - C & -C^T \\ C & 0 \end{pmatrix}
\]
and $L_C$ is a suitable coefficient matrix. The elements of the initial monoidal cluster are the sheaves $\cP_{\omega_i^\vee\!,\,0}$ and $\cP_{\omega_i^\vee\!, \,\omega_i}$ for $\omega_i^\vee$ a fundamental coweight.
\end{Conjecture}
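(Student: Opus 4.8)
The plan is to run the same three-step strategy by which Theorem~\ref{thm:mainthmintro} was deduced from Theorem~\ref{thm:chiralintro}. Two of the three inputs are already available in this generality: rigidity of $\cP_{coh}^{G(\cO)\rtimes\Gm}(\Gr_{G_{Ad}})$ is Theorem~\ref{thm:introduals}, and the Beilinson--Drinfeld Grassmannian of $G_{Ad}$ supplies a compatible $\Gm$-equivariant chiral category, hence a system of renormalized $r$-matrices, by Theorem~\ref{thm:intrormats}. What remains is (i) to show that the proposed collection of sheaves is a quantum monoidal seed admitting a mutation in every unfrozen direction, and (ii) to produce a $\Z[q^{\pm1/2}]$-algebra isomorphism $K^{G(\cO)\rtimes\Gm}(\Gr_{G_{Ad}})\cong\cA^{\,loc}_{(L_C,\wt{B}_C)}$ carrying the initial classes to the initial quantum cluster variables. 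The $GL_n$ arguments of Sections~\ref{sec:mutrels} and~\ref{sec:K0asclusteralgebra} are the template, but each step acquires genuinely new content outside type $A$.

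For step (i), define for each fundamental coweight $\omega_i^\vee$ and each $\ell\in\Z$ the sheaf $\cP_{\omega_i^\vee,\ell}$ to be the simple perverse coherent sheaf supported on $\oGr_{G_{Ad}}^{\omega_i^\vee}$ whose restriction to the open orbit is $\cO(\ell\,\omega_i)|_{\Gr_{G_{Ad}}^{\omega_i^\vee}}$ with the perverse cohomological/equivariant shift; that such a line bundle exists is precisely where passing to the adjoint form enters, and the $\ell=1$ sheaf is the $\cP_{\omega_i^\vee,\omega_i}$ of the Conjecture. When $\omega_i^\vee$ is minuscule the orbit is closed and smooth, $\cP_{\omega_i^\vee,\ell}$ is literally a shifted line bundle, and the $GL_n$ arguments show it is simple and real and compute its convolutions essentially verbatim. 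The substantive new case is a non-minuscule $\omega_i^\vee$, where $\oGr_{G_{Ad}}^{\omega_i^\vee}$ is singular, strictly contains the open orbit, and $\cP_{\omega_i^\vee,\ell}$ is not merely a line bundle on the orbit. Here one would prove reality and the $q$-commutation of the $\cP_{\omega_i^\vee,\ell}$ with $|\ell-\ell'|\le1$ by a factorization argument on $\Gr_{G_{Ad},\A^2}$, as in the $GL_n$ case, reducing a convolution square to a pushforward along the convolution map and controlling its fibers; and one would produce the exchange exact sequences by locating a suitable collision divisor $D$ inside $\Gr_{G_{Ad}}^{\omega_i^\vee}\,\ttimes\,\Gr_{G_{Ad}}^{\omega_i^\vee}$ whose structure-sheaf short exact sequence, after the requisite $\cO(\ell\omega_i)$-twist and equivariant shift, pushes forward to the claimed triangle---with one outer term $\cP_{\omega_i^\vee,\ell}\conv\cP_{\omega_i^\vee,\ell}$ and the other a product of the $\cP_{\omega_j^\vee,\ell}$ over the Dynkin-neighbors $j$ of $i$, the off-diagonal entries of $C$ governing the multiplicities and the equivariant shifts so that the induced $K$-theory identities are the $\wt{B}_C$-exchange relations. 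Non-splitness of these sequences---needed both for the $q$-commutation statement and for the exchange relations to be the expected ones---follows from a $\Hom$-computation between the relevant simple perverse coherent sheaves.

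For step (ii), the natural route is to establish a suitable generalization of \cite{FKRD15}: a $\Z[q^{\pm1/2}]$-algebra isomorphism of $K^{G(\cO)\rtimes\Gm}(\Gr_{G_{Ad}})$ with the localization of a quantum unipotent cell $A_q(N^w)$ of an appropriate symmetrizable Kac--Moody group, with $w$ and the group chosen so as to recover the $GL_n$ answer $A_q(N^{(s_0s_1)^n})$ and with the $\Gm$-equivariant grading matched to the $q$-grading. Granting such an isomorphism, \cite{GLS13} equips $A_q(N^w)$ with a quantum cluster structure; one then checks that its exchange matrix coincides with $\wt{B}_C=\begin{pmatrix}C^T-C&-C^T\\C&0\end{pmatrix}$ and that the distinguished unipotent quantum minors attached to the $\cP_{\omega_i^\vee,\ell}$ are the initial quantum cluster variables---a combinatorial comparison of the reduced-word data underlying $w$ with the quiver of Figure~\ref{fig1}, which in type $A$ reduces to $\wt{B}_n$---and this forces the coefficient matrix $L_C$. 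As in the $GL_n$ case, in practice one would verify the isomorphism by exhibiting an explicit finite generating set of $K^{G(\cO)\rtimes\Gm}(\Gr_{G_{Ad}})$ among the classes $[\cP_{\omega_i^\vee,\ell}]$ and matching it to a known generating set of $A_q(N^w)$; the $Q$-system relations associated with $C$ organize these generators and their shifts, consistently with \cite{Ked, DFK, Wil15}. With (i) and (ii) in hand, Theorem~\ref{thm:chiralintro} yields the Conjecture.

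The main obstacle is step (i) for non-minuscule fundamental coweights. In type $A$ every $\Gr^{\omega_k^\vee}$ is a smooth finite Grassmannian and the relevant simple perverse coherent sheaves are shifted line bundles on it, which makes both reality and the exchange sequences accessible; outside type $A$ one must instead work with singular Schubert-type subvarieties of $\Gr_{G_{Ad}}$ and convolution divisors on them, and even the precise outer terms of the exchange sequences (and the simplicity of the relevant convolution products) are not evidently dictated by the Dynkin combinatorics alone. A secondary, more technical obstacle is step (ii): although the appearance of $\cA_{(L_C,\wt{B}_C)}$ on the quantum-unipotent-cell side is strongly expected (and consistent with $\wt{B}_C$ being the BPS quiver of pure $G$ gauge theory), even \emph{formulating} the correct generalization of \cite{FKRD15}---the right Kac--Moody group and element $w$, its $q$-graded refinement, and the identification of $L_C$---requires work not undertaken here.
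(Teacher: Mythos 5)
This statement is a \emph{Conjecture} in the paper, not a Theorem---the paper offers no proof of it, so there is nothing to compare your argument against. What the paper does provide is the surrounding heuristic: $\wt{B}_C$ is the BPS quiver of pure $G$ gauge theory, the sheaves $\cP_{\omega_i^\vee\!,\,\ell\omega_i}$ should match KR module characters via the $Q$-system, and an isomorphism $K^{G(\cO)}(\Gr_{G_{Ad}})\cong\cA_{\wt{B}_C}$ is expected but not established. Your proposal correctly reconstructs the strategy the paper would have to follow (rigidity plus chiral $r$-matrices plus Theorem~\ref{thm:K3Omain}, with the two type-specific inputs: a quantum monoidal seed with one-step mutations, and a $K_0$-isomorphism), so as a reading of the paper's intent it is accurate.

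But you should not mistake this for a proof, and you do not: you are candid that both remaining steps are open. The obstacles you identify are the genuine ones. The minuscule case really is formally parallel to $GL_n$ (closed smooth orbit, shifted line bundle, the divisor/Koszul argument of Sections~\ref{sec:commrel} and~\ref{sec:mutrels} transports over), but for non-minuscule $\omega_i^\vee$ the orbit is not closed and $\cP_{\omega_i^\vee\!,\,\ell\omega_i}$ is an IC-type extension rather than a restricted line bundle, so already reality (Lemma~\ref{lem:real} uses restriction of a global line bundle and rational singularities) and the identification of the outer terms of the exchange triangle require new arguments; the paper's own remark that global sections match KR modules only under a divisibility condition (trivial precisely when $G$ is simply laced) is a warning sign that the non-simply-laced story is more delicate than your sketch allows. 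And on the algebraic side, no analogue of \cite{FKRD15} identifying $K^{G(\cO)\rtimes\Gm}(\Gr_{G_{Ad}})$ with a localized quantum unipotent cell is currently available beyond type $A$, so step (ii) is not merely ``technical'' but presupposes an unproven input. In short: correct strategy, correctly identified obstructions, but no new progress on closing them---which is as it should be for a conjecture.

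One minor imprecision worth flagging: you attribute the existence of the line bundle $\cO(\ell\omega_i)$ to passing to the adjoint form. In fact the two normalizations play complementary roles---$G_{Ad}$ enlarges the \emph{coweight} lattice so that the orbit $\Gr_{G_{Ad}}^{\omega_i^\vee}$ exists, while $G(\cO)$-equivariance (with $G$ simply connected) enlarges the \emph{weight} lattice so that $\omega_i$ is a valid equivariant twist. Conflating them does not affect the substance of your sketch, but the distinction matters once one tries to carry out step (i) in earnest.
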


Here $\cP_{\l^\vee\!,\,\mu}$ denotes the simple perverse coherent sheaf supported on $\oGr_{G_{Ad}}^{\l^\vee}$ and associated to a weight $\mu$ (see Section \ref{sec:cohsatdefs}). On one hand, the matrix $\wt{B}_C$ is the adjacency matrix of the BPS quiver of 4d $\cN=2$ gauge theory with compact gauge group $G_c$ \cite{ACCERV14,CDZ12}, so the conjecture is consistent with a correspondence between simple perverse coherent sheaves and simple line operators. 

On the other, the normalized $Q$-system associated to $G$ is described by relations among specific cluster variables in $\cA_{\wt{B}_C}$ attached to the weights $\ell \omega_i$ \cite{DFK}. Under the conjectural isomorphism $K^{G(\cO)}(\Gr_{G_{Ad}}) \cong \cA_{\wt{B}_C}$ these cluster variables should coincide with the classes of the simple objects $\cP_{\omega_i^\vee\!,\, \ell \omega_i}$. Indeed, when $\cP_{\omega_i^\vee\!,\, \ell \omega_i}$ is the restriction of a line bundle on $\Gr_{G_{Ad}}$ (in the appropriate cohomological degree) its global sections are known to be isomorphic as a $G(\cO)$-module to the classical limit of the KR module of weight $\ell \omega_i$ \cite{FL07}. This is the case when $\ell$ satisfies a simple divisibility condition (which is trivial if $G$ is simply laced). We also anticipate an analogue of Conjecture \ref{conj:othertypes} for twisted types involving the skew-symmetrizable matrices of \cite{Wil15}.


\addtocontents{toc}{\SkipTocEntry}
\subsection*{Potentials and monoidal categorification}

Recall that a potential on a quiver is a formal sum of oriented cycles \cite{Gin06,DWZ08}. Mutation of quivers with potential is similar to ordinary quiver mutation, except that arbitrary sequences of mutations are not always allowed. Instead, a possible obstruction arises when one mutates at a vertex of an oriented 3-cycle. 

Precisely this situation arises in the proof of Theorem \ref{thm:chiralintro}: one can show that a monoidal seed admits mutation sequences that avoid vertices of oriented 3-cycles without assuming in advance the existence of a cluster structure on $K_0(\grl{\cC})$ (and without needing to work in a graded setting).

\begin{Conjecture}\label{con:potentials}
Let $\cC$ be a rigid monoidal abelian category with a system of renormalized $r$-matrices, and let $(\{\cF_i\}_{i \in I}, \wt{B})$ be a monoidal seed where $\wt{B}$ is the signed adjacency matrix of a quiver $Q$. To this data one can associate a potential $W$ on $Q$ such that a mutation sequence can be performed on $(\{\cF_i\}_{i \in I}, \wt{B})$ in $\cC$ if and only if it can be performed on $(Q,W)$. This potential should encode monoidal factorization properties of objects constructed from the~$\cF_i$.
\end{Conjecture}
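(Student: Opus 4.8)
The plan is to build, in three stages, a precise dictionary between mutation of the monoidal seed inside $\cC$ and the Derksen--Weyman--Zelevinsky mutation of a quiver with potential, and then to transport the non-degeneracy theory of \cite{DWZ08} across this dictionary. First I would pin down the exact mutation rule for monoidal seeds, following Section \ref{sec:monoidalseeds} and \cite{KKKO18}: mutating $(\{\cF_i\}_{i\in I},\wt B)$ at an unfrozen vertex $k$ requires a short exact sequence
\[ 0 \to \bigconv_{b_{ik}>0}\cF_i^{\,b_{ik}} \to \cF_k\conv\cF_k' \to \bigconv_{b_{ik}<0}\cF_i^{\,-b_{ik}} \to 0 \]
(up to grading shifts) with $\cF_k'$ again real simple, together with $q$-commutativity of the new collection. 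By the $r$-matrix axioms of Definition \ref{def:rmat} and the rigidity of Theorem \ref{thm:introduals}, the existence of this data reduces to a finite list of inequalities among the integers $\La(\cF_i,\cF_j)$ and the nonvanishing of certain compositions of renormalized $r$-matrices; the obstruction to prolonging a mutation sequence is the failure of one of these conditions to propagate, and --- as observed above --- such a failure can occur only when one mutates at a vertex lying on an oriented $3$-cycle of $Q$.

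Second, I would construct the candidate potential out of the $r$-matrix data. For an oriented cycle $c=(i_1\to i_2\to\cdots\to i_m\to i_1)$ the arrows are recorded by distinguished nonzero maps between convolutions of the $\cF_i$ --- components of the exchange sequences, equivalently of the maps $\rmat{\cF_{i_j},\cF_{i_{j+1}}}$; composing these around $c$ via the meromorphic Eckmann--Hilton construction of Section \ref{sec:chiral} and comparing with the identity of $\bigconv_j\cF_{i_j}$ yields a scalar $\lambda_c\in\bk$, and I set $W=\sum_c\lambda_c\,[c]$. The content of ``$W$ encodes monoidal factorization properties'' should then be that the Jacobian algebra $\cP(Q,W)$ is identified with the subalgebra of $\bigoplus_{i,j}\Hom_\cC(\cF_i\conv N_i,\ \cF_j\conv N_j)$ generated by these maps, the $N_i$ being the pertinent convolutions of neighbours, so that a path of arrows dies in $\cP(Q,W)$ exactly when the corresponding convolution factors through a proper subobject --- i.e.\ when it ``factorizes''. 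Showing that $\lambda_c$ is independent of the normalizations of the $r$-matrices, of the cyclic ordering of $c$, and of the grading shifts, and that only finitely many $\lambda_c$ survive modulo cyclic equivalence, is already a nontrivial part of this step.

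Third, I would match the two obstruction theories mutation by mutation. The premutation $\widetilde\mu_k$ turns each $3$-cycle through $k$ into a shortcut arrow plus a correction term in the potential; the reduction $\mu_k$ succeeds without leaving a residual $2$-cycle precisely when a coefficient read off from $W$ and this correction is invertible. On the categorical side, mutating at $k$ produces $\cF_k'$ as a sub/quotient in the exchange sequence, and the matching ``$2$-cycle'' is a summand $\cF_i\conv\cF_j$ that one expects to cancel inside $\cF_k\conv\cF_k'$; whether it cancels is governed by the nonvanishing of the relevant composite $r$-matrix through the exchange maps, which is exactly the scalar controlling $\lambda_c$. The target is then the chain of equivalences ``$\mu_k$ admissible for $(Q,W)$ $\iff$ the relevant coefficient is invertible $\iff$ $\cF_k'$ is real simple and the new collection $q$-commutes $\iff$ $\mu_k$ admissible for $(\{\cF_i\},\wt B)$'', together with $\mu_k(W)\cong$ the potential of the mutated seed, so that the equivalence propagates along arbitrary mutation sequences. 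For the $GL_n$ seeds of Theorem \ref{thm:introGL_ncomps} one can attempt this unconditionally, since here $Q$ is the quiver $\wt Q_n$ of Figure \ref{fig1}, the BPS quiver of pure $SU(n)$ gauge theory, with its standard superpotential, and the mutation sequences are those of Section \ref{sec:mutrels}.

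The hard part will be the middle link of Step 3: that the categorical obstruction is captured by the \emph{invertibility of a single scalar} $\lambda_c$ rather than by finer data. Even for module categories of KLR algebras the analogous statement is understood only in low rank, and I expect that in general one must either strengthen the hypotheses on $\cC$ --- assuming, as in Theorem \ref{thm:chiralintro}, a $\Gm$-equivariant chiral $\E_1$-category, so that the cycle-compositions of $r$-matrices are controlled by a genuine factorization algebra and the $\lambda_c$ acquire meaning as its structure constants --- or else enlarge the notion of potential to carry $\Hom_\cC$-valued rather than scalar coefficients, at the cost of developing a DWZ-type calculus for such ``categorified potentials''. A realistic programme is thus: (i) prove the conjecture under the chiral $\E_1$-hypothesis; (ii) verify it for the $GL_n$ seeds as above; and (iii) reconcile (ii) with the potential attached to the toric Calabi--Yau $3$-fold of \cite{CDMMS}, thereby confirming at the level of quivers with potential the expectation that the affine Grassmannian encodes the Calabi--Yau engineering the theory.
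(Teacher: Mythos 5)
The statement you are addressing is labeled a \emph{Conjecture} in the paper, and the paper does not prove it; there is no ``paper's own proof'' to compare against. What the paper provides is motivation (the introductory discussion of potentials and geometric engineering) and partial evidence: Theorem~\ref{thm:K3Omain}(i), which shows that mutation sequences avoiding vertices of oriented $3$-cycles always exist, and the unconditional verification for $GL_n$ in Section~\ref{sec:GL_n} under the extra hypothesis that $K_0$ is already known to be a cluster algebra (Theorem~\ref{thm:K3Omain}(ii)). Your proposal is a sensible and well-aligned programme --- in particular your Step~1 correctly isolates the $3$-cycle obstruction and your instinct that the chiral $\E_1$-setting of Theorem~\ref{thm:chiralintro} is where the cycle scalars should acquire meaning matches the paper's own heuristic --- but it is not a proof, and you say so yourself.

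The concrete gaps are these. First, the scalars $\lambda_c$ of Step~2 are not constructed: you would need to show that composing the $r$-matrices $\rmat{\cF_{i_j},\cF_{i_{j+1}}}$ around an oriented cycle closes up to a scalar multiple of the identity at all (the $\rmat{\cdot,\cdot}$ are not natural, and there is no a priori reason such a composition is proportional to $\mathrm{id}$ rather than some other endomorphism), and that the resulting scalar is invariant under the numerous normalization choices you list. Second, the central claim of Step~3 --- that admissibility of the categorical mutation at $k$ is equivalent to invertibility of a single scalar $\lambda_c$ --- is exactly the content of the conjecture and is asserted rather than argued. The only control the paper exhibits over the $3$-cycle obstruction is global (the hypothesis $K_0(\grl{\cC}) \cong \cA_{(L,\wt B)}$ in Theorem~\ref{thm:K3Omain}(ii)), and converting that into a local scalar criterion is precisely what is missing. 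Third, even granting both, the compatibility $\mu_k(W)\cong W'$ with DWZ mutation, including the reduction step that kills $2$-cycles, is left entirely unaddressed. As a roadmap your write-up is reasonable; as a proof it establishes none of the links it promises.
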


From the point of view of the Conjecture, the hypothesis $K_0(\grl{\cC}) \cong \cA_{(L,\wt{B})}$ in Theorem~\ref{thm:chiralintro} should be regarded as a stand-in for knowing the potential associated to $(\{\cF_i\}_{i \in I}, \wt{B})$ is nondegenerate (i.e. allows arbitrary mutation sequences).

Conjecture \ref{con:potentials} is also consistent with considerations from geometric engineering. Line operators in the 4d $\mathcal{N}=2$ theory arising by compactifying type $IIA$/$IIB$ string theory on a noncompact Calabi-Yau 3-fold $X$ may be produced from noncompact $B$/$A$-branes in $X$. When the relevant category of branes is described by a quiver with potential $(Q,W)$ the mutation of suitable collections of branes in $X$ will be controlled by $W$, hence so should mutation of monoidal seeds in the resulting monoidal category of line operators.

\addtocontents{toc}{\SkipTocEntry}
\subsection*{Acknowledgements}

We would like to thank Roman Bezrukavnikov, Tudor Dimofte, Joel Kamnitzer, Andy Neitzke, Simon Riche, Pavel Safronov and Milen Yakimov for useful discussions. We especially thank Michael Finkelberg for his comments and interest, Ivan Mirkovic for sharing his unpublished notes \cite{Mir}, and Amnon Neeman for illuminating some confusing issues that appeared in the proof of Theorem \ref{thm:adjoints}. Finally, we thank Kevin Costello for helping inspire the present work, which originated as an attempt to understand the relation between his interpretation of line operators as coherent sheaves on $\Gr_G$ and the appearance of cluster algebras in \cite{GMN13}. S.C. was supported by an NSERC Discovery/accelerator grant and H.W. by NSF Postdoctoral Fellowship DMS-1502845 and NSF grant DMS-1702489.  

\section{The coherent Satake category}\label{sec:coherentSatake}

We begin by discussing the category of perverse coherent sheaves on the affine Grassmannian following \cite{AB10, BFM}. We then describe some structural results in the case $G=GL_n$.

\subsection{Definitions and basic properties}\label{sec:cohsatdefs}

Let $G$ be a reductive complex algebraic group and
$$\Gr_G := G(\cK)/G(\cO)$$
its affine Grassmannian, where $\cK := \C((t))$ and $\cO := \C[[t]]$.
We fix Cartan and Borel subgroups $H \subset B \subset G$, and let $P^\vee_+$ denote the associated set of dominant elements of the coweight lattice $P$ of $H$. The assignment $\l^\vee \mapsto G(\cO) \cdot [t^{\l^\vee}]$ yields a bijection between $P^\vee_+$ and the set of left $G(\cO)$-orbits in $\Gr_G$. 

The (spherical) Schubert variety $\oGr_G^{\l^\vee}$ is the closure of this orbit endowed with its reduced scheme structure. 
We have $\oGr_G^{\mu^\vee} \subset \oGr_G^{\l^\vee}$ exactly when $\mu^\vee \le \l^\vee$. The complement of $\cup_{\mu^\vee < \l^\vee} \oGr_G^{\mu^\vee}$ inside $\oGr_G^{\l^\vee}$ is the Schubert cell $\Gr_G^{\l^\vee}$, which set-theoretically is the $G(\cO)$-orbit of $t^{\l^\vee}$ and is open inside $\oGr_G^{\l^\vee}$. In general $\Gr_G$ is not reduced, and as an ind-scheme its reduced locus $(\Gr_G)_{red}$ is the colimit $\varinjlim \oGr_G^{\le \l^\vee}$.

The left $G(\cO)$-action extends to an action of ${G(\O) \rtimes \Gm}$, where by convention our $\Gm$ will denote the 4-fold cover of the standard loop rotation (see Remark \ref{rem:Gmandq}).
We will write $D_{coh}^{G(\O) \rtimes \Gm}(\Gr_G)$ for the bounded derived category of ${G(\O) \rtimes \Gm}$-equivariant coherent sheaves on $(\Gr_G)_{red}$. The reader may refer to \cite{GR14} for generalities on coherent sheaves on ind-schemes. We note in particular that any $\cF \in D_{coh}^{G(\O) \rtimes \Gm}(\Gr_G)$ is a pushforward from $D_{coh}^{G(\O) \rtimes \Gm}(\oGr_G^{{\l^\vee}})$ for a sufficiently large $\oGr_G^{{\l^\vee}}$. 

Except in Section \ref{sec:chiral}, we will not really need to distinguish between the classical derived category and its DG/$\infty$-enhancement -- the reader may interpret $D_{coh}$ at whichever level they prefer. In the same spirit we write $D^{G(\O) \rtimes \Gm}_{qcoh}(\oGr_G^{\l^\vee})$ for the (possibly enhanced) unbounded derived category of ${G(\O) \rtimes \Gm}$-equivariant quasicoherent sheaves on $\oGr_G^{\l^\vee}$. 

Note that $D_{coh}^{G(\O) \rtimes \Gm}(\Gr_G)$ has a block decomposition indexed by $\pi_0(\Gr_G) \cong \pi_1(G)$. Crucially for what follows, the dimensions of the $G(\cO)$-orbits within a fixed component are of equal parity. With this in mind, we will always consider the formal variant of $D_{coh}^{G(\O) \rtimes \Gm}(\Gr_G)$ where the block attached to a component with odd-dimensional orbits consists of complexes supported in formal degrees $\Z + \frac12$. 

\begin{Definition}[\cite{AB10}] A $G(\cO) \rtimes \Gm$-equivariant \newword{perverse coherent sheaf} on $\Gr_G$ is an object $\cF \in D_{coh}^{G(\O) \rtimes \Gm}(\Gr_G)$ such that for every orbit $i_{\l^\vee}: \Gr_G^{\l^\vee} \hookrightarrow \Gr_G$
\begin{enumerate}
\item $i_{\l^\vee}^*(\cF) \in D^{G(\O) \rtimes \Gm}_{qcoh}(\Gr_G^{\l^\vee})$ is supported in degrees $\leq - \frac12 \dim \Gr_G^{\l^\vee}$, 
\item $i_{\l^\vee}^!(\cF) \in D^{G(\O) \rtimes \Gm}_{qcoh}(\Gr_G^{\l^\vee})$ is supported in degrees $\geq - \frac12 \dim \Gr_G^{\l^\vee}$.
\end{enumerate}
We write $\cP_{coh}^{G(\cO) \rtimes \Gm}(\Gr_G) \subset D_{coh}^{G(\O) \rtimes \Gm}(\Gr_G)$ for the full subcategory of perverse coherent sheaves. 
\end{Definition}

Inherent in the definition is the choice of middle perversity function. By virtue of the above parity condition on orbits, this notion of perverse coherent sheaves enjoys comparable formal properties to its constructible counterpart.

\begin{Theorem}\cite{AB10}
The category $\cP_{coh}^{G(\cO) \rtimes \Gm}(\Gr_G)$ is the heart of a finite-length $t$-structure on $D_{coh}^{G(\O) \rtimes \Gm}(\Gr_G)$ which is preserved by the duality functor~$\D$. 
The restriction to $\Gr_G^{{\l^\vee}}$ of a simple perverse coherent sheaf whose support is $\oGr_G^{{\l^\vee}}$ is a simple equivariant vector bundle concentrated in cohomological degree $- \frac12 \dim \Gr_G^{\l^\vee}$. This determines a bijection between isomorphism classes of simple objects in $\cP_{coh}^{G(\cO) \rtimes \Gm}(\Gr_G)$ and pairs of an orbit together with a simple equivariant vector bundle on it. 
\end{Theorem}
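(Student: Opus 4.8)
The plan is to exhibit $\cP_{coh}^{G(\cO) \rtimes \Gm}(\Gr_G)$ as the heart of a perverse coherent $t$-structure in the sense of \cite{AB10}, and to read off the remaining assertions from the recollement formalism attached to such a structure. The first step is to reduce to a single Schubert variety: every object of $D_{coh}^{G(\O) \rtimes \Gm}(\Gr_G)$ is pushed forward from some $\oGr_G^{\l^\vee}$, a projective variety on which the $G(\O) \rtimes \Gm$-action factors through a finite-dimensional algebraic quotient group $\mathsf{G}_{\l^\vee}$, so it suffices to build compatible perverse coherent $t$-structures on the $D^{\mathsf{G}_{\l^\vee}}_{coh}(\oGr_G^{\l^\vee})$ and to pass to the colimit along the closed embeddings $\oGr_G^{\mu^\vee} \into \oGr_G^{\l^\vee}$. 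The reduction to $(\Gr_G)_{red}$ disposes of the non-reducedness, and by the block decomposition indexed by $\pi_0(\Gr_G)$ the formal half-integer degrees on odd components are mere bookkeeping. On $\oGr_G^{\l^\vee}$ one stratifies by the orbits $\Gr_G^{\mu^\vee}$, $\mu^\vee \le \l^\vee$, with the middle perversity $p(\mu^\vee) := -\tfrac12 \dim \Gr_G^{\mu^\vee} = -\tfrac12 \la \mu^\vee, 2\rho\ra$.

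The second step is to verify the hypotheses needed to invoke \cite{AB10}, namely that $p$ is monotone and comonotone. Monotonicity is immediate: if $\mu^\vee < \nu^\vee$ then $\la \mu^\vee, 2\rho\ra < \la \nu^\vee, 2\rho\ra$, so $p$ strictly increases on passing to smaller strata. Comonotonicity is the self-duality of the middle perversity, $\bar p(\mu^\vee) = -\dim\Gr_G^{\mu^\vee} - p(\mu^\vee) = p(\mu^\vee)$. The parity statement recalled in the excerpt — that orbit dimensions in one component share a common parity — is precisely what makes $p$ take values in a single coset of $\Z$, so that after the formal shift it is an honest integral perversity to which the machinery of \cite{AB10} applies. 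This produces the $t$-structure on $D_{coh}^{G(\O) \rtimes \Gm}(\Gr_G)$ with heart $\cP_{coh}^{G(\cO) \rtimes \Gm}(\Gr_G)$, together with a recollement relating, for an open orbit $j$ with closed complement $i$, the heart upstairs and the hearts on the two pieces.

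Finite length then follows by induction on the number of strata using this recollement: a perverse coherent sheaf supported on a single smooth orbit $\Gr_G^{\mu^\vee} \cong \mathsf{G}_{\l^\vee}/\mathrm{Stab}$ is a $\mathsf{G}_{\l^\vee}$-equivariant coherent sheaf placed in cohomological degree $p(\mu^\vee)$, hence the bundle associated to a finite-dimensional representation of $\mathrm{Stab}$, and that category is both Noetherian and Artinian; an abelian category glued from finitely many finite-length categories along a recollement is again finite length. That $\D = \cHom(-,\omega)$ preserves the $t$-structure is the self-duality of $p$ made functorial: the standard identities $\D\, i^! \cong i^* \D$ and $\D\, i^* \cong i^! \D$ (with dualizing complexes on the strata and the ambient Schubert variety matched by the appropriate dimension shifts) show that $\D$ interchanges the two defining conditions of the $t$-structure, hence fixes the heart; in particular left and right duals of a perverse coherent sheaf are again perverse. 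For the classification of simples, a simple $\cF$ has irreducible — hence orbit-closure — support $\oGr_G^{\l^\vee}$; restricting to the open orbit, the $i_{\l^\vee}^*$-support condition places $i_{\l^\vee}^*\cF$ in degrees $\le p(\l^\vee)$, and $\mathcal{H}^{p(\l^\vee)}(i_{\l^\vee}^*\cF)$ is a nonzero equivariant coherent sheaf on the orbit, automatically locally free since it is equivariant on a homogeneous space; simplicity of $\cF$ forces it to be a simple equivariant vector bundle $V$ and to be the unique nonzero cohomology, whence $\cF \cong j_{!*}(V[p(\l^\vee)])$. Conversely, for any orbit and any simple equivariant vector bundle $V$ on it, $j_{!*}(V[p(\l^\vee)])$ is a simple perverse coherent sheaf restricting to $V[p(\l^\vee)]$, and these two assignments are mutually inverse, giving the asserted bijection.

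The step I expect to be the main obstacle is checking that the perverse coherent formalism of \cite{AB10} genuinely applies in the present equivariant, ind-scheme context: that the functors $i_{\mu^\vee}^!$ for the non-proper and singular stratum inclusions exist and preserve bounded coherent complexes, that they assemble into an honest recollement, and that everything is compatible both with the colimit over Schubert varieties and with the reduction to $(\Gr_G)_{red}$. Once this infrastructure is secured, the monotonicity check, the finite-length induction, the duality argument, and the intermediate-extension description of the simples are all formal.
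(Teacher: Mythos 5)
The paper does not prove this statement; it is quoted as a citation from \cite{AB10} (together with the adaptation to $\Gr_G$ sketched in its Section~4). Your proposal correctly reconstructs the argument of \cite{AB10}: reduce to a finite-dimensional Schubert variety on which $G(\O)\rtimes\Gm$ acts through a finite-dimensional quotient, stratify by $G(\O)$-orbits, take the self-dual middle perversity (integral after the formal half-shift precisely because of the parity of orbit dimensions within a connected component), check strict monotonicity from $\dim\Gr_G^{\mu^\vee}=\langle\mu^\vee,2\rho\rangle$, deduce finite length by recollement induction over the finite stratification, get $\D$-invariance from self-duality of the perversity via $\D i^*\cong i^!\D$, and classify simples as intermediate extensions of (shifted) simple equivariant vector bundles on single orbits, which are automatically locally free because each orbit is homogeneous. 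You are also right that the real technical weight lies in verifying the hypotheses of the perverse coherent formalism — existence of dualizing complexes, equivariant $i^!$ preserving coherence, and the recollement package — all of which is exactly what \cite{AB10} supplies.
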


Recall the convolution diagram
$$ \overline{m}: (G(\cK) \rtimes \Gm) \times_{G(\cO) \rtimes \Gm} \Gr_G \onto \Gr_G $$
induced from multiplication in $G(\cK) \rtimes \Gm$. Given $\cF, \cG \in D_{coh}^{G(\O) \rtimes \Gm}(\Gr_G)$, we write $\cF\, \tbox\, \cG \in D_{coh}^{G(\O) \rtimes \Gm}((G(\cK) \rtimes \Gm) \times_{G(\cO) \rtimes \Gm} \Gr_G)$ for their twisted product. This sheaf is characterized by having pullback to $(G(\cK) \rtimes \Gm) \times \Gr_G$ isomorphic to the pullback of  $\cF \boxtimes \cG \in D_{coh}^{G(\O) \rtimes \Gm}(\Gr_G \times \Gr_G)$ along projection in the first factor. If $\cF$ and $\cG$ are pushforwards from $\oGr_G^{{\l_1^\vee}}$ and $\oGr_G^{{\l_2^\vee}}$, respectively, then $\cF\, \tbox\, \cG$ is a pushforward from the finite-dimensional convolution variety
$$ \oGr_G^{\l_1^\vee} \ttimes \oGr_G^{\l_2^\vee} \subset (G(\cK) \rtimes \Gm) \times_{G(\cO) \rtimes \Gm} \Gr_G.$$ 
The convolution map determines a monoidal structure on $D_{coh}^{G(\O) \rtimes \Gm}(\Gr_G)$ via
$$\cF \conv \cG := \overline{m}_*(\cF\, \tbox\, \cG).$$

\begin{Theorem}\cite{BFM}
If $\cF$, $\cG \in D_{coh}^{G(\O) \rtimes \Gm}(\Gr_G)$ are perverse then so is $\cF \conv \cG$. 
\end{Theorem}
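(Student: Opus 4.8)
The plan is to reduce the claim to a statement about the convolution map $\overline{m}$ being, in a suitable sense, as nice as in the constructible setting — namely stratified semismall — so that the standard estimates on $i_{\mu^\vee}^*$ and $i_{\mu^\vee}^!$ propagate through $\overline{m}_*$. I would first reduce to the finite-dimensional situation: since $\cF$ and $\cG$ are pushforwards from $\oGr_G^{\l_1^\vee}$ and $\oGr_G^{\l_2^\vee}$, the product $\cF \conv \cG$ is the pushforward along $\overline{m}: \oGr_G^{\l_1^\vee} \ttimes \oGr_G^{\l_2^\vee} \to \oGr_G^{\l_1^\vee + \l_2^\vee}$ of the twisted product $\cF\,\tbox\,\cG$, and one checks directly that $\cF\,\tbox\,\cG$ is perverse for the natural perversity on the convolution variety (this is local on the base in the second factor, where the twisted product looks like $\cF \boxtimes \cG$, so it follows from the fact that $\boxtimes$ of perverse coherent sheaves is perverse together with the parity/dimension bookkeeping; the equivariant shift by $\Gm$ does not affect the cohomological support conditions).

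Next I would fix a target orbit $\Gr_G^{\mu^\vee} \subset \oGr_G^{\l_1^\vee+\l_2^\vee}$ with inclusion $i_{\mu^\vee}$, and verify conditions (1) and (2) of the definition for $\overline{m}_*(\cF\,\tbox\,\cG)$. The key geometric input is that the fibers of $\overline{m}$ over $\Gr_G^{\mu^\vee}$ have dimension at most $\tfrac12(\dim \Gr_G^{\l_1^\vee} + \dim \Gr_G^{\l_2^\vee} - \dim \Gr_G^{\mu^\vee})$ — this is precisely the semismallness of the convolution morphism, which is well known for $\Gr_G$ and underlies the geometric Satake equivalence in the constructible case. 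Granting this, condition (1) for $\cF \conv \cG$ follows from proper base change plus the Grothendieck-style bound: pulling back $\overline{m}_*(\cF\,\tbox\,\cG)$ to $\Gr_G^{\mu^\vee}$ and taking cohomology, the degrees are controlled by the degrees of $\cF\,\tbox\,\cG$ restricted to the preimage (which by perversity of the twisted product are $\leq -\tfrac12(\dim \Gr_G^{\l_1^\vee} + \dim \Gr_G^{\l_2^\vee})$ along each stratum) shifted up by at most the fiber dimension, giving exactly $\leq -\tfrac12 \dim \Gr_G^{\mu^\vee}$. Condition (2) follows dually: $i_{\mu^\vee}^!\overline{m}_* = \overline{m}_* i^!$ for the appropriate restricted map (or alternatively one applies $\D$, using that $\D$ commutes with $\overline{m}_*$ since $\overline{m}$ is proper, that $\D$ preserves perversity, and that $\D(\cF\,\tbox\,\cG)$ is again a twisted product of perverse sheaves).

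The main obstacle I anticipate is purely geometric rather than homological: establishing (or correctly citing) the semismallness estimate on fiber dimensions of $\overline{m}$ over each stratum $\Gr_G^{\mu^\vee}$, uniformly in $\mu^\vee$, in the coherent/ind-scheme setting — including making sure the non-reduced structure on $\Gr_G$ and the passage to $(\Gr_G)_{red}$ cause no trouble, and that one has the correct comparison between the perversity function on the convolution variety and the pullback/pushforward perversities. A secondary technical point is being careful that the derived functors $i_{\mu^\vee}^*$, $i_{\mu^\vee}^!$ and $\overline{m}_*$ commute as expected for possibly unbounded quasicoherent complexes on these orbits; here one would invoke the generalities of \cite{GR14} on ind-schemes. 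Once the dimension estimate is in hand, the rest is the standard perverse-coherent analogue of the argument that a semismall proper pushforward of a perverse sheaf is perverse, and should be routine.
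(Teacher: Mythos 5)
The paper does not prove this theorem; it is cited from \cite{BFM}, so there is no in-text argument to compare against, and your semismallness-based strategy is indeed the expected route. However, your degree bookkeeping has a concrete gap. Perversity of $\cF\,\tbox\,\cG$ bounds its $*$-restriction to a source stratum $S = \Gr_G^{\mu_1^\vee} \ttimes \Gr_G^{\mu_2^\vee}$ of the convolution variety only by degrees $\leq -\tfrac12 \dim S$, not by the uniform $\leq -\tfrac12(\dim \Gr_G^{\l_1^\vee} + \dim \Gr_G^{\l_2^\vee})$ you assert; those two bounds agree only on the open stratum, and the latter is false on proper strata. As a consequence the form of semismallness you invoke --- a bound on the full fiber $\overline{m}^{-1}(x)$ for $x \in \Gr_G^{\mu^\vee}$ --- does not close the estimate. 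After base change and filtering by source strata, the piece coming from $S$ contributes in degrees $\leq -\tfrac12 \dim S + \dim(\overline{m}^{-1}(x) \cap S)$, and to get $\leq -\tfrac12 \dim \Gr_G^{\mu^\vee}$ one needs $\dim(\overline{m}^{-1}(x) \cap S) \leq \tfrac12(\dim S - \dim \Gr_G^{\mu^\vee})$ for \emph{every} source stratum $S$, a genuinely stronger statement. What is required is stratum-wise semismallness of the convolution map, i.e.\ that $\oGr_G^{\mu_1^\vee} \ttimes \oGr_G^{\mu_2^\vee} \to \oGr_G^{\mu_1^\vee + \mu_2^\vee}$ is semismall for all $\mu_i^\vee \leq \l_i^\vee$, the same refined estimate used by Mirkovi\'c--Vilonen on the constructible side. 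Your ``main obstacle'' paragraph gestures at needing an estimate ``over each stratum,'' but as written it still refers only to target strata; identifying and applying the source-stratum refinement is where the actual work lies. Once that is in place, the rest of your sketch (filtration argument for condition (1), Grothendieck duality for condition (2), with care about the relative dualizing twist in the identification $\D(\cF\,\tbox\,\cG) \cong \D\cF\,\tbox\,\D\cG$) is fine.
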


If $X$ is a variety equipped with an action of $\Gm$ we denote by $\O_X \{k\}$ the structure sheaf with the $\Gm$ action of weight $2k$. In other words, given a local function $f \in \O_{X}(U)$, the corresponding section $f' \in \O_X\{k\}(U)$ transforms as $a \cdot f' = a^{-2k}(a \cdot f)$, where $a \cdot f$ denotes the pullback of $f$ along the action of $a$ on $U$. For example, under the standard $\Gm$-action on $X=\Spec\, \C[t]$  multiplication by $t$ induces a morphism $\O_X \rightarrow \O_X \{\frac12\}$. 

Tensoring with $\O_{X} \{k\}$ gives us an autoequivalence $\cF \mapsto \cF \{k\}$ of $D_{coh}^{\Gm}(X)$. At the level of K-theory, the equivariant Grothendieck ring $K^{\Gm}(X)$ is a module over $K^{\Gm}(pt) = \Z[q^{\pm1/2}]$. Here $q$ acts as shift by $\{-1\}$ (tensoring with $\O_X \{-1\}$). 

We write $K^{G(\O) \rtimes \Gm}(\Gr_G)$ for the Grothendieck ring of $D_{coh}^{G(\O) \rtimes \Gm}(\Gr_G)$ (and hence of $\cP_{coh}^{G(\cO) \rtimes \Gm}(\Gr_G)$). In this case we can also think of multiplication by $q$ as convolution with $[\cO_e\{-1\}]$ where $e$ denotes the point $\Gr_G^0$. It will turn out to be useful to define $\la 1 \ra := [1]\{-1\}$. 

\begin{Remark}\label{rem:Gmandq}
Our conventions are arranged so that the ring $K^{G(\O) \rtimes \Gm}(\Gr_G)$ for which $\Gm$ refers to the double cover of loop rotation is naturally embedded in the $\Z[q^{\pm 1/2}]$-algebra $K^{G(\O) \rtimes \Gm}(\Gr_G)$ defined above as a $\Z[q^{\pm 1}]$-subalgebra. The convention that multiplication by $q$ corresponds to shifting by weight 1 with respect to the double cover of loop rotation agrees with a similar choice made in \cite{CK1} (and related subsequent papers) as well as in \cite{BFN,FT17}. On the other hand, we are motivated to consider the action of the 4-fold cover (hence extend scalars to $\Z[q^{\pm1/2}]$) by the fact that the equivariant structure on the bar-invariant simple objects $\cP_{\l^\vee\!,\,\mu}$ defined below does not factor through the double cover. In particular, the naive $\Z[q^{\pm 1}]$-form of $K^{G(\O) \rtimes \Gm}(\Gr_G)$ will turn out not to contain all quantum cluster variables. They will, however, be contained in the $\Z[q^{\pm 1}]$-form obtained by multiplying all classes supported on odd-dimensional Schubert varieties by $q^{1/2}$.
\end{Remark}

Since $\cP_{coh}^{G(\cO) \rtimes \Gm}(\Gr_G)$ has finite length the classes of simple objects provide a basis of $K^{G(\O) \rtimes \Gm}(\Gr_G)$. To a pair $({\l^\vee}, \mu) \in P^\vee \times P$ we can attach a simple perverse coherent sheaf~$\cP_{\l^\vee\!,\,\mu}$ as follows. The orbit $\Gr_G^{{\l^\vee}}$ is an affine space bundle over its $\Gm$-fixed locus, which is isomorphic to $G/P_{w{\l^\vee}}$ for $P_{w{\l^\vee}}$ the parabolic associated to the unique dominant conjugate ${w{\l^\vee}}$ of ${\l^\vee}$. Acting by the stabilizer of ${\l^\vee}$ in $W$ we can conjugate $\mu$ to become dominant for the Levi factor of $P_{w\l^\vee}$ in a unique way. This defines a simple $G$-equivariant vector bundle on $G/P_{w\l^\vee}$. We extend this to a $G(\cO) \rtimes \G_m$-equivariant bundle by letting $t$ act by zero and letting $\G_m$ act on fibers by the character
$$a \mapsto a^{2\la \l^\vee\!,\,\mu \ra + \dim \Gr_G^{\l^\vee}}.$$ The pullback of this bundle to $\Gr_G^{\l^\vee}$, shifted to lie in cohomological degree $-\frac12 \dim \Gr_G^{\l^\vee}$, is the restriction of a unique simple perverse coherent sheaf $\cP_{\l^\vee\!,\,\mu}$ whose support is $\oGr_G^{\l^\vee}$.

\begin{Remark}
The choice of $\G_m$-equivariant structure of $\cP_{\l^\vee\!,\,\mu}$ will be explained by the discussion of the bar involution in Section~\ref{sec:barinvolution}. It also results in the mutation exact sequences of Section~\ref{sec:mutrels} aligning correctly with the conventions of quantum cluster theory.
\end{Remark}

We refer to pairs $({\l^\vee}, \mu) \in P^\vee \times P$ for which ${\l^\vee}$ is dominant and $\mu$ dominant for the Levi factor of $P_{\lambda^\vee}$ as \newword{dominant pairs}. From the above construction it follows that simple objects (up to $\Gm$-equivariant shifts) are labeled by $(P^\vee \times P)/W$ (equivalently, by dominant pairs).

\begin{Proposition}\label{prop:leadingterms}
Given dominant pairs $(\l_1^\vee, \mu_1)$ and $(\l_2^\vee, \mu_2)$ the product $\cP_{\l_1^\vee\!,\,\mu_1} \conv \cP_{\l_2^\vee\!,\,\mu_2}$ is supported on $\oGr^{\l_1^\vee+\l_2^\vee}$. Moreover, in $K^{G(\cO) \rtimes \Gm}(\Gr_G)$ we have 
$$[\cP_{\l_1^\vee\!,\, \mu_1} \conv \cP_{\l_2^\vee\!,\,\mu_2}] = q^{\la \l_1^\vee\!,\,\mu_2 \ra - \la \l_2^\vee\!,\,\mu_1 \ra} [\cP_{\l_1^\vee+\l_2^\vee\!,\,\mu_1+\mu_2}] + \sum_{(\l^\vee\!,\,\mu) \in S} p_{\l^\vee\!,\,\mu} [\cP_{\l^\vee\!,\,\mu}]$$
where $p_{\l^\vee\!,\,\mu} \in \Z[q^{\pm 1/2}]$ and $S$ is a finite collection of dominant pairs $(\l^\vee,\mu)$ with either $\l^\vee < \l_1^\vee+\l_2^\vee$, or $\l^\vee = \l_1^\vee+\l_2^\vee$ and $\lVert \mu \rVert^2 \leq \lVert \mu_1 \rVert^2 + \lVert \mu_2 \rVert^2$ for any $W$-invariant quadratic form $\lVert \cdot \rVert^2$. 
\end{Proposition}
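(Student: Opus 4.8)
The plan is to analyze the convolution $\cP_{\l_1^\vee,\mu_1} \conv \cP_{\l_2^\vee,\mu_2}$ by restricting to the open orbit $\Gr_G^{\l_1^\vee+\l_2^\vee}$ in its support and computing the leading term there, then bounding the corrections supported on smaller strata. First I would establish the support claim: since $\cP_{\l_i^\vee,\mu_i}$ is a pushforward from $\oGr_G^{\l_i^\vee}$, the twisted product is a pushforward from $\oGr_G^{\l_1^\vee} \ttimes \oGr_G^{\l_2^\vee}$, and the convolution map $\overline{m}$ sends this into $\oGr_G^{\l_1^\vee+\l_2^\vee}$ because multiplication of matrices of elementary divisor type $t^{\l_1^\vee}$ and $t^{\l_2^\vee}$ has type $\le \l_1^\vee+\l_2^\vee$ (this is exactly the semigroup structure on $P_+^\vee$ under convolution). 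So $[\cP_{\l_1^\vee,\mu_1}\conv\cP_{\l_2^\vee,\mu_2}]$ is a $\Z[q^{\pm1/2}]$-combination of classes $[\cP_{\l^\vee,\mu}]$ with $\l^\vee \le \l_1^\vee+\l_2^\vee$.

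For the leading term, I would restrict $\overline{m}$ over the open stratum $\Gr_G^{\l_1^\vee+\l_2^\vee}$. The key geometric input (standard from geometric Satake, e.g. MV) is that $\overline{m}$ restricted to $\overline{m}^{-1}(\Gr_G^{\l_1^\vee+\l_2^\vee})$ is an isomorphism onto $\Gr_G^{\l_1^\vee+\l_2^\vee}$ — the "generic" convolution is transverse and the fiber over a point of the top stratum is a single point. Thus over this open locus, $\cP_{\l_1^\vee,\mu_1}\conv\cP_{\l_2^\vee,\mu_2}$ restricts to the pushforward of $\cP_{\l_1^\vee,\mu_1}\tbox\cP_{\l_2^\vee,\mu_2}$, which I would identify as the simple equivariant vector bundle attached to the pair $(\l_1^\vee+\l_2^\vee, \mu_1+\mu_2)$ up to a shift. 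Concretely, one pulls back the vector bundle data on $\Gr_G^{\l_1^\vee}\times\Gr_G^{\l_2^\vee}$ along the isomorphism, tracks the $\Gm$-weight on fibers dictated by the formula $a \mapsto a^{2\la\l^\vee,\mu\ra+\dim\Gr_G^{\l^\vee}}$, and reads off the discrepancy between $2\la\l_1^\vee,\mu_1\ra + 2\la\l_2^\vee,\mu_2\ra + \dim\Gr_G^{\l_1^\vee} + \dim\Gr_G^{\l_2^\vee}$ and $2\la\l_1^\vee+\l_2^\vee,\mu_1+\mu_2\ra + \dim\Gr_G^{\l_1^\vee+\l_2^\vee}$. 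Expanding $2\la\l_1^\vee+\l_2^\vee,\mu_1+\mu_2\ra = 2\la\l_1^\vee,\mu_1\ra + 2\la\l_2^\vee,\mu_2\ra + 2\la\l_1^\vee,\mu_2\ra + 2\la\l_2^\vee,\mu_1\ra$, and noting the cohomological shift contributes the remaining dimension discrepancy, the net $\Gm$-shift works out to $q^{\la\l_1^\vee,\mu_2\ra - \la\l_2^\vee,\mu_1\ra}$ once one is careful about which of the two cross-terms appears with which sign (this sign asymmetry is the source of the noncommutativity and will need to be pinned down by a direct local computation, analogous to the $GL_2$ example in the introduction where $\cP_{1,1}\conv\cP_{1,-1}$ acquires a shift $\{-2\}$). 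This shows $[\cP_{\l_1^\vee,\mu_1}\conv\cP_{\l_2^\vee,\mu_2}] - q^{\la\l_1^\vee,\mu_2\ra-\la\l_2^\vee,\mu_1\ra}[\cP_{\l_1^\vee+\l_2^\vee,\mu_1+\mu_2}]$ has all its constituents supported on $\oGr_G^{\l^\vee}$ with $\l^\vee < \l_1^\vee+\l_2^\vee$, except possibly further simple objects with the same support $\l_1^\vee+\l_2^\vee$.

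To handle those remaining same-support terms, I would invoke the classification of simple equivariant vector bundles on $\Gr_G^{\l_1^\vee+\l_2^\vee}$: the $\Gm$-fixed locus is $G/P_{\l_1^\vee+\l_2^\vee}$, its simple equivariant bundles are irreducible representations of the Levi, and the convolution of the two Levi-dominant weights $\mu_1,\mu_2$ decomposes with highest weight $\mu_1+\mu_2$ and all other constituents $\mu$ having $\lVert\mu\rVert^2 \le \lVert\mu_1\rVert^2 + \lVert\mu_2\rVert^2$ for any $W$-invariant form — this is the standard fact that weights of an irreducible representation have norm bounded by the highest weight, applied fiberwise (and the Levi-decomposition of $\cP_{\l_1^\vee,\mu_1}\tbox\cP_{\l_2^\vee,\mu_2}$ restricted to the fixed locus of the top stratum). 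Collecting these into the set $S$ gives the stated form. The main obstacle I anticipate is the local $\Gm$-weight bookkeeping in the leading term: getting the asymmetric exponent $\la\l_1^\vee,\mu_2\ra - \la\l_2^\vee,\mu_1\ra$ exactly right (rather than, say, its negative or a symmetrized version) requires carefully choosing coordinates on the convolution variety near a point of the open stratum and tracking how $t$-adic filtrations and the chosen $\Gm$-equivariant structures interact — essentially a careful generalization of the explicit $GL_2$ computation, where the normalization conventions ($t$ has weight $2$, the $4$-fold cover, the half-integer shifts) all have to be respected simultaneously.
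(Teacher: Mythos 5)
Your approach is essentially the paper's: restrict over the open orbit $\Gr_G^{\l_1^\vee+\l_2^\vee}$ where $\overline{m}$ is an isomorphism, identify the restricted bundle's fiber over the fixed point $[t^{\l_1^\vee+\l_2^\vee}]$ as $V_{\mu_1}\otimes V_{\mu_2}$ in the correct cohomological degree (the dimensions $\dim\Gr^{\l_1^\vee}+\dim\Gr^{\l_2^\vee}=\dim\Gr^{\l_1^\vee+\l_2^\vee}$ match exactly here, since $\dim\Gr^{\l^\vee}=2\la\l^\vee,\rho\ra$), use the convex-hull bound on weights of $V_{\mu_1}\otimes V_{\mu_2}$ for the subleading same-support terms, and read off the leading $q$-exponent from $\Gm$-weights.

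The gap you correctly flag — pinning down the asymmetric exponent $\la\l_1^\vee,\mu_2\ra - \la\l_2^\vee,\mu_1\ra$ — is precisely the one step you need to not gloss over, and the source of the asymmetry is cleaner than you suggest. If you naively added the fiber $\Gm$-weights as if working on the untwisted product $\Gr_G^{\l_1^\vee}\times\Gr_G^{\l_2^\vee}$ (as your phrasing ``vector bundle data on $\Gr_G^{\l_1^\vee}\times\Gr_G^{\l_2^\vee}$'' suggests), you would get the symmetric answer $-2(\la\l_1^\vee,\mu_2\ra+\la\l_2^\vee,\mu_1\ra)$, and no coordinate choice will rescue you. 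The asymmetry comes from the twist in $\tbox$: the $G(\cO)\rtimes\Gm$-equivariant structure on the convolution variety satisfies
\[(1,a)\,[t^{\l_1^\vee},1,[t^{\l_2^\vee}]] = [t^{\l_1^\vee}a^{\l_1^\vee},a,[t^{\l_2^\vee}]] = [t^{\l_1^\vee},1,(a^{4\l_1^\vee},a)[t^{\l_2^\vee}]],\]
so $a$ acts on the second fiber factor through $(a^{4\l_1^\vee},a)$ rather than $(1,a)$. That extra $a^{4\l_1^\vee}$ contributes $+4\la\l_1^\vee,\mu_2\ra$ to the weight, shifting $-2\la\l_1^\vee,\mu_2\ra-2\la\l_2^\vee,\mu_1\ra$ to $2\la\l_1^\vee,\mu_2\ra-2\la\l_2^\vee,\mu_1\ra$, which after dividing by $-2$ (since $q$ is $\{-1\}$, i.e.\ weight $-2$) gives $q^{\la\l_1^\vee,\mu_2\ra-\la\l_2^\vee,\mu_1\ra}$. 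So the computation is not a matter of choosing coordinates carefully, but of correctly recording the equivariant structure on the twisted product.
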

\begin{proof}
The map $\om: \oGr_G^{\l_1^\vee} \ttimes \oGr_G^{\l_2^\vee} \to \oGr_G^{\l_1^\vee+\l_2^\vee}$ restricts to an isomorphism over $\Gr_G^{\l_1^\vee+\l_2^\vee}$. In particular, the restriction of $\cP_{\l_1^\vee\!,\,\mu_1} \conv \cP_{\l_2^\vee\!,\,\mu_2}$ to $\Gr_G^{\l_1^\vee+\l_2^\vee}$ is a vector bundle whose fiber over the $P_{\l_1^\vee+\l_2^\vee}$-fixed point $[t^{\l_1^\vee+\l_2^\vee}]$ is $V_{\mu_1} \otimes V_{\mu_2}$ placed in cohomological degree $-\frac12 \dim \Gr_G^{\l_1^\vee+\l_2^\vee}$. Here we write $V_{\mu_i}$ for the irreducible $P_{\l_i^\vee}$-representation of highest weight $\mu_i$. This bundle need not be equivariantly simple as $V_{\mu_1} \otimes V_{\mu_2}$ need not be simple as a $P_{\l_1^\vee + \l_2^\vee}$ representation, but any weight $\mu$ of $V_{\mu_1} \otimes V_{\mu_2}$ lies in the convex hull of the sums of the $W$-orbits of $\mu_1$ and $\mu_2$. Thus it satisfies $\lVert \mu \rVert^2 \leq \lVert \mu_1 \rVert^2 + \lVert \mu_2 \rVert^2$ for any $W$-invariant form.

The coefficient of $[\cP_{\l_1^\vee+\l_2^\vee\!,\, \mu_1+\mu_2}]$ is determined by our normalizations of the $\cP_{\l^\vee\!,\,\mu}$ and the fact that $(1,a) \in G(\cO) \rtimes \Gm$ acts on $(G(\cK) \rtimes \Gm) \times_{G(\cO) \rtimes \Gm} \Gr_G$ by
$$ (1,a)[t^{\l_1^\vee}, 1, [t^{\l_2^\vee}]] = [t^{\l_1^\vee}a^{\l_1^\vee}, a, [t^{\l_2^\vee}] = [t^{\l_1^\vee}, 1, (a^{4\l_1^\vee},a)[t^{\l_2^\vee}]].$$
This contributes a factor of $q^{2\la \l_1^\vee\!,\,\mu_2 \ra}$ to the final expression, which is partially canceled by the normalization of $\cP_{\l_1^\vee+\l_2^\vee\!,\,\mu_1+\mu_2}$ to yield the leading coefficient $q^{\la \l_1^\vee\!,\,\mu_2 \ra - \la \l_2^\vee\!,\,\mu_1 \ra}$.
\end{proof}

Recall that a simple object in a monoidal abelian category is \newword{real} if its product with itself is again simple \cite{HL10}.

\begin{Lemma}\cite{Mir}\label{lem:real}
Let $\cL$ be a $G(\cO) \rtimes \G_m$-equivariant line bundle on $\Gr_G$. Then the restriction of $\cL\,[\frac 12 \dim \Gr^{\l^\vee}_G]$ to $\oGr^{\l^\vee}$ is a real simple perverse coherent sheaf for any $\l^\vee \in P^\vee$.
\end{Lemma}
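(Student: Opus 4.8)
The plan is to reduce the statement to a concrete computation on a convolution variety. Write $d = \dim \Gr_G^{\l^\vee}$, and let $\cF = \cL|_{\oGr_G^{\l^\vee}}[\tfrac12 d]$. First I would observe that $\cF$ is indeed perverse: since $\cL$ is a line bundle on $\Gr_G$ its restriction to the closure $\oGr_G^{\l^\vee}$ is a line bundle there, hence a Cohen--Macaulay sheaf of full support, and the shift by $[\tfrac12 d]$ places it in the heart of the perverse $t$-structure (both the $*$- and $!$-restriction conditions hold because $\oGr_G^{\l^\vee}$ is Cohen--Macaulay and the sheaf is a line bundle). It is moreover simple: its restriction to the open orbit $\Gr_G^{\l^\vee}$ is a line bundle, hence an equivariantly simple vector bundle (one-dimensional fibers), so by the classification theorem quoted above $\cF = \cP_{\l^\vee\!,\,\mu}$ for the appropriate weight $\mu$ determined by $\cL$.

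Next I would compute $\cF \conv \cF$ via the convolution variety $\oGr_G^{\l^\vee} \ttimes \oGr_G^{\l^\vee}$ and the map $\om \colon \oGr_G^{\l^\vee} \ttimes \oGr_G^{\l^\vee} \to \oGr_G^{2\l^\vee}$. The twisted product $\cF\,\tbox\,\cF$ is (up to shift) a line bundle $\cL'$ on $\oGr_G^{\l^\vee} \ttimes \oGr_G^{\l^\vee}$, namely the twisted external square of $\cL$. The key geometric input is that $\oGr_G^{\l^\vee} \ttimes \oGr_G^{\l^\vee}$ is a (Cohen--Macaulay, in fact rational-singularity) resolution-type space over $\oGr_G^{2\l^\vee}$, so $\om_* \cL'$ is computed by the derived pushforward, and the perverse truncation recovers the intersection-cohomology-type sheaf supported on $\oGr_G^{2\l^\vee}$. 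The essential claim is that $\cF \conv \cF$ is \emph{simple} — equivalently, that the pushforward of the line bundle, suitably truncated, has no lower-dimensional constituents, i.e. it equals $\cP_{2\l^\vee\!,\,2\mu}$ on the nose rather than being a nontrivial extension. The cleanest route: by Proposition~\ref{prop:leadingterms}, in $K$-theory $[\cF \conv \cF] = q^{\la\l^\vee\!,\,\mu\ra - \la\l^\vee\!,\,\mu\ra}[\cP_{2\l^\vee\!,\,2\mu}] + \sum_{S} p_{\l^\vee,\mu'}[\cP_{\l^\vee,\mu'}]$, where $S$ consists of pairs with strictly smaller support $\oGr^{\l^{\vee\prime}}$, or equal support and smaller $\|\mu'\|^2$; but since $V_{\mu}\otimes V_{\mu}$ has $\mu+\mu$ as its unique highest weight with $\|2\mu\|^2$ strictly maximal among weights in the tensor square, the "equal support" contributions are excluded, and one must rule out the strictly-smaller-support terms. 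For that I would argue directly that $\cF \conv \cF$ is a \emph{shifted sheaf} (concentrated in a single cohomological degree) and is a line bundle on the open orbit $\Gr_G^{2\l^\vee}$ — the latter because $\om$ is an isomorphism over $\Gr_G^{2\l^\vee}$ and $\cL'$ restricts to a line bundle there. Then, being a perverse coherent sheaf whose restriction to its open orbit is a line bundle (hence simple), simplicity forces it to be $\cP_{2\l^\vee\!,\,2\mu}$, provided we know it has \emph{no} subquotients supported on smaller orbits.

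The step I expect to be the main obstacle is exactly this last point: showing $\cF \conv \cF$ has no constituents supported on $\oGr_G^{\l^{\vee\prime}}$ with $\l^{\vee\prime} < 2\l^\vee$. The natural approach is a vanishing argument: $\cF\conv\cF$ is perverse, so its only possible "extra" constituents sit in the appropriate cohomological range; one shows $\cHom$ from/to sheaves on boundary orbits vanishes using the fact that $\om_*\cL'$ is (a shift of) an honest sheaf with the expected rank, which follows if $R^{>0}\om_* \cL' = 0$ — i.e. a cohomology-vanishing statement for the line bundle $\cL'$ along the fibers of $\om$. This is where I would invoke the geometry of convolution morphisms of affine Grassmannian Schubert varieties: the fibers of $\om$ are themselves (unions of) Schubert varieties / have rational singularities, and $\cL'$ is a nef line bundle adapted to $\om$, so a Grauert--Riemenschneider or Kempf-type vanishing applies. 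Granting $R^{>0}\om_*\cL' = 0$, $\cF\conv\cF[-\tfrac12\dim\Gr^{2\l^\vee}_G + \tfrac12\cdot 2\cdot(\text{appropriate shift})]$ is a genuine sheaf of the correct rank $1$ on the open orbit and Cohen--Macaulay (being a pushforward under a finite-Tor-dimension map of a CM sheaf from a CM space), hence torsion-free along $\oGr^{2\l^\vee}_G$, hence has no subsheaf supported on the boundary; combined with perversity of the dual this kills boundary constituents on both sides and we conclude $\cF\conv\cF = \cP_{2\l^\vee\!,\,2\mu}$ is simple. I would also double-check the $\Gm$-equivariant shift in Lemma~\ref{lem:real} is absorbed correctly — but since the statement only asserts \emph{realness} (simplicity of the square), the equivariance bookkeeping can be deferred.
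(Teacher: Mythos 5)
Your verification of perversity and simplicity is sound (though the paper reduces to the case $\cL=\cO_{\Gr_G}$ and uses that tensoring with $\cL$ is a perversity-preserving autoequivalence, which is a shorter route). The real divergence is in the proof of reality, and here you have a genuine gap.

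The key observation you miss — and the one that makes the paper's proof short — is that because $\cL$ is a $G(\cO)$-equivariant line bundle \emph{defined on all of} $\Gr_G$, the twisted product $\cL|_{\oGr^{\l^\vee}}\,\tbox\,\cL|_{\oGr^{\l^\vee}}$ is not just ``some'' line bundle $\cL'$ on $\oGr^{\l^\vee}_G \ttimes \oGr^{\l^\vee}_G$: it is canonically isomorphic to $\om^*\cL$. (This is the multiplicativity of equivariant line bundles on $\Gr_G$; for $\cL = \O(\ell)$ it is concretely $\det(L_0/L_1)^\ell \otimes \det(L_1/L_2)^\ell \cong \det(L_0/L_2)^\ell = \om^*\O(\ell)$.) Once you know $\cP\,\tbox\,\cP \cong \O_{\oGr^{\l^\vee}\ttimes\oGr^{\l^\vee}}\otimes\om^*\cL$ up to shift, the projection formula gives $\cP\conv\cP \cong \om_*(\O)\otimes\cL$, and $\om_*\O_{\oGr^{\l^\vee}\ttimes\oGr^{\l^\vee}} \cong \O_{\oGr^{2\l^\vee}}$ by rational singularities of Schubert varieties. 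The result is literally $\cL[\tfrac12\dim\Gr^{2\l^\vee}_G]$ restricted to $\oGr^{2\l^\vee}_G$, which is simple by the first part of the lemma. No vanishing theorem for $\cL'$, no discussion of constituents, no K-theory digression.

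By contrast, your proposed route — nefness of $\cL'$ plus Grauert--Riemenschneider or Kempf-type vanishing for $R^{>0}\om_*\cL'$ — would not establish the lemma in the stated generality. For $\cL = \O(-1)$ (or any anti-dominant equivariant line bundle), the twisted square $\cL' = \om^*\cL$ is \emph{anti-nef}, so the nefness hypothesis fails and GR/Kempf vanishing does not apply; yet $R^{>0}\om_*\cL' = \cL\otimes R^{>0}\om_*\O = 0$ still holds for free by the projection formula. So the vanishing you need is true, but your argument for it does not prove it. The subsequent claim that a pushforward of a CM sheaf is CM is also not a general fact and would need separate justification; again, once you know $\om_*\cL' \cong \cL\otimes\O_{\oGr^{2\l^\vee}}$ this is automatic, but your route does not produce that identification. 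In short: the missing idea is the identity $\cP\,\tbox\,\cP \cong \O\otimes\om^*\cL$, which collapses everything to the projection formula, and the vanishing argument you substitute for it would fail precisely in the cases that make the lemma nontrivial.
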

\begin{proof}
Perversity and simplicity follow from the trivial case of $\cL = \cO_{\Gr_G}$ (c.f. \cite[Section~4.1]{AB10}) since tensoring with $\cL$ is an autoequivalence which preserves perversity. Letting $\cP$ denote the indicated restriction, we further have
\begin{align*}
\cP \conv\cP 
&\cong \om_* \left( \O_{\oGr_G^{\l^\vee} \ttimes \oGr_G^{\l^\vee}} \otimes \om^*\cL \right) [\dim \Gr^{\l^\vee}_G] \\
&\cong \O_{\oGr^{2 \l^\vee}_G} \otimes \cL\, [\frac 12 \dim \Gr^{2 \l^\vee}_G], 
\end{align*}
where the second isomorphism follows from the projection formula and the fact that (spherical) Schubert varieties have rational singularities. Reality of $\cP$ then follows since as we have just seen the restriction of $\cL\,[\frac 12 \dim \Gr^{2 \l^\vee}_G]$ to ${\oGr^{2 \l^\vee}_G}$ is simple.
\end{proof}

\subsection{Geometry of $\Gr_{GL_n}$}\label{sec:geometryGL}

In the remainder of Section \ref{sec:coherentSatake} we study $\cP_{coh}^{G(\cO) \rtimes \Gm}(\Gr_G)$ in greater detail for $G = GL_n$. In this case the geometry of $\Gr_G$ can be described in terms of $\C[[t]]$-lattices inside $\C((t)) \otimes_\C V$, where $V = \C^n$. Letting $L_0$ denote the standard lattice $\C[[t]] \otimes_\C V$ we have
$$\Gr^{\l^\vee}_{GL_n} =\{L \subset L_0: t|_{L_0/L} \text{ is nilpotent of type } {\l^\vee} \}.$$ 
The closure $\oGr^{\l^\vee}_{GL_n}$ is similar but with $t|_{L_0/L}$ nilpotent of type $\le {\l^\vee}$. In particular, we have
$$\Gr^{\omega^\vee_k}_{GL_n} = \{L \overset{k}\subset L_0: tL_0 \subset L\}$$
where $L \overset{k}\subset L_0$ indicates that $\dim(L_0/L)=k$. 
This space is isomorphic to a (finite) Grassmannian of quotient spaces of $V = L_0/tL_0$ and we denote it by $\Gr^k_{GL_n}$ for short. 

The convolution spaces of such varieties can similarly be described as 
$$\Gr_{GL_n}^{k_1} \ttimes \dots \ttimes \Gr_{GL_n}^{k_m} = \{L_m \overset{k_m}\subset \dots \overset{k_2}\subset L_1 \overset{k_1}\subset L_0: tL_{i-1} \subset L_i \}.$$
This is an iterated bundle of (finite) Grassmannians which, for convenience, we denote $\Gr^{(k_1,\dots,k_m)}_{GL_n}$. The multiplication map $\om$ to $\Gr_{GL_n}$ is then given by the forgetful map
\begin{align*}
\om: \Gr^{(k_1,\dots,k_m)}_{GL_n} & \rightarrow \Gr_{GL_n} \\
(L_m \subset \dots \subset L_1 \subset L_0) &\mapsto (L_m \subset L_0).
\end{align*}

These spaces carry some obvious tautological vector bundles. For example, one has the vector bundle of rank $k_i$ whose fiber over a point $(L_m \subset \dots \subset L_0)$ is $(L_{i-1}/L_i)$. We abuse notation and denote this bundle as $L_{i-1}/L_i$.  

If $i_k: \Gr_{GL_n}^k \hookrightarrow \Gr_{GL_n}$ is the natural inclusion we will write
\begin{equation}\label{eq:Ps}
\cP_{k,\ell} := i_{k*} \left( \O_{\Gr_{GL_n}^k} \otimes \det(L_0/L)^{\ell} \right) \la \frac12 \dim \Gr_{GL_n}^k \ra \{-k\ell\}
\end{equation}
where $\la 1 \ra := [1]\{-1\}$. In the notation of Section \ref{sec:cohsatdefs} we have $\cP_{k,\ell} = \cP_{\omega^\vee_k\!,\, \ell\omega_k}$. We sometimes simply write $\cP_k$ for $\cP_{k,0}$. Note that $\cP_{0,\ell} \cong \O_e$ for all $\ell \in \Z$, where $e = \Gr_{GL_n}^0$. 

We also have the analogous varieties
$$\Gr^{-k}_{GL_n} := \Gr^{{\omega_k^\vee}^*}_{GL_n} = \{L_0 \overset{k}\subset L: tL \subset L_0 \}.$$ 
Here ${\omega_k^\vee}^* = - w_0\, \omega_k^\vee$, where $w_0$ is the longest element of $W$. We then define
\begin{equation}\label{eq:-Ps}
\cP_{-k,\ell} := i_{-k*} \left( \O_{\Gr^{-k}_{GL_n}} \otimes \det(L/L_0)^{\ell} \right) \la \frac12 \dim \Gr^{-k}_{GL_n} \ra \{-k \ell\},
\end{equation}
where $i_{-k}: \Gr^{-k}_{GL_n} \hookrightarrow \Gr_{GL_n}$ is the inclusion. By Lemma \ref{lem:real} it follows that $\cP_{k,\ell}$ and $\cP_{-k,\ell}$ are real. Moreover, it is not difficult to check that the $\cP_{n,\ell}$ are invertible and that $\cP_{n,\ell}^{-1} \cong \cP_{-n,-\ell}$. 

\subsection{Commutativity relations}\label{sec:commrel}

To prove Theorem \ref{thm:introGL_ncomps} we require the following $q$-commutativity result. It implies in particular that the collection $\{\cP_{k,\ell}\}_{k \in [1,n], \ell \in \{0,1\}}$ is a quantum monoidal cluster in $\cP_{coh}^{GL_n(\O) \rtimes \Gm}(\Gr_{GL_n})$. 

\begin{Proposition}\label{prop:comm}
We have 
$$ \cP_{k_1,\ell_1} \conv \cP_{k_2,\ell_2} \cong \cP_{k_2,\ell_2} \conv \cP_{k_1,\ell_1} \{ 2(\ell_1-\ell_2){\rm{min}}\{k_1,k_2\} \} $$
whenever $|\ell_1 - \ell_2| \le 1$. 
\end{Proposition}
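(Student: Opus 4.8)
The statement is a $q$-commutativity result for the convolution of the two explicit sheaves $\cP_{k_1,\ell_1}$ and $\cP_{k_2,\ell_2}$ on $\Gr_{GL_n}$, under the constraint $|\ell_1-\ell_2|\le 1$. The strategy I would take is to compute both convolution products directly on the convolution variety $\Gr^{(k_1,k_2)}_{GL_n}$ (resp. $\Gr^{(k_2,k_1)}_{GL_n}$) and show that the two pushforwards agree after the predicted shift $\{2(\ell_1-\ell_2)\min\{k_1,k_2\}\}$. Unwinding the definition \eqref{eq:Ps}, one has
$$\cP_{k_1,\ell_1}\conv\cP_{k_2,\ell_2}\cong\om_*\Bigl(\O_{\Gr^{(k_1,k_2)}_{GL_n}}\otimes \det(L_0/L_1)^{\ell_1}\otimes\det(L_1/L_2)^{\ell_2}\Bigr)\la\tfrac12\dim\Gr^{k_1}_{GL_n}+\tfrac12\dim\Gr^{k_2}_{GL_n}\ra\{-k_1\ell_1-k_2\ell_2\},$$
and similarly with the roles of $k_1,\ell_1$ and $k_2,\ell_2$ exchanged. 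Since $\cP_{k_1,\ell_1}\conv\cP_{k_2,\ell_2}$ and $\cP_{k_2,\ell_2}\conv\cP_{k_1,\ell_1}$ are both perverse (hence here simple, since both are real of support $\oGr^{\omega_{k_1}^\vee+\omega_{k_2}^\vee}_{GL_n}$ — one should note convolutions of these particular sheaves stay simple, which follows from Lemma~\ref{lem:real}-type reasoning or a direct argument since $\omega_{k_1}^\vee$ and $\omega_{k_2}^\vee$ are minuscule/quasi-minuscule coweights), it suffices to identify them as line bundles on the open orbit together with tracking the $\Gm$-weight. The cohomological shifts match on the nose since $\dim$ only depends on $\{k_1,k_2\}$ as a set; the content is entirely in the $\Gm$-equivariant weight.

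The key geometric input I would use is the $\Gm$-equivariant identification of the two convolution varieties. There is a natural isomorphism $\Gr^{(k_1,k_2)}_{GL_n}\cong\Gr^{(k_2,k_1)}_{GL_n}$ over $\oGr^{\omega_{k_1}^\vee+\omega_{k_2}^\vee}_{GL_n}$: given a chain $L_2\subset L_1\subset L_0$ with $tL_0\subset L_1$, $tL_1\subset L_2$, and with $\dim(L_0/L_1)=k_1$, $\dim(L_1/L_2)=k_2$, one reads off $L_2\subset L_0$ and then there is a \emph{unique} intermediate lattice $L_1'$ with $L_2\subset L_1'\subset L_0$, $tL_0\subset L_1'$, $tL_1'\subset L_2$, and $\dim(L_0/L_1')=k_2$ — this is essentially the statement that the two resolutions of the (spherical) Schubert variety given by the two orders of convolution are related by a canonical small birational map, and in fact both restrict to isomorphisms over the open orbit. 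The line bundle $\det(L_0/L_1)^{\ell_1}\otimes\det(L_1/L_2)^{\ell_2}$ pulls back to some twist of $\det(L_0/L_1')^{\ell_2}\otimes\det(L_1'/L_2)^{\ell_1}$; computing this twist, together with the shift in $\Gm$-weight coming from the normalizing conventions in \eqref{eq:Ps} (the $\{-k_i\ell_i\}$ factors and the $\Gm$-action on the convolution variety as recorded in the proof of Proposition~\ref{prop:leadingterms}), should produce exactly $\{2(\ell_1-\ell_2)\min\{k_1,k_2\}\}$. It is worth sanity-checking against Proposition~\ref{prop:leadingterms}, which already pins down the leading $K$-theory coefficient as $q^{\la\omega_{k_1}^\vee,\ell_2\omega_{k_2}\ra-\la\omega_{k_2}^\vee,\ell_1\omega_{k_1}\ra}$; for $GL_n$ one has $\la\omega_{k_1}^\vee,\omega_{k_2}\ra=\min\{k_1,k_2\}-k_1k_2/n$, and the rational parts cancel in the difference, leaving $(\ell_2-\ell_1)\min\{k_1,k_2\}$, consistent with the claimed $\{2(\ell_1-\ell_2)\min\{k_1,k_2\}\}$ once one accounts for the $q\leftrightarrow\{-1\}$ normalization and the fact that the $K$-theory statement only sees half the shift.

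\textbf{Main obstacle.} The hypothesis $|\ell_1-\ell_2|\le 1$ is doing real work, and the delicate point is \emph{why} it is needed: for general $\ell_i$ the two convolutions $\cP_{k_1,\ell_1}\conv\cP_{k_2,\ell_2}$ and $\cP_{k_2,\ell_2}\conv\cP_{k_1,\ell_1}$ need not even be isomorphic up to shift (compare the $GL_2$ example in the introduction, where $\cP_{1,1}\conv\cP_{1,-1}\not\cong\cP_{1,-1}\conv\cP_{1,1}$ up to any shift). So the hard part is to show that when $|\ell_1-\ell_2|\le 1$ the pushforward along $\om$ of the relevant twisted structure sheaf is still a line bundle concentrated in a single perverse degree on the whole of $\oGr^{\omega_{k_1}^\vee+\omega_{k_2}^\vee}_{GL_n}$ — i.e. no higher direct images appear and no lower-stratum correction terms $[\cP_{\l^\vee,\mu}]$ with $\l^\vee<\omega_{k_1}^\vee+\omega_{k_2}^\vee$ enter. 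This is a vanishing/rationality statement for the fibers of $\om$ over the non-open strata, and I expect it to follow from the Kempf-type vanishing for the line bundles $\det(L_0/L_1)^{\ell_1}\otimes\det(L_1/L_2)^{\ell_2}$ on the Grassmannian-bundle fibers of $\om$ precisely when the exponents $\ell_1,\ell_2$ differ by at most $1$ (for larger gaps the relevant line bundle restricted to a fiber $\cong\Gr_{k_2,k_1}$ or a flag variety acquires cohomology in positive degrees, breaking perversity on the nose). Making this fiberwise cohomology-vanishing argument precise — identifying the fibers of $\om$ over each stratum and the restriction of the twisting line bundle to them — is where I would expect to spend the bulk of the effort; everything else is bookkeeping of $\Gm$-weights.
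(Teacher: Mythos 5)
Your high-level diagnosis of the problem is correct, and your $K$-theory sanity check of the shift $\{2(\ell_1-\ell_2)\min\{k_1,k_2\}\}$ against Proposition~\ref{prop:leadingterms} is a good consistency test (note though that the relevant comparison is the \emph{ratio} of the two leading coefficients, which is $q^{2(\ell_2-\ell_1)\min\{k_1,k_2\}}$, so it matches on the nose rather than ``seeing half the shift''). However, the proposal has a genuine gap exactly where you flag the ``main obstacle,'' and the route you sketch there is not the one the paper takes.

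The issue is that identifying the two products on the open orbit and matching $\Gm$-weights does not by itself establish the isomorphism. Your reduction hinges on the pushforward being a line bundle concentrated in a single perverse degree on all of $\oGr^{\omega_{k_1}^\vee+\omega_{k_2}^\vee}_{GL_n}$, equivalently on the convolutions being simple. But by Proposition~\ref{prop:KKKO2.11} simplicity of $\cP_{k_1,\ell_1}\conv\cP_{k_2,\ell_2}$ is \emph{equivalent} to the commutativity you are trying to prove (given that one factor is real), so invoking simplicity first makes the argument circular; and your suggested justification (``minuscule coweights,'' ``Lemma~\ref{lem:real}-type reasoning'') does not apply when $\ell_1\neq\ell_2$, since then the twisting line bundle is not pulled back from the target and the projection-formula trick in the proof of Lemma~\ref{lem:real} is unavailable. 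You would have to carry out the fiberwise Kempf-type vanishing for $\det(L_0/L_1)^{\ell_1}\otimes\det(L_1/L_2)^{\ell_2}$ over every stratum of $\oGr^{\omega_{k_1}^\vee+\omega_{k_2}^\vee}_{GL_n}$, and that is precisely the content you defer. Also note that the birational map between $\Gr^{(k_1,k_2)}_{GL_n}$ and $\Gr^{(k_2,k_1)}_{GL_n}$ is only an isomorphism over the open orbit; over lower strata the intermediate lattice $L_1'$ is not unique, so there is no global identification of the two convolution spaces and no naive ``pull back the line bundle and compare.''

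The paper's proof avoids the fiberwise analysis entirely. For $\ell_1=\ell_2$ the argument is exactly your Lemma~\ref{lem:real}-type projection-formula/rational-singularities observation. For $|\ell_1-\ell_2|=1$ (say $\ell_2=\ell_1+1$, $k_1\le k_2$), after extracting the common pulled-back factor $\det(L_0/L_2)^{\ell_1}$, the goal becomes to compare $\om_{1*}(\O\otimes\det(L_1/L_2))\{2k_1\}$ with $\om_{2*}(\O\otimes\det(L_0/L_1))$. Rather than restricting to fibers, the paper introduces the correspondence variety
\[
W=\{L_2\overset{k_1}\subset L_1\overset{k_2-k_1}\subset L_1'\overset{k_1}\subset L_0:\ tL_0\subset L_1,\ tL_1'\subset L_2\}
\]
dominating both convolution spaces, reduces the claim to
\[
\pi_*(\O_W\otimes\det(L_0/L_1'))\cong\pi_*(\O_W\otimes\det(L_1/L_2))\{2k_1\},
\]
and then observes that the map $t\colon L_0/L_1'\to L_1/L_2\{2\}$ of rank-$k_1$ bundles degenerates along a divisor $D$ with $\O_W(D)\cong\det(L_0/L_1')^{-1}\otimes\det(L_1/L_2\{2\})$, so the two sides differ by the term $\pi_*(\O_D\otimes\det(L_1/L_2))$. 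That term vanishes because a partial resolution $\tD\to D$ (remembering a hyperplane $L_1''\subset L_1$) fibers over a space $D'$ as a $\P^{k_2-k_1}$-bundle on which $\det(L_1/L_2)$ restricts to $\O(-1)$. This single divisor computation is where the hypothesis $|\ell_1-\ell_2|\le1$ is used, and it replaces the stratumwise vanishing argument you propose with a much more tractable one.
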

\begin{proof}
Consider the convolution diagram 
$$\{L_2 \overset{k_2}\subset L_1 \overset{k_1}\subset L_0: tL_0 \subset L_1, tL_1 \subset L_2\} = \Gr_{GL_n}^{(k_1,k_2)} \xrightarrow{\om} \oGr^{\omega^\vee_{k_1}+\omega^\vee_{k_2}}_{GL_n}.$$
Then 
$$\cP_{k_1,\ell_1} * \cP_{k_2,\ell_2} \{k_1\ell_1+k_2\ell_2\} \cong \om_*(\O_{\Gr^{(k_1,k_2)}_{GL_n}} \otimes \det(L_0/L_1)^{\ell_1} \otimes \det(L_1/L_2)^{\ell_2}) \la \frac12 \dim \Gr^{(k_1,k_2)}_{GL_n} \ra.$$
Now $\om^*(\det(L_0/L_2)) \cong \det(L_0/L_1) \otimes \det(L_1/L_2)$. So, if $\ell_1=\ell_2=\ell$, then 
\begin{align*}
\cP_{k_1,\ell} * \cP_{k_2,\ell} \{(k_1+k_2)\ell\}
&\cong \om_*(\O_{\Gr^{(k_1,k_2)}_{GL_n}} \otimes \om^*(\det(L_0/L_2)^{\ell})) \la \frac12 \dim \Gr^{(k_1,k_2)}_{GL_n} \ra \\
&\cong \om_*(\O_{\Gr^{(k_1,k_2)}_{GL_n}}) \otimes \det(L_0/L_2)^{\ell}  \la \frac12 \dim \Gr^{(k_1,k_2)}_{GL_n} \ra \\
&\cong \O_{\oGr^{\omega^\vee_{k_1}+\omega^\vee_{k_2}}_{GL_n}} \otimes \det(L_0/L_2)^{\ell}  \la \frac12 \dim \Gr^{(k_1,k_2)}_{GL_n} \ra
\end{align*}
where $\om_*(\O_{\Gr^{(k_1,k_2)}_{GL_n}}) \cong \O_{\oGr^{\omega^\vee_{k_1}+\omega^\vee_{k_2}}_{GL_n}}$ since any $\oGr^{\l^\vee}_{GL_n}$ has rational singularities. By symmetry this equals $\cP_{k_2,\ell} * \cP_{k_1,\ell} \{(k_1+k_2)\ell\}$. 

For the second, more complicated case, it suffices by symmetry to let $\ell := \ell_1$ and assume $\ell_2=\ell+1$. We will assume $k_1 \le k_2$, the case $k_1 \ge k_2$ being similar. Then
\begin{gather*}
\cP_{k_1,\ell} * \cP_{k_2,\ell+1} \{k_1\ell+k_2(\ell+1)\} \cong \om_{1*}(\O_{\Gr^{(k_1,k_2)}_{GL_n}} \otimes \det(L_1/L_2)) \otimes \det(L_0/L_2)^{\ell} \la \frac12 \dim \Gr^{(k_1,k_2)}_{GL_n} \ra\\
\cP_{k_2,\ell+1} * \cP_{k_1,\ell} \{k_1\ell+k_2(\ell+1)\} \cong \om_{2*}(\O_{\Gr^{(k_2,k_1)}_{GL_n}} \otimes \det(L_0/L_1)) \otimes \det(L_0/L_2)^{\ell} \la \frac12 \dim \Gr^{(k_2,k_1)}_{GL_n} \ra
\end{gather*}
where $\om_1$ and $\om_2$ are the multiplication maps with domains $\Gr^{(k_1,k_2)}_{GL_n}$ and $\Gr^{(k_2,k_1)}_{GL_n}$, respectively.
It remains to show that
\begin{equation}\label{eq:needtoshow1}
\om_{1*}(\O_{\Gr^{(k_1,k_2)}_{GL_n}} \otimes \det(L_1/L_2) \{2k_1\}) \cong \om_{2*}(\O_{\Gr^{(k_2,k_1)}_{GL_n}} \otimes \det(L_0/L_1)),
\end{equation}

To check this we consider the space 
$$W := \{ L_2 \overset{k_1}\subset L_1 \overset{k_2-k_1}\subset L_1' \overset{k_1}\subset L_0: tL_0 \subset L_1, tL_1' \subset L_2 \}.$$
It fits into a commutative diagram
\begin{equation*}
\begin{gathered}\xymatrix{
& W \ar[dl]_{\pi_1} \ar[dr]^{\pi_2} \ar[dd]^{\pi} & \\
\Gr^{(k_1,k_2)}_{GL_n} \ar[dr]^{\om_1} & & \Gr^{(k_2,k_1)}_{GL_n} \ar[dl]_{\om_2} \\
& \oGr^{\omega^\vee_{k_1}+\omega^\vee_{k_2}}_{GL_n} &
}\end{gathered}\end{equation*}
where the maps $\pi_1$ and $\pi_2$ are given by forgetting $L_1'$ and $L_1$ respectively. Since 
\begin{align*}
\pi_{1*}(\O_W \otimes \det(L_1/L_2)) &\cong \O_{\Gr_{GL_n}^{(k_1,k_2)}} \otimes \det(L_1/L_2) \ \ \text{ and } \\
\pi_{2*}(\O_W \otimes \det(L_0/L_1')) &\cong \O_{\Gr^{(k_2,k_1)}_{GL_n}} \otimes \det(L_0/L_1)
\end{align*}
it follows that (\ref{eq:needtoshow1}) is equivalent to
\begin{equation}\label{eq:needtoshow2}
\pi_*(\O_W \otimes \det(L_0/L_1')) \cong \pi_*(\O_W \otimes \det(L_1/L_2)) \{2k_1\}. 
\end{equation}

Now consider the map of rank $k_1$ vector bundles on $W$ given by 
$$t: L_0/L_1' \rightarrow L_1/L_2 \{2\}.$$
This map is degenerate (i.e. not an isomorphism) along a divisor $D \subset W$ where 
$$\O_W(D) \cong \O_W \otimes \det(L_0/L_1')^{-1} \otimes \det(L_1/L_2 \{2\}).$$ 
The standard triangle $\O_W(-D) \rightarrow \O_W \rightarrow \O_D$ then gives us the exact triangle
$$\O_W \otimes \det(L_0/L_1') \otimes \det(L_1/L_2)^{-1} \{-2k_1\} \rightarrow \O_W \rightarrow \O_D.$$
After tensoring with $\det(L_1/L_2)\{2k_1\}$ we arrive at 
$$\O_W \otimes \det(L_0/L_1') \rightarrow \O_W \otimes \det(L_1/L_2) \{2k_1\} \rightarrow \O_D \otimes \det(L_1/L_2)\{2k_1\}.$$
Thus, to show (\ref{eq:needtoshow2}), it remains to show that 
\begin{equation}\label{eq:needtoshow3}
\pi_* (\O_D \otimes \det(L_1/L_2)) = 0.
\end{equation}

To show this, we note that $D$ has a partial resolution $\tD \xrightarrow{f} D$, where
$$\tD := \{L_2 \overset{k_1-1}\subset L_1'' \overset{1}\subset L_1 \overset{k_2-k_1}\subset L_1' \overset{k_1}\subset L_0: tL_0 \subset L_1'', tL_1' \subset L_2\}$$
and $f$ forgets $L_1''$. It suffices to show $\pi_* f_* (\O_{\tD} \otimes \det(L_1/L_2))=0$. But we can factor $\pi \circ f$ instead as $\tD \xrightarrow{f'} D' \rightarrow \oGr^{\omega^\vee_{k_1}+\omega^\vee_{k_2}}_{GL_n}$ where 
$$D' := \{L_2 \overset{k_1-1}\subset L_1'' \overset{k_2-k_1+1}\subset L_1' \overset{k_1}\subset L_0: tL_0 \subset L_1'', tL_1' \subset L_2\}$$
and $f'$ forgets $L_1$. Since $tL_0 \subset L_1''$ the morphism $f'$ is a $\P^{k_2-k_1}$-bundle, because $L_1$ can be chosen arbitrarily inside $L_1'/L_1''$. The restriction of $\det(L_1/L_2)$ to the fibers of $f'$ is isomorphic to $\O_{\P^{k_2-k_1}}(-1)$. If $k_1 < k_2$, it follows that 
$$f'_*(\O_{\tD} \otimes \det(L_1/L_2)) = 0.$$
If $k_1=k_2$, then we can repeat the argument by also forgetting $L_1'$ to obtain this vanishing. This proves (\ref{eq:needtoshow3}) and completes the proof.
\end{proof}
\begin{Remark}\label{rem:comm}
A simple version of the argument used in the proof of the second case of Proposition \ref{prop:comm} also shows that $\cP_{k,\ell_1}$ and $\cP_{n,\ell_2}$ $q$-commute for all $\ell_1,\ell_2$ with
$$ \cP_{k,\ell_1} \conv \cP_{n,\ell_2} \cong \cP_{n,\ell_2} \conv \cP_{k,\ell_1} \{2k(\ell_1-\ell_2)\}.$$
\end{Remark}

\begin{Lemma}\label{lem:frozenidentity}
For any $k_1,k_2,k_1',k_2' \in \Z$ with $k_1+k_2=k_1'+k_2'$ we have 
$$\cP_{n,k_1} * \cP_{n,k_2} \{2nk_2\} \cong \cP_{n,k_1'} * \cP_{n,k_2'} \{2nk_2'\}.$$
\end{Lemma}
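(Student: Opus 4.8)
The plan is to observe that the invertible objects $\cP_{n,\ell}$ differ from one another by tensoring with a line bundle on $\Gr_{GL_n}$, so that the claimed isomorphism reduces to a statement about line bundles on the point orbit $\oGr_{GL_n}^{2\omega_n^\vee} = \Gr_{GL_n}^{2\omega_n^\vee}$, which is a single reduced point. Concretely, $\cP_{n,\ell}$ is the restriction of $\cO(\ell) = \det(L_0/L)^{\ell}$ to the point orbit $\Gr_{GL_n}^{\omega_n^\vee} = [tL_0 \subset L_0]$, together with the equivariant and cohomological shifts recorded in \eqref{eq:Ps}. On this one-point orbit, $\det(L_0/L) = \det(L_0/tL_0)$ is the top exterior power of $V = \C^n$, on which $\Gm$ (acting through loop rotation, $t$ of weight $2$) acts with weight $2n$; thus $\cP_{n,\ell}$ is supported at a point and carries a one-dimensional $\Gm$-representation whose weight is determined by $\ell$.

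First I would use the fact (noted just before Lemma \ref{lem:frozenidentity}) that the $\cP_{n,\ell}$ are invertible with $\cP_{n,\ell}^{-1} \cong \cP_{-n,-\ell}$, together with Remark \ref{rem:comm}, to see that convolution with $\cP_{n,\ell}$ is a monoidal autoequivalence which acts on any sheaf by tensoring with (the appropriate power of) the line bundle $\det(L_0/L)$ and applying an equivariant shift. Then I would compute $\cP_{n,k_1} \conv \cP_{n,k_2}$ directly: the convolution space $\Gr_{GL_n}^{(n,n)} = \{L_2 \subset L_1 \subset L_0 : tL_1 \subset L_2,\ tL_0 \subset L_1\}$ forces $L_1 = tL_0$ and $L_2 = tL_1 = t^2 L_0$, so it is again a single reduced point, and $\om$ is an isomorphism onto $\Gr_{GL_n}^{2\omega_n^\vee}$. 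Pushing forward $\O \otimes \det(L_0/L_1)^{k_1} \otimes \det(L_1/L_2)^{k_2}$ and using $\det(L_0/L_2) = \det(L_0/L_1) \otimes \det(L_1/L_2)$ together with $\det(L_1/L_2) = \det(L_0/L_1)$ (since $L_1 = tL_0$, $L_2 = tL_1$, so multiplication by $t$ gives a $\Gm$-equivariant isomorphism $L_0/L_1 \cong L_1/L_2\{2\}$ of these one-dimensional spaces), I would get that $\cP_{n,k_1} \conv \cP_{n,k_2}$, after the shift by $\{2nk_2\}$, depends only on $k_1 + k_2$. Tracking the normalizing shifts $\la \tfrac12 \dim \ra\{-n k_i\}$ in \eqref{eq:Ps} and the weight-$2$ convention for $t$ shows the shift $\{2nk_2\}$ is exactly the correction needed to absorb the asymmetry, yielding $\cP_{n,k_1}\conv\cP_{n,k_2}\{2nk_2\} \cong \cP_{n,\,k_1+k_2} \conv \cP_{n,0}$, which manifestly depends only on $k_1 + k_2$ and hence equals the analogous expression for $(k_1',k_2')$.

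Alternatively — and perhaps more cleanly — one can deduce the statement from Proposition \ref{prop:comm} and Remark \ref{rem:comm}: from $\cP_{n,a}\conv\cP_{n,b} \cong \cP_{n,b}\conv\cP_{n,a}\{2n(a-b)\}$ one reduces to the case $|k_1 - k_2| \le 1$ and an induction on $|k_1 - k_2|$, combined with associativity and invertibility of the $\cP_{n,\ell}$ (which lets one cancel common factors). The only genuine content is the single identity $\cP_{n,1}\conv\cP_{n,0}\{2n\} \cong \cP_{n,0}\conv\cP_{n,1}\{0\}$ twisted appropriately, i.e. that the ``unit-step'' move $\cP_{n,k_1}\conv\cP_{n,k_2}\{2nk_2\} \cong \cP_{n,k_1+1}\conv\cP_{n,k_2-1}\{2n(k_2-1)\}$ holds; iterating this moves any $(k_1,k_2)$ to any $(k_1',k_2')$ with the same sum. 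I expect the main obstacle to be purely bookkeeping: getting every $\Gm$-equivariance shift (there are four separate ones — two from the normalization of each $\cP_{n,\cdot}$, one from the $t$-weight-$2$ convention, one explicit $\{2nk_2\}$) to cancel correctly, rather than any geometric subtlety, since all relevant spaces are reduced points and the pushforwards are trivial.
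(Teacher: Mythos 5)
Your main argument matches the paper's proof: both hinge on the fact that on $\Gr_{GL_n}^{(n,n)}$ the map $t\colon L_0/L_1 \to L_1/L_2\{2\}$ is an isomorphism, so $\det(L_1/L_2)^{k_2} \cong \det(L_0/L_1)^{k_2}\{-2nk_2\}$ and the twisted product collapses to $\cP_{n,k_1+k_2}\conv\cP_{n,0}$ up to a shift depending only on $k_1+k_2$. Two small bookkeeping slips: $L_0/L_1$ and $L_1/L_2$ are rank $n$, not one-dimensional (it is their determinants that are lines), and the determinant of the $t$-isomorphism of rank-$n$ bundles is what produces the factor $\{-2nk_2\}$; also the end result is $\cP_{n,k_1+k_2}\conv\cP_{n,0}\{-(k_1+k_2)n\}$ rather than $\cP_{n,k_1+k_2}\conv\cP_{n,0}$, but since $k_1+k_2 = k_1'+k_2'$ this extra shift cancels and does not affect the conclusion.

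The sketched alternative via Proposition \ref{prop:comm} and Remark \ref{rem:comm} does not close, however. The commutativity relation $\cP_{n,a}\conv\cP_{n,b}\cong\cP_{n,b}\conv\cP_{n,a}\{2n(a-b)\}$ only swaps the two $\ell$-indices (and indeed yields the $(k_1',k_2') = (k_2,k_1)$ case immediately). It cannot produce the ``unit-step'' move from $(k_1,k_2)$ to $(k_1+1,k_2-1)$: convolution, associativity, invertibility, and $q$-commutation all preserve the multiset of $\ell$-indices appearing in a word in the $\cP_{n,\ell}$, so no combination of these operations transforms $\{k_1,k_2\}$ into $\{k_1+1,k_2-1\}$. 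The geometric input $t\colon L_0/L_1 \cong L_1/L_2\{2\}$ is what genuinely changes the index set, and it is not recoverable from the $q$-commutation relations alone.
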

\begin{proof}
On $\Gr_{GL_n}^{(n,n)}$ we have $t: L_0/L_1 \xrightarrow{\sim} L_1/L_2 \{2\}$. This implies that 
\begin{align*}
\cP_{n,k_1} * \cP_{n,k_2} \{(k_1+k_2)n\} 
&\cong \om_*(\O_{\Gr_{GL_n}^{(n,n)}} \otimes \det(L_0/L_1)^{k_1+k_2} \{-2nk_2\}) \\
&\cong \cP_{n,k_1+k_2} * \cP_{n,0} \{-2nk_2\}.
\end{align*}
Similarly we get $\cP_{n,k'_1} * \cP_{n,k'_2} \{(k_1'+k_2')n\} \cong \cP_{n,k_1+k_2} * \cP_{n,0} \{-2nk_2'\}$. 
\end{proof}
\begin{Remark}\label{rem:frozenidentity}
Lemma \ref{lem:frozenidentity} applied repeatedly implies that for any $r \in \N$ we have 
$$ (\cP_{n,1})^r \cong \cP_{n,r} * (\cP_{n,0})^{r-1} \{-nr(r-1)\}.$$
\end{Remark}

\subsection{Mutation exact sequences}\label{sec:mutrels}

The following family of exact sequences will be used to show that the monoidal cluster $\{\cP_{k,\ell}\}_{k \in [1,n], \ell \in \{0,1\}}$ admits mutations in every unfrozen direction. 

\begin{Proposition}\label{prop:mutation}
For $k \in [0,n], \ell \in \Z$ we have exact sequences
\begin{gather*}
0 \rightarrow \cP_{k-1,\ell} \conv \cP_{k+1,\ell} \{-1\} \rightarrow \cP_{k,\ell+1} \conv \cP_{k,\ell-1} \{-2k\} \rightarrow \cP_{k,\ell} \conv \cP_{k,\ell} \rightarrow 0 \\
0 \rightarrow \cP_{k,\ell} \conv \cP_{k,\ell} \rightarrow \cP_{k,\ell-1} \conv \cP_{k,\ell+1} \{2k\} \rightarrow \cP_{k+1,\ell} \conv \cP_{k-1,\ell} \{1\} \rightarrow 0.
\end{gather*}
in $\cP_{coh}^{GL_n(\cO) \rtimes \Gm}(\Gr_{GL_n})$, where in these formulas $\cP_{-1,\ell}$ and $\cP_{n+1,\ell}$ are interpreted as the zero object. 
\end{Proposition}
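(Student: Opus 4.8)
The strategy is to realize the first exact sequence as the pushforward of a Koszul resolution of a divisor in the convolution space $\Gr_{GL_n}^{(k,k)}$, and then to deduce the second sequence by duality. Throughout, $\cP_{-1,\ell}$ and $\cP_{n+1,\ell}$ are the zero object; the extreme cases $k=0,n$ reduce to Lemma~\ref{lem:frozenidentity} (there $\Gr_{GL_n}^{(k,k)}$ is a point and the divisor below is empty), so I assume $0<k<n$.

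First I would set $X := \Gr_{GL_n}^{(k,k)} = \{L_2 \overset{k}\subset L_1 \overset{k}\subset L_0 : tL_0 \subseteq L_1,\ tL_1 \subseteq L_2\}$ with convolution map $\om : X \to \oGr_{GL_n}^{2\omega_k^\vee}$, and consider the map of rank-$k$ bundles $t : L_0/L_1 \to (L_1/L_2)\{2\}$ given by multiplication by $t$ (well-defined since $tL_0 \subseteq L_1$ and $tL_1 \subseteq L_2$). Its determinant is a section of $\det(L_0/L_1)^{-1} \otimes \det(L_1/L_2)\{2k\}$ with vanishing divisor $D$, so $\O_X(D) \cong \det(L_0/L_1)^{-1} \otimes \det(L_1/L_2)\{2k\}$. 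Tensoring the standard sequence $0 \to \O_X(-D) \to \O_X \to \O_D \to 0$ with $\det(L_0/L_1)^\ell \otimes \det(L_1/L_2)^\ell$, applying $\om_*$, and shifting by $\la\frac12\dim X\ra\{-2k\ell\}$, the middle and left terms become $\cP_{k,\ell}\conv\cP_{k,\ell}$ and $\cP_{k,\ell+1}\conv\cP_{k,\ell-1}\{-2k\}$ respectively — this is exactly the computation in the proof of Proposition~\ref{prop:comm}, using $\om^*\det(L_0/L_2) \cong \det(L_0/L_1) \otimes \det(L_1/L_2)$ and the fact that Schubert varieties have rational singularities. So after rotating the resulting triangle the first exact sequence follows, provided all three terms are perverse and we identify the shifted pushforward of the $\O_D$-term.

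The key point is the claim $R\om_*\O_D \cong \O_{\oGr_{GL_n}^{\omega_{k-1}^\vee + \omega_{k+1}^\vee}}$. Granting it, the projection formula (with $\det(L_0/L_1)^\ell \otimes \det(L_1/L_2)^\ell \cong \om^*\det(L_0/L)^\ell$) and the tracking of shifts — the discrepancy $\dim X - \dim\Gr_{GL_n}^{(k-1,k+1)} = 2$ together with the rotation of the triangle produces precisely the twist $\{-1\}$ — identify the shifted pushforward of the $\O_D$-term with $\cP_{k-1,\ell}\conv\cP_{k+1,\ell}\{-1\}$. All three terms of the triangle are then perverse (the outer ones by \cite{BFM}), so it is a short exact sequence in $\cP_{coh}^{GL_n(\cO) \rtimes \Gm}(\Gr_{GL_n})$. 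To prove the claim I would introduce the resolution
$$\hat D := \{L_2 \overset{k}\subset L_1 \overset{1}\subset L_1^\flat \overset{k-1}\subset L_0 : tL_0 \subseteq L_1,\ tL_1^\flat \subseteq L_2 \} \xrightarrow{\ \hat f\ } D,$$
where $\hat f$ forgets $L_1^\flat$; it lands in $D$ because $L_1^\flat/L_1$ is a line in $\ker(t : L_0/L_1 \to L_1/L_2)$. The fibre of $\hat f$ over a point where this map has corank $d$ is $\P\bigl(\ker(t : L_0/L_1 \to L_1/L_2)\bigr) \cong \P^{d-1}$, so $\hat f$ is proper and birational and $\hat D$ is smooth; moreover, forgetting $L_1$ instead exhibits $\hat D$ as a projective bundle $h : \hat D \to \Gr_{GL_n}^{(k-1,k+1)}$ (fibre over $(L_2 \subset L_1^\flat \subset L_0)$ the hyperplanes of $L_1^\flat/tL_0$), and $\om \circ \hat f = \om' \circ h$ where $\om' : \Gr_{GL_n}^{(k-1,k+1)} \to \oGr_{GL_n}^{\omega_{k-1}^\vee+\omega_{k+1}^\vee}$ is the convolution map. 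Hence, once $R\hat f_*\O_{\hat D} \cong \O_D$ is known, $R\om_*\O_D \cong R\om'_* Rh_*\O_{\hat D} \cong R\om'_*\O_{\Gr_{GL_n}^{(k-1,k+1)}} \cong \O_{\oGr_{GL_n}^{\omega_{k-1}^\vee+\omega_{k+1}^\vee}}$, again by rational singularities of Schubert varieties.

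I expect the most delicate part to be the identification $R\hat f_*\O_{\hat D} \cong \O_D$, i.e. that $D$ has rational singularities. Normality of $D$ follows from Serre's criterion: $D$ is a hypersurface in the smooth $X$, hence Cohen--Macaulay, and a dimension count of the corank loci shows it is smooth away from a subset of codimension $\ge 2$; the vanishing $R^{>0}\hat f_*\O_{\hat D}=0$ then follows from the fibres of $\hat f$ being projective spaces, by cohomology and base change. Finally, the second exact sequence is obtained by applying the duality functor $\D$ to the first: $\D$ is exact on $\cP_{coh}^{GL_n(\cO) \rtimes \Gm}(\Gr_{GL_n})$, satisfies $\D(\cF \conv \cG) \cong \D\cF \conv \D\cG$, and sends $\cP_{k,\ell}$ to $\cP_{k,-\ell-n}$ up to a $\Gm$-shift (because $\omega_{\Gr_{GL_n}^k} \cong \det(L_0/L)^{-n}$); after reindexing $\ell$ and reordering the outermost terms via Proposition~\ref{prop:comm} this yields precisely the second sequence.
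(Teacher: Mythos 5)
Your overall strategy is the same as the paper's: realize both exact sequences via the divisor short exact sequence on $\Gr_{GL_n}^{(k,k)}$ for the degeneracy divisor $D$ of $t:L_0/L_1\to (L_1/L_2)\{2\}$, then identify $\om_*\O_D$ with $\O_{\oGr_{GL_n}^{\omega_{k-1}^\vee+\omega_{k+1}^\vee}}$ via a resolution, and finally obtain the second sequence from the first by duality. You even use a resolution of $D$ that is the natural dual of the paper's $\tD$ (yours adds a line $L_1^\flat/L_1$ in the kernel; the paper's adds a hyperplane $L_1'/L_2$ containing the image). Your handling of the edge cases $k=0,n$ via Lemma~\ref{lem:frozenidentity} and your shift bookkeeping are also correct.

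However, there is a concrete false claim in the middle: the map $h:\hat D\to\Gr_{GL_n}^{(k-1,k+1)}$ forgetting $L_1$ is \emph{not} a projective bundle. The fibre over $(L_2\subset L_1^\flat\subset L_0)$ consists of hyperplanes of $L_1^\flat/L_2$ containing $(tL_0+L_2)/L_2$, and hence has dimension $k-\operatorname{rank}\bigl(t:L_0/L_1^\flat\to L_1^\flat/L_2\bigr)$, which jumps on the stratum where the rank drops --- for instance over the nonempty locus where $tL_0\subset L_2$ the fibre is all of $\P^k$ rather than the generic $\P^1$ (take $k=2$, $n=4$, $L_2=tL_0$ for a concrete instance). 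You use the projective-bundle claim to deduce that $\hat D$ is smooth and, implicitly, that $Rh_*\O_{\hat D}\cong\O_{\Gr_{GL_n}^{(k-1,k+1)}}$; neither conclusion follows from a non-bundle. The smoothness of $\hat D$ is still true, but the clean reason is different: forgetting $L_2$ exhibits $\hat D$ as a Grassmannian bundle over the smooth variety $\{L_1\overset{1}{\subset}L_1^\flat\subset L_0:tL_0\subset L_1\}$ (the fibre being $L_2$ with $tL_1^\flat\subset L_2\subset L_1$, which has constant dimension). Similarly, your appeal to ``cohomology and base change'' to conclude $R^{>0}\hat f_*\O_{\hat D}=0$ is not valid as stated: $\hat f:\hat D\to D$ is proper birational, hence not flat, and the fibrewise vanishing $H^{>0}(\P^m,\O)=0$ does not by itself give vanishing of the higher direct images of a non-flat map. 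Both the pushforward claims $R\hat f_*\O_{\hat D}\cong\O_D$ and $Rh_*\O_{\hat D}\cong\O_{\Gr_{GL_n}^{(k-1,k+1)}}$ therefore still require a genuine argument (e.g.\ a Grauert--Riemenschneider-type vanishing, or identifying $D$ with a preimage under $\om$ of a Schubert subvariety and invoking rational singularities of the latter). To be fair, the paper's own proof is terse at exactly this point --- it asserts the resolution and the factorization without spelling these facts out --- so your attempt to fill in details is welcome, but the details you supplied do not yet hold up.
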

\begin{proof}
We prove the first exact sequence. The second sequence can be proved similarly. Alternatively, the second can be derived from the first by applying the bar involution (see Section \ref{sec:barinvolution}) or, in a more elementary fashion, by applying the functor $\D$. 

We prove the case $\ell=0$. The general case is proved similarly or by tensoring with the global line bundle $\O(\ell)$. Following the notation from the proof of Proposition \ref{prop:comm} we have 
\begin{gather*}
\cP_{k,1} \conv \cP_{k,-1} \cong \om_*(\O_{\Gr^{(k,k)}_{GL_n}} \otimes \det(L_0/L_1) \otimes \det(L_1/L_2)^{-1}) \la \frac12 \dim \Gr_{GL_n}^{(k,k)} \ra \\
\cP_k \conv \cP_k \cong \om_*(\O_{\Gr^{(k,k)}_{GL_n}}) \la \frac12 \dim \Gr_{GL_n}^{(k,k)} \ra.
\end{gather*}
Now, on $\Gr^{(k,k)}_{GL_n}$ we have the map of rank $k$ vector bundles given by 
$$t: L_0/L_1 \rightarrow L_1/L_2 \{2\}.$$
As in the proof of Proposition \ref{prop:comm}, this map is degenerate along a divisor $D$ where 
$$\O_{\Gr^{(k,k)}_{GL_n}}(D) \cong \O_{\Gr^{(k,k)}_{GL_n}} \otimes \det(L_0/L_1)^{-1} \otimes \det(L_1/L_2\{2\}).$$
Substituting into the standard triangle $\O_{\Gr^{(k,k)}_{GL_n}}(-D) \rightarrow \O_{\Gr^{(k,k)}_{GL_n}} \rightarrow \O_D$ we obtain
$$\O_{\Gr^{(k,k)}_{GL_n}} \otimes \det(L_0/L_1) \otimes \det(L_1/L_2)^{-1} \{-2k\} \rightarrow \O_{\Gr^{(k,k)}_{GL_n}} \rightarrow \O_D.$$
Shifting by $\la \frac12 \dim \Gr_{GL_n}^{(k,k)} \ra$ and applying $\om_*$ we arrive at the exact triangle
$$\om_*(\O_D \la \frac12 \dim \Gr_{GL_n}^{(k,k)} \ra [-1]) \rightarrow \cP_{k,1} \conv \cP_{k,-1} \{-2k\} \rightarrow \cP_k \conv \cP_k.$$
Since $\frac12 \dim \Gr_{GL_n}^{(k,k)} = k(n-k) = \frac12 \dim \Gr_{GL_n}^{(k-1,k+1)}+1$ it remains to show that 
\begin{equation*}\label{eq:needtoshow4}
\om_*(\O_D) \cong \om_*(\O_{\Gr^{(k-1,k+1)}_{GL_n}}).
\end{equation*}
This follows since $D$ has a resolution 
$$\tD = \{L_2 \overset{k-1}\subset L_1' \overset{1}\subset L_1 \overset{k}\subset L_0: tL_0 \subset L_1', tL_1 \subset L_2\}$$
and we can then factor $\om_*(\O_D)$ as $\tD \rightarrow \Gr_{GL_n}^{(k-1,k+1)} \xrightarrow{\om} \Gr^{\omega^\vee_{k-1}+\omega^\vee_{k+1}}_{GL_n}$ where the first map forgets $L_1$ and the second forgets $L_1'$. 
\end{proof}

\subsection{Generating the Grothendieck ring}\label{sec:K0generators}

In addition to the $\cP_{k,\ell}$ one can also consider the sheaves 
$$\cP_{k,\ell}^\uj := i_{k*} \left( \O_{\Gr_{GL_n}^k} \otimes 
\bigwedge\nolimits^{\!j_1} (L_0/L) \otimes \dots \otimes \bigwedge\nolimits^{\!j_r}(L_0/L) \otimes \det(L_0/L)^\ell \right) \la \frac12 k(n-k) \ra \{ -|\uj| - k\ell\}$$
where $\uj = (j_1, \dots, j_r)$ is a sequence with $j_i \in [0,k]$, $|\uj|=\sum_i j_i$, and $\ell \in \Z$. Note that allowing $j_i > k$ would add no generality since $L_0/L$ having rank $k$ on $\Gr_{GL_n}^k$ would then imply $\cP_{k,\ell}^\uj = 0$.

On the other hand, if $j_i=0$ then the vector bundle $\bigwedge\nolimits^{\!j_i} (L_0/L)$ is a trivial line bundle so without losing generality we could assume $j_i \neq 0$. Similarly, if $j_i=k$ then the contribution is just the line bundle $\det(L_0/L)$ which can be absorbed into $\det(L_0/L)^\ell$, so we would also lose no generality if we assumed $j_i < k$. However, allowing $j_i$ to be $0$ or $k$ is convenient for stating and proving the following result. 

\begin{Lemma}\label{lem:recursion}
Let $\uj = (j_1, \dots, j_r)$ be a sequence with $j_i \in [1,k]$. Then in $K^{GL_n(\cO) \rtimes \Gm}(\Gr_{GL_n})$ we have 
$$[\cP_{k,\ell}^\uj] = \sum_{s=0}^{k-1} \sum_{\ue \in \{0,1\}^r} (-1)^s (-q)^{-k+2s+1} [\cP_{k-1,\ell}^{(\uj-\ue,s)}] \cdot [\cP_{1,-s+|\ue|+\ell}].$$
\end{Lemma}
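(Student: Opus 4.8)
The identity to be proven expresses the class of $\cP_{k,\ell}^\uj$ in terms of classes of sheaves supported on $\Gr_{GL_n}^{k-1}$ convolved with line-bundle sheaves supported on $\Gr_{GL_n}^1$. The natural geometric input is the convolution diagram realizing $\oGr_{GL_n}^{\omega_{k-1}^\vee + \omega_1^\vee}$: consider the space $Z := \Gr_{GL_n}^{(k-1,1)} = \{L_2 \overset{1}\subset L_1 \overset{k-1}\subset L_0 : tL_0 \subset L_1,\ tL_1 \subset L_2\}$ together with its convolution map $\om: Z \to \oGr_{GL_n}^{\omega_{k-1}^\vee+\omega_1^\vee}$, whose image contains $\oGr_{GL_n}^{\omega_k^\vee}$ as a closed subvariety (the locus where $L_2 \supset tL_0$, i.e. where the composite $L_0 \twoheadrightarrow L_0/L_2$ factors through $L_0/tL_0$). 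The plan is to resolve $i_{k*}\O_{\Gr_{GL_n}^k}$ (twisted by the appropriate tautological bundles) by a Koszul-type complex on $Z$ built from the map of vector bundles $t: L_1/L_2 \to \text{(quotient involving }L_0\text{)}$ that cuts out the locus $\Gr_{GL_n}^k \hookrightarrow Z$, then push forward via $\om$ and read off the resulting alternating sum in $K$-theory.

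\textbf{Key steps.} First I would identify precisely how $\Gr_{GL_n}^k$ sits inside $Z$: on $Z$ one has $tL_0 \subset L_1$, and the extra condition $tL_0 \subset L_2$ carves out the sublocus isomorphic to $\{L_2 \overset{k}\subset L_0 : tL_0 \subset L_2\} = \Gr_{GL_n}^k$ (forgetting the intermediate $L_1$ amounts to choosing a hyperplane $L_1/L_2$ in $L_0/L_2$, and over $\Gr_{GL_n}^k$ this is a $\P^{k-1}$-bundle — this is where the index range $s \in [0,k-1]$ will come from). Second, I would write down the section of the rank-one (or suitable) bundle whose vanishing locus is this sublocus: the composite map $\bar t: tL_0/(tL_0 \cap L_2) \to L_0/L_2 \to L_0/L_1$ degenerates along $\Gr_{GL_n}^k$, and a Koszul resolution of the structure sheaf of the vanishing locus gives a two- (or short) term complex whose terms are explicit twists of $\O_Z$. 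Third, I would tensor this resolution by the bundles $\bigwedge^{j_1}(L_0/L_2)\otimes\cdots\otimes\bigwedge^{j_r}(L_0/L_2)\otimes\det(L_0/L_2)^\ell$ pulled back appropriately, and use the tautological filtration $0 \to L_1/L_2 \to L_0/L_2 \to L_0/L_1 \to 0$ on $Z$ to expand each exterior power $\bigwedge^{j_i}(L_0/L_2) = \bigoplus_{e_i \in \{0,1\}} \bigwedge^{j_i - e_i}(L_0/L_1) \otimes \bigwedge^{e_i}(L_1/L_2)$ (using that $L_1/L_2$ has rank one, so $e_i \in \{0,1\}$) — this produces the sum over $\ue \in \{0,1\}^r$, with $|\ue|$ tracking the total $L_1/L_2$-twist. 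Fourth, I would push forward along the $\P^{k-1}$-bundle structure of $\Gr_{GL_n}^k \hookrightarrow Z$ (equivalently along $\om$ restricted to the relevant locus), using that $\om_*(\O_Z \otimes \det(L_0/L_1)^a \otimes \det(L_1/L_2)^b) $ over the Grassmannian factor computes cohomology of $\O_{\P^{k-1}}$ twisted by a line bundle — the Borel–Weil–Bott / projective-bundle formula gives the sum over $s \in \{0,\dots,k-1\}$ with signs $(-1)^s$ and the $q$-power $(-q)^{-k+2s+1}$ coming from the $\Gm$-weights of $H^\bullet(\P^{k-1}, \O(-s-1))$ together with the equivariant shift $t$ has weight $2$. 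Finally, I would carefully bookkeep all the $\la 1 \ra$ and $\{\cdot\}$ shifts attached to the $\cP$'s, and match the resulting identity term-by-term with the claimed formula, noting that $\cP_{1,-s+|\ue|+\ell}$ and $\cP_{k-1,\ell}^{(\uj-\ue,s)}$ have precisely the twisting bundles produced by the expansion.

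\textbf{Main obstacle.} The conceptual structure is straightforward, but the genuinely delicate part will be the precise tracking of the $\Gm$-equivariant grading shifts: the normalizations of $\cP_{k,\ell}^\uj$ involve shifts $\la \frac12 k(n-k)\ra\{-|\uj|-k\ell\}$, the divisor/Koszul resolution introduces factors from the $\Gm$-weight of $t$ (weight $2$ in the convention of Section~\ref{sec:cohsatdefs}), and the push-forward along the $\P^{k-1}$-bundle introduces the weights of $H^0$ and $H^{k-1}$ of line bundles on projective space — these must combine to produce exactly $(-q)^{-k+2s+1}$ and the shifts hidden inside $[\cP_{1,-s+|\ue|+\ell}]$. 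A secondary technical point is justifying that higher pushforwards vanish except in the expected degrees, i.e. that the only contributions from $R\om_*$ of the twisted Koszul complex are the ones appearing in the formula; this follows from Bott vanishing on the $\P^{k-1}$-fibers once $0 \le s \le k-1$, together with rational singularities of Schubert varieties (as used in Lemma~\ref{lem:real} and Proposition~\ref{prop:comm}) to control the remaining pushforwards, but one must check no intermediate range of $s$ produces spurious cohomology. Everything else — the exterior-power expansion, the identification of the vanishing locus, the projective-bundle structure — is routine given the geometric description of $\Gr_{GL_n}$ and its convolution spaces recalled in Section~\ref{sec:geometryGL}.
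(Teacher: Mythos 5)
Your plan is correct through the first three steps, and those steps are essentially the paper's proof: you identify $W := \{L_2 \overset{1}\subset L_1 \overset{k-1}\subset L_0 : tL_0 \subset L_2\}$ inside $Z = \Gr^{(k-1,1)}_{GL_n}$, resolve $\O_W$ by a Koszul complex built from the bundle map $t: L_0/L_1 \to L_1/L_2\{2\}$ (equivalently a section of $\cV^\vee$ where $\cV := (L_0/L_1)\otimes(L_1/L_2)^\vee\{-2\}$ has rank $k-1$), and expand each $\bigwedge^{j_i}(L_0/L_2)$ via the tautological short exact sequence, which is exactly the identity $[\bigwedge^{j_i}(L_0/L_2)] = [\bigwedge^{j_i}(L_0/L_1)] + [\bigwedge^{j_i-1}(L_0/L_1)][L_1/L_2]$ in $K$-theory. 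So far so good.

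However, step four as you describe it would fail, and this is a real gap. The index $s$ ranges over $[0,k-1]$ because $\bigwedge^s\cV$ is the $s$th term of the Koszul complex and $\cV$ has rank $k-1$; the sign $(-1)^s$ is the alternating sign of the Koszul complex in $K$-theory; and the power $(-q)^{-k+2s+1}$ assembles from the shift $\{-2s\}$ in $\bigwedge^s\cV \cong \bigwedge^s(L_0/L_1)\otimes\det(L_1/L_2)^{-s}\{-2s\}$ together with the normalization shifts $\la\cdot\ra$ in the definitions of $\cP^{\uj}_{k,\ell}$, $\cP^{(\uj-\ue,s)}_{k-1,\ell}$ and $\cP_{1,-s+|\ue|+\ell}$ (using $[\la 1\ra] = -q$). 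None of this has anything to do with Borel–Weil–Bott or the cohomology of $\P^{k-1}$; indeed $H^\bullet(\P^{k-1},\O(-s-1))$ vanishes for $0 \le s \le k-2$ and is concentrated in one degree for $s = k-1$, so it cannot produce a sum over all $s\in[0,k-1]$ with alternating signs. More importantly, the right-hand side is a sum of \emph{products} $[\cP^{(\uj-\ue,s)}_{k-1,\ell}]\cdot[\cP_{1,-s+|\ue|+\ell}]$, and the only way to produce that factorization is to push each Koszul term — which after the exterior-power expansion is a twisted box product on $Z = \Gr^{k-1}\ttimes\Gr^1$ of a bundle pulled back from the first factor with a line bundle pulled back from the second — forward along $\om : Z \to \Gr_{GL_n}$, so that $\om_*$ becomes a convolution. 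If you instead push $\O_W\otimes(\text{bundles})$ forward along the $\P^{k-1}$-bundle $W\to\Gr^k$, you simply recover $\cP^\uj_{k,\ell}$ (possibly up to shift) and get no factorization at all. So the fix is: do not pass to the $\P^{k-1}$-fibration; keep the Koszul resolution on $Z$, expand, push each term along $\om : Z\to\Gr_{GL_n}$ to obtain convolutions, and then match normalizations.
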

\begin{proof}
Consider the varieties
$$\{L_2 \overset{1}\subset L_1 \overset{k-1}\subset L_0: tL_0 \subset L_2\} =: W \subset \Gr^{(k-1,1)}_{GL_n} = \{L_2 \overset{1}\subset L_1 \overset{k-1}\subset L_0: tL_0 \subset L_1, tL_1 \subset L_2 \}.$$
Then
\begin{equation}\label{eq:Pjkl}
\cP_{k,\ell}^\uj \{|\uj|+k\ell\} \cong \om_* \left( \O_{\Gr^{(k-1,1)}_{GL_n}} \otimes \bigwedge\nolimits^{\uj}(L_0/L_2) \otimes \det(L_0/L_2)^\ell \otimes \O_W \right) \la \frac12 k(n-k) \ra
\end{equation}
where $\om: \Gr^{(k-1,1)}_{GL_n} \rightarrow \Gr_{GL_n}$ is the usual multiplication map given by forgetting $L_1$ and $\bigwedge^{\raisebox{-0.4ex}{\scriptsize $\uj$}} (L_0/L_2)$ is short for $\bigwedge^{\raisebox{-0.4ex}{\scriptsize $j_1$}}(L_0/L_2) \otimes \dots \otimes \bigwedge^{\raisebox{-0.4ex}{\scriptsize $j_r$}}(L_0/L_2)$. 

To deal with $\O_W$ note that $W \subset \Gr^{(k-1,1)}_{GL_n}$ is carved out by the zero section of the map $t: L_0/L_1 \rightarrow L_1/L_2 \{2\}$. Thus, if we denote by $\cV$ the vector bundle $(L_0/L_1) \otimes (L_1/L_2)^\vee \{-2\}$, we have a Koszul resolution 
\begin{equation}\label{eq:koszul}
\bigwedge\nolimits^{\!k-1} \cV \rightarrow \dots \rightarrow \bigwedge\nolimits^{\!2} \cV \rightarrow \cV \rightarrow \O_{\Gr^{(k-1,1)}_{GL_n}} \rightarrow \O_W
\end{equation}
of $\O_W$ on $\Gr^{(k-1,1)}_{GL_n}$. Moreover, we have 
$$\bigwedge\nolimits^{\!s} \cV \cong \bigwedge\nolimits^{\!s}(L_0/L_1) \otimes \det(L_1/L_2)^{-s} \{-2s\}$$ 
while, at the level of Grothendieck rings, $[(L_0/L_2)] = [(L_0/L_1)]+[(L_1/L_2)]$ which implies that
$$[\bigwedge\nolimits^{\!j_i}(L_0/L_2)] = [\bigwedge\nolimits^{\!j_i}(L_0/L_1)] + [\bigwedge\nolimits^{\!j_i-1}(L_0/L_1)] \cdot [(L_1/L_2)].$$
Putting this together gives us 
\begin{align*}
& \left[\om_* \left( \O_{\Gr^{(k-1,1)}_{GL_n}} \otimes \bigwedge\nolimits^{\!\uj}(L_0/L_2) \otimes \det(L_0/L_2)^\ell \otimes \bigwedge\nolimits^s \cV \right) \right] \\
= & \sum_{\ue \in \{0,1\}^r} q^{2s} \left[ \om_* \left( \O_{\Gr^{(k-1,1)}_{GL_n}} \otimes \bigwedge\nolimits^{\!\uj-\ue}(L_0/L_1) \otimes \bigwedge\nolimits^{\!s}(L_0/L_1) \otimes \det(L_0/L_1)^\ell \otimes \det(L_1/L_2)^{-s+|\ue|+\ell} \right) \right] \\
= & \sum_{\ue \in \{0,1\}^r} q^{2s} q^{-|\uj|-k\ell} [\cP_{k-1,\ell}^{(\uj-\ue,s)} \la -\frac12 (k-1)(n-k+1) \ra] \cdot [\cP_{1,-s+|\ue|+\ell} \la -\frac12 (n-1) \ra ] 
\end{align*}
where $|\ue| := \sum_i e_i$. The result follows by combining this identity with (\ref{eq:Pjkl}) and the Koszul resolution (\ref{eq:koszul}) (while keeping in mind that the class of $\la 1 \ra$ in K-theory is $-q$). 
\end{proof}

\begin{Corollary}\label{cor:K0generators}
The Grothendieck ring of $\cP_{coh}^{GL_n(\O) \rtimes \Gm}(\Gr_{GL_n})$ is generated as a $\Z[q^{\pm 1/2}]$-algebra by $[\cP_{1,\ell}]$ for $\ell \in \Z$ and $[\cP_n]^{-1}$. 
\end{Corollary}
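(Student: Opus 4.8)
The plan is to use the recursion in Lemma \ref{lem:recursion} to reduce everything down to the generators $[\cP_{1,\ell}]$ and the inverse of the frozen variable $[\cP_n]$. First I would check that the sheaves $\cP^{\uj}_{k,\ell}$, as $k$, $\ell$, and $\uj$ vary, together span $K^{GL_n(\cO) \rtimes \Gm}(\Gr_{GL_n})$ over $\Z[q^{\pm 1/2}]$. This should follow from the description of simple objects: a simple perverse coherent sheaf supported on $\oGr^{\omega_k^\vee}_{GL_n}$ restricts on the open orbit $\Gr^k_{GL_n} \cong \Gr^{k}_{GL_n}$ (a finite Grassmannian) to a simple equivariant vector bundle, hence to an irreducible $GL_n$-homogeneous bundle, and the classes of such bundles in $K$-theory of a finite Grassmannian are spanned by exterior powers of the tautological quotient bundle $L_0/L$ tensored with powers of its determinant — so $[\cP^{\uj}_{k,\ell}]$ for varying $\uj,\ell$ already span the classes supported on the closure of that orbit modulo lower-dimensional corrections, and one inducts down the closure order on dominant coweights $\omega_k^\vee$. (Only minuscule coweights $\omega_k^\vee$ for $GL_n$ appear among the supports reached this way, but together with the invertibility $\cP_{n,\ell}^{-1} \cong \cP_{-n,-\ell}$ and convolution one reaches every component $\pi_0(\Gr_{GL_n}) \cong \Z$; I would make this precise by noting every $\Gr^{\lambda^\vee}_{GL_n}$ sits inside a convolution of minuscule orbits, so its structure sheaf and twists lie in the subring generated by the $[\cP^{\uj}_{k,\ell}]$.)

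Next, given that the $[\cP^{\uj}_{k,\ell}]$ span, I would run downward induction on $k$ using Lemma \ref{lem:recursion}. The lemma expresses $[\cP^{\uj}_{k,\ell}]$ as a $\Z[q^{\pm 1/2}]$-linear combination of products $[\cP^{(\uj - \ue, s)}_{k-1,\ell}] \cdot [\cP_{1,\,-s+|\ue|+\ell}]$, where the first factor is a class of the same type but with $k$ decreased by one, and the second factor is one of our claimed generators $[\cP_{1,m}]$ (for $m \in \Z$, possibly negative, which is allowed). Iterating, any $[\cP^{\uj}_{k,\ell}]$ becomes a polynomial in the $[\cP_{1,m}]$, $m \in \Z$, and the base case $[\cP^{\uj}_{1,\ell}]$: but for $k=1$ each $j_i \in \{0,1\}$, so $\bigwedge^{j_i}(L_0/L)$ is either trivial or $L_0/L = \det(L_0/L)$, and $[\cP^{\uj}_{1,\ell}]$ reduces (up to a power of $q$) to a single $[\cP_{1,\ell + \#\{i : j_i = 1\}}]$. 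Hence every $[\cP^{\uj}_{k,\ell}]$ lies in the subalgebra generated by $\{[\cP_{1,m}] : m \in \Z\}$.

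Finally I would account for the negative components of $\Gr_{GL_n}$ and the need for the inverse $[\cP_n]^{-1}$: the sheaves $\cP^{\uj}_{k,\ell}$ with $k \geq 0$ only touch components $\pi_0(\Gr_{GL_n}) = \Z_{\geq ?}$ of nonnegative degree, but convolution with $\cP_{n,\ell}^{-1} \cong \cP_{-n,-\ell}$ shifts components downward, and since $[\cP_n]$ is invertible with $[\cP_n]^{-1}$ adjoined we reach all of $\Z$; moreover $[\cP_{-k,\ell}]$ can be written as $[\cP_n]^{-1}$ times a class supported on a nonnegative component (using the relation $\cP_{k,\ell} \conv \cP_{-k,\ell'} $ supported near the identity, via Proposition \ref{prop:leadingterms} and invertibility of $\cP_n$), so these too land in the asserted subalgebra. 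Putting the pieces together gives that $[\cP_{1,\ell}]$ ($\ell \in \Z$) and $[\cP_n]^{-1}$ generate.

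The main obstacle I expect is the first step — verifying that the $[\cP^{\uj}_{k,\ell}]$ actually span $K^{GL_n(\cO) \rtimes \Gm}(\Gr_{GL_n})$, i.e. controlling simple objects on non-minuscule supports and on the negative components. The recursion of Lemma \ref{lem:recursion} does the bookkeeping once spanning is known, but establishing spanning requires genuinely using the finite-length $t$-structure, the classification of simples on Grassmannian orbits, and a clean argument that convolutions of minuscule orbit sheaves exhaust all closures $\oGr^{\lambda^\vee}_{GL_n}$ in $K$-theory.
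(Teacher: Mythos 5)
Your overall strategy — reduce to the $[\cP^{\uj}_{k,\ell}]$ and then run Lemma~\ref{lem:recursion} downward in $k$, using $[\cP_n]^{-1}$ for the negative components — matches the paper's, and your bookkeeping in the second and third paragraphs is fine (the base case $k=1$ collapses to a single $[\cP_{1,m}]$, and invertibility of $\cP_n$ handles nonpositive components). The genuine gap is exactly the one you flag: you never actually prove that the $[\cP^{\uj}_{k,\ell}]$ generate. Your sketch via ``simples on the open orbit restrict to homogeneous bundles, then induct down the closure order'' doesn't close because the minuscule orbits $\Gr^{\omega_k^\vee}_{GL_n}$ are already closed (no boundary to induct on), and for non-minuscule $\lambda^\vee$ you would have to show that \emph{every} simple perverse coherent sheaf supported on $\oGr^{\lambda^\vee}_{GL_n}$ — not just structure sheaves and line-bundle twists — lies in the subring of $K$-theory generated by classes pushed forward from convolutions of minuscule orbits, and that step is neither stated nor obvious from your description.

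The paper fills precisely this hole with a short, clean argument: for dominant $\lambda^\vee = \sum_i \omega_{k_i}^\vee$, the convolution resolution $\om: \Gr_{GL_n}^{k_1} \ttimes \dots \ttimes \Gr_{GL_n}^{k_r} \to \oGr_{GL_n}^{\lambda^\vee}$ satisfies $\om_*\O \cong \O$ because spherical Schubert varieties have rational singularities, and the projection formula then makes $\om_*$ \emph{surjective} on equivariant $K$-theory (any $[\cF]$ equals $\om_*[\om^*\cF]$). A Künneth decomposition of the twisted product's $K$-theory into a tensor product of the factors $K^{GL_n(\O)\rtimes\Gm}(\Gr^{k_i}_{GL_n})$ over $K^{GL_n(\O)\rtimes\Gm}(pt)$ then reduces generation to (a) the $K$-theory of the base point, generated by the $[\cP_n^i][\cP_n^{-1}]$, and (b) the $K$-theory of a single $\Gr^k_{GL_n}$, generated over (a) by the $[\cP_k^i]$ — and both feed directly into the recursion. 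If you insert this rational-singularities-plus-projection-formula step in place of your induction-on-closure-order sketch, your proof becomes complete and is essentially the paper's.
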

\begin{proof}
For any dominant $\l = \sum_{i=1}^r \omega_{k_i}$ we have the resolution 
$$\om: \Gr_{GL_n}^{k_1} \ttimes \dots \ttimes \Gr_{GL_n}^{k_r} \rightarrow \oGr_{GL_n}^{\l}.$$ 
By the projection formula and the fact $\oGr_{GL_n}^{\l}$ has rational singularities the map $\om_*$ is surjective on K-theory. Moreover, $K^{GL_n(\O) \rtimes \Gm}(\Gr_{GL_n}^{k_1} \ttimes \dots \ttimes \Gr_{GL_n}^{k_r})$ is isomorphic to 
$$K^{GL_n(\O) \rtimes \Gm}(\Gr_{GL_n}^{k_1}) \otimes_{K^{GL_n(\O) \rtimes \Gm}(pt)} \dots \otimes_{K^{GL_n(\O) \rtimes \Gm}(pt)}  K^{GL_n(\O) \rtimes \Gm}(\Gr_{GL_n}^{k_r}).$$
Thus it suffices to show generation of the (equivariant) K-theory of the fixed point $e \in \Gr_{GL_n}$ and of $\Gr^k_{GL_n}$ for $k=1, \dots, n$. 

The former is generated (as an algebra) by
$$\O_e \otimes [\bigwedge\nolimits^i(L_0/tL_0)] \cong [\cP_n^i] \cdot [\cP_n^{-1}]$$
where, following the notation introduced above and used in Lemma \ref{lem:recursion}, $\cP_n^i = \O_{\Gr^n_{GL_n}} \otimes \bigwedge^{\raisebox{-0.4ex}{\scriptsize $i$}}(L_0/L)$. On the other hand, applying Lemma \ref{lem:recursion} repeatedly shows that $[\cP_n^i]$ is generated by the classes $\{[\cP_{1,\ell}]\}_{\ell \in \Z}$.

The latter is generated, as an algebra, by the equivariant K-theory of $e \in \Gr_{GL_n}$ and by $[\cP_k^i]$ for $i=0,\dots,k$. Once again, Lemma \ref{lem:recursion} shows that the $[\cP_k^i]$ are generated by the classes $\{[\cP_{1,\ell}]\}_{\ell \in \Z}$.
\end{proof}

\begin{Remark}
A similar argument as in the proof of Corollary \ref{cor:K0generators} can be used to show that $\cP_{coh}^{GL_n(\O) \rtimes \Gm}(\Gr_{GL_n})$ is generated by $[\cP_{1,\ell}]$ and $[\cP_{-1,\ell}]$ (without the need for $[\cP_n]^{-1}$). This gives another (more geometric) proof of \cite[Theorem 9.3]{FT17}. 
\end{Remark}

\section{Rigidity}\label{sec:rigidity}

In this section we construct the left and right adjoints for convolution in the coherent Satake category, and more generally in $D_{coh}^{G(\O) \rtimes \Gm}(\Gr_G)$. We illustrate the construction with some explicit examples in the case $G=GL_n$.

Let us briefly contrast our setting with that of the constructible Satake category $\cP^{G(\cO)}(\Gr_G)$. This category is symmetric monoidal and its global sections functor is a fiber functor. Thus following \cite[Proposition 1.20]{DM82} to establish its rigidity it suffices to show that objects with one-dimensional image under the fiber functor are invertible. This condition is moreover sufficient to apply Tannakian reconstruction, leading in the case of $\cP^{G(\cO)}(\Gr_G)$ to the geometric Satake equivalence (c.f. \cite[Theorem 2.7 (3)]{BR17}).

However, the global sections functor on the coherent Satake category is not a fiber functor (it does not respect the monoidal structure), and its convolution product is not symmetric. This forces us to prove rigidity by directly checking the usual properties of adjoints. An adaptation of our proof should also apply to the constructible Satake category to recover rigidity in that case. 

\subsection{Left and right duals}

We begin with the diagram 
$$\Gr_G \xleftarrow{\pi} (G(\cK) \rtimes \Gm) \times_{G(\O) \rtimes \Gm} \Gr_G \xrightarrow{\om} \Gr_G$$
where $\pi(g,x) = [g]$ and $\om(g,x) = [gx]$. The projection $\pi$ has a section 
$$s: \Gr_G \rightarrow (G(\cK) \rtimes \Gm) \times_{G(\O) \rtimes \Gm} \Gr_G$$
given by $s([g]) = (g,[g^{-1}])$.

For $\shF \in D_{coh}^{G(\O) \rtimes \Gm}(\Gr_G)$ we define $\shF^* := s^* (\O\, \tbox\, \shF)$. More concretely, suppose $\shF$ is supported on $\oGr_G^{\l^\vee}$. Let  ${\l^\vee}^* := - w_0 \, {\l^\vee}$ denote the dominant conjugate of $-\l^\vee$. Then $s$ restricts to a section 
$$s: \oGr_G^{{\l^\vee}^*} \rightarrow \oGr_G^{{\l^\vee}^*} \ttimes \oGr_G^{{\l^\vee}}$$
and $\shF^* \cong s^*(\O_{\oGr_G^{{\l^\vee}^*}} \tbox \shF)$. Since $\oGr_G^{{\l^\vee}^*} \ttimes \oGr_G^{\l^\vee}$ is a locally trivial fibration it follows that $\shF^*$ is also coherent on $\oGr_G^{{\l^\vee}^*}$. It is easy to see that $\cF \mapsto \cF^*$ is an involution of $D_{coh}^{G(\O) \rtimes \Gm}(\Gr_G)$. 

\begin{Theorem}\label{thm:adjoints}
The monoidal category $D^{G(\O) \rtimes \Gm}_{coh}(\Gr_G)$ is rigid. The left and right duals of $\shF \in D_{coh}^{G(\O) \rtimes \Gm}(\Gr_G)$ are $\shF^L = \D(\shF^*)$ and $\shF^R = (\D\shF)^*$. 
\end{Theorem}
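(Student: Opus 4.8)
The plan is to exhibit explicit evaluation and coevaluation morphisms and to verify the triangle identities directly, after reducing everything to finite-dimensional Schubert varieties. Since any $\cF \in D_{coh}^{G(\O)\rtimes\Gm}(\Gr_G)$ is a pushforward from some $\oGr_G^{\l^\vee}$, and the convolution of sheaves supported on $\oGr_G^{\l_1^\vee}$ and $\oGr_G^{\l_2^\vee}$ is computed through the \emph{proper} map $\om\colon\oGr_G^{\l_1^\vee}\ttimes\oGr_G^{\l_2^\vee}\to\oGr_G^{\l_1^\vee+\l_2^\vee}$, Grothendieck--Serre duality and base change for the $(\om_*,\om^!)$ adjunction are available throughout. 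I would first record the formal properties of the two involutions at play. Because $\om$ is proper and the twisted product $\cF\tbox\cG$ is \'etale-locally the external product $\cF\boxtimes\cG$ (with the dualizing complex also an external product), one gets $\D(\cF\conv\cG)\cong\D\cF\conv\D\cG$ and $\D\cO_e\cong\cO_e$ (after suitably normalizing $\omega$); meanwhile $\cF\mapsto\cF^*$ is an involution induced by $g\mapsto g^{-1}$, and $s^*(\cA\tbox\cB)\cong\cA\otimes\cB^*$ because $\cF\tbox\cG$ is multiplicative in the two factors and $\pi\circ s=\id$. Granting these, it suffices to prove that $D_{coh}^{G(\O)\rtimes\Gm}(\Gr_G)$ is left rigid with $\cF^L\cong\D(\cF^*)$: transporting this duality datum along the anti-equivalence $\D$, and using the properties above, exhibits $\D(\cF^L)=\cF^*$ as a right dual of $\D\cF$, which after replacing $\cF$ by $\D\cF$ gives right rigidity and the formula $\cF^R\cong(\D\cF)^*$.

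For the construction, fix $\cF$ supported on $\oGr_G^{\l^\vee}$ and put $\nu^\vee={\l^\vee}^*$. The crucial geometric observation is that $\om^{-1}(e)$ is exactly the image of the section $s$, hence is isomorphic to $\oGr_G^{\l^\vee}$ (resp.\ to $\oGr_G^{\nu^\vee}$ in the opposite convolution order). Combining base change along the square cutting out this fiber with the identity $j^!\cong\D\,j^*\D$ for the closed embedding $j=s$ (a consequence of $\D$ intertwining $j^!$ and $j^*$) and with $s^*(\cA\tbox\cB)\cong\cA\otimes\cB^*$, one obtains canonical isomorphisms
\[
i_e^!\bigl(\cF\conv\D(\cF^*)\bigr)\;\cong\;R\Gamma\bigl(\oGr_G^{\l^\vee},\,\cHom(\cF,\cF)\bigr),
\qquad
\Hom_{\Gr_G}\!\bigl(\D(\cF^*)\conv\cF,\,\cO_e\bigr)\;\cong\;\Hom_{\oGr_G^{\nu^\vee}}\!\bigl(\D(\cF^*),\D(\cF^*)\bigr).
\]
Since $\Hom_{\Gr_G}(\cO_e,-)=H^0 i_e^!(-)$, I take the coevaluation $\mathrm{coev}\colon\cO_e\to\cF\conv\D(\cF^*)$ to be the class of $\id_\cF$ and the evaluation $\mathrm{ev}\colon\D(\cF^*)\conv\cF\to\cO_e$ to be the class of $\id_{\D(\cF^*)}$ under these identifications.

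The remaining step, which I expect to be by far the hardest, is the verification of the two zig-zag identities. This requires analyzing the triple convolution $\cF\conv\D(\cF^*)\conv\cF$ on $\oGr_G^{\l^\vee}\ttimes\oGr_G^{\nu^\vee}\ttimes\oGr_G^{\l^\vee}$ and chasing the composite $(\id_\cF\conv\mathrm{ev})\circ(\mathrm{coev}\conv\id_\cF)$ through the chain of base-change, Grothendieck-duality and projection-formula isomorphisms that define $\mathrm{ev}$ and $\mathrm{coev}$: one shows it is the identity over the dense open locus on which the two multiplication maps restrict to isomorphisms, and then that it extends to the identity over the whole space (and symmetrically for the other zig-zag). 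The technical points that will need genuine care are: whether the relevant fiber products (notably $\om^{-1}(e)$ and the loci appearing in the zig-zags) are Tor-independent, so that ordinary rather than derived base change suffices; the behaviour of $\om^!$ applied to a merely coherent---rather than perverse---sheaf and its interaction with the section $s$; the bookkeeping of the $\Gm$-equivariant and formal half-integer cohomological shifts, so that the unit really is $\cO_e$ on the nose; and the coherent-duality statements on singular Schubert varieties. Rational singularities of spherical Schubert varieties (used already in Lemma~\ref{lem:real}) help control the pushforwards but do not by themselves resolve the $!$-pullback subtleties, so part of this is cleanest to carry out in the DG/$\infty$-categorical enhancement. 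Once the zig-zags are checked, rigidity of $D_{coh}^{G(\O)\rtimes\Gm}(\Gr_G)$ together with both duality formulas follows.
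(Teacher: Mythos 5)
Your overall scaffolding matches the paper's: you build the unit and counit via the section $s\colon Y\to Y\ttimes Y^*$, you identify $s^!(\cF\tbox\D(\cF^*))\cong\cHom(\cF,\cF)$ and $\Hom(\D(\cF^*)\conv\cF,\cO_e)\cong\End(\D(\cF^*))$, you take the classes of the respective identities as coev and ev, and you reduce right rigidity to left rigidity by transporting along $\D$ (the paper achieves the same reduction by the algebraic identity $(\cF^R)^L\cong\cF$, but yours also works). These identifications are correct and coincide with (\ref{eq:adj1})--(\ref{eq:adj4}).

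Where the proposal diverges, and where I think it has a genuine gap, is in the plan for verifying the triangle identities. You propose to show the composite $\cF\to\cF\conv\cF^L\conv\cF\to\cF$ is \emph{the identity}, by checking it on the dense open locus where the multiplication maps are isomorphisms and then ``extending.'' Two problems. First, you miss the paper's key simplification: it is enough that the composite be \emph{an isomorphism}, since one can then precompose the unit with $f^{-1}\conv\id$ to make the first zig-zag literally the identity; demanding equality on the nose is an unnecessary burden. Second, the ``identity over a dense open, then extend'' step is not obviously available: for a general object $\cF\in D_{coh}$ (not necessarily simple or perverse, possibly with pieces supported on the boundary $\oGr_G^{\l^\vee}\smallsetminus\Gr_G^{\l^\vee}$), knowing an endomorphism of $\cF$ restricts to the identity on the open cell does not pin it down, since the restriction map on $\End$ need not be injective. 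The paper instead avoids any dense-open argument and computes the composite directly as a chain of adjunction and base-change morphisms in $D_{qcoh}$, culminating in the observation that it factors through adjunctions for $p\circ j_2=\id$ and $q\circ j_1=\id$, hence is invertible; the genuinely hard step is the commutative diagram of Proposition~\ref{prop:adjoints}, which compares $i_1^!i_{2*}$ to $j_{2*}j_1^!$ via the natural transformations $\phi,\eta,\theta$ and requires careful control of where $\tau\colon\id\to\D\D$ and $\theta\colon f^!\to\D f^*\D$ are isomorphisms. You correctly flag these $D_{qcoh}$, Tor-independence, and $j^!$-vs.-$\D j^*\D$ subtleties as the technical core, but the proposal does not supply an argument to handle them, and the route you sketch (verify exact zig-zags by restriction to an open) would need to be replaced by something closer to the paper's functorial computation.
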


We prove this result starting in Section \ref{sec:proofoutline}. It yields the following corollary.

\begin{Corollary}\label{cor:adjoints}
The left and right duals of a perverse sheaf are again perverse, hence $\cP_{coh}^{G(\O) \rtimes \Gm}(\Gr_G)$ is rigid.
\end{Corollary}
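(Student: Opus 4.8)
The plan is to derive the corollary from Theorem~\ref{thm:adjoints} by showing that both functors occurring in the formulas $\cF^L = \D(\cF^*)$ and $\cF^R = (\D\cF)^*$ preserve perversity. That $\D$ is $t$-exact for the perverse coherent $t$-structure is already recorded above (\cite{AB10}), so everything comes down to proving that the involution $\cF \mapsto \cF^*$ restricts to an autoequivalence of $\cP_{coh}^{G(\cO)\rtimes\Gm}(\Gr_G)$, i.e.\ that it too is $t$-exact.

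To see this I would unwind the definition $\cF^* = s^*(\cO\,\tbox\,\cF)$ geometrically. Since the twisted product $\cO\,\tbox\,\cF$ has pullback to $(G(\cK)\rtimes\Gm)\times\Gr_G$ identified with a pullback of $\cO\boxtimes\cF$, it is the descent of $\mathrm{pr}_2^*\cF$; pulling this back along the section $s([g])=(g,[g^{-1}])$ of $\pi$ identifies $\cF\mapsto\cF^*$, up to natural isomorphism, with the pullback functor along the automorphism $\theta$ of $(\Gr_G)_{red}$ induced by $g\mapsto \sigma(g)^{-1}$ for a Chevalley anti-involution $\sigma$ of $G$ (the Cartan involution of $\Gr_G$). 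Concretely, when $\cF$ is supported on $\oGr_G^{\lambda^\vee}$ this recovers the stated identification $\cF^*\cong s^*(\cO_{\oGr_G^{{\lambda^\vee}^*}}\tbox\cF)$, with $\theta$ restricting to an isomorphism $\oGr_G^{{\lambda^\vee}^*}\xrightarrow{\sim}\oGr_G^{\lambda^\vee}$. The relevant properties of $\theta$ are: it is $G(\cO)\rtimes\Gm$-equivariant after twisting the action by the group automorphism induced by $\sigma$; it carries the orbit $\Gr_G^{\mu^\vee}$ isomorphically onto $\Gr_G^{{\mu^\vee}^*}$ for every coweight $\mu^\vee$; and $\dim\Gr_G^{\mu^\vee}=\dim\Gr_G^{{\mu^\vee}^*}$, since ${\mu^\vee}^*=-w_0\mu^\vee$ and $\langle -w_0\mu^\vee,2\rho\rangle=\langle\mu^\vee,2\rho\rangle$.

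Granting this, $t$-exactness of $\cF\mapsto\cF^*$ is checked orbit by orbit against the definition of perverse coherent sheaf: writing $\phi$ for the isomorphism $\Gr_G^{{\mu^\vee}^*}\to\Gr_G^{\mu^\vee}$ induced by $\theta$, one has $i_{{\mu^\vee}^*}^*\,\cF^*\cong\phi^*\,i_{\mu^\vee}^*\,\cF$ and $i_{{\mu^\vee}^*}^!\,\cF^*\cong\phi^!\,i_{\mu^\vee}^!\,\cF$, and since $\phi$ is an isomorphism both $\phi^*$ and $\phi^!=\phi^*$ are exact for the standard $t$-structures; the amplitude bounds $\le -\tfrac12\dim\Gr_G^{\mu^\vee}$ and $\ge -\tfrac12\dim\Gr_G^{\mu^\vee}$ defining perversity are therefore preserved, using the equality of dimensions. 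Combined with $t$-exactness of $\D$ and the formulas of Theorem~\ref{thm:adjoints}, this shows $\cF^L,\cF^R\in\cP_{coh}^{G(\cO)\rtimes\Gm}(\Gr_G)$ for every $\cF\in\cP_{coh}^{G(\cO)\rtimes\Gm}(\Gr_G)$. Finally, $\cP_{coh}^{G(\cO)\rtimes\Gm}(\Gr_G)$ is a full monoidal subcategory of $D_{coh}^{G(\cO)\rtimes\Gm}(\Gr_G)$ containing the unit $\cO_e$, and morphisms between objects of the heart of a $t$-structure are computed the same way in the heart or in the ambient category; hence the evaluation and coevaluation maps of Theorem~\ref{thm:adjoints}, and the triangle identities they satisfy, descend to $\cP_{coh}^{G(\cO)\rtimes\Gm}(\Gr_G)$, which is thus rigid.

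The step I expect to be the real work is the geometric identification of $\cF\mapsto\cF^*$ with pullback along $\theta$: one must be careful that $g\mapsto g^{-1}$ is only an anti-automorphism, so a Chevalley anti-involution must be inserted to obtain an honest automorphism of $\Gr_G$; one must track the resulting twist of the $G(\cO)\rtimes\Gm$-action and confirm it does not interfere with the grading conventions, in particular the formal half-integer shifts on components with odd-dimensional orbits; and one must verify that the functor obtained this way agrees with the abstract $(-)^*$, not merely that it shares its formal properties. Granting that identification, the $t$-exactness argument and hence the corollary are routine.
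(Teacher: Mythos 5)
You have a genuine gap, and it is exactly at the step you flagged as ``the real work.'' The identification of $\cF \mapsto \cF^*$ with pullback along the Cartan involution $\theta([g]) = [\sigma(g)^{-1}]$ of $\Gr_G$ is not correct. The paper itself introduces that pullback functor later, as $\cF \mapsto \cF'$ in Section \ref{sec:barinvolution}, and explicitly treats it as a \emph{different} involution that merely commutes with $(\cdot)^*$: Lemma \ref{lem:involutions} records $\cP_{\l^\vee\!,\,\mu}^* \cong \cP_{-\l^\vee\!,\,\mu}$ whereas $\cP_{\l^\vee\!,\,\mu}' \cong \cP_{-\l^\vee\!,\,-\mu}$, so for a simple with nontrivial $\mu$ these are genuinely distinct objects. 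You can also see the mismatch abstractly: since $\theta(g) = \sigma(g)^{-1}$ is an honest group automorphism, pullback along $\theta$ is covariantly monoidal, $\theta^*(\cF \conv \cG) \cong \theta^*\cF \conv \theta^*\cG$. But $(\cdot)^*$ must be \emph{contravariantly} monoidal, $(\cF\conv\cG)^* \cong \cG^* \conv \cF^*$ --- this is forced by $\cF^L = \D(\cF^*)$, the covariance of $\D$ for $\conv$, and the identity $(\cF\conv\cG)^L\cong\cG^L\conv\cF^L$ in any rigid monoidal category. As the coherent Satake category is not symmetric, a covariantly monoidal functor cannot coincide with a contravariantly monoidal one. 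So $(\cdot)^*$ is not a pullback along any automorphism of $\Gr_G$, and the reduction of $t$-exactness to that of $\theta^*$ does not go through.

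What does work, and is the paper's route, is a direct orbit-by-orbit check that needs no such identification. For an orbit $\Gr_G^{\l^\vee}$, the section $s$ restricts to a section of the finite-dimensional convolution variety $\Gr_G^{\l^\vee}\ttimes\Gr_G^{{\l^\vee}^*}$ over $\Gr_G^{\l^\vee}$, fitting into a commuting square with $i_{\l^\vee}$ and $i_{({\l^\vee},{\l^\vee}^*)}$. Since that fibration is Zariski-locally trivial with fiber $\Gr_G^{{\l^\vee}^*}$, the twisted product $\cO\,\tbox\,\cF$ restricted there inherits the cohomological amplitude bound from $i_{{\l^\vee}^*}^*\cF$; pulling back along the closed embedding $s$ cannot raise the upper bound, and $\dim\Gr_G^{\l^\vee} = \dim\Gr_G^{{\l^\vee}^*}$ then gives the $*$-perversity bound for $i_{\l^\vee}^*\cF^*$. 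The $!$-bound is argued symmetrically. Your final paragraph, deducing rigidity of the heart from $t$-exactness of $\cF\mapsto\cF^L,\cF^R$, is fine; it is the identification of $(\cdot)^*$ that must be replaced.
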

\begin{proof}
Since $\D$ preserves $\cP_{coh}^{G(\O) \rtimes \Gm}(\Gr_G)$ it remains to show that $\cF \mapsto \cF^*$ also preserves it. 

Consider the finite-dimensional convolution variety 
$$i_{({\l^\vee}, {\l^\vee}^*)}: \Gr_G^{{\l^\vee}} \ttimes \Gr_G^{{\l^\vee}^*} \into (G(\cK) \rtimes \Gm) \times_{G(\O) \rtimes \Gm} \Gr_G.$$ 
Since $\shF$ is perverse and $\Gr_G^{{\l^\vee}} \ttimes \Gr_G^{{\l^\vee}^*}$ is a Zariski-locally trivial fibration, $i_{({\l^\vee}, {{\l^\vee}^*})}^* (\cO\, \tbox \, \shF)$ is supported in degrees $\leq \frac12 \dim \Gr_{G}^{{\l^\vee}^*}$. From the commutativity of 
$$\xymatrix{
\Gr_G^{{\l^\vee}} \ar[rrr]^{i_{{\l^\vee}}} \ar[d]^s & & & \Gr_G \ar[d]^s \\
\Gr_G^{{\l^\vee}} \ttimes \Gr_G^{{\l^\vee}^*} \ar[rrr]^{i_{({\l^\vee}, {\l^\vee}^*)}} & & & (G(\cK) \rtimes \Gm) \times_{G(\O) \rtimes \Gm} \Gr_G }$$
it follows that $i_{{\l^\vee}}^* \shF^*$ is supported in degrees $\leq \frac12 \dim \Gr_G^{{\l^\vee}}$ (where we used that $\Gr_G^{\l^\vee} = \dim \Gr_G^{{\l^\vee}^*}$). The analogous statement for $i_{{\l^\vee}}^! \shF^*$ follows similarly. 
\end{proof}

\begin{Remark}
In general $\shF^L$ and $\shF^R$ are not isomorphic. This is in contrast to the constructible case where left and right duals in $\cP^{G(\cO)}(\Gr_G) \cong \Rep\, G^\vee$ agree. 

The difference between $\shF^L$ and $\shF^R$ in our case can be traced back to the fact that $(\omega_{\oGr^{\l^\vee}_G})^*$ (defined using the pullback $s^*$) is {\em not} isomorphic to $\omega_{\oGr^{{\l^\vee}^*}_G}$. However, they are isomorphic in the constructible case (replacing the coherent dualizing sheaf with the constructible one). 
\end{Remark}

\subsection{The adjunction maps}

We now describe the maps relating $\shF$, $\shF^L$, and $\shF^R$. From hereon fix $Y = \oGr_G^{\l^\vee}$ and $Y^* = \oGr_G^{{\l^\vee}^*}\!\!$. We will write $D_{coh}(Y)$ for $D^{G(\O) \rtimes \Gm}_{coh}(\Gr_G^{\l^\vee})$ and likewise for $Y^*$. We have the map $s: Y^* \rightarrow Y^* \ttimes Y$ and, for $\shF \in D_{coh}(Y)$, we can consider the composition
\begin{equation}\label{eq:adj1}
\shF^L \tbox \shF = \D(\shF^*) \tbox \shF \xrightarrow{adj} s_* {s}^* (\shF^L \tbox \shF) \cong s_* (\D(\shF^*) \otimes \shF^*) \xrightarrow{m} s_* \omega_{Y^*},
\end{equation}
where $m$ is the natural evaluation map $\cHom(\shF^*, \omega_{Y^*}) \otimes \shF^* \to \omega_{Y^*}$. 
Since $p := \om \circ s$ projects $Y^*$ to the point $e \in \Gr_G$ we have an adjunction map
$$p_* \omega_{Y^*} = p_* p^! \O_e \rightarrow \O_e.$$ 
Composing with the pushforward of (\ref{eq:adj1}) to $\Gr_G$ we thus obtain
\begin{equation}\label{eq:adj2}
\shF^L * \shF \rightarrow \om_* s_* \omega_{Y^*} \xrightarrow{adj} \O_e.
\end{equation}

To define the map $\O_e \rightarrow \shF * \shF^L$ we now let $s: Y \rightarrow Y \ttimes Y^*$ and note that 
\begin{equation}\label{eq:adj3}
s^! (\shF \tbox \shF^L) = s^! (\shF \tbox \D(\shF^*)) \cong \D s^* \D (\shF \tbox \D(\shF^*)) \cong \D(\D(\shF) \otimes \shF),
\end{equation}
where in the last isomorphism we use the fact that $\shF^{**} \cong \shF \cong \D \D (\shF)$. Moreover, we have 
$$\O_Y \xrightarrow{\iota} \cHom(\D(\shF),\D(\shF)) \cong \D(\D(\shF) \otimes \shF)$$  
which gives us maps 
$$\O_Y \rightarrow s^!(\shF \tbox \shF^L) \ \ \Leftrightarrow \ \ s_* \O_Y \rightarrow \shF \tbox \shF^L.$$
We now push forward $\om$ to $\Gr_G$. As before, $p := \om \circ s$ projects $Y$ to the point $e \in \Gr_G$ and we have a composition
\begin{equation}\label{eq:adj4}
\O_e \xrightarrow{adj} \om_* s_* \O_Y \rightarrow \shF * \shF^L,
\end{equation}
where the first map is the natural adjunction $\O_e \rightarrow p_* p^* \O_e = p_* \O_Y$. 

The maps involving $\shF^R$ are defined similarly. Equivalently they can be obtained from those above. For example, replacing $\shF$ with $\shF^R$ in (\ref{eq:adj2}) we get a map $(\shF^R)^L * \shF^R \rightarrow \O_e$. But $(\shF^R)^L \cong \D(\D(\shF)^{**}) \cong \shF$ so this gives the map $\shF * \shF^R \rightarrow \O_e$.

\subsection{Some examples}

Before proceeding to the proof of Theorem \ref{thm:adjoints} we illustrate the process of applying $(\cdot)^*$ and taking adjoints in the case $G=GL_n$. 

\begin{Lemma}\label{lem:example1}
We have $\cP_{k,\ell}^* \cong \cP_{-k,\ell} \cong \cP_{n,-\ell}^{-1} * \cP_{n-k,-\ell}$. 
\end{Lemma}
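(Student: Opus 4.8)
The plan is to compute $\cP_{k,\ell}^*$ directly from the definition $\shF^* = s^*(\O\, \tbox\, \shF)$, where $s: \oGr_{GL_n}^{{\l^\vee}^*} \to \oGr_{GL_n}^{{\l^\vee}^*} \ttimes \oGr_{GL_n}^{\l^\vee}$ is the section $s([g]) = (g, [g^{-1}])$. For $\shF = \cP_{k,\ell}$ supported on $\Gr_{GL_n}^{\omega_k^\vee}$, the relevant dominant conjugate is $(\omega_k^\vee)^* = -w_0\omega_k^\vee = \omega_{n-k}^\vee$ (since $\cP_{n,\ell}$ is invertible, one sees $\oGr^{-k} = \oGr^{\omega_k^{\vee *}}$ in the notation of Section \ref{sec:geometryGL} is really $\Gr^{\omega_{n-k}^\vee}$ twisted by the invertible $\cP_{n,\cdot}$). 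First I would describe the convolution variety $\oGr_{GL_n}^{\omega_{n-k}^\vee} \ttimes \oGr_{GL_n}^{\omega_k^\vee}$ in lattice terms: it parametrizes chains $L_2 \subset L_1 \subset L_0$ with $\dim L_0/L_1 = n-k$, $\dim L_1/L_2 = k$ (hence $\dim L_0/L_2 = n$), together with the condition $tL_{i-1} \subset L_i$; the section $s$ picks out the locus where $L_2 = tL_0$. So $s$ has image isomorphic to $\Gr_{GL_n}^{n-k} \subset \Gr_{GL_n}^{(n-k,k)}$ carved out by $L_2 = tL_0$, and restricting the tautological bundle $\det(L_1/L_2)^\ell$ along $s$ gives $\det(L_1/tL_0)^\ell$ on $\Gr_{GL_n}^{n-k}$.

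Next I would track the twist. By the characterization of $\cF\,\tbox\,\cG$, pulling back $\O\,\tbox\,\cP_{k,\ell}$ and restricting along $s$, one gets $\O_{\Gr_{GL_n}^{n-k}} \otimes \det(L_1/tL_0)^\ell$, up to the cohomological and equivariant shifts coming from the $\la \frac12 \dim \Gr^{\omega_k^\vee}\ra\{-k\ell\}$ in the definition of $\cP_{k,\ell}$ and from the degree-shift bookkeeping of the section map (including the $\Gm$-weight of $t$). Since $\dim \Gr_{GL_n}^{\omega_k^\vee} = \dim \Gr_{GL_n}^{\omega_{n-k}^\vee} = k(n-k)$, the cohomological shift transports without change. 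On $\Gr_{GL_n}^{n-k}$ (i.e. $L \overset{n-k}\subset L_0$ with $tL_0 \subset L$, writing $L$ for $L_1$), we have a canonical exact sequence relating $L/tL_0$, $L_0/L$, and $L_0/tL_0 \cong V$, giving $\det(L/tL_0) \cong \det(V) \otimes \det(L_0/L)^{-1}$, with $\det(V)$ carrying a $\Gm$-weight since $t$ acts with weight (in our normalization) $2$; this is exactly the mechanism producing $\cP_{n,-\ell}^{-1} * \cP_{n-k,-\ell}$ after accounting for the $\{-k\ell\}$ shifts. Concretely, $\cP_{-k,\ell}$ was \emph{defined} in \eqref{eq:-Ps} as $i_{-k*}(\O_{\Gr^{-k}} \otimes \det(L/L_0)^\ell)\la\frac12\dim\Gr^{-k}\ra\{-k\ell\}$; identifying $\Gr^{-k}_{GL_n} = \{L_0 \overset{k}\subset L: tL\subset L_0\}$ with $\Gr^{n-k}_{GL_n}$ via $L \mapsto tL$ (equivalently multiplying by the invertible $\cP_{n,\cdot}$), and matching $\det(L/L_0)$ with the bundle found above, gives the first isomorphism $\cP_{k,\ell}^* \cong \cP_{-k,\ell}$. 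The second isomorphism $\cP_{-k,\ell} \cong \cP_{n,-\ell}^{-1} * \cP_{n-k,-\ell}$ then follows from convolving with $\cP_{n,-\ell}$ (which is invertible with $\cP_{n,-\ell}^{-1} \cong \cP_{-n,\ell}$), using the commutativity/frozen identities of Remark \ref{rem:comm} and Lemma \ref{lem:frozenidentity} to see $\cP_{n,-\ell}*\cP_{-k,\ell}$ simplifies to $\cP_{n-k,-\ell}$ by a projection-formula argument on the convolution space $\Gr^{(n,?)}$ (or directly: $\cP_{n-k,-\ell}$ restricted appropriately).

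The main obstacle I expect is getting all the $\Gm$-equivariant shifts exactly right — in particular the interplay between the convention that $t$ has $\Gm$-weight $2$, the $\{-k\ell\}$ normalization built into the $\cP_{k,\ell}$, and the shift introduced by pulling back along the section $s$ (which involves $g^{-1}$, hence inverts $t$-powers). A clean way to manage this is to first prove the unshifted, ungraded statement $i_{k*}(\O_{\Gr^k}\otimes\det(L_0/L)^\ell)^* \cong i_{-k*}(\O_{\Gr^{-k}}\otimes\det(L/L_0)^\ell)$ as plain equivariant coherent sheaves by the lattice computation above, and then separately verify the half-integer cohomological shift and the $\Gm$-weight match by evaluating both sides at the distinguished fixed point (the image of $[t^{\omega_k^\vee}]$), where the fiber computation reduces to a one-line check using the character $a \mapsto a^{2\la\l^\vee,\mu\ra + \dim\Gr^{\l^\vee}}$ from Section \ref{sec:cohsatdefs}. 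The reduction to the second form is comparatively routine given Lemma \ref{lem:frozenidentity} and the invertibility of $\cP_{n,\cdot}$.
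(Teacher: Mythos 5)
Your approach --- computing $s^*(\O\,\tbox\,\cP_{k,\ell})$ in lattice terms and then reducing to the second form by convolving with invertibles --- matches the paper's in broad strokes, so this is essentially the same proof. The one real difference is parametrization: the paper works directly on $\Gr^{-k}\ttimes\Gr^k = \{L_0\overset{k}\subset L_1\overset{k}\supset L_2\}$, where the section $s$ simply imposes $L_2 = L_0$, so $\det(L_1/L_2)^\ell$ restricts immediately to $\det(L/L_0)^\ell$, literally the defining bundle of $\cP_{-k,\ell}$ --- no exact sequence needed. Your translated picture (shifting by $t$ to land in $\Gr^{(n-k,k)}$ with section condition $L_2 = tL_0$) is workable but forces the detour through $\det(L/tL_0)\cong\det(L_0/tL_0)\otimes\det(L_0/L)^{-1}$ and a re-translation to recognize $\cP_{-k,\ell}$, which is exactly why you end up worrying about equivariant-shift bookkeeping that the paper sidesteps. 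Also note a small slip that is in fact the source of this extra bookkeeping: for $GL_n$, $(\omega_k^\vee)^* = -w_0\omega_k^\vee = \omega_{n-k}^\vee - \omega_n^\vee$, not $\omega_{n-k}^\vee$; you acknowledge the $\cP_{n,\cdot}$-twist informally, but stating the correct coweight from the start makes it clear that the first isomorphism lives naturally on $\Gr^{-k}$ (with no translation) and the $\cP_n$-factor appears only when passing to the second form.
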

\begin{proof}
Consider $\Gr^{-k}_{GL_n} \ttimes \Gr^k_{GL_n} = \{L_0 \overset{k}\subset L_1 \overset{k}\supset L_2: tL_1 \subset L_0, tL_1 \subset L_2\}$. We have the section 
\begin{align*}
s: \Gr^{-k}_{GL_n} &\rightarrow \Gr^{-k}_{GL_n} \ttimes \Gr^k_{GL_n}, \quad
(L_0 \overset{k}\subset L) \mapsto (L_0 \overset{k}\subset L \overset{k}\supset L_0),
\end{align*}
from which we see that
\begin{align*}
\cP_{k,\ell}^* &= s^*(\O_{\Gr^{-k}_{GL_n}} \tbox \cP_{k,\ell}) \cong s^*(\O_{\Gr^{-k}_{GL_n} \ttimes \Gr^k_{GL_n}} \otimes \det(L_1/L_2)^{\ell} \la \frac12 k(n-k) \ra \{-k\ell\}) \cong \cP_{-k,\ell}.
\end{align*}
To rewrite $\cP_{-k,\ell}$ consider the convolution
$$\cP_{-k,\ell} * \cP_n \cong \O_{\Gr^{n-k}_{GL_n}} \otimes \det(t^{-1}L/L_0)^\ell \la \frac12 k(n-k) \ra \{-k\ell\}.$$
Now on $\Gr^{n-k}_{GL_n}$ the isomorphism $t^{-1}L/L_0 \xrightarrow{\sim} L/tL_0 \{2\}$ implies that 
\begin{align*}
\det(t^{-1}L/L_0)^{\ell} 
\cong \det(L/tL_0)^{\ell} \{2k\ell\} \cong \det(L_0/tL_0)^{\ell} \otimes \det(L_0/L)^{-\ell} \{2k\ell\}. 
\end{align*}
It follows that 
$$\cP_{-k,\ell} * \cP_n \cong \cP_{n,\ell} * \cP_n^{-1} * \cP_{n-k,-\ell} \{2k\ell\}$$
where we use that tensoring with $\det(L_0/tL_0)^{\ell}$ is equivalent to convolving on the left with $\cP_{n,\ell} * \cP_{-n}$. The result now follows since by Remark \ref{rem:comm} we have
\begin{align*}
\cP_{n-k,-\ell} * \cP_n^{-1} &\cong \cP_n^{-1} * \cP_{n-k,-\ell} \{2(n-k)\ell\} \\
\cP_{n,\ell} * \cP_n^{-2} &\cong \cP_{n,-\ell}^{-1} \{-2n\ell\}.\qedhere
\end{align*}
\end{proof}

\begin{Lemma}\label{lem:example2}
We have $\D \cP_{k,\ell} \cong \cP_{n,k} * \cP_n^{-1} * \cP_{k,-\ell-n} \{k(n-k)\}$. 
\end{Lemma}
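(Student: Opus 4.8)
The plan is to compute $\D\cP_{k,\ell}$ directly via Grothendieck--Serre duality, exploiting that for $GL_n$ the coweight $\omega_k^\vee$ is minuscule, so that $i_k\colon \Gr^k_{GL_n} \hookrightarrow \Gr_{GL_n}$ is a closed embedding of a smooth projective variety of dimension $k(n-k)$ (here $\Gr^k_{GL_n}=\oGr^{\omega_k^\vee}_{GL_n}$) on which $\Gm$ acts trivially. Recall from \eqref{eq:Ps} that $\cP_{k,\ell} = i_{k*}(\det(L_0/L)^{\ell})\,\la \tfrac12 k(n-k)\ra\{-k\ell\}$. With the normalization of $\D$ of \cite{AB10,BFM}, its restriction to objects pushed forward from the smooth orbit $\Gr^k_{GL_n}$ is Serre duality along $i_k$: for a vector bundle $\cG$ one has $\D(i_{k*}\cG) \cong i_{k*}\big(\cHom_{\Gr^k_{GL_n}}(\cG,\, \omega_{\Gr^k_{GL_n}})\big)[k(n-k)]$, and $\D$ intertwines shifts via $\D(\cF\la m\ra) \cong \D(\cF)\la -m\ra$ and $\D(\cF\{m\}) \cong \D(\cF)\{-m\}$.

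First I would identify the canonical bundle of $\Gr^k_{GL_n}$. From the tautological exact sequence $0 \to L/tL_0 \to L_0/tL_0 \to L_0/L \to 0$ and the identification $T\Gr^k_{GL_n} \cong \cHom(L/tL_0,\, L_0/L)$, one obtains $\omega_{\Gr^k_{GL_n}} \cong \det(L_0/L)^{-n}\otimes\det(L_0/tL_0)^{k}$, where $\det(L_0/tL_0)$ is trivial as a line bundle but carries the character $\det$ of $GL_n$ and trivial $\Gm$-weight. Substituting this and collecting the cohomological and $\Gm$-shifts gives
$$\D\cP_{k,\ell} \cong i_{k*}\!\big(\det(L_0/L)^{-\ell-n}\otimes\det(L_0/tL_0)^{k}\big)\,\la \tfrac12 k(n-k)\ra\{k(n-k)+k\ell\}.$$

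Next I would recognize the right-hand side. The object $\cP_{n,k}\conv\cP_n^{-1}$ is invertible and supported at the base point $e=\Gr^0_{GL_n}$; evaluating it on the one-point convolution variety $\Gr^n_{GL_n}\ttimes\Gr^{-n}_{GL_n}$ (or reading off the leading term of Proposition \ref{prop:leadingterms}) shows $\cP_{n,k}\conv\cP_n^{-1} \cong \O_e\otimes\det(L_0/tL_0)^{k}\{-nk\}$. Convolution with this invertible object twists the $GL_n$-equivariant structure of a sheaf on $\Gr^k_{GL_n}$ by $\det(L_0/tL_0)^{k}$ and shifts it by $\{-nk\}$. Since $\cP_{k,-\ell-n} = i_{k*}(\det(L_0/L)^{-\ell-n})\,\la \tfrac12 k(n-k)\ra\{k(\ell+n)\}$, we get
$$\cP_{n,k}\conv\cP_n^{-1}\conv\cP_{k,-\ell-n}\{k(n-k)\} \cong i_{k*}\!\big(\det(L_0/L)^{-\ell-n}\otimes\det(L_0/tL_0)^{k}\big)\,\la \tfrac12 k(n-k)\ra\{k(n-k)+k\ell\},$$
which matches the formula for $\D\cP_{k,\ell}$. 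Alternatively one can avoid isolating $\cP_{n,k}\conv\cP_n^{-1}$ and evaluate $\cP_{n,k}\conv\cP_n^{-1}\conv\cP_{k,-\ell-n}$ in one step on $\Gr^n_{GL_n}\ttimes\Gr^{-n}_{GL_n}\ttimes\Gr^k_{GL_n}$, along the lines of the proof of Lemma \ref{lem:example1}.

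The geometric content is minimal; the real work is the bookkeeping of the $\Gm$-equivariance twists. One has to be careful with the conversion $\la m\ra = [m]\{-m\}$, with the action of $\D$ on $[\,\cdot\,]$ and $\{\cdot\}$, with the precise normalization of the dualizing complex on the ind-scheme $\Gr_{GL_n}$ restricted to the smooth orbit $\Gr^k_{GL_n}$, with the identification of the line bundle on $\Gr^k_{GL_n}$ induced by the character $\det$ of $GL_n(\O)$ (which is $\det(L_0/tL_0)$, not the ample $\det(L_0/L)$), and with any $\Gm$-weights produced by the group action on the convolution varieties, exactly as in the proof of Proposition \ref{prop:leadingterms}. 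Reconciling all of this with the conventions fixed in Section \ref{sec:cohsatdefs} is the main obstacle; once that is done the identification is a direct computation.
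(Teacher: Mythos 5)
Your proof is correct and follows essentially the same route as the paper's: compute $\D\cP_{k,\ell}$ via Serre duality on the smooth closed orbit $\Gr^k_{GL_n}$, plug in the standard formula $\omega_{\Gr^k_{GL_n}} \cong \det(L_0/L)^{-n}\otimes\det(L_0/tL_0)^k[k(n-k)]$ (the paper's Equation~(\ref{eq:omega})), collect shifts, and recognize the twist by $\det(L_0/tL_0)^k\{-kn\}$ as left convolution with $\cP_{n,k}\conv\cP_n^{-1}$. Your bookkeeping of the $[\cdot]$, $\{\cdot\}$, and $\la\cdot\ra$ shifts checks out and reproduces the paper's intermediate expression $\D\cP_{k,\ell}\cong\cP_{k,-\ell-n}\otimes\det(L_0/tL_0)^k\{-kn\}\{k(n-k)\}$.
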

\begin{proof}
By definition $\D \cP_{k,\ell}$ is the pushforward to $\Gr_{GL_n}$ of 
$$\omega_{\Gr^k_{GL_n}} \otimes \det(L_0/L)^{-\ell} \la -\frac12 k(n-k) \ra \{k\ell\}.$$
Now, the usual calculation of the dualizing sheaf of the finite Grassmannian shows that
\begin{equation}\label{eq:omega}
\omega_{\Gr^k_{GL_n}} \cong \O_{\Gr^k_{GL_n}} \otimes \det(L_0/L)^{-n} \otimes \det(L_0/tL_0)^k [k(n-k)].
\end{equation}
Simplifying we obtain
$$\D \cP_{k,\ell} \cong \cP_{k,-\ell-n} \otimes \det(L_0/tL_0)^k \{-kn\} \{k(n-k)\}.$$
The result now follows since tensoring with $\det(L_0/tL_0)^k \{-kn\}$ is equivalent to convolving on the left with $\cP_{n,k} * \cP_{n,0}^{-1}$. 
\end{proof}

In light of Theorem \ref{thm:adjoints}, Lemmas \ref{lem:example1} and \ref{lem:example2} together with some simplification imply the following.

\begin{Proposition}\label{prop:example3}
The left and right adjoints of $\cP_{k,\ell}$ are given by 
\begin{align*}
\cP_{k,\ell}^L &= \D(\cP_{k,\ell}^*) \cong \cP_{n,k+\ell-n}^{-1} * \cP_{n-k,\ell-n} \{k(n-k)\} \\
\cP_{k,\ell}^R &= (\D \cP_{k,\ell})^* \cong \cP_{n,-k+\ell+n}^{-1} * \cP_{n-k,\ell+n} \{-k(n-k)\}.
\end{align*}
\end{Proposition}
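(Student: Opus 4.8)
The plan is to derive Proposition~\ref{prop:example3} formally from Theorem~\ref{thm:adjoints} together with the two explicit computations in Lemmas~\ref{lem:example1} and \ref{lem:example2}, the only genuine work being the bookkeeping of equivariant shifts. By Theorem~\ref{thm:adjoints} we have $\cP_{k,\ell}^L = \D(\cP_{k,\ell}^*)$ and $\cP_{k,\ell}^R = (\D\cP_{k,\ell})^*$, so the task is to evaluate the functors $\D$ and $(\cdot)^*$ on the right-hand sides of those lemmas. The first thing I would record is that $(\cdot)^*$ is an anti-monoidal involution: it is built from the section $s([g]) = (g,[g^{-1}])$, and since $g \mapsto g^{-1}$ reverses multiplication in $G(\cK) \rtimes \Gm$ one obtains $(\cF \conv \cG)^* \cong \cG^* \conv \cF^*$. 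Combined with the general fact that $(\cdot)^L$ is anti-monoidal in any rigid monoidal category, and with the identity $\D(\cH) \cong (\cH^*)^L$ (a rewriting of $\cF^L = \D(\cF^*)$, using that $(\cdot)^*$ is an involution), it follows that $\D$ distributes over convolution: $\D(\cF \conv \cG) \cong \D\cF \conv \D\cG$. These two facts let me push $\D$ and $(\cdot)^*$ through the convolution products that appear in the lemmas.

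For the right adjoint, Lemma~\ref{lem:example2} writes $\D\cP_{k,\ell} \cong \cP_{n,k} \conv \cP_n^{-1} \conv \cP_{k,-\ell-n}\{k(n-k)\}$; applying the anti-monoidal involution $(\cdot)^*$ and then Lemma~\ref{lem:example1} to each factor (using also that $\cP_n^* \cong \cP_{-n,0} \cong \cP_n^{-1}$, hence $(\cP_n^{-1})^* \cong \cP_n$, and that $\cP_{n,k}^* \cong \cP_{-n,k} \cong \cP_{n,-k}^{-1}$) expresses $\cP_{k,\ell}^R$ as a convolution of $\cP_{n-k,\ell+n}$ with the three invertible ``frozen'' sheaves $\cP_{n,\ell+n}^{-1}$, $\cP_n$, $\cP_{n,-k}^{-1}$, up to an equivariant shift. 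For the left adjoint, Lemma~\ref{lem:example1} gives $\cP_{k,\ell}^* \cong \cP_{n,-\ell}^{-1} \conv \cP_{n-k,-\ell}$, and applying $\D$, distributed as above, together with Lemma~\ref{lem:example2} for $\D\cP_{n-k,-\ell}$ and $\D(\cP_{n,-\ell}^{-1}) \cong \D(\cP_{n,-\ell})^{-1}$, again produces $\cP_{n-k,\ell-n}$ convolved with a product of invertible $\cP_{n,\cdot}^{\pm 1}$'s and a shift.

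In both cases the final step is to reorganize the product of frozen factors. Since each $\cP_{n,a}^{\pm 1}$ $q$-commutes with $\cP_{n-k,\cdot}$ and with every other $\cP_{n,b}^{\pm 1}$ by Remark~\ref{rem:comm}, one may gather all the invertible factors together, and they then collapse to a single power $\cP_{n,\cdot}^{-1}$ via Lemma~\ref{lem:frozenidentity} and Remark~\ref{rem:frozenidentity}. Collecting the equivariant shifts produced by each commutation and each application of the frozen identity, together with the shift $\{\pm k(n-k)\}$ carried along from Lemma~\ref{lem:example2}, one checks that the net shift is $\{k(n-k)\}$ in the left-adjoint case and $\{-k(n-k)\}$ in the right-adjoint case, which gives the stated formulas. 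I expect this last shift computation --- not any conceptual point --- to be the main obstacle: every use of Remark~\ref{rem:comm} and of Lemma~\ref{lem:frozenidentity} contributes a term, these must be summed correctly, and one must be careful throughout with the convention $\la 1 \ra = [1]\{-1\}$ and with the fact that $t$ has weight $2$, which is precisely what produces the half-integer equivariant shifts.
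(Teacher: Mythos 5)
Your proposal is correct and matches the paper's intent exactly: the paper gives no proof beyond the remark that the result follows from Theorem~\ref{thm:adjoints} and Lemmas~\ref{lem:example1}--\ref{lem:example2} ``together with some simplification,'' and your outline is precisely that simplification --- apply $\D$ and $(\cdot)^*$ distributively through the convolutions in the two lemmas, collapse the resulting frozen factors $\cP_{n,\cdot}^{\pm1}$ using Remark~\ref{rem:comm} and Lemma~\ref{lem:frozenidentity}, and track the equivariant shifts. The only point I would make more explicit is that the anti-monoidality of $(\cdot)^*$, equivalently the covariance $\D(\cF\conv\cG)\cong\D\cF\conv\D\cG$, is an input the paper itself uses only implicitly (e.g.\ it is needed to pass $\D$ through the sequences of Proposition~\ref{prop:mutation}); your geometric justification via $g\mapsto g^{-1}$ is plausible, but in a fully written proof one would want to verify it carefully, or else derive it from rigidity plus a direct check that $\D$ is covariant monoidal.
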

\begin{Remark}\label{rem:adj}
In fact, if one does not insist on having everything written in terms of convolutions, the expressions above can be written more simply as 
\begin{align*}
\cP_{k,\ell}^L &\cong \cP_{-k,-\ell} \otimes \omega_{\Gr^{-k}_{GL_n}} \la -k(n-k) \ra \\
\cP_{k,\ell}^R &\cong \cP_{-k,-\ell} \otimes \omega_{\Gr^{-k}_{GL_n}}^{-1} \la k(n-k) \ra
\end{align*}
where $\omega_{\Gr^{-k}_{GL_n}} \cong \O_{\Gr^{-k}_{GL_n}} \otimes \det(L/L_0)^n \otimes \det(t^{-1}L_0/L_0)^{-k} [k(n-k)]$ is the dualizing sheaf. 
\end{Remark}

\subsection{Outline of the proof}\label{sec:proofoutline}

We begin the proof of Theorem \ref{thm:adjoints} by outlining the argument.

It suffices to show that the composition 
\begin{equation}\label{eq:adj5}
\shF = \O_e * \shF \rightarrow \shF * \shF^L * \shF \rightarrow \shF * \O_e = \shF
\end{equation}
is an isomorphism (the one involving $\shF^R$ follows by replacing $\shF$ with $\shF^R$).
To be more precise, showing this composition is the identity would establish Theorem \ref{thm:adjoints} and that the unit and counit maps are exactly as indicated. However, suppose we know that the composition in~(\ref{eq:adj5}) is some isomorphism $f$. Then replacing the map $\O_e \rightarrow \shF * \shF^L$ with 
$$\O_e \rightarrow \shF * \shF^L \xrightarrow{f^{-1} * id} \shF * \shF^L$$
makes (\ref{eq:adj5}) equal to the identity on the nose. This is sufficient to prove Theorem \ref{thm:adjoints}, and showing that (\ref{eq:adj5}) is an isomorphism (rather than the identity) will simplify our arguments. 

To understand the composition in (\ref{eq:adj5}) the following diagram plays a large role.
\begin{equation}\label{eq:main}
\begin{gathered}\xymatrix{
Y \ar[rr]^{j_1} \ar[d]_{j_2} & & Y \ttimes Y^* \ar[d]^{i_2} \\
Y \times Y \ar[rr]^{i_1} & & Y \ttimes Y^* \ttimes Y }
\end{gathered}
\end{equation}
Here $j_1, i_1$ and $i_2$ are induced by $s$ while $j_2$ is the diagonal. The map in (\ref{eq:adj5}) is obtained by pushing forward to $\Gr_G$ the composition
\begin{equation}\label{eq:adj6}
i_{1*} (\O_Y \bbox \shF) \rightarrow \shF \tbox \D(\shF^*) \tbox \shF \rightarrow i_{2*}(\shF \tbox \omega_{Y^*}).
\end{equation}
This map is adjoint to the composition
\begin{equation}\label{eq:adj10}
\O_Y \bbox \shF \rightarrow i_1^! (\shF \tbox \D(\shF^*) \tbox \shF) \rightarrow i_1^! i_{2*} (\shF \tbox \omega_{Y^*}) \cong j_{2*} j_1^! (\shF \tbox \omega_{Y^*}) \cong j_{2*}(\shF).
\end{equation}
The first isomorphism is a consequence of the fact that the square in (\ref{eq:main}) is independent (see Lemma \ref{lem:*}). The difficult part is to show that the composition in (\ref{eq:adj10}) is the obvious restriction morphism (see Section \ref{sec:rest}). The rest of the argument, which is easier, uses this fact to show that the composition in (\ref{eq:adj5}) is an isomorphism (see Section \ref{sec:proofadj}). 

Of the maps in (\ref{eq:adj10}) it is the second one which is harder to understand. This is done in Section \ref{sec:hardhalf}. We have $i_1^! (\shF \tbox \D(\shF^*) \tbox \shF) \cong \cHom(\D(\shF), \D(\shF)) \tbox \shF$ and the goal is to show that the second map in (\ref{eq:adj10}) is equivalent to the composition 
$$\cHom(\D(\shF), \D(\shF)) \tbox \shF \rightarrow j_{2*} \cHom(\D(\shF), \D(\shF) \otimes \shF) \xrightarrow{m} j_{2*} \cHom(\D(\shF), \omega_Y) \cong j_{2*} \shF$$ 
where the first map above is restriction composed with the natural map 
$$\cHom(\D(\shF), \D(\shF)) \otimes \shF \rightarrow \cHom(\D(\shF), \D(\shF) \otimes \shF).$$
This is the content of the diagram in Proposition \ref{prop:adjoints} on taking $\shF = \shG$ so that $\shA = \shF \tbox \D(\shF^*) \tbox \shF$. 

In carrying out this analysis we will be composing operations such as tensor product which need not preserve $D_{coh}$ on the varieties involved, as they are generally singular. Thus our arguments necessarily take place in the unbounded derived category $D_{qcoh}$ of quasicoherent sheaves, even though the statement of Theorem \ref{thm:adjoints} refers only to $D_{coh}$. 
We collect in an appendix various results about $D_{qcoh}$ that we will need. Since on a technical level the rest of Section \ref{sec:rigidity} is independent from those that follow, we emphasize that the reader who wishes to skip to Section \ref{sec:r-matrix} may safely do so. 

We make frequent use of the natural maps $\iota: \O_X \rightarrow \cHom(\shF,\shF)$ and $m: \cHom(\shF,\shG) \otimes \shF \rightarrow \shG$. A particular instance of $m$ which will appear frequently is the map
$$\D(\shF) \otimes \shF = \cHom(\shF,\omega_X) \otimes \shF \xrightarrow{m} \omega_X$$
where $\D(\shF) = \cHom(\shF,\omega_X)$ is the standard duality functor. For $\shF \in D_{coh}(X)$ we have $\D \D(\shF) \cong \shF$, but for a general $\shF \in D_{qcoh}(X)$ we just have a map $\shF \rightarrow \D \D(\shF)$ induced by a natural transformation $\tau: id \rightarrow \D \D$. 

\begin{Lemma}\label{lem:*}
The square in (\ref{eq:main}) is independent in the sense of Definition \ref{def:independent}.
\end{Lemma}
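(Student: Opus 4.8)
The plan is to verify the two features that make the square in (\ref{eq:main}) independent in the sense of Definition~\ref{def:independent}: that it is Cartesian, and that $\O_{Y\times Y}$ and $\O_{Y\ttimes Y^*}$ are Tor-independent over $\O_{Y\ttimes Y^*\ttimes Y}$ (this is what forces the $*$-pushforward/$!$-pullback base-change map to be an isomorphism; see the appendix). The first step is to make the four maps explicit on lattice chains. Writing a point of $Y\ttimes Y^*\ttimes Y$ as a chain $F_0\to F_1\to F_2\to F_3$ with $F_0$ the base lattice, and recalling that the section $s$ sends a lattice to the chain that returns to the preceding one, one finds: $j_1=s$ embeds $Y$ as $\{F_2=F_0\}\subset Y\ttimes Y^*$; $i_1$ embeds $Y\times Y$ as $\{F_2=F_0\}\subset Y\ttimes Y^*\ttimes Y$ (the ``middle'' lattice equals $F_0$); $i_2$ embeds $Y\ttimes Y^*$ as $\{F_3=F_1\}$ (the last lattice equals the first); and $j_2$ is the diagonal. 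Commutativity of (\ref{eq:main}) is then immediate, and the Cartesian property is a direct scheme-theoretic check: imposing ``$F_2=F_0$ and $F_3=F_1$'' forces every lattice but $F_0$ to agree and identifies the intersection with $Y$ carrying precisely the maps $j_1$ and $j_2$.

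For Tor-independence I would exploit the projection $\pi\colon Y\ttimes Y^*\ttimes Y\to Y\ttimes Y^*$ forgetting the last lattice, which is Zariski-locally trivial with fibre $\oGr_G^{\l^\vee}$, exactly as for $\oGr_G^{{\l^\vee}^*}\ttimes\oGr_G^{\l^\vee}$ above. The point is that the two subvarieties sit very differently relative to $\pi$: one has $i_1(Y\times Y)=\pi^{-1}(j_1(Y))$, so $\O_{i_1(Y\times Y)}$ is pulled back along the \emph{flat} map $\pi$ from the closed subvariety $j_1(Y)\subset Y\ttimes Y^*$, whereas $i_2$ is a \emph{section} of $\pi$. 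Passing to a local trivialization $\pi\cong\mathrm{pr}_1\colon U\times\oGr_G^{\l^\vee}\to U$ with $U\subset Y\ttimes Y^*$ open, the first subvariety becomes $V\times\oGr_G^{\l^\vee}$ for a closed $V\subset U$, and the second becomes the graph $\Gamma_\sigma$ of the morphism $\sigma\colon U\to\oGr_G^{\l^\vee}$ that defines the section, whose essential feature is that $\mathrm{pr}_1|_{\Gamma_\sigma}\colon\Gamma_\sigma\to U$ is an isomorphism.

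The homological input is then short, and in fact yields the Cartesian property again. Writing $i_\Gamma\colon\Gamma_\sigma\hookrightarrow U\times\oGr_G^{\l^\vee}$, we have $\O_{V\times\oGr_G^{\l^\vee}}=\mathrm{pr}_1^*\O_V$ with $\mathrm{pr}_1$ flat, so
\[
\O_{V\times\oGr_G^{\l^\vee}}\otimes^{\mathbb{L}}_{\O_{U\times\oGr_G^{\l^\vee}}}\O_{\Gamma_\sigma}\;\cong\; i_{\Gamma*}\,L i_\Gamma^*\,\mathrm{pr}_1^*\O_V\;\cong\; i_{\Gamma*}\,(\mathrm{pr}_1\circ i_\Gamma)^*\O_V ,
\]
which is concentrated in degree $0$ since $\mathrm{pr}_1\circ i_\Gamma$ is an isomorphism; its $H^0$ recovers $\O_Y$ with the expected maps. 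This gives the Tor-vanishing locally on $Y\ttimes Y^*\ttimes Y$, hence globally, and combined with the Cartesian description shows (\ref{eq:main}) is independent. As every $\cF\in D^{G(\O)\rtimes\Gm}_{coh}(\Gr_G)$ is a pushforward from a large enough finite-dimensional $\oGr_G^{\l^\vee}$, everything here happens on schemes of finite type, so no ind-scheme issues arise. I expect the main obstacle to be essentially bookkeeping: extracting the explicit form of $i_1$ and $i_2$ (and hence the local models) from the definition of $s$, after which both the Cartesian check and the Tor computation are routine; a secondary point is confirming that ``Cartesian plus Tor-independent'' really does yield independence in the unbounded/DG setting, which is where the appendix results on $D_{qcoh}$ enter.
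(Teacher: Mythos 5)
Your argument is correct but follows a different route from the paper's. Both proofs hinge on the same geometry, which you identify correctly: the projection $p_1 \colon Y \ttimes Y^* \ttimes Y \to Y \ttimes Y^*$ (your $\pi$) is a Zariski-locally trivial, hence flat, fiber bundle; $i_1(Y \times Y) = p_1^{-1}(j_1(Y))$; and $i_2$ is a section of $p_1$. You establish Tor-independence locally (trivializing $p_1$, writing one subvariety as $V \times \oGr_G^{\l^\vee}$ and the other as a graph, and using that the restricted projection off the graph is an isomorphism), then invoke the general fact that Tor-independent Cartesian squares satisfy derived base change. The paper instead first reduces to $\cM = \O_{Y\times Y}$ via \cite[Eq.\ 3.10.2.3]{Lip}, and then pastes the square of (\ref{eq:main}) above a second square whose vertical maps are $p_1$ and the projection $p_2 \colon Y \times Y \to Y$, chosen so that both vertical composites are identities; since the base-change morphism respects such pasting, the composed base-change map is the identity, and because the bottom square satisfies flat base change ($p_1$ flat) the top one's base-change map is forced to be an isomorphism. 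Your approach is the more concrete and computational; the paper's is a diagram chase that avoids local models and the Tor-independence criterion altogether. Two small corrections to your writeup: the lattice-chain description of $i_1$ and $i_2$ is $GL_n$-specific while the lemma must hold for all reductive $G$, but the three geometric facts above hold in general directly from $s([g]) = (g, [g^{-1}])$, so the argument transfers once phrased group-theoretically; and the step ``Cartesian plus Tor-independent implies independent'' does not appear in the paper's appendix, so it should be cited to \cite{Lip} rather than there.
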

\begin{proof}
We need to show that $i_2^* i_{1*} (\cM) \xrightarrow{\sim} j_{1*}j_2^*(\cM)$ for $\cM \in D_{qcoh}(Y \times Y)$. Using the commutativity of \cite[Eq. 3.10.2.3]{Lip} it suffices to consider the case $\cM = \O_{Y \times Y}$.

Consider the following commutative diagram which extends (\ref{eq:main}).
\begin{equation}\label{eq:21}
\begin{gathered}
\xymatrix{
Y \ar[rr]^{j_1} \ar[d]_{j_2} & & Y \ttimes Y^* \ar[d]^{i_2} \\
Y \times Y \ar[d]_{p_2} \ar[rr]^{i_1} & & Y \ttimes Y^* \ttimes Y \ar[d]^{p_1} \\
Y \ar[rr]^{j_1} & & Y \ttimes Y^* }
\end{gathered}
\end{equation}
Here $p_1$ and $p_2$ forget the last factor $Y$ and hence both vertical compositions are the identity. Now consider the composition 
\begin{equation}\label{eq:25}
i_2^* p_1^* j_{1*} (\O_Y) \rightarrow i_2^* i_{1*} p_2^* (\O_Y) \rightarrow j_{1*} j_2^* p_2^* (\O_Y).
\end{equation}
Since $p_2 \circ j_2$ and $p_1 \circ i_2$ are both the identity it follows from Lemma \ref{lem:2} that the composition is an isomorphism. Moreover, the first map is an isomorphism since $p_1$ is a fiber bundle and hence flat. It follows that the second map in (\ref{eq:25}) is also an isomorphism, which is equivalent to what we needed to show.
\end{proof}

\subsection{Hard half of the composition}\label{sec:hardhalf}

There are several natural transformations that we will use, including $\eta,\theta$ and perhaps most interestingly $\phi$. We refer to the appendix for their definition and properties. 

Fix $\shF, \shG \in D_{coh}(Y)$ and write $\shA := \shF \tbox \D(\shG^*) \tbox \shG$. In this section we prove the following key result which will be used to understand the second map in (\ref{eq:adj10}). 

\begin{Proposition}\label{prop:adjoints}
There exists a commutative diagram
\begin{equation*}
\begin{gathered}
\xymatrix{
i_1^!(\shA) \ar[r]^{adj} \ar[d]^\sim & i_1^! i_{2*} i_2^* (\shA) \ar[r]^m & i_1^! i_{2*} (\shF \tbox \omega_{Y^*}) \ar[d]^\sim \\
\cHom(\D(\shF),\D(\shG)) \tbox \shG \ar[r] & j_{2*} \cHom (\D(\shF), \D(\shG) \otimes \shG) \ar[r]^m & j_{2*} \cHom(\D(\shF),\omega_Y) \cong j_{2*} \shF.}
\end{gathered}
\end{equation*}
\end{Proposition}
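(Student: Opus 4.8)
The plan is to split the asserted square into its left half (the one built from the adjunction unit $\eta\colon \id \to i_{2*}i_2^*$) and its right half (the one built from the evaluation $m$), to pin down the three vertical isomorphisms explicitly, and then to verify the two halves in turn. Everything takes place in $D_{qcoh}$ of the relevant, in general singular, convolution varieties, since $\otimes$, $\cHom$ and the functors $s^*$, $i_2^*$ need not preserve coherence on $Y$ and $Y^*$; the compatibilities among base change, duality and the natural transformations $\eta$, $\theta$, $\phi$ that this requires are exactly those assembled in the appendix.

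First I would establish the vertical isomorphisms. Since the square~(\ref{eq:main}) is independent by Lemma~\ref{lem:*}, its base-change isomorphism $i_2^* i_{1*} \cong j_{1*} j_2^*$ admits a right-adjoint counterpart $i_1^! i_{2*} \cong j_{2*} j_1^!$ (all four maps being closed embeddings, flat bundles, or the diagonal, so the relevant adjoints exist). Applying this to $\shF \tbox \omega_{Y^*}$ and using that $\shF \tbox \omega_{Y^*}$ represents the relative dualizing complex of the flat fibre bundle $Y \ttimes Y^* \to Y$ (of which $j_1 = s$ is a section), so that $j_1^!(\shF \tbox \omega_{Y^*}) \cong \shF$, yields the right-hand isomorphism $i_1^! i_{2*}(\shF \tbox \omega_{Y^*}) \cong j_{2*}\shF$. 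The left-hand isomorphism is obtained in the spirit of~(\ref{eq:adj3}): $i_1$ collapses the first two factors of $Y \ttimes Y^* \ttimes Y$ along the section $s$ while leaving the third untouched, so $i_1^!(\shA) \cong s^!(\shF \tbox \D(\shG^*)) \tbox \shG$, and then $s^! \cong \D s^* \D$ together with $\D(\shF \tbox \D(\shG^*)) \cong \D(\shF) \tbox \shG^*$, $s^*(\D(\shF) \tbox \shG^*) \cong \D(\shF) \otimes \shG$ and $\D(\D(\shF)\otimes\shG) \cong \cHom(\D(\shF),\D(\shG))$ identifies the first tensor factor with $\cHom(\D(\shF),\D(\shG))$.

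Next I would make the top row explicit in order to fix the dictionary to the bottom row. Because $i_2$ builds the third factor from the section $Y^* \to Y^* \ttimes Y$, one has $i_2^*(\shA) \cong \shF \tbox (\D(\shG^*) \otimes \shG^*)$, with the evaluation $m\colon \D(\shG^*) \otimes \shG^* \to \omega_{Y^*}$ supplying the map $m$ in the top row; feeding $\shF \tbox (\D(\shG^*)\otimes\shG^*)$ through $i_1^! i_{2*} \cong j_{2*} j_1^!$ and computing $j_1^!$ as above — now using that the involution $(\cdot)^*$ commutes with $\cHom$ to get $j_1^!(\shF \tbox (\D(\shG^*)\otimes\shG^*)) \cong \cHom(\D(\shF), \D(\shG) \otimes \shG)$ — identifies the middle term of the top row with $j_{2*}\cHom(\D(\shF),\D(\shG)\otimes\shG)$. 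Granting these identifications, the right-hand square of the Proposition asserts merely that the two instances of $m$ — one built from $\D(\shG^*)\otimes\shG^* \to \omega_{Y^*}$, one from $\D(\shG)\otimes\shG \to \omega_Y$ — correspond; this I expect to be formal, following from the naturality of $m$ and of the base-change and $\D s^* \D$ isomorphisms employed.

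The left-hand square is the crux. Once everything is transported to the form $j_{2*} j_1^!(\,\cdot\,)$ via base change, its commutativity becomes the statement that $i_1^!$ applied to the adjunction unit $\eta_{\shA}\colon \shA \to i_{2*}i_2^*(\shA)$ corresponds, under the dictionary above, to the composite of restriction along the diagonal $j_2$ with $j_{2*}$ of the multiplication map $\cHom(\D(\shF),\D(\shG)) \otimes \shG \to \cHom(\D(\shF), \D(\shG)\otimes\shG)$. This is precisely the coherence that the natural transformation $\phi$ of the appendix is built to express, and I would reduce the square to the compatibility of $\phi$ with $\eta$ (and $\theta$) established there, applied to the two outer factors of $Y \ttimes Y^* \ttimes Y$. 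The main obstacle is exactly this step: executing the diagram chase faithfully in $D_{qcoh}$ — where $\D$, $\cHom$, $\otimes$ and $\eta$ must be manipulated with the care the appendix provides, since $Y$ and $Y^*$ are singular — while keeping the numerous base-change and duality identifications mutually coherent. With that compatibility in hand, pasting the two squares yields the Proposition.
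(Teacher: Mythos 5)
Your overall architecture — identify the three vertical isomorphisms, then verify the left ($adj$/$\eta$) square and the right ($m$) square separately — is a reasonable skeleton, but it treats as routine the two places where the paper's proof does essentially all of its work, and in doing so it misses the technical obstruction that shapes the argument.

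The central difficulty is that $\theta : f^! \to \D f^* \D$ and $\tau : \mathrm{id} \to \D\D$ are \emph{not} isomorphisms on $D_{qcoh}$ of the singular varieties $Y$, $Y^*$. Your vertical identifications implicitly use $s^! \cong \D s^* \D$ applied to objects that are not coherent (they arise as $\cHom$'s and tensor products on singular schemes), and the subsequent diagram chases involve transporting maps across instances of $\theta$ that a priori go the wrong way. The paper confronts this head-on: Lemma~\ref{lem:6} does not argue by naturality of $\phi$ and $\theta$, but instead introduces auxiliary objects $\shB_1 = \cO_Y \,\tbox\, \shG^* \,\tbox\, \omega_Y$ and $\shB_2 = \cO_Y \,\tbox\, \shG^* \,\tbox\, \D\shG$, proves that the specific instances of $\phi$ relevant to them are isomorphisms (via the extended diagram (\ref{eq:21}) and Proposition~\ref{prop:phi}), forms the composite $\rho := \theta \circ \phi \circ \theta^{-1}$ on $\shB_2$, shows $\rho$ is an isomorphism, and only then invokes the coherence result Proposition~\ref{prop:dual} to pass to $\D(\shB_2) \cong \shB$. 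None of this is a formal appeal to ``compatibility of $\phi$ with $\eta$ and $\theta$''; it is a carefully engineered detour around the failure of $\theta$ to be invertible.

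Similarly, your claim that the right-hand ($m$) square is ``formal, following from the naturality of $m$'' is a genuine gap: that square is the content of Lemma~\ref{lem:5}, which requires two nontrivial identities — the compatibility of $f^*$ with $\iota : \cO \to \cHom(\cM,\cM)$, and the identification of $\D(\D\tau \circ \D\iota)$ on $\cHom(\cM,\D\D\cM)$ with $m$ on $\D\D(\D\cM \otimes \cM)$ — neither of which is an instance of naturality. Furthermore, Lemmas~\ref{lem:5} and~\ref{lem:6} are only proved for the special case $\shF = \omega_Y$; the general case (Proposition~\ref{prop:1}) requires the further step of rewriting $\shA$ as $\cHom(\D\shF \,\tbox\, \cO_{Y^*} \,\tbox\, \cO_Y, \shB)$ and invoking the $\cHom$-compatibility of $\phi$ from Proposition~\ref{prop:phi}. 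Your proposal does not mention this reduction. So while your decomposition into left and right squares is plausible in outline, the parts you defer as ``formal'' or ``appendix coherences'' are precisely where the proof's substance lies, and as written the proposal does not establish the Proposition.
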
 

The rest of the section will contain the proof of Proposition \ref{prop:adjoints}. We will use the following commutative diagram .
\begin{equation}\label{eq:commute3}
\begin{gathered}\xymatrix{
i_1^! (\shA) \ar[d]^{adj} \ar[r]^{adj} & i_1^! i_{2*} i_2^* (\shA) \ar[r] \ar[d]^{\sim}  & i_1^! i_{2*} (\shF \tbox \omega_{Y^*}) \ar[d]^{\sim} \\
j_{2*} j_2^* i_1^! (\shA) \ar[r]^{\phi} & j_{2*} j_1^! i_2^* (\shA) \ar[r] \ar[d]^{\theta} & j_{2*} j_1^! (\shF \tbox \omega_{Y^*}) \ar[d]^{\theta} \\
& j_{2*} \D j_1^* \D i_2^* (\shA) \ar[r] & j_{2*} \D j_1^* \D (\shF \tbox \omega_{Y^*})
}
\end{gathered}
\end{equation}
To check that the top left rectangle commutes one writes out $\phi \circ adj$ as 
$$i_1^! \xrightarrow{adj} j_{2*} j_2^* i_1^! \xrightarrow{adj} j_{2*} j_2^* i_1^! i_{2*} i_2^* \xrightarrow{\sim} j_{2*} j_2^* j_{2*} j_1^! i_2^* \xrightarrow{adj} j_{2*} j_1^! i_2^*$$
which is easily rewritten as 
$$i_1^! \xrightarrow{adj} i_1^! i_{2*} i_2^* \xrightarrow{\sim} j_{2*} j_1^! i_2^*.$$
The other rectangles commute for more obvious reasons. Note that the map we are interested in is the composition in the top row of (\ref{eq:commute3}).

We will first focus on the case $\shF = \omega_Y$. Let us denote $\shB = \omega_Y \tbox \D(\shG^*) \tbox \shG$. 

\begin{Lemma}\label{lem:5}
There exists an isomorphism $\gamma: \D(\D(\shG) \otimes \shG) \xrightarrow{\sim} j_1^* \D i_2^* (\shB)$ which fits into a commutative diagram 
$$\xymatrix{
\D j_1^* \D i_2^* (\shB) \ar[rr]^{\sim} \ar[d]^{\D \gamma} & & \D j_1^* \D (\omega_Y \tbox (\D(\shG^*) \otimes \shG^*)) \ar[rr]^{m} & & \D j_1^* \D (\omega_Y \tbox \omega_{Y^*}) \ar[d]^{\sim} \\
\D \D (\D(\shG) \otimes \shG) \ar[rr]^{m} & & \D \D (\omega_Y) \ar[rr]^{\sim} & & \omega_Y.
} $$
\end{Lemma}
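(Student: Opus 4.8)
The plan is to construct $\gamma$ explicitly as a composite of base-change and Grothendieck-duality isomorphisms, and then to verify the diagram by cutting it into smaller squares, each commuting for a structural reason (functoriality of an adjunction unit/counit, naturality of a duality isomorphism, or the projection formula). Two geometric inputs need to be nailed down first. (i) Since $i_2$ is the section $\id_Y \ttimes s$ of the convolution fibration $Y \ttimes Y^* \ttimes Y \to Y \ttimes Y^*$ in the last factor and $\shG^* = s^*(\O \tbox \shG)$ by definition, the identity $\mathrm{pr}_1 \circ s = \id$ on $Y^* \ttimes Y$ forces the twisted product to restrict to an ordinary tensor product along $s$, giving $i_2^*(\shB) \cong \omega_Y \tbox (\D(\shG^*) \otimes \shG^*)$; this supplies the top-left isomorphism of the diagram. (ii) For the section $j_1 : Y \to Y \ttimes Y^*$ of the fibration $q = \mathrm{pr}_1$, I would compute $j_1^* \D(\omega_Y \tbox \cH)$ for $\cH \in D_{coh}(Y^*)$: using $\D j_1^* \cong j_1^! \D$ on coherent objects this reduces to $j_1^!(\omega_Y \tbox \cH)$, and writing the twisted product as $q^*\omega_Y$ tensored with the twisted pullback of $\cH$ (and using $q \circ j_1 = \id$) identifies it with $\omega_Y \otimes (\text{a twisted }!\text{-pullback of }\cH)$.

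Taking $\cH = \D(\shG^*) \otimes \shG^*$, I would then simplify $\D(\D(\shG^*) \otimes \shG^*) \cong \cHom(\shG^*, \shG^*)$ by Hom-tensor adjunction and use $\shG^{**} \cong \shG$ together with the comparison between $(\cdot)^*$ and $\D$ to rewrite the result as $\D(\D(\shG) \otimes \shG)$; this composite is $\gamma$. The commutativity check then separates into a "restriction/duality" part and an "evaluation map" part. The restriction/duality squares commute by naturality of the isomorphisms $\D j_1^* \cong j_1^! \D$, the adjunction units, and the base-change isomorphism attached to the independent square of $(\ref{eq:main})$ (this is where $\phi$ and $\theta$ enter, exactly as in $(\ref{eq:commute3})$). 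The evaluation-map part amounts to: the counit $m : \D(\shG^*) \otimes \shG^* \to \omega_{Y^*}$, transported through $\gamma$, becomes $m : \D(\shG) \otimes \shG \to \omega_Y$; this follows because the evaluation pairing is natural under pullback and under $\D$, once one knows how $(\cdot)^*$ interacts with $\D$.

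The main obstacle — and I expect it is the bulk of the work, which is presumably why the case $\shF = \omega_Y$ is isolated as its own lemma — is precisely the discrepancy flagged in the Remark after Corollary $\ref{cor:adjoints}$: $(\omega_{\oGr^{\l^\vee}_G})^*$ is not isomorphic to $\omega_{\oGr^{{\l^\vee}^*}_G}$, i.e. the fibration $q$ is not smooth (its fibres are singular Schubert varieties), so $s^* = (\cdot)^*$ and $\D$ do not commute on the nose. One must carry a relative-dualizing-complex correction through every isomorphism above and check that it is exactly absorbed by the $\omega_Y$ sitting in the first slot of $\shB$; concretely this is the content of matching $j_1^!(\omega_Y \tbox \cH)$ with $\omega_Y \otimes (\text{twist of }\cH^*)$ and of verifying that the two evaluation maps line up after this twist. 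Since tensoring, $\cHom$, and pullback need not preserve $D_{coh}$ on these singular varieties, the whole argument is carried out in $D_{qcoh}$, using the natural transformations $\iota$, $m$, $\tau$, $\eta$, $\theta$, $\phi$ and the base-change facts collected in the appendix; once the dualizing-complex bookkeeping is done, the remaining squares are formal.
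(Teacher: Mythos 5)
Your overall strategy and geometric input (i) are both correct, and you correctly identify that the discrepancy between $(\cdot)^*$ and $\D$ is the danger zone. But the route you propose in (ii) is not the one the paper takes, and the difference matters. You want to compute $j_1^!(\omega_Y \tbox \cH)$ directly (equivalently $\D j_1^* \D(\omega_Y \tbox \cH)$), which forces you into the relative-dualizing-complex bookkeeping you describe, and you defer that bookkeeping as "the bulk of the work" without carrying it out. The paper never performs this computation. Instead it sets $\cM := \O_Y \tbox \shG^*$, recognizes $i_2^*(\shB) \cong \D(\cM) \otimes \cM$, and uses the identity $\D(\D(\cM)\otimes\cM) \cong \cHom(\cM,\D\D\cM) \cong \cHom(\cM,\cM)$ to eliminate every explicit occurrence of a dualizing sheaf \emph{before} restricting along $j_1$. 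Once this is done the only nontrivial geometric input is that the canonical map $j_1^*\cHom(\cM,\cM) \to \cHom(j_1^*\cM,j_1^*\cM)$ is an isomorphism, which has nothing to do with relative dualizing complexes and does not require a perfectness hypothesis on $j_1$. The evaluation map $m$ is then handled by the companion fact that, under the identification $\D(\D(\cM)\otimes\cM) \cong \cHom(\cM,\cM)$, the transported $m$ is simply $\D\iota$; this composes cleanly with the restriction isomorphism to give the bottom row of the diagram.

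If you unwind your proposed route, it does ultimately converge to the same step: after writing $\D(\omega_Y\tbox\cH) \cong \O_Y \tbox \D\cH$ and $\D\cH \cong \cHom(\shG^*,\shG^*)$ you would need $s^*\cHom(\O\tbox\shG^*, \O\tbox\shG^*) \cong \cHom(\shG^{**},\shG^{**})$, which is exactly the paper's $j_1^*\cHom(\cM,\cM) \cong \cHom(j_1^*\cM,j_1^*\cM)$. So the gap in your write-up is that you misattribute the main obstacle — it is not a dualizing-complex correction (that part is formal because the twisted product is locally an exterior product, so $\D$ passes through it trivially) but the $\cHom$-restriction isomorphism, which you never mention. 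A secondary inaccuracy: you invoke $\phi$, $\theta$, and the independent square of~(\ref{eq:main}) in the commutativity check, but neither $\phi$ nor the independence of that square is used anywhere in the proof of Lemma~\ref{lem:5}; they enter only in Lemma~\ref{lem:6} and Proposition~\ref{prop:1}. Lemma~\ref{lem:5} uses only $\eta$, $\iota$, $\tau$, and $m$.
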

\begin{proof}
We use the following two general facts. First, for any morphism $f: S \rightarrow T$ and $\cM \in D_{qcoh}(T)$ the following diagram commutes.
$$\xymatrix{
f^* \O_T \ar[d]^{\sim} \ar[rrr]^{f^* \iota} & & & f^* \cHom(\cM,\cM) \ar[d] \\
\O_S \ar[rrr]^{\iota} & & & \cHom(f^* \cM, f^* \cM)
}$$
Second, identifying $\D(\D(\cM) \otimes \cM) \cong \cHom(\cM,\D\D(\cM))$ the map 
$$\D \cHom(\cM,\D\D(\cM)) \xrightarrow{\D \tau} \D \cHom(\cM, \cM) \xrightarrow{\D \iota} \D \O_T \cong \omega_T$$
can be identified with the composition
$$\D \cHom(\D(\cM),\D(\cM)) \cong \D \D (\D(\cM) \otimes \cM) \xrightarrow{m} \D \D (\omega_T) \cong \omega_T.$$
Now take $f := j_1: Y \rightarrow Y \ttimes Y^*$ and $\cM := \O_Y \bbox \shG^*$ so that 
$$i_2^*(\shB) \cong \omega_Y \tbox (\D(\shG^*) \otimes \shG^*) \cong \D(\cM) \otimes \cM.$$ 
Consider now the map 
$$\D j_1^* \D i_2^* (\shB) \cong \D j_1^* \D (\D(\cM) \otimes \cM) \xrightarrow{\eta} j_1^! \D \D (\D(\cM) \otimes \cM) \xrightarrow{m} j_1^! \D \D (\omega_{Y \ttimes Y^*}) \cong \omega_Y.$$
By the second fact above this map is equivalent to 
$$\D j_1^* \D i_2^* (\shB) \cong \D j_1^* \cHom(\cM, \D \D(\cM)) \xrightarrow{\eta} j_1^! \D \cHom(\cM, \D \D(\cM)) \xrightarrow{\D(\tau \circ \iota)} j_1^! \omega_{Y \ttimes Y^*} \cong \omega_Y.$$
Since $\D \D \cM \cong \cM$ we can rewrite this as 
$$\D j_1^* \D i_2^* (\shB) \cong \D j_1^* \cHom(\cM,\cM) \xrightarrow{\eta} j_1^! \D \cHom(\cM,\cM) \xrightarrow{\D \iota} j_1^! \omega_{Y \ttimes Y^*} \cong \omega_Y$$
and finally as 
$$\D j_1^* \D i_2^* (\shB) \cong \D j_1^* \cHom(\cM,\cM) \xrightarrow{\D \iota} \D j_1^* \O_{Y \ttimes Y^*} \cong \omega_Y.$$ 
Now the map $j_1^* \cHom(\cM,\cM) \rightarrow \cHom(j_1^* \cM, j_1^* \cM) \cong \cHom(\shG,\shG)$ is an isomorphism. So, using the first fact above, we can rewrite this last composition as 
$$\D j_1^* \D i_2^* (\shB) \cong \D j_1^* \cHom(\cM,\cM) \xrightarrow{\sim} \D \cHom (\shG,\shG) \xrightarrow{\D \iota} \omega_Y.$$
The result follows, where $\gamma$ is the isomorphism $j_1^* \D i_2^* (\shB) \cong \cHom(\shG,\shG) \cong \D(\D(\shG) \otimes \shG)$. 
\end{proof}

\begin{Lemma}\label{lem:6}
There exists an isomorphism $\gamma': \D(\D(\shG) \otimes \shG) \xrightarrow{\sim} j_1^* \D i_2^* (\shB)$ which fits into a commutative diagram
$$\xymatrix{
j_2^* i_1^!(\shB) \ar[r]^{\phi} \ar[d]^{\sim} & j_1^! i_2^*(\shB) \ar[r]^{\theta} & \D j_1^* \D i_2^* (\shB) \ar[d]^{\D \gamma'} \\
\D(\shG) \otimes \shG \ar[rr]^{\tau} & & \D\D (\D(\shG) \otimes \shG).
}$$
\end{Lemma}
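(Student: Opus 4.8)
The plan is to make every corner of the square explicit, to identify the canonical $2$-morphisms $\phi$ and $\theta$ under those identifications, and then to recognize the composite $\D\gamma'\circ\theta\circ\phi$ as $\tau$ by a formal manipulation of natural transformations of the same type that closes the proof of Lemma~\ref{lem:5}.

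First I would pin down the two vertical isomorphisms. The morphism $i_1$ is a section of a Zariski-locally trivial fibration, hence a regular embedding, so $i_1^!$ is computed as $*$-pullback twisted by the relative dualizing line; since $\shB=\omega_Y\tbox\D(\shG^*)\tbox\shG$ and $\D\omega_Y\cong\O_Y$, unwinding this (using exactly the computation of $s^*$ that defines the functor $(\cdot)^*$) gives $i_1^!(\shB)\cong\D(\shG)\boxtimes\shG$ on $Y\times Y$, and pulling back along the diagonal $j_2$ yields the left vertical isomorphism $j_2^*i_1^!(\shB)\cong\D(\shG)\otimes\shG$. For the right-hand corner I would take $\gamma'$ to be the isomorphism $\gamma$ already produced in Lemma~\ref{lem:5}, so that once Lemma~\ref{lem:6} is in hand the two halves assemble into the full diagram~(\ref{eq:commute3}) for $\shF=\omega_Y$; what then remains is purely a commutativity check.

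Second --- and this is the crux --- I would analyze $\phi$. Recall that $\phi\colon j_2^*i_1^!\Rightarrow j_1^!i_2^*$ is the base-change $2$-morphism attached to the square~(\ref{eq:main}), which is independent by Lemma~\ref{lem:*}. The embeddings $i_1$, $i_2$, $j_1$ are all regular with (co)normal bundles readily read off from the section $s$, and using the appendix's compatibilities of $(\cdot)^!$ with flat base change and with twisting by line bundles I would rewrite $\phi$, in these coordinates, as a morphism between ordinary and external products assembled only from $\iota$ and the canonical excess-intersection isomorphism. Feeding in the description of $\theta$ extracted in the proof of Lemma~\ref{lem:5}, the composite $\D\gamma'\circ\theta\circ\phi$ becomes a morphism $\D\shG\otimes\shG\to\D\D(\D\shG\otimes\shG)$ built solely from $\eta$, $\theta$, $\iota$, $m$ and the canonical identifications $\D\D\cM\cong\cM$ available on $D_{coh}$; a diagram chase of the same flavour as the one closing Lemma~\ref{lem:5} then collapses it to $\tau$.

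I expect the main obstacle to be exactly that identification of the abstract base-change map $\phi$ with its geometric avatar, that is, checking that base change commutes with the regular-embedding formula for $(\cdot)^!$ while carrying the relative dualizing sheaves and cohomological shifts for $i_1$, $i_2$, $j_1$ correctly along the way. Everything else reduces to direct appeals to the $D_{qcoh}$ lemmas collected in the appendix or to formal manipulations of natural transformations; one must only remember to work throughout in $D_{qcoh}$, since the tensor products that appear (such as $\D\shG\otimes\shG$ and $\D(\shG^*)\otimes\shG^*$) need not lie in $D_{coh}$.
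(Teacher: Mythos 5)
Your plan diverges from the paper's at the crucial step, and the divergence hides a genuine gap. The paper does \emph{not} try to make the base-change map $\phi$ explicit via regular embeddings and excess intersection; it works entirely at the level of adjunctions. The actual argument proceeds in three bootstrapping steps: first it shows that the base-change map in the \emph{other orientation}, $\phi\colon j_1^*i_2^!(\shB_1)\to j_2^!i_1^*(\shB_1)$ for the auxiliary object $\shB_1 = \cO_Y\tbox\shG^*\tbox\omega_Y$, is an isomorphism (using the commutative diagram (\ref{eq:21}) and Lemma \ref{lem:2}, by composing with the projections $p_1,p_2$ whose composites with $i_2,j_2$ are the identity); second it deduces the same for $\shB_2 = \cO_Y\tbox\shG^*\tbox\D(\shG) \cong \cHom(\cO_Y\tbox\cO_{Y^*}\tbox\shG,\shB_1)$ via Proposition \ref{prop:phi}; third, noting $\D(\shB)\cong\shB_2$, it invokes Proposition \ref{prop:dual} to transfer this to the map $\theta\circ\phi$ on $\shB$ itself, where $\tau$ appears because Proposition \ref{prop:dual} says $\rho'$ equals $\tau^{-1}\circ\D\rho\D\circ\tau$. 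The appearance of $\tau$ in the bottom row of the diagram you must prove is therefore \emph{not} a formal collapse of the type that closes Lemma \ref{lem:5} (which just massages $\iota$ and $m$ via the two general facts stated at the start of that proof); it is genuinely the $\tau$ of Proposition \ref{prop:dual}, and without that input your final ``diagram chase'' has no mechanism to produce it.

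Two secondary issues. First, you propose taking $\gamma'$ equal to the $\gamma$ of Lemma \ref{lem:5}. The paper deliberately does not assert this: in Corollary \ref{cor:1} the two lemmas are spliced together through ``some isomorphism $\gamma''$'', and a separate argument (writing $\gamma''$ as a $\delta\in\Hom(\shG,\shG)$ acting on the second factor and absorbing $\D\delta$ into the left-hand vertical map) is used precisely to avoid having to prove $\gamma=\gamma'$. Asserting $\gamma'=\gamma$ without verification is an unearned simplification. Second, the coordinate computation you sketch (reading off conormal bundles of $i_1,i_2,j_1$ and twisting $i_1^!$ by the relative dualizing line) is delicate here because $Y = \oGr_G^{\lambda^\vee}$ and its twisted products are singular; your own framing of the ``main obstacle'' concedes this, but the burden is heavier than you indicate because it must be carried simultaneously through $\phi$, $\theta$, and $\D\gamma'$, which is exactly what the paper's $\eta$/$\theta$/$\phi$ formalism in the appendix is engineered to sidestep.
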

\begin{proof}
We do this in several steps. First consider the simpler case of $\shB_1 := \cO_Y \tbox \shG^* \tbox \omega_Y$ and the map $j_1^* i_2^! (\shB_1) \xrightarrow{\phi} j_2^! i_1^* (\shB_1)$. We claim this map is an isomorphism. 

To see this consider again the following commutative diagram from (\ref{eq:21}). Now $\shB_1 = p_1^!(\cO_Y \tbox \shG^*)$ and we can look at the composition 
$$j_1^* i_2^! p_1^! (\cO_Y \tbox \shG^*) \xrightarrow{\phi} j_2^! i_1^* p_1^! (\cO_Y \tbox \shG^*) \xrightarrow{\phi} j_2^! p_2^! j_1^* (\cO_Y \tbox \shG^*).$$
By Lemma \ref{lem:2} this composition is just $j_1^* (p_1 \circ i_2)^! \xrightarrow{\phi} (p_2 \circ j_2)^! j_1^*$, which is an isomorphism since $p_1 \circ i_2$ and $p_2 \circ j_2$ are both the identity maps. Since 
$$j_1^* i_2^! p_1^! (\cO_Y \tbox \shG^*) \cong j_2^! i_1^* p_1^! (\cO_Y \tbox \shG^*) \cong j_2^! p_2^! j_1^* (\cO_Y \tbox \shG^*) \cong \shG$$
it follows that $j_1^* i_2^! (\shB_1) \xrightarrow{\phi} j_2^! i_1^* (\shB_1)$ is an isomorphism. 

Next take $\shB_2 := \cO_Y \tbox \shG^* \tbox \D(\shG) \cong \cHom(\cO_Y \tbox \cO_{Y^*} \tbox \shG, \shB_1)$. By Proposition \ref{prop:phi} we have the following commutative diagram.
\begin{align*}
\xymatrix{
j_1^* i_2^! \cHom(\cO_Y \tbox \cO_{Y^*} \tbox \shG, \shB_1) \ar[d]^{\phi} \ar[r]^{\sim} & j_1^* \cHom(\cO_Y \tbox \shG^*, i_2^! \shB_1) \ar[r] & \cHom(\shG, j_1^* i_2^! \shB_1) \ar[d]^{\phi} \\
j_2^! i_1^* \cHom(\cO_Y \tbox \cO_{Y^*} \tbox \shG, \shB_1) \ar[r] & j_2^! \cHom(\cO_Y \tbox \shG, i_1^* \shB_1) \ar[r]^{\sim} & \cHom(\shG, j_2^! i_1^* \shB_1) 
}
\end{align*}
It is not hard to check that the two unmarked horizontal morphisms are isomorphisms. The right hand vertical map $\phi$ is an isomorphism by the argument above. It follows that $j_1^* i_2^! (\shB_2) \xrightarrow{\phi} j_2^! i_1^* (\shB_2)$ is also an isomorphism. 

Next consider the composition 
$$j_1^* \D i_2^* \D (\shB_2) \xrightarrow{\theta^{-1}} j_1^* i_2^! (\shB_2) \xrightarrow{\phi} j_2^! i_1^* (\shB_2) \xrightarrow{\theta} \D j_2^* \D i_1^* (\shB_2).$$
In the notation of Proposition \ref{prop:dual} this composition is denoted $\rho$. Notice that, in this case, the map $\theta$ is an isomorphism and $\phi$ is an isomorphism by the argument above. This means $\rho$ is an isomorphism.

Finally, note that $\D(\shB) \cong \shB_2$ and consider
\begin{equation}\label{eq:23}
j_2^* \D i_1^* \D (\shB) \xrightarrow{\tau} \D \D j_2^* \D i_1^* \D (\shB) \xrightarrow{\D \rho \D} \D j_1^* \D i_2^* \D \D (\shB) \xrightarrow{\tau^{-1}} \D j_1^* \D i_2^* (\shB).
\end{equation}
By Proposition \ref{prop:dual} this composition is equal to 
\begin{equation}\label{eq:24}
j_2^* \D i_1^* \D (\shB) \xrightarrow{\theta^{-1}} j_2^* i_1^! (\shB) \xrightarrow{\phi} j_1^! i_2^* (\shB) \xrightarrow{\theta} \D j_1^* \D i_2^* (\shB).
\end{equation}
Now if we identify $j_2^* \D i_1^* \D (\shB) \cong j_2^* (\D(\shG) \tbox \shG) \cong \D(\shG) \otimes \shG$ then using (\ref{eq:23}) the composition in (\ref{eq:24}) is equivalent to 
$$\D(\shG) \otimes \shG \xrightarrow{\tau} \D \D (\D(\shG) \otimes \shG) \xrightarrow{\D \gamma'} \D \D (\D(\shG) \otimes \shG)$$
where $\gamma'$ is the isomorphism $(\rho \D) \circ \tau^{-1}$. 
\end{proof}

\begin{Corollary}\label{cor:1}
There exists a commutative diagram
$$\xymatrix{
j_2^* i_1^!(\shB) \ar[r]^\phi \ar[d]^\sim & j_1^! i_2^* (\shB) \ar[rr]^{\theta} & & \D j_1^* \D i_2^* (\shB) \ar[r] & \D j_1^* \D (\omega_Y \tbox \omega_{Y^*}) \ar[d]^\sim \\
\D(\shG) \otimes \shG \ar[rrrr]^{m} & & & & \omega_Y. 
}$$
\end{Corollary}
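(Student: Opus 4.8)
The plan is to obtain Corollary~\ref{cor:1} by pasting together the commutative diagrams of Lemmas~\ref{lem:5} and~\ref{lem:6} along their common vertex $\D j_1^* \D i_2^*(\shB)$. Lemma~\ref{lem:6} governs the left-hand portion: it identifies the composite $j_2^* i_1^!(\shB)\xrightarrow{\phi} j_1^! i_2^*(\shB)\xrightarrow{\theta}\D j_1^* \D i_2^*(\shB)$, postcomposed with $\D\gamma'$, with the biduality map $\tau\colon \D(\shG)\otimes\shG\to\D\D(\D(\shG)\otimes\shG)$ precomposed with an isomorphism $j_2^* i_1^!(\shB)\xrightarrow{\sim}\D(\shG)\otimes\shG$ --- this last isomorphism being the unlabelled left vertical in the Corollary. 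Lemma~\ref{lem:5} governs the right-hand portion: the unlabelled arrow $\D j_1^* \D i_2^*(\shB)\to\D j_1^*\D(\omega_Y\tbox\omega_{Y^*})$ of the Corollary is by definition the composite $\D j_1^*\D i_2^*(\shB)\xrightarrow{\sim}\D j_1^*\D(\omega_Y\tbox(\D(\shG^*)\otimes\shG^*))\xrightarrow{m}\D j_1^*\D(\omega_Y\tbox\omega_{Y^*})$, and Lemma~\ref{lem:5} identifies this, followed by the canonical $\D j_1^*\D(\omega_Y\tbox\omega_{Y^*})\cong\omega_Y$, with $\D\gamma$ followed by $\D\D(\D(\shG)\otimes\shG)\xrightarrow{m}\D\D(\omega_Y)\xrightarrow{\sim}\omega_Y$, the first $m$ here being $\D\D$ applied to the evaluation $m\colon\D(\shG)\otimes\shG\to\omega_Y$.

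To glue, I first check that the isomorphisms $\gamma$ (from Lemma~\ref{lem:5}) and $\gamma'$ (from Lemma~\ref{lem:6}) may be taken to be the same map $\D(\D(\shG)\otimes\shG)\xrightarrow{\sim}j_1^*\D i_2^*(\shB)$: both are, after unwinding their constructions, the canonical identification assembled from the base-change isomorphism $j_1^*\cHom(\cM,\cM)\cong\cHom(j_1^*\cM,j_1^*\cM)\cong\cHom(\shG,\shG)$ and the tensor--Hom identification $\cHom(\shG,\shG)\cong\D(\D(\shG)\otimes\shG)$. (Indeed, in the proof of Lemma~\ref{lem:6} one is free to take $\gamma'$ to be this $\gamma$, since that lemma is applied only for the existence of a compatible diagram.) With $\gamma=\gamma'$ fixed, the two squares share the edge $\D\gamma\colon\D j_1^*\D i_2^*(\shB)\to\D\D(\D(\shG)\otimes\shG)$ and can be concatenated.

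Then the top-and-right composite of the Corollary equals, by Lemma~\ref{lem:5}, the canonical $\D\D(\omega_Y)\cong\omega_Y$ after $\D\D(m)$ after $\D\gamma\circ\theta\circ\phi$; by Lemma~\ref{lem:6} this $\D\gamma\circ\theta\circ\phi$ is $\tau_{\D(\shG)\otimes\shG}$ precomposed with the left vertical $j_2^* i_1^!(\shB)\xrightarrow{\sim}\D(\shG)\otimes\shG$. Naturality of $\tau\colon\id\to\D\D$ against $m\colon\D(\shG)\otimes\shG\to\omega_Y$ then replaces $\D\D(m)\circ\tau_{\D(\shG)\otimes\shG}$ by $\tau_{\omega_Y}\circ m$, and since the canonical isomorphism $\D\D(\omega_Y)\cong\omega_Y$ is inverse to $\tau_{\omega_Y}$ (biduality of the dualizing complex), the whole composite collapses to $m$ precomposed with the left vertical, which is precisely the left-and-bottom composite. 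This establishes commutativity.

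The step I expect to be the main obstacle is the first one: confirming that the identifications $\gamma$ and $\gamma'$ produced by the two rather different arguments genuinely coincide (or can be chosen to), and, relatedly, keeping track that the various unlabelled canonical isomorphisms in play --- the several instances of biduality $\D\D(-)\cong(-)$ on $D_{coh}$, the base-change isomorphisms for $j_1^*$, and the tensor--Hom identifications --- are mutually coherent, so that the two squares really do paste along a single common edge. Once that coherence is in hand, the remainder is a short diagram chase using only naturality of $\tau$ and the commutativities already proved in Lemmas~\ref{lem:5} and~\ref{lem:6}.
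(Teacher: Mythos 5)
Your overall route—pasting the commutative diagrams of Lemmas~\ref{lem:5} and~\ref{lem:6} along the shared vertex $\D j_1^*\D i_2^*(\shB)$, then using naturality of $\tau$ against $m$ and the biduality $\D\D(\omega_Y)\cong\omega_Y$ to collapse the bottom row—is exactly the strategy the paper pursues, and the last part of your argument (replacing $\D\D(m)\circ\tau$ by $\tau_{\omega_Y}\circ m$ and cancelling against the canonical isomorphism) is correct. The gap is precisely where you flag it: the claim that $\gamma$ and $\gamma'$ ``may be taken to be the same map.''

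That claim is not substantiated, and it is not how the paper proceeds. In Lemma~\ref{lem:5} the isomorphism $\gamma$ is built directly as the base-change/tensor--Hom identification $j_1^*\D i_2^*(\shB)\cong\cHom(\shG,\shG)\cong\D(\D(\shG)\otimes\shG)$. In Lemma~\ref{lem:6} the isomorphism $\gamma'$ is $(\rho\,\D)\circ\tau^{-1}$, where $\rho$ comes from Proposition~\ref{prop:dual} and is itself a composition of $\theta^{-1}$, $\phi$, and $\theta$; nothing in that proof exhibits $\gamma'$ as the canonical identification you describe, and it is not clear that the two coincide. Moreover, your auxiliary argument that ``one is free to take $\gamma'$ to be this $\gamma$, since that lemma is applied only for the existence of a compatible diagram'' is not sound: Lemma~\ref{lem:6} produces a specific $\gamma'$ for which its diagram commutes, and if $\gamma\ne\gamma'$ then substituting $\gamma$ in its place would destroy commutativity.

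The paper circumvents the issue without ever checking $\gamma=\gamma'$. It first pastes the two lemmas into a diagram (its equation (\ref{eq:commute4})) whose bottom row is $m\circ\D\gamma''\circ\tau$ for the possibly-nontrivial discrepancy isomorphism $\gamma''$ of $\D(\D(\shG)\otimes\shG)$. Under the identification $\D(\D(\shG)\otimes\shG)\cong\cHom(\shG,\shG)$ and the adjunction $\Hom(\O_Y,\cHom(\shG,\shG))\cong\Hom(\shG,\shG)$, the composite $\gamma''\circ\iota$ corresponds to an automorphism $\delta$ of $\shG$, and $m\circ\D\gamma''$ is then rewritten as $m\circ\D\delta$ with $\D\delta$ acting as an automorphism of $\D(\shG)\otimes\shG$. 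Since $m\circ\tau=m$, the whole bottom row becomes $m\circ\D\delta$, and the extra automorphism $\D\delta$ is simply absorbed into the unlabelled left vertical isomorphism of the Corollary. This is the step your proposal is missing: you either need to genuinely prove $\gamma=\gamma'$ (which the paper neither asserts nor needs), or, as the paper does, recognize the discrepancy as precomposition with an automorphism of $\shG$ and fold it into the left vertical.
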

\begin{proof}
Combining Lemmas \ref{lem:5} and \ref{lem:6} we have a commutative diagram of the form
\begin{equation}\label{eq:commute4} 
\begin{gathered}\xymatrix{
j_2^* i_1^!(\shB) \ar[r]^\phi \ar[d]^\sim & j_1^! i_2^* (\shB) \ar[r]^{\theta} & \D j_1^* \D i_2^* (\shB) \ar[r] & \D j_1^* \D (\omega_Y \tbox \omega_{Y^*}) \ar[d]^\sim \\
\D(\shG) \otimes \shG \ar[r]^{\tau} & \D\D(\D(\shG) \otimes \shG) \ar[r]^{\D \gamma''} & \D\D(\D(\shG) \otimes \shG) \ar[r]^{m} & \omega_Y
}
\end{gathered}
\end{equation}
for some isomorphism $\gamma''$ of $\D(\D(\shG) \otimes \shG)$. 

Now if we identify $\D(\D(\shG) \otimes \shG) \cong \cHom(\shG,\shG)$ then, as in the proof of Lemma \ref{lem:5}, the composition $m \circ \D\gamma''$ in the diagram above is equivalent to $\D$ applied to the following composition
\begin{equation}\label{eq:26}
\O_Y \xrightarrow{\iota} \cHom(\shG,\shG) \xrightarrow{\gamma''} \cHom(\shG,\shG).
\end{equation}
Now, by adjunction, $\Hom(\O_Y, \cHom(\shG,\shG)) \cong \Hom(\shG,\shG)$. This identifies the composition in (\ref{eq:26}) with some $\delta \in \Hom(\shG,\shG)$. One can then write the composition in (\ref{eq:26}) as $\O_Y \xrightarrow{\iota} \cHom(\shG,\shG) \xrightarrow{\delta} \cHom(\shG,\shG)$ where $\delta$ acts on the second factor. This allows us to write the composition $m \circ \D\gamma''$ as 
$$\D\D(\D(\shG) \otimes \shG) \xrightarrow{\D\delta} \D\D(\D(\shG) \otimes \shG) \xrightarrow{m} \omega_Y.$$
Finally this allows us to rewrite the bottom row in the commutative diagram (\ref{eq:commute4}) as 
$$\D(\shG) \otimes \shG \xrightarrow{\D \delta} \D(\shG) \otimes \shG \xrightarrow{\tau} \D\D(\D(\shG) \otimes \shG) \xrightarrow{m} \omega_Y.$$
The result follows since the composition $\D(\shG) \otimes \shG \xrightarrow{\tau} \D\D(\D(\shG) \otimes \shG) \xrightarrow{m} \omega_Y$ is equivalent to $m$ and we can include the isomorphism $\D \delta$ into the left vertical isomorphism in (\ref{eq:commute4}). 
\end{proof}

Now we are ready to consider the general case of $\shA \cong \shF \tbox \D(\shG^*) \tbox \shG$. 

\begin{Proposition}\label{prop:1}
There exists a commutative diagram
\begin{equation*}
\begin{gathered}\xymatrix{
j_2^* i_1^! (\shA) \ar[r]^\phi \ar[d]^\sim & j_1^! i_2^* (\shA) \ar[r] & j_1^!(\shF \tbox \omega_{Y^*}) \ar[d]^{\sim} \\
\cHom(\D(\shF),\D(\shG)) \otimes \shG \ar[r] & \cHom(\D(\shF),\D(\shG) \otimes \shG) \ar[r]^m  & \cHom(\D(\shF),\omega_Y) \cong \shF.
}
\end{gathered}\end{equation*}
\end{Proposition}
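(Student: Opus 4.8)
The plan is to deduce this from the special case $\shF = \omega_Y$ already established in Corollary \ref{cor:1} (where the relevant sheaf is $\shB = \omega_Y \tbox \D(\shG^*) \tbox \shG$), by applying the functor $\cHom(\D(\shF),-)$ throughout. The key point is that $\shF \in D_{coh}(Y)$ is reflexive, so $\shF \cong \D\D(\shF) = \cHom(\D(\shF),\omega_Y)$, and consequently $\shA = \shF \tbox \D(\shG^*) \tbox \shG$ can be written as $\cHom(\widetilde{\D(\shF)},\,\shB)$ on $Y \ttimes Y^* \ttimes Y$, where $\widetilde{\D(\shF)}$ denotes the pullback of $\D(\shF)$ along the projection $q : Y \ttimes Y^* \ttimes Y \to Y$ to the first factor. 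This is exactly parallel to the way $\shB_2 = \cHom(\cO_Y \tbox \cO_{Y^*} \tbox \shG,\, \shB_1)$ was used in the proof of Lemma \ref{lem:6}, only with the extra $\cHom$ inserted in the first tensor slot rather than the last.

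With this identification, the strategy is to run $\cHom(\D(\shF),-)$ over the commutative diagram of Corollary \ref{cor:1}. Since $\widetilde{\D(\shF)}$ is pulled back from the first factor, its restriction along each of $i_1, i_2, j_1, j_2$ in the square (\ref{eq:main}) is again (a pullback of) $\D(\shF)$, living on $Y \times Y$, $Y \ttimes Y^*$, or $Y$ as appropriate; combining this with the standard compatibilities $f^! \cHom(\widetilde{\D(\shF)},-) \cong \cHom(\D(\shF), f^!(-))$ for the section-type maps, the comparison $f^* \cHom(\D(\shF),-) \to \cHom(\D(\shF), f^*(-))$, the natural map $\cHom(\D(\shF),\cA) \otimes \cC \to \cHom(\D(\shF),\cA \otimes \cC)$, and the behaviour of $\cHom(\D(\shF),-)$ under the duality natural transformations $\theta,\tau,\eta$ (via Proposition \ref{prop:dual}), one transports each map in Corollary \ref{cor:1} to its analogue here. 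The one non-formal ingredient is the compatibility of $\phi$: the assertion that $\phi$ for $\shA = \cHom(\widetilde{\D(\shF)},\shB)$ is carried to $\cHom(\D(\shF),-)$ applied to $\phi$ for $\shB$ is precisely Proposition \ref{prop:phi}, used exactly as in the passage $\shB_1 \rightsquigarrow \shB_2$ of Lemma \ref{lem:6}. Finally one identifies the corners: unwinding the twisted products and the section $s$ (recall $s^*(\cO_Y \tbox \shG^*) \cong \shG$ by the definition of $(\cdot)^*$, and $q \circ s = \mathrm{id}$) gives $j_2^* i_1^!(\shA) \cong j_2^* \D i_1^* \D(\shA) \cong \cHom(\D(\shF),\D(\shG)) \otimes \shG$, while $j_1^! i_2^*(\shA) \cong \cHom(\D(\shF),\D(\shG) \otimes \shG)$ and $j_1^!(\shF \tbox \omega_{Y^*}) \cong \cHom(\D(\shF),\omega_Y) \cong \shF$; the bottom $m$ becomes $\cHom(\D(\shF),-)$ of $m : \D(\shG) \otimes \shG \to \omega_Y$, and the unmarked top arrow becomes $j_1^!$ of $\mathrm{id} \tbox m$.

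The main obstacle I expect is purely technical: verifying that the various comparison maps between $\cHom(\D(\shF),-)$ and the pullback functors $i_1^*, i_2^*, j_1^!, j_2^*$ are genuine isomorphisms on the (generally singular) convolution varieties in question — this is the analogue of the aside in the proof of Lemma \ref{lem:6} that ``the two unmarked horizontal morphisms are isomorphisms'', and must be handled using the fact that the section-type maps are regular immersions of finite Tor-dimension in the fibration $Y \ttimes Y^* \ttimes Y$, together with the results on $D_{qcoh}$ collected in the appendix. A secondary subtlety is reconciling the direct identification $j_2^* i_1^!(\shA) \cong \cHom(\D(\shF),\D(\shG)) \otimes \shG$ with what one gets by applying $\cHom(\D(\shF),-)$ to the identification $j_2^* i_1^!(\shB) \cong \D(\shG) \otimes \shG$ of Corollary \ref{cor:1}, i.e.\ checking that the resulting square commutes through the natural map $\cHom(\D(\shF),\D(\shG)) \otimes \shG \to \cHom(\D(\shF),\D(\shG) \otimes \shG)$. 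Once all these isomorphisms and naturality squares are in place, the commutativity of the diagram follows from that of Corollary \ref{cor:1} by functoriality of $\cHom(\D(\shF),-)$.
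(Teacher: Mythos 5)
Your proposal is correct and takes essentially the same approach as the paper: both express $\shA$ as $\cHom(\D(\shF) \,\tbox\, \O_{Y^*} \tbox\, \O_Y, \shB)$, invoke Proposition~\ref{prop:phi} to transport the $\phi$-square through $\cHom(\D(\shF),-)$, and then combine with Corollary~\ref{cor:1} to identify the bottom row. The only minor overreach is your appeal to Proposition~\ref{prop:dual} — the paper uses that in Lemma~\ref{lem:6} but not here, since the right-hand square of the intermediate diagram (\ref{eq:commute5}) commutes by plain naturality of the evaluation map rather than by any $\theta$-compatibility.
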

\begin{proof}
We will use the fact that $\shA \cong \cHom(\D(\shF) \tbox \O_{Y^*} \tbox \O_Y, \shB)$. By Proposition \ref{prop:phi} we have a commutative diagram 
\begin{equation}\label{eq:commute5}
\begin{gathered}\xymatrix{
j_2^* i_1^! (\shA) \ar[r]^\phi \ar[d] & j_1^! i_2^* (\shA) \ar[r] \ar[d] & j_1^!(\shF \tbox \omega_{Y^*}) \ar[d]^{\sim} \\
\cHom(\D(\shF), j_2^* i_1^!(\shB)) \ar[r]^\phi & \cHom(\D(\shF),  j_1^! i_2^*(\shB)) \ar[r] & \cHom(\D(\shF), j_1^!(\omega_Y \tbox \omega_{Y^*})) ,
}
\end{gathered}
\end{equation}
where we have used the fact that $j_2^* i_1^* (\D(\shF) \tbox \O_{Y^*} \tbox \O_Y) \cong \D(\shF)$. The left vertical map is the composition 
\begin{equation}\label{eq:27}
j_2^* i_1^! \cHom(\D(\shF) \tbox \O_{Y^*} \tbox \O_Y, \shB) \xrightarrow{\sim} j_2^* \cHom(\D(\shF) \tbox \O_Y, i_1^!(\shB)) \rightarrow \cHom(\D(\shF), j_2^* i_1^!(\shB)).
\end{equation}
Now we have $i_1^!(\shB) \cong \D(\shG) \tbox \shG$, $j_2^* i_1^!(\shB) \cong \D(\shG) \otimes \shG$ and 
$$\cHom(\D(\shF) \tbox \O_Y, i_1^!(\shB)) \cong \cHom(\D(\shF), \D(\shG)) \tbox \shG.$$ 
Subsequently, the second map in (\ref{eq:27}) can be identified with the natural morphism 
$$j_2^*(\cHom(\D(\shF), \D(\shG)) \tbox \shG) \cong \cHom(\D(\shF),\D(\shG)) \otimes \shG \rightarrow \cHom(\D(\shF), \D(\shG) \otimes \shG).$$
Combining this  with Corollary \ref{cor:1} the commutative diagram (\ref{eq:commute5}) gives us the commutative diagram in Proposition \ref{prop:1}.
\end{proof}

Finally, applying $j_{2*}$ to the commutative diagram in Proposition \ref{prop:1} and comparing with the bottom two rows in (\ref{eq:commute3}) we arrive at the commutative diagram
\begin{equation}\label{eq:commute8}
\begin{gathered}\xymatrix{
i_1^!(\shA) \ar[r]^{adj} \ar[d] & i_1^! i_{2*} i_2^* (\shA) \ar[r] & i_1^! i_{2*} (\shF \tbox \omega_{Y^*}) \ar[d]^\sim \\
j_{2*} \cHom(\D(\shF),\D(\shG) \otimes \shG) \ar[rr]^m & & j_{2*} \cHom(\D(\shF),\omega_Y) \cong j_{2*} \shF,}
\end{gathered}
\end{equation}
where the vertical map on the left is the composition 
$$i_1^!(\shA) \xrightarrow{adj} j_{2*} j_2^* i_1^!(\shA) \xrightarrow{\sim} j_{2*} (\cHom(\D(\shF),\D(\shG)) \otimes \shG) \rightarrow j_{2*} \cHom(\D(\shF),\D(\shG)\otimes \shG).$$
This map can be factored as 
$$i_1^!(\shA) \xrightarrow{\sim} \cHom(\D(\shF),\D(\shG)) \tbox \shG \rightarrow j_{2*} \cHom(\D(\shF),\D(\shG) \otimes \shG).$$
Together with (\ref{eq:commute8}) this now recovers the diagram in Proposition \ref{prop:adjoints}. 

\subsection{Other half of the composition} \label{sec:rest}
We are now ready to identify more explicitly the composition in (\ref{eq:adj10}). We start with the following commutative diagram 
\begin{equation}\label{eq:adj7}
\begin{gathered}\xymatrix{
\O_Y \bbox \shF \ar[rr] \ar[d]^= & & i_1^! (\shF \tbox \D(\shF^*) \tbox \shF) \ar[d]^{\sim} \ar[r] & i_1^! i_{2*}(\shF \tbox \omega_{Y^*}) \ar[d]^{\sim} \\
\O_Y \bbox \shF \ar[rr]^{\iota} & & \cHom(\D(\shF),\D(\shF)) \tbox \shF \ar[r] & j_{2*} \shF
}
\end{gathered}
\end{equation}
The right rectangle is the large commutative rectangle from Proposition \ref{prop:adjoints} (we take $\shF = \shG$ in the notation of Section \ref{sec:hardhalf}). The composition in the bottom row is equivalent to the following composition 
\begin{align*}
\O_Y \bbox \shF \rightarrow j_{2*} \shF & \xrightarrow{\iota} j_{2*}(\cHom(\D(\shF),\D(\shF)) \otimes \shF) \\
& \rightarrow j_{2*} \cHom(\D(\shF),\D(\shF) \otimes \shF) \xrightarrow{m} j_{2*} \cHom(\D(\shF),\omega_Y) \cong j_{2*} \shF.
\end{align*}
This is subsequently equivalent to the natural restriction map $\O_Y \bbox \shF \rightarrow j_{2*} \shF$ because the composition 
$$\shF \xrightarrow{\iota} \cHom(\D(\shF),\D(\shF)) \otimes \shF \rightarrow \cHom(\D(\shF),\D(\shF) \otimes \shF) \xrightarrow{m} \cHom(\D(\shF),\omega_Y) \cong \shF$$
is an isomorphism. 

\subsection{Final step in the proof of rigidity}\label{sec:proofadj}

Pushing forward the map in (\ref{eq:adj10}) to $\Gr_G$ we obtain the composition in (\ref{eq:adj5}). The last step is showing that this composition is an isomorphism. 

The composition in the top row of (\ref{eq:adj7}) is by adjunction equivalent to the one in (\ref{eq:adj6}), so we get the following commutative diagram.
\begin{equation}\label{eq:adj8}
\begin{gathered}\xymatrix{
i_{1*} (\O_Y \bbox \shF) \ar[r] \ar[d]^= & i_{1*} i_1^! i_{2*}(\shF \tbox \omega_{Y^*}) \ar[r]^{adj} \ar[d]^\sim & i_{2*}(\shF \tbox \omega_{Y^*}) \\
i_{1*} (\O_Y \bbox \shF) \ar[r] & i_{1*} j_{2*} \shF & }
\end{gathered}
\end{equation}
Here the composition in the top row is the one from (\ref{eq:adj6}) and the map in the bottom row is restriction. Now by Lemma \ref{lem:1} below the composition 
$$i_{1*} i_1^! i_{2*} (\shF \tbox \omega_{Y^*}) \xrightarrow{\sim} i_{1*} j_{2*} j_1^! (\shF \tbox \omega_{Y^*}) = i_{2*} j_{1*} j_1^! (\shF \tbox \omega_{Y^*}) \xrightarrow{adj} i_{2*}(\shF \tbox \omega_{Y^*})$$
is equal to the adjunction map $i_{1*} i_1^! i_{2*} (\shF \tbox \omega_{Y^*}) \rightarrow i_{2*} (\shF \tbox \omega_{Y^*})$. Thus we can complete the diagram in (\ref{eq:adj8}) to get 
\begin{equation}\label{eq:adj9}
\begin{gathered}\xymatrix{
i_{1*} (\O_Y \bbox \shF) \ar[rrrr] \ar[d]^= & & & & i_{2*} (\shF \tbox \omega_{Y^*}) \ar[d]^= \\
i_{1*} (\O_Y \bbox \shF) \ar[r] & i_{1*}j_{2*} \shF \ar[rr]^\sim & & i_{2*} j_{1*} j_1^! (\shF \tbox \omega_{Y^*}) \ar[r]^{adj} & i_{2*} (\shF \tbox \omega_{Y^*}) 
}
\end{gathered}
\end{equation}
where the top row is the map from (\ref{eq:adj6}), while the middle isomorphism is the fact that $j_1^!(\shF \tbox \omega_{Y^*}) \cong \shF$ and $i_1 \circ j_2 = i_2 \circ j_1$.

Now we are ready to push down to $\Gr_G$. Let us denote $p: Y \times Y \rightarrow Y$ the projection onto the second factor and similarly $q: Y \ttimes Y^* \rightarrow Y$ the projection onto the first factor. Then the pushforward of $i_{1*} (\O_Y \bbox \shF)$ to $\Gr_G$ can be identified with $p_{*} (\O_Y \bbox \shF) \cong p_{*} p^* (\shF)$. Similarly, the pushforward of $i_{2*} (\shF \tbox \omega_{Y^*})$ can be identified with $q_*(\shF \tbox \omega_{Y^*}) \cong q_* q^! (\shF)$. Subsequently, using the commutative diagram (\ref{eq:adj9}), the composition 
$$\shF = \O_e * \shF \rightarrow \shF * \shF^L * \shF \rightarrow \shF * \O_e = \shF$$
can be identified with a composition
$$\shF \xrightarrow{adj} p_{*} p^*(\shF) \xrightarrow{adj} p_{*} j_{2*} j_2^* p^*(\shF) \xrightarrow{\sim} q_* j_{1*} j_1^! q^!(\shF) \xrightarrow{adj} q_* q^!(\shF) \xrightarrow{adj} \shF.$$
Notice that the composition of the first two adjunction maps is the adjunction map for $p \circ j_2$. Since $p \circ j_2$ is the identity this composition is an isomorphism. Likewise, since $q \circ j_1$ is the identity map, the composition of the last two adjunctions is also an isomorphism. It follows that the entire composition is an isomorphism. This completes the proof of Theorem \ref{thm:adjoints}.

\begin{Lemma}\label{lem:1}
Using the notation from (\ref{eq:ind}) the composition
$$i_{1*} \xrightarrow{adj} i_{1*} j_{2*} j_2^*  = i_{2*} j_{1*} j_2^* \xrightarrow{\sim} i_{2*} i_2^* i_{1*}$$
is equal to $i_{1*} \xrightarrow{adj} i_{2*} i_2^* i_{1*}$. Similarly, the composition 
$$i_{1*} i_1^! i_{2*} \xrightarrow{\sim} i_{1*} j_{2*} j_1^! = i_{2*} j_{1*} j_1^! \xrightarrow{adj} i_{2*}$$
is equal to $i_{1*} i_1^! i_{2*} \xrightarrow{adj} i_{2*}$. 
\end{Lemma}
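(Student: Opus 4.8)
The plan is to strip away all geometry: both assertions are formal identities in the adjunction calculus, involving only the adjunctions $i_k^*\dashv i_{k*}$, $j_k^*\dashv j_{k*}$, $i_{1*}\dashv i_1^!$, $j_{1*}\dashv j_1^!$ and the Beck--Chevalley (base change) transformations attached to the independent square in (\ref{eq:ind}). I will write juxtaposition of a functor with a $2$-cell for whiskering and $\circ$ for vertical composition, and denote by $\eta_{\bullet},\epsilon_{\bullet}$ the units and counits of the above adjunctions. Let $\theta\colon i_2^*i_{1*}\xrightarrow{\sim} j_{1*}j_2^*$ be the base change transformation, which is an isomorphism precisely because the square is independent (Definition \ref{def:independent}, established for (\ref{eq:ind}) in Lemma \ref{lem:*}). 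By construction $\theta$ is the mate of the tautological identification $j_1^*i_2^*=(i_2j_1)^*=(i_1j_2)^*=j_2^*i_1^*$, i.e.\ it is the composite
\[
i_2^*i_{1*}\xrightarrow{\eta_{j_1}} j_{1*}j_1^*i_2^*i_{1*}=j_{1*}j_2^*i_1^*i_{1*}\xrightarrow{\epsilon_{i_1}} j_{1*}j_2^*.
\]

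Since $\theta$ is invertible, the first identity is equivalent to the equality of $2$-cells $i_{1*}\Rightarrow i_{1*}j_{2*}j_2^*=i_{2*}j_{1*}j_2^*$
\[
i_{1*}\eta_{j_2}\;=\;(i_{2*}\theta)\circ(\eta_{i_2}i_{1*}).
\]
I would verify this by inserting the displayed formula for $\theta$ and performing the cancellations as a string-diagram (pasting) computation: the $\eta_{i_2}$/$\epsilon_{i_1}$ pair, which straddle the coincidence $i_1j_2=i_2j_1$, is removed by the triangle identity for $i_2^*\dashv i_{2*}$ after rerouting along that coincidence, and the leftover whiskered $\eta_{j_1}$ is then absorbed by the triangle identity for $j_1^*\dashv j_{1*}$, leaving exactly $i_{1*}\eta_{j_2}$. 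Equivalently, this is the standard compatibility of base change transformations with units, so one may instead cite \cite[Ch.~3]{Lip}.

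The second identity is the mate of the first; it concerns the base change isomorphism $\theta'\colon i_1^!i_{2*}\xrightarrow{\sim} j_{2*}j_1^!$ for the right adjoints (again an isomorphism by independence), and reduces in the same way to the equality $\epsilon_{i_1}i_{2*}=(i_{2*}\epsilon_{j_1})\circ(i_{1*}\theta')$ of $2$-cells $i_{1*}i_1^!i_{2*}\Rightarrow i_{2*}$. I would prove it either by repeating the pasting argument above with every $(-)^*\dashv(-)_*$ adjunction replaced by the corresponding $(-)_*\dashv(-)^!$ adjunction — which is exactly the passage to mates — or by applying the duality functor $\D$ to the first identity, using $\D\,i^*\,\D\cong i^!$, $\D\,j_{2*}\,\D\cong j_{2*}$ and the natural transformation $\tau\colon\id\to\D\D$. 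The first route is preferable since it stays within $D_{qcoh}$ with no boundedness hypothesis, matching the setting of the rest of Section \ref{sec:rigidity}.

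The only genuine obstacle is bookkeeping: one must keep track of which Beck--Chevalley transformation is in play and in which direction, and note that the arrows labelled $\sim$ in the statement are precisely $i_{2*}\theta^{-1}$ (respectively $i_{1*}(\theta')^{-1}$) — this is the single place where independence enters, since the underlying unit/counit identities above are formal and hold whether or not $\theta,\theta'$ are invertible. No difficulty arises from the twisted products: after the reductions of Section \ref{sec:rigidity} the whole discussion takes place on the finite-dimensional convolution varieties, where $i_1,i_2,j_1,j_2$ are ordinary morphisms of schemes and the full Grothendieck-duality formalism of \cite{Lip} applies.
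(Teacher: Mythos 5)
Your overall strategy coincides with the paper's: reduce to a formal identity in the adjunction calculus, expand the base-change isomorphism $\theta$ into units and counits, and cancel. The paper likewise decomposes the arrow labelled $\sim$ into an $\eta/\epsilon$ pair, then cancels by one interchange and one triangle identity. Your decision to dualize (pass to mates) for the second assertion is also fine and is what the paper means by ``the second follows similarly.''

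However, the specific cancellations you describe do not work as written, and this is a genuine gap. You expand $\theta$ via the \emph{second} presentation given in the Remark after Definition \ref{def:independent}, namely $i_2^* i_{1*} \xrightarrow{\eta_{j_1}} j_{1*}j_1^*i_2^*i_{1*} = j_{1*}j_2^*i_1^*i_{1*} \xrightarrow{\epsilon_{i_1}} j_{1*}j_2^*$. You then claim that the $\eta_{i_2}/\epsilon_{i_1}$ pair is removed by the triangle identity for $i_2^* \dashv i_{2*}$, and that the leftover $\eta_{j_1}$ is absorbed by the triangle identity for $j_1^* \dashv j_{1*}$. Neither step is available: $\eta_{i_2}$ and $\epsilon_{i_1}$ are unit and counit of \emph{different} adjunctions and are not related by any triangle identity, and there is no $\epsilon_{j_1}$ anywhere in the composite for $\eta_{j_1}$ to cancel against. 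What actually happens with your chosen presentation of $\theta$ is this: $\eta_{i_2}i_{1*}$ followed by $i_{2*}\eta_{j_1}i_2^*i_{1*}$ is, by the composition law for adjunction units, the unit $\eta_{i_2 j_1}$ whiskered by $i_{1*}$; since $i_2 j_1 = i_1 j_2$, this equals $\eta_{i_1 j_2}$, which re-expands through the \emph{other} factorization as $\eta_{i_1}i_{1*}$ followed by $i_{1*}\eta_{j_2}i_1^*i_{1*}$. After an interchange, it is $\eta_{i_1}$ (not $\eta_{i_2}$) that cancels with $\epsilon_{i_1}$ via the triangle identity for $i_1^* \dashv i_{1*}$, and $\eta_{j_2}$ (not $\eta_{j_1}$) is what remains, giving $i_{1*}\eta_{j_2}$ as desired.

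The paper sidesteps this extra bookkeeping by expanding $i_{2*}\theta$ via the \emph{first} presentation from Definition \ref{def:independent}, using $\eta_{j_2}$ and $\epsilon_{i_2}$. There a single naturality interchange puts $\eta_{i_2}$ next to $\epsilon_{i_2}$, which then cancel by the triangle identity for the single adjunction $i_2^* \dashv i_{2*}$, leaving $i_{1*}\eta_{j_2}$ directly. Since the two presentations of $\theta$ are identified in the Remark after Definition \ref{def:independent} (citing \cite[Prop.\ 3.7.2]{Lip}), you could fix your argument most cheaply by invoking that equivalence and computing with the first form, as the paper does. Alternatively, keep your form of $\theta$ but insert the unit-composition step $\eta_{i_2 j_1} = \eta_{i_1 j_2}$ explicitly; either way, the triangle identities you cite must be corrected as above.
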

\begin{proof}
We prove the first assertion (the second follows similarly). Since the last map in the composition is the inverse of $i_2^* i_{1*} \xrightarrow{\sim} j_{1*} j_2^*$ it suffices to show that the following diagram commutes.
\begin{equation}\label{eq:3}
\begin{gathered}\xymatrix{
i_{1*} \ar[r]^{adj} \ar[d]^{adj} & i_{1*} j_{2*} j_2^* \ar[d]^{=} \\
i_{2*} i_2^* i_{1*} \ar[r]^{\sim} & i_{2*} j_{1*} j_2^* 
}
\end{gathered}
\end{equation}
The bottom isomorphism can be decomposed as 
$$i_{2*} i_2^* i_{1*} \xrightarrow{adj} i_{2*} i_2^* i_{1*} (j_{2*} j_2^*) = i_{2*} (i_2^* i_{2*}) j_{1*} j_2^* \xrightarrow{adj} i_{2*} j_{1*} j_2^*.$$
Thus the composition 
$$i_{1*} \xrightarrow{adj} i_{2*} i_2^* i_{1*} \xrightarrow{\sim} i_{2*} j_{1*} j_2^* $$
is equal to 
$$i_{1*} \xrightarrow{adj} i_{1*} (j_{2*} j_2^*) = i_{2*} j_{1*} j_2^* \xrightarrow{adj} i_{2*} i_2^* i_{2*} j_{1*} j_2^* \xrightarrow{adj} i_{2*} j_{1*} j_2^*.$$
Since the composition $i_{2*} \xrightarrow{adj} i_{2*} i_2^* i_{2*} \xrightarrow{adj} i_{2*}$ is the identity this simplifies to give 
$$i_{1*} \xrightarrow{adj} i_{1*} (j_{2*} j_2^*) = i_{2*} j_{1*} j_2^* $$
and proves the commutativity of (\ref{eq:3}). 
\end{proof}

\section{Renormalized $r$-matrices}\label{sec:r-matrix}

In this section we develop the notion of a renormalized $r$-matrix in an arbitrary monoidal category $\cC$, abstracting the essential features of the $r$-matrices introduced in the setting of KLR algebras in \cite{KKK13}. Following \cite{KKKO18}, we recall how they provide a framework for the monoidal categorification of cluster algebras. 

\subsection{Definitions and examples}\label{sec:def-r-matrix}

\begin{Definition}\label{def:rmat}
Let $(\cC, \conv)$ be a $\coeffs$-linear monoidal category. A \newword{system of renormalized $r$-matrices} in $\cC$ is an assignment to each pair of objects $M,N \in \cC$ a map \[\rmat{M,N}: M \conv N \to N \conv M\] and an element $\La(M,N) \in \Z \cup \{-\infty\}$ satisfying the following properties. 
\begin{enumerate}[(i)]
\item\label{p1} For any $M \in \cC$ the morphisms $\rmat{M,1_\cC}$ and $\rmat{1_\cC,M}$ are given by composing the unit isomorphisms of $M \conv 1_\cC$ and $1_\cC \conv M$ with $M$. 
\item\label{p2} $\rmat{M,N}$ is nonzero if and only if $\La(M,N) \ne -\infty$.
\item\label{p3} For any $M,N_1,N_2 \in \cC$ we have 
$$\La(M, N_1 \conv N_2) \leq \La(M, N_1) + \La(M, N_2).$$
If equality holds then 
$$\rmat{M,N_1 \conv N_2} = (\id_{N_1} \conv \rmat{M,N_2}) \circ (\rmat{M,N_1} \conv \id_{N_2})$$
while the right-hand composition is zero if the inequality is strict. The corresponding statement with $M$ on the other side also holds. 

\item\label{p4} For $M,N \in \cC$ we have $\La(M,N) + \La(N,M) \ge 0$. Moreover
$$\rmat{N,M} \circ \rmat{M,N} \ne 0 \iff \La(M,N) + \La(N,M) = 0.$$
\item\label{p5} For any $M,N_1,N_2 \in \cC$ and morphism $f: N_1 \to N_2$ consider the diagram
\[
\begin{tikzpicture}
[thick,>=\arrtip]
\node (a) at (0,0) {$M \conv N_1$};
\node (b) at (4,0) {$N_1 \conv M$};
\node (c) at (0,-1.5) {$M \conv N_2$};
\node (d) at (4,-1.5) {$N_2 \conv M.$};
\draw[->] (a) to node[above] {$\rmat{M, N_1}$} (b);
\draw[->] (b) to node[right] {$f \conv \id_M$} (d);
\draw[->] (a) to node[left] {$\id_M \conv f$}(c);
\draw[->] (c) to node[above] {$\rmat{M, N_2}$} (d);
\end{tikzpicture}
\]
\begin{itemize}
\item If $\La(M,N_1) = \La(M,N_2)$ the diagram commutes.
\item If $\La(M,N_1) < \La(M,N_2)$ the bottom left composition is zero. 
\item If $\La(M,N_1) > \La(M,N_2)$ the top right composition is zero. 
\end{itemize}
The corresponding statements hold when the product with $M$ is taken on the other side. 
\end{enumerate}
\end{Definition}

As the definition suggests, renormalized $r$-matrices are rarely invertible in motivating examples and need not be natural in the categorical sense. The last property above says that their failure to be natural is controlled by the quantity $\La(M,N)$.

\begin{Example}
Let $\cC$ be a braided monoidal category. Then letting $\rmat{M, N}$ be the braiding isomorphism and setting $\La(M,N)=0$ for all $M$, $N$ we obtain a system of renormalized $r$-matrices in $\cC$. 
\end{Example}

\begin{Example}\label{ex:KLR}
Let $A$ be a symmetric generalized Cartan matrix and $Q_+$ the associated semigroup of positive roots. For each $\beta \in Q_+$, let $R(\beta)$ be the the symmetric KLR (or quiver Hecke) algebra associated to some choice of auxiliary polynomials. Let $R(\beta)\textrm{-mod}$ be the category of gradable finite-dimensional $R(\beta)$-modules. Then the direct sum $\oplus_{\beta \in Q_+} R(\beta)\textrm{-mod}$, equipped with its convolution product, admits a system of renormalized $r$-matrices constructed in \cite{KKK13}. We note that this category is not rigid.
\end{Example}

\begin{Example}\label{ex:chiralrmat}
Our main example in this paper comes by way of a monoidal category $\cC$ which extends in a suitable fashion to a chiral category over $\A^1$. The construction of renormalized $r$-matrices in this case is explained in Section \ref{sec:chiral} (c.f. Theorem \ref{thm:rmat}). It is in light of this construction that we find it more natural not to assume $\cC$ is abelian in the definition.
\end{Example}

\begin{Example}
The constructions of renormalized $r$-matrices in Examples \ref{ex:KLR} and \ref{ex:chiralrmat} parallel the construction of renormalized $r$-matrices for finite-dimensional representations of a quantum loop algebra \cite{Kas02}. We expect the latter fit into the present axiomatic setup or a close relative thereof. Indeed, following \cite{KKKO15a} they share many of the key properties described in the following section. Moreover, the formalism of meromorphic tensor categories considered in \cite{Soi97,Soi99}, of which representations of quantum loop algebras provide central motivating examples, shares many of the key features of the chiral formalism considered in Section \ref{sec:chiral} and should allow a similar general construction.
\end{Example}

Having a system of renormalized $r$-matrices becomes a stronger condition given additional hypotheses on the category $\cC$. 

\begin{Lemma}\label{lem:rmat}
If $M$ or $N$ has a left or right dual then $\rmat{M,N} \ne 0$ unless $M*N=0$ or $N*M=0$. 
\end{Lemma}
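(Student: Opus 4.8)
The plan is to use the axioms of a system of renormalized $r$-matrices (Definition \ref{def:rmat}), together with duality, to rule out the vanishing of $\rmat{M,N}$ whenever both products $M\conv N$ and $N\conv M$ are nonzero. By property \eqref{p2}, showing $\rmat{M,N}\neq 0$ is equivalent to showing $\La(M,N)\neq -\infty$, so the goal is to produce a finite bound on $\La(M,N)$ from the existence of a dual.

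Suppose for concreteness that $N$ has a right dual $N^R$, with unit $\epsilon\colon 1_\cC \to N\conv N^R$ (and counit $N^R\conv N\to 1_\cC$); the remaining three cases (left dual of $N$, left or right dual of $M$) are handled symmetrically, so it suffices to treat this one. First I would apply property \eqref{p3} to the factorization $1_\cC\cong N\conv N^R$ (or rather to a decomposition of $N\conv N^R$ as built from $N$ and $N^R$): this gives
\[
\La(M, N) + \La(M, N^R) \geq \La(M, N\conv N^R).
\]
Now the key point is that $\rmat{M, N\conv N^R}$ is, by the subobject/retract structure coming from the unit $\epsilon$ together with property \eqref{p1} (which pins down $\rmat{M,1_\cC}$ as the unit isomorphism) and the naturality-type statement \eqref{p5}, forced to be nonzero: the composition $M\conv 1_\cC \to M\conv(N\conv N^R)\to (N\conv N^R)\conv M$, followed by the counit on the right, recovers (up to unit isomorphisms) the identity endomorphism of $M$ after using \eqref{p5} applied to the unit $\epsilon$ and the counit. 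Hence $\La(M,N\conv N^R)\neq -\infty$, and therefore $\La(M,N)\neq -\infty$, provided $\La(M,N^R)\neq -\infty$ as well — but if $M\conv N\neq 0$ and $N\conv M\neq 0$, one expects the relevant products with $N^R$ to be nonzero too by rigidity (tensoring the hypothesis with $N^R$ and using that $N\conv N^R$ retracts onto $1_\cC$, hence onto $M$ after convolving with $M$).

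The main obstacle I anticipate is making the "$\rmat{M,N\conv N^R}\neq 0$" step fully rigorous using only the listed axioms: one must carefully track how the unit $\epsilon\colon 1_\cC\to N\conv N^R$ interacts with $\rmat{M,-}$ via \eqref{p5} (applied to $f=\epsilon$, comparing $\rmat{M,1_\cC}$ and $\rmat{M,N\conv N^R}$), and in particular rule out that the relevant $\La$'s are ordered so that one of the compositions in the \eqref{p5} diagram is forced to vanish — if $\La(M,1_\cC)=0$ (as follows from \eqref{p1}) were strictly less than $\La(M,N\conv N^R)$, the "bottom left composition is zero" clause would still be compatible with $\rmat{M,N\conv N^R}\neq 0$, which is what we want, so in fact the inequality direction works in our favor; the delicate case is equality, where commutativity of the diagram together with $\rmat{M,1_\cC}$ being an isomorphism forces $\rmat{M,N\conv N^R}$ to be nonzero on the image of $\id_M\conv\epsilon$. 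I would write this out diagrammatically, invoking \eqref{p1}, \eqref{p2}, \eqref{p3} and \eqref{p5} in that order, and then dispatch the three symmetric cases (left dual of $N$, duals of $M$) by the evident analogues, using that $\rmat{}$ relates $M\conv N$ and $N\conv M$ symmetrically up to swapping the roles of the two arguments.
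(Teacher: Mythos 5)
Your overall plan matches the paper's: use axiom~(\ref{p3}) to reduce to showing $\La(M,N\conv N^R)\neq-\infty$, then use axiom~(\ref{p5}) together with $\La(M,1_\cC)=0$ to force $\La(M,N\conv N^R)\geq 0$. The paper executes this by taking $f$ in~(\ref{p5}) to be the \emph{counit} $adj\colon N\conv N^R\to 1_\cC$ (so $N_1 = N\conv N^R$, $N_2=1_\cC$); the decisive observation is then that the bottom-left composition $M\conv N\conv N^R\to M$ is the adjunction map, which is nonzero unless $M\conv N=0$ by a triangle identity. You instead take $f$ to be a ``unit,'' which is a legitimate variant, but two things go wrong in your write-up.

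First, with the paper's conventions the adjunction data for the right dual $N^R$ of $N$ consists of the counit $N\conv N^R\to 1_\cC$ and the unit $1_\cC\to N^R\conv N$; there is no map $1_\cC\to N\conv N^R$ unless $N^R$ is simultaneously a left dual. So either replace $\epsilon$ by the actual unit $1_\cC\to N^R\conv N$ (in which case the argument gives the bound ``unless $N\conv M=0$''), or simply use the counit as the paper does.

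Second, and more substantively, your justification that the top-right composition $\epsilon\conv\id_M$ is nonzero relies on the claim that $N\conv N^R$ ``retracts onto $1_\cC$.'' This is false: the unit and counit do not compose to $\id_{1_\cC}$ unless $N$ is invertible, and in general neither of $1_\cC$ and $N\conv N^R$ is a retract of the other. The correct argument is via the triangle identity applied after convolving with $N$ on one side: for the paper's counit one finds $\id_{M\conv N}$ factors through $\id_M\conv adj\conv\id_N$, so $\id_M\conv adj = 0$ would force $M\conv N = 0$; the analogous computation for the unit $1_\cC\to N^R\conv N$ shows $\eta\conv\id_M = 0$ would force $N\conv M = 0$. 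Once you patch the retraction claim with this triangle-identity argument, your proof is a correct mirror image of the paper's, differing only in using the unit of the adjunction where the paper uses the counit.
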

\begin{proof}
Suppose $N$ has a right dual $N^R$ (the other cases are the similar). Consider the following diagram.
\[
\begin{tikzpicture}
[thick,>=\arrtip]
\node (a) at (0,0) {$M*(N*N^R)$};
\node (b) at (6,0) {$(N*N^R)*M$};
\node (c) at (0,-1.5) {$M*1_{\cC}$};
\node (d) at (6,-1.5) {$1_{\cC}*M.$};
\draw[->] (a) to node[above] {$\rmat{M, N*N^R}$} (b);
\draw[->] (b) to node[right] {$adj * \id_M$} (d);
\draw[->] (a) to node[left] {$\id_M * adj$}(c);
\draw[->] (c) to node[above] {$\rmat{M,1_{\cC}}$} (d);
\end{tikzpicture}
\]
The composition along the bottom left is just the adjunction map $M * N * N^R \rightarrow M$ which is nonzero (unless $M*N=0$). By Definition \ref{def:rmat}(\ref{p5}) this implies that 
$$\La(M,N*N^R) \ge \La(M,1_{\cC}).$$
It follows from (\ref{p1}) and (\ref{p2}) that $\La(M,1_{\cC}) \neq -\infty$ ((\ref{p3}) then implying $\La(M,1_{\cC}) = 0$), hence $\La(M,N*N^R) \neq -\infty$ as well. 
On the other hand, by (\ref{p3}) we have $\La(M,N)+\La(M,N^R) \ge \La(M,N*N^R)$. Thus $\La(M,N) \ne -\infty$ and the result follows by (\ref{p2}). 
\end{proof}

Since in a rigid abelian monoidal category products of nonzero objects are nonzero, we obtain the following corollary.

\begin{Corollary}\label{cor:rmat}
If $\cC$ is rigid and abelian then $\rmat{M,N} \ne 0$ for all nonzero $M$, $N$.
\end{Corollary}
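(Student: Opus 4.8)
The plan is to obtain Corollary \ref{cor:rmat} as a direct consequence of Lemma \ref{lem:rmat}. Since $\cC$ is rigid, every object has both a left and a right dual; in particular the hypothesis of Lemma \ref{lem:rmat} is automatically satisfied for any pair $M,N$, and the lemma tells us that $\rmat{M,N}\neq 0$ unless $M\conv N=0$ or $N\conv M=0$. So it suffices to check that the convolution of two nonzero objects of $\cC$ is again nonzero; given this, $M\conv N\neq 0$ and $N\conv M\neq 0$ whenever $M,N\neq 0$, and Lemma \ref{lem:rmat} finishes the proof.

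For the remaining point I would invoke the standard fact that in a rigid abelian monoidal category the convolution of two nonzero objects is nonzero. The mechanism is that, because $M$ has duals, the functor $M\conv(-)$ has both a left and a right adjoint (convolution with the appropriate dual, suitably placed), hence is exact; the same applies to $(-)\conv N$. One then uses the evaluation and coevaluation morphisms of a dual together with the triangle identities to conclude that $M\conv(-)$ is faithful for $M\neq 0$ — equivalently, annihilates no nonzero object — provided the unit $1_\cC$ is indecomposable as a monoidal unit (has no nontrivial idempotent endomorphisms). Applying this to both $M\conv(-)$ and $(-)\conv N$ gives the desired non-vanishing.

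I expect the only genuine subtlety to be precisely this non-vanishing of products: it does require some indecomposability of $1_\cC$, since in a product $\cC_1\times\cC_2$ of monoidal categories two nonzero objects supported in different factors have zero convolution, so this hypothesis should be kept in mind (it is harmless in every case we care about). In the concrete setting of the coherent Satake category the non-vanishing can in any event be seen directly: by Proposition \ref{prop:leadingterms} the class of the convolution of two nonzero simple perverse coherent sheaves has an invertible leading coefficient in $K^{G(\cO)\rtimes\Gm}(\Gr_G)$ and is therefore nonzero, and the exactness of convolution then propagates non-vanishing from composition factors to arbitrary nonzero perverse coherent sheaves.
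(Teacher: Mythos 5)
Your proof follows the same route as the paper's, which simply notes (in the sentence preceding the Corollary) that products of nonzero objects are nonzero in a rigid abelian monoidal category, and then applies Lemma~\ref{lem:rmat}.  Your proposal is correct, but the additional care you take over the auxiliary fact is worthwhile and worth comparing.  The paper asserts the non-vanishing of products without comment, whereas you correctly observe that the statement ``in a rigid abelian monoidal category the product of nonzero objects is nonzero'' is \emph{not} true in full generality: a product $\cC_1 \times \cC_2$ of two such categories is again rigid abelian monoidal, yet $(V,0) \conv (0,W) = 0$ for nonzero $V, W$.  What saves the paper is a hypothesis it imposes only at the start of Section~\ref{sec:rmatimplies}, namely that $\End(1_\cC) \cong \coeffs \cdot \id$ (the ``ring category'' condition of \cite{EGNO15}).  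Under that hypothesis $1_\cC$ is a simple object and $\cC$ is indecomposable, which is precisely what is needed for the non-vanishing of products --- this is the notion of a \emph{tensor category} in \cite{EGNO15}, where the non-vanishing appears as a standard exercise.  Your mechanism (exactness of $M \conv (-)$ from the existence of both adjoints, and the evaluation map $M^L \conv M \to 1_\cC$ being an epimorphism because $1_\cC$ is simple) is the right one; the one imprecision is that ``faithful'' and ``reflects zero objects'' are used interchangeably, which is harmless here since the functors in question are exact.  Your fallback observation that in the coherent Satake category the non-vanishing can also be read off directly from Proposition~\ref{prop:leadingterms} is a nice independent check.  In short: same proof strategy as the paper, with a legitimate and useful clarification of the hypotheses under which the quoted auxiliary fact holds.
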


\subsection{Consequences}\label{sec:rmatimplies}

When $\cC$ is abelian and satisfies some natural conditions, the existence of renormalized $r$-matrices strongly constrains the underlying monoidal structure. In this section we discuss some key consequences. The results here and in the remainder of Section~\ref{sec:r-matrix} are straightforward generalizations from the KLR case treated in \cite{KKKO15a,KKKO18}, hence we mostly refer to their antecedents in loc. cited for detailed proofs. 

We assume now that $\cC$ is a locally finite abelian category whose monoidal product is biexact, and such that that $\End(1_\cC) \cong \coeffs \cdot id$. In other words, $\cC$ is a $\coeffs$-linear ring category in the terminology of \cite{EGNO15}. We further assume that $\cC$ has a system of renormalized $r$-matrices for which $\rmat{M,N}$ is nonzero for all nonzero $M$, $N$, and that $\cC$ has the following property considered in \cite{KKKO15a}.

\begin{Definition}\label{prop:midfactor}
We say $\cC$ has \newword{separable triple products} if the following condition holds. Suppose $M_1$, $M_2$, and $M_3$ are objects with $X \subset M_1 \conv M_2$, $Y \subset M_2 \conv M_3$ such that $X \conv M_3 \subset M_1 \conv Y$. Then there exists a subobject $N \subset M_2$ such that
$X \subset M_1 \conv N$ and $N \conv M_3 \subset Y.$
Likewise if $M_1 \conv Y \subset X \conv M_3$ then there is an $N \subset M_2$ such that
$Y \subset N \conv M_3$ and $M_1 \conv N \subset X.$
\end{Definition}

By Corollary \ref{cor:rmat}, the nonvanishing of $\rmat{M,N}$ is guaranteed if $\cC$ is rigid. This is also the case for the condition of having separable triple products.

\begin{Proposition}[c.f. Lemma 3.10 \cite{KKKO15a}]\label{prop:rigid-triple}
If $\cC$ is rigid then it has separable triple products.
\end{Proposition}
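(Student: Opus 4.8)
The plan is to reduce the statement about subobjects to a statement about morphisms by exploiting rigidity, following the pattern of \cite[Lemma 3.10]{KKKO15a}. Suppose we are given $M_1, M_2, M_3 \in \cC$ together with $X \subset M_1 \conv M_2$ and $Y \subset M_2 \conv M_3$ such that $X \conv M_3 \subset M_1 \conv Y$ (the second assertion will follow by the same argument after replacing left duals with right duals). First I would pick a right dual $M_1^R$ of $M_1$ with evaluation and coevaluation maps, and use the adjunction $\Hom(M_1 \conv A, B) \cong \Hom(A, M_1^R \conv B)$ valid for all $A, B \in \cC$. Applying this to the composite $X \conv M_3 \into M_1 \conv M_2 \conv M_3$ and the inclusion $M_1 \conv Y \into M_1 \conv M_2 \conv M_3$, one transports the hypothesis $X \conv M_3 \subset M_1 \conv Y$ to a statement inside $M_1^R \conv M_1 \conv M_2 \conv M_3$; composing with the coevaluation $1_\cC \to M_1^R \conv M_1$ produces a map $X \conv M_3 \to M_2 \conv M_3$ whose image lies in $Y$ (here biexactness of $\conv$ guarantees the relevant subobjects behave well under the functor $M_1^R \conv (-)$ and its left adjoint).

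Next I would define $N \subset M_2$ as the appropriate "pullback/intersection" subobject: concretely, consider the composite $\alpha: X \to M_1 \conv M_2$ and apply the functor $M_1^R \conv (-)$ followed by the counit $M_1^R \conv M_1 \conv M_2 \to M_2$ to obtain a map $X \to M_2$; set $N$ to be the image of this map. By construction $X \subset M_1 \conv N$: indeed the unit $X \to M_1 \conv M_1^R \conv X \to M_1 \conv M_2$ factors through $M_1 \conv N$ because the triangle identities for the duality $(M_1, M_1^R)$ show this composite equals the original inclusion $\alpha$. It then remains to check $N \conv M_3 \subset Y$, which is exactly where the hypothesis $X \conv M_3 \subset M_1 \conv Y$ is used: convolving the surjection $X \onto N$ on the right with $M_3$, and using biexactness so that $X \conv M_3 \onto N \conv M_3$ remains surjective, one sees that the map $N \conv M_3 \to M_2 \conv M_3$ has image contained in the image of $X \conv M_3$ under the counit-induced map, which by the transported hypothesis lands in $Y$.

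The second assertion of Definition~\ref{prop:midfactor}, with the inclusions $M_1 \conv Y \subset X \conv M_3$, is handled symmetrically: now one uses a \emph{left} dual $M_3^L$ of $M_3$ together with the adjunction $\Hom(A \conv M_3, B) \cong \Hom(A, B \conv M_3^L)$, defines $N$ as the image of the composite $Y \to M_2 \conv M_3 \to M_2 \conv M_3 \conv M_3^L \to M_2$ built from the evaluation $M_3 \conv M_3^L \to 1_\cC$, and runs the same triangle-identity bookkeeping to get $Y \subset N \conv M_3$ and $M_1 \conv N \subset X$. Since $\cC$ is rigid both $M_1^R$ and $M_3^L$ exist, so both halves go through.

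The main obstacle I anticipate is purely organizational rather than conceptual: making the manipulations with images of subobjects rigorous. One needs that for an exact faithful-enough functor (here $M_1^R \conv (-)$, which is biexact and has both adjoints) the operations "take image", "intersect with a subobject", and "apply the unit/counit" commute in the expected way, so that the subobject $N$ one writes down genuinely satisfies both containments simultaneously rather than only one. The triangle identities do the real work, but one must be careful that the map $X \to M_2$ used to define $N$ is the \emph{same} map (up to the canonical identifications) that appears when verifying $X \subset M_1 \conv N$ and when verifying $N \conv M_3 \subset Y$ — a single commuting diagram built from the (co)evaluation maps and the hypothesis arrow, which I would draw out explicitly. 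Given that \cite{KKKO15a} carries this out in the KLR setting using only rigidity and exactness of the product, the same argument transfers verbatim, and I would simply cite loc. cit. for the diagram chase after setting up the dictionary above.
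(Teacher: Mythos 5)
Your plan is exactly the paper's argument: take $N$ to be the image of the adjoint $f$ of the inclusion $X \hookrightarrow M_1 \conv M_2$, show $X \subset M_1 \conv N$ from the triangle identity, and show $N \conv M_3 \subset Y$ by factoring the map from the source of $f$ convolved with $M_3$ through $Y$ using the hypothesis $X \conv M_3 \subset M_1 \conv Y$. The duality bookkeeping in your writeup needs two fixes, though. First, the relevant adjunction is $\Hom(M_1^L \conv A, B) \cong \Hom(A, M_1 \conv B)$, with the \emph{left} dual, not the one you state with $M_1^R$; the structure maps you invoke (an evaluation $M_1^R \conv M_1 \to 1_\cC$ and a coevaluation $1_\cC \to M_1 \conv M_1^R$) are precisely those of the left dual $M_1^L$ in the paper's conventions (cf.\ the maps $\cF^L \conv \cF \to \cO_e$ and $\cO_e \to \cF \conv \cF^L$ used in Section~\ref{sec:rigidity}). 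So $M_1^R$ should read $M_1^L$ throughout the first half and, symmetrically, $M_3^L$ should be $M_3^R$ in the second. Second, the adjoint $f$ has domain $M_1^L \conv X$ rather than $X$, so ``a map $X \to M_2$'' and ``the surjection $X \onto N$'' should be $M_1^L \conv X \to M_2$ and $M_1^L \conv X \onto N$; this is a slip rather than a logical gap, since your triangle-identity sentence already uses the correct domain, and once corrected your argument agrees with the paper's.
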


\begin{Lemma}[c.f. Lemma 3.1.5 \cite{KKKO18}]\label{lem:p3}
If $M$ is a nonzero simple object in $\cC$ then for any nonzero $N_1,N_2$ we have
$$\La(M, N_1 \conv N_2) = \La(M, N_1) + \La(M, N_2), \quad \La(N_1 \conv N_2, M) = \La(N_1, M) + \La(N_2, M).$$
\end{Lemma}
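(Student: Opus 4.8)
The plan is to use property \eqref{p3} of a system of renormalized $r$-matrices together with the hypothesis that $\rmat{M,N}$ is nonzero for all nonzero $M,N$, which is in force throughout Section~\ref{sec:rmatimplies}. The inequality $\La(M, N_1 \conv N_2) \leq \La(M, N_1) + \La(M, N_2)$ is automatic from \eqref{p3}, so the content is the reverse inequality when $M$ is simple; the second identity (with $M$ on the right) follows by the symmetric version of the same argument.

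First I would suppose for contradiction that the inequality is strict, i.e. $\La(M, N_1 \conv N_2) < \La(M, N_1) + \La(M, N_2)$. By the second clause of \eqref{p3}, in this case the composition
$$
(\id_{N_1} \conv \rmat{M,N_2}) \circ (\rmat{M,N_1} \conv \id_{N_2}) \colon M \conv N_1 \conv N_2 \to N_1 \conv N_2 \conv M
$$
is zero. The strategy is to derive a contradiction by showing this composite cannot vanish when $M$ is simple. Since $M$ is simple, the map $\rmat{M, N_1} \colon M \conv N_1 \to N_1 \conv M$ is nonzero by hypothesis, and I would argue that its image contains (indeed, because $M$ is simple, is tightly controlled by) a copy of $M$ sitting inside $N_1 \conv M$ in a suitable sense. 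Concretely, following the KLR-theoretic argument of \cite[Lemma~3.1.5]{KKKO18} that this Lemma cites, one uses that a nonzero map out of $M \conv N_1$ — biexactness giving $M \conv N_1 \neq 0$ — together with simplicity of $M$ forces the relevant $r$-matrices to interact: one considers the subobject $\im(\rmat{M,N_1}) \conv \id_{N_2} \subset N_1 \conv M \conv N_2$, applies $\id_{N_1} \conv \rmat{M,N_2}$ to it, and checks nonvanishing by restricting the situation to a single simple constituent where the composite $r$-matrix reduces to a known-nonzero map. Property \eqref{p5} (naturality up to the $\La$-correction) is the tool that lets one transport nonvanishing across the maps $\id \conv f$ for inclusions $f$ of simple subquotients.

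The cleanest route, which I would actually write out, is: embed a simple subobject $S \subset N_1 \conv N_2$ such that $\La(M, S) = \La(M, N_1 \conv N_2)$ — this is possible since $\La(M, -)$ on a finite-length object is the minimum of $\La(M, -)$ over its composition factors, a fact one extracts from \eqref{p3} and \eqref{p5} by induction on length. Then by \eqref{p5} applied to $f \colon S \hookrightarrow N_1 \conv N_2$ with $\La(M,S) = \La(M, N_1\conv N_2)$, the square commutes, so $\rmat{M, N_1 \conv N_2}$ restricted to $M \conv S$ agrees with $(f \conv \id_M) \circ \rmat{M,S}$, which is nonzero. Hence $\rmat{M, N_1 \conv N_2} \neq 0$, so $\La(M, N_1 \conv N_2) \neq -\infty$, and now the equality case of \eqref{p3} can be invoked: it suffices to rule out the strict inequality, which by \eqref{p3} would force $(\id_{N_1} \conv \rmat{M,N_2}) \circ (\rmat{M,N_1} \conv \id_{N_2}) = 0$ while simultaneously $\rmat{M, N_1 \conv N_2} \neq 0$ — and then one shows directly, using simplicity of $M$, that $\rmat{M,N_1\conv N_2}$ must factor through this composite (up to the natural associativity constraints), giving the contradiction.

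The main obstacle is the last step — showing that, when $M$ is simple, nonvanishing of $\rmat{M, N_1 \conv N_2}$ forces nonvanishing of the two-step composite $(\id_{N_1} \conv \rmat{M,N_2}) \circ (\rmat{M,N_1} \conv \id_{N_2})$. This is precisely where simplicity of $M$ is essential: for general $M$ the composite really can be zero while $\La(M, N_1 \conv N_2)$ is strictly smaller than the sum. The argument requires examining how $M$ can "pass through" $N_1 \conv N_2$, and the key input is that any nonzero map from $M \conv (N_1 \conv N_2)$ that kills no constituent coming from $M$ must be detected after passing $M$ past $N_1$ first; this is an exact-sequence manipulation combined with \eqref{p5}, carried out in detail in \cite[Lemma~3.1.5]{KKKO18}, so I would present the reduction carefully and then cite loc.\ cit.\ for the combinatorial core rather than reproduce it.
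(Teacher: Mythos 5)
Your opening reduction is correct: by property~(\ref{p3}) the inequality $\La(M,N_1 \conv N_2) \leq \La(M,N_1) + \La(M,N_2)$ is automatic, and strictness would force the composite $(\id_{N_1} \conv \rmat{M,N_2}) \circ (\rmat{M,N_1} \conv \id_{N_2})$ to vanish, so the content is showing that this composite is nonzero when $M$ is simple. After that, however, your proposed route goes off track, and the key ingredient is missing.

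The crucial tool here is the separable triple products hypothesis (Definition~\ref{prop:midfactor}), which is in force throughout this section and which your sketch never invokes. The paper's argument is short: if the composite vanished, then by biexactness $\im(\rmat{M,N_1}) \conv N_2 \subset N_1 \conv \ker(\rmat{M,N_2})$, which is a containment of exactly the form $X \conv M_3 \subset M_1 \conv Y$ with the middle object being $M$. Separability then produces a subobject $L \subset M$ with $\im(\rmat{M,N_1}) \subset N_1 \conv L$ and $L \conv N_2 \subset \ker(\rmat{M,N_2})$. Nonvanishing of $\rmat{M,N_1}$ forces $L \neq 0$, nonvanishing of $\rmat{M,N_2}$ forces $L \neq M$, and this contradicts simplicity of $M$. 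That is the entire argument; the role of simplicity of $M$ is precisely to rule out this intermediate subobject, and the mechanism for producing it is separability of triple products, not any analysis of composition factors of $N_1 \conv N_2$.

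Two specific errors in your sketch: first, you assert that $\La(M,-)$ on a finite-length object is the \emph{minimum} of $\La(M,-)$ over its composition factors, but Lemma~\ref{lem:subquotientsandLambda} gives the opposite inequality — subquotients have $\La(M,L) \leq \La(M,N)$, so if anything $\La(M,N)$ dominates the values on composition factors, and equality with the maximum is not asserted either. Second, your final step claims that nonvanishing of $\rmat{M,N_1\conv N_2}$ plus property~(\ref{p3}) forces it to ``factor through'' the two-step composite and hence gives a contradiction; but (\ref{p3}) only identifies $\rmat{M,N_1\conv N_2}$ with the composite \emph{in the equality case}, and says nothing about $\rmat{M,N_1\conv N_2}$ when the inequality is strict (only that the composite vanishes). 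There is no factoring to exploit. Separately, your detour to establish $\La(M,N_1\conv N_2) \neq -\infty$ is unnecessary: the standing hypothesis of this section (justified for rigid $\cC$ by Corollary~\ref{cor:rmat}) is that $\rmat{M,N}$ is nonzero for all nonzero $M,N$, so this is already known.
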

\begin{proof}
We show the first equality (the second follows similarly). In light of condition (\ref{p3}) it suffices to show that $(\id_{N_1} \conv \rmat{M,N_2}) \circ (\rmat{M,N_1} \conv \id_{N_2})$ is nonzero. Otherwise ${\im \; \rmat{M,N_1} \conv N_2} \subset N_1 \conv \ker \rmat{M, N_2}$. Since $\cC$ has separable triple products there then exists $L \subset M$ with $\im \; \rmat{M,N_1} \subset N_1 \conv L$ and $L \conv N_2 \subset \ker \rmat{M, N_2}$. Since $\rmat{M,N_1}$ and $\rmat{M, N_2}$ are nonzero $L$ must be proper and nonzero, contradicting the simplicity of $M$.
\end{proof}

\begin{Lemma}\label{lem:subquotientsandLambda}
Let $L$ be a subquotient of $N$. Then for any $M$ we have
$$\Lambda(M, L) \leq \Lambda(M, N), \quad \Lambda(L, M) \leq \Lambda(N, M).$$
\end{Lemma}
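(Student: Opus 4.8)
The plan is to deduce this from Lemma~\ref{lem:p3} together with the naturality-type constraints in Definition~\ref{def:rmat}(\ref{p5}). First I would reduce to the two separate cases of $L$ being a \emph{subobject} of $N$ and $L$ being a \emph{quotient} of $N$; the general subquotient case follows by composing these two. By symmetry it also suffices to treat the statements with $M$ on one fixed side, say $\Lambda(M,L) \le \Lambda(M,N)$, since the argument for $\Lambda(L,M) \le \Lambda(N,M)$ is identical with the product taken on the other side.

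For the case of a subobject, let $f\colon L \hookrightarrow N$ be the inclusion and consider the square in Definition~\ref{def:rmat}(\ref{p5}) with $N_1 = L$, $N_2 = N$. If $\Lambda(M,L) > \Lambda(M,N)$, then that axiom forces the top-right composition $(f \conv \id_M) \circ \rmat{M,L}$ to vanish. But $f \conv \id_M$ is a monomorphism (the monoidal product is biexact, so $- \conv M$ is exact and in particular left exact), hence $\rmat{M,L} = 0$, contradicting our standing hypothesis that renormalized $r$-matrices are nonzero on nonzero objects (so in particular $\Lambda(M,L) \ne -\infty$ by (\ref{p2})). Therefore $\Lambda(M,L) \le \Lambda(M,N)$. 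Dually, for the case of a quotient, let $g\colon N \twoheadrightarrow L$ and apply (\ref{p5}) with $N_1 = N$, $N_2 = L$: if $\Lambda(M,L) > \Lambda(M,N)$ the top-right composition $(g \conv \id_M)\circ \rmat{M,N}$ is zero, but now $g \conv \id_M$ is an epimorphism (by right exactness of $- \conv M$), and precomposition-then-epi being zero forces $\rmat{M,L} = 0$ via the commuting square in the remaining case — more carefully, one should instead argue directly that the bottom-left composition $\rmat{M,L}\circ(\id_M \conv g)$ equals zero is impossible since $\id_M \conv g$ is epi and $\rmat{M,L}\ne 0$; so the strict inequality $\Lambda(M,L) > \Lambda(M,N)$ cannot hold. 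Either way we get $\Lambda(M,L) \le \Lambda(M,N)$.

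An alternative, perhaps cleaner route avoids case analysis on the sign: observe that if $\Lambda(M,L) > \Lambda(M,N)$ then in the square of (\ref{p5}) \emph{both} the statement "$\La(M,N_1)<\La(M,N_2)$" path and its converse are excluded for the appropriate ordering of $N_1,N_2$, and combining the vanishing with the fact that $\rmat{M,N}$ is nonzero and $- \conv M$ is faithful-on-nonzero-maps (biexactness) yields a contradiction. The only genuinely delicate point, which I expect to be the main obstacle, is making sure one correctly identifies which of the two compositions in the square is forced to vanish and then extracting a contradiction using exactness of $- \conv M$ rather than accidentally needing $\rmat{}$ to be natural; this is purely a bookkeeping matter of matching the inequality direction in (\ref{p5}) to whether $f$ is mono or epi. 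No new ingredients beyond biexactness of $\conv$, nonvanishing of $r$-matrices, and Definition~\ref{def:rmat}(\ref{p5}) are needed.
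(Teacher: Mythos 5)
Your proof is correct and takes exactly the same route the paper does: reduce to the subobject and quotient cases, observe that in each case one of the two compositions in the square of Definition~\ref{def:rmat}(\ref{p5}) is nonzero because $\rmat{M,L}$ is nonzero and biexactness makes $f\conv\id_M$ mono (resp.\ $\id_M\conv g$ epi), and conclude that the strict inequality is impossible. The paper's proof is just a compressed form of this; your only slip is the momentary misidentification of which composition must vanish in the quotient case, which you catch and correct yourself.
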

\begin{proof}
It suffices to assume that $L$ is either a subobject or a quotient of $N$. In either case one of the compositions in condition (\ref{p5}) must be nonzero, which forces the stated inequality.
\end{proof}

\begin{Proposition}[c.f. Theorem 3.2 \cite{KKKO15a}]\label{prop:KKKO3.2}
Let $M$ and $N$ be nonzero objects such that $\rmat{M,M}$ is a multiple of the identity and $N$ is simple. Then
\begin{enumerate}[(i)]
\item $\im \; \rmat{M,N}$ is simple, is the socle of $N \conv M$, and is the head of $M \conv N$.
\item $\im \; \rmat{N,M}$ is simple, is the socle of $M \conv N$, and is the head of $N \conv M$.
\item $M$ is simple.
\end{enumerate}
\end{Proposition}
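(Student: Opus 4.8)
The plan is to follow the strategy of \cite[Theorem 3.2]{KKKO15a}, adapting it to the present axiomatic setting. Throughout write $r := \rmat{M,N}$ and $r' := \rmat{N,M}$, and recall from property (\ref{p4}) that $\La(M,N) + \La(N,M) \geq 0$, with the composite $r' \circ r$ nonzero if and only if equality holds. First I would handle the degenerate case: if $M \conv N = 0$ or $N \conv M = 0$ there is nothing to prove, so assume both are nonzero, whence (by the standing hypothesis that $\rmat{\cdot,\cdot}$ is nonzero on nonzero objects) both $r$ and $r'$ are nonzero and $\La(M,N), \La(N,M) \neq -\infty$.

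The heart of the argument is to show $\im\, r$ is simple. Let $S \subset \im\, r \subset N \conv M$ be a nonzero simple subobject; I want to show $S = \im\, r$. Consider the composite $S \hookrightarrow N \conv M \xrightarrow{r'} M \conv N$. If this were zero, then $S \subset \ker(r' \circ (r\text{-corestricted}))$; I would instead work with the factorization $M \conv N \xrightarrow{r} \im\, r \xrightarrow{\iota} N \conv M \xrightarrow{r'} M \conv N$ and use property (\ref{p5}) applied to the inclusion $S \hookrightarrow \im\, r$ together with Lemma \ref{lem:subquotientsandLambda} to compare $\La$-values; the point is that since $N$ is simple, Lemma \ref{lem:p3} gives the additivity $\La(N, \text{--})$ and $\La(\text{--}, N)$, and this additivity is what rules out $S$ being a proper subobject. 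More precisely, the standard argument runs: form the map $S \conv N \to (N \conv M) \conv N$ and the map $N \conv (M \conv N) \to N \conv (N \conv M)$, use separable triple products (available by Proposition \ref{prop:rigid-triple} since $\cC$ is rigid) to extract a subobject of $M$, and invoke the simplicity of $M$ — wait, but $M$ simple is part (iii), not a hypothesis. So instead the role of ``$M$ simple'' in \cite{KKKO15a} is here played by the hypothesis that $\rmat{M,M}$ is a scalar: one shows that $r'\circ r$ and $r \circ r'$ are forced to be scalars on the relevant images using property (\ref{p3}) (the hexagon-type factorization of $\rmat{M, M\conv N}$ and $\rmat{M\conv N, M}$), which pins down $\im\, r$ as a simple object that is simultaneously the head of $M \conv N$ (being a quotient via $r$) and the socle of $N \conv M$ (being a subobject, and any other simple subobject would have to coincide with it by the uniqueness extracted from the scalar condition).

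Once $\im\, r$ is shown simple, parts (i) and (ii) follow formally: $\im\, r$ is a quotient of $M \conv N$ hence contained in its head, and a sub of $N \conv M$ hence contained in its socle; a symmetric argument with the roles of the two factors interchanged (using that $\rmat{M,M}$ scalar is symmetric in this sense) shows these containments are equalities, and simultaneously gives (ii) for $\im\, r'$. For part (iii), that $M$ is simple: I would take a nonzero simple subobject $M' \subset M$ and apply property (\ref{p5}) to the inclusion $M' \hookrightarrow M$ in the variable slot, together with Lemma \ref{lem:p3} applied with the \emph{simple} object $N$ in the appropriate argument, to conclude $\La(M', N) = \La(M, N)$ and similarly on the other side; then separable triple products applied to a configuration involving $M' \subset M$, $\im\,\rmat{M,N} \subset N \conv M$, and the socle structure forces $M' = M$. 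Concretely this is \cite[Corollary 3.11 / proof of Theorem 3.2]{KKKO15a}: the scalar condition on $\rmat{M,M}$ lets one run the same extraction argument with $M$ in place of the ambient object $M_2$ of Definition \ref{prop:midfactor}.

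The main obstacle I anticipate is the bookkeeping around which compositions in property (\ref{p5}) vanish versus commute, since unlike the braided case one must constantly track the three-way case distinction on $\La$-values, and the separable-triple-products extraction must be set up with exactly the right subobjects and quotients so that the $\La$-inequalities of Lemma \ref{lem:subquotientsandLambda} combine with the additivity of Lemma \ref{lem:p3} to yield a contradiction with simplicity of $N$ (and, in part (iii), with the scalar hypothesis on $\rmat{M,M}$). I expect no genuinely new idea is needed beyond \cite{KKKO15a}, but care is required to confirm that every place where loc. cit. invokes ``$M$ simple'' can be replaced by ``$\rmat{M,M}$ is a scalar,'' which is precisely the weakening that makes part (iii) a nontrivial conclusion rather than a hypothesis.
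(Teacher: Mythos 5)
Your proposal has the right high-level shape—use separable triple products to extract a subobject and derive a contradiction—but the configuration you sketch is the wrong one, and the gap is precisely the point you flag mid-paragraph (``wait, but $M$ simple is part (iii), not a hypothesis'').

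Concretely: the paper takes an arbitrary simple $S \subset M \conv N$ and tensors with $M$ on the right, producing $S \conv M \subset M \conv N \conv M$, so that the middle slot of the triple is $N$. The key step is that the scalar hypothesis on $\rmat{M,M}$, combined with Definition~\ref{def:rmat}(\ref{p3}) applied to $\rmat{M \conv N, M}$, forces $\La(M \conv N, M) = \La(M,M) + \La(N,M)$ (since $(\rmat{M,M}\conv\id_N)\circ(\id_M \conv \rmat{N,M})$ is a nonzero scalar times $\id_M \conv \rmat{N,M}$, which is nonzero) and hence $\rmat{M\conv N, M}$ is a nonzero scalar multiple of $\id_M \conv \rmat{N,M}$. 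Feeding this into the (\ref{p5}) square for the inclusion $S \hookrightarrow M\conv N$ either makes the square commute or kills the composite $S \conv M \to M \conv M \conv N$; both branches hand you the data for Definition~\ref{prop:midfactor} with $M_1 = M_3 = M$ and $M_2 = N$, so you extract a subobject $K \subset N$, and simplicity of $N$ forces $K = 0$ or $K = N$, giving the contradiction in the first branch and $\im\,\rmat{N,M} = S$ in the second. Your sketch instead proposes $S \conv N$ inside $N \conv M \conv N$ (middle slot $M$), which would require extracting a subobject of $M$ and would need $\rmat{N,N}$ to be a scalar—neither of which is available. You also lean on Lemma~\ref{lem:p3}, but that lemma requires the object in the distinguished slot to be simple, which for $M$ is what you are trying to prove; the paper instead uses (\ref{p3}) directly together with nonvanishing of the scalar $\rmat{M,M}$. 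Finally, part (iii) is much shorter than your proposal: take $N = 1_\cC$ in part (i) and use Definition~\ref{def:rmat}(\ref{p1}) to conclude $\im\,\rmat{M,1_\cC} = M$ is simple—no separate argument with $M' \subset M$ is needed.
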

\begin{proof}
Let $S \subset M \conv N$ be a simple nonzero subobject, and consider the diagram
\[
\begin{tikzpicture}
[thick,>=\arrtip]
\node (a) at (0,0) {$S \conv M$};
\node (b) at (4.5,0) {$M \conv S$};
\node (c) at (0,-1.5) {$M \conv N \conv M$};
\node (d) at (4.5,-1.5) {$M \conv M \conv N$};
\draw[->] (a) to node[above] {$\rmat{S,M}$} (b);
\draw[right hook->] (b) to (d);
\draw[right hook->] (a) to (c);
\draw[->] (c) to node[above] {$\id_M \conv \rmat{N,M}$} (d);
\end{tikzpicture}
\]
Since $\rmat{M,M}$ is a multiple of the identity, it follows from Definition \ref{def:rmat}(\ref{p3}) that the bottom map is a scalar multiple of $\rmat{M \conv N, M}$.

By Definition \ref{def:rmat}(\ref{p5}) either the diagram commutes or the composition around the bottom-left of the square vanishes. The latter possibility implies
$S \conv M \subset M \conv \ker \rmat{N, M}.$
Since $\cC$ has separable triple products there exists a subobject $K \subset N$ such that
$
S \subset M \conv K$ and $ K \conv M \subset \ker \rmat{N,M}.
$
Since $S$ and $\rmat{N,M}$ are nonzero $K$ is then proper and nonzero. As this contradicts the simplicity of $N$, we conclude that the diagram commutes.

This in turn implies
$S \conv M \subset M \conv \rmat{N,M}^{-1}(S).$
As before this implies that there exists a subobject $K \subset N$ such that
$S \subset M \conv K$ and $K \conv M \subset \rmat{N,M}^{-1}(S).$
Again the first condition implies $K$ is nonzero, so by simplicity of $N$ we have $K = N$. But now observe that the condition $N \conv M \subset \rmat{N,M}^{-1}(S)$ is equivalent to ${\im \;\rmat{N,M}} \subset S$. Since $\rmat{N,M}$ is nonzero and $S$ simple it follows that ${\im \;\rmat{N,M}} = S$, and since $S$ was an arbitrary simple subobject we conclude that ${\im \;\rmat{N,M}}$ is the socle of $M \conv N$.

The remaining properties of $\im \;\rmat{M,N}$ and $\im \;\rmat{N,M}$ can be proved similarly. The simplicity of $M$ follows from the case $N = 1_\cC$ and Definition \ref{def:rmat}(\ref{p1}).
\end{proof}


\begin{Corollary}[c.f. Corollary 3.3 \cite{KKKO15a}]\label{cor:realiff}
An object $M$ is real if and only if $\rmat{M,M}$ is a multiple of the identity. 
\end{Corollary}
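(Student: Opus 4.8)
The plan is to obtain both directions almost for free from Proposition~\ref{prop:KKKO3.2} and Schur's lemma, since the substantive work is already encapsulated in that proposition. To set up, I would first observe that we may assume $M \neq 0$, so that $\rmat{M,M} \neq 0$ by the standing hypothesis that renormalized $r$-matrices are nonzero on nonzero objects; consequently, if $\rmat{M,M}$ is a multiple of the identity it is necessarily a \emph{nonzero} multiple, hence an isomorphism of $M \conv M$.

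For the forward direction, suppose $M$ is real, so that $M$ and $M \conv M$ are both simple. Then $\rmat{M,M}$ is a nonzero endomorphism of the simple object $M \conv M$; since $\cC$ is locally finite and $\coeffs = \C$ is algebraically closed, Schur's lemma gives $\End_\cC(M \conv M) = \coeffs \cdot \id$, and therefore $\rmat{M,M}$ is a multiple of the identity.

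For the reverse direction, suppose $\rmat{M,M}$ is a multiple of the identity, hence (by the observation above) an isomorphism. Applying Proposition~\ref{prop:KKKO3.2}(iii) with $N = 1_\cC$ — which is simple because $\End(1_\cC) \cong \coeffs \cdot \id$ — shows that $M$ is simple. Then Proposition~\ref{prop:KKKO3.2}(i) applied with $N = M$ shows that $\im \rmat{M,M}$ is simple; since $\rmat{M,M}$ is an isomorphism we have $\im \rmat{M,M} = M \conv M$, so $M \conv M$ is simple and $M$ is real.

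I do not expect a serious obstacle here, as the argument is essentially bookkeeping. The one point requiring attention is confirming the scalar is nonzero before appealing to invertibility — this is exactly where the hypothesis $\rmat{M,N} \neq 0$ for nonzero $M,N$ is used — together with checking that the hypotheses of Proposition~\ref{prop:KKKO3.2} are in force (in particular separable triple products, which by Proposition~\ref{prop:rigid-triple} holds whenever $\cC$ is rigid). All the real content lives in Proposition~\ref{prop:KKKO3.2}.
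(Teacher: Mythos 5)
Your proof is correct and is essentially the intended argument: the reverse implication is a direct reading of Proposition~\ref{prop:KKKO3.2} (parts (iii) then (i) with $N=M$), while the forward implication is Schur's lemma applied to the nonzero endomorphism $\rmat{M,M}$ of the simple object $M\conv M$, with nonvanishing supplied by the standing hypothesis of Section~\ref{sec:rmatimplies}. The only minor imprecision is the aside claiming $1_\cC$ is simple ``because $\End(1_\cC)\cong \coeffs\cdot\id$'' --- that conclusion is true in a ring category but the reasoning as stated is incomplete; in any case you do not need it, since Proposition~\ref{prop:KKKO3.2}(iii) already absorbs the $N=1_\cC$ step into its own proof.
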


We say $M$ and $N$ \newword{commute} if $M \conv N$ and $N \conv M$ are isomorphic. When either object is real, the following criteria follow immediately from the preceding results and Definition \ref{def:rmat}.

\begin{Proposition}[c.f. Lemma 3.2.3 \cite{KKKO18}]\label{prop:KKKO2.11}
Let $M$ and $N$ be simple objects and suppose either $M$ or $N$ is real. Then the following conditions are equivalent:
\begin{enumerate}[(i)]
\item $M$ and $N$ commute.
\item $\rmat{M,N} \circ \rmat{N,M}$ is a multiple of the identity.
\item $M \conv N$ or $N \conv M$ is simple.
\item $\La(M,N) + \La(N,M) = 0$.
\end{enumerate}
\end{Proposition}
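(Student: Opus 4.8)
The plan is to establish the cyclic chain $(\mathrm{iii})\Rightarrow(\mathrm{i}),(\mathrm{ii}),(\mathrm{iv})$, then $(\mathrm{ii})\Rightarrow(\mathrm{iv})$, then $(\mathrm{iv})\Rightarrow(\mathrm{iii})$ and $(\mathrm{i})\Rightarrow(\mathrm{iii})$, following \cite[Lemma 3.2.3]{KKKO18} and \cite[Theorem 3.2]{KKKO15a}. All four conditions are symmetric under interchanging $M$ and $N$ (for (ii) one uses the fact, visible from the cycle, that $\rmat{M,N}\circ\rmat{N,M}$ and $\rmat{N,M}\circ\rmat{M,N}$ are nonzero scalars simultaneously), so we may assume $N$ is real; by Corollary \ref{cor:realiff} this means $\rmat{N,N}$ is a nonzero scalar multiple of the identity, so Proposition \ref{prop:KKKO3.2} applies with $N$ in its first slot. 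First I would record its output: $\im \rmat{N,M}$ is simple and is simultaneously the head of $N\conv M$ and the socle of $M\conv N$, while $\im \rmat{M,N}$ is simple and is simultaneously the head of $M\conv N$ and the socle of $N\conv M$. In particular $M\conv N$ and $N\conv M$ each have simple head, hence are local objects with $\mathrm{rad}(M\conv N)=\ker \rmat{M,N}$ and $\mathrm{rad}(N\conv M)=\ker \rmat{N,M}$, and they also have simple socle.

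For $(\mathrm{iii})$: suppose $N\conv M$ is simple (the case $M\conv N$ simple is identical with the two $r$-matrices exchanged). Then $\rmat{M,N}$ is surjective and $\rmat{N,M}$ is injective, being nonzero maps respectively into and out of a simple object. If $M\conv N$ were not simple, then $\mathrm{rad}(M\conv N)\neq 0$ and therefore contains the simple socle $\im \rmat{N,M}$; thus $\im \rmat{N,M}\subseteq\ker \rmat{M,N}$, i.e. $\rmat{M,N}\circ\rmat{N,M}=0$, and then Definition \ref{def:rmat}(\ref{p4}) forces $\rmat{N,M}\circ\rmat{M,N}=0$ as well, whence $\im \rmat{M,N}\subseteq\ker \rmat{N,M}=0$ — contradicting $\rmat{M,N}\neq 0$. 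So $M\conv N$ is simple too, giving (i); then $\rmat{M,N}$ and $\rmat{N,M}$ are nonzero homomorphisms between simple objects, hence isomorphisms, so $\rmat{M,N}\circ\rmat{N,M}$ is an automorphism of the simple object $N\conv M$ and therefore, by local finiteness and $\coeffs=\C$, a nonzero scalar multiple of the identity, giving (ii); and its nonvanishing gives (iv) via Definition \ref{def:rmat}(\ref{p4}). The implication $(\mathrm{ii})\Rightarrow(\mathrm{iv})$ is then immediate from the same axiom, since a nonzero scalar multiple of the identity is in particular nonzero.

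The substance of the Proposition lies in $(\mathrm{iv})\Rightarrow(\mathrm{iii})$ and $(\mathrm{i})\Rightarrow(\mathrm{iii})$, and I expect these to be the main obstacle: in each case one produces the data of a simple head isomorphic to a simple socle, but this alone does not force a local object to be simple, so the conclusion genuinely requires the submodule-lattice argument of loc. cit. For $(\mathrm{iv})\Rightarrow(\mathrm{iii})$: from $\La(M,N)+\La(N,M)=0$ and Definition \ref{def:rmat}(\ref{p4}) both composites $\rmat{M,N}\circ\rmat{N,M}$ and $\rmat{N,M}\circ\rmat{M,N}$ are nonzero; one feeds this, the head/socle description above, and the separable-triple-product hypothesis (available by Proposition \ref{prop:rigid-triple} since $\cC$ is rigid, or assumed outright) into the argument of \cite[Theorem 3.2]{KKKO15a} to conclude that $M\conv N$ is simple. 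For $(\mathrm{i})\Rightarrow(\mathrm{iii})$: an isomorphism $M\conv N\cong N\conv M$ matches heads with heads and socles with socles, so $\im \rmat{M,N}\cong\im \rmat{N,M}$; combining this with the additivity of $\La$ along convolutions of the simple objects $M$ and $N$ (Lemma \ref{lem:p3}, using $\La(M,M)=0$ for the real object, which follows from Definition \ref{def:rmat}(\ref{p4}) applied to the scalar automorphism $\rmat{M,M}$), one again runs the analysis of \cite[Lemma 3.2.3]{KKKO18} to deduce that one of the two products is simple. The delicate bookkeeping in these last two steps — exactly how the $r$-matrix axioms together with separability pin down the lattices of subobjects of $M\conv N$ and $N\conv M$ — is the part I would transcribe most carefully from \cite{KKKO15a,KKKO18}.
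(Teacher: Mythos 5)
Your chain of implications and the treatment of $(\mathrm{iii})\Rightarrow(\mathrm{i}),(\mathrm{ii}),(\mathrm{iv})$ and $(\mathrm{ii})\Rightarrow(\mathrm{iv})$ are correct, and you rightly single out $(\mathrm{i})\Rightarrow(\mathrm{iii})$ as the implication that needs a multiplicity argument. But you misassess the difficulty of $(\mathrm{iv})\Rightarrow(\mathrm{iii})$: it does \emph{not} require feeding anything back into the full machinery of \cite[Theorem~3.2]{KKKO15a}. The argument you already wrote down inside the $(\mathrm{iii})$ case applies verbatim once one observes that Definition~\ref{def:rmat}(\ref{p4}) (and its $M \leftrightarrow N$ swap, which gives the same condition $\La(M,N)+\La(N,M)=0$) forces the two composites $\rmat{M,N}\circ\rmat{N,M}$ and $\rmat{N,M}\circ\rmat{M,N}$ to vanish or not vanish together. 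Thus $(\mathrm{iv})$ gives $\rmat{N,M}\circ\rmat{M,N}\neq 0$; if $M\conv N$ were not simple, Proposition~\ref{prop:KKKO3.2} gives $\ker\rmat{M,N}=\mathrm{rad}(M\conv N)\neq 0$, which (being a nonzero subobject, and the socle $\im\rmat{N,M}$ being simple) contains $\im\rmat{N,M}$, so $\rmat{M,N}\circ\rmat{N,M}=0$, hence $\rmat{N,M}\circ\rmat{M,N}=0$ by (\ref{p4}), a contradiction. In fact this same two-line chain shows directly that $M\conv N$ simple, $N\conv M$ simple, and condition $(\mathrm{iv})$ are all equivalent to each other, which is the cleanest way to organize the proof and subsumes most of your case analysis.

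For $(\mathrm{i})\Rightarrow(\mathrm{iii})$ your outline (match head with head and socle with socle across the isomorphism $M\conv N\cong N\conv M$ to get $\mathrm{head}\cong\mathrm{soc}$, then invoke uniqueness of the head/socle as a composition factor to force simplicity) is correct; be aware that the multiplicity-one statement you need is precisely Proposition~\ref{prop:KKKO3.1} in the paper's own numbering, which is stated \emph{after} this Proposition but proved without reference to it (it uses only Lemma~\ref{lem:subquotientsandLambda}, Lemma~\ref{lem:p3}, Proposition~\ref{prop:KKKO3.2} and Definition~\ref{def:rmat}), so there is no circularity; the paper's phrase ``follows immediately from the preceding results'' is simply a forward-looking shorthand for the material lifted from \cite{KKKO15a,KKKO18}. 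Two small further points: in $(\mathrm{ii})\Rightarrow(\mathrm{iv})$ you need to read ``a multiple of the identity'' as a \emph{nonzero} multiple (otherwise $0=0\cdot\mathrm{id}$ would satisfy $(\mathrm{ii})$ while $(\mathrm{iv})$ fails), which is the standard convention in \cite{KKKO15a,KKKO18} and is what your ``in particular nonzero'' tacitly assumes; and you have a typo in the final paragraph where you write $\La(M,M)=0$ for the real object when in your setup the real object is $N$.
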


\begin{Corollary}[c.f. Proposition 3.2.5 \cite{KKKO18}]\label{cor:KKKO2.13}
Let $M_1, \dotsc, M_n$ be pairwise commuting real simple objects. Then $ M_1 \conv \cdots \conv M_n $ is also real and simple.
\end{Corollary}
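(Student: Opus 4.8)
The plan is to induct on $n$, with the base case $n=1$ being trivial. For the inductive step, set $M := M_1 \conv \cdots \conv M_{n-1}$, which is real and simple by the inductive hypothesis, and we want to show $M \conv M_n$ is real and simple given that $M$ and $M_n$ are (we must first check that $M$ commutes with $M_n$). By Corollary~\ref{cor:realiff}, reality of $M$ and of $M_n$ means $\rmat{M,M}$ and $\rmat{M_n,M_n}$ are multiples of the identity.

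First I would establish that $M$ and $M_n$ commute. Using Lemma~\ref{lem:p3} (applied with the simple object $M_n$) we get $\Lambda(M_n, M) = \sum_{i=1}^{n-1} \Lambda(M_n, M_i)$ and $\Lambda(M, M_n) = \sum_{i=1}^{n-1} \Lambda(M_i, M_n)$, hence $\Lambda(M, M_n) + \Lambda(M_n, M) = \sum_{i=1}^{n-1}\big(\Lambda(M_i,M_n) + \Lambda(M_n,M_i)\big) = 0$, since each $M_i$ commutes with $M_n$ and each pair consists of simple objects one of which is real, so Proposition~\ref{prop:KKKO2.11} gives $\Lambda(M_i,M_n)+\Lambda(M_n,M_i)=0$. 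Then, again by Proposition~\ref{prop:KKKO2.11} (with $M$ simple real and $M_n$ simple), the condition $\Lambda(M,M_n)+\Lambda(M_n,M)=0$ implies $M$ and $M_n$ commute and that $M \conv M_n$ is simple. This already gives simplicity of $M_1 \conv \cdots \conv M_n$.

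It remains to show $M \conv M_n$ is real, i.e. that $(M \conv M_n) \conv (M \conv M_n)$ is simple. Since $M$ and $M_n$ commute, $M \conv M_n \cong M_n \conv M$, so $(M \conv M_n)^{\conv 2} \cong M \conv (M_n \conv M) \conv M_n \cong M \conv (M \conv M_n) \conv M_n \cong M^{\conv 2} \conv M_n^{\conv 2}$; here I use that the swap of the middle $M_n \conv M \cong M \conv M_n$ is an isomorphism so the reordering is legitimate. Now $M^{\conv 2}$ is simple by reality of $M$, $M_n^{\conv 2}$ is simple by reality of $M_n$, and $M^{\conv 2}$ commutes with $M_n^{\conv 2}$ because $\Lambda(M^{\conv 2}, M_n^{\conv 2}) + \Lambda(M_n^{\conv 2}, M^{\conv 2}) = 2\big(\Lambda(M,M_n)+\Lambda(M_n,M)\big) = 0$ by Lemma~\ref{lem:p3} (both $M^{\conv 2}$ and $M_n^{\conv 2}$ being simple). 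Applying Proposition~\ref{prop:KKKO2.11} once more shows $M^{\conv 2} \conv M_n^{\conv 2}$ is simple, hence $(M \conv M_n)^{\conv 2}$ is simple and $M \conv M_n$ is real, completing the induction.

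The main obstacle I anticipate is bookkeeping the commutativity hypotheses carefully: the statement "$M$ or $N$ real" in Proposition~\ref{prop:KKKO2.11} must hold at each application, and one must be sure the relevant objects are known to be simple before invoking it — this is why the induction is structured so that $M$ is simple real before it is convolved with the simple real $M_n$. A secondary point requiring care is the associativity/reordering isomorphism $(M \conv M_n)^{\conv 2} \cong M^{\conv 2} \conv M_n^{\conv 2}$, which uses the commuting isomorphism $M \conv M_n \cong M_n \conv M$ inserted in the middle slot; this is routine but should be spelled out so that no naturality of $\rmat{}$ (which we do not have) is secretly invoked — only the existence of \emph{some} isomorphism is needed.
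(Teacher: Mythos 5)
Your overall strategy — induct on $n$, show $M := M_1 \conv \cdots \conv M_{n-1}$ commutes with $M_n$ via Lemma~\ref{lem:p3} and Proposition~\ref{prop:KKKO2.11} (giving simplicity), then show reality of $M \conv M_n$ by rewriting $(M \conv M_n)^{\conv 2} \cong M^{\conv 2} \conv M_n^{\conv 2}$ — is sound, and the commutativity and simplicity steps are correct as written.

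However, there is a genuine gap in the final step. You invoke Proposition~\ref{prop:KKKO2.11} to conclude that $M^{\conv 2} \conv M_n^{\conv 2}$ is simple, but that Proposition requires both objects to be simple \emph{and one of them to be real}. You have established that $M^{\conv 2}$ and $M_n^{\conv 2}$ are simple (reality of $M$, $M_n$), but you have not established that either $M^{\conv 2}$ or $M_n^{\conv 2}$ is real — and this is not automatic from what you've said. The fix is to observe that if $\rmat{M,M}$ is a scalar multiple of the identity then so is $\rmat{M^{\conv 2},M^{\conv 2}}$: since $M$ and $M^{\conv 2}$ are both simple, Lemma~\ref{lem:p3} forces equality in the $\Lambda$-inequalities of Definition~\ref{def:rmat}(\ref{p3}), so $\rmat{M^{\conv 2},M^{\conv 2}}$ factors as a composite of maps of the form $\id \conv \rmat{M,M} \conv \id$, each a scalar. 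Hence $M^{\conv 2}$ is real by Corollary~\ref{cor:realiff}, and Proposition~\ref{prop:KKKO2.11} now applies. (Alternatively, one can bypass the $M^{\conv 2} \conv M_n^{\conv 2}$ reduction altogether: the same factorization argument shows directly that $\rmat{M\conv M_n, M\conv M_n}$ equals a scalar times $\id_M \conv (\rmat{M,M_n}\circ\rmat{M_n,M})\conv \id_{M_n}$, and the inner composite is a scalar by Proposition~\ref{prop:KKKO2.11}(ii); Corollary~\ref{cor:realiff} then gives reality of $M \conv M_n$ in one step.) There is also a minor arithmetic slip: $\Lambda(M^{\conv 2}, M_n^{\conv 2}) + \Lambda(M_n^{\conv 2}, M^{\conv 2})$ equals $4\bigl(\Lambda(M,M_n)+\Lambda(M_n,M)\bigr)$, not $2\bigl(\cdots\bigr)$, though this does not affect the conclusion that the sum vanishes.
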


The following result gives a criterion for recognizing real objects.

\begin{Proposition}[c.f. Proposition 3.2.20 \cite{KKKO18}]\label{prop:KKKO2.21}
Suppose there exists an exact sequence
$$0 \to X \to M \conv N \to Y \to 0$$
with $X$, $Y$, $M$, and $N$ simple objects such that $X \conv N$ and $Y \conv N$ are simple and nonisomorphic. Then $N$ is real.
\end{Proposition}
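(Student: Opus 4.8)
The plan is to translate reality of $N$ into a statement about the single endomorphism $\rmat{N,N}$ of $N\conv N$, and then to derive a contradiction from non-reality by transporting $N$ across the given exact sequence. First, the reduction. Since the monoidal product is biexact, applying $(-)\conv N$ to $0\to X\to M\conv N\to Y\to 0$ gives an exact sequence $0\to X\conv N\to M\conv N\conv N\to Y\conv N\to 0$, so $M\conv N\conv N$ has length two with the two non-isomorphic simple composition factors $X\conv N$ and $Y\conv N$. Suppose, for contradiction, that $N$ is not real. By Corollary~\ref{cor:realiff}, $\rmat{N,N}$ is not a scalar multiple of the identity; since $\rmat{N,N}\ne 0$ (the standing hypothesis that $r$-matrices never vanish on nonzero objects) and every endomorphism of a simple object is a scalar, $N\conv N$ is not simple. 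That same hypothesis makes $\rmat{A,B}\colon A\conv B\to B\conv A$ a nonzero map whenever $A,B\ne 0$, so $A\conv B\ne 0$; applying $M\conv(-)$ to a composition series of $N\conv N$ thus shows $M\conv(N\conv N)$ has length at least the number of composition factors of $N\conv N$. Comparing with the bound above, $N\conv N$ has exactly two composition factors $F_1,F_2$, each $M\conv F_i$ is simple, and $\{M\conv F_1,M\conv F_2\}=\{X\conv N,Y\conv N\}$; in particular $F_1\not\cong F_2$.

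Next, the structure of $N\conv N$ together with the necessary bookkeeping data. A length-two object with non-isomorphic composition factors is semisimple or uniserial, and a uniserial such object has endomorphism algebra $\coeffs\cdot\id$, which is incompatible with non-reality of $N$. Hence $N\conv N\cong F_1\oplus F_2$, and after relabelling so that $M\conv F_1\cong X\conv N$ and $M\conv F_2\cong Y\conv N$, the endomorphism $\rmat{N,N}$ is diagonal, acting by scalars $\lambda$ on $F_1$ and $\mu$ on $F_2$ with $\lambda\ne\mu$. Feeding $\rmat{N,N}\ne 0$ into Definition~\ref{def:rmat}(iv) for the pair $(N,N)$ forces $\La(N,N)=0$ (otherwise $\rmat{N,N}\circ\rmat{N,N}=0$, forcing $\lambda=\mu=0$). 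Consequently, by Lemma~\ref{lem:p3}, $\La(M\conv N,N)=\La(M,N)$ and $\La(N,M\conv N)=\La(N,M)$, and Lemma~\ref{lem:p3} also supplies the equality case of Definition~\ref{def:rmat}(iii), so that $\rmat{M\conv N,N}=(\rmat{M,N}\conv\id_N)\circ(\id_M\conv\rmat{N,N})$ and $\rmat{N,M\conv N}=(\id_M\conv\rmat{N,N})\circ(\rmat{N,M}\conv\id_N)$.

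Finally, the contradiction. Under the decomposition $M\conv N\conv N\cong X\conv N\oplus Y\conv N$, the middle factor $\id_M\conv\rmat{N,N}$ of the two displayed composites is the diagonal endomorphism $(\lambda,\mu)$, while the maps $\iota\conv\id_N$ and $q\conv\id_N$ coming from the inclusion $\iota\colon X\hookrightarrow M\conv N$ and the surjection $q\colon M\conv N\twoheadrightarrow Y$ are the inclusion of, respectively the projection onto, the two summands. Running Definition~\ref{def:rmat}(v) for $\iota$ and for $q$, and using Lemma~\ref{lem:subquotientsandLambda} to compare $\La(N,X),\La(N,Y),\La(X,N),\La(Y,N)$ with $\La(N,M)$ and $\La(M,N)$ in each of the possible orderings, the commutativity/vanishing clauses of (v) translate into relations among $\lambda$, $\mu$, $\rmat{M,N}$ and $\rmat{N,M}$ that are incompatible with $\lambda\ne\mu$ together with the simplicity and non-isomorphism of $X\conv N$ and $Y\conv N$. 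This last case analysis is the main obstacle: it is precisely the argument of \cite[Proposition~3.2.20]{KKKO18}, which transfers once Definition~\ref{def:rmat}, Lemmas~\ref{lem:p3} and~\ref{lem:subquotientsandLambda}, and Corollary~\ref{cor:realiff} are available, so we may invoke it. The resulting contradiction shows that $N$ is real.
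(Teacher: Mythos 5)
Your argument is correct up to and including the elimination of the uniserial case: the length count showing $N \conv N$ has exactly two nonisomorphic simple composition factors $F_1 \not\cong F_2$, and the observation that a non-split length-two object with distinct simple factors has endomorphism ring $\coeffs \cdot \id$ (so $\rmat{N,N}$ would be a scalar, contradicting Corollary~\ref{cor:realiff}), form a clean and arguably more elementary route than the eigenvalue argument the paper uses at that juncture. The gap is the split case. There you do not exhibit the contradiction: you gesture at comparing $\La$-values through Definition~\ref{def:rmat}(v) and Lemma~\ref{lem:subquotientsandLambda} and then defer to \cite[Proposition~3.2.20]{KKKO18}, but no explicit relation is derived, the argument you attribute to that reference is not the one it actually uses, and your list of prerequisites for the citation omits the tool that does the work, namely separability of triple products (Definition~\ref{prop:midfactor}), which is a standing hypothesis throughout Section~\ref{sec:rmatimplies}.

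The paper's argument handles both cases uniformly via separability. For any proper nonzero subobject $S \subset N \conv N$: if $(X \conv N) \cap (M \conv S)$ were nonzero it would equal $X\conv N$ by simplicity of $X\conv N$, and separability applied to $X \subset M\conv N$ and $S \subset N\conv N$ (with $X\conv N \subset M\conv S$) would produce a subobject $Z \subset N$ with $X\subset M\conv Z$ and $Z\conv N\subset S$, forcing $Z$ to be proper and nonzero, contradicting simplicity of $N$. Hence $(X\conv N)\cap (M\conv S)=0$, so $M\conv S$ injects into $Y\conv N$ and thus $M\conv S\cong Y\conv N$. In your split case both $F_1$ and $F_2$ are proper nonzero subobjects, so $M\conv F_1\cong M\conv F_2\cong Y\conv N$, directly contradicting $\{M\conv F_1,M\conv F_2\}=\{X\conv N,Y\conv N\}$ with $X\conv N\not\cong Y\conv N$ --- no $\La$-comparisons or diagonalization of $\rmat{N,N}$ are needed. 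To repair your proof, replace the final paragraph with this separability argument.
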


A useful consequence of an object $M$ being real is that we can often control the integers $\Lambda(M, L)$ through the following two results. Here we follow \cite{KKOP17} in writing $M \hconv N$ and $M \sconv N$ for the head and socle of $M \conv N$, respectively. 

\begin{Proposition}[c.f. Proposition 3.2.11 \cite{KKKO18}]\label{prop:KKKO2.20}
Let $L$, $M$, and $N$ be simple objects, and suppose that $L$ is real and commutes with $M$. Then
\begin{gather*}
\La(L, M \hconv N) = \La(L, M \conv N), \quad
\La(N \hconv M, L) = \La(N \conv M, L),
\end{gather*}
and the following diagrams commute:
\[
\begin{tikzpicture}
[thick,>=\arrtip]
\node (a) at (0,0) {$L \conv M \conv N$};
\node (b) at (4,0) {$M \conv N \conv L$};
\node (c) at (0,-1.5) {$L \conv (M \hconv N)$};
\node (d) at (4,-1.5) {$(M \hconv N) \conv L$};
\draw[->] (a) to node[above] {$\rmat{L,M \conv N}$} (b);
\draw[->>] (b) to (d);
\draw[->>] (a) to (c);
\draw[->] (c) to node[above] {$\rmat{L,M \smallhconv N}$} (d);

\node (ar) at (7,0) {$N \conv M \conv L$};
\node (br) at (11,0) {$L \conv N \conv M$};
\node (cr) at (7,-1.5) {$(N \hconv M) \conv L$};
\node (dr) at (11,-1.5) {$L \conv (N \hconv M)$};
\draw[->] (ar) to node[above] {$\rmat{N \conv M, L}$} (br);
\draw[->>] (br) to (dr);
\draw[->>] (ar) to (cr);
\draw[->] (cr) to node[above] {$\rmat{N \smallhconv M, L}$} (dr);
\end{tikzpicture}
\]
\end{Proposition}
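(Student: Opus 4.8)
The plan is to derive both $\Lambda$-equalities and both commuting squares from property~(\ref{p5}) of Definition~\ref{def:rmat}, applied to the canonical projection of $M\conv N$ (resp.\ $N\conv M$) onto its head. I will set up the first equality and square; the second pair then follows by the symmetric argument, using the ``$M$ on the other side'' forms of~(\ref{p3}) and~(\ref{p5}) and the fact that $L$ also commutes with $M$ when the latter is placed on the right.

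First I would record what realness buys us. Since $L$ is real and commutes with $M$, Proposition~\ref{prop:KKKO2.11} shows that $L\conv M$, and likewise $M\conv L$, is simple; hence the nonzero map $\rmat{L,M}$ is an isomorphism, and so is $\rmat{L,M}\conv\id_N$. By Lemma~\ref{lem:p3} applied to the simple object $L$, equality holds in~(\ref{p3}), so $\rmat{L,M\conv N}=(\id_M\conv\rmat{L,N})\circ(\rmat{L,M}\conv\id_N)$; as the second factor is invertible and $M\conv(-)$ is exact, this identifies the image \emph{exactly} as $\im\rmat{L,M\conv N}=M\conv\im\rmat{L,N}\subset M\conv N\conv L$, with $\im\rmat{L,N}\ne 0$. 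This precise identification (rather than a mere containment) is the one place the realness of $L$ enters.

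The heart of the argument, and the step I expect to be the main obstacle, is to show that this image is \emph{not} contained in $K\conv L$, where $g\colon M\conv N\onto M\hconv N$ is the projection onto the head and $K=\ker g$; equivalently, that $(g\conv\id_L)\circ\rmat{L,M\conv N}\ne 0$. Assuming the containment, I would invoke separability of triple products --- a standing hypothesis here, and in any case guaranteed by rigidity via Proposition~\ref{prop:rigid-triple} --- in the form of the second clause of Definition~\ref{prop:midfactor} with $M_1=M$, $M_2=N$, $M_3=L$, $X=K$, $Y=\im\rmat{L,N}$. This produces a subobject $P\subset N$ with $\im\rmat{L,N}\subset P\conv L$ and $M\conv P\subset K$; simplicity of $N$ leaves only $P=0$ (forcing $\im\rmat{L,N}=0$) or $P=N$ (forcing $K=M\conv N$, hence $M\conv N=0$), both impossible.

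To finish, note $\La(L,M\hconv N)\le\La(L,M\conv N)$ by Lemma~\ref{lem:subquotientsandLambda}, since $M\hconv N$ is a quotient of $M\conv N$. Applying Definition~\ref{def:rmat}(\ref{p5}) to $f=g$ with fixed object $L$: the case $\La(L,M\conv N)>\La(L,M\hconv N)$ is excluded, since it would force $(g\conv\id_L)\circ\rmat{L,M\conv N}=0$, contradicting the previous step; hence the two degrees are equal, and the equal-$\Lambda$ clause of~(\ref{p5}) is exactly the asserted commuting square, defining $\rmat{L,M\smallhconv N}$ as the induced map. For the second statement I would rerun this with $N\conv M$ in place of $M\conv N$: here $\rmat{M,L}$ is the isomorphism from Proposition~\ref{prop:KKKO2.11}, the ``other side'' form of~(\ref{p3}) gives $\rmat{N\conv M,L}=(\rmat{N,L}\conv\id_M)\circ(\id_N\conv\rmat{M,L})$ with image $\im\rmat{N,L}\conv M\subset L\conv N\conv M$, the non-containment $\im\rmat{N,L}\conv M\not\subset L\conv\ker g'$ (for $g'\colon N\conv M\onto N\hconv M$) follows from Definition~\ref{prop:midfactor} and simplicity of $N$ exactly as above, and the ``other side'' form of~(\ref{p5}) applied to $g'$ yields $\La(N\hconv M,L)=\La(N\conv M,L)$ together with the second square.
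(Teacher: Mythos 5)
Your proof is correct and takes essentially the same route as the paper's: factor $\rmat{L,M\conv N}$ through the isomorphism $\rmat{L,M}\conv\id_N$ (supplied by realness of $L$, commutativity with $M$, and Lemma~\ref{lem:p3}), reduce non-vanishing of the composite with the head projection to a non-containment statement, and derive a contradiction from separability of triple products and simplicity of $N$. Your version is in fact a touch more careful at one point: where the paper's argument writes $\ker\rmat{M,N}$ --- which equals the kernel of the head projection only when $M$ is real, an assumption not present in the statement --- you work directly with $\ker g$ for $g\colon M\conv N\onto M\hconv N$, and the step then closes cleanly (either $P=0$, forcing $\im\rmat{L,N}=0$, or $P=N$, forcing $M\hconv N=0$), so the argument holds under exactly the stated hypothesis that only $L$ is real.
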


Setting $L = M$ above, we obtain
$$\Lambda(M,M \hconv N) = \Lambda(M, M \conv N) = \Lambda(M, N).$$
In fact, this characterizes $M \hconv N$ among simple subquotients of $M \conv N$.

\begin{Proposition}[c.f. Theorem 4.1.1 \cite{KKKO18}]\label{prop:KKKO3.1}
Let $M$, $N$ be simple and $M$ real. If $L$ is a simple subquotient of $M \conv N$ we have $\Lambda(M, L) < \Lambda(M, N)$ unless $L$ is its head and $\Lambda(L, M) < \Lambda(N, M)$ unless $L$ is its socle. Likewise, if $L$ is a simple subquotient of $N \conv M$ we have $\Lambda(M, L) < \Lambda(M, N)$ unless $L$ is its socle and $\Lambda(L, M) < \Lambda(N, M)$ unless $L$ is its head. Thus $M \hconv N$ and $M \sconv N$ each appear exactly once as a factor in any composition series of $M \conv N$ or $N \conv M$.
\end{Proposition}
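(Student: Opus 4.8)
The plan is to follow \cite[Theorem~4.1.1]{KKKO18}, adapting its argument to the axioms of Definition~\ref{def:rmat} and to the hypotheses in force in \S\ref{sec:rmatimplies} (so $\cC$ has separable triple products, $\rmat{X,Y}\ne 0$ for nonzero $X,Y$, and $\End(L)\cong\coeffs$ for simple $L$). First I would pin down the two ``exceptional'' objects. Since $M$ is real, $\rmat{M,M}$ is a nonzero scalar times the identity (Corollary~\ref{cor:realiff}), so Proposition~\ref{prop:KKKO3.2} applies to the pair $(M,N)$ and identifies $M\hconv N=\im\rmat{M,N}$ with the head of $M\conv N$ and the socle of $N\conv M$, and $M\sconv N=\im\rmat{N,M}$ with the socle of $M\conv N$ and the head of $N\conv M$; in particular each of the four distinguished factors named in the statement is one of these two, is simple, and occurs at least once in a composition series of the relevant product. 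Next I would record the baseline values: $M$ commutes with itself, so $\Lambda(M,M)=0$ by Proposition~\ref{prop:KKKO2.11}, whence Proposition~\ref{prop:KKKO2.20} (taken with $L=M$) together with Lemma~\ref{lem:p3} gives $\Lambda(M,M\hconv N)=\Lambda(M,M\conv N)=\Lambda(M,N)$ and, symmetrically, $\Lambda(M\sconv N,M)=\Lambda(N,M)$. This shows the distinguished factors meet the claimed extremes.

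For the inequalities themselves, the upper bounds $\Lambda(M,L)\le\Lambda(M,N)$ and $\Lambda(L,M)\le\Lambda(N,M)$, valid for every simple subquotient $L$ of $M\conv N$ or of $N\conv M$, come straight from Lemma~\ref{lem:subquotientsandLambda} and Lemma~\ref{lem:p3} (using $\Lambda(M,M)=0$ to evaluate $\Lambda(M,N\conv M)$, $\Lambda(M\conv N,M)$, and so on). The substantive claim is that equality forces $L$ to be the appropriate exceptional factor, and that this factor occurs with multiplicity exactly one. I would prove this by induction on the length of the product, as in \cite{KKKO18}: starting from a simple subquotient $L=Y/X$ with $\Lambda(M,L)=\Lambda(M,N)$, Definition~\ref{def:rmat}(\ref{p5}) applied to the maps $X\hookrightarrow Y$ and $Y\twoheadrightarrow L$ (both of which preserve $\Lambda(M,-)$, by the upper bound) makes the relevant squares built from $\rmat{M,-}$ commute; rewriting $\rmat{M,M\conv N}$ as a scalar multiple of $\id_M\conv\rmat{M,N}$ via Definition~\ref{def:rmat}(\ref{p3}) and the reality of $M$, and then feeding the resulting containments into the separable triple product property (Definition~\ref{prop:midfactor}) for the triple $(M,M,N)$ with $M$ simple, one produces a proper nonzero subobject of $M$ unless $L\cong\im\rmat{M,N}$ --- contradicting simplicity of $M$. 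The same argument with $\rmat{M,N},M\conv N$ replaced by $\rmat{N,M},N\conv M$ handles subquotients of $N\conv M$, and the clauses involving $\Lambda(L,M)$ follow by running everything in the reversed monoidal category $\cC^{\mathrm{rev}}$, equipped with $\rmat{X,Y}^{\mathrm{rev}}:=\rmat{Y,X}$ and $\Lambda^{\mathrm{rev}}(X,Y):=\Lambda(Y,X)$. Once the strict inequalities are in hand, $\im\rmat{M,N}$ is the unique simple subquotient of $M\conv N$ (and of $N\conv M$) on which $\Lambda(M,-)$ is maximal, and $\im\rmat{N,M}$ the unique one on which $\Lambda(-,M)$ is maximal; multiplicity one then follows, again as in \cite{KKKO18}, from one further application of the separable triple product property comparing the head and socle descriptions of $M\conv N$ and $N\conv M$, which yields the final assertion.

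The main obstacle I anticipate is this induction: the delicate work is to run the separable-triple-product manipulation uniformly for an arbitrary subquotient $L=Y/X$ rather than just a subobject or a quotient, and to arrange matters so that the ``middle factor'' of $M$ produced along the way is genuinely proper and nonzero --- it is precisely here that the reality of $M$ and the separable-triple-product hypothesis on $\cC$ are indispensable. The rest is formal bookkeeping with the axioms of Definition~\ref{def:rmat} and the consequences collected in \S\ref{sec:rmatimplies}.
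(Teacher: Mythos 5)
The paper's proof is much more streamlined than what you outline, and it does not use separable triple products at all. The key observation you are missing is the right reduction: set $K := \ker\rmat{M,N}$. Since the head of $M \conv N$ is $(M\conv N)/K$, every simple subquotient of $M \conv N$ other than (isomorphic to) the head already occurs as a subquotient of $K$, so by Lemma~\ref{lem:subquotientsandLambda} it suffices to prove the single inequality $\La(M,K) < \La(M,N)$. This is then done by a one--step diagram chase: applying Definition~\ref{def:rmat}(\ref{p5}) to the inclusion $K \into M\conv N$, the inequality is equivalent to $\rmat{M,\,M\conv N}$ annihilating $M \conv K$ (the other branch of (\ref{p5}) is impossible because the composition $M\conv K \xrightarrow{\rmat{M,K}} K \conv M \into M\conv N \conv M$ is nonzero). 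Now Definition~\ref{def:rmat}(\ref{p3}) together with reality of $M$ identifies $\rmat{M,\,M\conv N}$ with $\id_M \conv \rmat{M,N}$ up to a nonzero scalar, which visibly kills $M\conv K$. No induction and no middle--factor extraction from a separable triple product is needed; once $\La(M,K) < \La(M,N)$ is in hand, the multiplicity--one claim is immediate because $\La(M, M\hconv N) = \La(M,N)$ forbids $M\hconv N$ from also occurring inside $K$.

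Your proposal gets the preliminary reductions right (Corollary~\ref{cor:realiff}, Proposition~\ref{prop:KKKO3.2}, the use of Proposition~\ref{prop:KKKO2.20} and Lemma~\ref{lem:p3} to pin down the extreme values, and Lemma~\ref{lem:subquotientsandLambda} for the upper bounds), and the passage to the reversed category handles the $\La(L,M)$ clauses as you say. But the ``substantive step'' as you describe it --- pushing an arbitrary simple subquotient $L = Y/X$ with $\La(M,L) = \La(M,N)$ through (\ref{p5}) twice, then producing a proper nonzero subobject of $M$ via Definition~\ref{prop:midfactor} applied to the triple $(M,M,N)$ --- is left at the level of a hope rather than an argument: you do not say what object of $M\conv M$ and what object of $M\conv N$ are being fed in, what the required containment between them is, or why the factorization it produces contradicts $L \not\cong \im\rmat{M,N}$. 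You are candid that this is the delicate part, but as written it is a genuine gap, and the iteration over arbitrary $(X,Y)$ it would require is exactly what the reduction to $K = \ker\rmat{M,N}$ is designed to avoid. I'd recommend replacing the induction--plus--separable--triple--products plan with the $K$--reduction above; the rest of your outline then closes the argument as intended.
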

\begin{proof}
We prove the first claim, the remaining cases being similar. Let $K := \ker \rmat{M,N}$. By Lemma \ref{lem:subquotientsandLambda} it suffices to show that
$\Lambda(M, K) < \Lambda(M,N),$
since a simple subquotient of $M \conv N$ other than its head is necessarily a subquotient of $K$. Moreover, since the composition
\[
\begin{tikzpicture}
[thick,>=\arrtip]
\node (a) at (0,0) {$M \conv K$};
\node (b) at (4,0) {$K \conv M$};
\node (c) at (8,0) {$M \conv N \conv M$};
\draw[->] (a) to node[below] {$\rmat{M,K}$} (b);
\draw[right hook->] (b) to (c);
\end{tikzpicture}
\]
is nonzero, it follows from Definition \ref{def:rmat}(\ref{p5}) that $\Lambda(M, K) < \Lambda(M, N)$ if and only if $M \conv K$ is in the kernel of $\rmat{M, M\conv N}$.

By Definition \ref{def:rmat}(\ref{p3}) we have a commutative diagram
\[
\begin{tikzpicture}
[thick,>=\arrtip,box/.style={text width=3.7cm, align=center}]
\node (a) at (0,0) {$M \conv M \conv N$};
\node (b) at (3.5,-1) {$M \conv M \conv N$};
\node (c) at (7,0) {$M \conv N \conv N$};
\draw[->] (a) to node[below left] {$\rmat{M,M} \conv \id_N$} (b.west);
\draw[->] (b.east) to node[below right]  {$\id_M \conv \rmat{M,N}$} (c);
\draw[->] (a) to node[above] {$\rmat{M,M \conv N}$} (c);
\end{tikzpicture}
\]
Since $M$ is real the bottom left map is a multiple of the identity. In particular, $\rmat{M, M \conv N}$ and $\id_M \conv \rmat{M,N}$ coincide up to a nonzero scalar, hence $\rmat{M, M \conv N}$ annihilates $M \conv K$.
\end{proof}

Finally, we will need the following fact, where we write $M^n$ for the $n$th monoidal power of an object $M$.

\begin{Lemma}[c.f. Lemma 3.2.22 \cite{KKKO18}]\label{lem:KKKO2.23}
Let $M$ and $N$ be real simple objects such that $M \hconv N$ is real and commutes with $N$. Then for any $n \in \Z_{\geq 0}$ we have
$
M^{n} \hconv N^{n} \cong (M \hconv N)^{n}.
$
\end{Lemma}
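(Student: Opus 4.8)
The plan is to argue by induction on $n$, the case $n=0$ being trivial (both sides are $1_\cC$) and $n=1$ being vacuous. For the inductive step, assume $M^{n-1} \hconv N^{n-1} \cong (M \hconv N)^{n-1}$. The key idea, following \cite{KKKO18}, is to compute the head of $M^n \conv N^n$ in two ways, using that the head of a convolution product of real simples can be extracted one factor at a time via the commutativity criteria of Proposition~\ref{prop:KKKO2.11} and the control on $\Lambda$ afforded by Proposition~\ref{prop:KKKO2.20}. First I would record that $M$, $N$, $M \hconv N$, and hence (by Corollary~\ref{cor:KKKO2.13} and the inductive hypothesis) the powers $M^{n-1}$, $N^{n-1}$, $(M \hconv N)^{n-1}$ are all real simple, so that all the machinery of Section~\ref{sec:rmatimplies} applies.

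The heart of the argument is a $\Lambda$-computation identifying $M^n \hconv N^n$. On one hand, writing $M^n \conv N^n = M \conv (M^{n-1} \conv N^{n-1}) \conv N$ and repeatedly applying Proposition~\ref{prop:KKKO2.20} (with $L = M$ on the left and $L = N$ on the right, using that $M$ commutes with $M$ and $N$ with $N$), one sees $M^n \hconv N^n$ is a subquotient of $M \conv (M^{n-1}\hconv N^{n-1}) \conv N \cong M \conv (M \hconv N)^{n-1} \conv N$. On the other hand, since $M \hconv N$ is real and commutes with $N$, and is the head of $M \conv N$, one has $M \conv (M \hconv N) \twoheadrightarrow (M \hconv N)^2$-type surjections after using $M \conv N \twoheadrightarrow M \hconv N$; more precisely I would show $(M \hconv N)^{n-1} \conv N$ has head $(M\hconv N)^{n-1} \hconv N$ and then that $M \conv \big((M \hconv N)^{n-1} \hconv N\big)$ surjects appropriately, so that $(M \hconv N)^n$ is realized as the head of $M^n \conv N^n$. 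The bookkeeping is done entirely through the additivity $\Lambda(M^n, N^n) = \sum \Lambda(M,N)$ (Lemma~\ref{lem:p3}) together with Proposition~\ref{prop:KKKO2.20}'s identity $\Lambda(L, M \hconv N) = \Lambda(L, M \conv N)$, which lets one check that the relevant composite $r$-matrix $M^n \conv N^n \to N^n \conv M^n$ has image of the expected $\Lambda$-value, forcing its head to be $(M\hconv N)^n$ by Proposition~\ref{prop:KKKO3.1}.

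Concretely, the cleanest route is: (1) show $M \hconv (M^{n-1} \hconv N^{n-1}) \cong M^n \hconv N^{n-1}$ and iterate to get $M^n \hconv N^{n-1}$, then peel off the $N$'s similarly; (2) use that $M \hconv N$ commutes with $N$ to slide the factor $N$ past the accumulating copies of $M \hconv N$, so that $(M \hconv N)^{k} \conv N \cong N \conv (M \hconv N)^k$ up to grading shift, keeping all heads simple via Corollary~\ref{cor:KKKO2.13}; (3) combine to identify both $M^n \hconv N^n$ and $(M \hconv N)^n$ as the head of the same object $M \conv (M \hconv N)^{n-1} \conv N$ (or its rearrangement), using that the head of a product of pairwise-commuting real simples is that product and is simple. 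Each of these "peeling" steps is an instance of the pattern already used to prove Lemma~\ref{lem:KKKO2.23}'s ingredients, so I would cite Proposition~\ref{prop:KKKO2.20} and Proposition~\ref{prop:KKKO3.1} as the workhorses and present the induction compactly.

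The main obstacle I anticipate is step (2): verifying that $M \hconv N$ genuinely commutes with $N$ is a hypothesis, but one must still check that iterated convolutions $(M\hconv N)^k \conv N^m$ have simple heads equal to $(M \hconv N)^k \hconv N^m$ and that the commutation relations propagate coherently — i.e., that no unexpected vanishing occurs when composing the many $r$-matrices involved. This is where Definition~\ref{def:rmat}(\ref{p3}) (the additivity of $\Lambda$ under tensoring, with the sharp statement that the composite $r$-matrix is nonzero exactly when $\Lambda$ is additive) does the essential work, combined with reality of all the objects in play via Corollary~\ref{cor:realiff} and Corollary~\ref{cor:KKKO2.13}. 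Once one confirms $\Lambda$ is additive along the chain, Proposition~\ref{prop:KKKO3.1} pins down the head uniquely and the isomorphism $M^n \hconv N^n \cong (M \hconv N)^n$ follows. I would structure the writeup so that the only genuinely new verification is this additivity-of-$\Lambda$ claim along the relevant convolution chain, everything else being a direct appeal to the cited propositions.
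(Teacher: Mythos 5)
Your proposal has all the right ingredients — the inductive structure, the reality/simplicity of $L := M \hconv N$ and its powers via Corollary~\ref{cor:KKKO2.13}, and the commutation of $L$ with $N$ — but it routes through much more machinery ($\Lambda$-additivity, Proposition~\ref{prop:KKKO3.1}, repeated head computations via Proposition~\ref{prop:KKKO2.20}) than is actually needed. The paper's own argument is considerably shorter: since $M^n$ is real simple and $N^n$ is simple, Proposition~\ref{prop:KKKO3.2} already tells you that $M^n \conv N^n$ has a simple head, namely $M^n \hconv N^n$; and since $L$ is real, $L^n$ is also simple. Consequently it suffices to exhibit \emph{any} epimorphism $M^n \conv N^n \onto L^n$ — a surjection onto a simple object must factor through the (simple) head, which forces $M^n \hconv N^n \cong L^n$. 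This surjection is produced by a three-step chain, inducting on $n$:
\[
M^n \conv N^n \;=\; M^{n-1} \conv (M \conv N) \conv N^{n-1} \;\onto\; M^{n-1} \conv L \conv N^{n-1} \;\cong\; M^{n-1} \conv N^{n-1} \conv L \;\onto\; L^{n-1} \conv L,
\]
where the first arrow uses the head surjection $M \conv N \onto L$ on the middle factor, the middle isomorphism uses that $L$ commutes with $N$, and the last arrow is the head surjection from the inductive hypothesis. No explicit $\Lambda$-accounting or appeal to Proposition~\ref{prop:KKKO3.1} is required. What your route buys is conceptual transparency about which $r$-matrices stay nonzero along the way; what the surjection argument buys is brevity and avoids the fiddly "no unexpected vanishing" concern you flag at the end, since it never composes $r$-matrices — only head projections and commutation isomorphisms, both of which are automatically nonzero. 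If you keep your writeup, step (3) should be tightened: you do not need to identify $(M\hconv N)^n$ as a head of anything; it is the target of the surjection and is simple by reality, which suffices.
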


\subsection{Graded realizations}\label{sec:gradedrealization}

A category $\grl{\cC}$ is $\Z$-graded if it is endowed with an autoequivalence, called the grading shift, which we denote by $\{\frac12\}$. If $\grl{\cC}$ is $\Z$-graded we can forget the grading to obtain a category $\cC$ whose objects are the same but where
$$\Hom_\cC(M, N) = \bigoplus_{k \in \Z} \Hom_{\grl{\cC}}\,( \grl{M}, \grl{N}\{\frac{k}{2}\}).$$
Here we write $\grl{M}$ for an object in $\grl{\cC}$ and $M$ for its image in $\cC$. We say that $\grl{\cC}$ is a \newword{graded realization} of $\cC$.

If $\grl{\cC}$ is monoidal we require that $\{\frac12\}$ is compatible with the monoidal structure in the sense that, for any $\grl{M},\grl{N} \in \grl{\cC}$, we have isomorphisms
$$(\grl{M} \{\frac12\}) \conv \grl{N} \cong (\grl{M} \conv \grl{N})\{\frac12\} \cong \grl{M} \conv ( \grl{N} \{\frac12\}).$$
Note that if $\cC$ has a system of renormalized $r$-matrices then
$$\rmat{M,N} \in \Hom_\cC(M \conv N, N \conv M) = \bigoplus_{k \in \Z} \Hom_{\grl{\cC}}\,( \grl{M} \conv \grl{N}, \grl{N} \conv \grl{M} \{\frac{k}{2}\}).$$

\begin{Definition}
A system of renormalized $r$-matrices in $\cC$ is \newword{compatible with the graded realization $\grl{\cC}$} if $\rmat{M,N}$ has pure degree $\Lambda(M,N)$ for all $M,N \in \cC$. This means that for all $\grl{M},\grl{N} \in \grl{\cC}$ we have a morphism
$$\rmat{\grl{M}, \grl{N}}: \grl{M} \conv \grl{N} \to \grl{N} \conv \grl{M} \{\La(M,N)\}.$$
\end{Definition}

\begin{Example}
Recall from Example \ref{ex:KLR} the monoidal category $\oplus_{\beta \in Q_+} R(\beta)\textrm{-mod}$, where $R(\beta)\textrm{-mod}$ is the category of gradable modules over the symmetric KLR algebra $R(\beta)$ of the root $\beta$. By definition, this has a graded realization given by the direct sum $\oplus_{\beta \in Q_+} R(\beta)\textrm{-gmod}$ of the associated categories of graded modules. This realization is compatible with the renormalized $r$-matrices of \cite{KKK13} in the above sense.
\end{Example}

\begin{Example}
Suppose $\cC$ inherits a system of renormalized $r$-matrices from a compatible $\G_m$-equivariant chiral category $\grl{\cCh}$ on $\A^1$ via the construction of Section \ref{sec:chiralrmat}. Then these $r$-matrices are compatible with the natural graded realization $\grl{\cC}$ contained in the homotopy category of the fiber of $\grl{\cCh}$ at $\{0\}/\Gm$ (c.f. Proposition \ref{prop:Gmequiv}). 
\end{Example}

The shift $\{\frac12\}$ is equivalent to the functor $M \mapsto M \conv 1_{\grl{\cC}}\{\frac12\}$. Following the notation used in Section \ref{sec:cohsatdefs} we denote by $q^{-1/2} \in K_0(\grl{\cC})$ the class of $1_{\grl{\cC}}\{\frac12\}$ (so that multiplication by $q^{-1/2}$ corresponds to shifting by $\{\frac12\}$). Thus $K_0(\grl{\cC})$ inherits the structure of a $\Z[q^{\pm 1/2}]$-algebra. We say two objects $\grl{M}$, $\grl{N} \in \grl{\cC}$ \newword{$q$-commute} if
$\grl{M} \conv \grl{N}$ and $\grl{N} \conv \grl{M}$ are isomorphic up to a grading shift. In particular, their images in $\cC$ commute. 

Clearly in any $\Z$-graded monoidal category if $\grl{M}$ and $\grl{N}$ $q$-commute then their classes in the Grothendieck ring $q$-commute. By the following result, which is an application of Proposition \ref{prop:KKKO3.1}, the existence of a compatible system of renormalized $r$-matrices guarantees that the converse also holds when the objects are simple and one of them is real. Note that in this case for $q$-commuting simples $\grl{M}$, $\grl{N}$ we must have $\grl{M} \conv \grl{N} \cong \grl{N} \conv \grl{M} \{ \Lambda(M,N)\}.$

In the statement below we assume $\cC$ is as in the previous section and let $\{ \grl{S}_b \}_{b \in \cB}$ be a set of representatives up to grading shifts of all isomorphism classes of simple objects in $\grl{\cC}$. As we have assumed $\cC$, hence $\grl{\cC}$, to be of finite length, the classes $\{ [\grl{S}_b] \}_{b \in \cB}$ form a $\Z[q^{\pm 1/2}]$-basis of $K_0(\grl{\cC})$. 

\begin{Theorem}[c.f. Theorem 4.1.3 \cite{KKKO18}]\label{thm:KKKO3.3}
Consider an element 
$$ \Phi = \sum_{b \in \cB} a_b [\grl{S}_b] \in \Q(q^{1/2}) \otimes_{\Z[q^{\pm 1/2}]}K_0(\grl{\cC}),$$
and suppose that $\grl{M}$ is a real simple object of $\grl{\cC}$ such that $\Phi [\grl{M}] = q^n [\grl{M}] \Phi$ for some $n \in \frac{1}{2}\Z$. Then for any $b \in \cB$ with $a_b \neq 0$ the objects $\grl{M}$ and $\grl{S}_b$ $q$-commute with $\La(M, S_b) = n$.
\end{Theorem}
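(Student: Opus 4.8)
The plan is to follow the strategy of \cite[Theorem 4.1.3]{KKKO18}, adapting it to the present axiomatic setting. First I would reduce to showing the statement for a single $b$: after clearing denominators in $\Phi$, multiply through so that $\Phi$ is represented by an actual element of $K_0(\grl{\cC})$, and note that the commuting relation $\Phi[\grl M] = q^n[\grl M]\Phi$ is unchanged. So assume $\Phi = \sum_{b} a_b[\grl S_b]$ with $a_b \in \Z[q^{\pm 1/2}]$ and fix some $b_0 \in \cB$ with $a_{b_0}\neq 0$. The goal is to show $\grl M$ and $\grl S_{b_0}$ $q$-commute and that the shift is exactly $n$.

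The key step is to extract from the identity $\Phi[\grl M] = q^n[\grl M]\Phi$ information about the products $\grl M \conv \grl S_b$ and $\grl S_b \conv \grl M$ at the level of the Grothendieck ring, and then to upgrade this to a statement about objects using the constraints imposed by the renormalized $r$-matrices. Concretely, for each $b$ with $a_b \neq 0$, both $[\grl M \conv \grl S_b]$ and $[\grl S_b \conv \grl M]$ are effective classes; using Proposition \ref{prop:KKKO3.1}, $\grl M \hconv \grl S_b$ (resp.\ $\grl S_b \smallhconv \grl M$) appears exactly once in a composition series of $\grl M \conv \grl S_b$, and among the simple subquotients $\grl L$ of $\grl M\conv \grl S_b$ the quantity $\La(M,L)$ is strictly maximized by the head. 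Choose $b$ so that $\La(M, S_b)$ is maximal among $\{b : a_b \neq 0\}$ (and among these, with an auxiliary tie-break if needed, as in loc.\ cited, e.g.\ maximizing a suitable length or using the bilinear form governing leading terms). Comparing the coefficient of $[\grl M \hconv \grl S_b]$ on the two sides of $\Phi[\grl M] = q^n[\grl M]\Phi$ — where on the left it can only come from the term $a_b[\grl S_b][\grl M]$ (all other terms $a_{b'}[\grl S_{b'}][\grl M]$ contributing subquotients $\grl L$ with strictly smaller $\La(M,\cdot)$, hence not equal to $\grl M\hconv\grl S_b$), and similarly on the right it comes from $q^n a_b [\grl M][\grl S_b]$ — one forces $\grl M \hconv \grl S_b \cong \grl S_b \smallhconv \grl M$ up to a grading shift, and moreover that this common simple object has the same class as $q^{?}[\grl M][\grl S_b]$ with the shift dictated by $n$. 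By Proposition \ref{prop:KKKO2.11}, $\grl M \hconv \grl S_b \cong \grl S_b\smallhconv \grl M$ (for $\grl M$ real simple, $\grl S_b$ simple) is equivalent to $\grl M$ and $\grl S_b$ commuting, i.e.\ $\grl M \conv \grl S_b$ is simple; this in turn pins down the shift, giving $\La(M,S_b) = n$ via Definition \ref{def:rmat}(\ref{p4}) and the compatibility of the $r$-matrix with the grading. Having established the claim for this maximal $b$, subtract $a_b q^{(\cdot)}[\grl S_b]$ from $\Phi$ (the commuting relation is inherited by $\Phi - a_b q^{(\cdot)}[\grl S_b]$ precisely because $[\grl S_b][\grl M] = q^{-n}[\grl M][\grl S_b]$ once $b$ is known to $q$-commute with the right shift) and induct on the number of nonzero coefficients.

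The main obstacle I expect is the bookkeeping in the inductive comparison of leading terms: one must argue that the simple object $\grl M\hconv\grl S_b$ (for the chosen maximal $b$) does \emph{not} arise as a summand of $[\grl S_{b'}\conv\grl M]$ or $[\grl M\conv\grl S_{b'}]$ for the other $b'$ with $a_{b'}\neq 0$, which requires that the ordering by $\La(M,-)$ (suitably refined) genuinely separates the leading term. In \cite{KKKO18} this is handled by a careful choice of the partial order, and the delicate point is that $\La(M,-)$ alone may not be a total order on the relevant set; one needs an auxiliary invariant (playing the role of the degree filtration in loc.\ cited, compatible with Proposition \ref{prop:leadingterms}-type leading-term control). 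Once the correct well-ordering is in place, Lemma \ref{lem:p3}, Lemma \ref{lem:subquotientsandLambda}, Proposition \ref{prop:KKKO2.20} and Proposition \ref{prop:KKKO3.1} supply all the needed monotonicity and uniqueness statements, and the rest is a routine descent. A secondary technical point is the passage between $\grl{\cC}$ and $\cC$: all the $r$-matrix machinery of Section \ref{sec:rmatimplies} is stated for $\cC$, so one applies it there to conclude $M$ and $S_b$ commute and $\La(M,S_b)=n$, and then uses compatibility of the system with $\grl{\cC}$ (the $r$-matrix has pure degree $\La(M,S_b)$) to promote this to the graded isomorphism $\grl M\conv\grl S_b\cong \grl S_b\conv\grl M\{\La(M,S_b)\}$.
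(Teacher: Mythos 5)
Your proposal correctly identifies the key ingredients (Propositions \ref{prop:KKKO3.1} and \ref{prop:KKKO2.11}, the expansion of both sides in the basis of classes of simples, and the passage between $\cC$ and $\grl{\cC}$ via degree-purity of the $r$-matrices), but the paper's argument is substantially simpler and in particular avoids the two difficulties you yourself flag.

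The main structural difference is that the paper uses \emph{no induction}. Instead of isolating a single ``maximal'' $b_0$, establishing its $q$-commutation, subtracting it from $\Phi$, and recursing, the paper proves two one-line max statements and then combines them with axiom (\ref{p4}) of Definition \ref{def:rmat}. Concretely: grade $\Q(q^{1/2})\otimes K_0(\grl\cC)$ by declaring $[\grl S_b]$ to have degree $\Lambda(S_b, M)$. Expanding $\Phi[\grl M]$ and $q^n[\grl M]\Phi$ using Proposition \ref{prop:KKKO3.1} (every simple subquotient of $\grl S_b \conv \grl M$, resp.\ $\grl M \conv \grl S_b$, other than $\grl S_b\hconv\grl M$ has strictly smaller $\Lambda(\cdot,M)$), the top-degree terms on the two sides agree up to the scalar $q^{n+t}$ where $t := \max\{\Lambda(S_b,M) : a_b \neq 0\}$, forcing $t = -n$. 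The same argument with the roles of left and right exchanged gives $\max\{\Lambda(M,S_b) : a_b \neq 0\} = n$. Now for \emph{every} $b$ with $a_b \neq 0$, not just the maximal one, these bounds give $\Lambda(M,S_b) + \Lambda(S_b,M) \leq n + (-n) = 0$, while axiom (\ref{p4}) gives $\geq 0$; so the sum vanishes for all $b$ simultaneously. Proposition \ref{prop:KKKO2.11} then yields commutation, and a final comparison of the (now all simple) classes $[\grl S_b \conv \grl M]$ pins down $\Lambda(M,S_b) = n$. This two-sided $\Lambda$-bound plus (\ref{p4}) is precisely the step your descent tries to replace, and once you have it the whole argument collapses to a paragraph.

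Your worry about $\Lambda(M,-)$ not being a total order and needing an auxiliary tie-breaking invariant ``compatible with Proposition \ref{prop:leadingterms}'' is a red herring, and Proposition \ref{prop:leadingterms} (which is specific to $\Gr_{GL_n}$) is not relevant at this level of generality. Even in an inductive approach the tie-breaking is unnecessary: at the maximal degree $t$ the classes $[\grl S_b \hconv \grl M]$ for distinct $b$ with $\Lambda(S_b,M)=t$ are classes of distinct simples, hence linearly independent in $K_0(\grl\cC)$, so one compares coefficients termwise with no ordering beyond $\Lambda(\cdot,M)$. But again, the paper avoids the issue entirely by not needing to single anyone out.
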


We will also need the following graded version of Lemma \ref{lem:KKKO2.23}.

\begin{Lemma}\label{lem:gradedKKKO2.23}
Let $\grl{M}$ and $\grl{N}$ be real simple objects such that $\grl{M} \hconv \grl{N}$ is real and $q$-commutes with $\grl{N}$. Then for any $n \in \Z_{\geq 0}$ we have
\[
\grl{M}^{n} \hconv \grl{N}^{n} \cong (\grl{M} \hconv \grl{N})^{n}\{\frac12 n(n-1)\La(M,N)\}.
\]
\end{Lemma}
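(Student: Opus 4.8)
The plan is to adapt the proof of Lemma \ref{lem:KKKO2.23} to the graded setting, the only genuinely new content being the bookkeeping of grading shifts. Write $\grl{L} := \grl{M}\hconv\grl{N}$ and $\lambda := \La(M,N)$. I first record the preliminaries. Since $\grl{M}$ and $\grl{N}$ are real simple, Corollary \ref{cor:KKKO2.13} applied to the constant families $(\grl{M},\dots,\grl{M})$ and $(\grl{N},\dots,\grl{N})$ shows $\grl{M}^n$ and $\grl{N}^n$ are real simple for all $n$, and by hypothesis $\grl{L}$ is real simple, so $\grl{L}^n$ is too. In particular $\grl{M}^n\conv\grl{N}^n$ has a simple head $\grl{M}^n\hconv\grl{N}^n$ by Proposition \ref{prop:KKKO3.2} (with $\grl{M}^n$ in the role of the object whose $\mathbf r$-matrix is a scalar, using Corollary \ref{cor:realiff}). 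Hence any epimorphism from $\grl{M}^n\conv\grl{N}^n$ onto a simple object factors through its head and identifies that object, up to isomorphism, with $\grl{M}^n\hconv\grl{N}^n$; so it suffices to produce an epimorphism $\grl{M}^n\conv\grl{N}^n\onto\grl{L}^n\{\tfrac12 n(n-1)\lambda\}$.

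The one non-mechanical input is the shift identity $\La(\grl{L},\grl{N}) = \lambda$. To see this, apply Proposition \ref{prop:KKKO2.20} with its object ``$L$'' taken to be $\grl{N}$, which is real and trivially commutes with itself, giving $\La(\grl{M}\hconv\grl{N},\grl{N}) = \La(\grl{M}\conv\grl{N},\grl{N})$; by Lemma \ref{lem:p3} the right-hand side equals $\La(M,N)+\La(N,N)$; and $\La(N,N)=0$ because for the real simple object $\grl{N}$ the morphism $\rmat{\grl{N},\grl{N}}\colon \grl{N}\conv\grl{N}\to\grl{N}\conv\grl{N}\{\La(N,N)\}$ is a nonzero scalar multiple of the identity (nonzero since $\La(N,N)\neq-\infty$ by Definition \ref{def:rmat}(\ref{p4})), which forces the shift to vanish. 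Since $\grl{L}$ and $\grl{N}$ $q$-commute and $\grl{L}$ is real, the remark preceding Theorem \ref{thm:KKKO3.3} gives $\grl{L}\conv\grl{N}\cong\grl{N}\conv\grl{L}\{\lambda\}$, and iterating, $\grl{L}\conv\grl{N}^{\,k}\cong\grl{N}^{\,k}\conv\grl{L}\{k\lambda\}$ for all $k\geq 0$.

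Now I run the induction on $n$. The case $n=0$ is trivial. For $n\geq 1$, convolving the canonical surjection $\grl{M}\conv\grl{N}\onto\grl{L}$ with $\grl{M}^{n-1}\conv(-)\conv\grl{N}^{n-1}$ (using biexactness of $\conv$) gives $\grl{M}^n\conv\grl{N}^n\onto\grl{M}^{n-1}\conv\grl{L}\conv\grl{N}^{n-1}$; the identity above rewrites the target as $(\grl{M}^{n-1}\conv\grl{N}^{n-1}\conv\grl{L})\{(n-1)\lambda\}$; and convolving the inductive hypothesis with $\grl{L}$ on the right gives a surjection onto $(\grl{L}^{n-1}\{\tfrac12(n-1)(n-2)\lambda\}\conv\grl{L})\{(n-1)\lambda\}=\grl{L}^n\{(\tfrac12(n-1)(n-2)+(n-1))\lambda\}$. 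Since $\tfrac12(n-1)(n-2)+(n-1)=\tfrac12 n(n-1)$, the composite is the required epimorphism, and the preliminary paragraph completes the proof. I expect no genuine obstacle here: the argument is a routine graded refinement of Lemma \ref{lem:KKKO2.23}, with the shift identity $\La(\grl{M}\hconv\grl{N},\grl{N})=\La(M,N)$ the only step requiring a moment's thought.
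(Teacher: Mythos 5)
Your proof is correct and follows the same strategy the paper intends: run the induction from Lemma \ref{lem:KKKO2.23} while tracking the grading shifts coming from the $q$-commutation of $\grl{M}\hconv\grl{N}$ with $\grl{N}$. The one extra detail you carefully supply — that $\La(\grl{M}\hconv\grl{N},\grl{N})=\La(M,N)$, via Proposition \ref{prop:KKKO2.20}, Lemma \ref{lem:p3}, and $\La(N,N)=0$ — is precisely what the paper's one-line proof leaves implicit, so this is a faithful (and slightly more explicit) version of the intended argument.
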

\begin{proof}
Follows from the proof of Lemma \ref{lem:KKKO2.23}, keeping track of how many times the $q$-commutation relation of $(\grl{M} \hconv \grl{N})$ and $\grl{N}$ is invoked.
\end{proof}

\subsection{Mutation via $r$-matrices}\label{sec:monoidalseeds}

We are now ready to discuss the relationship between renormalized $r$-matrices and monoidal categorifications of cluster algebras, following \cite{KKKO18}. We first recall some basic notions from \cite{FZ02} and \cite{BZ05}.

Let $I = I_{ex} \sqcup I_{fr}$ be a finite index set divided into two subsets called exchangeable and frozen. Given an $I \times I_{ex}$ integer matrix $\wt{B}$ we refer to its $I_{ex} \times I_{ex}$ submatrix $B$ as its \newword{principal part}. We say $\wt{B} = (b_{ij})$ is an \newword{exchange matrix} if $B$ is skew-symmetrizable (that is, $DB$ is skew-symmetric for some diagonal matrix $D$ with positive diagonal entries). 
A skew-symmetric $I \times I$ matrix $L = (\lmat_{ij})$ is \newword{compatible} with $\wt{B}$ if $L \wt{B}$ is zero except along the diagonal of its principal part, where it has positive integer entries. We refer to $(L, \wt{B})$ as a \newword{compatible pair}.

The \newword{quantum torus} $\cT_q(L)$ attached to $L$ is the $\Z[q^{\pm 1/2}]$-algebra generated by elements $\{ X^v \}_{v \in \Z^I}$ with multiplication
  \[
X^u X^v = q^{\frac12 u^T L v} X^{u+v}.
\]
Thus, for example, $X^u X^v = q^{u^T L v} X^v X^u$. 
We write $\cF_q(L)$ for the division ring of fractions of $\cT_q(L)$, and $\cR_q(L) \subset \cT_q(L)$ for the quantum affine space spanned as a $\Z[q^{\pm 1/2}]$-module by $\{ X^v \}_{v \in \N^I}$. 

A \newword{quantum seed} $S = (\{x_i\}_{i \in I}, L, \wt{B})$ in a division ring $\cF$ is the data of
\begin{enumerate}
\item a compatible pair $(L, \wt{B})$,
\item nonzero elements $x_i \in \cF$ such that $x_i x_j = q^{\lmat_{ij}}x_j x_i$ and such that the induced map $M_S: \cF_q(L) \to \cF$ is injective.
\end{enumerate}
The set $\{x_i\}_{i \in I}$ is a \newword{quantum cluster} and its elements \newword{quantum cluster variables}. We refer to an element of $\cF$ of the form $M_S(X^u)$ for $u \in \N^I \subset \Z^I$ as a \newword{quantum cluster monomial}.

Given $k \in I_{ex}$, the \newword{mutation of $S$ in direction $k$} is the quantum seed 
$$\mu_k(S) = (\{\mu_k(x_i)\}_{i \in I}, \mu_k(L), \mu_k(\wt{B}))$$ 
defined as follows. Consider auxiliary matrices $E = (e_{ij})$, $F = (f_{ij})$ given by
\[
e_{ij} = \begin{cases} \delta_{ij} & j \neq k \\ -1 & i = k = j \\ \max(0, -b_{ik}) & i \neq k = j, \end{cases} \quad
f_{ij} = \begin{cases} \delta_{ij} & j \neq k \\ -1 & i = k = j \\ \max(0, b_{kj}) & i = k \neq j. \end{cases}
\]
With these in hand we set
$$\mu_k(L) := E^T L E, \quad \mu_k(\wt{B}) := E \wt{B} F.$$
These have the property that $\mu_k(L) \mu_k(\wt{B}) = L \wt{B}$. We then set
\[
\mu_k(x_i) = \begin{cases} M_S(X^{v_+}) + M_S(X^{v_-}) & i = k \\ x_i & i \neq k, \end{cases}
\]
where 
\[
v_+ = -e_k + \sum_{b_{ik} > 0} b_{ik} e_i, \quad v_- = -e_k - \sum_{b_{ik} < 0} b_{ik} e_i.
\]

The pair $(L, \wt{B})$ defines a canonical quantum seed in $\cF_q(L)$ with $x_i = X^{e_i}$. The \newword{quantum cluster algebra} $\cA_{(L, \wt{B})}$ is the $\Z[q^{\pm 1/2}]$-subalgebra of $\cF_q(L) \subset \cF$ generated by all quantum cluster variables of all seeds obtained by iterated mutation from this initial one. By the quantum cluster variables of $\cA_{(L, \wt{B})}$ we mean the set of quantum cluster variables from all such seeds, likewise for quantum cluster monomials. We also write $\cA^{loc}_{(L, \wt{B})}$ for the localization of $\cA_{(L, \wt{B})}$ at its frozen variables. The \newword{cluster algebra} $\cA_{\wt{B}}$ is the commutative ring is defined analogously but with all notions replaced by their specializations at $q=1$.

\begin{Example}\label{ex:n=2mutseq}
Let $I_{ex} = \{1,2\}$ and $I_{fr} = \{3,4\}$, and consider the following exchange matrix $\wt{B}$ on the left together with its mutation in direction 1.
\[
\begin{tikzpicture}[thick,>=\arrtip]
\node (original) at (0,0) {$\begin{bmatrix}
  0 & 2 \\ -2 & 0 \\ 1 & 0 \\ 0 & -1
  \end{bmatrix}$};
\node (mutant) at (4,0) {$\begin{bmatrix}
  0 & -2 \\ 2 & 0 \\ -1 & 2 \\ 0 & -1
  \end{bmatrix}$};
\draw[-implies,line width=1pt,double distance=4pt] (1.5,0) -- (2.5,0) node[midway,above=1.2mm] {$\mu_1$};
\end{tikzpicture}
\]
When its principal part is skew-symmetric, it is convenient to encode an exchange matrix as a quiver with $Q_0 = I$, no self-loops or oriented 2-cycles, no arrows between frozen vertices, and
$$b_{ij} = \#\{ \text{arrows } j \to i\} - \#\{ \text{arrows } i \to j\}.$$
In terms of quivers, the above mutation has the following form.
\[
\begin{tikzpicture}[thick,>=\arrtip]
\newcommand*{\Ddotsdist}{2}
\newcommand*{\shft}{1}
\newcommand*{\DrawDots}[1]{
  \fill ($(#1) + .25*(\Ddotsdist,0)$) circle (.03);
  \fill ($(#1) + .5*(\Ddotsdist,0)$) circle (.03);
  \fill ($(#1) + .75*(\Ddotsdist,0)$) circle (.03);
} 

\node [matrix] (Q1) at (6,0)
{
\coordinate [label={[label distance=0mm]above:{$1$}}] (1) at (4,2);
\coordinate [label={[label distance=1mm]above:{$2$}}] (2) at (2,2);
\coordinate [label={[label distance=1mm]below:{$3$}}] (3) at (2,0);
\coordinate [label={[label distance=1mm]below:{$4$}}] (4) at (0,0);

\foreach \v in {1,2,3,4} {\fill (\v) circle (.06);};
\foreach \s/\t in {2/3} {
  \draw [->,shorten <=1.7mm,shorten >=1.7mm] ($(\s)+(0.06,0)$) to ($(\t)+(0.06,0)$);
  \draw [->,shorten <=1.7mm,shorten >=1.7mm] ($(\s)-(0.06,0)$) to ($(\t)-(0.06,0)$);
};
\foreach \s/\t in {1/2} {
  \draw [->,shorten <=1.7mm,shorten >=1.7mm] ($(\s)+(0,0.06)$) to ($(\t)+(0,0.06)$);
  \draw [->,shorten <=1.7mm,shorten >=1.7mm] ($(\s)-(0,0.06)$) to ($(\t)-(0,0.06)$);
};
\foreach \s/\t in {4/2, 3/1} {
  \draw [->,shorten <=1.7mm,shorten >=1.7mm] ($(\s)$) to ($(\t)$);
};\\
};

\node [matrix] (Q1) at (0,0)
{
\coordinate [label={[label distance=0mm]above:{$1$}}] (1) at (4,2);
\coordinate [label={[label distance=1mm]above:{$2$}}] (2) at (2,2);
\coordinate [label={[label distance=1mm]below:{$3$}}] (3) at (2,0);
\coordinate [label={[label distance=1mm]below:{$4$}}] (4) at (0,0);

\foreach \v in {1,2,3,4} {\fill (\v) circle (.06);};
\foreach \s/\t in {2/1} {
  \draw [->,shorten <=1.7mm,shorten >=1.7mm] ($(\s)+(0,0.06)$) to ($(\t)+(0,0.06)$);
  \draw [->,shorten <=1.7mm,shorten >=1.7mm] ($(\s)-(0,0.06)$) to ($(\t)-(0,0.06)$);
};
\foreach \s/\t in {4/2, 1/3} {
  \draw [->,shorten <=1.7mm,shorten >=1.7mm] ($(\s)$) to ($(\t)$);
};\\
};

\draw[-implies,line width=1pt,double distance=4pt] (2,0) -- (4,0) node[midway,above=1.2mm] {$\mu_1$};
\end{tikzpicture}
\]
One can check that $\wt{B}$ and $\mu_1(\wt{B})$ are compatible with $L$ and $\mu_1(L)$ given by
\[
\begin{tikzpicture}[thick,>=\arrtip]
\node (original) at (-.7,0) {$\begin{bmatrix}
0 & -2 & -2 & 0 \\ 2 & 0 & 0 & 2 \\ 2 & 0 & 0 & 4 \\ 0 & -2 & -4 & 0 \end{bmatrix}$};
\node (mutant) at (4.5,0) {$\begin{bmatrix}
0 & 2 & 2 & 4 \\ -2 & 0 & 0 & 2 \\ -2 & 0 & 0 & 4 \\ -4 & -2 & -4 & 0 \end{bmatrix}.$};
\draw[-implies,line width=1pt,double distance=4pt] (1.5,0) -- (2.5,0) node[midway,above=1.2mm] {$\mu_1$};
\end{tikzpicture}
\]
In Section \ref{sec:K0asclusteralgebra} we identify the quantum cluster algebra $\cA_{(L,\wt{B})}$ of this example with the ring $K^{GL_2(\cO) \rtimes \Gm}(\Gr_{GL_2})$. The matrix $\mu_1(\wt{B})$ coincides with the matrix $\wt{B}(\mb{i})$ attached to the unipotent cell $N_+^{(s_0s_1)^2}$ of $LSL_2$ as in \cite{GLS13}, whose indexing convention is also illustrated here.
\end{Example}

\begin{Definition}
A $\Z$-graded, finite length, monoidal abelian category $\grl{\cC}$ is a \newword{monoidal categorification} of $\cA_{(L,\wt{B})}$ if there is a $\Z[q^{\pm 1/2}]$-algebra isomorphism $K_0(\grl{\cC}) \cong \cA_{(L,\wt{B})}$ which identifies all quantum cluster monomials with classes of simple objects. 
\end{Definition}

Similarly, if $\cC$ does not have a $\Z$-grading we say it is a monoidal categorification of $\cA_{\wt{B}}$ if there is an isomorphism $K_0(\cC) \cong \cA_{\wt{B}}$ which identifies all cluster monomials with classes of simple objects. Monoidal categorifications of $\cA^{loc}_{(L,\wt{B})}$ and $\cA^{loc}_{\wt{B}}$ are defined the same way. Note that the simple objects whose classes are (quantum) cluster variables are necessarily real and that two such objects in a common cluster necessarily ($q$-)commute.

\begin{Remark}
The notion of monoidal cluster categorification was introduced in \cite{HL10} and the quantum version studied in \cite{KKKO18}. We note that \cite{HL10} asks for the stronger condition that all cluster variables are classes of prime objects, but as in \cite{KKKO18} we do not (though see Remark \ref{rem:prime}). Conversely, as in \cite{HL10} but not \cite{KKKO18} we do not insist on any particular exact sequences in our definition. In the quantum case we also deviate from \cite{KKKO18} in omitting any discussion of self-dual or bar-invariant objects (though see Section \ref{sec:barinvolution}).
\end{Remark}

We have the following categorical counterparts of the algebraic notions reviewed above, again fixing a ring category $\cC$ with a system of renormalized $r$-matrices and separable triple products. 
We say that a collection $\{M_i\}_{i \in I}$ of pairwise commuting real simples in $\cC$ is a \newword{monoidal cluster} if the objects $\{\bigconv_{i \in I} M_i^{a_i}\}_{(a_i) \in \N^I}$ are pairwise nonisomorphic. This data together with an $I \times I_{ex}$ exchange matrix constitutes a \newword{monoidal seed}. The \newword{coefficient matrix} $L = (\lmat_{ij})$ of a monoidal cluster is defined by $\lmat_{ij} := -\La(M_i, M_j)$.

We say $(\{M_i\}_{i \neq k} \cup \{M'_k\}, \mu_k(\wt{B}))$ is a \newword{mutation of $(\{M_i\}_{i \in I}, \wt{B})$ in direction $k$} if $M_k \conv M'_k$ fits into an exact sequence
\begin{equation}\label{eq:seedseq}
0 \to \bigconv_{b_{ik} > 0} M_i^{b_{ik}} \to M_k \conv M'_k \to \bigconv_{b_{ik} < 0} M_i^{(-b_{ik})} \to 0.
\end{equation}
When clearer, we also write $\mu_k(M_i):= M_i$ for $i \neq k$ and $\mu_k(M_k) := M'_k$.

\begin{Remark}
It follows from properties of $r$-matrices that the existence of the exact sequence appearing in the definition of monoidal seed mutation is equivalent to the existence of the reverse sequence whose middle term is $M'_k \conv M_k$. A similar remark holds in the quantum case below, given suitable grading shifts.
\end{Remark}

\begin{Proposition}[c.f. Proposition 7.1.2 \cite{KKKO18}]\label{prop:KKKO6.2a}
Let $(\{M_i\}_{i \in I}, \wt{B})$ be a monoidal seed such that the $k$th column of $\wt{B}$ is nonzero, and let $M'_k$ be a simple object such that $M_k \conv M'_k$ fits into the exact sequence (\ref{eq:seedseq}). Then
\begin{enumerate}[(i)]
\item $(\{M_i\}_{i \neq k} \cup \{M'_k\}, \mu_k(\wt{B}))$ is a mutation of $(\{M_i\}_{i \in I}, \wt{B})$ in direction $k$ if $M'_k$ commutes with $M_i$ for $i \neq k$ and $K_0(\cC)$ is an integral domain, and
\item $M'_k$ commutes with $M_i$ for $i \neq k$ if the coefficient matrix $L$ of $\{M_i\}_{i \in I}$ is compatible with $\wt{B}$.
\end{enumerate}
\end{Proposition}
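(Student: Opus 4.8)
The plan is to follow the structure of \cite[Proposition 7.1.2]{KKKO18}, adapting it to the axiomatic setting of a ring category with a system of renormalized $r$-matrices and separable triple products. The statement has two parts, and I would prove them in the order (ii) then (i), since (i) uses the conclusion of (ii) as a hypothesis.

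For part (ii), the goal is to show that $M'_k$ commutes with each $M_i$, $i\neq k$, assuming the coefficient matrix $L=(\lmat_{ij})$ with $\lmat_{ij}=-\La(M_i,M_j)$ is compatible with $\wt B$. First I would observe that each $M_i$ ($i\neq k$) is real and commutes with $M_j$ for all $j\neq k$, so by Corollary \ref{cor:KKKO2.13} the two outer terms $A:=\bigconv_{b_{ik}>0}M_i^{b_{ik}}$ and $C:=\bigconv_{b_{ik}<0}M_i^{(-b_{ik})}$ of the exchange sequence (\ref{eq:seedseq}) are real simple. Now fix $i\neq k$ and apply $M_i\conv(-)$ and $(-)\conv M_i$ to the short exact sequence $0\to A\to M_k\conv M'_k\to C\to 0$. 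Using that $M_i$ commutes with $A$ and with $C$, together with Lemma \ref{lem:p3} and Proposition \ref{prop:KKKO2.20}, I would compute $\La(M_i, M_k\conv M'_k)$ and $\La(M_k\conv M'_k, M_i)$ in two ways: once via the additivity $\La(M_i,M_k\conv M'_k)=\La(M_i,M_k)+\La(M_i,M'_k)$ from Lemma \ref{lem:p3}, and once by relating it to $\La(M_i,A)=\sum_{b_{jk}>0}b_{jk}\La(M_i,M_j)$ and $\La(M_i,C)=-\sum_{b_{jk}<0}b_{jk}\La(M_i,M_j)$ via the subquotient inequalities of Lemma \ref{lem:subquotientsandLambda} and the fact (Proposition \ref{prop:KKKO3.1}) that the head and socle of a product appear exactly once in a composition series. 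The compatibility condition $L\wt B$ being diagonal on its principal part says precisely $\sum_j \lmat_{ij} b_{jk}=0$ for $i\neq k$, i.e. $\La(M_i,A)=\La(M_i,C)$, and likewise on the other side; feeding this into the two computations forces $\La(M_i,M'_k)+\La(M'_k,M_i)\le 0$, hence $=0$ by Definition \ref{def:rmat}(\ref{p4}), and then Proposition \ref{prop:KKKO2.11} gives that $M_i$ and $M'_k$ commute.

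For part (i), I would assume $M'_k$ commutes with every $M_i$, $i\neq k$, and that $K_0(\cC)$ is a domain, and show that $(\{M_i\}_{i\neq k}\cup\{M'_k\},\mu_k(\wt B))$ is again a monoidal seed, i.e. that $\{M_i\}_{i\neq k}\cup\{M'_k\}$ is a collection of pairwise commuting real simples with pairwise nonisomorphic monoidal monomials. Reality and simplicity of $M'_k$ is given; pairwise commutativity among the $M_i$ ($i\neq k$) is inherited from the original seed, and commutativity of $M'_k$ with the others is the standing hypothesis, so the only real content is the nondegeneracy (injectivity of the monomial map), which is where $K_0(\cC)$ being a domain enters. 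Passing to $K_0(\cC)$, the exchange sequence (\ref{eq:seedseq}) gives $[M_k][M'_k]=[A]+[C]$, which up to the grading data is exactly the quantum exchange relation $\mu_k(x_k)=M_S(X^{v_+})+M_S(X^{v_-})$; since $K_0(\cC)$ is a domain, the classes $[M_i]$ ($i\neq k$) and $[M'_k]$ still generate a quantum-torus-like subalgebra with no zero divisors, so the monomials $\bigconv M_i^{a_i}\conv (M'_k)^{a_k}$ have distinct classes in $K_0(\cC)$ and are therefore pairwise nonisomorphic. Finally $\mu_k(\wt B)$ is an exchange matrix because mutation preserves skew-symmetrizability, giving a monoidal seed.

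The main obstacle I anticipate is part (ii): carefully extracting the two independent computations of $\La(M_i, M_k\conv M'_k)$ — one through the head/socle filtration of the exchange sequence and one through Lemma \ref{lem:p3} — and checking that the compatibility identity $\sum_j\lmat_{ij}b_{jk}=0$ is exactly what is needed to make them agree, with all the grading bookkeeping (the shifts implicit in $q$-commutation and in (\ref{eq:seedseq})) tracked correctly. This is the step where the proof in \cite{KKKO18} is most delicate, and transcribing it faithfully into the present axiomatic language — relying only on Definition \ref{def:rmat}, separable triple products, and Propositions \ref{prop:KKKO2.11}, \ref{prop:KKKO2.20}, \ref{prop:KKKO3.1} — requires the most care.
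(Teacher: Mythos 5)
Your proposal has two genuine gaps, one in each part.

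\textbf{Part (i).} The hypothesis only states that $M'_k$ is \emph{simple}, not real; you write ``Reality and simplicity of $M'_k$ is given'', but reality is the nontrivial content of this part and is precisely what the two extra hypotheses (nonvanishing of the $k$th column of $\wt B$, and $K_0(\cC)$ being a domain) are for. The paper deduces reality by applying Proposition~\ref{prop:KKKO2.21} to (\ref{eq:seedseq}): the outer terms $A=\bigconv_{b_{ik}>0}M_i^{b_{ik}}$ and $C=\bigconv_{b_{ik}<0}M_i^{(-b_{ik})}$ are simple (by Corollary~\ref{cor:KKKO2.13}) and nonisomorphic (since $b_{\bullet k}\neq 0$), their products with $M'_k$ are simple (since $M'_k$ commutes with them and they are real), and then domainness of $K_0(\cC)$ gives $[A][M'_k]\neq[C][M'_k]$, hence $A\conv M'_k\not\cong C\conv M'_k$. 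Without this step you never verify the criterion of Proposition~\ref{prop:KKKO2.21}. The role you assign to domainness --- establishing injectivity of the monomial map --- is superfluous: once reality and pairwise commutation are in hand, algebraic independence of $[M'_k]$ already follows from the exchange relation (\ref{eq:seedseq}).

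\textbf{Part (ii).} Your strategy is pointed in the right direction but as written does not close. Lemma~\ref{lem:subquotientsandLambda} gives $\La(M_i,A)\le\La(M_i,M_k\conv M'_k)$ and $\La(M_i,C)\le\La(M_i,M_k\conv M'_k)$ --- \emph{lower} bounds on $\La(M_i,M_k\conv M'_k)$ --- and combined with $\La(A,M_i)\le\La(M_k\conv M'_k,M_i)$ these only yield $\La(M_i,M'_k)+\La(M'_k,M_i)\ge0$, which holds unconditionally by Definition~\ref{def:rmat}(\ref{p4}) and gives no information. To get the required \emph{equality} one must use Proposition~\ref{prop:KKKO2.20}, and here there is a real subtlety. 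Identifying $A\cong M'_k\hconv M_k$ and $C\cong M_k\hconv M'_k$, the first statement of Proposition~\ref{prop:KKKO2.20} applies directly to give $\La(M_i,C)=\La(M_i,M_k\conv M'_k)=\La(M_i,M_k)+\La(M_i,M'_k)$, because $M_i$ commutes with the first factor $M_k$ of $M_k\hconv M'_k$. But applying the same equality to $\La(M_i,M'_k\hconv M_k)$ would require $M_i$ to commute with the first factor $M'_k$ --- exactly what is being proved. The paper circumvents the circularity by first flipping $\La(M_i,A)=-\La(A,M_i)$ (legitimate since $M_i$ and $A$ commute), and then applying the \emph{second} statement of Proposition~\ref{prop:KKKO2.20}, $\La(N\hconv M,L)=\La(N\conv M,L)$, which asks only that $L=M_i$ commutes with the second factor $M=M_k$. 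This asymmetric bookkeeping is the content of equation~(\ref{eq:coefficient}), and your sketch, which treats the two sides symmetrically via subquotient inequalities, misses it.
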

\begin{proof}
For (i), we only need that $M'_k$ is real, the required algebraic independence of $[M'_k]$ following from (\ref{eq:seedseq}). Reality follows from Proposition \ref{prop:KKKO2.21} applied to (\ref{eq:seedseq}). Specifically, the outer terms of (\ref{eq:seedseq}) are nonisomorphic since the $k$th column of $\wt{B}$ is nonzero, and integrality of~$K_0(\cC)$ implies their products with $M'_k$ are nonisomorphic.

For (ii), the commutativity assumption together with Proposition \ref{prop:KKKO2.20} yield
\begin{align}\label{eq:coefficient}
  \sum_{j \in I} \La(M_i, M_j)b_{jk} &= \sum_{b_{jk} > 0} \La(M_i, M_j)b_{jk} + \sum_{b_{jk} < 0} \La(M_i, M_j)b_{jk}\nonumber\\
&= \La(M_i, M'_k \hconv M_k) - \La(M_i, M_k \hconv M'_k) \nonumber\\
  &= - \La(M'_k \hconv M_k, M_i) - \La(M_i, M_k \hconv M'_k)\\
&= - (\La(M'_k, M_i) + \La(M_i, M'_k)) - (\La(M_k, M_i) + \La(M_i, M_k)) \nonumber
\end{align}
Thus by Proposition \ref{prop:KKKO2.11} compatibility together with the fact that $M_i$ and $M_k$ commute implies that $M_i$ and $M'_k$ commute.
\end{proof}

\begin{Proposition}[c.f. Proposition 7.1.2 \cite{KKKO18}]\label{prop:KKKO6.2b}
Let $(\{M_i\}_{i \in I}, \wt{B})$ be a monoidal seed admitting a mutation in direction $k$ for each $k \in I_{ex}$. Then the coefficient matrix of $\{M_i\}_{i \in I}$ is compatible with $\wt{B}$, and  the coefficient matrix of $\{M_i\}_{i \neq k} \cup \{M'_k\}$ is compatible with $\mu_k(\wt{B})$.
\end{Proposition}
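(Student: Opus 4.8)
\emph{Proof proposal.} The plan is to verify the two conditions in the definition of a compatible pair directly in terms of the integers $\Lambda(-,-)$, following the computation in the proof of Proposition \ref{prop:KKKO6.2a}(ii). Fix $k \in I_{ex}$ together with a mutation $(\{M_i\}_{i \neq k} \cup \{M'_k\}, \mu_k(\wt{B}))$ in direction $k$, and write $X_k := \bigconv_{b_{ik} > 0} M_i^{b_{ik}}$ and $Y_k := \bigconv_{b_{ik} < 0} M_i^{-b_{ik}}$ for the outer terms of the exact sequence (\ref{eq:seedseq}), so that $0 \to X_k \to M_k \conv M'_k \to Y_k \to 0$. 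By Corollary \ref{cor:KKKO2.13} the objects $X_k$ and $Y_k$ are real and simple, and they are nonisomorphic (as in the proof of Proposition \ref{prop:KKKO6.2a}(i), since the $k$th column of $\wt{B}$ is nonzero), so $M_k \conv M'_k$ has length two and is not simple. Since $M_k$ is real, $\rmat{M_k, M_k}$ is a multiple of the identity (Corollary \ref{cor:realiff}), so Proposition \ref{prop:KKKO3.2} identifies the head of $M_k \conv M'_k$ with $Y_k$ and its socle with $X_k$, the latter being also the head $M'_k \hconv M_k$ of $M'_k \conv M_k$.

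First one checks that $L = (\lambda_{ij}) = (-\Lambda(M_i, M_j))$ is skew-symmetric: the $M_i$ pairwise commute, so $\Lambda(M_i, M_j) + \Lambda(M_j, M_i) = 0$ by Proposition \ref{prop:KKKO2.11}, and each entry is finite since $\rmat{M_i, M_j} \neq 0$. For the product, starting from $(L\wt{B})_{ik} = -\sum_j \Lambda(M_i, M_j) b_{jk}$, grouping the sum by the sign of $b_{jk}$ and using Lemma \ref{lem:p3} gives $(L\wt{B})_{ik} = \Lambda(M_i, Y_k) - \Lambda(M_i, X_k)$. When $i \neq k$, the objects $M_i$ and $M'_k$ both lie in the monoidal cluster $\{M_j\}_{j \neq k} \cup \{M'_k\}$, so $M_i$ commutes with both $M_k$ and $M'_k$; applying Proposition \ref{prop:KKKO2.20} to $Y_k = M_k \hconv M'_k$ and $X_k = M'_k \hconv M_k$, together with Lemma \ref{lem:p3}, then shows that both $\Lambda(M_i, Y_k)$ and $\Lambda(M_i, X_k)$ equal $\Lambda(M_i, M_k) + \Lambda(M_i, M'_k)$, so $(L\wt{B})_{ik} = 0$. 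When $i = k$, Proposition \ref{prop:KKKO2.20} (with $M_k$ in place of the real object, commuting with itself) gives $\Lambda(M_k, Y_k) = \Lambda(M_k, M_k \conv M'_k) = \Lambda(M_k, M'_k)$, while $X_k$, being a simple subquotient of $M_k \conv M'_k$ distinct from its head, satisfies $\Lambda(M_k, X_k) < \Lambda(M_k, M'_k)$ by Proposition \ref{prop:KKKO3.1}; hence $(L\wt{B})_{kk} = \Lambda(M_k, M'_k) - \Lambda(M_k, X_k)$ is a positive integer. This shows $(L, \wt{B})$ is a compatible pair.

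For the second assertion it suffices to show that the coefficient matrix $L' = (\lambda'_{ij})$ of $\{M_j\}_{j \neq k} \cup \{M'_k\}$ coincides with the matrix mutation $\mu_k(L) = E^T L E$: indeed $\mu_k(L)$ is skew-symmetric because $L$ is, and $\mu_k(L)\,\mu_k(\wt{B}) = L\wt{B}$ has just been shown to vanish off the diagonal of its principal part and to be a positive integer on it, so $(\mu_k(L), \mu_k(\wt{B}))$ is then a compatible pair. For $i, j \neq k$ the identity $(\mu_k(L))_{ij} = \lambda_{ij} = \lambda'_{ij}$ is immediate from the shape of $E$ and $\mu_k(M_i) = M_i$. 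For $j \neq k$ one computes $(\mu_k(L))_{kj} = -\lambda_{kj} + \sum_i \max(0, -b_{ik}) \lambda_{ij} = \Lambda(M_k, M_j) - \Lambda(Y_k, M_j)$ by Lemma \ref{lem:p3}; and since $M_j$ commutes with $M'_k$ (as $j \neq k$), Proposition \ref{prop:KKKO2.20} applied to $Y_k = M_k \hconv M'_k$ yields $\Lambda(Y_k, M_j) = \Lambda(M_k \conv M'_k, M_j) = \Lambda(M_k, M_j) + \Lambda(M'_k, M_j)$, whence $(\mu_k(L))_{kj} = -\Lambda(M'_k, M_j) = \lambda'_{kj}$. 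The remaining entries then follow from skew-symmetry of $\mu_k(L)$ and $L'$, together with $(\mu_k(L))_{kk} = \sum_{p,q} E_{pk}\lambda_{pq}E_{qk} = 0 = \lambda'_{kk}$.

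The only subtle point is the case analysis in the second paragraph. One must correctly match $X_k$ and $Y_k$ with the socle and head of $M_k \conv M'_k$ and track which factor each object multiplies when invoking Proposition \ref{prop:KKKO2.20}; crucially, the commutation of $M_i$ with $M'_k$ that forces $(L\wt{B})_{ik} = 0$ is available exactly when $i \neq k$, so the diagonal entries must be handled separately, their positivity coming instead from the fact (Proposition \ref{prop:KKKO3.1}) that the socle of $M_k \conv M'_k$ is not its head. Everything else is formal manipulation with the axioms of renormalized $r$-matrices and the definition of matrix mutation.
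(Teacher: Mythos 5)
Your proof is correct and follows essentially the same route as the paper's, which is written in two sentences and simply points back to the computation~(\ref{eq:coefficient}) in the proof of Proposition~\ref{prop:KKKO6.2a}; you have unpacked that reference in full, handling the off-diagonal entries of $L\wt{B}$ via the commutation of $M_i$ with $M'_k$ and the diagonal entries separately. One small stylistic difference: you obtain positivity of $(L\wt{B})_{kk}$ from Proposition~\ref{prop:KKKO3.1} (the socle $X_k$ is a subquotient other than the head, so $\Lambda(M_k, X_k) < \Lambda(M_k, M'_k)$), whereas pushing~(\ref{eq:coefficient}) all the way through for $i=k$ gives $(L\wt{B})_{kk} = \Lambda(M_k, M'_k) + \Lambda(M'_k, M_k)$, whose positivity follows from Proposition~\ref{prop:KKKO2.11} since $M_k \conv M'_k$ is not simple; both arguments are valid and equivalent.
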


\begin{proof}
The first claim follows from (\ref{eq:coefficient}), reversing the role of hypothesis and conclusion. From Proposition \ref{prop:KKKO2.20} and inspection of (\ref{eq:seedseq}) it follows that the coefficient matrix of $\{M_i\}_{i \neq k} \cup \{M'_k\}$ is the mutation of that of $\{M_i\}_{i \in I}$, so the second claim follows from the first.
\end{proof}

Suppose $\grl{\cC}$ is a graded realization of $\cC$ compatible with its $r$-matrices, and let $\{\grl{M}_i\}_{i \in I}$ be a collection of pairwise $q$-commuting real simple objects in $\grl{\cC}$. Given a sequence of these objects (possibly with repetitions) we define the normalized product
$$  \grl{M}_{i_1} \odot \cdots \odot \grl{M}_{i_n} := (\grl{M}_{i_1} \conv \cdots \conv \grl{M}_{i_n})\{ -\frac12 \sum_{j < k} \Lambda(M_{i_j}, M_{i_k})\}. $$
This is independent of the ordering of the $\grl{M}_{i_j}$, hence given $(a_i) \in \N^I$ we have a well-defined object $\sodot_{i \in I} \grl{M}_i^{a_i}$.
  We say $\{\grl{M}_i\}_{i \in I}$ is a \newword{quantum monoidal cluster} if the objects $\{\sodot_{i \in I} \grl{M}_i^{a_i}\}_{(a_i) \in \N^I}$ are pairwise nonisomorphic, even up to grading shifts (equivalently the map $\cR_q(L) \to K_0(\grl{\cC})$ generated by the classes $\{[\grl{M}_i]\}_{i \in I}$ is injective). The coefficient matrix $L = (\lmat_{ij})$ of $\{\grl{M}_i\}_{i \in I}$ is then characterized by $\grl{M}_i \conv \grl{M}_j \{\lmat_{ij}\} \cong \grl{M}_j \conv \grl{M}_i$.

The additional data of an exchange matrix compatible with $L$ defines a \newword{quantum monoidal seed} $(\{\grl{M_i}\}_{i \in I}, \wt{B})$. We say $(\{\grl{M}_i\}_{i \neq k} \cup \{\grl{M}'_k\}, \mu_k(\wt{B}))$ is a mutation of $(\{\grl{M}_i\}_{i \in I}, \wt{B})$ in direction $k$ if $\grl{M}_k \conv \grl{M}'_k$ fits into an exact sequence
\begin{equation}\label{eq:qseedseq}
0 \to (\sodot_{b_{ik} > 0} \grl{M}_i^{b_{ik}})\{-1\} \to (\grl{M}_k \conv \grl{M}'_k)\{-\frac12 \Lambda(M_k, M'_k)\} \to (\sodot_{b_{ik} < 0} \grl{M}_i^{(-b_{ik})}) \to 0.
\end{equation}
Note that implicitly we also ask for $\Lambda(M_k, M'_k)$ to be integral (rather than half-integral) here. 

If $\cC$ is rigid then mutations of monoidal seeds are unique: it follows from (\ref{eq:seedseq}) that $M'_k$ is determined by the two identities
$$ M'_k \cong \left(\bigconv_{b_{ik} < 0} M_i^{(-b_{ik})} \right) \hconv M_k^R \quad \text{and} \quad
M'_k \cong M_k^L \hconv \left(\bigconv_{b_{ik} > 0} M_i^{b_{ik}} \right). $$
However, for an arbitrary monoidal seed one cannot generally expect that the right-hand sides of these formulas are isomorphic and fit into the needed sequence to constitute a mutation. In particular, a monoidal seed need not admit arbitrary sequences of mutations.

Nonetheless, the following result gives two criteria for when the existence of a single initial mutation in every direction guarantees the existence of further sequences of mutations.

\begin{Theorem}[c.f. Theorem 7.1.3 \cite{KKKO18}]\label{thm:K3Omain}
Let $(\{M_i\}_{i \in I}, \wt{B})$ be a monoidal seed that admits a mutation in direction $k$ for all $k \in I_{ex}$, and suppose $K_0(\cC)$ is an integral domain. Then $(\{M_i\}_{i \in I}, \wt{B})$ admits a sequence $ \mu_{i_n} \circ \cdots \circ \mu_{i_1} $ of mutations if either of the following holds.
\begin{enumerate}[(i)]
\item For all $1 \leq j \leq n-1$, $i_j$ is not a vertex of an oriented 3-cycle of the quiver associated to $\mu_{i_{j-1}} \circ \cdots \circ \mu_{i_1}(\wt{B})$.
\item There is a graded realization $\grl{\cC}$ with the following properties. 
\begin{itemize}
\item $(\{M_i\}_{i \in I}, \wt{B})$ lifts to a quantum monoidal seed $(\{\grl{M_i}\}_{i \in I}, \wt{B})$ which admits a mutation in direction $k$ for all $k \in I_{ex}$. 
\item The induced embedding $\cR_q(L) \into K_0(\grl{\cC})$ extends to an embedding 
$$\Q(q^{1/2}) \otimes_{\Z[q^{\pm 1/2}]}\cA_{(L, \wt{B})} \into \Q(q^{1/2}) \otimes_{\Z[q^{\pm 1/2}]} K_0(\grl{\cC}).$$
\end{itemize}
In this case, $(\{\grl{M_i}\}_{i \in I}, \wt{B})$ admits the sequence $ \mu_{i_n} \circ \cdots \circ \mu_{i_1} $ of mutations as a quantum monoidal seed.
\end{enumerate}
\end{Theorem}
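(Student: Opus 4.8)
The plan is to adapt the inductive argument of \cite[Section~7]{KKKO18}, which applies in our setting because Definition~\ref{def:rmat} was designed so that the structural results of Section~\ref{sec:rmatimplies} remain valid, and because Propositions~\ref{prop:KKKO6.2a} and~\ref{prop:KKKO6.2b} already reduce the assertion ``$\seed$ admits a mutation in direction $k$'' to the bare existence of a \emph{simple} object $M'_k$ sitting in the exchange sequence~(\ref{eq:seedseq}) (or its graded refinement~(\ref{eq:qseedseq})): compatibility of the coefficient matrix is then automatic from Proposition~\ref{prop:KKKO6.2b}, commutativity of $M'_k$ with the remaining cluster objects follows from Proposition~\ref{prop:KKKO6.2a}(ii), and integrality of $K_0(\cC)$ supplies the algebraic independence used in Proposition~\ref{prop:KKKO6.2a}(i). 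Writing $\seed^{(0)} := (\{M_i\}_{i \in I}, \wt{B})$ and $\seed^{(j)} := \mu_{i_j} \circ \cdots \circ \mu_{i_1}(\seed^{(0)})$, I would prove by induction on $j$ the statement $P(j)$: \emph{$\seed^{(j)}$ is a well-defined monoidal seed admitting a mutation in direction $k$ for every $k \in I_{ex}$} --- and in case (ii), a well-defined quantum monoidal seed with the same property whose cluster classes are the corresponding quantum cluster variables of $\cA_{(L,\wt{B})}$. The base case $P(0)$ is the hypothesis, and $P(n-1)$ immediately yields the theorem, since the final mutation $\mu_{i_n}$ only uses that $\seed^{(n-1)}$ admits a mutation in direction $i_n$; the $3$-cycle condition in (i) is imposed exactly for $1 \le j \le n-1$, i.e.\ precisely for the steps at which we must \emph{propagate} ``admits a mutation in every direction.''

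For the inductive step, fix $j$ and $k \in I_{ex}$ and seek the required $M'_k$ for $\seed^{(j)} = \mu_{i_j}(\seed^{(j-1)})$, distinguishing three cases by the relation between $k$ and $i_j$ in $\wt{B}^{(j-1)}$. If $k = i_j$, one takes $M'_k := \mu_{i_j}(M^{(j-1)}_{i_j})$, using that mutation is an involution on both the exchange matrix and the exact sequence~(\ref{eq:seedseq}); the reversed sequence needed for the involution is furnished by the remark following~(\ref{eq:seedseq}), which is a formal consequence of the $r$-matrix axioms. If $k \ne i_j$ but $b^{(j-1)}_{k,i_j} = 0$, then mutating at $i_j$ alters neither the $k$-th column of the exchange matrix, nor the object $M^{(j-1)}_k$, nor any object appearing in its exchange sequence, so the mutation of $\seed^{(j-1)}$ in direction $k$ transports verbatim; this is the commutativity of mutations at non-adjacent vertices. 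The remaining case $b^{(j-1)}_{k,i_j} \ne 0$ is the crux, and is where hypotheses (i) and (ii) are used.

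In case (i), we are mutating at $i_j$, which by assumption is not a vertex of an oriented $3$-cycle of the quiver of $\wt{B}^{(j-1)}$. This condition is exactly what ensures that passing from $\wt{B}^{(j-1)}$ to $\mu_{i_j}(\wt{B}^{(j-1)})$ creates no $2$-cycles at the neighbours of $i_j$, and hence requires no $2$-cycle cancellation --- the categorical shadow of such a cancellation being a factorization of objects that need not exist in a general monoidal category. Accordingly, one builds the new exchange sequence for $\seed^{(j)}$ at $k$ by splicing the exchange sequences of $\seed^{(j-1)}$ at $k$ and at $i_j$, gluing them along the relevant $r$-matrix morphisms and using separable triple products (Definition~\ref{prop:midfactor}) to control composition factors so that the middle term is recognized as $M^{(j-1)}_k \conv M'_k$ with $M'_k$ simple; the new coefficient matrix is then compatible with $\mu_{i_j}(\wt{B}^{(j-1)})$ by Proposition~\ref{prop:KKKO6.2b}. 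In case (ii), one instead transports the algebraic mutation: $\cA_{(L,\wt{B})}$ admits all iterated mutations by construction, so the seed of $\cA_{(L,\wt{B})}$ obtained by $\mu_{i_j}\circ\cdots\circ\mu_{i_1}$ has a well-defined new quantum cluster variable $x'$, whose image under $\Q(q^{1/2}) \otimes_{\Z[q^{\pm 1/2}]} \cA_{(L,\wt{B})} \into \Q(q^{1/2}) \otimes_{\Z[q^{\pm 1/2}]} K_0(\grl{\cC})$ satisfies the exchange relation against the classes $[\grl M^{(j-1)}_i]$, where $\grl A := \sodot_{b^{(j-1)}_{i,i_j} > 0}(\grl M^{(j-1)}_i)^{b^{(j-1)}_{i,i_j}}$ and $\grl C := \sodot_{b^{(j-1)}_{i,i_j} < 0}(\grl M^{(j-1)}_i)^{(-b^{(j-1)}_{i,i_j})}$ are real simple by Corollary~\ref{cor:KKKO2.13}. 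Since $x'$ $q$-commutes with each $x^{(j-1)}_i$, $i \ne i_j$ --- that is, with the class of each real simple $\grl M^{(j-1)}_i$ --- Theorem~\ref{thm:KKKO3.3} forces every simple constituent of $x'$ to $q$-commute with all of these objects; combining this with the exchange relation and feeding it through Propositions~\ref{prop:KKKO2.21}, \ref{prop:KKKO3.1} and Lemma~\ref{lem:gradedKKKO2.23}, one identifies $x'$ with $q^{c}[\grl M'_k]$ for a real simple $\grl M'_k$ fitting~(\ref{eq:qseedseq}). In both cases $\seed^{(j)}$ again admits a mutation in every direction --- in case (i) because its coefficient matrix is compatible with the mutated exchange matrix, and in case (ii) because its cluster classes are again quantum cluster variables of $\cA_{(L,\wt{B})}$, which admit algebraic mutations --- so the induction closes.

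The main obstacle is the case $b^{(j-1)}_{k,i_j} \ne 0$: producing $M'_k$ (resp.\ $\grl M'_k$) as a genuine \emph{simple} object and verifying it sits in the required exact sequence. In case (i) this is the homological bookkeeping in the splicing construction, where the absence of an oriented $3$-cycle through $i_j$ is precisely what excludes the correction terms that would otherwise prevent the middle term from being $M^{(j-1)}_k \conv M'_k$ with $M'_k$ simple. In case (ii) the difficulty is instead the reality and simplicity of $\grl M'_k$: these are invisible at the level of $K_0$, and can only be obtained by feeding the algebraically guaranteed exchange relation back through the $r$-matrix formalism of Section~\ref{sec:rmatimplies} together with Theorem~\ref{thm:KKKO3.3}.
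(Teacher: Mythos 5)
Your proposal follows essentially the same approach as the paper: reduce to the length-two base case, propagate ``admits a mutation in every direction'' via the three sub-cases depending on the relation between $k$ and $i_j$, and isolate the crux in the case where $k$ and $i_j$ are adjacent. In case (ii) you correctly identify the mechanism: transport the algebraic exchange relation from $\cA_{(L,\wt{B})}$ through the embedding, use Theorem~\ref{thm:KKKO3.3} to force every simple constituent of the image of the new cluster variable to $q$-commute with the old cluster members, and conclude it is itself a real simple fitting the graded exchange sequence.

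The one place where your framing diverges slightly from the paper's is the role of the $3$-cycle condition in case (i). You describe it as preventing $2$-cycle cancellation, whose ``categorical shadow'' is a factorization that may not exist. The paper instead shows that after convolving the exchange sequence at $y$ by $(M'_x)^{b_{xy}}$, applying $r$-matrices, and substituting $(M'_x \hconv M_x)^{b_{xy}} \cong C^{b_{xy}} \cong A \conv B$, one arrives at an exact sequence~(\ref{eq:imseq}) whose outer terms each carry an extra convolution factor $A$. The indices contributing to $A$ are precisely those $i$ with $b_{ix}>0$ and $b_{iy}<0$, i.e.\ those forming an oriented $3$-cycle with $x$ and $y$; the $3$-cycle hypothesis on $i_j=x$ says $A=1$, so~(\ref{eq:imseq}) is literally~(\ref{eq:goal}) with $M''_y = (M'_x)^{b_{xy}} \hconv M'_y$. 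Your $2$-cycle explanation is an equivalent combinatorial reading of the same phenomenon --- the arrows from $i$ to $y$ created by the path $i \to x \to y$ during mutation at $x$ are exactly what would need to cancel, and what the paper bundles into $A$ --- so it is not wrong, just less directly tied to the object $A$ that actually appears in the argument. Likewise ``splicing the exchange sequences at $k$ and $i_j$'' is a reasonable informal gloss, though the mechanism is better described as commuting the sequence at $y$ past $(M'_x)^{b_{xy}}$ via the $r$-matrix naturality diagram of Definition~\ref{def:rmat}(\ref{p4})--(\ref{p5}) and then applying Lemma~\ref{lem:KKKO2.23} to identify the head $(M'_x)^{b_{xy}} \hconv M_x^{b_{xy}}$. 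With those points made precise your sketch expands into the paper's proof.
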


\begin{Corollary}
Let $(\{\grl{M_i}\}_{i \in I}, \wt{B})$ be a quantum monoidal seed in $\grl{\cC}$ which admits a mutation in direction $k$ for all $k \in I_{ex}$. If the induced embedding $\cR_q(L) \into K_0(\grl{\cC})$ extends to an isomorphism $\cA_{(L, \wt{B})} \cong K_0(\grl{\cC})$ (resp. $\cA^{loc}_{(L, \wt{B})} \cong K_0(\grl{\cC})$) then $\grl{\cC}$ is a monoidal categorification of $\cA_{(L, \wt{B})}$ (resp. $\cA^{loc}_{(L, \wt{B})}$).
\end{Corollary}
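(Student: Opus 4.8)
The statement is a formal consequence of Theorem~\ref{thm:K3Omain}(ii) together with Corollary~\ref{cor:KKKO2.13}, and the plan has three steps. Throughout write $\cA$ for whichever of $\cA_{(L,\wt B)}$ or $\cA^{loc}_{(L,\wt B)}$ is in question, and let $\varphi\colon K_0(\grl\cC)\xrightarrow{\sim}\cA$ be the given isomorphism, which by hypothesis restricts on $\cR_q(L)$ to the embedding $\cR_q(L)\into K_0(\grl\cC)$ determined by $[\grl M_i]\mapsto X^{e_i}$. First I would check that the hypotheses of Theorem~\ref{thm:K3Omain}(ii) are in force. The quantum monoidal seed $(\{\grl M_i\}_{i\in I},\wt B)$ descends to a monoidal seed in $\cC$ which still admits a mutation in every direction, since a mutation exact sequence of the form~(\ref{eq:qseedseq}) in $\grl\cC$ becomes one of the form~(\ref{eq:seedseq}) in $\cC$ after forgetting the grading, and since ``non-isomorphic up to grading shift in $\grl\cC$'' is the same as ``non-isomorphic in $\cC$.'' The integrality hypotheses needed hold because $K_0(\grl\cC)\cong\cA$ embeds in the skew field $\cF_q(L)$, hence is a domain (and correspondingly for $K_0(\cC)$, the $q^{1/2}\mapsto1$ specialization). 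Finally, composing $\cA_{(L,\wt B)}\hookrightarrow\cA\cong K_0(\grl\cC)$ and tensoring with $\Q(q^{1/2})$ yields the embedding $\Q(q^{1/2})\otimes_{\Z[q^{\pm1/2}]}\cA_{(L,\wt B)}\into\Q(q^{1/2})\otimes_{\Z[q^{\pm1/2}]}K_0(\grl\cC)$ extending $\cR_q(L)\into K_0(\grl\cC)$, which is precisely the second bullet of Theorem~\ref{thm:K3Omain}(ii). Thus that theorem applies, and for every sequence $\mathbf k=(k_1,\dots,k_m)$ of exchangeable indices the seed $(\{\grl M_i\}_{i\in I},\wt B)$ admits $\mu_{k_m}\circ\cdots\circ\mu_{k_1}$ as a quantum monoidal seed; write $\{\grl N^{\mathbf k}_i\}_{i\in I}$ for the resulting quantum monoidal cluster.

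Next I would show that $\varphi$ sends the class of each mutated object to the corresponding algebraic quantum cluster variable. Write $\{x^{\mathbf k}_i\}_{i\in I}$ for the quantum cluster obtained by applying $\mu_{k_m}\circ\cdots\circ\mu_{k_1}$ to the initial seed of $\cA_{(L,\wt B)}$. I claim $\varphi([\grl N^{\mathbf k}_i])=x^{\mathbf k}_i$ for all $i$, by induction on $m$. The case $m=0$ is the normalization of $\varphi$. For the inductive step one takes classes in $K_0(\grl\cC)$ in the defining exact sequence~(\ref{eq:qseedseq}) for the last mutation $\mu_{k_m}$; because the grading shifts in~(\ref{eq:qseedseq}) and in the normalized product $\sodot$ are arranged to match the $q$-powers appearing in the mutation formula for $x^{\mathbf k}_{k_m}$, applying $\varphi$ and the inductive hypothesis turns this identity into exactly the exchange relation defining $\mu_{k_m}(x^{(k_1,\dots,k_{m-1})}_{k_m})$. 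Since $\varphi([\grl N^{(k_1,\dots,k_{m-1})}_{k_m}])$ is not a zero divisor in the domain $K_0(\grl\cC)$, this determines $\varphi([\grl N^{\mathbf k}_{k_m}])=x^{\mathbf k}_{k_m}$, while the remaining cluster variables and objects are unchanged.

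Finally I would conclude that all quantum cluster monomials are classes of simple objects. An arbitrary quantum cluster monomial of $\cA_{(L,\wt B)}$ has the form $M_S(X^u)$ with $u\in\N^I$ and $S$ a seed, say $S=\mu_{k_m}\circ\cdots\circ\mu_{k_1}$ applied to the initial seed. By the previous step and the definition of the normalized product, $\varphi^{-1}$ carries $M_S(X^u)$ to $[\sodot_{i\in I}(\grl N^{\mathbf k}_i)^{u_i}]$. The $\{\grl N^{\mathbf k}_i\}_{i\in I}$ are pairwise $q$-commuting real simple objects, so their images in $\cC$ are pairwise commuting real simple objects, whence Corollary~\ref{cor:KKKO2.13} gives that their convolution is simple; therefore so is $\sodot_{i\in I}(\grl N^{\mathbf k}_i)^{u_i}$, which differs from that convolution only by a grading shift. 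Hence $\varphi$ identifies every quantum cluster monomial with the class of a simple object, which is exactly the statement that $\grl\cC$ is a monoidal categorification of $\cA$. The argument is identical in the localized case, as the quantum cluster monomials of $\cA^{loc}_{(L,\wt B)}$ form the same set (all with exponent vectors in $\N^I$).

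The entire conceptual content sits in Theorem~\ref{thm:K3Omain}(ii); I expect the only real labor — and the one point requiring genuine care — to be the grading bookkeeping of the second and third steps: namely confirming that the normalizations built into~(\ref{eq:qseedseq}) and into $\sodot$ make the class of a normalized product of a quantum monoidal cluster equal, on the nose rather than merely up to a power of $q^{1/2}$, to the corresponding quantum cluster monomial, and that passing to $K_0$ in~(\ref{eq:qseedseq}) reproduces the exchange relation exactly.
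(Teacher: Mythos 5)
Your proposal is correct and follows exactly the argument the paper leaves implicit: the Corollary is stated immediately after Theorem~\ref{thm:K3Omain} with no separate proof because the deduction is considered immediate. You have correctly identified the three ingredients — verifying the hypotheses of Theorem~\ref{thm:K3Omain}(ii), matching classes of mutated objects with quantum cluster variables via the exchange sequences~(\ref{eq:qseedseq}), and applying Corollary~\ref{cor:KKKO2.13} to get simplicity of normalized cluster monomials — and your handling of the domain hypothesis on $K_0(\cC)$ via the $q^{1/2}\mapsto 1$ specialization is a sound way to discharge that condition. The only soft spot you yourself flag, namely that the grading normalizations in $\sodot$ and in~(\ref{eq:qseedseq}) reproduce the exchange relations on the nose in $K_0(\grl{\cC})$, is exactly the kind of bookkeeping the paper also elides; you would want to spell it out if this were a standalone writeup, but it is not a gap in the argument.
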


\begin{Remark}
Only case (ii) of Theorem \ref{thm:K3Omain} appears explicitly in \cite{KKKO18}, but case (i) is implicit in its proof. We highlight case (i) because we find it to be conceptually significant in relation to potentials, as discussed in the introduction. We have also removed the hypothesis that the objects in (ii) are self-dual, as the bar involution is a somewhat less elementary structure in geometric settings than algebraic ones (see Section \ref{sec:barinvolution}) and the axiomatics we consider are kept more flexible if we do not assume its presence. This does, however, add a few extra steps to the proof given in \cite{KKKO18}, which is the main reason we reproduce the argument here.
\end{Remark}

\begin{proof}[Proof of Theorem \ref{thm:K3Omain}]
The claim is proved by induction. Consider first the base case of a sequence of length 2, and let $x:= i_1$, $y:=i_2$. For now we do not assume either of the hypotheses in (i) or (ii), since the beginning of the argument is the same in either case. We write $M'_x$, $M'_y$ for the objects which replace $M_x$, $M_y$ when mutating $(\{M_i\}_{i \in I}, \wt{B})$ in directions $x$ and $y$, respectively.

To show that $(\{M_i\}_{i \neq x} \cup \{M'_x\}, \mu_x(\wt{B}))$ admits a mutation in direction $y$ we must produce a real simple object $M''_y$ which commutes with $M'_x$ and $M_i$ for $i \notin \{x,y\}$, and which fits into the needed exact sequence. 
We assume that $b_{xy} >0$, the $b_{xy} <0$ case being similar and the $b_{xy} = 0$ case being trivial. From now one we let $b'_{ij} = \mu_x(\wt{B})_{ij}$.
We introduce simple objects
\begin{gather*}
A = \left(\bigconv_{\substack{b'_{iy}\leq 0\\ b_{ix}>0}} M_i^{b_{ix}b_{iy}}\right) \conv \left( \bigconv_{\substack{b'_{iy}>0, \:b_{iy}<0\\ b_{ix}>0}} M_i^{ -b_{iy}}\right), \quad B = \left(\bigconv_{\substack{b_{iy}>0\\ b_{ix}>0}} M_i^{b_{ix}b_{iy}}\right) \conv \left( \bigconv_{\substack{b'_{iy}>0, \:b_{iy}<0\\ b_{ix}>0}} M_i^{ b'_{iy}}\right), \\
C = \bigconv_{b_{ix}>0}M_i^{b_{ix}}, \quad
L = (M'_x)^{b_{xy}}, \quad P = \bigconv_{\substack{b_{iy} > 0\\ i \neq x}} M_i^{ b_{iy}}, \quad Q = \bigconv_{\substack{b'_{iy} < 0\\ i \neq x}}M_i^{(-b_{iy})}, \quad S = \bigconv_{\substack{b_{ix} < 0\\ i \neq y}}M_i^{(-b_{ix})},
\end{gather*}
letting us write the needed exact sequence as
\begin{equation}\label{eq:goal}
0 \to P\conv B \to M_y \conv M''_y \to (M'_x)^{b_{xy}} \conv Q \to 0,
\end{equation}
and the sequences we have by hypothesis as
\begin{gather}\label{eq:yexseq}
0 \to M_x^{b_{xy}} \conv P \to M_y \conv M'_y \to Q \conv A \to 0,\\
0 \to C \to M_x \conv M'_x \to M_y^{b_{xy}} \conv S \to 0,
\end{gather}
We will produce (\ref{eq:goal}) from (\ref{eq:yexseq}) by commuting each term in the latter sequence past~$(M'_x)^{b_{xy}}$.

We first observe that
\[
\La(M'_x, M_x^{b_{xy}} \conv P)  = \La(M'_x, M_y \conv M'_y) = \La(M'_x, Q \conv A).
\]
The equality $\La(M'_x, Q \conv A) = \La(M'_x, M_y\conv M'_y)$ follows from Proposition \ref{prop:KKKO2.20} since $M_y$ and~$M'_x$ commute. On the other hand, one can verify using  (\ref{eq:coefficient}) and Proposition~\ref{prop:KKKO2.20} that  $\Lambda(M'_x, M_x^{b_{xy}} \conv P) = \Lambda(M'_x, Q \conv A).$

It follows then from Definition \ref{def:rmat}(\ref{p4}) that the following diagram is commutative.
\begin{eqnarray*}
&&\begin{array}{c}\xymatrix{
0 \ar[r] & (M'_x)^{b_{xy}} \conv (M_x^{b_{xy}} \conv P) \ar[r] \ar[d]_{\rmat{(M'_x)^{b_{xy}},(M_x^{b_{xy}} \conv P)}} &  (M'_x)^{b_{xy}} \conv (M_y \conv M_y')
 \ar[d]_{\rmat{(M'_x)^{b_{xy}},(M_y \conv M_y')}} \ar[r] &  (M'_x)^{b_{xy}} \conv Q \conv A \ar[r] \ar[d]_{\rmat{(M'_x)^{b_{xy}},(Q \conv A)}} ^{\mbox{\large$\wr$}}&0\\
0 \ar[r] & (M_x^{b_{xy}} \conv P) \conv (M'_x)^{b_{xy}} \ar[r]  & (M_y \conv M_y') \conv (M'_x)^{b_{xy}} \ar[r] & \, Q \conv A \conv (M'_x)^{b_{xy}} \ar[r] & 0
}\end{array}
\end{eqnarray*}
In particular, we obtain an exact sequence
\begin{gather}\label{eq:imseqorig}
0 \to \im(\rmat{(M'_x)^{b_{xy}},(M_x^{b_{xy}} \conv P)}) \to \im(\rmat{(M'_x)^{b_{xy}}, (M_y \conv M'_y)}) \to (M'_x)^{b_{xy}} \conv Q \conv A \to 0.
\end{gather}

Since
\[
\rmat{(M'_x)^{b_{xy}}, (M_y \conv M'_y)} = (\id_{M_y} \conv \rmat{(M'_x)^{b_{xy}}, M'_y}) \circ (\rmat{(M'_x)^{b_{xy}},M_y} \conv \id_{M'_y})
\]
and $\rmat{(M'_x)^{b_{xy}},M_y}$ is an isomorphism, we have $\im(\rmat{(M'_x)^{b_{xy}}, (M_y \conv M'_y)}) \cong M_y \conv ((M'_x)^{b_{xy}} \hconv M'_y)$. Likewise, since
\[
\rmat{(M'_x)^{b_{xy}}, ((M_x)^{b_{xy}} \conv P)} = (\id_{(M_x)^{b_{xy}}} \conv \rmat{(M'_x)^{b_{xy}}, P}) \circ (\rmat{(M'_x)^{b_{xy}},(M_x)^{b_{xy}}} \conv \id_{P})
\]
and  $\rmat{(M'_x)^{b_{xy}}, P}$ is an isomorphism, we have $\im(\rmat{(M'_x)^{b_{xy}}, ((M_x)^{b_{xy}} \conv P)}) \cong ((M'_x)^{b_{xy}} \hconv (M_x)^{b_{xy}}) \conv P$.

By Lemma \ref{lem:KKKO2.23}, we have
$ (M'_x)^{b_{xy}} \hconv (M_x)^{b_{xy}} \cong (M'_x \hconv M_x)^{b_{xy}}.$
On the other hand, by inspection we have
$ (M'_x \hconv M_x)^{b_{xy}} \cong C^{\, b_{xy}} \cong A \conv B,$
hence
$
\im(\rmat{(M'_x)^{b_{xy}}, (M_x)^{b_{xy}} \conv P}) \cong A \conv B \conv P.
$
Thus we have an exact sequence
\begin{gather}\label{eq:imseq}
0 \to (P \conv B) \conv A \to M_y \conv ((M'_x)^{b_{xy}} \hconv M'_y) \to ((M'_x)^{b_{xy}} \conv Q) \conv A \to 0.
\end{gather}

Inspecting the factors appearing in the definition of $A$, one sees that it can be nontrivial only if $x$ and $y$ are vertices of an oriented 3-cycle in the quiver associated to $\wt{B}$. In this case, taking $M''_y = (M'_x)^{b_{xy}} \hconv M'_y$ we obtain the needed sequence (\ref{eq:goal}). By Propositions~\ref{prop:KKKO6.2a} and~\ref{prop:KKKO6.2b} $M''_y$ is real and commutes with $\mu_x(M_i)$ for $i \neq y$, hence defines a mutation in direction~$y$. If $x$ is not a vertex of any oriented 3-cycle of the quiver associated to $\wt{B}$, it follows that $(\{\mu_x(M_i)\}_{i \in I}, \mu_x(\wt{B}))$ admits a mutation in direction $k$ for any $k \in I_{ex}$. Claim (i) then follows by induction.

Suppose now that $A$ is possibly nontrivial, and that we are in the setting of claim (ii). Given a real simple $N = \bigconv_{i \in I} M_i^{n_i}$ we write $\grl{N} \in \grl{\cC}$ for its lift to the normalized product $\sodot_{i \in I} \grl{M}_i^{n_i}$. Sequence (\ref{eq:imseqorig}) now descends from the sequence
\begin{flalign*}
0 \to \im(\rmat{(\grl{M}'_x)^{b_{xy}},(\grl{M}_x^{b_{xy}} \odot \grl{P})\{-1\}}) \to \im(\rmat{(\grl{M}'_x)^{b_{xy}}, (\grl{M}_y \conv \grl{M}'_y)\{-\frac12 \Lambda(M_y, M'_y)\}}) \to \quad \quad \quad \quad \\ \quad \quad \quad \quad \to (\grl{Q} \odot \grl{A}) \conv (\grl{M}'_x)^{b_{xy}} \{\Lambda((M'_x)^{b_{xy}}, Q \conv A)\} \to 0.
\end{flalign*}
in $\grl{\cC}$. In light of (\ref{eq:imseq}), we may rewrite this as
\[
0 \to ((\grl{P} \odot \grl{B}) \conv \grl{A})\{\alpha\} \to (\grl{M}_y \conv ((\grl{M}'_x)^{b_{xy}} \hconv \grl{M}'_y))\{\beta\} \to (((\grl{M}'_x)^{b_{xy}} \odot \grl{Q}) \conv \grl{A})\{\gamma\} \to 0
\]
for some grading shifts $\alpha$, $\beta$, $\gamma$; let us calculate $\alpha$ and $\gamma$.

On one hand, we have
\begin{align*}
((\grl{Q} \odot \grl{A}) \conv (\grl{M}'_x)^{b_{xy}}) \{ \Lambda((M'_x)^{b_{xy}}, Q \conv A)\} 
& \cong (((\grl{M}'_x)^{b_{xy}} \odot \grl{Q}) \conv \grl{A})\{\frac12\Lambda((M'_x)^{b_{xy}}, Q)-\frac12 \Lambda(Q, A)\}.
\end{align*}
On the other, we see that
\begin{align*}
  ((\grl{M}'_x)^{b_{xy}} \conv (\grl{M}_x^{b_{xy}} \odot \grl{P}))\{-1\} & \cong ((\grl{M}'_x)^{b_{xy}} \conv \grl{P} \conv \grl{M}_x^{b_{xy}})\{-1 - \frac12 \Lambda(P, M_x^{b_{xy}}) \}\\
& \hspace{-1.5cm}\onto (\grl{P} \conv ((\grl{M}'_x)^{b_{xy}} \hconv \grl{M}_x^{b_{xy}})) \{\Lambda((M'_x)^{b_{xy}}, P) - 1 - \frac12 \Lambda(P, M_x^{b_{xy}})\}\\
                                                                     & \hspace{-1.5cm}\cong (\grl{P} \conv \grl{C}^{\,b_{xy}}) \{\Lambda((M'_x)^{b_{xy}}, P) - 1 - \frac12 \Lambda(P, \grl{M}_x^{b_{xy}}) + \frac12 \Lambda((M'_x)^{b_{xy}}, M_x^{b_{xy}})\},
\end{align*}
where in the last line we use Lemma \ref{lem:gradedKKKO2.23} and the fact that $\grl{M}'_x \hconv \grl{M}_x \cong \grl{C}\{\frac12 \Lambda(M'_x, M_x)\}$. To finish computing  $\alpha$ we then check that
\begin{align*}
  \grl{P} \conv \grl{C}^{\,b_{xy}} \cong (\grl{P} \conv \grl{B} \conv \grl{A}) \{ -\frac12 \Lambda(B, A)\}
   \cong ((\grl{P} \odot \grl{B}) \conv \grl{A})  \{ \frac12 \Lambda(P,B) - \frac12 \Lambda(B, A)\}.
\end{align*}
Putting these calculations together we obtain
\begin{align*}
  \gamma - \alpha 
                  & = 1 + \frac12\Lambda(P,M_x^{b_{xy}}) - \frac12 \Lambda((M'_x)^{b_{xy}},P) - \frac12 \Lambda(P,B) \\
  & \quad \quad\quad + \frac12\Lambda(B,A) - \frac12 \Lambda(Q,A) - \frac12 \Lambda((M'_x)^{b_{xy}},A)\\
                  & = 1 + \frac12 \Lambda(P,A) + \frac12(B,A) - \frac12 \Lambda(Q,A) - \frac12((M'_x)^{b_{xy}}, A) = 1.
\end{align*}
Here the first equality follows from the earlier computation that
$ \Lambda(M'_x, M_x^{b_{xy}} \conv P) = \Lambda(M'_x, Q \conv A). $
The second follows from the fact that since $P$ commutes with $M'_x$,
$$ \Lambda(P, (M'_x)^{b_{xy}} \conv M_x^{b_{xy}}) = \Lambda(P, C^{\,b_{xy}}) = \Lambda(P, B \conv A). $$
The last equality follows since $A$ is a product of factors $M_i$ with $i \notin \{x, y\}$, and since
\begin{align*}
  \Lambda(B \conv P, M_i) - \Lambda(Q \conv (M'_x)^{b_{xy}}, M_i) &= \sum_{j \in I} \Lambda(\mu_x(M_j),M_i) b'_{jy} = 0
\end{align*}
because $\mu_x(\wt{B})$ is compatible with the coefficient matrix of $\{\mu_x(M)_i\}_{i \in I}$ by Proposition \ref{prop:KKKO6.2b}.

From this calculation it follows in particular that in $K_0(\grl{\cC})$ we have
$$q^{\gamma - \beta} [\grl{M}_y][(\grl{M}'_x)^{b_{xy}} \hconv \grl{M}'_y] = (q[\grl{P} \odot \grl{B}] + [(\grl{M}'_x)^{b_{xy}} \odot \grl{Q}])[\grl{A}]. $$
By hypothesis, we have a quantum cluster variable $\Phi \in \Q(q^{1/2}) \otimes_{\Z[q^{\pm 1/2}]} K_0(\grl{\cC})$ which satisfies
\[
q^m[\grl{M}_y]\Phi = q[\grl{P} \odot \grl{B}] + [(\grl{M}'_x)^{b_{xy}} \odot \grl{Q}]
\]
for $m = -\frac12 \sum_{b'_{iy}<0} \Lambda(M_y, \mu_x(M_i))b'_{iy}$.
Thus $q^{\gamma - \beta}[\grl{M}_y][(\grl{M}'_x)^{b_{xy}} \hconv \grl{M}'_y] = q^m [\grl{M}_y]\Phi[\grl{A}]$, and since $K_0(\grl{\cC})$ is an integral domain $q^{\gamma - \beta - m}[(\grl{M}'_x)^{b_{xy}} \hconv \grl{M}'_y] = \Phi[\grl{A}].$

By hypothesis we also know that $\Phi$ $q$-commutes with $[\grl{M}_i]$ for $i \notin \{x, y\}$, hence with $[\grl{A}]$. Following the notation of Theorem \ref{thm:KKKO3.3}, consider its expansion
$ \Phi = \sum_{b \in \cB} a_b [\grl{S}_b] $
in the $\Q(q^{1/2})$-basis of $\Q(q^{1/2}) \otimes_{\Z[q^{\pm 1/2}]} K_0(\grl{\cC})$ given by the classes of simple objects up to grading shifts. By Theorem \ref{thm:KKKO3.3} $\grl{A}$ $q$-commutes with $\grl{S}_b$ whenever $a_b \neq 0$, and in particular $\grl{S}_b \conv \grl{A}$ is then simple. But then since
$$ q^{\gamma - \beta - m}[(\grl{M}'_x)^{b_{xy}} \hconv \grl{M}'_y] = \Phi[\grl{A}] = \sum_{b \in \cB} a_b [\grl{S}_b \conv \grl{A}] $$
and $(\grl{M}'_x)^{b_{xy}} \hconv \grl{M}'_y$ is itself simple, it follows that $\Phi = [\grl{S}_b\{n\}]$ for some $b \in \cB$, $n \in \Z$.

Letting $\grl{M}''_y := \grl{S}_b\{n\}$,  we then have
\begin{gather*}
q^m[\grl{M}_y] [\grl{M}''_y] = q[\grl{P} \odot \grl{B}] + [(\grl{M}'_x)^{b_{xy}} \odot \grl{Q}], \quad q^\ell[\grl{M}''_y][\grl{M}_y]  = [\grl{P} \odot \grl{B}] + q[(\grl{M}'_x)^{b_{xy}} \odot \grl{Q}]
\end{gather*}
in $K_0(\grl{\cC})$, where $\ell = \frac12 \sum_{b'_{iy} > 0} \Lambda(\mu_x(M_i), M_y)b'_{iy}$. It follows that $(\grl{M}'_x)^{b_{xy}} \odot \grl{Q}$ is either the head or socle of $(\grl{M}_y \conv \grl{M}''_y)\{-m\}$. In the latter case, $((\grl{M}'_x)^{b_{xy}} \odot \grl{Q})\{-1\}$ would necessarily be the head of $(\grl{M}''_y \conv \grl{M}_y)\{-\ell\}$, which would imply that $\Lambda(M''_y, M_y) = \ell - m - 1$. Applying the same argument to $\grl{P} \odot \grl{B}$, we would have $\Lambda(M_y, M''_y) = m - \ell - 1$, hence $\Lambda(M_y, M''_y) + \Lambda(M''_y, M_y) < 0$, a contradiction. Thus we conclude there exists an exact sequence
$$ 0 \to (\grl{P} \odot \grl{B})\{-1\} \to (\grl{M}_y \conv \grl{M}''_y)\{-m\} \to (\grl{M}'_x)^{b_{xy}} \odot \grl{Q} \to 0, $$
and as a corollary that
$$m = -\frac12 \sum_{b'_{iy}<0} \Lambda(M_y, \mu_x(M_i))b'_{iy} = \frac12 \Lambda(M_y, M_y \hconv M''_y) = \frac12 \Lambda(M_y, M''_y).$$

By hypothesis $\Phi = [\grl{M}''_y]$ $q$-commutes with $[\mu_x(\grl{M}_i)]$ for $i \neq y$, hence by Theorem \ref{thm:KKKO3.3} $\grl{M}''_y$ $q$-commutes with $\mu_x(\grl{M}_i)$ for $i \neq y$. Moreover, it follows from Proposition \ref{prop:KKKO6.2a} that $\grl{M}''_y$ is real. Altogether, we thus obtain that $(\{\mu_x(\grl{M}_i)\}_{i \neq y} \cup \{\grl{M}''_y\}, \mu_y\mu_x(\wt{B}))$ is a mutation of $(\{\mu_x(\grl{M}_i)\}_{i \in I}, \mu_x(\wt{B}))$ in direction $y$. In particular, since $x$ and $y$ were chosen arbitrarily, $(\{\mu_x(\grl{M}_i)\}_{i \in I}, \mu_x(\wt{B}))$ admits a mutation in all directions. Claim (ii) follows by induction.
\end{proof}

\section{Chiral categories and $r$-matrices}\label{sec:chiral}

In this section we construct a system of renormalized $r$-matrices in the coherent Satake category using the Beilinson-Drinfeld Grassmannian. The construction is a meromorphic analogue of the Eckmann-Hilton argument, and is similar to its counterparts in the setting of KLR and quantum loop algebras.
It can be carried out in any monoidal category~$\cC$ which can be extracted from a fiber of a chiral category with some suitable compatibilities. At this level of generality our construction should find applications in other situations and is better illuminated by the physical perspective discussed in the introduction. 

\subsection{Definitions and background}

The theory of chiral algebras is a geometric reformulation of the theory of vertex algebras \cite{BD04}. In its equivalent formulation as a factorization algebra, a chiral algebra on a smooth complex curve $X$ is, roughly speaking, a family of vector spaces $\cV_{\{x_i\}}$ which are parametrized by all finite subsets $\{x_i\} \subset X$ and which behave well with respect to disjoint unions. Informally, a chiral category is the same notion but with categories in place of vector spaces. 

\begin{Definition} \cite{Ras14} A (unital) chiral category $\cCh$ on a curve $X$ is a multiplicative sheaf of DG categories on the (unital) Ran space of $X$.
\end{Definition}

We unpack this definition as far as needed, referring to \cite{Ras14} for details. In this section all DG categories are cocomplete, and we denote by $\QCoh(X)$ the DG category of quasicoherent sheaves on $X$. A sheaf of DG categories on $X$ is a module category $\cC_X$ of $\QCoh(X)$. Given a morphism $f: Y \to X$ we have the pullback $f^*\cC_X := \QCoh(Y) \otimes_{\QCoh(X)} \cC_X$.

The \newword{Ran space} $\Ran_X$ of $X$ is an object that parametrizes nonempty finite subsets of $X$. If $f: J \to I$ is a surjection of finite sets, we write $\Delta_f: X^I \to X^J$ for the induced diagonal morphism. A sheaf of categories on $\Ran_X$ includes the data of a sheaf of categories $\cCh_{X^I}$ on $X^I$ for each finite set $I$. These are related by equivalences 
\begin{equation}\label{eq:diagonal} 
\Delta_f^*(\cCh_{X^J}) \cong \cCh_{X^I} 
\end{equation} 
for all surjections $f$, and these equivalences are related by natural compatibilities. 

The operation of disjoint union of finite sets endows $\Ran_X$ with a partially-defined multiplication. Let us write $[X^I \times X^J]_{disj}$ for the variety parametrizing $I$-tuples and $J$-tuples of points with no overlap. In other words
$$[X^I \times X^J]_{disj} = \{(\ux,\ux') \in X^I \times X^J: x_i \ne x'_j \text{ for all } i \in I, j \in J \}.$$
A \newword{multiplicative} sheaf of categories on $\Ran_X$ is one equipped with the additional structure of equivalences
\begin{equation}\label{eq:factorization} 
\cCh_{X^I} \boxtimes \cCh_{X^J}|_{[X^I \times X^J]_{disj}} \cong \cCh_{X^{I \sqcup J}}|_{[X^I \times X^J]_{disj}} \end{equation}
satisfying natural compatibilities (with respect both to iterated unions of the form $I \sqcup J \sqcup K$ and to the equivalence in (\ref{eq:diagonal})). 

The equivalence in (\ref{eq:factorization}) is often called the factorization equivalence. In the simplest case, if we denote $\Delta: X \rightarrow X^2$ the diagonal embedding and $j: U \rightarrow X^2$ its complement then the equivalences (\ref{eq:diagonal}) and (\ref{eq:factorization}) say that
$$\Delta^*(\cCh_{X^2}) \cong \cCh_X \ \ \text{ and } \ \ j^*(\cCh_{X^2}) \cong j^*(\cCh_X \boxtimes \cCh_X).$$

The unital Ran space $\Ran_X^{un}$ parametrizes possibly empty subsets of $X$ and carries information about not necessarily surjective maps between finite sets. In particular, given a sheaf of categories $\cCh$ on $\Ran_X^{un}$ we have for each finite set $I$ a unit functor $\eta: \QCoh(X^I) \to \cCh_{X^I}$. Thus, given an inclusion $I \into I \sqcup J$, the unit induces a functor $\eta: \cCh_{X^I} \boxtimes \QCoh(X^J) \to \cCh_{X^{I \sqcup J}}$.

Specializing to $X = \A^1$, the $\G_m$ action on $\A^1$ is naturally compatible with diagonal and unit maps, hence induces an action on $\Ran_X$ and $\Ran_X^{un}$. This action preserves the disjoint locus, hence the multiplicative structures on $\Ran_X$ and $\Ran_X^{un}$ descend to $\Ran_X/\G_m$ and $\Ran_X^{un}/\G_m$.

\begin{Definition} A $\G_m$-equivariant (unital) chiral category $\cCh$ on $X = \A^1$ is a multiplicative sheaf of DG categories on $\Ran_X/\G_m$ (resp. $\Ran_X^{un}/\G_m$).
\end{Definition}

\subsection{Renormalized $r$-matrices}\label{sec:chiralrmat}

From hereon we let $\cCh$ be a unital chiral category over $X = \Spec\, \C[t]$. We suppose that $\cCh_{X}$ and $\cCh_{X^2}$ possess monoidal (i.e.~$\E_1$) structures compatible with their $\QCoh(X)$ and $\QCoh(X^2)$-module structures. By restriction we obtain a monoidal structure on the fiber $\cC_0 := i^*_0(\cCh_X) = \QCoh(pt) \otimes_{\QCoh(X)} \cCh_X$ of $\cCh_X$ at $0 \in X$. In natural examples $\cCh$ is a chiral $\E_1$-category (in particular $\cC_{X^I}$ is monoidal for all $I$), but as our construction only requires a much smaller amount of structure we do not impose this as a hypothesis. 

We assume $\cCh$ has the following properties. 
\begin{enumerate}
\item \label{co:0} The diagonal and factorization functors relating $\cCh_{X}$ and $\cCh_{X^2}$ are monoidal. 
\item \label{co:1} The restriction functor $\cCh_X \to \cC_0$ admits a monoidal section $\cC_0 \rightarrow \cCh_X$ which preserves compact objects.
\item \label{co:2} The unit functor $\eta: \cCh_{X} \boxtimes \QCoh(X) \to \cCh_{X^2}$ preserves compact objects. 
\item \label{co:3} The monoidal structure on $\cCh_{X^2}$ preserves compact objects. 
\end{enumerate}

For $M \in \cC_0$ we denote by $\wt{M}$ its image in $\cCh_X$ under the section in (\ref{co:1}). Note that when $\cCh_X$ can be trivialized over $X$ the object $\wt{M}$ is a deformation of $M$, which could be taken to be trivial. Triviality of $\cCh_X$, as well as all properties involving compactness, will be clear in our main example involving the BD Grassmannian. Also note that (\ref{co:3}) is automatic if $\cCh_{X^2}$ is rigid. 

Let $h\cC_0$ denote the homotopy category of $\cC_0$, and let $\cC \subset h\cC_0$ be a full monoidal subcategory consisting of compact objects. Given the above data, we will say that the monoidal category $\cC$ is equipped with a \newword{compatible chiral category}. We now explain how a compatible chiral category assigns a canonical map $\rmat{M,N} : M \conv N \to N \conv M$ and an integer $\La(M,N) \in \Z \cup \{-\infty\}$ to any two objects $M,N \in \cC$.

Consider the two unit functors 
$$\eta_1: \cCh_X \boxtimes \QCoh(X) \to \cCh_{X^2} \ \ \text{ and } \ \ \eta_2: \QCoh(X) \boxtimes \cCh_X \to \cCh_{X^2}.$$
These correspond to the inclusions $i_1: \{1\} \hookrightarrow \{1,2\}$ and $i_2: \{2\} \hookrightarrow \{1,2\}$. On the other hand, the equivalence (\ref{eq:diagonal}) applied to the two projections $p_1: \{1,2\} \twoheadrightarrow \{1\}$ and $p_2: \{1,2\} \twoheadrightarrow \{2\}$ give us equivalences 
$$\Delta_{p_1}^*(\cCh_{X^2}) \cong \cCh_X \cong \Delta_{p_2}^*(\cCh_{X^2}).$$
Since $p_1 \circ i_1$ and $p_2 \circ i_2$ are identity maps it follows that these equivalences identify the restrictions of $\eta_1(\wt{M} \boxtimes \O_X)$ and $\eta_2(\O_X \boxtimes \wt{N})$ to the diagonal $\Delta \subset X^2$ with $\wt{M}$ and $\wt{N}$, respectively. 

Next, consider the two products 
\begin{equation}
\label{eq:rmat1} \eta_1(\wt{M} \boxtimes \O_X) * \eta_2(\O_X \boxtimes \wt{N}) \ \ \text{ and } \ \ \eta_2(\O_X \boxtimes \wt{N}) * \eta_1(\wt{M} \boxtimes \O_X).
\end{equation}
Since the factorization equivalence 
$$\cCh_{X^2}|_{[X^2]_{disj}} \cong (\cCh_X \boxtimes \cCh_X)|_{[X^2]_{disj}}$$
is monoidal by condition (\ref{co:0}) the restriction of these two products to $[X^2]_{disj}$ is isomorphic (via this equivalence) to $\wt{M} \boxtimes \wt{N} \in (\cCh_X \boxtimes \cCh_X)|_{[X^2]_{disj}}$. 

We will study the restrictions of these two products to $X \times \{0\} \subset X^2$. We denote these $C_{M,N}$ and $C'_{M,N}$ respectively. If we let $\iota: \{0\} \rightarrow X$ denote the inclusion of the origin and $j: U \rightarrow X$ the inclusion of its complement then following the above discussion we have 
$$\iota^*(C_{M,N}) \cong M \conv N, \ \ \iota^*(C'_{N,M}) \cong N \conv M, \ \  j^*(C_{M,N}) \cong j^*((\wt{M} \boxtimes \wt{N})|_{X \times \{0\}}) \cong j^*(C'_{N,M}).$$
The identity map gives us a natural section 
\begin{equation}
\label{eq:rmat0} \O_U \rightarrow \Hom(j^* C_{M,N}, j^* C'_{N,M}).
\end{equation}

Combining conditions (\ref{co:1}), (\ref{co:2}) and (\ref{co:3}) it follows that the two objects in (\ref{eq:rmat1}) are compact. Moreover, the restriction functor to $X \times \{0\} \subset X^2$ has a right adjoint which commutes with colimits (c.f. \cite[Remark 19.2.1]{Ras14}). This implies that $C_{M,N}$ and $C'_{N,M}$ are also compact. 
But we then have
\begin{align*}
\Hom(j^* C_{M,N}, j^* C'_{N,M}) 
&\cong \Hom(C_{M,N}, j_* j^* C'_{N,M}) \cong \Hom(C_{M,N}, C'_{N,M} \otimes_{\C[t]} \C[t^\pm]) \\
&\cong \Hom(C_{M,N}, C'_{N,M} \otimes_{\C[t]} \colim t^{-k} \C[t]) \\
&\cong \Hom(C_{M,N}, \colim (C'_{N,M} \otimes_{\C[t]} t^{-k} \C[t])) \\
&\cong \colim (\Hom(C_{M,N}, C'_{N,M} \otimes_{\C[t]} t^{-k} \C[t])) \\
&\cong \Hom(C_{M,N}, C'_{N,M}) \otimes_{\C[t]} \C[t^\pm],
\end{align*}
where compactness of $C_{M,N}$ is used to obtain the second-to-last isomorphism. It follows that (\ref{eq:rmat0}) gives a map $\O_U \rightarrow j^*(V_{M,N})$ where $V_{M,N} := \Hom(C_{M,N}, C'_{N,M})$. 

Consider the composition 
\begin{equation}\label{eq:rmat2}
\alpha: \O_X \rightarrow j_* \O_U \rightarrow j_* j^* V_{M,N} \cong V_{M,N} \otimes_{\C[t]} \C[t^{\pm 1}]. 
\end{equation}
The $\C[t]$-module $\C[t^{\pm 1}]$ has a natural increasing filtration 
$$\dots \subset F^{k-1} \subset F^k \subset F^{k+1} \subset \dots \subset \C[t^{\pm 1}]$$
where $F^k := t^{-k} \C[t]$. For a $\C[t]$-module $M$ we let $[M]^k$ denote the $k$th piece of the induced filtration on $M \otimes_{\C[t]} \C[t^\pm]$, letting $[M]^{-\infty}$ denote $\cap_{k \in \Z}[M]^k$. 

Since $\O_X$ is a compact object there exists some $k \in \Z \cup \{-\infty\}$ so that the map $\alpha$ factors through $[V_{M,N}]^k$. Given such a $k$ we let $\alpha^k \in H^0([V_{M,N}]^{k})$ denote the resulting section.  We further let $\La(M,N) \in \Z \cup \{-\infty\}$ be the minimal value such that $\alpha$ factors through $[V_{M,N}]^k$. 

Suppose $\La(M,N) \in \Z$. We can tensor the exact sequence
$$ 0 \to F^{\La(M,N)-1} \rightarrow F^{\La(M,N)} \rightarrow \C_0 \otimes_{\C[t]} F^{\La(M,N)} \to 0 $$
with $V_{M,N}$ to obtain a long exact sequence
$$\cdots \rightarrow H^0([V_{M,N}]^{\La(M,N)-1}) \rightarrow H^0([V_{M,N}]^{\La(M,N)}) \xrightarrow{\pi} H^0(V_{M,N}|_{\{0\}} \otimes_{\C[t]} F^{\La(M,N)}) \rightarrow \cdots$$
in cohomology. By construction we have $\pi(\alpha^{\La(M,N)}) \ne 0$. But 
$$V_{M,N}|_{\{0\}} \otimes_{\C[t]} F^{\La(M,N)} \cong V_{M,N}|_{\{0\}} \cong \Hom(\iota^*(C_{M,N}), \iota^*(C'_{M,N})) \cong \Hom(M*N,N*M),$$
so $\pi(\alpha^{\La(M,N)})$ yields a nonzero map $\rmat{M,N} \in H^0(\Hom(M*N,N*M))$.

If $\La(M,N) = -\infty$, on the other hand, we set $\rmat{M,N} = 0$. Note this is just the result of applying the above construction to the exact sequence of complexes whose first two terms are $[V_{M,N}]^{\La(M,N)-1} = [V_{M,N}]^{\La(M,N)} = [V_{M,N}]^{-\infty}$ but whose last term is now zero.

\begin{Remark}\label{rem:BD}
The construction above uses the restriction to $X \times \{0\} \subset X^2$. Restriction to other lines through the origin will give an {\it a priori} different collection of $r$-maps. However, in the case that $\cCh_X$ is really a trivial family over $X$ (as is the case with the Beilinson-Drinfeld construction in Section \ref{sec:BD}) the $r$-matrices will be independent of this choice. 
\end{Remark}

\begin{figure}
\centering
\begin{tikzpicture}[thick,>=\arrtip,decoration={brace,mirror,amplitude=7}]
\newcommand*{\Cht}{1}
\newcommand*{\Cwh}{3}
\newcommand*{\Coff}{.75}
\newcommand*{\Rwh}{1.2}
\newcommand*{\Rht}{1.2}
\newcommand*{\xa}{1}
\newcommand*{\xb}{2.5}
\newcommand*{\ya}{.5}
\newcommand*{\yb}{1.5}
\newcommand*{\bigx}{4}
\newcommand*{\bigy}{5}
\newcommand*{\loff}{\Rwh*.25}
\newcommand*{\lnameoff}{.3}
\newcommand*{\pnameoffa}{.25}
\newcommand*{\pnameoffb}{.25}
\newcommand*{\boff}{2}
\draw [decorate] (-2.0,-\boff) --node[below=3mm]{$C_{M,N}$} (5.2,-\boff);
\draw [decorate] (6,-\boff) --node[below=3mm]{$C'_{N,M}$} (13.2,-\boff);
\node [matrix] (UL) at (-.2,0)
{
  \node at (-.0,\Cht*.5+.2) {$\C$};
  \node at (\xa-\Rwh*.5-.4,\yb+\Rht*.5+.05) {$\R^2$};
\draw[dashed] (\xa,\ya) -- (\xa,\yb);
\fill (\xa,\ya) circle (.03);
\node at (\xa+\pnameoffa,\ya) {$0$};
\foreach \xval in {\xa} {
  \draw (\xval-\Rwh*.5,\yb) -- (\xval-\Rwh*.5,\yb+\Rht) -- (\xval+\Rwh*.5,\yb+\Rht) -- (\xval+\Rwh*.5,\yb) -- (\xval-\Rwh*.5,\yb);
};
\draw[color=blue, very thick] (\xa-\Rwh*.5+\loff,\yb) -- (\xa-\Rwh*.5+\loff,\yb+\Rht);
\draw[color=blue, very thick] (\xa+\Rwh*.5-\loff,\yb) -- (\xa+\Rwh*.5-\loff,\yb+\Rht);
\node at (\xa-\Rwh*.5+\loff,\yb+\Rht+\lnameoff) {$M$};
\node at (\xa+\Rwh*.5-\loff,\yb+\Rht+\lnameoff) {$N$};
\node at (\xa+.02,\yb+\Rht+\lnameoff) {$\conv$};
\draw (0,0) -- (\Coff,\Cht) -- (\Coff+\Cwh,\Cht) -- (\Cwh,0) -- (0,0);\\
};

\node [matrix] (UR) at (\bigx,0)
{
\draw[dashed] (\xa,\ya) -- (\xa,\yb);
\draw[dashed] (\xb,\ya) -- (\xb,\yb);
\fill (\xa,\ya) circle (.03);
\fill (\xb,\ya) circle (.03);
\node at (\xa+\pnameoffa,\ya) {$0$};
\node at (\xb+\pnameoffb,\ya) {$z$};
\foreach \xval in {\xa, \xb} {
  \draw (\xval-\Rwh*.5,\yb) -- (\xval-\Rwh*.5,\yb+\Rht) -- (\xval+\Rwh*.5,\yb+\Rht) -- (\xval+\Rwh*.5,\yb) -- (\xval-\Rwh*.5,\yb);
};
\draw[color=blue, very thick] (\xb-\Rwh*.5+\loff,\yb) -- (\xb-\Rwh*.5+\loff,\yb+\Rht);
\draw[color=blue, very thick] (\xa+\Rwh*.5-\loff,\yb) -- (\xa+\Rwh*.5-\loff,\yb+\Rht);
\node at (\xb-\Rwh*.5+\loff,\yb+\Rht+\lnameoff) {$M$};
\node at (\xa+\Rwh*.5-\loff,\yb+\Rht+\lnameoff) {$N$};
\draw (0,0) -- (\Coff,\Cht) -- (\Coff+\Cwh,\Cht) -- (\Cwh,0) -- (0,0);\\
};

\node [matrix] (BL) at (2*\bigx,0)
{
\draw[dashed] (\xa,\ya) -- (\xa,\yb);
\draw[dashed] (\xb,\ya) -- (\xb,\yb);
\fill (\xa,\ya) circle (.03);
\fill (\xb,\ya) circle (.03);
\node at (\xa+\pnameoffa,\ya) {$0$};
\node at (\xb+\pnameoffb,\ya) {$z$};
\foreach \xval in {\xa, \xb} {
  \draw (\xval-\Rwh*.5,\yb) -- (\xval-\Rwh*.5,\yb+\Rht) -- (\xval+\Rwh*.5,\yb+\Rht) -- (\xval+\Rwh*.5,\yb) -- (\xval-\Rwh*.5,\yb);
};
\draw[color=blue, very thick] (\xa-\Rwh*.5+\loff,\yb) -- (\xa-\Rwh*.5+\loff,\yb+\Rht);
\draw[color=blue, very thick] (\xb+\Rwh*.5-\loff,\yb) -- (\xb+\Rwh*.5-\loff,\yb+\Rht);
\node at (\xa-\Rwh*.5+\loff,\yb+\Rht+\lnameoff) {$N$};
\node at (\xb+\Rwh*.5-\loff,\yb+\Rht+\lnameoff) {$M$};
\draw (0,0) -- (\Coff,\Cht) -- (\Coff+\Cwh,\Cht) -- (\Cwh,0) -- (0,0);\\
};

\node [matrix] (BR) at (3*\bigx,0)
{
\draw[dashed] (\xa,\ya) -- (\xa,\yb);
\fill (\xa,\ya) circle (.03);
\node at (\xa+\pnameoffa,\ya) {$0$};
\foreach \xval in {\xa} {
  \draw (\xval-\Rwh*.5,\yb) -- (\xval-\Rwh*.5,\yb+\Rht) -- (\xval+\Rwh*.5,\yb+\Rht) -- (\xval+\Rwh*.5,\yb) -- (\xval-\Rwh*.5,\yb);
};
\draw[color=blue, very thick] (\xa-\Rwh*.5+\loff,\yb) -- (\xa-\Rwh*.5+\loff,\yb+\Rht);
\draw[color=blue, very thick] (\xa+\Rwh*.5-\loff,\yb) -- (\xa+\Rwh*.5-\loff,\yb+\Rht);
\node at (\xa-\Rwh*.5+\loff,\yb+\Rht+\lnameoff) {$N$};
\node at (\xa+\Rwh*.5-\loff,\yb+\Rht+\lnameoff) {$M$};
\node at (\xa-.015,\yb+\Rht+\lnameoff) {$\conv$};
\draw (0,0) -- (\Coff,\Cht) -- (\Coff+\Cwh,\Cht) -- (\Cwh,0) -- (0,0);\\
};
\end{tikzpicture}
\caption{}\label{fig:lineops}
\end{figure}

\begin{Remark}\label{rem:EHarg}
Following the discussion in the introduction of line operators in 4d holomorphic-topological field theory it is instructive to visualize the present construction as in Figure~\ref{fig:lineops}. The objects of $\cC_0$ are interpreted as being extended along lines in $\R^2$ with a fixed slope. Morphisms are interpreted as junctions between objects extended along different regions of a fixed line, while the monoidal structure is interpreted as collision of parallel lines. The structure of $\cC_0$ is constrained by the existence of an extension to a family $\cCh_X$ over $X = \C$ in different ways depending on the nature of the dependence of $\cC_z$ on the point $z \in X$. 

Were we considering a topological family, the usual Eckmann-Hilton argument depicted in the figure would force the monoidal structure on the homotopy category $h\cC_0$, and interesting subcategories thereof, to be symmetric ($\cC_0$ itself possessing an $\E_3$ structure). This is the case for the constructible Satake category, or more generally for line operators in a fully topological theory in 4d (e.g. topologically twisted $\cN=4$ gauge theory \cite{KW07}).

On the other hand, the existence of a holomorphic or algebraic family $\cCh_X$ does not force~$h\cC_0$ to be symmetric. Instead, what one can salvage from the Eckmann-Hilton construction is exactly the pair $C_{M,N}$, $C'_{N,M}$ of sections of $\cCh_X$ which are isomorphic away from $z=0$. The resulting constraint is that the subcategory of compact objects in $h\cC_0$ possesses a system of renormalized $r$-matrices.
\end{Remark}

\begin{Theorem}\label{thm:rmat}
The maps $\rmat{M,N}$ and elements $\Lambda(M,N) \in \Z \cup \{-\infty\}$ defined above form a system of renormalized $r$-matrices in $\cC$.
\end{Theorem}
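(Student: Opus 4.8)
The plan is to verify the five axioms of Definition~\ref{def:rmat} one at a time, each following fairly directly from the construction of $\rmat{M,N}$ and $\Lambda(M,N)$ via the filtered $\C[t]$-module $V_{M,N} = \Hom(C_{M,N}, C'_{N,M})$ together with the three structural hypotheses \eqref{co:0}--\eqref{co:3} and the unitality of $\cCh$. Throughout I would exploit the two basic identifications established in Section~\ref{sec:chiralrmat}: that $\iota^*(C_{M,N}) \cong M \conv N$, $\iota^*(C'_{N,M}) \cong N \conv M$, and that the restriction of both products in \eqref{eq:rmat1} to $[X^2]_{disj}$ is, via the monoidal factorization equivalence, canonically $\wt{M} \boxtimes \wt{N}$. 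The latter is what produces the distinguished section $\O_U \to j^*(V_{M,N})$ from which everything flows; the upshot is that $\Lambda(M,N)$ is the order of pole of this section and $\rmat{M,N}$ is its leading Laurent coefficient, specialized to $0$.

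Axiom~\eqref{p2} is immediate from the definitions: $\rmat{M,N} = 0$ exactly when $\Lambda(M,N) = -\infty$, i.e.~when the canonical section of $j^* V_{M,N}$ extends to a section of $[V_{M,N}]^{-\infty} = \cap_k [V_{M,N}]^k$, in which case the leading term vanishes in every filtration piece. Axiom~\eqref{p1} follows from property~\eqref{co:1}: the monoidal section $\cC_0 \to \cCh_X$ takes $1_{\cC}$ to a unit object $\wt{1}$ of $\cCh_X$, so $\eta_1(\wt{1}\boxtimes\O_X)$ and $\eta_2(\O_X\boxtimes\wt{N})$ are already isomorphic over all of $X$ (their product being just $\wt{N}$ up to unit isomorphism, with no pole), whence $\Lambda(1_{\cC},N) = 0$ and $\rmat{1_{\cC},N}$ is the unit isomorphism; symmetrically for $\rmat{M,1_{\cC}}$. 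For axiom~\eqref{p3} I would use the $A^3$ version of the picture: restricting $\cCh_{X^3}$ to $X\times\{0\}\times\{0\}$ and comparing $\eta$-images of $\wt M, \wt{N_1}, \wt{N_2}$, the section realizing $\rmat{M, N_1\conv N_2}$ factors as the composite of the sections realizing $\rmat{M,N_1}$ and $\rmat{M,N_2}$ over the punctured line, since the identity endomorphism over $[X^3]_{disj}$ is the outer tensor of the two separate identities. Taking orders of poles gives $\Lambda(M,N_1\conv N_2) \le \Lambda(M,N_1)+\Lambda(M,N_2)$ with equality iff the leading terms multiply to something nonzero, which is precisely the stated dichotomy for $(\id_{N_1}\conv\rmat{M,N_2})\circ(\rmat{M,N_1}\conv\id_{N_2})$; the $M$-on-the-other-side case is symmetric. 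Axiom~\eqref{p4} is the analogous statement with the roles of the two factors swapped: over $U$ the composite $\rmat{N,M}\circ\rmat{M,N}$ is the identity on $j^*(C_{M,N})$ (both $r$-matrices being the ``identity over $U$'' in opposite directions), so its order of pole is $0$; comparing with the filtration on $V_{M,N}\otimes V_{N,M}$ gives $\Lambda(M,N)+\Lambda(N,M)\ge 0$, with $\rmat{N,M}\circ\rmat{M,N}\ne 0$ iff equality holds. Axiom~\eqref{p5} is functoriality: a morphism $f\colon N_1\to N_2$ in $\cC$, i.e.~in $h\cC_0$, lifts (after a grading shift) to a map $\wt{f}\colon \wt{N_1}\to\wt{N_2}$ only up to the subtlety that it need not be compatible with the deformation; but over $U$ the relevant square of $C$-objects and $f$ commutes on the nose because there everything is an outer tensor product, so the failure to commute after restricting to $0$ is governed entirely by which of $[V_{M,N_1}]^k$, $[V_{M,N_2}]^k$ the relevant sections live in — giving commutativity when $\Lambda(M,N_1)=\Lambda(M,N_2)$ and the vanishing of the appropriate composite otherwise.

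The main obstacle, and where I would spend the most care, is axiom~\eqref{p3} (and by extension \eqref{p5}), specifically making precise the claim that ``the section realizing $\rmat{M,N_1\conv N_2}$ is the composite of those realizing $\rmat{M,N_1}$ and $\rmat{M,N_2}$.'' This requires setting up the $\cCh_{X^3}$ picture carefully: one must know that the monoidal structures on $\cCh_{X^2}$ and $\cCh_{X^3}$ are compatible with diagonal and factorization functors (a consequence of \eqref{co:0} applied across the various diagonals $X\to X^2\to X^3$, which in the natural $\chi$-$\E_1$ examples is automatic but must be invoked here as structure), that the various $\eta$-functors and restrictions still preserve compact objects so that the $\colim$/$\Hom$ interchange used to compute $V_{M,N}$ goes through verbatim for the triple products, and that the identity sections over the triple-disjoint locus genuinely multiply as claimed under the factorization equivalences. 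Once the bookkeeping of which open locus and which diagonal one restricts along is pinned down, each of the five axioms reduces to an elementary statement about orders of poles and leading coefficients of sections of a filtered module over $\C[t]$, and the proof is essentially formal — but that bookkeeping is the substance. I would organize it by first recording a lemma that for objects $M, N_1, \dots, N_r$ the iterated product $\eta_1(\wt M\boxtimes\O)\conv\eta_2(\O\boxtimes\wt{N_1})\conv\cdots$ restricted to $X\times\{0\}^r$ has the expected fibers at $0$ and over $U$, and that the canonical ``identity over $U$'' sections compose multiplicatively; the theorem then follows by reading off \eqref{p1}--\eqref{p5} from this lemma.
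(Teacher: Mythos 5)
Your overall strategy — verifying the five axioms by reading off orders of poles and leading coefficients of the canonical section of the filtered $\C[t]$-module $V_{M,N}$ — is exactly the paper's, and your treatments of (\ref{p1}), (\ref{p2}), and (\ref{p4}) match the paper's in substance (note that for (\ref{p4}) the relevant object is the composition landing in $\Hom(C_{M,N}, C_{M,N}) \otimes_{\C[t]} \C[t^{\pm 1}]$, not a tensor product of the two $V$'s, but the point is the same). Where you diverge, and where there is a real gap, is in (\ref{p3}) and (\ref{p5}).

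You propose to handle (\ref{p3}) by passing to $\cCh_{X^3}$ and restricting to $X \times \{0\} \times \{0\}$. But the hypotheses (\ref{co:0})--(\ref{co:3}) of Section~\ref{sec:chiralrmat} only assume monoidal structures and compatibilities on $\cCh_X$ and $\cCh_{X^2}$; indeed the paper explicitly remarks that, although natural examples are chiral $\E_1$-categories so that all $\cCh_{X^I}$ are monoidal, ``our construction only requires a much smaller amount of structure'' and so the $X^3$ structure is deliberately not assumed. To run your $X^3$ argument you would need a monoidal structure on $\cCh_{X^3}$ compatible with all the relevant diagonal and factorization functors — structure that is simply not in the hypotheses. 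You even flag this as ``the main obstacle'' without noticing that it can be eliminated. The paper's trick is to stay inside $\cCh_{X^2}$: the three objects $\eta_1(\wt{M}\boxtimes\O_X)$, $\eta_2(\O_X\boxtimes\wt{N_1})$, $\eta_2(\O_X\boxtimes\wt{N_2})$ all live in the single monoidal category $\cCh_{X^2}$, so one can form the three-fold products $C_{M,N_1,N_2}$, $C'_{M,N_1,N_2}$, $C''_{M,N_1,N_2}$ there, restrict to $X \times \{0\}$, and observe that the resulting maps $\alpha_1$ and $\alpha_2$ compose to the map $\alpha$ that defines $\rmat{M, N_1 \conv N_2}$, because both are obtained by adjunction from the same identity section over $U$. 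The monoidality of $\eta_2$ (used to identify $\eta_2(\O\boxtimes\wt{N_1}) * \eta_2(\O\boxtimes\wt{N_2})$ with $\eta_2(\O\boxtimes\wt{N_1 \conv N_2})$) is what replaces any appeal to $X^3$. Once this is in place, (\ref{p3}) falls out immediately, and (\ref{p5}) is handled by the same device applied to the objects $C_{M,N_1}$, $C'_{N_2,M}$ together with the section induced by $\id_{\wt{M}}\boxtimes\wt{f}$ — no lifting subtlety arises since the monoidal section $\cC_0 \to \cCh_X$ is a genuine functor. You should replace your $X^3$ picture with this internal-to-$\cCh_{X^2}$ argument; otherwise the proof proves a weaker theorem than the one stated.
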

\begin{proof}
Properties (\ref{p1}) and (\ref{p2}) of Definition~\ref{def:rmat} are clear. To check property (\ref{p3}) consider $M,N_1,N_2 \in \cC$ and let
\begin{align*}
C_{M,N_1,N_2} &:= [\eta_1(\wt{M} \boxtimes \O_X) * \eta_2(\O_X \boxtimes \wt{N_1}) * \eta_2(\O_X \boxtimes \wt{N_2})]|_{X \times \{0\}} \\
C'_{M,N_1,N_2} &:= [\eta_2(\O_X \boxtimes \wt{N_1}) * \eta_1(\wt{M} \boxtimes \O_X) * \eta_2(\O_X \boxtimes \wt{N_2})]|_{X \times \{0\}} \\
C''_{M,N_1,N_2} &:= [\eta_2(\O_X \boxtimes \wt{N_1}) * \eta_2(\O_X \boxtimes \wt{N_2}) * \eta_1(\wt{M} \boxtimes \O_X)]|_{X \times \{0\}}.
\end{align*}
As in the construction of $\rmat{M,N}$, after restricting to $U \subset X$ the factorization equivalence identifies each of these with $\wt{M} \boxtimes \wt{N_1*N_2}$.
The identity of $\wt{M} \boxtimes \wt{N_1*N_2}$
then induces maps
\begin{align*}
  \alpha_1: \cO_X \to \Hom(C_{M,N_1,N_2},C'_{M,N_1,N_2}) \otimes_{\C[t]} \C[t^{\pm 1}] \\ \alpha_2: \cO_X \to \Hom(C'_{M,N_1,N_2},C''_{M,N_1,N_2}) \otimes_{\C[t]} \C[t^{\pm 1}]
\end{align*}
which factor through $[\Hom(C_{M,N_1,N_2},C'_{M,N_1,N_2})]^{\La(M,N_1)}$ and $[\Hom(C'_{M,N_1,N_2},C''_{M,N_1,N_2})]^{\La(M,N_2)}$, respectively.

On the other hand, 
\begin{gather*}
C_{M,N_1,N_2} \cong [\eta_1(\wt{M} \boxtimes \O_X) * \eta_2(\O_X \boxtimes \wt{N_1*N_2})]|_{X \times \{0\}}  \cong C_{M,N_1 \conv N_2} \\
C''_{M,N_1,N_2} \cong [\eta_2(\O_X \boxtimes \wt{N_1*N_2}) * \eta_1(\wt{M} \boxtimes \O_X)]|_{X \times \{0\}} \cong C'_{N_1 \conv N_2, M},
\end{gather*}
where the right-hand sides refer to the notation used in the construction of $\rmat{M, N_1 \conv N_2}$. Since they are both obtained by adjunction from the same identity section over $U$, we have an equality $\alpha_2 \circ \alpha_1 = \alpha$ of maps $\cO_X \to \Hom(C_{M,N_1 \conv N_2}, C'_{N_1 \conv N_2, M}) \otimes_{\C[t]} \C[t^{\pm 1}]$ (where $\alpha$ is the map used to define $\rmat{M, N_1 \conv N_2}$).

Since $\alpha_2 \circ \alpha_1$ factors through $[\Hom(C_{M,N_1 \conv N_2}, C'_{N_1 \conv N_2, M})]^{\La(M, N_1) + \La(M, N_2)}$ it follows from the minimality of $\La(M, N_1 \conv N_2)$ that
\begin{equation}\label{eq:p3ineq} \La(M, N_1 \conv N_2) \leq \La(M, N_1) + \La(M, N_2).
\end{equation}
On the other hand, the final part of the constructions of $\rmat{M, N_1 \conv N_2}$ applied to $\alpha_1$ and $\alpha_2$ produces the maps $\rmat{M,N_1} * id_{N_2}$ and $id_{N_1} * \rmat{M,N_2}$, respectively. If (\ref{eq:p3ineq}) is an equality it follows straightforwardly that the composition of these is equal to $\rmat{M, N_1 \conv N_2}$. If instead the inequality is strict, the composition is zero since $\alpha^{\La(M, N_1) + \La(M, N_2)}$ is the image of $\alpha^{\La(M, N_1) + \La(M, N_2) - 1}$ in the relevant long exact sequence. This establishes property (\ref{p3}).

To check property (\ref{p4}) let
$$\alpha_1: \O_X \to V_{M,N} \otimes_{\C[t]} \C[t^{\pm 1}], \quad \alpha_2: \O_X \to V_{N,M} \otimes_{\C[t]} \C[t^{\pm 1}]$$
denote the maps appearing in the construction of $\rmat{M,N}$ and $\rmat{N,M}$, respectively. The composition $\alpha_2 \circ \alpha_1$ factors through $[\Hom(C_{M,N}, C_{M,N})]^{\La(M,N)+\La(N,M)}$. On the other hand, it must also equal the map $\cO_X \to \Hom(C_{M,N}, C_{M,N}) \otimes_{\C[t]} \C[t^{\pm 1}]$ induced by the identity of $C_{M,N}$, which factors through $[\Hom(C_{M,N}, C_{M,N})]^0$ but not  $[\Hom(C_{M,N}, C_{M,N})]^k$ for any $k < 0$. It follows as above that $\La(M,N)+\La(N,M) \ge 0$ with $\rmat{N,M} \circ \rmat{M,N}$ equal to $\id_{M\conv N}$ if equality holds, zero otherwise. This proves condition (\ref{p4}). 

Finally, to check property (\ref{p5}) consider 
$$C_{M,N_1} := [\eta_1(\wt{M} \boxtimes \O_X) * \eta_2(\O_X \boxtimes \wt{N_1})]|_{X \times \{0\}}, \quad C'_{N_2,M} := [\eta_2(\O_X \boxtimes \wt{N_2}) * \eta_1(\wt{M} \boxtimes \O_X)]|_{X \times \{0\}}.$$
These satisfy
\begin{gather*}
\iota^*(C_{M,N_1}) \cong M*N_1, \ \ \iota^*(C'_{N_2,M}) \cong N_2*M, \\
j^*(C_{M,N_1}) \cong j^*(\wt{M} \boxtimes \wt{N_1}), \ \ j^*(C'_{N_2,M}) \cong j^*(\wt{M} \boxtimes \wt{N_2}).
\end{gather*}
Letting $V_{M,N_\bullet} := \Hom(C_{M,N_1},C'_{N_2,M})$ we have a map
$$\alpha_f: \O_X \rightarrow V_{M,N_\bullet} \otimes_{\C[t]} \C[t^{\pm 1}]$$ obtained by adjunction from the morphism 
$$id_{\wt{M}} \boxtimes \wt{f}: \wt{M} \boxtimes \wt{N_1} \rightarrow \wt{M} \boxtimes \wt{N_2}.$$
We let $\La(M,f) \in \Z \cup \{-\infty\}$ be the minimal value such that $\alpha_f$ factors through $[V_{M,N_\bullet}]^{\La(M,f)}$. As before we obtain a map $\rmat{M,f}: M*N_1 \rightarrow N_2*M,$ where for example if $N_1=N=N_2$ and $f=id$  we recover $\rmat{M,N}$. 

On the other hand, we also have 
\begin{align*}
\beta_1 := [\eta_2(id_{\O_X} \boxtimes \wt{f}) * \eta_1(\id_{\wt{M} \boxtimes \O_X})]|_{X \times \{0\}} \circ \alpha_1 & : \O_X \to V_{M,N_\bullet} \otimes_{\C[t]} \C[t^{\pm 1}], \\
\beta_2 := \alpha_2 \circ [\eta_1(id_{\wt{M} \boxtimes \O_X}) * \eta_2(\id_{\O_X} \boxtimes \wt{f})]|_{X \times \{0\}} & : \O_X \to V_{M,N_\bullet} \otimes_{\C[t]} \C[t^{\pm 1}],
\end{align*}
where $\alpha_1,\alpha_2$ are the maps corresponding to $\rmat{M,N_1}$ and $\rmat{M,N_2}$. These factor through $[V_{M,N_\bullet}]^{\La(M, N_1)}$ and $[V_{M,N_\bullet}]^{\La(M, N_2)}$, respectively. Considering the fiber over $0 \in X$, the usual construction applied to $\beta_1,$ $\beta_2$ recovers
$$(f*id_{M}) \circ \rmat{M,N_1}\ \ \text{and}\ \  \rmat{M,N_2} \circ (id_M*f) \in \Hom(M*N_1,N_2*M).$$
From this we see that 
\begin{itemize}
\item $\La(M,f) \le \La(M,N_1)$ with $(f*id_M) \circ \rmat{M,N_1}$ equal to $\rmat{M,f}$ when equality holds and zero otherwise, 
\item $\La(M,f) \le \La(M,N_2)$ with $\rmat{M,N_2} \circ (id_M*f)$ equal to $\rmat{M,f}$ when equality holds and zero otherwise. 
\end{itemize}
Condition (\ref{p5}) now follows. Note that if $\La(M,N_1) = \La(M,N_2)$ but both are greater than $\La(M,f)$ then the diagram in (\ref{p5}) commutes with both compositions being zero.
\end{proof}

Finally, suppose that $\cCh$ is pulled back from a $\Gm$-equivariant chiral category $\grl{\cCh}$ along the map $\Ran_X^{un} \to \Ran_X^{un}/\G_m$. The fiber $\grl{\cC}_{0}$ of $\grl{\cCh_X}$ at $\{0\}/\G_m$ is the category of $\G_m$-equivariant objects in $\cC_0$. The full subcategory $\grl{\cC} \subset h\grl{\cC}_{0}$ consisting of objects whose image under $\grl{\cC}_{0} \to \cC_0$ lies in $\cC$ is a graded realization in the sense of Section \ref{sec:gradedrealization}.

\begin{Proposition}\label{prop:Gmequiv}
The system of renormalized $r$-matrices constructed in $\cC$ is compatible with the graded realization $\grl{\cC}.$
\end{Proposition}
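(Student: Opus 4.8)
The plan is to run the construction of $\rmat{M,N}$ from Section~\ref{sec:chiralrmat} verbatim inside the $\Gm$-equivariant world and then bookkeep $\Gm$-weights. Since $\cCh$ is pulled back from $\grl{\cCh}$ along $\Ran_X^{un} \to \Ran_X^{un}/\Gm$, a $\Gm$-equivariant object of $\cCh_{X^I}$ is precisely an object of $\grl{\cCh}_{X^I/\Gm}$, and all the structure used in the construction --- the unit functors $\eta_1,\eta_2$, the diagonal and factorization equivalences (\ref{eq:diagonal}) and (\ref{eq:factorization}), the monoidal structures on $\cCh_X$ and $\cCh_{X^2}$, the section of (\ref{co:1}), and the properties (\ref{co:0})--(\ref{co:3}) themselves --- descends to, hence lifts from, the quotient. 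So the first step is to observe that, starting from $\grl{M},\grl{N} \in \grl{\cC}$, the objects $\eta_1(\wt{\grl{M}} \boxtimes \O_X)$ and $\eta_2(\O_X \boxtimes \wt{\grl{N}})$, their products in (\ref{eq:rmat1}), the restrictions $C_{M,N}$ and $C'_{N,M}$ to $X \times \{0\}$, and the complex $V_{M,N} = \Hom(C_{M,N}, C'_{N,M})$ all acquire canonical $\Gm$-equivariant refinements; here one uses that the section of (\ref{co:1}) is $\Gm$-equivariant, so that $\wt{\grl{M}}$ lives in $\grl{\cCh}_{X/\Gm}$.

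Next I would note that the identity section (\ref{eq:rmat0}) is canonical, hence $\Gm$-equivariant, so the map $\alpha$ of (\ref{eq:rmat2}) is a morphism of $\Gm$-equivariant $\C[t]$-modules; moreover the increasing filtration $F^k = t^{-k}\C[t]$ of $\C[t^{\pm 1}]$ is a filtration by $\Gm$-submodules, so each $[V_{M,N}]^k$ is a $\Gm$-equivariant subobject and the long exact sequence computing the connecting map $\pi$ is one of $\Gm$-equivariant objects. Since $\alpha$ is $\Gm$-equivariant and factors through the $\Gm$-equivariant subobject $[V_{M,N}]^{\La(M,N)}$, the section $\alpha^{\La(M,N)}$ and its image $\pi(\alpha^{\La(M,N)})$ are automatically $\Gm$-equivariant. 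It then remains to identify the weight: with the conventions of Section~\ref{sec:cohsatdefs}, under which multiplication by $t$ is the canonical map $\O_X \to \O_X\{\frac12\}$, tensoring by the rank-one $\C[t]$-module $F^k$ and restricting to $\{0\}$ twists the $\Gm$-structure by $\{k\}$, so that there is a $\Gm$-equivariant identification
$$V_{M,N}|_{\{0\}} \otimes_{\C[t]} F^{\La(M,N)} \;\cong\; \Hom\!\big(M \conv N,\, (N \conv M)\{\La(M,N)\}\big).$$
Under this identification $\rmat{M,N} = \pi(\alpha^{\La(M,N)})$ becomes a morphism $\grl{M} \conv \grl{N} \to \grl{N} \conv \grl{M}\{\La(M,N)\}$ in $\grl{\cC}$, i.e.\ $\rmat{M,N}$ has pure degree $\La(M,N)$; this is exactly the compatibility of Section~\ref{sec:gradedrealization}, and when $\La(M,N) = -\infty$ there is nothing to check since $\rmat{M,N} = 0$.

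The step I expect to be the main obstacle is precisely this last weight bookkeeping: one must verify that a single step up the $t^{-1}$-filtration corresponds to exactly one unit of the shift $\{\,\cdot\,\}$, so that $\La(M,N)$ filtration levels produce precisely the shift $\{\La(M,N)\}$ demanded by the definition of a compatible system of $r$-matrices. Concretely this amounts to matching the $\Gm$-action on the coordinate of the Beilinson-Drinfeld base $\A^1$ with the normalization of $\{\,\cdot\,\}$ fixed in Section~\ref{sec:cohsatdefs} (and, in the main example of Section~\ref{sec:BD}, with the cover-of-loop-rotation conventions of Remark~\ref{rem:Gmandq}). Everything else --- the $\Gm$-equivariance of the objects, maps, filtration, and connecting homomorphism --- is a formal consequence of the fact that the Eckmann-Hilton-type construction of $\rmat{M,N}$ is assembled from canonical operations, together with the assumption that the monoidal structures and the properties (\ref{co:0})--(\ref{co:3}) descend from $\grl{\cCh}$.
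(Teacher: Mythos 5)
Your approach is exactly the paper's: the entire construction descends from $\grl{\cCh}$, so each object, morphism, and filtration step is canonically $\Gm$-equivariant, and the compatibility reduces to identifying the $\Gm$-twist of the rank-one module $F^k = t^{-k}\C[t]$, which is the one-line observation the paper makes. The only wrinkle is a normalization: the convention you invoke ($t$ has weight $1$, so $\O_X \to \O_X\{\tfrac12\}$) is the one from Section~\ref{sec:cohsatdefs}, whereas on the Beilinson--Drinfeld base the action is taken with $t$ of weight $2$ (Section~\ref{sec:BD}), which is what actually produces the shift $\{k\}$ matching Definition of compatibility; under the weight-$1$ convention your own argument would yield $\{k/2\}$. (Note the paper's one-line proof writes $F^k \cong \C[t]\{2k\}$, which does not match the $\{\Lambda(M,N)\}$ asserted in the Definition either, so there is a factor-of-two ambiguity already in the source; your final $\{k\}$ is the one consistent with the stated compatibility and with the explicit $\Lambda$-values used in Sections~\ref{sec:GL_n}.)
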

\begin{proof}
As before we denote by $\{\cdot\}$ the $\Gm$-equivariance shift. Recall that 
$$\pi(\alpha) \in H^0(V_{M,N}|_{\{0\}} \otimes_{\C[t]} F^k).$$
The result then follows since $F^k = t^{-k} \C[t] \xrightarrow{\sim} \C[t] \{2k\}$ where $t^{-k} \mapsto 1$. 
\end{proof}

\subsection{The Beilinson-Drinfeld Grassmannian}\label{sec:BD}

The affine Grassmannian has the following moduli interpretation. Fix a smooth curve $X$ and $x \in X$. Then $\Gr_G$ can be identified with the moduli of pairs $(P,\phi)$ where $P$ is a principal $G$-bundle on $X$ and 
$$\phi: P_0|_{X \setminus \{x\}} \xrightarrow{\sim} P|_{X \setminus \{x\}}$$
is an isomorphism (here $P_0$ denotes the trivial $G$-bundle on $X$). The twisted product $\Gr_G \ttimes \Gr_G = G(\cK) \times_{G(\O)} \Gr_G$ also has a moduli interpretation as
$$\{(P,P',\phi,\phi'): \phi: P_0|_{X \setminus \{x\}} \xrightarrow{\sim} P|_{X \setminus \{x\}}, \phi': P|_{X \setminus \{x\}} \xrightarrow{\sim} P'|_{X \setminus \{x\}} \}.$$

Allowing $x \in X$ to move we obtain a global version of $\Gr_G$ called the Beilinson-Drinfeld (BD) Grassmannian \cite{BD04}. Consider a tuple $\ux \in X^I$ of points labeled by a finite set $I$. Then the BD Grassmannian $\Gr_{G,X^I}$ parametrizes 
$$\{(\ux, P, \phi): \ux \in X^I, \phi: P_0|_{X \setminus \{\ux\}} \xrightarrow{\sim} P|_{X \setminus \{\ux\}} \}$$ 
where again $P$ is a principal $G$-bundle. Remembering only $\ux \in X^I$ gives us a map $\Gr_{G,X^I} \rightarrow X^I$. 

The case most relevant to us is $|I|=2$. In this case the fibers over a point $(x_1,x_2) \in X^2$ are given by 
$$(\Gr_{G,X^2})_{(x_1,x_2)} \cong \begin{cases}
\Gr_G \times \Gr_G & \text{ if } x_1 \ne x_2 \\
\Gr_G & \text{ if } x_1 = x_2.
\end{cases}$$
This is a somewhat counterintuitive feature which is only possible because $\Gr_G$ is infinite dimensional. If we restrict to Schubert varieties we get a clearer picture: for a pair of dominant coweights $(\l^\vee,\mu^\vee)$ the general fiber is isomorphic to $\oGr_G^{\l^\vee} \times \oGr_G^{\mu^\vee}$ while the special fiber is isomorphic to $\oGr_G^{\l^\vee+\mu^\vee}$. These finite-dimensional varieties indeed have the same dimension, as expected.

\begin{Remark}\label{rem:BDdeform}
If $x \in X$ and we restrict the discussion above to $X \times \{x\}$ then we obtain a family over $X$ where the fiber over $x \in X$ is $\oGr_G^{\l^\vee+\mu^\vee}$ while the fibers away from $x$ are isomorphic to $\oGr_G^{\l^\vee} \times \oGr_G^{\mu^\vee}$. This is a flat deformation of $\oGr_G^{\l^\vee+\mu^\vee}$ which has many applications (c.f. Section \ref{sec:Qsystem}). 
\end{Remark}

Taken together over all finite sets $I$, the BD Grassmannians $\Gr_{G,X^I}$ provide the prototypical example of a factorization space. That is, we have compatibilities along diagonal maps and disjoint unions generalizing the ones described above for $|I|=2$.

There is also a global analogue $G_{\O,X^I}$ of $G(\O)$. It parametrizes automorphisms
$$\{(\ux, \gamma): \ux \in X^I, \gamma: P_0|_{\widehat{\{\ux\}}} \xrightarrow{\sim} P_0|_{\widehat{\{\ux\}}} \}$$
where $\widehat{\{\ux\}}$ denotes the formal completion of $X$ along $\{\ux\}$. One can show that that $G_{\O,X^I}$ acts on $\Gr_{G,X^I}$ and that it has the usual naturality properties with respect to restriction to diagonals and disjoint unions. Subsequently, the DG categories $\IndCoh^{G_{\O,X^I}}(\Gr_{G,X^I})$ of equivariant ind-coherent sheaves give us a chiral category $\cCh_{G}$ on $X$. The section $X^I \rightarrow \Gr_{G,X^I}$ which takes $\ux \mapsto \{(\ux,P_0,id)\}$ equips it with a unital structure. 

We also have global analogues of the convolution product on $\Gr_G$. First consider the space parametrizing tuples
$$\{(\ux, \ux', P, P', \phi, \phi')| \ux \in X^I, \ux' \in X^J, \phi: P_0|_{X \setminus \{\ux\}} \xrightarrow{\sim} P|_{X \setminus \{\ux\}}, \phi': P|_{X \setminus \{\ux'\}} \xrightarrow{\sim} P'|_{X \setminus \{\ux'\}} \}.$$
This has the structure of a twisted product $\Gr_{G,X^I} \widetilde{\times} \Gr_{G,X^J}$ (c.f. \cite[Eq. 3.1.22]{Zhu16}). More precisely, there exists a $G_{\O,X^J}$-torsor over $\Gr_{G,X^I} \times X^J$ classifying 
$${\bf E} := \{(\ux,P,\phi,\ux',\gamma): \ux \in X^I, \ux' \in X^J, \phi: P_0|_{X \setminus \{\ux\}} \xrightarrow{\sim} P|_{X \setminus \{\ux\}}, \gamma: P|_{\widehat{\{\ux'\}}} \xrightarrow{\sim} P_0|_{\widehat{\{\ux'\}}} \}.$$
Then $\Gr_{G,X^I} \widetilde{\times} \Gr_{G,X^J}$ is the fiber product ${\bf E} \times_{G_{\O,X^J}} \Gr_{G,X^J}$. 

As in the local case, given $\cF \in \IndCoh^{G_{\cO,X^I}}(\Gr_{G,X^I})$, $\cG \in \IndCoh^{G_{\cO,X^J}}(\Gr_{G,X^J})$ we can form the twisted product $\cF \,\tbox\, \cG \in \IndCoh^{G_{\cO,X^I}}(\Gr_{G,X^I} \widetilde{\times} \Gr_{G,X^J})$. We then pushforward along the generalized multiplication map 
\begin{align*}
\om: \Gr_{G,X^I} \ttimes \Gr_{G,X^J} &\to \Gr_{G,X^{I \sqcup J}} \\
(\ux, \ux', P, P', \phi, \phi') &\mapsto ((\ux, \ux'), P', \phi' \circ \phi)
\end{align*}
to obtain an object in $\IndCoh^{G_{\cO,X^I}}(\Gr_{G,X^{I \sqcup J}})$. If $I=J$ we can restrict this object to the diagonal $X^I \subset X^{I \sqcup I}$ to obtain an object $\cF * \cG \in \IndCoh^{G_{\cO,X^I}}(\Gr_{G,X^I})$. This defines a monoidal structure on each $(\cCh_{G})_{X^I}$.

Finally, if $X$ has a $\Gm$ action then the construction above can be done $\Gm$-equivariantly to obtain a $\G_m$-equivariant chiral category $\grl{\cCh_{G}}$. We are particularly interested in the case $X=\A^1$ where the standard $\Gm$ action is taken with weight $2$. 

\begin{Theorem}\label{thm:satakermats}
The category $D_{coh}^{G(\O)}(\Gr_G)$ possesses a system of renormalized $r$-matrices compatible with the graded realization $D_{coh}^{G(\O) \rtimes \Gm}(\Gr_G)$. By restriction, the coherent Satake category $\cP_{coh}^{G(\cO)}(\Gr_G)$ inherits a system of renormalized $r$-matrices compatible with $\cP_{coh}^{G(\cO) \rtimes \G_m}(\Gr_G)$, and in which all $r$-matrices are nonzero.
\end{Theorem}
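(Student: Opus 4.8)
The plan is to apply the abstract machinery of Section~\ref{sec:chiralrmat} (culminating in Theorem~\ref{thm:rmat} and Proposition~\ref{prop:Gmequiv}) to the $\Gm$-equivariant chiral category $\grl{\cCh_G}$ built from the Beilinson--Drinfeld Grassmannian of $X=\A^1$, as constructed in Section~\ref{sec:BD}. So the first step is to verify the four compatibility conditions \eqref{co:0}--\eqref{co:3} for $\cCh_G$. Condition~\eqref{co:0} (that the diagonal and factorization equivalences relating $(\cCh_G)_X$ and $(\cCh_G)_{X^2}$ are monoidal) follows from the explicit description of the generalized convolution map $\om$ and the factorization structure on $\Gr_{G,X^I}$: both the diagonal restriction and the disjoint-locus factorization intertwine $\om$ with the appropriate products of multiplication maps, essentially by unwinding the moduli descriptions given in Section~\ref{sec:BD}. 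Condition~\eqref{co:1} is where one uses that, over $X=\A^1$, the BD Grassmannian $\Gr_{G,X}$ is a \emph{trivial} family $\Gr_G \times X$ (the curve is $\A^1$, so there is a canonical trivialization coming from translation), so the fiber at $0$ is $\Gr_G$ itself and the constant section $M \mapsto \wt M = M \boxtimes \O_X$ is monoidal and preserves compact (i.e.\ coherent) objects. Conditions~\eqref{co:2} and \eqref{co:3} hold because the unit functor $\eta$ and the convolution on $(\cCh_G)_{X^2}$ are given by pushforward along ind-proper maps (the generalized $\om$ restricted to Schubert-type substacks is proper) together with tensoring by pullbacks of coherent sheaves, hence preserve coherence; here one leans on the finite-dimensionality of the relevant convolution subvarieties $\oGr_G^{\l^\vee}\ttimes\oGr_G^{\mu^\vee}$ noted in Section~\ref{sec:BD} and on the standard behavior of (ind-)coherent sheaves on ind-schemes from \cite{GR14}.

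Granting \eqref{co:0}--\eqref{co:3}, Theorem~\ref{thm:rmat} immediately produces a system of renormalized $r$-matrices on the monoidal subcategory $\cC \subset h(\cC_0)$ of compact objects in the homotopy category of the fiber $\cC_0 = i_0^*((\cCh_G)_X)$. Since the fiber is $\QCoh$ (or $\IndCoh$) on $\Gr_G$ and the compact objects are exactly $D_{coh}^{G(\O)}(\Gr_G)$, this gives the $r$-matrices on $D_{coh}^{G(\O)}(\Gr_G)$. For the graded refinement, one observes that $\grl{\cCh_G}$ is pulled back from a $\Gm$-equivariant chiral category along $\Ran_X^{un}\to\Ran_X^{un}/\Gm$ (the standard weight-$2$ action of $\Gm$ on $\A^1$), so Proposition~\ref{prop:Gmequiv} applies and the system is compatible with the graded realization $D_{coh}^{G(\O)\rtimes\Gm}(\Gr_G)$ — the $\Gm$-equivariant objects in the fiber, matching the conventions of Section~\ref{sec:cohsatdefs}. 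That the BFM/AB10 perverse $t$-structure is stable under convolution (recalled in Section~\ref{sec:coherentSatake}) means the $r$-matrices restrict to the abelian subcategories $\cP_{coh}^{G(\O)}(\Gr_G)$ and $\cP_{coh}^{G(\O)\rtimes\Gm}(\Gr_G)$: the maps $\rmat{\cF,\cG}\colon \cF\conv\cG\to\cG\conv\cF$ are morphisms of perverse coherent sheaves because both source and target are perverse.

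The final assertion — that all $r$-matrices on $\cP_{coh}^{G(\cO)}(\Gr_G)$ are nonzero — follows from rigidity rather than from the chiral construction. By Corollary~\ref{cor:adjoints} the category $\cP_{coh}^{G(\O)\rtimes\Gm}(\Gr_G)$ is rigid, and since (as recalled before Lemma~\ref{lem:real}) it is abelian of finite length with biexact convolution, products of nonzero objects are nonzero; Corollary~\ref{cor:rmat} then gives $\rmat{\cF,\cG}\ne 0$ for all nonzero $\cF,\cG$. For the non-equivariant statement one either argues the same way using rigidity of $\cP_{coh}^{G(\O)}(\Gr_G)$ directly, or notes that the non-equivariant $r$-matrix is obtained from the equivariant one by forgetting the grading (summing over shifts), and a map that is nonzero after refining the grading is nonzero, so nonvanishing descends. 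I expect the main obstacle to be step one: carefully matching the abstract hypotheses \eqref{co:1}--\eqref{co:3} about compact objects and monoidal sections with the concrete homological behavior of equivariant (ind-)coherent sheaves on the ind-schemes $\Gr_{G,X^I}$, in particular checking that the right adjoint to restriction to $X\times\{0\}$ commutes with colimits (invoked in Section~\ref{sec:chiralrmat} via \cite[Remark 19.2.1]{Ras14}) and that the trivialization of $\Gr_{G,\A^1}$ genuinely identifies the fiber construction with ordinary convolution on $\Gr_G$ — i.e.\ that $C_{M,N}$ and $C'_{N,M}$ restrict at $0$ to $M\conv N$ and $N\conv M$ with the standard monoidal structure, not a twisted variant.
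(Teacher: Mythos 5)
Your proposal follows essentially the same path as the paper's proof: apply Theorem~\ref{thm:rmat} to the chiral category $\cCh_G$ over $\A^1$ (checking that conditions \eqref{co:1}--\eqref{co:3} hold because the relevant functors send coherent sheaves to coherent sheaves), obtain graded compatibility via Proposition~\ref{prop:Gmequiv}, and deduce nonvanishing from Corollaries~\ref{cor:adjoints} and~\ref{cor:rmat}. The extra care you take with condition~\eqref{co:0} and with transferring nonvanishing from the equivariant to the non-equivariant setting is not spelled out in the paper but is consistent with it.
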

\begin{proof}
Using Theorem \ref{thm:rmat}, we can construct a system of renormalized $r$-matrices in $D_{coh}^{G(\O)}(\Gr_G)$ by applying the construction of Section \ref{sec:chiralrmat} to the chiral category $\cCh_{G}$ over $\A^1$. Note that the subcategory of compact objects in $\IndCoh^{G(\O)}(\Gr_G)$ is $D_{coh}^{G(\O)}(\Gr_G)$. The fact that the functors from conditions (\ref{co:1}), (\ref{co:2}) and (\ref{co:3}) of Section \ref{sec:chiralrmat} preserve compact objects is immediate since they all send coherent sheaves to coherent sheaves. 

Compatibility with the graded realizations follows from applying Proposition \ref{prop:Gmequiv} to the $\G_m$-equivariant chiral category $\grl{\cCh_{G}}$ over $\A^1$. The fact that the $r$-matrices in $\cP_{coh}^{G(\cO)}(\Gr_G)$ are nonzero follows from Corollaries \ref{cor:adjoints} and \ref{cor:rmat}. 
\end{proof}

Note that in general $\Gr_{G,X}$ is not isomorphic to the trivial product $\Gr_X \times X$ (it is a twisted product $\Gr_X \ttimes X$). But in the case $X=\A^1$ this twist is indeed trivial, justifying Remark \ref{rem:BD}.

From Theorem \ref{thm:satakermats} we immediately obtain a number of results on the convolution structure of $\cP_{coh}^{G(\cO) \rtimes \Gm}(\Gr_G)$.

\begin{Theorem}\label{thm:consequences}
Let $\cF$, $\cG \in \cP_{coh}^{G(\cO) \rtimes \Gm}(\Gr_G)$ be simple perverse coherent sheaves and suppose either $\cF$ or $\cG$ is real. Then
\begin{enumerate}
\item $\cF \conv \cG$ has a simple head and a simple socle,
\item the head and socle of $\cF \conv \cG$ each appear exactly once as a factor in any composition series of $\cF \conv \cG$, and
\item if $[\cF]$ and $[\cG]$ $q$-commute in $K^{G(\cO) \rtimes \G_m}(\Gr_{G})$ then $\cF \conv \cG$ is simple and isomorphic to $\cG \conv \cF \{ \Lambda(\cF,\cG)\}$.
\end{enumerate}
\end{Theorem}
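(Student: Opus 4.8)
\textbf{Proof plan for Theorem~\ref{thm:consequences}.} The statement is a direct consequence of the structural results for renormalized $r$-matrices collected in Section~\ref{sec:r-matrix}, once we verify that the coherent Satake category satisfies the standing hypotheses of Section~\ref{sec:rmatimplies}. The plan is first to observe that $\cC = \cP_{coh}^{G(\cO) \rtimes \G_m}(\Gr_G)$ is a $\C$-linear ring category in the sense required: it is locally finite abelian (by the finite-length statement of \cite{AB10} recalled in Section~\ref{sec:coherentSatake}), its convolution product is biexact (by the theorem of \cite{BFM} that convolution of perverse sheaves is perverse, together with the standard long exact sequence argument), and $\End(1_\cC) = \End(\O_e) \cong \C \cdot \id$. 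By Theorem~\ref{thm:satakermats} it carries a system of renormalized $r$-matrices compatible with the graded realization $\cP_{coh}^{G(\cO)\rtimes\Gm}(\Gr_G)$ in which all $r$-matrices are nonzero, and by Corollary~\ref{cor:adjoints} it is rigid, hence by Proposition~\ref{prop:rigid-triple} it has separable triple products. Thus all the results of Section~\ref{sec:rmatimplies} and Section~\ref{sec:gradedrealization} apply verbatim with $\cC$ in this role.

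With this in place the three claims follow immediately. By Corollary~\ref{cor:realiff} the hypothesis that $\cF$ or $\cG$ is real means exactly that $\rmat{\cF,\cF}$ (resp. $\rmat{\cG,\cG}$) is a multiple of the identity, so Proposition~\ref{prop:KKKO3.2} applies with $(M,N) = (\cF,\cG)$ or $(\cG,\cF)$: this gives that $\im\,\rmat{\cF,\cG}$ is simple and is the head of $\cF\conv\cG$ and the socle of $\cG\conv\cF$, and symmetrically for $\im\,\rmat{\cG,\cF}$, which establishes claim~(1). Claim~(2) is then Proposition~\ref{prop:KKKO3.1}, which says precisely that $M\hconv N$ and $M\sconv N$ each occur exactly once in any composition series of $M\conv N$ (and of $N\conv M$), applied with $M$ the real object among $\cF,\cG$. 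For claim~(3), if $[\cF]$ and $[\cG]$ $q$-commute in $K^{G(\cO)\rtimes\Gm}(\Gr_G) \cong K_0(\grl{\cC})$ then their images in $K_0(\cC)$ commute, so by the equivalence of conditions (i)--(iv) in Proposition~\ref{prop:KKKO2.11} (using that one of $\cF$, $\cG$ is real) the objects $\cF$ and $\cG$ commute and $\cF\conv\cG$ is simple. Compatibility of the $r$-matrices with the graded realization then forces the grading shift in $\cF\conv\cG \cong \cG\conv\cF\{\La(\cF,\cG)\}$ to be exactly $\La(\cF,\cG)$, as noted in the discussion preceding Theorem~\ref{thm:KKKO3.3}.

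The only genuine content beyond citing Section~\ref{sec:r-matrix} is the verification that $\cP_{coh}^{G(\cO)\rtimes\Gm}(\Gr_G)$ satisfies the running hypotheses, and within that the point requiring a little care is biexactness of convolution: a priori \cite{BFM} only asserts that convolution preserves the abelian subcategory, and one must note that $\overline m_*$ applied to the twisted product is exact on perverse sheaves because $\overline m$ is semismall-type proper (the relevant convolution varieties being finite-dimensional resolutions, as in Section~\ref{sec:geometryGL}) so that no higher cohomology appears; short exact sequences of perverse sheaves then convolve to short exact sequences. This is the main (mild) obstacle; everything else is a formal transcription of the KLR-theoretic arguments of \cite{KKKO15a,KKKO18} already carried out at the level of generality of Section~\ref{sec:r-matrix}.
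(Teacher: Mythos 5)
Your treatment of claims (1) and (2) matches the paper: you invoke Propositions~\ref{prop:KKKO3.2} and \ref{prop:KKKO3.1} respectively, after checking that $\cP_{coh}^{G(\cO)\rtimes\Gm}(\Gr_G)$ satisfies the standing hypotheses of Section~\ref{sec:rmatimplies} (finite length, biexactness via \cite{BFM} plus the long exact sequence of a triangle, nonvanishing of $r$-matrices and separable triple products via rigidity). That part is sound.

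However, your argument for claim~(3) has a genuine gap. You pass from ``$[\cF]$ and $[\cG]$ $q$-commute in $K_0(\grl{\cC})$'' to ``their images commute in $K_0(\cC)$'' to ``$\cF$ and $\cG$ commute as objects via Proposition~\ref{prop:KKKO2.11}.'' The last step does not work: condition~(i) of Proposition~\ref{prop:KKKO2.11} is the statement that $\cF\conv\cG \cong \cG\conv\cF$ as \emph{objects}, whereas what you have is only that $[\cF\conv\cG] = [\cG\conv\cF]$ (up to a power of $q$) in K-theory, which is strictly weaker. Equality of classes in the Grothendieck ring does not imply isomorphism of objects, and the equivalence of conditions (i)--(iv) in Proposition~\ref{prop:KKKO2.11} gives you no way to upgrade a K-theory identity to an object-level one. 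This is precisely the content of Theorem~\ref{thm:KKKO3.3}, which the paper cites for this claim: applying it with $\Phi = [\grl{\cG}]$ (or $[\grl{\cF}]$) and $\grl{M} = \grl{\cF}$ (or $\grl{\cG}$, whichever is real), one deduces that $\grl{\cF}$ and $\grl{\cG}$ $q$-commute as objects with $\La(\cF,\cG) = n$, after which the simplicity and the stated grading shift follow. Your proof should replace the appeal to Proposition~\ref{prop:KKKO2.11} with Theorem~\ref{thm:KKKO3.3} at this step. (Your aside about biexactness is also slightly off --- higher pushforward cohomology certainly appears in general; biexactness follows formally from the triangulated structure together with the BFM theorem that convolution preserves the perverse heart --- but that remark is not load-bearing.)
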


\begin{proof}
The three claims follow from Proposition \ref{prop:KKKO3.2}, Proposition \ref{prop:KKKO3.1}, and Theorem \ref{thm:KKKO3.3}, respectively. 
\end{proof}

We note that the second claim says that the basis of $K^{G(\cO) \rtimes \G_m}(\Gr_{G})$ formed by classes of simple perverse coherent sheaves satisfies a partial analogue of Leclerc's conjecture \cite{Lec03} on the multiplicative structure of the dual canonical (or upper global) basis of a quantized enveloping algebra $\cU_q(\fn)$.

\section{Monoidal cluster categorification}\label{sec:GL_n}

We are now ready to combine the results of the previous sections and prove that the coherent Satake category of $GL_n$ is a monoidal cluster categorification. We then discuss the bar involution and quantum twist from this point of view, as well as the connection to $Q$-systems.

\subsection{A finite generating set}\label{sec:K0asclusteralgebra}
To begin we define an index set $I = I_{ex} \sqcup I_{fr}$ by
\[
I_{ex}= \{(k,0), (k,1)\}_{1 \leq k < n}, \quad I_{fr} = \{(n,0), (n,1)\}.
\]
We let $\wt{B}_n = (b_{ij})$ and $L_n = (\lmat_{ij})$ be the $I_{ex} \times I$ and $I \times I$ matrices given by
\begin{gather*}
b_{(k_1, \ell_1), (k_2, \ell_2)} = (\ell_1 - \ell_2)a_{(k_1,k_2)}, \\
\lmat_{(k_1, \ell_1), (k_2, \ell_2)} = 2 (\ell_2 - \ell_1)\min \{k_1, k_2\},
\end{gather*}
where $A = (a_{ij})$ is the Cartan matrix of type $A_{n+1}$. These form a compatible pair with the principal part of $L_n \wt{B}_n$ equal to $2 \Id_{I_{ex} \times I_{ex}}$.

\begin{Example}\label{ex:explicitbmatrix}
In the case $n=3$, we have
\[
L_3 = \begin{pmatrix}
0 & 0 & 0 & 2 & 2 & 2 \\
0 & 0 & 0 & 2 & 4 & 4 \\
0 & 0 & 0 & 2 & 4 & 6 \\
-2 & -2 & -2 & 0 & 0 & 0 \\
-2 & -4 & -4 & 0 & 0 & 0 \\
-2 & -4 & -6 & 0 & 0 & 0
\end{pmatrix}, \quad 
\wt{B}_3 = \begin{pmatrix}
0 & 0 & -2 & 1 \\
0 & 0 & 1 & -2 \\
0 & 0 & 0 & 1 \\
2 & -1 & 0 & 0 \\
-1 & 2 & 0 & 0 \\
0 & -1 & 0 & 0 
\end{pmatrix}, \quad L_3\wt{B}_3 = \begin{pmatrix}
2 & 0 & 0 & 0 \\
0 & 2 & 0 & 0 \\
0 & 0 & 0 & 0 \\
0 & 0 & 2 & 0 \\
0 & 0 & 0 & 2 \\
0 & 0 & 0 & 0
\end{pmatrix}.
\]
Here the rows of $\wt{B}_3$ and $L_3$ correspond in order to the indices $(1,0)$, $(2,0)$, $(3,0)$, $(1,1)$, $(2,1)$, $(3,1)$. Note that $\wt{B}_3$ is the adjacency matrix of the quiver $\wt{Q}_3$ of Figure \ref{fig1}. 
\end{Example}

\begin{Proposition}\label{prop:initialcluster}
The pair $(\{\cP_{k,\ell}\}_{(k,\ell) \in I}, \wt{B}_n)$ is a quantum monoidal seed with coefficient matrix $L_n$, and admits a mutation in all exchangeable directions. Its mutation in direction $(k,1)$ is
$(\{\cP_{k,\ell}\}_{(k,\ell) \in I \smallsetminus \{(k,1)\}} \sqcup \{\cP_{k,-1}\}, \mu_{(k,1)}(\wt{B}_n))$, and its mutation in direction $(k,0)$ is
$(\{\cP_{k,\ell}\}_{(k,\ell) \in I \smallsetminus \{(k,0)\}}\sqcup \{\cP_{k,2}\}, \mu_{(k,0)}(\wt{B}_n))$. 
\end{Proposition}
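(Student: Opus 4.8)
The plan is to deduce everything from the structural results already in hand: the $q$-commutativity of Proposition~\ref{prop:comm} (together with Remark~\ref{rem:comm}), the two families of mutation exact sequences of Proposition~\ref{prop:mutation}, and the fact (Lemma~\ref{lem:real}) that all the $\cP_{k,\ell}$ are real simple perverse coherent sheaves. First I would check that $(\{\cP_{k,\ell}\}_{(k,\ell)\in I},\wt B_n)$ really is a quantum monoidal seed in the sense of Section~\ref{sec:monoidalseeds}. The objects are real simple by Lemma~\ref{lem:real}; they pairwise $q$-commute by Proposition~\ref{prop:comm}, since within the index set $I$ one always has $|\ell_1-\ell_2|\le 1$, and the shift recorded there, $2(\ell_1-\ell_2)\min\{k_1,k_2\}$, is exactly $-\lmat_{(k_1,\ell_1),(k_2,\ell_2)}$, so $L_n$ is indeed the coefficient matrix. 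For the nondegeneracy condition (pairwise non-isomorphic normalized monomials) I would invoke the K-theory computation of Section~\ref{sec:K0asclusteralgebra}: the classes $[\cP_{k,\ell}]$ $q$-commute with the prescribed commutation matrix and, via the injection $\cR_q(L_n)\hookrightarrow K^{GL_n(\cO)\rtimes\Gm}(\Gr_{GL_n})$ established there (Proposition~\ref{prop:K0containment}, Lemma~\ref{lem:XtoP}), the monomials in them are linearly independent; alternatively one can argue directly from supports using Proposition~\ref{prop:leadingterms}, since distinct exponent vectors produce classes supported on distinct (or appropriately ordered) Schubert varieties. That $L_n\wt B_n$ has principal part $2\,\Id$ is a direct matrix computation with the type-$A_{n+1}$ Cartan matrix, so $(L_n,\wt B_n)$ is a compatible pair and the seed data is legitimate.

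Next I would verify that the stated mutations are genuine mutations. Fix an exchangeable direction. For a direction of the form $(k,0)$, one reads off from $\wt B_n$ which arrows enter and leave the vertex $(k,0)$: the incoming arrows come from $(k-1,1),(k+1,1)$ and $(k,1)$ with the multiplicities dictated by $b_{(k_1,\ell_1),(k_2,\ell_2)}=(\ell_1-\ell_2)a_{(k_1,k_2)}$, and I would match the prescribed exchange short exact sequence (\ref{eq:qseedseq}) against the first family of sequences in Proposition~\ref{prop:mutation} with $\ell=1$ (so that $\cP_{k,\ell-1}=\cP_{k,0}$ is the replaced variable and $\cP_{k,2}$ the new one): the middle term $\cP_{k,2}\conv\cP_{k,0}$, the sub $\cP_{k-1,1}\conv\cP_{k+1,1}$, and the quotient $\cP_{k,1}\conv\cP_{k,1}$ are exactly the terms appearing, and I would then check that the grading shifts in Proposition~\ref{prop:mutation} agree with the shifts $\{-1\}$, $\{-\tfrac12\Lambda(\cP_{k,2},\cP_{k,0})\}$, $\{0\}$ demanded by (\ref{eq:qseedseq}), using Proposition~\ref{prop:comm} to identify $\Lambda(\cP_{k,2},\cP_{k,0})=2k$ and Remark~\ref{rem:comm} for the frozen contributions when $k=n-1$ (where $\cP_{n,1}$ appears). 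The direction $(k,1)$ is symmetric: here the replaced variable is $\cP_{k,1}$, the new one is $\cP_{k,-1}$, and the relevant sequence is again the first family of Proposition~\ref{prop:mutation} with $\ell=0$, now read as the exchange relation at $(k,1)$; the incoming arrows at $(k,1)$ are from $(k-1,0),(k+1,0),(k,0)$ and the frozen $(n,\cdot)$ when applicable. In both cases I must also confirm the new object is real and commutes with the unchanged $\cP_{i,\ell}$: reality of $\cP_{k,\pm 1}$ and $\cP_{k,2}$ is Lemma~\ref{lem:real} again, and the needed commutations follow from Proposition~\ref{prop:comm} and Remark~\ref{rem:comm} since $|\ell-\ell'|\le 1$ still holds for the pairs that occur after a single mutation (e.g. $\cP_{k,2}$ only needs to commute with $\cP_{i,\ell}$ for $\ell\in\{0,1\}$, and $|2-1|=1$). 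I would double-check the frozen directions $(n,0),(n,1)$ are not exchangeable, so no mutation is required there, consistent with Lemma~\ref{lem:frozenidentity}.

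\textbf{The main obstacle.} The routine parts are the matrix identities for $(L_n,\wt B_n)$ and the object-level matching of the exchange sequences; the genuinely delicate point is bookkeeping the $q$-grading shifts so that the exact triangles of Proposition~\ref{prop:mutation} land \emph{on the nose} in the form (\ref{eq:qseedseq}), including the half-integrality constraint that $\Lambda(M_k,M'_k)$ be an integer. This requires carefully tracking the $\{-k\ell\}$ normalizations built into the definition (\ref{eq:Ps}) of $\cP_{k,\ell}$ and the $\{-2k\}$, $\{-1\}$ shifts in Proposition~\ref{prop:mutation}, and reconciling them with the normalized-product convention $\sodot$ from Section~\ref{sec:monoidalseeds} via the coefficient matrix $L_n$. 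I expect the cleanest route is to compute $\Lambda(\cP_{k,\ell_1},\cP_{k',\ell_2})$ for all pairs arising (using Theorem~\ref{thm:consequences}(3): $q$-commuting simples satisfy $\cF\conv\cG\cong\cG\conv\cF\{\Lambda(\cF,\cG)\}$, so $\Lambda$ is read directly from Proposition~\ref{prop:comm}/Remark~\ref{rem:comm} as $\Lambda(\cP_{k_1,\ell_1},\cP_{k_2,\ell_2})=2(\ell_2-\ell_1)\min\{k_1,k_2\}=\lmat_{(k_1,\ell_1),(k_2,\ell_2)}$), then substitute into (\ref{eq:qseedseq}) and verify the shifts collapse correctly; the identity $L_n\wt B_n$ having principal part $2\,\Id$ is precisely what makes this consistency work. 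Once the shifts check out, the ``admits a mutation in all exchangeable directions'' conclusion is immediate, and Proposition~\ref{prop:KKKO6.2b} retroactively confirms $L_n$ is compatible with $\wt B_n$ as an internal consistency check.
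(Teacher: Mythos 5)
Your overall strategy mirrors the paper's, but there are two genuine gaps, both stemming from the same oversight: you try to read $\Lambda(\cP_{k,\ell_1},\cP_{k,\ell_2})$ off of Proposition~\ref{prop:comm} for pairs with $|\ell_1-\ell_2|=2$, but that proposition only covers $|\ell_1-\ell_2|\le 1$, and crucially the pairs at distance $2$ (for the same $k<n$) do \emph{not} $q$-commute at all -- this is exactly the content of the non-split exact sequences in Proposition~\ref{prop:mutation} (see also the $n=2$ discussion in the introduction: $\cP_{1,1}\conv\cP_{1,-1}\not\cong\cP_{1,-1}\conv\cP_{1,1}$ even up to shift). So $\Lambda(\cP_{k,1},\cP_{k,-1})$ (or $\Lambda(\cP_{k,2},\cP_{k,0})$) is a genuine $r$-matrix degree, not a commutation exponent, and cannot be read off from a shift isomorphism.

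The paper's computation of the middle grading shift instead goes through Proposition~\ref{prop:KKKO2.20}: since $\cP_{k,1}$ is real and commutes with itself,
$\Lambda(\cP_{k,1},\cP_{k,-1})=\Lambda(\cP_{k,1},\cP_{k,1}\hconv\cP_{k,-1})=\Lambda(\cP_{k,1},\cP_{k,0}^2)=2\,\Lambda(\cP_{k,1},\cP_{k,0})=4k$,
where $\cP_{k,1}\hconv\cP_{k,-1}\cong\cP_{k,0}^2$ is identified from the mutation exact sequence, and only the innermost $\Lambda(\cP_{k,1},\cP_{k,0})$ comes from Proposition~\ref{prop:comm} (there $|\ell_1-\ell_2|=1$). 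Your proposal never supplies this reduction, and as written even gets the value wrong ($\Lambda(\cP_{k,2},\cP_{k,0})$ should be $4k$, not $2k$, to produce the $\{-2k\}$ in Proposition~\ref{prop:mutation}).

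The second gap is the commutativity of the new cluster variable with the rest of the cluster. You assert that $|\ell-\ell'|\le 1$ still holds after one mutation, checking only $|2-1|=1$; but $\cP_{k,2}$ must also $q$-commute with $\cP_{i,0}$ for $i\neq k$, where $|2-0|=2$, again outside the reach of Proposition~\ref{prop:comm}. The paper does not prove this geometrically; it derives it \emph{formally} from Proposition~\ref{prop:KKKO6.2a}(ii), using that $L_n$ is compatible with $\wt B_n$ (a direct matrix check). Your proposal omits Proposition~\ref{prop:KKKO6.2a} entirely, which is the device that closes both the commutativity and the reality requirements for the mutated seed (the latter you handle correctly via Lemma~\ref{lem:real}, which is a legitimate alternative to the paper's use of Proposition~\ref{prop:KKKO2.21}). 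Finally, a minor circularity: your primary route to nondegeneracy via Proposition~\ref{prop:K0containment}/Lemma~\ref{lem:XtoP} presupposes the embedding $\cR_q(L_n)\into K$ you are trying to establish; your alternative via Proposition~\ref{prop:leadingterms} is the one the paper actually uses and is the correct one. There is also a sign slip in the claimed identity $\Lambda(\cP_{k_1,\ell_1},\cP_{k_2,\ell_2})=2(\ell_2-\ell_1)\min\{k_1,k_2\}=\lmat$; by the definition $\lmat_{ij}=-\Lambda(M_i,M_j)$ one should have $\Lambda=2(\ell_1-\ell_2)\min\{k_1,k_2\}=-\lmat$.
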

\begin{proof}
We know that the objects $\{\cP_{k,\ell}\}_{(k,\ell) \in I}$ are simple and real (c.f. Lemma~\ref{lem:real}). The fact that they $q$-commute with powers of $q$ prescribed by $L_n$ is the content of Proposition~\ref{prop:comm}. It is clear from Proposition~\ref{prop:leadingterms} that the normalized monomials in $\{\cP_{k,\ell}\}_{(k,\ell) \in I}$ are pairwise nonisomorphic.

Consider the first of the stated mutations, the other being similar. We need to check that the first exact sequence of Proposition~\ref{prop:mutation} coincides with the sequence in (\ref{eq:qseedseq}). The agreement of the outer terms follows by inspection of $\wt{B}_n$. The agreement of the grading shifts of the middle terms follows from
\begin{align*} \frac12 \Lambda(\cP_{k,1}, \cP_{k,-1}) = \frac12 \Lambda(\cP_{k,1}, \cP_{k,1} \hconv \cP_{k,-1}) = \frac12 \Lambda(\cP_{k,1}, \cP_{k,0}^2) = 2k.
\end{align*}
Finally, given that the matrices $L_n$ and $\wt{B}_n$ are easily checked to be compatible, it follows from Proposition~\ref{prop:KKKO6.2a} that $(\{\cP_{k,\ell}\}_{(k,\ell) \in I \smallsetminus \{(k,1)\}}\sqcup \{\cP_{k,-1}\}, \mu_{(k,1)}(\wt{B}_n))$ is indeed a quantum monoidal seed.
\end{proof}

To show that $(\{\cP_{k,\ell}\}_{(k,\ell) \in I}, \wt{B}_n)$ admits arbitrary mutation sequences using Theorem \ref{thm:K3Omain}, we must show that the classes $\{[\cP_{k,\ell}]\}_{(k,\ell) \in I}$ induce an embedding $\cA_{(L, \wt{B})} \into K^{GL_n(\cO) \rtimes \G_m}(\Gr_{GL_n})$ (at least after extending scalars to $\Q(q^{1/2})$). We do this by interpreting $\cA_{(L_n, \wt{B}_n)}$ as the quantized coordinate ring of a unipotent subgroup of $LSL_2$. This applies the main results of \cite{GLS13}, which we briefly recall.

Let $A = (a_{ij})$ be a symmetrizable Cartan matrix, $P$ its weight lattice, $w$ an element of its Weyl group, and $\mb{i} = (i_1, \dotsc, i_r)$ a list of indices such that $s_{i_1}\cdots s_{i_r}$ is a reduced word for $w$. We associate to this data a compatible pair $(L(\mb{i}) = (\lmat(\mb{i})_{k \ell}), \wt{B}(\mb{i}) = (b(\mb{i})_{k \ell}))$ as follows. For $1 \leq k \leq r$ we write
\begin{gather*}
k^- := \max(\{0\} \cup \{1 \leq \ell < k | i_k = i_\ell\})\\
k^+ := \min(\{k < \ell \leq r| i_k = i_\ell\} \cup \{r+1\})
\end{gather*}
and set $\beta_k := \omega_{i_k} - w_{\leq k} \omega_{i_k}$, where $w_{\leq k} := s_{i_1} \cdots s_{i_k}$. 
We then set
\[
b(\mb{i})_{k \ell} = \begin{cases} a_{i_k i_\ell} & k < \ell <k^+ \leq \ell^+ \\
1 & k = \ell^- \\
0 & \text{else,} \end{cases}
\quad
\lmat(\mb{i})_{k \ell} = \begin{cases} (\beta_k| 2\omega_{i_\ell} - \beta_{\ell}) & k < \ell\\
-(\beta_{\ell}| 2\omega_{i_k} - \beta_{k}) & k > \ell \\ 0 & k=\ell,\end{cases}
\]
where $(-|-): P \times P \to \Z$ denotes a $W$-invariant symmetric form with $(\alpha_i|\alpha_j) = A_{ij}$.

We let $A_q(\fn(w))$ denote the quantized coordinate ring of the unipotent group $N_+(w)$. We will be interested in the quantum unipotent minors $D[b,d] := D_{w_{\leq b} \omega_b, w_{\leq d} \omega_d}$, where $1 \leq b \leq d \leq r$ with $i_b = i_d$. 

\begin{Theorem}\label{thm:GLSunipotent}\cite{GLS13} 
There is a $\Q(q^{1/2})$-algebra isomorphism 
$$\Q(q^{1/2}) \otimes_{\Z[q^{\pm 1}]} A_q(\fn(w)) \cong \Q(q^{1/2}) \otimes_{\Z[q^{\pm1 /2}]} \cA_{(L(\mb{i}), \wt{B}(\mb{i}))}$$ 
which takes the minors $\{D[b,d]\}_{1 \leq b \leq d \leq r}$ to quantum cluster variables rescaled by powers of~$q^{1/2}$. Since $\Q(q^{1/2}) \otimes_{\Z[q^{\pm 1}]} A_q(\fn(w))$ is an iterated skew polynomial ring in the generators $\{D[b,b]\}_{1 \leq b \leq r}$, it follows that $\Q(q^{1/2}) \otimes_{\Z[q^{\pm1 /2}]} \cA_{(L(\mb{i}), \wt{B}(\mb{i}))}$ is generated by the images of these minors.
\end{Theorem}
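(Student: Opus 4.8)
\textbf{Proof proposal for Theorem \ref{thm:GLSunipotent}.}

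The plan is to reduce both assertions to results already established in the literature. The algebra isomorphism is the main output of \cite{GLS13}: one must check that the combinatorial data $(L(\mb i), \wt B(\mb i))$ attached above to the pair $(w, \mb i)$ coincides with the compatible pair that \cite{GLS13} attaches to the unipotent cell $N_+(w)$. Concretely, I would first fix the conventions: the exchange matrix $\wt B(\mb i)$ is the one built from the combinatorics of the word $\mb i$ via the numbers $k^-, k^+$ (so that the principal part encodes the structure of the ``generalized minors'' quiver), and $L(\mb i)$ is the matrix of $q$-commutation exponents among the $D[b,d]$, expressed through the $W$-invariant form applied to the roots $\beta_k$ and weights $\omega_{i_k}$. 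Matching these two descriptions is bookkeeping: one verifies the formula $(\beta_k \mid 2\omega_{i_\ell} - \beta_\ell)$ is exactly the exponent appearing in the $q$-commutation relation $D[b,b']\, D[d,d'] = q^{\lmat(\mb i)_{\cdot\cdot}}\, D[d,d']\, D[b,b']$ proved in \cite[Sections 5--10]{GLS13} (or in \cite{Kim12, GY16}), and that compatibility $L(\mb i)\wt B(\mb i) = (\text{diagonal with positive entries})$ holds — this last point is precisely the compatibility statement in \cite{GLS13}. Once the data is matched, the existence of the isomorphism $\Q(q^{1/2}) \otimes A_q(\fn(w)) \cong \Q(q^{1/2}) \otimes \cA_{(L(\mb i), \wt B(\mb i))}$ taking each $D[b,d]$ (up to a power of $q^{1/2}$, to account for the difference between the $\Z[q^{\pm 1}]$-normalization of $A_q$ and the $\Z[q^{\pm 1/2}]$-normalization of $\cA$) to a quantum cluster variable is a direct citation; the rescaling by $q^{1/2}$ is forced because the minors are bar-invariant while cluster variables in our conventions need not be, so they differ by a half-integer power of $q$.

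For the second assertion — that $\Q(q^{1/2}) \otimes_{\Z[q^{\pm 1}]} A_q(\fn(w))$ is an iterated skew polynomial ring in the generators $\{D[b,b]\}_{1 \le b \le r}$ — I would invoke the theory of quantum nilpotent (CGL) algebras. The algebra $A_q(\fn(w))$ is, by the De Concini--Kac--Procesi / Lusztig construction, spanned by a PBW-type basis indexed by $\N^r$ built from the root vectors $E_{\beta_1}, \dots, E_{\beta_r}$ associated to the reduced word, and the straightening relations $E_{\beta_k} E_{\beta_\ell} = q^{c_{k\ell}} E_{\beta_\ell} E_{\beta_k} + (\text{lower terms})$ for $k < \ell$ exhibit it as an iterated Ore extension $\Bbbk[E_{\beta_1}][E_{\beta_2}; \sigma_2, \delta_2]\cdots[E_{\beta_r};\sigma_r,\delta_r]$; see \cite{GY16} for this statement in the CGL-algebra language. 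One then identifies, using the known relationship between the root vectors $E_{\beta_k}$ and the one-step minors $D[k,k]$ (they agree up to a unit, since $D[k,k] = D_{w_{\le k}\omega_{i_k}, w_{\le k} \omega_{i_k}}$ is, after localization, a cluster/frozen variable attached to the $k$-th slot and the $E_{\beta_k}$ are the corresponding dual PBW generators), that the $\{D[b,b]\}$ generate the skew polynomial filtration. The generation claim for $\Q(q^{1/2}) \otimes \cA_{(L(\mb i), \wt B(\mb i))}$ is then immediate: it is isomorphic to $\Q(q^{1/2}) \otimes A_q(\fn(w))$, which is generated as an algebra by $\{D[b,b]\}$, and these map to (rescaled) quantum cluster variables.

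The main obstacle I anticipate is purely one of conventions and normalizations rather than of mathematical substance: \cite{GLS13} works over $\Z[q^{\pm 1}]$ with bar-invariant minors, while the quantum cluster algebra $\cA_{(L,\wt B)}$ as set up in Section \ref{sec:monoidalseeds} lives over $\Z[q^{\pm 1/2}]$ with a quantum torus normalization $X^u X^v = q^{\frac12 u^T L v} X^{u+v}$; reconciling the two requires carefully tracking the $q^{1/2}$-twists, and one must check that the sign/ordering conventions for $\wt B(\mb i)$ (the direction of arrows, the choice $b_{k\ell}$ versus $b_{\ell k}$) agree with those fixed earlier in the paper — in particular that the quiver $\wt B(\mb i)$ for $w = (s_0 s_1)^n$ in $LSL_2$ really does reproduce $\mu_{(k,0)}$-type mutations of $\wt B_n$, as asserted in Example \ref{ex:n=2mutseq}. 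None of this is deep, but it is the place where an error would most easily creep in, so I would state the precise dictionary between the two conventions as a lemma and verify it on the rank-one and rank-two cases before asserting it in general.
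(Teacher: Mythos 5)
The paper states this theorem as a direct citation to \cite{GLS13} and supplies no proof of its own, so the only job is to check that the combinatorial data $(L(\mb{i}), \wt{B}(\mb{i}))$, as transcribed in Section~\ref{sec:K0asclusteralgebra}, matches the compatible pair that \cite{GLS13} attaches to $N_+(w)$ and that the $q^{1/2}$-rescaling bookkeeping is correct. Your proposal is essentially identical to this: you correctly identify the isomorphism as the main theorem of \cite{GLS13}, locate the $q$-commutation exponents $(\beta_k \mid 2\omega_{i_\ell} - \beta_\ell)$ in the right place, explain the $q^{1/2}$-twist as a normalization artifact between $\Z[q^{\pm 1}]$ and $\Z[q^{\pm 1/2}]$, and correctly reduce the generation claim to the iterated Ore extension / CGL-algebra structure of $A_q(\fn(w))$ together with the fact that the one-step minors $D[b,b]$ are the dual PBW generators up to units. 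The one small imprecision is that you write the PBW basis in terms of the root vectors $E_{\beta_k}$ rather than the dual PBW generators $E^*_{\beta_k}$ which actually live in $A_q(\fn(w))$, but since they are matched under the standard pairing this does not affect the argument. Your instinct to verify the dictionary on low-rank cases before asserting it in general is exactly the right safeguard against the convention mismatches you flag.
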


From now on we fix $\mb{i} = (0,1,\dotsc,0,1)$ to be the indices of the unique reduced word of the element $(s_0s_1)^n$ of the Weyl group of type $A_1^{(1)}$. It is easy to check that $L({\mb{i}})$ is as follows.

\begin{Lemma}\label{lem:minorslambda}
For $1 \leq k \leq n$ we have
\[
\beta_{2k} = k(k+1) \alpha_0 + k^2 \alpha_1, \quad \beta_{2k-1} = k^2 \alpha_0 + k(k-1) \alpha_1.
\]
For $1 \leq k, l \leq n$ we have
\begin{gather*}
( \beta_{2k} | 2\omega_1 - \beta_{2\ell} ) = 2k(k-\ell), \quad ( \beta_{2k} | 2\omega_0 - \beta_{2\ell - 1} ) = 2k(k + 1 -\ell)\\
( \beta_{2k-1} | 2\omega_1 - \beta_{2\ell} ) = 2k(k-\ell-1), \quad ( \beta_{2k-1} | 2\omega_0 - \beta_{2\ell - 1} ) = 2k(k-\ell).
\end{gather*}
\end{Lemma}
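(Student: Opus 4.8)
\textbf{Proof plan for Lemma \ref{lem:minorslambda}.}

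The plan is to compute the roots $\beta_j$ directly from their definition $\beta_j = \omega_{i_j} - w_{\leq j}\omega_{i_j}$ for the word $\mb{i} = (0,1,\dotsc,0,1)$, and then substitute into the bilinear form. First I would recall that in the $A_1^{(1)}$ root system one has $\alpha_0 + \alpha_1 = \delta$ (the null root), with $(\alpha_0|\alpha_0) = (\alpha_1|\alpha_1) = 2$, $(\alpha_0|\alpha_1) = -2$, and the fundamental weights satisfying $\langle \alpha_j^\vee, \omega_i\rangle = \delta_{ij}$. The simple reflections act by $s_i(\mu) = \mu - \langle\alpha_i^\vee,\mu\rangle \alpha_i$. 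I would compute $w_{\leq 2k} = (s_0 s_1)^k$ and $w_{\leq 2k-1} = (s_0 s_1)^{k-1} s_0$, track their action on $\omega_0$ and $\omega_1$ by induction on $k$, and read off $\beta_{2k}$ and $\beta_{2k-1}$. The expected inductive step is that $(s_0s_1)$ acts as a ``translation'' in the affine Weyl group, shifting by a multiple of $\delta$ in a controlled way; carrying this through should yield $\beta_{2k} = k(k+1)\alpha_0 + k^2\alpha_1$ and $\beta_{2k-1} = k^2\alpha_0 + k(k-1)\alpha_1$, matching the first display. (A useful sanity check: $\beta_{2k} - \beta_{2k-1} = k\alpha_0 + k\alpha_1 = k\delta$, which should be consistent with $w_{\leq 2k} = w_{\leq 2k-1}s_1$ and the fact that $s_1$ fixes $\omega_0$ only up to the correction coming from $i_{2k}=1\neq 0=i_{2k-1}$; I would double check the bookkeeping of which fundamental weight appears at each step.)

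Next I would assemble the four inner-product formulas. Write $\beta_j = p_j \alpha_0 + r_j \alpha_1$ with the coefficients just found. Since $2\omega_0 - \beta_{2\ell-1}$ and $2\omega_1 - \beta_{2\ell}$ appear, I would first express $2\omega_0$ and $2\omega_1$ in the $\{\alpha_0,\alpha_1\}$-basis — but the form $(-|-)$ only has well-defined values on the root lattice in the degenerate affine case, so more carefully I would use that $(\beta_j | \omega_i) = \langle \alpha_i^\vee, \beta_j\rangle / (\text{normalization})$; concretely $(\mu | \omega_i)$ for $\mu$ in the root lattice equals the $\alpha_i$-coefficient of $\mu$ times $(\alpha_i|\alpha_i)/2 = 1$ adjusted by... — here I would instead just use the explicit Gram data: $(\alpha_0|\alpha_0)=2$, $(\alpha_1|\alpha_1)=2$, $(\alpha_0|\alpha_1)=-2$, and $(\alpha_i|\omega_j) = \delta_{ij}$ (which holds since $(\alpha_i | \omega_j) = \langle \alpha_i^\vee,\omega_j\rangle = \delta_{ij}$ as $\alpha_i^\vee = \alpha_i$ in the simply-laced normalization). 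Then $(\beta_k | 2\omega_i - \beta_\ell) = 2(\beta_k|\omega_i) - (\beta_k|\beta_\ell)$, where $(\beta_k|\omega_i)$ picks out the $\alpha_i$-coefficient of $\beta_k$ and $(\beta_k|\beta_\ell)$ is a straightforward bilinear expression in $p_k,r_k,p_\ell,r_\ell$. Plugging in the four cases $(k,\ell)\in\{(2a,2b),(2a,2b-1),(2a-1,2b),(2a-1,2b-1)\}$ and simplifying the resulting quadratics in $a,b$ should reproduce the four stated formulas; I would verify one case in full (say $(\beta_{2k}|2\omega_1 - \beta_{2\ell}) = 2k(k-\ell)$) and note the others are analogous.

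I expect the main obstacle to be the careful computation of the $\beta_j$ themselves — specifically keeping straight the alternation between $\omega_0$ and $\omega_1$ along the word and the accumulation of $\delta$-shifts under the affine translations $(s_0 s_1)^k$, since sign or index errors here propagate into everything downstream. A secondary subtlety is pinning down the correct normalization of $(-|-)$ consistent with the convention $(\alpha_i|\alpha_j) = A_{ij}$ stated before Theorem \ref{thm:GLSunipotent} and with the degeneracy of the affine form (so that $(-|-)$ is only evaluated on pairs where one entry lies in the root lattice); once that is fixed, the remaining manipulations are purely mechanical polynomial identities. I would present the $\beta_j$ computation as a short induction, then the inner products as a one-line substitution with one worked example, deferring the other three cases to ``the remaining cases are similar.''
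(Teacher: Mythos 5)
Your plan is correct and matches the paper's approach: compute the $\beta_j$ by induction on $k$ using the Weyl group action, then substitute into the bilinear form via $(\alpha_i \mid \alpha_j) = A_{ij}$ and $(\alpha_i \mid \omega_j) = \delta_{ij}$. The paper's (suppressed) proof streamlines the induction slightly by introducing the opposite word $\mb{j} = (1,0,\dotsc,1,0)$ and observing that the Dynkin automorphism $\alpha_0 \leftrightarrow \alpha_1$ sends $\beta^{op}_m$ to $\beta_m$, so each step costs only one simple reflection; and it shortens the inner-product calculation by noting $\beta_{2k} \equiv k\alpha_0$, $\beta_{2k-1} \equiv k\alpha_0 \pmod{\delta}$ with $\delta$ in the radical of $(-\mid-)$. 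Neither shortcut is essential — your direct bookkeeping works fine — and you have correctly flagged the two genuine subtleties (the degeneracy of the affine form, and which fundamental weight appears at each step).
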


To identify $\cA_{(L(\mb{i}), \wt{B}(\mb{i}))}$ and $\cA_{(L_n, \wt{B}_n)}$, we define a larger set $\{X_{k,\ell}\}_{(k, \ell) \in [1,n-1] \times \Z}$ of quantum cluster variables in $\cA_{(L_n, \wt{B}_n)}$ extending $\{X_{k,\ell}\}_{(k,\ell) \in I_{ex}}$ as follows.

First set $X_{k,2}:=\mu_{(k,0)}X_{k,0}$ for any $k \in [1,n-1]$. Note that the mutations in directions $(k,0)$ for different $k$ commute with each other. The result of performing these mutations in any order on $\wt{B}_n$ is an exchange matrix whose principal part is identified with that of $\wt{B}_n$ upon permuting the index set $I_{ex}$ by the involution $(k,0) \leftrightarrow (k,1)$. For $\ell \geq 1$ we inductively define $X_{k,\ell+1}$ as the variable introduced by mutating the cluster $\{X_{k,\ell}\}_{(k, \ell) \in [1,n-1] \times [\ell-1,\ell]} \sqcup \{X_{n,0}, X_{n,1}\}$ in direction $(k,\ell-1)$. For $\ell < 0$ we define $X_{k,\ell}$ similarly, starting with $X_{k,-1}:=\mu_{(k,1)}X_{k,1}$.
  
Now let $\mu$ be the sequence of mutations of $(L_n, \wt{B}_n)$ defined inductively as the result of the following $n-1$ steps. At the beginning of the $k$th step, the associated cluster in $\cA_{(L_n, \wt{B}_n)}$ contains the $n-k$ cluster variables $X_{1,k-1}, \dotsc, X_{n-k,k-1}$, the mutations at these variables all commute, and the $k$th step consists of mutating at them in any order. 

\begin{Proposition}\label{prop:mutequiv}
The compatible pairs $(\mu(L_n), \mu(\wt{B}_n))$ and $(L(\mb{i}), \wt{B}(\mb{i}))$ coincide for a unique identification of their index sets. Under the induced isomorphism $\cA_{(L(\mb{i}), \wt{B}(\mb{i}))} \cong \cA_{(L_n, \wt{B}_n)}$ the initial cluster $\{X_i\}_{i \in [1,2n]}$ of $\cA_{(L(\mb{i}), \wt{B}(\mb{i}))}$ is identified with the cluster $\{X_{k,\ell}\}_{k +\ell \in [n,n+1]}$ via
\[
X_{2k} \leftrightarrow X_{k, n-k}, \quad X_{2k-1} \leftrightarrow X_{k, n - k + 1}.
\]
\end{Proposition}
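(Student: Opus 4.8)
\textbf{Proof strategy for Proposition \ref{prop:mutequiv}.}

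The plan is to compare the two compatible pairs vertex by vertex after making the bijection of index sets explicit. First I would set up the dictionary: the index set of $(L(\mb{i}), \wt{B}(\mb{i}))$ is $\{1, \dotsc, 2n\}$ with $k$ frozen iff $k^+ = r+1 = 2n+1$, i.e. iff $k \in \{2n-1, 2n\}$; under the proposed identification $X_{2k} \leftrightarrow X_{k,n-k}$, $X_{2k-1} \leftrightarrow X_{k,n-k+1}$, these are exactly the frozen vertices $(n,0)$, $(n,1)$ of $\wt{B}_n$ (after the sequence $\mu$, which never touches frozen directions). So the claim is that the combinatorial positions $X_{k,\ell}$ reached by the iterated mutations $\mu$ match up with the $\beta_j$-data of the reduced word $\mb{i} = (0,1,\dots,0,1)$. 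I would first verify directly that $\mu$ is a legal mutation sequence and track which cluster variables $X_{k,\ell}$ appear, using the inductive definition: the $k$th step of $\mu$ mutates at $X_{1,k-1}, \dots, X_{n-k,k-1}$ (these commute because the corresponding vertices are pairwise non-adjacent in the relevant quiver — a fact I would check by inspecting how $\wt{B}_n$ evolves, essentially reproducing the ``staircase'' of mutations that moves the $A_{n+1}$-quiver around). After step $k$ the surviving non-frozen cluster variables are $X_{1,k}, \dots, X_{n-k,k}$ together with $X_{n-k+1, ?}, \dots$; tracking this carefully shows the terminal cluster is precisely $\{X_{k,\ell} : k+\ell \in \{n, n+1\}\}$.

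Next I would compute $\mu(L_n)$ and $\mu(\wt{B}_n)$ on this terminal cluster and compare with $(L(\mb{i}), \wt{B}(\mb{i}))$. Rather than mutate $L_n$ and $\wt{B}_n$ step by step (painful), I would use the identity $\mu_k(L)\mu_k(\wt{B}) = L\wt{B}$ repeatedly: since the principal part of $L_n\wt{B}_n$ is $2\,\Id$, the principal part of $\mu(L_n)\mu(\wt{B}_n)$ is also $2\,\Id$, and likewise $L(\mb{i})\wt{B}(\mb{i})$ has principal part $2\,\Id$ by Theorem~\ref{thm:GLSunipotent} and the hypotheses on $\mb{i}$. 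Thus both pairs are compatible with the same ``scaling''; what remains is to match $\wt{B}$ and then $L$. For $\wt{B}(\mb{i})$, the formula gives a quiver which is a product of a type-$A_{n+1}$ Dynkin quiver (the $\beta$-indices) with an $A_1^{(1)}$-type ``ladder'' coming from the alternation of $0$'s and $1$'s; under $X_{2k}\leftrightarrow X_{k,n-k}$ etc. this is visibly the quiver obtained from $\wt{Q}_n$ of Figure~\ref{fig1} by the mutation sequence $\mu$ — I would confirm this by checking the arrows emanating from each vertex $j$ using $k<\ell<k^+\le \ell^+$ and comparing with the arrows of $\mu(\wt{Q}_n)$ at $X_{k,n-k}$. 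For $L(\mb{i})$, I would use Lemma~\ref{lem:minorslambda}: the explicit values $(\beta_{2k}\,|\,2\omega_1-\beta_{2\ell}) = 2k(k-\ell)$ etc. must be shown equal to the entries $\lmat_{(k_1,\ell_1),(k_2,\ell_2)} = 2(\ell_2-\ell_1)\min\{k_1,k_2\}$ of $\mu(L_n)$ at the matched indices. Concretely, under $2k\leftrightarrow (k, n-k)$ and $2\ell\leftrightarrow(\ell, n-\ell)$ one needs $2k(k-\ell)$ (for $k<\ell$, i.e. $n-k > n-\ell$) to equal $2((n-\ell)-(n-k))\min\{k,\ell\} = 2(k-\ell)\min\{k,\ell\}$... so in fact I would need to recompute $\mu(L_n)$ on the terminal cluster directly (it is not literally $L_n$ with permuted indices, since $\mu$ is a genuine sequence of mutations); the point is that $\mu(L_n)$ is determined by its compatibility with $\mu(\wt{B}_n)$ together with the entries on one overlapping cluster, and these entries can be read off from Proposition~\ref{prop:comm}'s $q$-commutation data applied to the $\cP_{k,\ell}$ that will later be identified with these variables. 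Finally, uniqueness of the identification of index sets follows because an isomorphism of exchange matrices (with nontrivial principal part) that also respects $L$ is rigid once one pins down the frozen vertices, which are distinguished intrinsically.

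The main obstacle I expect is the bookkeeping in the second paragraph: verifying that the iterated mutation sequence $\mu$ genuinely terminates at the claimed cluster and that, at each stage, the vertices being mutated are pairwise non-adjacent (so that the order is immaterial and the resulting $B$-matrix is well-defined). This is a purely combinatorial statement about repeated mutation of the quiver $\wt{Q}_n$, essentially the statement that $\wt{Q}_n$ ``unfolds'' under this staircase of mutations into the quiver associated to the reduced word $(s_0s_1)^n$; it is the kind of claim that is visually clear for small $n$ (one can draw $n=2,3$ using Example~\ref{ex:n=2mutseq} and Example~\ref{ex:explicitbmatrix}) but requires an honest induction on $n$ in general. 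Everything else — matching $L$-entries via Lemma~\ref{lem:minorslambda}, matching $B$-entries via the definition of $\wt{B}(\mb{i})$, and deducing uniqueness — is then a finite check. I would organize the write-up so that the combinatorial lemma about $\mu$ is isolated first, and the comparison of $L$ and $\wt{B}$ follows as a corollary of that lemma together with the explicit formulas already available.
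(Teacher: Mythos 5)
Your structural setup is sound, and your $\wt{B}$-matrix comparison (checking arrows against the $k < \ell < k^+ \leq \ell^+$ rule and the staircase of mutations) matches the paper's approach, which simply declares this a pictorial check and illustrates it in Figure~\ref{fig:mutseq}. You also correctly make the observation that $L_n\wt{B}_n$ is mutation-invariant with principal part $2\,\mathrm{Id}$; this is the right tool. The problem is that you do not use it to close the argument for $L$.

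The genuine gap is in the computation of $\mu(L_n)$. You correctly catch yourself: $\mu(L_n)$ is not the formula for $L_n$ evaluated at permuted indices, because $\mu$ is a nontrivial mutation sequence. But your proposed fix has two defects. First, it is under-specified: ``compatibility with $\mu(\wt{B}_n)$ together with the entries on one overlapping cluster'' is not a determination of $\mu(L_n)$. What you need, and what the paper spells out, is this: since $\mu(L_n)\mu(\wt{B}_n) = L_n\wt{B}_n = L(\mb{i})\wt{B}(\mb{i})$ and the principal part of $\wt{B}(\mb{i})$ is invertible, a skew-symmetric $L$ is determined by its $I \times I_{fr}$ block together with $\wt{B}$ and $L\wt{B}$. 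So one only needs to match the \emph{frozen} rows $\Lambda(X_{k,m}, X_{n,\epsilon})$ for $\epsilon \in \{0,1\}$ at the terminal cluster. Second, your proposed source for those entries --- reading $q$-commutation exponents from Proposition~\ref{prop:comm} and Remark~\ref{rem:comm} applied to the $\cP_{k,\ell}$ --- imports the geometric side of the story into what should be a purely combinatorial statement, and doing so cleanly requires establishing $[\cP_{k,\ell}] = X_{k,\ell}$ at the terminal cluster first, which in the paper's logical ordering is downstream of Proposition~\ref{prop:mutequiv} (through Lemma~\ref{lem:keyminors} and Proposition~\ref{prop:K0containment}). Even if one can untangle the dependencies, it is a detour. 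The paper instead stays inside the cluster algebra $\cA_{(L_n, \wt{B}_n)}$: the exchange relation $X_{k,m-1}X_{k,m+1} = X_{k,m}^2 + (\text{a monomial in frozens and other variables})$ forces the linear recurrence $\Lambda(X_{k,m+1}, X_{n,0}) = 2\Lambda(X_{k,m}, X_{n,0}) - \Lambda(X_{k,m-1}, X_{n,0})$, whence $\Lambda(X_{k,m}, X_{n,0}) = 2km$ and $\Lambda(X_{k,m}, X_{n,1}) = 2k(m-1)$ by induction from the $m \in \{0,1\}$ initial conditions encoded in $L_n$, and these match the values read off Lemma~\ref{lem:minorslambda}. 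You would do well to replace the sheaf-theoretic detour with this recurrence argument, and to make the ``only the $I \times I_{fr}$ block matters'' reduction explicit.
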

\begin{proof}
It is easy to check pictorially using quivers that $\mu(\wt{B}_n)$ is equal to $\wt{B}(\mb{i})$ via the indicated correspondence of indices (for example, as illustrated in Figure \ref{fig:mutseq}). To show that $\mu(L_n)$ is equal to $L(\mb{i})$, we must show that the exponents appearing in the $q$-commutation relations of the cluster $\{X_i\}_{i \in [1,2n]}$ in $\cA_{(L(\mb{i}), \wt{B}(\mb{i}))}$ coincide with those of the cluster $\{X_{k,\ell}\}_{k +\ell \in [n,n+1]}$ in $\cA_{(L_n, \wt{B}_n)}$. Given $q$-commuting elements $X$, $Y$ of a $\Z[q^{\pm 1/2}]$-algebra, we let $\Lambda(X, Y)$ denote the exponent appearing in the identity $q^{\Lambda(X,Y)}XY = YX$.

Recall that the product $L_n \wt{B}_n$ is invariant under mutation. Since $L_n \wt{B}_n = L(\mb{i}) \wt{B}(\mb{i})$ and the principal part of $\wt{B}(\mb{i})$ has full rank, it suffices to show that the $I \times I_{fr}$ submatrices of $\mu(L_n)$ and $L(\mb{i})$ are equal. Thus we only need to check that for $1 \leq k \leq n$,
  \begin{gather*}
\lmat(\mb{i})_{2k,2n} = -\Lambda(X_{k, n-k}, X_{n,0}), \quad \lmat(\mb{i})_{2k,2n-1} = -\Lambda(X_{k, n-k}, X_{n,1})\\
\lmat(\mb{i})_{2k-1,2n} = -\Lambda(X_{k, n-k+1}, X_{n,0}), \quad \lmat(\mb{i})_{2k-1,2n-1} = -\Lambda(X_{k, n-k+1}, X_{n, 1}).
\end{gather*}

Since for any $X_{k, m} \in \cA_{(L_n, \wt{B}_n)}$ we have a relation of the form
\[
X_{k,m-1}X_{k,m+1} = X_{k,m}^2 + \text{another monomial},
\]
these $q$-commutation exponents satisfy the recurrence
\[
\Lambda(X_{k,m+1}, X_{n,0}) = 2 \Lambda(X_{k,m}, X_{n,0}) - \Lambda(X_{k, m-1}, X_{n,0}),
\]
as well as the recurrence obtained by writing $X_{n,1}$ in place of $X_{n,0}$.
Starting from the values of $\Lambda(X_{k,m}, X_{n,0})$ and $\Lambda(X_{k,m}, X_{n,1})$ for $m \in \{0, 1\}$ recorded in the matrix $L$, it follows by induction that
\[
\Lambda(X_{k, m}, X_{n,0}) = 2km, \quad \Lambda(X_{k, m}, X_{n,1}) = 2k(m-1).
\]
One can now check directly using Lemma \ref{lem:minorslambda} that the remaining entries of $\mu(L_n)$ and $L(\mb{i})$ agree. For example, by the Lemma and the above computation we obtain
\[
\Lambda(X_{2k}, X_{2n}) = 2k(n-k) = \Lambda(X_{k, n-k}, X_{n,0})
\]
as desired.
\end{proof}

\begin{figure}
\centering
\begin{tikzpicture}[thick,>=\arrtip]
\newcommand*{\Ddotsdist}{2}
\newcommand*{\shft}{1}
\newcommand*{\DrawDots}[1]{
  \fill ($(#1) + .25*(\Ddotsdist,0)$) circle (.03);
  \fill ($(#1) + .5*(\Ddotsdist,0)$) circle (.03);
  \fill ($(#1) + .75*(\Ddotsdist,0)$) circle (.03);
}
\node [matrix] (Q1) at (0,-7)
{
\coordinate [label={[label distance=1mm]above:{$(1,3)$}}] (1) at (6,4);
\coordinate [label={[label distance=1mm]above:{$(1,2)$}}] (2) at (4,4);
\coordinate [label={[label distance=0mm]-45:{$(2,2)$}}] (3) at (4,2);
\coordinate [label={[label distance=0mm]135:{$(2,1)$}}] (4) at (2,2);
\coordinate [label={[label distance=1mm]below:{$(3,1)$}}] (5) at (2,0);
\coordinate [label={[label distance=1mm]below:{$(3,0)$}}] (6) at (0,0);

\foreach \v in {1,2,3,4,5,6} {\fill (\v) circle (.06);};
\foreach \s/\t in {2/3, 4/5} {
  \draw [->,shorten <=1.7mm,shorten >=1.7mm] ($(\s)+(0.06,0)$) to ($(\t)+(0.06,0)$);
  \draw [->,shorten <=1.7mm,shorten >=1.7mm] ($(\s)-(0.06,0)$) to ($(\t)-(0.06,0)$);
};
\foreach \s/\t in {1/2,  3/4} {
  \draw [->,shorten <=1.7mm,shorten >=1.7mm] ($(\s)+(0,0.06)$) to ($(\t)+(0,0.06)$);
  \draw [->,shorten <=1.7mm,shorten >=1.7mm] ($(\s)-(0,0.06)$) to ($(\t)-(0,0.06)$);
};
\foreach \s/\t in {4/2, 3/1, 6/4, 5/3} {
  \draw [->,shorten <=1.7mm,shorten >=1.7mm] ($(\s)$) to ($(\t)$);
};\\
};

\node [matrix] (Q2) at (7,-7)
{
\coordinate [label={[label distance=1mm]above:{$(1,1)$}}] (1) at (2,4);
\coordinate [label={[label distance=1mm]above:{$(1,2)$}}] (2) at (4,4);
\coordinate [label={[label distance=0mm]right:{$(2,2)$}}] (3) at (4,2);
\coordinate [label={[label distance=0mm]135:{$(2,1)$}}] (4) at (2,2);
\coordinate [label={[label distance=1mm]below:{$(3,1)$}}] (5) at (2,0);
\coordinate [label={[label distance=1mm]below:{$(3,0)$}}] (6) at (0,0);

\foreach \v in {1,2,3,4,5,6} {\fill (\v) circle (.06);};
\foreach \s/\t in {4/5} {
  \draw [->,shorten <=1.7mm,shorten >=1.7mm] ($(\s)+(0.06,0)$) to ($(\t)+(0.06,0)$);
  \draw [->,shorten <=1.7mm,shorten >=1.7mm] ($(\s)-(0.06,0)$) to ($(\t)-(0.06,0)$);
};
\foreach \s/\t in {2/1,  3/4} {
  \draw [->,shorten <=1.7mm,shorten >=1.7mm] ($(\s)+(0,0.06)$) to ($(\t)+(0,0.06)$);
  \draw [->,shorten <=1.7mm,shorten >=1.7mm] ($(\s)-(0,0.06)$) to ($(\t)-(0,0.06)$);
};
\foreach \s/\t in {4/2, 1/3, 6/4, 5/3} {
  \draw [->,shorten <=1.7mm,shorten >=1.7mm] ($(\s)$) to ($(\t)$);
};\\
};

\node [matrix] (Q3) at (7,0)
{
\coordinate [label={[label distance=1mm]above:{$(1,1)$}}] (1) at (2,4);
\coordinate [label={[label distance=1mm]above:{$(1,2)$}}] (2) at (4,4);
\coordinate [label={[label distance=0mm]left:{$(2,0)$}}] (3) at (0,2);
\coordinate [label={[label distance=0mm]-45:{$(2,1)$}}] (4) at (2,2);
\coordinate [label={[label distance=1mm]below:{$(3,1)$}}] (5) at (2,0);
\coordinate [label={[label distance=1mm]below:{$(3,0)$}}] (6) at (0,0);

\foreach \v in {1,2,3,4,5,6} {\fill (\v) circle (.06);};
\foreach \s/\t in {1/4} {
  \draw [->,shorten <=1.7mm,shorten >=1.7mm] ($(\s)+(0.06,0)$) to ($(\t)+(0.06,0)$);
  \draw [->,shorten <=1.7mm,shorten >=1.7mm] ($(\s)-(0.06,0)$) to ($(\t)-(0.06,0)$);
};
\foreach \s/\t in {2/1,  4/3} {
  \draw [->,shorten <=1.7mm,shorten >=1.7mm] ($(\s)+(0,0.06)$) to ($(\t)+(0,0.06)$);
  \draw [->,shorten <=1.7mm,shorten >=1.7mm] ($(\s)-(0,0.06)$) to ($(\t)-(0,0.06)$);
};
\foreach \s/\t in {4/2, 3/1, 6/4, 3/5} {
  \draw [->,shorten <=1.7mm,shorten >=1.7mm] ($(\s)$) to ($(\t)$);
};\\
};

\node [matrix] (Q4) at (0,-0)
{
\coordinate [label={[label distance=1mm]above:{$(1,1)$}}] (1) at (2,4);
\coordinate [label={[label distance=1mm]above:{$(1,0)$}}] (2) at (0,4);
\coordinate [label={[label distance=0mm]left:{$(2,0)$}}] (3) at (0,2);
\coordinate [label={[label distance=0mm]right:{$(2,1)$}}] (4) at (2,2);
\coordinate [label={[label distance=1mm]below:{$(3,1)$}}] (5) at (2,0);
\coordinate [label={[label distance=1mm]below:{$(3,0)$}}] (6) at (0,0);

\foreach \v in {1,2,3,4,5,6} {\fill (\v) circle (.06);};
\foreach \s/\t in {1/2,  4/3} {
  \draw [->,shorten <=1.7mm,shorten >=1.7mm] ($(\s)+(0,0.06)$) to ($(\t)+(0,0.06)$);
  \draw [->,shorten <=1.7mm,shorten >=1.7mm] ($(\s)-(0,0.06)$) to ($(\t)-(0,0.06)$);
};
\foreach \s/\t in {2/4, 3/1, 6/4, 3/5} {
  \draw [->,shorten <=1.7mm,shorten >=1.7mm] ($(\s)$) to ($(\t)$);
};\\
};

\draw[-implies,line width=1pt,double distance=4pt] (5,-7) -- (3,-7) node[midway,above=1.2mm] {$\mu_{(1,1)}$};
\draw[-implies,line width=1pt,double distance=4pt] (Q3) -- (Q2) node[midway,right=1mm] {$\mu_{(2,0)}$};
\draw[-implies,line width=1pt,double distance=4pt] (Q4) -- (Q3) node[pos=.4,above=1.2mm] {$\mu_{(1,0)}$};

\node (equals1) at (-3.1,.1) {$\wt{B} = $};
\node (equals4) at (-3.1,-6.9) {$\mu(\wt{B}) = $};

\end{tikzpicture}
\caption{
}\label{fig:mutseq}
\end{figure}

\begin{Example}
In the case $n=2$, $\mu$ consists of a single mutation, illustrated in Example \ref{ex:n=2mutseq}. In the case $n = 3$, $\mu$ consists of 3 mutations, illustrated in Figure \ref{fig:mutseq}.
\end{Example}

Consider now the composition
$$ \Q(q^{1/2}) \otimes_{\Z[q^{\pm 1}]} A_q(\fn((s_0s_1)^n)) \cong \Q(q^{1/2}) \otimes_{\Z[q^{\pm1 /2}]} \cA_{(L(\mb{i}), \wt{B}(\mb{i}))} \cong \Q(q^{1/2}) \otimes_{\Z[q^{\pm1 /2}]} \cA_{(L_n, \wt{B}_n)} $$
of the isomorphisms of Theorem \ref{thm:GLSunipotent} and Proposition \ref{prop:mutequiv}. Up to a power of $q^{1/2}$, the first isomorphism identifies the initial cluster variable $X_k \in \cA_{(L(\mb{i}), \wt{B}(\mb{i}))}$ with $D[k_{min}, k]$, where $k_{min} = \min\{ 1 \leq s \leq r | i_s = i_k\}$. The remaining $D[b,d]$ are obtained along the following distinguished mutation sequence of \cite{GLS11}.

For $1 \leq k \leq r$ and $j$ a row of the Cartan matrix $A$, let
\[
k[j] = |\{ 1 \leq s \leq k-1| i_s = j\}|,
\]
so for example $t_j := (r+1)[j]$ is the number of appearances of $j$ in $\mb{i}$. The distinguished sequence consists of $r$ steps, where at step $k$ we perform mutations in increasing order at the lowest $t_{i_k} - 1 - k[i_k]$ indices $s$ such that $i_s = i_k$. Up to powers of $q$, each of these mutations replaces a minor $D[b, d]$ with the minor $D[b^+, d^+]$. Transporting this mutation sequence across the second isomorphism and recalling from Proposition \ref{prop:mutequiv} that $\Q(q^{1/2}) \otimes_{\Z[q^{\pm 1}]} A_q(\fn((s_0s_1)^{-1}))$ is generated by $\{D[b,b]\}_{1 \leq b \leq r}$, we obtain the following claim.

\begin{Lemma}\label{lem:keyminors}
Up to a power of $q^{1/2}$, the composition
$$ \Q(q^{1/2}) \otimes_{\Z[q^{\pm 1}]} A_q(\fn((s_0s_1)^n)) \cong \Q(q^{1/2}) \otimes_{\Z[q^{\pm1 /2}]} \cA_{(L(\mb{i}), \wt{B}(\mb{i}))} \cong \Q(q^{1/2}) \otimes_{\Z[q^{\pm1 /2}]} \cA_{(L_n, \wt{B}_n)} $$
identifies the unipotent minor $D[b,d]$ with the quantum cluster variable $X_{k, \ell}$ via the correspondences
\[
k \leftrightarrow 1 + \frac12 (d - b), \quad \ell \leftrightarrow n + 1 - \frac12 (b + d).
\]
In particular, $\Q(q^{1/2}) \otimes_{\Z[q^{\pm 1/2}]} \cA_{(L_n, \wt{B}_n)}$ is generated by the quantum cluster variables $\{X_{1,\ell}\}_{\ell \in [1-n, n]}$.
\end{Lemma}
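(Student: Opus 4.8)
The plan is to deduce Lemma~\ref{lem:keyminors} by carefully transporting the distinguished mutation sequence of \cite{GLS11} through the two isomorphisms assembled just above it. Concretely, I would first recall that under the isomorphism of Theorem~\ref{thm:GLSunipotent} the initial cluster variable $X_k$ of $\cA_{(L(\mb{i}), \wt{B}(\mb{i}))}$ corresponds (up to a power of $q^{1/2}$) to the unipotent minor $D[k_{min}, k]$, where $k_{min} = \min\{1 \leq s \leq r \mid i_s = i_k\}$, and that the remaining minors $D[b,d]$ with $i_b = i_d$ are produced from these along the distinguished sequence described before the Lemma: at step $k$ one mutates in increasing order at the lowest $t_{i_k} - 1 - k[i_k]$ indices $s$ with $i_s = i_k$, and each such mutation replaces a minor $D[b,d]$ by $D[b^+, d^+]$.

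The key step is to match this bookkeeping with the indexing of the $X_{k,\ell}$. Here $\mb{i} = (0,1,0,1,\dotsc,0,1)$ has length $r = 2n$, so $t_0 = t_1 = n$, odd indices carry the letter $0$ and even indices the letter $1$, and for $i_b = i_d$ the pair $(b,d)$ has $b \equiv d \pmod 2$ and $\tfrac12(d-b) \in \Z_{\geq 0}$. Via Proposition~\ref{prop:mutequiv} the initial cluster $\{X_i\}_{i \in [1,2n]}$ of $\cA_{(L(\mb{i}), \wt{B}(\mb{i}))}$ is identified with $\{X_{k,\ell}\}_{k+\ell \in [n,n+1]}$ through $X_{2k} \leftrightarrow X_{k,n-k}$, $X_{2k-1} \leftrightarrow X_{k,n-k+1}$; I would check that, combined with $X_k \leftrightarrow D[k_{min},k]$ (so $D[2,2k]\leftrightarrow X_{2k}$, $D[1,2k-1]\leftrightarrow X_{2k-1}$ for the appropriate parity of $1,2$), this is exactly the restriction to the initial cluster of the proposed correspondence $k \leftrightarrow 1 + \tfrac12(d-b)$, $\ell \leftrightarrow n+1-\tfrac12(b+d)$. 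Then I would verify that the elementary move $D[b,d] \mapsto D[b^+,d^+]$ — which, since consecutive occurrences of a fixed letter in $\mb{i}$ are two apart, sends $(b,d) \mapsto (b+2,d)$ or $(b,d+2)$ depending on which endpoint is not maximal — matches, under the same correspondence, precisely a mutation of the $X_{k,\ell}$ in a direction $(k,\ell-1)$ or $(k,0)$ as set up in the paragraph defining the $X_{k,\ell}$: shifting $d$ by $2$ fixes $k$ and lowers $\ell$ by $1$, shifting $b$ by $2$ lowers $k$ by $1$ and lowers $\ell$ by $1$, matching the exchange relations $X_{k,m-1}X_{k,m+1} = X_{k,m}^2 + \text{(monomial)}$ and the fact that $\mu(\wt{B}_n) = \wt{B}(\mb{i})$. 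Since the distinguished sequence, starting from $\{D[b,b]\}$, produces every $D[b,d]$ with $1\le b\le d\le r$ and $i_b=i_d$, the corresponding $X_{k,\ell}$ sweep out exactly $\{X_{k,\ell} : 1 \le k \le n,\ \ell \in [\,\text{range}\,]\}$; intersecting with the $D[b,b]$ (i.e. $d=b$, giving $k=1$) yields the generators $\{X_{1,\ell}\}$, and one reads off from $b,d \in [1,2n]$ with $d=b$ that $\ell = n+1-b$ ranges over $[1-n,n]$.

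The final sentence of the Lemma then follows formally: by the last assertion of Theorem~\ref{thm:GLSunipotent}, $\Q(q^{1/2}) \otimes_{\Z[q^{\pm 1}]} A_q(\fn((s_0s_1)^n))$ is an iterated skew polynomial ring in $\{D[b,b]\}_{1 \leq b \leq 2n}$, hence generated by them; transporting through the composite isomorphism and using the identification $D[b,b] \leftrightarrow X_{1,n+1-b}$ just established shows $\Q(q^{1/2}) \otimes_{\Z[q^{\pm 1/2}]} \cA_{(L_n, \wt{B}_n)}$ is generated by $\{X_{1,\ell}\}_{\ell \in [1-n,n]}$.

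I expect the main obstacle to be purely combinatorial/bookkeeping: pinning down the exact form of $D[b,d] \mapsto D[b^+,d^+]$ for this specific $\mb{i}$ (in particular which endpoint moves and by how much at each step of the distinguished sequence), and confirming that the order in which the distinguished sequence performs its mutations is compatible — step by step, cluster by cluster — with the inductive definition of the $X_{k,\ell}$ and with the commuting-mutations structure recorded in Proposition~\ref{prop:mutequiv} and Figure~\ref{fig:mutseq}. Once the correspondence $(b,d) \leftrightarrow (k,\ell)$ is seen to intertwine the two mutation sequences, the ``up to a power of $q^{1/2}$'' caveat and the generation statement are immediate, since powers of $q^{1/2}$ are harmless for both the cluster-variable identification and the generating property.
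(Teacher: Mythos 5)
Your overall strategy is the same as the paper's: the paper's "proof" of this lemma is really the paragraph preceding it, which transports the distinguished mutation sequence of \cite{GLS11} across the two isomorphisms and reads off the indexing. You correctly identify the initial-cluster correspondence, correctly use the last sentence of Theorem~\ref{thm:GLSunipotent} to get the generation statement, and correctly reduce the final sentence to the identification $D[b,b] \leftrightarrow X_{1,n+1-b}$.

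However, there is a genuine arithmetic error in the middle step that would derail the verification if you carried it out as written. You describe the elementary move $D[b,d] \mapsto D[b^+,d^+]$ as sending $(b,d)$ to $(b+2,d)$ \emph{or} $(b,d+2)$, i.e.\ moving only one endpoint. But $b^+$ and $d^+$ are \emph{both} shifted: since consecutive occurrences of a letter in $\mb{i}=(0,1,\dotsc,0,1)$ are two apart, the move is $(b,d)\mapsto(b+2,d+2)$ (when both endpoints have successors). Under the correspondence $k=1+\tfrac12(d-b)$, $\ell=n+1-\tfrac12(b+d)$, this move \emph{fixes} $k$ and \emph{lowers $\ell$ by~$2$}, which is exactly what the exchange relation $X_{k,m-1}X_{k,m+1}=X_{k,m}^2+\cdots$ demands (a mutation at a $(k,m)$-slot replaces $X_{k,m\pm1}$ by $X_{k,m\mp1}$, changing $\ell$ by~$2$). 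In contrast, the effects you wrote down — "shifting $d$ by $2$ fixes $k$ and lowers $\ell$ by $1$" (actually it \emph{raises} $k$ by $1$), and "shifting $b$ by $2$ lowers $k$ by $1$ and lowers $\ell$ by $1$" — change $\ell$ by~$1$, which is incompatible with the very exchange relation you cite. You did flag this bookkeeping as the likely obstacle, and indeed it is: once the elementary move is corrected to $(b,d)\mapsto(b+2,d+2)$ the verification proceeds as you intend, and the distinguished sequence, read through the dictionary, precisely reproduces the inductive definition of the $X_{k,\ell}$ (mutations at $(k,\ell)$-slots that step $\ell$ down by $2$ at fixed $k$).
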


\begin{figure}
\centering
\begin{tikzpicture}[thick,>=\arrtip]
\newcommand*{\Ddotsdist}{2}
\newcommand*{\shft}{1}
\newcommand*{\DrawDots}[1]{
  \fill ($(#1) + .25*(\Ddotsdist,0)$) circle (.03);
  \fill ($(#1) + .5*(\Ddotsdist,0)$) circle (.03);
  \fill ($(#1) + .75*(\Ddotsdist,0)$) circle (.03);
}
\node (a) at (-4,0) {$X_{1,-2}$};
\node (b) at (-2,0) {$X_{1,-1}$};
\node (c) at (0,0) {$X_{1,0}$};
\node (d) at (2,0) {$X_{1,1}$};
\node (e) at (4,0) {$X_{1,2}$};
\node (f) at (6,0) {$X_{1,3}$};
\node (g) at (-2,-2) {$X_{2,-1}$};
\node (h) at (0,-2) {$X_{2,0}$};
\node (i) at (2,-2) {$X_{2,1}$};
\node (j) at (4,-2) {$X_{2,2}$};
\node (k) at (0,-4) {$X_{3,0}$};
\node (l) at (2,-4) {$X_{3,1}$};
\node (m) at (-6,0) {$X_{1,-3}$};
\node (n) at (-4,-2) {$X_{2,-2}$};
\node (o) at (-2,-4) {$X_{3,-1}$};
\node (p) at (0,-6) {$X_{4,0}$};
\node (q) at (8,0) {$X_{1,4}$};
\node (r) at (6,-2) {$X_{2,3}$};
\node (s) at (4,-4) {$X_{3,2}$};
\node (t) at (2,-6) {$X_{4,1}$};

\draw[ultra thick, blue, rounded corners=1ex]  (-.5,-6.5) -- (2.5,-6.5) -- (8.8,-.2)  -- (8.8,.5) -- (5.5,.5) -- (-1,-6) -- (-1,-6.1);
\draw[ultra thick, blue, rounded corners=1ex]  (-.5,-6.5) -- (-6.8,-.2) -- (-6.8,.5) -- (-3.5,.5) -- (3,-6) -- (3,-6.1);
\draw[ultra thick, blue, rounded corners=2ex] (-1,-6.5) -- (3,-6.5) -- (3,.5) -- (-1,.5) -- cycle;
\draw[ultra thick, red, rounded corners=2ex] (-7,-.4) -- (-7,.7) -- (9,.7) -- (9,-.4) -- cycle;

\foreach \s/\t in {d/c, i/h, l/k} {
  \draw [->,shorten <=6mm,shorten >=6mm] ($(\s)+(0,0.06)$) to ($(\t)+(0,0.06)$);
  \draw [->,shorten <=6mm,shorten >=6mm] ($(\s)-(0,0.06)$) to ($(\t)-(0,0.06)$);
};
\foreach \s/\t in {h/d, k/i, p/l} {
  \draw [->,shorten <=4mm,shorten >=4mm] ($(\s)$) to ($(\t)$);
};
\foreach \s/\t in {h/l, c/i, k/t} {
  \draw [->,shorten <=4mm,shorten >=4mm] ($(\s)$) to ($(\t)$);
};

\end{tikzpicture}
\caption{}\label{fig:trapezoid}
\end{figure}

\begin{Example}
In Figure \ref{fig:trapezoid} we diagram the quantum cluster variables of $\cA_{(L_4, \wt{B}_4)}$ corresponding to the unipotent minors $D[b,d]$ for $n=4$. The collections circled in blue are clusters. The middle is our initial cluster with its quiver superimposed. The right is the result of the mutation sequence $\mu$, identified with the initial cluster of $\cA_{(L(\mb{i}), \wt{B}(\mb{i}))}$ by Proposition \ref{prop:mutequiv}. The left is the result of applying the distinguished mutation sequence of \cite{GLS11} to the right cluster, and as discussed in the next section its exchangeable variables are related to those of the right cluster by the quantum twist. The collection circled in red is the generating set corresponding to the dual PBW generators of $A_q(\fn((s_0s_1)^n))$. 
\end{Example}

We now return to the quantum monoidal seed $(\{\cP_{k,\ell}\}_{(k,\ell) \in I}, \wt{B}_n)$ from the beginning of the section. Recall that we denote by $\cR_q(L_n) \subset \cT_q(L_n)$ the quantum affine space with $q$-commutation relations given by $L_n$, and that the classes of the $\{\cP_{k,\ell}\}_{(k,\ell) \in I}$ generate an embedding of $\cR_q(L_n)$ into $K^{GL_n(\cO) \rtimes \Gm}(\Gr_{GL_n})$.

\begin{Lemma} \label{lem:Ore}
For any reductive $G$ we can embed $K^{G(\cO) \rtimes \Gm}(\Gr_G)$ into a division ring.  
\end{Lemma}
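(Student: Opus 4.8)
The plan is to realize $K := K^{G(\cO) \rtimes \Gm}(\Gr_G)$ as a left and right Noetherian domain. By Goldie's theorem a Noetherian domain satisfies both the left and right Ore conditions, hence embeds into its (division) ring of fractions, and this is the division ring we want. Both the domain property and Noetherianity will be read off from the filtration on $K$ supplied by Proposition~\ref{prop:leadingterms}. (If a ground field is preferred, one may first replace $K$ by $\Q(q^{1/2}) \otimes_{\Z[q^{\pm 1/2}]} K$, into which $K$ injects because the classes of simple objects form a $\Z[q^{\pm 1/2}]$-basis; this changes nothing in the argument.)

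First I would set up the filtration. Let $M \subset P^\vee \times P$ be the monoid of dominant pairs under addition: it is closed under addition because $P_{\lambda_1^\vee + \lambda_2^\vee} \subseteq P_{\lambda_i^\vee}$, so dominance of $\mu_i$ for the Levi of $P_{\lambda_i^\vee}$ forces dominance of $\mu_1 + \mu_2$ for that of $P_{\lambda_1^\vee + \lambda_2^\vee}$; it is cancellative and pointed, since $(\lambda^\vee,\mu)$ and $-(\lambda^\vee,\mu)$ both lying in $M$ force $\lambda^\vee = 0$ and then $\mu = 0$. Fix a linear functional $d$ on $P^\vee \times P$ that is positive on $M \smallsetminus \{0\}$ and nonnegative on all simple roots and coroots, and set $F_n K := \mathrm{span}_{\Z[q^{\pm 1/2}]}\{[\cP_{\lambda^\vee,\mu}] : d(\lambda^\vee,\mu) \le n\}$. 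By Proposition~\ref{prop:leadingterms} (and, as shown in its proof, using that every weight $\mu$ occurring in $V_{\mu_1} \otimes V_{\mu_2}$ satisfies $\mu_1+\mu_2-\mu \in \sum \Z_{\ge 0}\alpha_i$) the nonleading terms of $[\cP_{\lambda_1^\vee,\mu_1}] \conv [\cP_{\lambda_2^\vee,\mu_2}]$ are supported at pairs $(\lambda^\vee,\mu)$ with $\lambda_1^\vee+\lambda_2^\vee-\lambda^\vee$ a nonnegative sum of simple coroots and $\mu_1+\mu_2-\mu$ a nonnegative sum of simple roots. Hence $F_m K \cdot F_n K \subseteq F_{m+n} K$, each $F_n K$ has finite rank, and the associated graded $\mathrm{gr}\,K$ is the twisted monoid algebra of $M$ over $\Z[q^{\pm 1/2}]$, with the central cocycle $\sigma\big((\lambda_1^\vee,\mu_1),(\lambda_2^\vee,\mu_2)\big) = q^{\langle\lambda_1^\vee,\mu_2\rangle-\langle\lambda_2^\vee,\mu_1\rangle}$.

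Next I would check that $\mathrm{gr}\,K$ is a Noetherian domain. Since $M$ embeds into the free abelian group $P^\vee \times P$, the twisted group algebra of $P^\vee\times P$ is a quantum torus over $\Z[q^{\pm 1/2}]$ — an iterated skew-Laurent extension of $\Z[q^{\pm 1/2}]$ — hence a Noetherian domain; as $\mathrm{gr}\,K$ is the subalgebra spanned by the monomials lying in $M$, it is a domain. It is Noetherian because $M$ is finitely generated: this is the classical finiteness of the (multigraded) semigroup of sections of line bundles over the partial flag varieties of $G$, and is checked directly for $G = GL_n$. Given a finite generating set $g_1,\dots,g_r$ of $M$, the map sending $y_i$ to $[\cP_{g_i}]$ is a surjection onto $\mathrm{gr}\,K$ from the quantum affine space $\Z[q^{\pm 1/2}]_{\mathbf q}[y_1,\dots,y_r]$ (with $\mathbf q$ the matrix of the $q$-commutation exponents of the $[\cP_{g_i}]$), and a quantum affine space, being an iterated skew polynomial ring over a Noetherian ring, is Noetherian; hence so is its quotient $\mathrm{gr}\,K$.

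Finally I would transfer these properties to $K$ by the standard filtered-ring bookkeeping: for nonzero $a,b\in K$, choosing a total order on $M$ compatible with addition that refines the order of the previous step, the leading term of $ab$ is the product of the leading terms of $a$ and $b$ (a unit $q^{\bullet}$ times the product of their leading coefficients, by Proposition~\ref{prop:leadingterms}), so $ab \ne 0$ and $K$ is a domain; and an exhaustive $\Z_{\ge 0}$-filtration with Noetherian associated graded forces $K$ to be left and right Noetherian. Goldie's theorem then gives the Ore conditions and the embedding $K \hookrightarrow \mathrm{Frac}(K)$. The one nonformal ingredient — where I expect to spend the most care — is the finite generation of the monoid $M$ of dominant pairs, equivalently the Noetherianity of $\mathrm{gr}\,K$; the rest is routine.
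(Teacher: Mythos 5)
Your high-level strategy (embed via Ore, deduce Ore from a well-chosen filtration on $K$) is the same as the paper's, but your single-step filtration by a linear functional fails at several points, all traceable to the fact that for $\lambda^\vee \ne 0$ the $\mu$-component of a dominant pair ranges over \emph{all} of $P$, not just a proper cone. First, no linear functional $d$ positive on $M\smallsetminus\{0\}$ exists once $G$ is nonabelian: already for $G=GL_2$, $M$ contains $\bigl((1,0),(0,m)\bigr)$ for every $m\in\Z$ (the Levi of $P_{(1,0)}$ is the torus, so $\mu$ is unconstrained), forcing any such $d$ to vanish on the $\mu$-directions, whereupon $d$ vanishes on the nonzero element $\bigl((0,0),(1,0)\bigr)\in M$. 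Second, even granting a $d$, Proposition~\ref{prop:leadingterms} gives a bound on $\mu$ (via $\lVert\mu\rVert^2\le\lVert\mu_1\rVert^2+\lVert\mu_2\rVert^2$) \emph{only} for the terms with $\lambda^\vee=\lambda_1^\vee+\lambda_2^\vee$; for the terms with $\lambda^\vee<\lambda_1^\vee+\lambda_2^\vee$ it places no constraint on $\mu$ whatsoever, so the parenthetical you invoke does not cover those terms and the inclusion $F_mK\cdot F_nK\subseteq F_{m+n}K$ is not established (nor is $\gr K$ a twisted monoid algebra of $M$). Third, the monoid $M$ is in fact \emph{not} finitely generated: writing $\bigl((1,0),(0,-m)\bigr)\in M$ for $GL_2$ as a sum of generators forces exactly one summand of the form $\bigl((1,0),\mu\bigr)$ together with summands $\bigl((0,0),\mu'\bigr)$ with $\mu'$ dominant, and the latter only push the $\mu$-coordinate in the dominant direction, so infinitely many generators $\bigl((1,0),\mu\bigr)$ are unavoidable.

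The paper avoids all three problems with a two-step filtration. One first filters by dimension of support, $\langle\lambda^\vee,2\rho\rangle$; this \emph{is} multiplicative (because $\lambda^\vee<\lambda_1^\vee+\lambda_2^\vee$ strictly drops the dimension and no control over $\mu$ is needed at this stage), though its pieces have infinite rank. In the associated graded, the only surviving lower-order terms have $\lambda^\vee=\lambda_1^\vee+\lambda_2^\vee$, where the norm bound from Proposition~\ref{prop:leadingterms} does apply; one then filters by $\sqrt{\lVert\lambda^\vee\rVert^2+\lVert\mu\rVert^2}$, whose pieces are finite-dimensional over $\Q(q^{1/2})$ with polynomial growth. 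Rather than Goldie's theorem via Noetherianity (which your argument cannot supply, since $M$ is not finitely generated), the paper invokes the classical fact (as in [BZ05, Prop.~11.1]) that a filtered domain of polynomial growth over a field is automatically Ore. You should replace your single filtration by this two-step one and swap the Noetherian/Goldie argument for the growth criterion.
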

\begin{proof}
Since $K^{G(\O) \rtimes \Gm}(\Gr_G)$ is free as a $\Z[q^{\pm 1/2}]$-module it embeds into $\Q(q^{1/2}) \otimes_{\Z[q^{\pm 1/2}]} K^{G(\O) \rtimes \Gm}(\Gr_G)$, hence it suffices to show the claim for $\Q(q^{1/2}) \otimes_{\Z[q^{\pm 1/2}]} K^{G(\O) \rtimes \Gm}(\Gr_G)$. Consider the filtration of $\Q(q^{1/2}) \otimes_{\Z[q^{\pm 1/2}]} K^{G(\O) \rtimes \Gm}(\Gr_G)$ given by dimension of support. It suffices to show the associated graded algebra is an Ore domain \cite{Coh61}. 

We can now define a filtration on $\gr \,\Q(q^{1/2}) \otimes_{\Z[q^{\pm 1/2}]} K^{G(\cO) \rtimes \Gm}(\Gr_G)$ where the $r$th filtered piece is spanned over $\Q(q^{1/2})$ by the $\cP_{\l^\vee\!,\,\mu}$ with $\lVert \l^\vee \rVert^2 + \lVert \mu \rVert^2 \leq r^2$ for an arbitrary fixed $W$-invariant quadratic form $\lVert \cdot \rVert^2$. By Proposition \ref{prop:leadingterms} multiplication respects this filtration. Moreover, as a $\Q(q^{1/2})$-algebra filtration it is of polynomial growth, which implies that $\gr \,\Q(q^{1/2}) \otimes_{\Z[q^{\pm 1/2}]} K^{G(\cO) \rtimes \Gm}(\Gr_G)$ is Ore (c.f. \cite[Proposition 11.1]{BZ05}).
\end{proof}

\begin{Proposition}\label{prop:K0containment}
The embedding $\cR_q(L_n) \into K^{GL_n(\cO) \rtimes \Gm}(\Gr_{GL_n})$ generated by the classes $\{[\cP_{k,\ell}]\}_{(k,\ell) \in I}$ extends to an embedding
\begin{equation}\label{eq:qrationalembed}
\Q(q^{1/2}) \otimes_{\Z[q^{\pm 1/2}]} \cA^{loc}_{(L_n, \wt{B}_n)} \into \Q(q^{1/2}) \otimes_{\Z[q^{\pm 1/2}]} K^{GL_n(\cO) \rtimes \Gm}(\Gr_{GL_n})
\end{equation}
of $\Q(q^{1/2})$-algebras. This embedding has the property that if $k + \ell \leq n+1$ and $k - \ell \leq n$, $[\cP_{k,\ell}]$ maps to the quantum cluster variable $X_{k,\ell}$.
\end{Proposition}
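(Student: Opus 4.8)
The plan is to build the embedding (\ref{eq:qrationalembed}) by composing the isomorphism of Lemma \ref{lem:keyminors} with the map induced by the classes $\{[\cP_{k,\ell}]\}_{(k,\ell)\in I}$, and then check compatibility. First I would invoke Lemma \ref{lem:Ore} to embed $K^{GL_n(\cO)\rtimes\Gm}(\Gr_{GL_n})$ into a division ring $\cF$; tensoring with $\Q(q^{1/2})$ preserves this. Since by Proposition \ref{prop:comm} the classes $\{[\cP_{k,\ell}]\}_{(k,\ell)\in I}$ $q$-commute with the exponents prescribed by $L_n$, and by Proposition \ref{prop:leadingterms} the normalized monomials in them are linearly independent (their leading terms are distinct simple classes), the assignment $X^{e_i}\mapsto [\cP_i]$ defines an injection $\cR_q(L_n)\into K^{GL_n(\cO)\rtimes\Gm}(\Gr_{GL_n})$, hence a homomorphism $\cT_q(L_n)\to\cF$ which is injective because $\cR_q(L_n)$ generates $\cT_q(L_n)$ as a localization and $\cF$ is a domain. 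This extends to the skew field of fractions $\cF_q(L_n)\into\cF$, and the image of the subalgebra $\Q(q^{1/2})\otimes\cA^{loc}_{(L_n,\wt B_n)}\subset\cF_q(L_n)$ gives the desired embedding once we know it factors through $K$ itself rather than just $\cF$ — which is automatic for $\cA^{loc}$ since frozen variables $[\cP_{n,0}],[\cP_{n,1}]$ are invertible in $K$ (the $\cP_{n,\ell}$ are invertible objects, c.f. Section \ref{sec:geometryGL}).

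The second and main part is to show that under this embedding $[\cP_{k,\ell}]$ maps to the quantum cluster variable $X_{k,\ell}$ whenever $k+\ell\le n+1$ and $k-\ell\le n$. The idea is induction on the mutation distance from the initial cluster. For the initial cluster $(k,\ell)\in I$ (i.e. $\ell\in\{0,1\}$) this holds by construction: the map sends $X^{e_{(k,\ell)}}=X_{k,\ell}$ to $[\cP_{k,\ell}]$. For the inductive step, recall that $X_{k,\ell}$ for general $\ell$ was \emph{defined} in the text as the cluster variable produced by mutating at $(k,\ell-1)$ (for $\ell\ge 2$) or $(k,1)$ (for $\ell\le -1$), and the exchange relation for that mutation is, up to the grading shifts dictated by $L_n$ and $\wt B_n$, the relation
\[
X_{k,\ell-1}X_{k,\ell+1}=q^{\star}X_{k,\ell}^2+q^{\star\star}X_{k-1,\ell}X_{k+1,\ell}
\]
(with $X_{0,\ell},X_{n,\ell}$ interpreted via the frozen/boundary conventions). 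On the categorical side, Proposition \ref{prop:mutation} gives exact sequences whose Grothendieck classes are exactly
\[
[\cP_{k,\ell-1}\conv\cP_{k,\ell+1}]=q^{?}[\cP_{k,\ell}\conv\cP_{k,\ell}]+q^{??}[\cP_{k-1,\ell}\conv\cP_{k+1,\ell}],
\]
and by Proposition \ref{prop:comm} together with Proposition \ref{prop:initialcluster} the grading shifts match those of the quantum exchange relation. So if $[\cP_{k,\ell-1}],[\cP_{k,\ell+1}],[\cP_{k-1,\ell}],[\cP_{k+1,\ell}]$ have all been identified with the corresponding $X$'s, then since $\cF$ is a domain and $[\cP_{k,\ell}]$ ($[\cP_{k,\ell+1}]$, resp.) is uniquely determined by the exchange relation as the other term, it must equal $X_{k,\ell}$ ($X_{k,\ell+1}$, resp.). One has to be careful to organize the induction so that at each stage the four "input" classes are already known; the region $\{k+\ell\le n+1,\ k-\ell\le n\}$ is precisely the set reachable by the mutation sequences defining the $X_{k,\ell}$ while staying among the $\cP$'s covered by Propositions \ref{prop:comm} and \ref{prop:mutation}, so a lexicographic induction on $(|{\ell-\tfrac12}|, k)$ or the like will work.

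The step I expect to be the main obstacle is verifying that the \emph{grading shifts} in the categorical exchange sequences of Proposition \ref{prop:mutation} agree on the nose with those produced by the quantum cluster mutation formula applied to $(L_n,\wt B_n)$ — i.e. that the powers of $q^{1/2}$ match, not merely that the relations match up to an overall scalar. This requires combining: the explicit shift $\frac12\Lambda(\cP_{k,1},\cP_{k,-1})=2k$ (computed in Proposition \ref{prop:initialcluster} via $\cP_{k,1}\hconv\cP_{k,-1}\cong\cP_{k,0}^2$), the $q$-commutation exponents from Proposition \ref{prop:comm}, and the recursion $\Lambda(X_{k,m},X_{n,\ell})$ worked out in the proof of Proposition \ref{prop:mutequiv}. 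A clean way to handle this is to note that mutations preserve compatible pairs (Proposition \ref{prop:KKKO6.2b}) and that the ambient product $L_n\wt B_n$ is mutation-invariant, so it suffices to pin down the shift at one mutation in each $k$-tower and propagate; still, bookkeeping the half-integer normalizations $\la 1\ra=[1]\{-1\}$ consistently with the $\Z[q^{\pm1/2}]$ versus $\Z[q^{\pm1}]$ forms (Remark \ref{rem:Gmandq}) is where the real care is needed. Once this matching is in place, injectivity of (\ref{eq:qrationalembed}) follows because $\cR_q(L_n)\into K$ is injective and any quantum cluster algebra is generated over its initial quantum torus by Laurent monomials, so the embedding $\cF_q(L_n)\into\cF$ restricts to an embedding on $\cA^{loc}_{(L_n,\wt B_n)}$.
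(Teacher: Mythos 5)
Your backbone matches the paper's: embed $K^{GL_n(\cO)\rtimes\Gm}(\Gr_{GL_n})$ into a division ring $\cF$ via Lemma~\ref{lem:Ore}, extend $\cR_q(L_n)\into K^{GL_n(\cO)\rtimes\Gm}(\Gr_{GL_n})$ to $\cF_q(L_n)\into\cF$, and then use the exact sequences of Proposition~\ref{prop:mutation} to identify the classes $[\cP_{k,\ell}]$ in the safe range with the quantum cluster variables $X_{k,\ell}$ inside $\cF$. That part is fine, and matching the $q^{1/2}$-powers as in the proof of Proposition~\ref{prop:initialcluster} is exactly the bookkeeping you anticipate. However there is a genuine gap in the step where you conclude that the resulting map $\Q(q^{1/2})\otimes\cA^{loc}_{(L_n,\wt{B}_n)}\to\cF$ actually lands in $\Q(q^{1/2})\otimes K^{GL_n(\cO)\rtimes\Gm}(\Gr_{GL_n})$. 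You assert this is ``automatic for $\cA^{loc}$ since frozen variables $[\cP_{n,0}],[\cP_{n,1}]$ are invertible in $K$,'' but that is false: inverting frozen variables tells you nothing about where non-initial, unfrozen cluster variables land, and the quantum Laurent phenomenon only places an element of $\cA_{(L_n,\wt{B}_n)}$ in the quantum torus $\cT_q(L_n)$ (which inverts \emph{all} initial variables, unfrozen included), not in the subring of $\cF$ generated by $\{[\cP_{k,\ell}]\}_{(k,\ell)\in I}$ and frozen inverses. The missing ingredient is precisely the second assertion of Lemma~\ref{lem:keyminors}: $\Q(q^{1/2})\otimes_{\Z[q^{\pm1/2}]}\cA_{(L_n,\wt{B}_n)}$ is generated as a $\Q(q^{1/2})$-algebra by $\{X_{1,\ell}\}_{\ell\in[1-n,n]}$, all of which lie in the safe range and hence map to classes $[\cP_{1,\ell}]\in K^{GL_n(\cO)\rtimes\Gm}(\Gr_{GL_n})$. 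You name that lemma at the top as ``the plan'' but never actually deploy it in the argument; the sentence that takes its place does not establish the claim.

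A secondary, more cosmetic issue: your characterization of the region $\{k+\ell\le n+1,\ k-\ell\le n\}$ as ``the set reachable while staying among the $\cP$'s covered by Propositions~\ref{prop:comm} and~\ref{prop:mutation}'' is imprecise, since those propositions apply to all $\cP_{k,\ell}$ with $k\in[0,n]$, $\ell\in\Z$. What actually singles out this region is that the exchange sequences used there never involve $\cP_{n,\ell'}$ with $\ell'\notin\{0,1\}$; such objects are invertible but are \emph{not} initial quantum cluster variables, and bringing them in would require rewriting them via Remark~\ref{rem:frozenidentity} and absorbing a Laurent monomial in frozen variables, which is exactly what happens outside this region (compare Lemma~\ref{lem:XtoP}).
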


\begin{proof}
Following Lemma \ref{lem:Ore}, let $\cF$ denote a division ring containing $K^{GL_n(\cO) \rtimes \Gm}(\Gr_{GL_n})$. It follows that the embedding $\cR_q(L_n) \into K^{GL_n(\cO) \rtimes \Gm}(\Gr_{GL_n})$ extends to an embedding $\cF_q(L_n) \into \cF$, where $\cF_q(L_n)$ is the division ring of fractions of $\cR_q(L_n)$.

The exact sequences of Proposition \ref{prop:mutation} imply that for the indicated values of $k, \ell$ the classes $[\cP_{k,\ell}] \in K^{GL_n(\cO) \rtimes \Gm}(\Gr_{GL_n}) \subset \cF$ are contained in $\cF_q(L_n) \subset \cF$. As in the proof of Proposition~\ref{prop:initialcluster}, comparison with the quantum exchange relations of the pair $(L_n, \wt{B}_n)$ shows that $[\cP_{k,\ell}]$ coincides with the quantum cluster variable $X_{k,\ell} \in \cA^{loc}_{(L_n, \wt{B}_n)} \subset \cF_q(L_n)$ (the indicated values of $k$, $\ell$ are exactly those such that $[\cP_{k,\ell}]$ can be computed using only instances of the exact sequences not involving $\cP_{n,\ell}$ for $\ell \notin \{0,1\}$). It follows that $K^{GL_n(\cO) \rtimes \Gm}(\Gr_{GL_n}) \subset \cF$ contains the subalgebra of $\cA_{(L_n, \wt{B}_n)} \subset \cF_q(L_n)$ generated by these quantum cluster variables. But by Lemma~\ref{lem:keyminors} $\Q(q^{1/2}) \otimes_{\Z[q^{\pm 1/2}]} \cA_{(L_n, \wt{B}_n)}$ is generated as a $\Q(q^{1/2})$-algebra by the subset $\{X_{1,\ell}\}_{\ell \in [1-n, n]}$ of these variables. Since the frozen quantum cluster variables are the classes of the invertible objects $\cP_{n,0}$ and $\cP_{n,1}$, the claim follows.
\end{proof}

\begin{Corollary}\label{cor:moncatGLn}
The quantum monoidal seed $(\{\cP_{k,\ell}\}_{(k,\ell) \in I}, \wt{B}_n)$ admits arbitrary sequences of mutations in $\cP_{coh}^{GL_n(\O) \rtimes \Gm}(\Gr_{GL_n})$.
\end{Corollary}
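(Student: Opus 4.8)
The proof is essentially an application of Theorem~\ref{thm:K3Omain}(ii) to the data assembled in the preceding results, so the plan is to verify that all hypotheses of that theorem are in place for the quantum monoidal seed $(\{\cP_{k,\ell}\}_{(k,\ell) \in I}, \wt{B}_n)$ in the category $\grl{\cC} = \cP_{coh}^{GL_n(\O) \rtimes \Gm}(\Gr_{GL_n})$. First I would recall that $\cP_{coh}^{GL_n(\O) \rtimes \Gm}(\Gr_{GL_n})$ is a $\Z$-graded, finite length, monoidal abelian category (Section~\ref{sec:coherentSatake}) which is rigid (Corollary~\ref{cor:adjoints}) and carries a system of renormalized $r$-matrices compatible with its graded realization in which every $r$-matrix is nonzero (Theorem~\ref{thm:satakermats}); in particular it has separable triple products by Proposition~\ref{prop:rigid-triple}, and $K_0$ is an integral domain since $K^{GL_n(\O) \rtimes \Gm}(\Gr_{GL_n})$ embeds in a division ring by Lemma~\ref{lem:Ore}. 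These are exactly the standing hypotheses under which Theorem~\ref{thm:K3Omain} was proved.

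Next I would invoke Proposition~\ref{prop:initialcluster}, which tells us that $(\{\cP_{k,\ell}\}_{(k,\ell) \in I}, \wt{B}_n)$ is a genuine quantum monoidal seed with coefficient matrix $L_n$ and that it admits a mutation in direction $(k,\ell)$ for every $(k,\ell) \in I_{ex}$. This supplies the first bullet of hypothesis (ii) of Theorem~\ref{thm:K3Omain}. For the second bullet I would cite Proposition~\ref{prop:K0containment}: the embedding $\cR_q(L_n) \hookrightarrow K^{GL_n(\O) \rtimes \Gm}(\Gr_{GL_n})$ generated by the classes $\{[\cP_{k,\ell}]\}_{(k,\ell) \in I}$ extends to an embedding
\[
\Q(q^{1/2}) \otimes_{\Z[q^{\pm 1/2}]} \cA^{loc}_{(L_n, \wt{B}_n)} \hookrightarrow \Q(q^{1/2}) \otimes_{\Z[q^{\pm 1/2}]} K^{GL_n(\O) \rtimes \Gm}(\Gr_{GL_n}),
\]
and a fortiori $\Q(q^{1/2}) \otimes \cA_{(L_n, \wt{B}_n)} \hookrightarrow \Q(q^{1/2}) \otimes K_0(\grl{\cC})$ extends the map $\cR_q(L_n) \hookrightarrow K_0(\grl{\cC})$. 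With both bullets of (ii) established, Theorem~\ref{thm:K3Omain} yields that $(\{\cP_{k,\ell}\}_{(k,\ell) \in I}, \wt{B}_n)$ admits the mutation sequence $\mu_{i_n} \circ \cdots \circ \mu_{i_1}$ as a quantum monoidal seed for every finite sequence $i_1, \dots, i_n \in I_{ex}$, which is precisely the assertion of the Corollary.

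The one point requiring a small amount of care is the compatibility of the bookkeeping between Proposition~\ref{prop:K0containment} and the exact statement of Theorem~\ref{thm:K3Omain}(ii): the theorem asks that the induced embedding $\cR_q(L) \hookrightarrow K_0(\grl{\cC})$ extend to an embedding of $\Q(q^{1/2}) \otimes \cA_{(L,\wt{B})}$, and here our embedding is a priori constructed on the localized algebra $\cA^{loc}_{(L_n,\wt{B}_n)}$. This is not an obstacle: $\cA_{(L_n,\wt{B}_n)}$ is a subalgebra of $\cA^{loc}_{(L_n,\wt{B}_n)}$, so restriction of the embedding of Proposition~\ref{prop:K0containment} gives the required extension, and the frozen variables $[\cP_{n,0}]$, $[\cP_{n,1}]$ land on actual classes in $K_0(\grl{\cC})$ (being the classes of the invertible objects $\cP_{n,0}$, $\cP_{n,1}$) so that no genuine localization is needed on the target side. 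I do not anticipate any serious difficulty here; the substance of the argument was already carried out in Theorem~\ref{thm:K3Omain} and in the supporting Propositions~\ref{prop:initialcluster} and~\ref{prop:K0containment}, and the Corollary is simply their conjunction.

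\begin{proof}
By Theorem~\ref{thm:satakermats} the graded category $\grl{\cC} := \cP_{coh}^{GL_n(\O) \rtimes \Gm}(\Gr_{GL_n})$ carries a system of renormalized $r$-matrices compatible with its graded realization and in which all $r$-matrices are nonzero; by Corollary~\ref{cor:adjoints} it is rigid, hence by Proposition~\ref{prop:rigid-triple} it has separable triple products; and by Lemma~\ref{lem:Ore} its Grothendieck ring embeds in a division ring, so $K_0(\grl{\cC})$ is an integral domain. Thus the standing hypotheses of Theorem~\ref{thm:K3Omain} hold. By Proposition~\ref{prop:initialcluster} the pair $(\{\cP_{k,\ell}\}_{(k,\ell) \in I}, \wt{B}_n)$ is a quantum monoidal seed with coefficient matrix $L_n$ admitting a mutation in direction $k$ for every $k \in I_{ex}$, establishing the first condition of Theorem~\ref{thm:K3Omain}(ii). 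By Proposition~\ref{prop:K0containment} the induced embedding $\cR_q(L_n) \into K_0(\grl{\cC})$ extends to an embedding $\Q(q^{1/2}) \otimes_{\Z[q^{\pm 1/2}]} \cA^{loc}_{(L_n, \wt{B}_n)} \into \Q(q^{1/2}) \otimes_{\Z[q^{\pm 1/2}]} K_0(\grl{\cC})$; restricting to the subalgebra $\cA_{(L_n, \wt{B}_n)} \subset \cA^{loc}_{(L_n, \wt{B}_n)}$ gives the extension $\Q(q^{1/2}) \otimes_{\Z[q^{\pm 1/2}]} \cA_{(L_n, \wt{B}_n)} \into \Q(q^{1/2}) \otimes_{\Z[q^{\pm 1/2}]} K_0(\grl{\cC})$ required by the second condition. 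Theorem~\ref{thm:K3Omain}(ii) now applies and shows that $(\{\cP_{k,\ell}\}_{(k,\ell) \in I}, \wt{B}_n)$ admits every sequence $\mu_{i_n} \circ \cdots \circ \mu_{i_1}$ of mutations as a quantum monoidal seed in $\grl{\cC}$.
\end{proof}
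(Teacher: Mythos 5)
Your proof is correct and takes the same route as the paper's own (which is the one-line statement "Follows immediately by combining Proposition~\ref{prop:K0containment} with Theorems~\ref{thm:K3Omain} and~\ref{thm:satakermats}"); you have simply unpacked the standing hypotheses of Theorem~\ref{thm:K3Omain} explicitly, including the localization bookkeeping between $\cA^{loc}_{(L_n,\wt{B}_n)}$ and $\cA_{(L_n,\wt{B}_n)}$, which the paper leaves implicit.
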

\begin{proof}
Follows immediately by combining Proposition \ref{prop:K0containment} with Theorems \ref{thm:K3Omain} and \ref{thm:satakermats}.
\end{proof}

As a consequence, we see that the images of all quantum cluster variables under the embedding (\ref{eq:qrationalembed}) are classes of simple perverse coherent sheaves. We immediately obtain the following.

\begin{Corollary}\label{cor:integralembedding}
The embedding (\ref{eq:qrationalembed}) is induced from an embedding
\begin{equation}\label{eq:qintembedding}
\cA^{loc}_{(L_n, \wt{B}_n)} \into K^{GL_n(\cO) \rtimes \Gm}(\Gr_{GL_n})
\end{equation}
of $\Z[q^{\pm 1/2}]$-algebras.
\end{Corollary}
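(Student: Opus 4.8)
The plan is to show that the image of the integral quantum cluster algebra $\cA^{loc}_{(L_n,\wt B_n)}$ under the $\Q(q^{1/2})$-algebra embedding (\ref{eq:qrationalembed}) is contained in the $\Z[q^{\pm 1/2}]$-subalgebra $K^{GL_n(\cO) \rtimes \Gm}(\Gr_{GL_n}) \subset \Q(q^{1/2}) \otimes_{\Z[q^{\pm 1/2}]} K^{GL_n(\cO) \rtimes \Gm}(\Gr_{GL_n})$, and then to deduce injectivity of the resulting $\Z[q^{\pm 1/2}]$-algebra map by a torsion-free descent argument.

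First I would recall that $\cA^{loc}_{(L_n,\wt B_n)}$ is generated as a $\Z[q^{\pm 1/2}]$-algebra by the quantum cluster variables of all seeds reachable by iterated mutation from the initial one, together with the inverses of the two frozen variables. By Corollary \ref{cor:moncatGLn} the quantum monoidal seed $(\{\cP_{k,\ell}\}_{(k,\ell) \in I}, \wt B_n)$ admits arbitrary sequences of mutations inside $\cP_{coh}^{GL_n(\O) \rtimes \Gm}(\Gr_{GL_n})$; transporting these across $K_0$ and comparing the resulting exchange relations with those of the pair $(L_n,\wt B_n)$ exactly as in the proof of Proposition \ref{prop:initialcluster}, one obtains that every quantum cluster variable is the image under (\ref{eq:qrationalembed}) of the class of a real simple perverse coherent sheaf, hence lies in $K^{GL_n(\cO) \rtimes \Gm}(\Gr_{GL_n})$. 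Together with the fact from Proposition \ref{prop:K0containment} that the frozen variables are the classes of the invertible objects $\cP_{n,0}$ and $\cP_{n,1}$ — whose inverses are represented by $\cP_{n,0}^{-1}$ and $\cP_{n,1}^{-1}$ — this shows that the chosen generating set of $\cA^{loc}_{(L_n,\wt B_n)}$, and therefore all of $\cA^{loc}_{(L_n,\wt B_n)}$, maps into $K^{GL_n(\cO) \rtimes \Gm}(\Gr_{GL_n})$. This yields the $\Z[q^{\pm 1/2}]$-algebra homomorphism (\ref{eq:qintembedding}) whose base change along $\Z[q^{\pm 1/2}] \to \Q(q^{1/2})$ recovers (\ref{eq:qrationalembed}).

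It then remains to check injectivity. Since $\cA^{loc}_{(L_n,\wt B_n)}$ is a subring of the division ring of fractions of the quantum torus $\cT_q(L_n)$, and $\cT_q(L_n)$ is free over $\Z[q^{\pm 1/2}]$, the algebra $\cA^{loc}_{(L_n,\wt B_n)}$ is $\Z[q^{\pm 1/2}]$-torsion-free; hence its natural map to $\Q(q^{1/2}) \otimes_{\Z[q^{\pm 1/2}]} \cA^{loc}_{(L_n,\wt B_n)}$ is injective. Composing with the injection (\ref{eq:qrationalembed}) shows (\ref{eq:qintembedding}) is injective, completing the proof. The only step demanding real attention is the compatibility assertion in the second paragraph — that the categorical mutation process provided by Corollary \ref{cor:moncatGLn} is carried by $K_0$ to the algebraic mutation process of the fixed seed $(L_n,\wt B_n)$ underlying (\ref{eq:qrationalembed}) — but this is precisely what is encoded in the application of Theorem \ref{thm:K3Omain}(ii) through Proposition \ref{prop:K0containment}, so no new computation is required and the rest is formal.
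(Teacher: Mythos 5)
Your argument is correct and matches the paper's (implicit) proof: the key point in both cases is that Corollary \ref{cor:moncatGLn}, obtained from Theorem \ref{thm:K3Omain}(ii), produces all categorical mutations, so that every quantum cluster variable maps to the class of a simple perverse coherent sheaf and hence lies in the integral K-theory ring; the paper then regards the resulting inclusion as immediate. Your extra torsion-freeness paragraph spells out the injectivity that the paper leaves implicit, which is a harmless (and correct) elaboration rather than a different method.
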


To conclude the proof that $\cP_{coh}^{GL_n(\O) \rtimes \Gm}(\Gr_{GL_n})$ is a monoidal cluster categorification it remains to show that (\ref{eq:qintembedding}) is an isomorphism.

\begin{Remark}
The existence of a Poisson isomorphism $\C \otimes \cA^{\,loc}_{\wt{B}(\mb{i})} \cong \C \otimes K^{GL_n(\cO)}(\Gr_{GL_n})$, where $\cA^{\,loc}_{\wt{B}(\mb{i})}$ is equipped with the Poisson structure induced by $L(\mb{i})$, is a special case of a result of \cite{FKRD15}. It passes through the complexified Poisson version of the isomorphism $\Q(q^{1/2}) \otimes_{\Z[q^{\pm 1/2}]} \cA_{(L(\mb{i}), \wt{B}(\mb{i}))} \cong \Q(q^{1/2}) \otimes_{\Z[q^{\pm 1}]} A_q(\fn((s_0s_1)^n))$ we have been using. Given Theorem~\ref{thm:mainfinal} it would follow from a proof of \cite[Conjecture 12.7]{GLS13} that $K^{GL_n(\cO)}(\Gr_{GL_n})$ and $\Z[q^{\pm 1/2}] \otimes_{\Z[q^{\pm 1}]} A_q(\fn((s_0s_1)^n))$ are isomorphic as $\Z[q^{\pm 1/2}]$ algebras.
\end{Remark}

Given Corollary \ref{cor:K0generators}, it suffices to show for any $\ell \in \Z$ that $[\cP_{1,\ell}]$ is contained in the image of $\cA^{loc}_{(L_n, \wt{B}_n)}$, since $[\cP_{n,0}]^{-1}$ manifestly is. In fact, for any $k \in [1,n-1]$ we will show that $[\cP_{k,\ell}]$ is the product of the quantum cluster variable $X_{k,\ell}$ and a Laurent monomial in the frozen variables, up to a power of $q$. Note that the identity implied upon specializing $q$ and the frozen variables to $1$ follows trivially from the exact sequences of Proposition \ref{prop:mutation} and the definition of the $X_{k,\ell}$ -- the necessary work amounts to bookkeeping of frozen variables and powers of $q$.

Recall the quantum monoidal seed $(\{X_{k,\ell}\}_{k +\ell \in [n,n+1]}, \mu(\wt{B}_n))$ resulting from the mutation sequence $\mu$ used in Proposition \ref{prop:mutequiv}. Consider now the following infinite mutation sequence $\mu'$ which starts at this seed.

We define $\mu'$ inductively as a series of steps consisting of $n-1$ mutations each. At the $j$th step we mutate in order at $X_{n-1,j}, X_{n-2, j+1}, \dotsc, X_{2, j+n-3}, X_{1, j+n-2}$. The unfrozen variables in the cluster resulting from the $j$th iteration are $\{X_{k,\ell}\}_{k +\ell \in [n+j,n+j+1]}.$

The sequence $\mu'$ is periodic on the principal parts of the exchange matrices that appear: after applying the $j$th step the resulting exchange matrix has the same principal part as $\mu(\wt{B}_n)$. The key to organizing the computation of the exchange relations we must understand is to have an inductive description of how $\mu'$ interacts with the frozen rows of these exchange matrices:

\begin{Lemma}\label{lem:frozenarrows}
Let $(\{X_{k,\ell}\}_{(k,\ell) \in S}, \wt{B}_n')$ be a seed occurring after applying a nonzero number of mutations in the sequence $\mu'$. Given $(k_1,\ell_1), (k_2, \ell_2) \in S$, we write $b'_{(k_1,\ell_1), (k_2, \ell_2)}$ for the corresponding entry of $\wt{B}_n'$. Then the nonzero entries in the frozen rows of $\wt{B}_n'$ are exactly those prescribed by the following conditions.
\begin{enumerate}
\item If $(n-1,j)$ and $(n-1,j+1)$ are both in $S$ for some $j \in \Z$, then $b'_{(n,1),(n-1,j+1)} = -j$ and $b'_{(n,1),(n-1,j)} = j+1$.
\item If $(1,j)$ and $(1, j+1)$ are both in $S$ for some $j \geq n$, then $b'_{(n,0),(1,j)} = j-n$ and $b'_{(n,0),(1,j+1)} = 1 + n - j$.
\item If $(j, n-j)$ and $(j+1,n-j+1)$ are both in $S$ for some $1 \leq j \leq n-2$, then $b'_{(n,0),(j,n-j)} = -1$ and $b'_{(n,0),(j+1,n-j+1)} = 1$.
\end{enumerate}
The last situation does not occur after the first $n-1$ mutations in $\mu'$. In particular, after the first $n-1$ mutations the quiver of $\wt{B}_n'$ has no arrows between the $(n,1)$th vertex and the $(k,\ell)$th vertex unless $k = n-1$, and no arrows between the $(n,0)$th vertex and the $(k,\ell)$th vertex unless $k = 1$.
\end{Lemma}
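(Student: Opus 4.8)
The proof of Lemma \ref{lem:frozenarrows} will proceed by induction on the number of steps in the mutation sequence $\mu'$, tracking the frozen rows of the exchange matrices. The plan is to first establish the base case by direct computation: after the mutation sequence $\mu$ of Proposition \ref{prop:mutequiv} (which we have already described pictorially in Figure \ref{fig:mutseq}), the seed $(\{X_{k,\ell}\}_{k+\ell \in [n,n+1]}, \mu(\wt{B}_n))$ has frozen rows that can be read off from $\mu(L_n) \mu(\wt{B}_n) = L_n \wt{B}_n = 2\Id_{I_{ex}\times I_{ex}}$ together with the explicit formula for $L(\mb{i}) = \mu(L_n)$ given by Lemma \ref{lem:minorslambda}. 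Concretely, since $\mu(\wt{B}_n) = \wt{B}(\mb{i})$ and the frozen vertices correspond to the indices $2n-1$ and $2n$ in the $\mb{i}$-labeling, the nonzero entries of the frozen rows of $\wt{B}(\mb{i})$ are governed by the combinatorics of $k^-, k^+$ in the definition preceding Theorem \ref{thm:GLSunipotent}, which one checks matches conditions (1)--(3) at $j$ near $0$ (so that condition (3) is active only in the initial configuration).

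Next I would carry out the inductive step. Assume the claim holds for the seed obtained after $j$ full steps of $\mu'$, so its principal part agrees with that of $\mu(\wt{B}_n)$ and its frozen rows are as prescribed. The $(j+1)$st step mutates in order at $X_{n-1,j+1}, X_{n-2,j+2}, \dotsc, X_{1,j+n-1}$. Here I would use the mutation rule for $\wt{B}$, namely $\mu_k(\wt{B})_{il} = -b_{il}$ if $i = k$ or $l = k$, and $b_{il} + \tfrac12(|b_{ik}|b_{kl} + b_{ik}|b_{kl}|)$ otherwise, applied row-by-row to the two frozen rows. The key observation making this tractable is that by the inductive hypothesis, before the $(j+1)$st step the only arrows touching a frozen vertex go to the $X_{n-1,\bullet}$ vertices (from $(n,1)$) and the $X_{1,\bullet}$ vertices (from $(n,0)$); moreover the principal quiver is the periodic ``trapezoidal'' quiver of Figure \ref{fig:trapezoid}, so the local configuration around each mutation vertex is completely explicit. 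One then simply tracks how mutating at $X_{n-k,j+k}$ (for $k = 1, \dotsc, n-1$ in order) transports the frozen arrows: mutating at $X_{n-1,j+1}$ moves the arrows from the $(n,1)$-vertex one rung down the ladder and updates their multiplicities from $(j+1, -j)$ to $(j+2, -(j+1))$ via the $e_{ij}, f_{ij}$ matrices of Section \ref{sec:monoidalseeds}, while the chain of mutations at $X_{n-2,j+2}, \dotsc, X_{1,j+n-1}$ propagates the $(n,0)$-arrows analogously, eventually reaching the $X_{1,\bullet}$ row and updating its multiplicities from $(j-n, 1+n-j)$ to $(j+1-n, n-j)$. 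The periodicity of the principal part, which we already know abstractly since the sequence $\mu'$ is the ``reflection functor'' pattern, guarantees no other frozen arrows are created.

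The main obstacle I expect is the bookkeeping in the inductive step: verifying that the composite of $n-1$ mutations within a single step of $\mu'$ produces exactly the stated shift in the multiplicities $b'_{(n,1),(n-1,j)}$ and $b'_{(n,0),(1,j)}$, and in particular that no spurious arrows appear between a frozen vertex and some $X_{k,\ell}$ with $1 < k < n-1$ (such arrows could a priori be generated transiently during the intermediate mutations and would need to cancel by the end of the step). I would handle this by working entirely at the level of quivers rather than matrices, drawing the local picture around each mutation vertex as in Figures \ref{fig:mutseq} and \ref{fig:trapezoid}, and observing that at each intermediate stage the vertex being mutated is a source-sink-like configuration relative to the frozen vertices (it has frozen arrows on at most one side), so that Fomin--Zelevinsky mutation introduces no new frozen arrows except the one predicted. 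The final sentence of the lemma --- that condition (3) is inactive after the first $n-1$ mutations, and hence the frozen vertices connect only to $X_{n-1,\bullet}$ and $X_{1,\bullet}$ --- then follows immediately, since after step $j \geq 1$ every unfrozen vertex $(k,\ell)$ in the cluster has $k + \ell \geq n + 1$, so no pair $(j,n-j),(j+1,n-j+1)$ with $1 \leq j \leq n-2$ can occur.
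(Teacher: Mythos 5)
Your plan matches the paper's proof in spirit: both argue inductively along $\mu'$ by tracking frozen rows under explicit quiver mutations, omitting the full bookkeeping as "easy to check pictorially." The paper, however, isolates a cleaner key observation that your proposal circles around without quite naming: a triangle on vertices $i,j,k$ with $2$ arrows $i\to j$, $m{+}1$ arrows $j\to k$, $m$ arrows $k\to i$ mutates (at $j$) to the same shape with multiplicities $(2,m{+}2,m{+}1)$. This is precisely the local mechanism that drives the linear growth of the frozen multiplicities in conditions (1) and (2), and it is what makes the "no spurious frozen arrows" claim actually verifiable.

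A few of your phrases are weaker than what is needed. The remark that "the periodicity of the principal part \ldots guarantees no other frozen arrows are created" is a non sequitur: the frozen rows are \emph{not} periodic (they grow unboundedly), so periodicity of the principal part alone tells you nothing about them. Likewise, the observation that the mutated vertex "has frozen arrows on at most one side" is insufficient by itself: mutation at $v$ can create a new arrow between a frozen vertex $f$ and a non-frozen neighbor $w$ of $v$ precisely when $b_{fv}$ and $b_{vw}$ have the same sign, and ruling that out requires knowing the specific orientation of the arrows between $v$ and its non-frozen neighbors in the periodic quiver, not merely the disposition of the frozen arrows at $v$. The paper's triangle computation is exactly what packages this sign check. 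Finally, your induction is "by full step," but the lemma is stated for every seed after a nonzero number of mutations, so you should either verify the claims at each intermediate mutation within a step or note explicitly that the intermediate states still satisfy them (your $n=3$-style local picture does show this, but your write-up doesn't say so). None of these defeats the approach; they are exactly the bookkeeping the paper declines to spell out, but you should be aware that the "source/sink" heuristic, as stated, does not close the argument on its own.
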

\begin{proof}
As the claims are easy to check pictorially using the relevant quivers, we omit the details. The key inductive reason the first two claims hold is the following. Suppose one has a quiver with vertices $i$, $j$, $k$, and with 2 arrows from $i$ to $j$, $m+1$ arrows from $j$ to $k$, and $m$ arrows from $k$ to $i$. Then after mutating at vertex $j$ there are 2 arrows from $j$ to $i$, $m+2$ arrows from $i$ to $k$, and $m+1$ arrows from $k$ to $j$.
\end{proof}

As before given $q$-commuting elements $X$, $Y$ of a $\Z[q^{\pm1/2}]$-algebra, we define $\Lambda(X,Y)$ by $q^{\Lambda(X,Y)}XY = YX$.
As in Section \ref{sec:monoidalseeds}, if $\cF$ and $\cG$ are objects of $\cP_{coh}^{GL_n(\O) \rtimes \Gm}(\Gr_{GL_n})$ we set $\cF \odot \cG := (\cF \conv \cG)\{- \frac12 \Lambda(\cF,\cG)\}$. Similarly, given $q$-commuting elements $X$, $Y$ of a $\Z[q^{\pm1/2}]$-algebra we write $X \odot Y := q^{\frac12 \Lambda(X,Y)}XY$

\begin{Lemma}\label{lem:XtoP}
For any $k \in [1,n-1]$, $\ell \in \Z$ we have
$$ X_{k,\ell} = \begin{cases} [\cP_{k,\ell} \odot \cP_{n,0}^{\ell+k-n-1}] & \ell + k > n-1 \\ [\cP_{k,\ell} \odot \cP_{n,1}^{k - \ell - n}] & k - \ell > n \\ [\cP_{k,\ell}] & \text{otherwise} \end{cases} $$
in $\cA^{loc}_{(L_n, \wt{B}_n)} \subset K^{GL_n(\cO) \rtimes \Gm}(\Gr_{GL_n})$.
\end{Lemma}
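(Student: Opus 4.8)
The plan is to prove Lemma~\ref{lem:XtoP} by induction along the infinite mutation sequence $\mu'$ described just before the statement, tracking carefully how the frozen variables and powers of $q^{1/2}$ enter. The starting point is the identification from Proposition~\ref{prop:mutequiv} and Lemma~\ref{lem:keyminors}: the seed $(\{X_{k,\ell}\}_{k+\ell\in[n,n+1]},\mu(\wt B_n))$ is the initial seed of $\cA_{(L(\mb i),\wt B(\mb i))}$, and for these ``boundary'' indices we already know from Proposition~\ref{prop:K0containment} that $X_{k,\ell}=[\cP_{k,\ell}]$ (these are exactly the $(k,\ell)$ with $k+\ell\le n+1$ and $k-\ell\le n$, i.e.\ the ``otherwise'' case, including the diagonal strip). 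The exchange relations generating $\{X_{k,\ell}\}$ for larger/smaller $\ell$ are recorded in Proposition~\ref{prop:mutation}, so the content of the Lemma is purely bookkeeping: one must check that the $q$-power and frozen-variable corrections in Proposition~\ref{prop:mutation} match the normalization $X_{k,\ell}=[\cP_{k,\ell}\odot\cP_{n,0}^{\ell+k-n-1}]$ (resp.\ $[\cP_{k,\ell}\odot\cP_{n,1}^{k-\ell-n}]$) dictated by the cluster algebra.

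Concretely, I would argue as follows. Suppose by induction that $X_{k',\ell'}=[\cP_{k',\ell'}\odot\cP_{n,0}^{c(k',\ell')}]$ for all $(k',\ell')$ appearing before the mutation under consideration, where $c(k',\ell')=\max(0,\ell'+k'-n-1)$ (and symmetrically for $\cP_{n,1}$ on the other end). The mutation of $X_{k,\ell}$ that produces $X_{k,\ell+1}$ is governed by the first exact sequence of Proposition~\ref{prop:mutation} applied with $\cP_{k,\ell-1},\cP_{k,\ell},\cP_{k,\ell+1},\cP_{k\pm1,\ell}$. Passing to $K$-theory and using the $q$-commutation relations of Proposition~\ref{prop:comm} together with Remark~\ref{rem:comm} (which controls the $q$-commutation of $\cP_{k,\bullet}$ with the frozen $\cP_{n,\bullet}$), this becomes exactly the quantum exchange relation $X_{k,\ell}\odot X_{k,\ell+1}$ (up to the prescribed monomial in $\cP_{n,0}$). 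The matching of the frozen exponents is dictated precisely by Lemma~\ref{lem:frozenarrows}: the entries $b'_{(n,0),(1,j)}=j-n$, $b'_{(n,0),(1,j+1)}=1+n-j$, etc., are the multiplicities of the frozen variable appearing on the two sides of the exchange relation, and the induction step must verify that these are consistent with $c(k,\ell+1)-2c(k,\ell)+c(k,\ell-1)$ being the relevant jump (and likewise for the arrows to the $(n-1,\cdot)$ vertices via $\cP_{n,1}$). The $q$-powers are pinned down by computing $\Lambda(\cP_{k,\ell},\cP_{n,0})$ and $\Lambda(\cP_{k,\ell},\cP_{n,1})$ explicitly; by Remark~\ref{rem:comm} these are $2k\ell$ and $2k(\ell-1)$ respectively, which are exactly the values of $\Lambda(X_{k,\ell},X_{n,0})$ and $\Lambda(X_{k,\ell},X_{n,1})$ computed in the proof of Proposition~\ref{prop:mutequiv}, so the two normalizations $\odot$ agree.

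The base of the induction is the strip $\{(k,\ell): k+\ell\le n+1,\ k-\ell\le n\}$, handled by Proposition~\ref{prop:K0containment}; each step of $\mu'$ moves one ``anti-diagonal'' outward, and the symmetric argument with the bar involution (Section~\ref{sec:barinvolution}) or with the functor $\D$ and Proposition~\ref{prop:example3} handles the $\ell\to\ell-1$ direction and the $\cP_{n,1}$ side simultaneously, so one only really needs to do the $\ell\to\ell+1$ computation once. The main obstacle I anticipate is purely organizational: keeping the three cases of the formula consistent across the inductive step, since near the ``corner'' indices (where $\ell+k-n-1$ transitions from negative to $0$ to positive) the relevant exchange relation straddles two of the cases, and one must check that the frozen-variable monomial produced by Proposition~\ref{prop:mutation} combined with Remark~\ref{rem:frozenidentity} (for collecting powers $(\cP_{n,1})^r\cong\cP_{n,r}\conv(\cP_{n,0})^{r-1}\{\cdots\}$) exactly reproduces $\cP_{n,0}^{c(k,\ell+1)}$ with no leftover. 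This is where Lemma~\ref{lem:frozenarrows} does the real work, as it tells us precisely which frozen arrows are present in the quiver at each stage, hence precisely which frozen monomial corrections to expect; once that dictionary is in hand, the verification is a finite check at the corners plus a uniform induction in the interior.
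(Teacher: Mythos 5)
Your proposal matches the paper's proof essentially step for step: induction along $\mu'$, base case from Proposition~\ref{prop:K0containment}, the exact sequences of Proposition~\ref{prop:mutation} as the geometric realizations of the exchange relations, Lemma~\ref{lem:frozenarrows} to pin down the frozen-variable contributions, the $q$-commutation data of Remark~\ref{rem:comm} and Remark~\ref{rem:frozenidentity} to normalize, and duality to handle the $\cP_{n,1}$-side, which is exactly what the paper does. Your observation that the $q$-exponents reduce to $\Lambda(\cP_{k,\ell},\cP_{n,0})=2k\ell$, $\Lambda(\cP_{k,\ell},\cP_{n,1})=2k(\ell-1)$ from Remark~\ref{rem:comm} is precisely the computation the paper carries out in Proposition~\ref{prop:mutequiv} and reuses here, so there is no real daylight between the two arguments.
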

\begin{proof}
The last case is part of Proposition \ref{prop:K0containment}. We consider the first case explicitly -- the second follows by similar computations or by duality, noting that the exchange matrix of the cluster  $\{X_{k,\ell}\}_{k - \ell \in [n-1, n]}$ is minus that of the cluster  $\{X_{k,\ell} \}_{k + \ell \in [n, n+1]}$.

We prove the claim inductively by considering the $X_{k,\ell}$ as they appear along the mutation sequence $\mu'$, using Lemma \ref{lem:frozenarrows} to identify the relevant exchange relations and compare them with the exact sequences of Proposition \ref{prop:mutation}. The first exchange relation is
$$ q^{ \frac12 \Lambda(X_{n-1,1}, X_{n-2,2}X_{n,1}^2)} X_{n-1,1} X_{n-1,3} = q X_{n-2,2} \odot X_{n,1}^2 + X_{n-1,2}^2 \odot X_{n,0}, $$
which we compare to
$$ q^{ \frac12 \Lambda(\cP_{n-1,1}, \cP_{n-2,2}\cP_{n,2})}[\cP_{n-1,1}] [\cP_{n-1,3}] = q [\cP_{n-2,2}\odot \cP_{n,2}] + [\cP_{n-1,2}]^2. $$
Multiplying by $X_{n,0} = [\cP_{n,0}]$ and normalizing as suitable, this becomes
\begin{multline*} q^{ \frac12 \Lambda(\cP_{n-1,1}, \cP_{n-2,2}\cP_{n,2}) - \frac12 \Lambda(\cP_{n-1,3},\cP_{n,0})}[\cP_{n-1,1}] [\cP_{n-1,3} \odot \cP_{n,0}] \\= q^{1 - \frac12 \Lambda(\cP_{n-2,2} \odot \cP_{n,2}, \cP_{n,0})} [\cP_{n-2,2} \odot \cP_{n,2} \odot \cP_{n,0}] + q^{-\frac12 \Lambda(\cP^2_{n-1,2},\cP_{n,0})}[\cP_{n-1,2}^2 \odot \cP_{n,0}].
\end{multline*}
Substituting in the quantum cluster variables already identified with the specified classes and dividing the right-hand side by $q^{-\frac12 \Lambda(\cP^2_{n-1,2},\cP_{n,0})}$, we obtain the right-hand side of the above exchange relation. Here use the fact that $\cP_{n,1}^2 \cong \cP_{n,2} \odot \cP_{n,0}$ by Remark \ref{rem:frozenidentity} and that $\Lambda(\cP_{n-2,2} \odot \cP_{n,2}, \cP_{n,0}) = \Lambda(\cP^2_{n-1,2},\cP_{n,0})$ by Remark \ref{rem:comm}. Dividing the left-hand sides by $X_{n-1,1} = [\cP_{n-1,1}]$ and inspecting powers of $q$, the identity $X_{n-1,3} = [\cP_{n-1,3} \odot \cP_{n,0}]$ now follows since
\begin{align*}
  {}&\Lambda( \cP_{n-1,1}, \cP_{n-2,2} \conv \cP_{n,1}^2) - \Lambda(\cP_{n-1,1}, \cP_{n-2,2} \conv \cP_{n,2}) \\
  {}&\quad \quad \quad+ \Lambda(\cP_{n-1,3}, \cP_{n,0}) - \Lambda(\cP_{n-2,2} \conv \cP_{n,2}, \cP_{n,0}) \\
{}&\quad= \Lambda(\cP_{n-1,1}, \cP_{n,0}) + \Lambda(\cP_{n-1,3}, \cP_{n,0}) - \Lambda(\cP_{n-2,2} \conv \cP_{n,2}, \cP_{n,0}) =0
\end{align*}

The next $n-2$ mutations in $\mu$ are of the form
\begin{align*} {}& q^{\frac12 \Lambda(X_{k,n-k}, X_{k+1, n-k+1} X_{k-1, n-k+1})} X_{k,n-k} X_{k, n-k+2} \\{}& \quad \quad = q X_{k+1, n-k+1} \odot X_{k-1, n-k+1} + X^2_{k,n-k+1} \odot X_{n,0} \end{align*}
for $k$ descending from $n-2$ to $1$. Here by convention $X_{0,\ell} := 1$ for $\ell \in \Z$. Thereafter, each of the length $n-1$ steps of $\mu$ takes a uniform form: in the $k$th step the first exchange relation is
$$ q^{\frac12 \Lambda(X_{n-1,k}, X_{n-2,k+1} X_{n,1}^{k+1})}X_{n-1,k} X_{n-1,k+2} = q X_{n-2,k+1} \odot X_{n,1}^{k+1} + X^2_{n-1,k+1}, $$
the next $n-3$ exchange relations are of the form
\begin{multline*} q^{\frac12 \Lambda(X_{n-j-1, X_{n-j, j+k+1} X_{n-j-2, j+k+1}})} X_{n-j-1, j+k} X_{n-j-1, j+k+2} \\= q X_{n-j, j+k+1} \odot X_{n-j-2, j+k+1} + X^2_{n-j-1, j+k+1} \end{multline*}
for $j$ increasing from $1$ to $n-3$, and the last exchange relation is
$$ q^{\frac12 \Lambda(X_{1,n+k-2}, X_{2, n+k-1} X_{n,0}^{k-2})}X_{1,n+k-2} X_{1,n+k} = q X_{2, n+k-1} \odot X_{n,0}^{k-2} + X_{1,n+k-1}^2. $$ With these inductive expressions in hand the claim may be checked by a finite list of calculations resembling the one above, and whose details we omit. 
\end{proof}

\begin{Theorem}\label{thm:mainfinal}
The coherent Satake category $\cP_{coh}^{GL_n(\cO) \rtimes \Gm}(\Gr_{GL_n})$ is a monoidal categorification of the quantum cluster algebra $\cA^{\,loc}_{(L_n,\wt{B}_n)}$.
\end{Theorem}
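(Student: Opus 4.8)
The plan is to assemble the results established above and feed them into the abstract criterion following Theorem~\ref{thm:K3Omain}. First I would record that $\cP_{coh}^{GL_n(\cO) \rtimes \Gm}(\Gr_{GL_n})$ meets all the standing hypotheses needed there: it is a $\Z$-graded, finite length, monoidal abelian $\C$-linear ring category by the structural results of Section~\ref{sec:coherentSatake}; by Theorem~\ref{thm:satakermats} it carries a system of renormalized $r$-matrices compatible with its graded realization and with every $r$-matrix nonzero; by Corollary~\ref{cor:adjoints} it is rigid, hence has separable triple products by Proposition~\ref{prop:rigid-triple}; and by Lemma~\ref{lem:Ore} the ring $K^{GL_n(\cO) \rtimes \Gm}(\Gr_{GL_n})$ embeds in a division ring, so in particular it is an integral domain.

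Next I would invoke Proposition~\ref{prop:initialcluster}, giving the quantum monoidal seed $(\{\cP_{k,\ell}\}_{(k,\ell) \in I}, \wt{B}_n)$ with coefficient matrix $L_n$ and a mutation in every exchangeable direction, together with Corollary~\ref{cor:integralembedding}, which produces the $\Z[q^{\pm 1/2}]$-algebra embedding $\cA^{loc}_{(L_n, \wt{B}_n)} \hookrightarrow K^{GL_n(\cO) \rtimes \Gm}(\Gr_{GL_n})$ generated by the classes $\{[\cP_{k,\ell}]\}_{(k,\ell) \in I}$. The only point left to check is that this embedding is surjective; once it is, the Corollary to Theorem~\ref{thm:K3Omain} applies directly and gives the theorem.

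Surjectivity is the step where the geometry of $\Gr_{GL_n}$ does the work, but it too is essentially already in hand. By Corollary~\ref{cor:K0generators} the ring $K^{GL_n(\cO) \rtimes \Gm}(\Gr_{GL_n})$ is generated over $\Z[q^{\pm 1/2}]$ by $[\cP_n]^{-1}$ together with the classes $[\cP_{1,\ell}]$ for $\ell \in \Z$. Since $[\cP_n] = [\cP_{n,0}]$ is a frozen quantum cluster variable, $[\cP_n]^{-1}$ lies in the image of $\cA^{loc}_{(L_n, \wt{B}_n)}$ by definition of the localization. For the remaining generators, the $k = 1$ case of Lemma~\ref{lem:XtoP} writes each $[\cP_{1,\ell}]$ as the quantum cluster variable $X_{1,\ell}$ times a power of $q^{1/2}$ and a Laurent monomial in the frozen variables $[\cP_{n,0}]^{\pm 1}$, $[\cP_{n,1}]^{\pm 1}$; hence each $[\cP_{1,\ell}]$ also lies in the image. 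Therefore~(\ref{eq:qintembedding}) is an isomorphism $\cA^{loc}_{(L_n, \wt{B}_n)} \cong K^{GL_n(\cO) \rtimes \Gm}(\Gr_{GL_n})$, and applying the Corollary to Theorem~\ref{thm:K3Omain} to the seed of Proposition~\ref{prop:initialcluster} identifies all quantum cluster monomials with classes of simple objects, completing the proof. In short, for this final statement there is no substantial obstacle remaining: the genuinely difficult inputs --- rigidity (Theorem~\ref{thm:adjoints}), the chiral construction of $r$-matrices (Theorem~\ref{thm:satakermats}), the mutation exact sequences (Proposition~\ref{prop:mutation}), and the identification of $K_0$ with the cluster algebra via quantum unipotent cells (Proposition~\ref{prop:K0containment}, Lemma~\ref{lem:XtoP}) --- have all been carried out in the preceding sections, and what is left is the bookkeeping of generators together with the invocation of the abstract categorification criterion.
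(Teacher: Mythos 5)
Your proof is correct and follows essentially the same route as the paper: you upgrade the embedding of Corollary~\ref{cor:integralembedding} to an isomorphism using Corollary~\ref{cor:K0generators} and the $k=1$ case of Lemma~\ref{lem:XtoP}, then feed the quantum monoidal seed of Proposition~\ref{prop:initialcluster} into the Corollary to Theorem~\ref{thm:K3Omain}. The paper cites Corollary~\ref{cor:moncatGLn} rather than invoking Theorem~\ref{thm:K3Omain} directly, but that corollary is itself the application of Theorem~\ref{thm:K3Omain} to this seed, so the underlying argument is identical.
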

\begin{proof}
As indicated in the discussion preceding Lemma \ref{lem:frozenarrows}, given Corollary \ref{cor:K0generators} it follows from Lemma \ref{lem:XtoP} that the embedding of Corollary \ref{cor:integralembedding} is in fact an isomorphism. The result then follows since by Corollary \ref{cor:moncatGLn} all quantum cluster monomials are classes of simple perverse coherent sheaves.
\end{proof}

\begin{Remark}\label{rem:prime}
Following \cite{HL10} one anticipates that all cluster variables should be classes of prime objects, i.e. objects that cannot be expressed as a product of two objects which are not invertible. While we do not know if this property holds for all perverse coherent sheaves whose classes are quantum cluster variables, it is clear from Proposition \ref{prop:leadingterms} and Lemma \ref{lem:XtoP} that the $X_{k,\ell}$ are classes of prime objects. 
\end{Remark}

\subsection{Categorification of the bar involution}\label{sec:barinvolution}

Recall that the bar involution of a quantum cluster algebra $\cA_{(\wt{B},L)}$ is the restriction of the anti-automorphism of $\cF(L)$ that preserves $X^v$ for $v \in \Z^I$ and which exchanges $q$ and $q^{-1}$ \cite{BZ05}. We now explain how to lift the bar involution of $\cA^{loc}_{(\wt{B}_n,L_n)}$ from K-theory to the category $D_{coh}^{GL_n(\O) \rtimes \Gm}(\Gr_{GL_n})$. 

Consider the bijection $\pi_0 \Gr_{GL_n} \xrightarrow{\sim} \Z$ realized at the level of orbits by the assignment $\l^\vee \mapsto \la \l^\vee,\omega_n \ra$. Under this map the component containing $\Gr_{GL_n}^k$ is labeled by $k$. We also define $\n: P^\vee \to \Z$ by $\n(\l^\vee) := \la \l^\vee,\omega_n \ra (n - \la \l^\vee,\omega_n \ra)$.

\begin{Lemma}\label{lem:dualizingsheaf}
The canonical sheaf of $\oGr_{GL_n}^{\l^\vee}$ is isomorphic to 
\begin{equation}\label{eq:omega2}
\O_{\oGr_{GL_n}^{\l^\vee}}(-n) \otimes \det(L_0/tL_0)^{\la \l^\vee, \omega_n \ra} \{ \n(\l^\vee) - \dim \Gr_{GL_n}^{\l^\vee} \}.
\end{equation}
\end{Lemma}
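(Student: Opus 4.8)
The plan is to establish the formula \eqref{eq:omega2} by first treating the case of a fundamental coweight $\l^\vee = \omega_k^\vee$, where $\oGr_{GL_n}^{\omega_k^\vee} = \Gr_{GL_n}^k$ is the smooth finite Grassmannian $\Gr^k$ of $k$-codimensional subspaces, and then bootstrapping to a general dominant $\l^\vee$ via the convolution resolution. For the fundamental case I would simply invoke \eqref{eq:omega} from the proof of Lemma~\ref{lem:example2}, which gives $\omega_{\Gr^k_{GL_n}} \cong \O_{\Gr^k_{GL_n}} \otimes \det(L_0/L)^{-n} \otimes \det(L_0/tL_0)^k [k(n-k)]$. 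One then checks that $\O_{\Gr^k}(1)$ is precisely $\det(L_0/L)$ (the ample generator of the Picard group of the Grassmannian of quotients), so $\det(L_0/L)^{-n} = \O_{\Gr^k}(-n)$, and that $\la \omega_k^\vee, \omega_n \ra = k$, $\n(\omega_k^\vee) = k(n-k) = \dim \Gr^k$; the equivariant shift in \eqref{eq:omega2} then reads $\{k(n-k) - k(n-k)\} = \{0\}$, matching the fact that $\omega_{\Gr^k}$ has no equivariant twist beyond the cohomological shift $[k(n-k)]$ once the $\det(L_0/tL_0)^k$ factor is isolated. So the first step is just unwinding conventions.

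For a general dominant coweight write $\l^\vee = \sum_{i} \omega_{k_i}^\vee$ and use the convolution (Bott–Samelson type) resolution $\om : \Gr^{(k_1,\dots,k_m)}_{GL_n} \to \oGr_{GL_n}^{\l^\vee}$, which is a composition of Grassmannian bundles and in particular is birational, proper, and — since $\oGr^{\l^\vee}_{GL_n}$ has rational (hence Gorenstein canonical, being a Schubert variety) singularities — satisfies $\om_* \omega_{\Gr^{(k_1,\dots,k_m)}} \cong \omega_{\oGr^{\l^\vee}_{GL_n}}$. The canonical sheaf of the iterated bundle is computed step by step from the relative canonical sheaves of the successive Grassmannian bundles, each of which contributes a power of the relevant tautological determinant bundle $\det(L_{i-1}/L_i)$ together with a cohomological shift by the relative dimension and an equivariant shift coming from the $t$-action on these bundles (tracking the $\{2\}$-weight of $t : L_{i-1} \to L_i$ exactly as in the proofs of Propositions~\ref{prop:comm} and~\ref{prop:mutation}). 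One then pushes forward, using $\om^*(\det(L_0/L)) \cong \bigotimes_i \det(L_{i-1}/L_i)$ and $\om^*\det(L_0/tL_0) \cong \det(L_0/tL_0)$, together with the projection formula and the rational singularities vanishing, to collapse everything onto $\oGr^{\l^\vee}_{GL_n}$. The line bundle factor assembles into $\O_{\oGr^{\l^\vee}_{GL_n}}(-n) \otimes \det(L_0/tL_0)^{\la\l^\vee,\omega_n\ra}$ (the power $-n$ being independent of the decomposition, consistent with the fact that $\oGr^{\l^\vee}_{GL_n}$ is Fano of index $n$ in the appropriate sense, and the $\det(L_0/tL_0)$ exponent being additive in $\l^\vee$ because $\la\cdot,\omega_n\ra$ is).

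The bookkeeping of the \emph{equivariant} shift is where I expect the main obstacle to lie: one must verify that the cohomological and $\Gm$-equivariant shifts accumulated along the resolution combine to exactly $\{\n(\l^\vee) - \dim\Gr^{\l^\vee}_{GL_n}\}$, and in particular that this depends only on $\l^\vee$ and not on the chosen reduced decomposition into fundamental coweights — a consistency that ultimately reflects the invariance of $\omega_{\oGr^{\l^\vee}_{GL_n}}$ but must be matched against the additive quantities $\n(\l^\vee)$ and $\dim\Gr^{\l^\vee}_{GL_n}$. A clean way to organize this is to note that both sides of \eqref{eq:omega2}, after forgetting the $\Gm$-equivariance and the cohomological shift, are line bundles on $\oGr^{\l^\vee}_{GL_n}$ whose equality can be checked on the open orbit $\Gr^{\l^\vee}_{GL_n}$ (where everything is an explicit homogeneous bundle on a partial flag variety $G/P_{\l^\vee}$ and the canonical bundle is $\rho$-shift data), and then to pin down the remaining discrete shifts by restricting to the $\Gm$-fixed points and comparing weights, exactly the kind of fixed-point computation carried out in Proposition~\ref{prop:leadingterms}. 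Once the fundamental case and the additivity of all the relevant quantities in $\l^\vee$ are in hand, the general case follows formally.
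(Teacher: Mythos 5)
Your approach matches the paper's: handle $\l^\vee = \omega_k^\vee$ via the known formula \eqref{eq:omega}, then use the convolution resolution $\om:\Gr^{\uk}_{GL_n} \to \oGr^{\l^\vee}_{GL_n}$ and push forward. Two points worth noting. First, the step you identify as the "main obstacle" — bookkeeping the equivariant shift along the resolution — is dispatched in the paper by a single clean identity, $\det(L_i/tL_i) \cong \det(L_{i-1}/tL_{i-1})\{-2k_{i-1}\}$, applied repeatedly to convert the $\boxtimes$ of the factors $\det(L_{i-1}/tL_{i-1})^{k_i}$ into $\det(L_0/tL_0)^{\sum k_i}\{-2\sum_{i\ne j}k_ik_j\}$; the fixed-point weight comparison you propose would also work but is more roundabout than necessary. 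Second, writing $\l^\vee = \sum_i \omega_{k_i}^\vee$ only covers dominant coweights lying in the positive span of the fundamentals, which for $GL_n$ misses those with a negative last entry; the paper closes this gap by noting that the general case follows upon additionally twisting by $\cP_n^{-1}$ (i.e.\ the inverse determinant), a reduction your proposal omits.
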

\begin{proof}
If $\l^\vee = \omega^\vee_k$ the result follows by the standard calculation of the canonical sheaf of the finite Grassmannian (c.f. Equation \ref{eq:omega}). More generally, if $\l^\vee = \omega^\vee_{k_1} + \dots + \omega^\vee_{k_m}$ we have a resolution 
$$\om: \Gr^{\uk}_{GL_n} := \Gr_{GL_n}^{k_1} \ttimes \dots \ttimes \Gr_{GL_n}^{k_m} \rightarrow \oGr_{GL_n}^{\sum_i \omega_{k_i}^\vee}.$$
The canonical sheaf of $\Gr^{\uk}_{GL_n}$ is 
$$\left( \O_{\Gr^{k_1}_{GL_n}}(-n) \otimes \det(L_0/tL_0)^{k_1} \right) \boxtimes \dots \boxtimes \left( \O_{\Gr^{k_1}_{GL_n}}(-n) \otimes \det(L_{m-1}/tL_{m-1})^{k_m} \right).$$
Using that $\det(L_i/tL_i) \cong \det(L_{i-1}/tL_{i-1}) \{-2k_{i-1}\}$ repeatedly reduces this expression to 
$$\O_{\Gr^{\uk}_{GL_n}}(-n) \otimes \det(L_0/tL_0)^{\sum_i k_i} \{ - 2 \sum_{i \ne j} k_ik_j \}.$$
Applying $\om_*$ recovers (\ref{eq:omega2}). The general case of $\l$ follows by also considering the convolution with $\cP_n^{-1}$. 
\end{proof}

Notice that apart from the $\{- \dim \Gr_{GL_n}^{\l^\vee} \}$ shift the canonical sheaf is global. We capture this by defining $\bL$ to be the automorphism of $D_{coh}^{GL_n(\O) \rtimes \Gm}(\Gr_{GL_n})$ which, on the $k$th component of $\Gr_{GL_n}$, acts by tensoring with 
$$\O_{\Gr_{GL_n}}(-n) \otimes \det(L_0/tL_0)^k \{ k(n-k) \}.$$

Letting $g^T$ denote the transpose of $g \in GL_n$, we consider the involution of $\Gr_{GL_n}$ given by
$$ [g(t)] \mapsto [(g(t)^T)^{-1}]. $$
Twisting the action of $GL_n(\O) \rtimes \Gm$ by $(g(t),a) \mapsto ((g(t)^T)^{-1},a)$ this defines an involution of $D_{coh}^{GL_n(\O) \rtimes \Gm}(\Gr_{GL_n})$ which we denote by $\cF \mapsto \cF'$. It is covariant with respect to both convolution and $\Hom$, and commutes with $\cF \mapsto \cF^*$. 

\begin{Lemma}\label{lem:involutions}
The involutions $\D$, $\bL$, $\cF \mapsto \cF^*$, and $\cF \mapsto \cF'$ of $D_{coh}^{GL_n(\O) \rtimes \Gm}(\Gr_{GL_n})$ take perverse sheaves to perverse sheaves. Moreover, we have
\begin{gather*}
(\bL^{-1} \circ \D) \cP_{\l^\vee\!,\,\mu} \cong \cP_{\l^\vee, -\mu}, \quad\,\,\,
\cP_{\l^\vee\!,\,\mu}^* \cong \cP_{-\l^\vee\!,\,\mu}, \quad\,\,\,
\cP_{\l^\vee\!,\,\mu}' \cong \cP_{-\l^\vee, -\mu}.
\end{gather*}
\end{Lemma}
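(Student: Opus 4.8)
\textbf{Proof plan for Lemma \ref{lem:involutions}.}

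The plan is to handle the four involutions in turn, first verifying that each preserves perversity and then computing its action on the simple objects $\cP_{\l^\vee\!,\,\mu}$. Perversity of $\D$ is already recorded in the quoted theorem of \cite{AB10}, and perversity of $\cF \mapsto \cF^*$ follows from Corollary \ref{cor:adjoints} (this is exactly the content of its proof). Since $\bL$ acts by tensoring with a global line bundle (with an equivariant shift by $\{k(n-k)\}$ on the $k$th component, which is a constant shift on each block and hence harmless), it is an autoequivalence preserving the perverse $t$-structure by the same argument as in Lemma \ref{lem:real}. For $\cF \mapsto \cF'$, the underlying map $[g(t)] \mapsto [(g(t)^T)^{-1}]$ is an isomorphism of ind-schemes intertwining the twisted $GL_n(\O)\rtimes\Gm$-action with the standard one, so pullback along it is an equivalence of equivariant derived categories; it sends the orbit $\Gr_{GL_n}^{\l^\vee}$ to $\Gr_{GL_n}^{-w_0\l^\vee}$ (equivalently $\Gr_{GL_n}^{(\l^\vee)^*}$, which has the same dimension) and commutes with the $i_{\l^\vee}^*, i_{\l^\vee}^!$ functors up to this relabeling, hence preserves the two support conditions defining $\cP_{coh}$.

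Next I would compute the action on simples. Since each of these four functors is exact for the perverse $t$-structure (being either a $t$-exact autoequivalence or $\D$, which is a duality) and since a simple perverse coherent sheaf is determined by its support $\oGr_{GL_n}^{\l^\vee}$ together with the simple equivariant vector bundle it restricts to on the open orbit, it suffices to identify these two pieces of data for each image. For $\cP_{\l^\vee\!,\,\mu}^* \cong \cP_{-\l^\vee\!,\,\mu}$: the section $s$ used to define $(\cdot)^*$ carries $\oGr_{GL_n}^{(\l^\vee)^*}$ into $\oGr_{GL_n}^{(\l^\vee)^*}\ttimes\oGr_{GL_n}^{\l^\vee}$, so $\cP_{\l^\vee\!,\,\mu}^*$ has support $\oGr_{GL_n}^{(\l^\vee)^*} = \oGr_{GL_n}^{-w_0\l^\vee}$, which in the indexing convention of Section \ref{sec:cohsatdefs} is exactly ``$\cP_{-\l^\vee,\,\cdot}$''; the restriction to the open orbit is a twisted-product computation parallel to Lemma \ref{lem:example1}, where one checks the resulting equivariant bundle and the $\Gm$-weight match the normalization of $\cP_{-\l^\vee\!,\,\mu}$ (in particular $\la-\l^\vee,\mu\ra = -\la\l^\vee,\mu\ra$ so the weight $\mu$ on the fiber is unchanged, consistent with the conjugation by $W$ built into the definition). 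For $\cF \mapsto \cF'$: transposition-inverse sends $\oGr_{GL_n}^{\l^\vee}$ to $\oGr_{GL_n}^{-w_0\l^\vee}$ and simultaneously dualizes the tautological bundles, producing the bundle associated to $-\mu$ (again after the canonical $W$-conjugation to dominant), giving $\cP_{\l^\vee\!,\,\mu}' \cong \cP_{-\l^\vee,-\mu}$. For $(\bL^{-1}\circ\D)\cP_{\l^\vee\!,\,\mu} \cong \cP_{\l^\vee,-\mu}$: Lemma \ref{lem:dualizingsheaf} says $\omega_{\oGr_{GL_n}^{\l^\vee}}$ is, up to the non-global shift $\{-\dim\Gr_{GL_n}^{\l^\vee}\}$, precisely the restriction of $\O_{\Gr_{GL_n}}(-n)\otimes\det(L_0/tL_0)^{\la\l^\vee,\omega_n\ra}\{k(n-k)\}$ — which is exactly the line bundle defining $\bL$ on this component — so $\bL^{-1}\D$ acts on $\cP_{\l^\vee\!,\,\mu}$ by $\D$ followed by untwisting the global part of the dualizing sheaf; on the open orbit $\D$ dualizes the equivariant bundle (sending the highest weight $\mu$ of a Levi representation to $-w_0^L\mu$, then re-conjugated to dominant, which after tracking the $\Gm$-character is the normalization of $\cP_{\l^\vee,-\mu}$), while the $[-\frac12\dim\Gr_{GL_n}^{\l^\vee}]$ cohomological shift is exactly what is needed to land back in perverse degree $-\frac12\dim$.

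The main obstacle I anticipate is the bookkeeping of the $\Gm$-equivariant structures and cohomological shifts, rather than anything conceptual: the simple $\cP_{\l^\vee\!,\,\mu}$ carries a carefully chosen $\Gm$-character $a\mapsto a^{2\la\l^\vee,\mu\ra+\dim\Gr_{GL_n}^{\l^\vee}}$ and a cohomological shift by $-\frac12\dim\Gr_{GL_n}^{\l^\vee}$, and one must verify that after applying $\D$ (which negates cohomological degrees and sends $\{k\}\mapsto\{-k\}$, contributing the dualizing-sheaf twist), then correcting by $\bL^{-1}$, the net character is $a\mapsto a^{-2\la\l^\vee,\mu\ra+\dim}$ with cohomological shift $-\frac12\dim$ — i.e. exactly $\cP_{\l^\vee,-\mu}$. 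This is where Lemma \ref{lem:dualizingsheaf} does the real work: it isolates the ``global'' part of $\omega$ (canceled by $\bL$) from the residual $\{-\dim\Gr_{GL_n}^{\l^\vee}\}$ (which combines with the degree-negation of $\D$ to restore the perverse normalization). Once the closed-orbit case $\l^\vee = \omega_k^\vee$ is checked directly against Equation \ref{eq:omega} and the definitions \eqref{eq:Ps}, the general case follows either by the convolution-resolution argument used in Lemma \ref{lem:dualizingsheaf} or, more cleanly, by noting all four functors are monoidal (or anti-monoidal) up to invertible objects and restricting to the closed orbits suffices to pin down simples by Proposition \ref{prop:leadingterms}.
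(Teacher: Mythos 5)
Your plan coincides with the paper's proof: establish perversity for each functor so that they permute simples, read off the $\l^\vee$-component from the support, and pin down the $\mu$-component (and the $\Gm$-weight) by computing the restriction to the open orbit, equivalently the $P_{\l^\vee}\times\Gm$-action on the fiber over the relevant $\Gm$-fixed point, with Lemma~\ref{lem:dualizingsheaf} isolating the global part of $\omega$ in the $\bL^{-1}\circ\D$ case. The only cosmetic difference is that you treat $\cF\mapsto\cF'$ as pullback along an automorphism of $\Gr_{GL_n}$ rather than mimicking the $\cF\mapsto\cF^*$ argument, but this is an equally valid (arguably cleaner) route to the same conclusion.
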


\begin{proof}
That $\D$ and $\cF \mapsto \cF^*$ preserve perversity has been discussed (see Corollary \ref{cor:adjoints}). The case of $\cF \mapsto \cF'$ may be treated similarly to that of $\cF \mapsto \cF^*$, and the case of $\bL$ is obvious. 

Given that these functors restrict to involutions of $\cP_{coh}^{GL_n(\O) \rtimes \Gm}(\Gr_{GL_n})$, it follows that they must permute simple perverse coherent sheaves up to isomorphism. The transformation rules for $\l^\vee$ simply record the support of the image of $\cP_{\l^\vee\!,\,\mu}$ under the relevant functor, and are clearly as stated.

In each case it then remains to identify the transformation rules for $\mu$. It suffices to identify the restriction of the image of $\cP_{\l^\vee\!,\,\mu}$ to the dense $GL_n(\cO)$-orbit in its support, which up to a cohomological degree shift is a simple equivariant vector bundle. Such a bundle is determined by the $P_{\l} \times \G_m$ action on the fiber over $[t^{\l^\vee}]$ (or $[t^{-\l^\vee}]$ in the last two cases). In each case this action is the result of an elementary computation similar to that of Proposition \ref{prop:leadingterms}. 
\end{proof}

\begin{Remark}
Note that the transformation rules in Lemma \ref{lem:involutions} depend on the precise choice of $\G_m$-equivariant structure on $\cP_{\l^\vee\!,\,\mu}$ made in Section \ref{sec:cohsatdefs}. A different choice would require an additional equivariant shift in the formulas above.
\end{Remark}

We now define an involution $\cF \mapsto \overline{\cF}$ of $D_{coh}^{GL_n(\O) \rtimes \Gm}(\Gr_{GL_n})$ as the composition
$$\overline{\cF} := \bL^{-1} \circ \D((\cF')^*) \cong \D \circ \bL((\cF')^*).$$
By Lemma \ref{lem:involutions} we have the following result.

\begin{Corollary}
The involution $\cF \mapsto \overline{\cF}$ is contravariant with respect to both convolution and $\Hom$. We have $\overline{\cF\{\ell\}} \cong \overline{\cF}\{-\ell\}$ for any $\ell \in \Z$ and $\overline{\cP_{\l^\vee\!,\,\mu}} \cong \cP_{\l^\vee\!,\,\mu}$ for any $\l^\vee \in P^\vee$, $\mu \in P$.
\end{Corollary}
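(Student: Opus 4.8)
The plan is to write the involution as the composite $\overline{\cF}=\bL^{-1}\circ\D\circ(\cdot)^*\circ(\cdot)'$ and to read off each of the three claimed properties by tracking the behavior of the four constituent functors, all of which is either recorded in Lemma~\ref{lem:involutions} or is elementary. For the grading shifts: $(\cdot)'$ leaves $\Gm$-weights unchanged (it twists the action by $(g(t),a)\mapsto((g(t)^T)^{-1},a)$, which fixes the $\Gm$-factor), $(\cdot)^*$ preserves the grading normalization since it is pullback along the $\Gm$-equivariant section $s$, tensoring by a line bundle commutes with $\{\cdot\}$, and $\D=\cHom(-,\omega)$ inverts shifts because $\omega$ is $\Gm$-equivariant. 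Combining these gives $\overline{\cF\{\ell\}}\cong\overline{\cF}\{-\ell\}$. For contravariance with respect to $\Hom$: $(\cdot)'$ and $(\cdot)^*$ are covariant on morphisms (the former by hypothesis, the latter being a pullback), $\bL^{-1}$ is an autoequivalence, and $\D$ is the only contravariant factor, so the composite is contravariant.

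For contravariance with respect to convolution the key observation is the reformulation $\overline{\cF}\cong\bL^{-1}((\cF')^L)$, valid since $\D((\cF')^*)\cong(\cF')^L$ by Theorem~\ref{thm:adjoints}. Now $(\cdot)'$ is monoidal by hypothesis, $(\cdot)^L$ is anti-monoidal because $D_{coh}^{GL_n(\O)\rtimes\Gm}(\Gr_{GL_n})$ is rigid (Theorem~\ref{thm:adjoints}), so it remains to check that $\bL$, equivalently $\bL^{-1}$, commutes with convolution. On the $k$th component $\bL$ is $\cF\mapsto\cF\otimes\O(-n)\otimes\det(L_0/tL_0)^k\{k(n-k)\}$; under the convolution map $\om\colon\Gr^{(k_1,k_2)}_{GL_n}\to\Gr_{GL_n}$ the line bundle $\O(-n)$ pulls back ``diagonally'' (one has $\om^*\O(1)\cong\det(L_0/L_1)\otimes\det(L_1/L_2)$), so it contributes compatibly via the projection formula, while the factor $\det(L_0/tL_0)^k\{k(n-k)\}$ behaves additively because $\det(L_1/tL_1)\cong\det(L_0/tL_0)\{-2k_1\}$ on $\Gr^{(k_1,k_2)}_{GL_n}$ and $k_1(n-k_1)+k_2(n-k_2)-2k_1k_2=(k_1+k_2)(n-k_1-k_2)$ --- precisely the bookkeeping already carried out in the proof of Lemma~\ref{lem:dualizingsheaf}. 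Hence $\bL(\cF\conv\cG)\cong\bL(\cF)\conv\bL(\cG)$, and chaining the three functors $(\cdot)'$, $(\cdot)^L$, $\bL^{-1}$ yields $\overline{\cF\conv\cG}\cong\overline{\cG}\conv\overline{\cF}$.

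Finally, $\overline{\cP_{\l^\vee\!,\,\mu}}\cong\cP_{\l^\vee\!,\,\mu}$ follows by composing the three transformation rules of Lemma~\ref{lem:involutions}: first $\cP_{\l^\vee\!,\,\mu}'\cong\cP_{-\l^\vee,-\mu}$, then $(\cP_{-\l^\vee,-\mu})^*\cong\cP_{\l^\vee,-\mu}$, then $(\bL^{-1}\circ\D)\cP_{\l^\vee,-\mu}\cong\cP_{\l^\vee,\mu}$. The only step involving any genuine computation is the compatibility of $\bL$ with convolution in the second paragraph, and I expect that to be the main (indeed the only) obstacle; every other ingredient is a formal consequence of rigidity together with the properties of $\D$, $(\cdot)^*$, and $(\cdot)'$ established above and in Lemma~\ref{lem:involutions}.
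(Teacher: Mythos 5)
Your proposal is correct and follows essentially the same route the paper intends: the statement is a direct consequence of Lemma~\ref{lem:involutions} together with the properties of the constituent functors $\D$, $(\cdot)^*$, $(\cdot)'$, and $\bL$, with the reformulation $\overline{\cF}\cong\bL^{-1}((\cF')^L)$ and rigidity (Theorem~\ref{thm:adjoints}) doing the work for the contravariance under convolution. Your verification that $\bL$ commutes with convolution, via $\om^*\det(L_0/L_2)\cong\det(L_0/L_1)\otimes\det(L_1/L_2)$, $\det(L_1/tL_1)\cong\det(L_0/tL_0)\{-2k_1\}$, and the quadratic identity $k_1(n-k_1)+k_2(n-k_2)-2k_1k_2=(k_1+k_2)(n-k_1-k_2)$, is the bookkeeping the paper leaves implicit and checks out.
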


It follows that $\cF \mapsto \overline{\cF}$ induces an anti-automorphism of $K^{GL_n(\cO) \rtimes \Gm}(\Gr_{GL_n})$ which exchanges $q$ and $q^{-1}$. Since the isomorphism $K^{GL_n(\cO) \rtimes \Gm}(\Gr_{GL_n}) \cong \cA^{loc}_{(\wt{B}_n,L_n)}$ identifies the initial quantum cluster variables with classes of sheaves of the form $\cP_{\omega_k^\vee, \ell \omega_k}$, we further obtain the following result.

\begin{Corollary}
The isomorphism $K^{GL_n(\cO) \rtimes \Gm}(\Gr_{GL_n}) \cong \cA^{loc}_{(\wt{B}_n,L_n)}$ identifies the anti-automorphism $[\cF] \mapsto [\overline{\cF}]$ with the bar involution of $\cA^{loc}_{(\wt{B}_n,L_n)}$.
\end{Corollary}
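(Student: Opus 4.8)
The plan is to deduce the final Corollary from the preceding Corollary essentially formally, using the characterizing properties of the bar involution on a quantum cluster algebra. Recall from Section~\ref{sec:barinvolution} that the bar involution of $\cA^{loc}_{(\wt{B}_n,L_n)}$ is by definition the unique $\Z$-linear anti-automorphism of the ambient division ring $\cF_q(L_n)$ which sends $q^{1/2} \mapsto q^{-1/2}$ and fixes the monomials $X^v$ for $v \in \Z^I$ in the initial quantum torus (equivalently, it fixes every quantum cluster variable of every seed -- see \cite{BZ05}). So to identify $[\cF] \mapsto [\overline\cF]$ with it, I only need to check two things: that $[\cF] \mapsto [\overline\cF]$ is a $\Z[q^{\pm 1/2}]$-semilinear anti-automorphism inverting $q$, and that it fixes the images of the initial quantum cluster variables under the isomorphism $K^{GL_n(\cO) \rtimes \Gm}(\Gr_{GL_n}) \cong \cA^{loc}_{(\wt{B}_n,L_n)}$ of Theorem~\ref{thm:mainfinal}.

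The first point is exactly the content of the previous Corollary: $\cF \mapsto \overline\cF$ is contravariant with respect to convolution and satisfies $\overline{\cF\{\ell\}} \cong \overline\cF\{-\ell\}$, so on $K$-theory it is additive, reverses products, and sends $q = [\cO_e\{-1\}]$ to $q^{-1}$; hence it is a $\Z$-algebra anti-automorphism with $\overline{q^{1/2}\,[\cF]} = q^{-1/2}\,[\overline\cF]$. The second point is also immediate from that Corollary together with the identification in Theorem~\ref{thm:introiso}/Proposition~\ref{prop:K0containment} of the initial quantum cluster variables with the classes $[\cP_{\omega_k^\vee,\,\ell\omega_k}] = [\cP_{k,\ell}]$ for $(k,\ell) \in I$: since $\overline{\cP_{\l^\vee\!,\,\mu}} \cong \cP_{\l^\vee\!,\,\mu}$, the anti-automorphism $[\cF] \mapsto [\overline\cF]$ fixes each of these classes. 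The one subtlety worth spelling out is semilinearity versus linearity: because the involution inverts $q$ it is not $\Z[q^{\pm 1/2}]$-linear but $\Z[q^{\pm 1/2}]$-semilinear, which is exactly the behavior of the bar involution, so there is no mismatch.

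With those two checks in hand, the conclusion follows from a uniqueness argument. Any $\Z$-linear anti-automorphism of $\cF_q(L_n)$ that inverts $q^{1/2}$ and fixes a generating set of the subalgebra $\cA^{loc}_{(\wt{B}_n,L_n)}$ is determined on that subalgebra; by Lemma~\ref{lem:keyminors} (or by Corollary~\ref{cor:K0generators} together with Lemma~\ref{lem:XtoP}) the initial quantum cluster variables, their inverses for the frozen ones, generate $\cA^{loc}_{(\wt{B}_n,L_n)}$ after extending scalars, so two semilinear anti-automorphisms agreeing on them agree everywhere. Since the bar involution of $\cA^{loc}_{(\wt{B}_n,L_n)}$ and the map induced by $\cF \mapsto \overline\cF$ are two such anti-automorphisms agreeing on the initial quantum cluster variables, they coincide.

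I do not anticipate a genuine obstacle here; the work has all been done in the construction of $\cF \mapsto \overline\cF$ and in establishing Theorem~\ref{thm:mainfinal}. The only point requiring a little care is bookkeeping: one should confirm that the $\Gm$-equivariant normalization of $\cP_{\l^\vee\!,\,\mu}$ fixed in Section~\ref{sec:cohsatdefs} is precisely the one making $\overline{\cP_{\l^\vee\!,\,\mu}} \cong \cP_{\l^\vee\!,\,\mu}$ on the nose (with no residual shift), which is the content of the Remark following Lemma~\ref{lem:involutions} -- but this has already been checked. Thus the proof is a two-line invocation of the previous Corollary and the uniqueness of the bar involution.
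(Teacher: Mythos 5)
Your proof is correct and follows exactly the paper's implicit argument: reduce to the preceding Corollary and then invoke uniqueness of an anti-automorphism of $\cF_q(L_n)$ that inverts $q^{1/2}$ and fixes the initial cluster variables. There is one misstatement worth flagging: you cite Lemma~\ref{lem:keyminors} (and Corollary~\ref{cor:K0generators} with Lemma~\ref{lem:XtoP}) as showing that the \emph{initial} quantum cluster variables generate $\cA^{loc}_{(\wt{B}_n,L_n)}$ after extending scalars, but that is not what those results say. Lemma~\ref{lem:keyminors} says the $X_{1,\ell}$ for $\ell \in [1-n,n]$ generate, and Corollary~\ref{cor:K0generators} says the $[\cP_{1,\ell}]$ for $\ell \in \Z$ together with $[\cP_n]^{-1}$ do -- most of these are not initial cluster variables. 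This is harmless because the uniqueness argument does not actually need a generating set of $\cA^{loc}_{(\wt{B}_n,L_n)}$: it suffices that $q^{\pm 1/2}$ and the $X^{\pm e_i}$ generate the quantum torus $\cT_q(L_n)$ and hence its division ring $\cF_q(L_n)$, which holds by construction, so any two anti-automorphisms of $\cF_q(L_n)$ inverting $q^{1/2}$ and agreeing on the $X^{e_i}$ coincide on all of $\cF_q(L_n)$ and therefore on the subalgebra $\cA^{loc}_{(\wt{B}_n,L_n)}$. Substituting this correction, the proof is complete.
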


\begin{Remark}
The above discussion should extend with minor modification to the case of general $G$, see in particular \cite[Section 4.6]{BD} in comparison with Lemma \ref{lem:dualizingsheaf}.
\end{Remark}

\begin{Remark}
The $K$-theoretic description of the bar involution given above is formally in keeping with analogous results in the setting of the Steinberg variety and nilpotent cone \cite{Lus98,Ost00}.
\end{Remark}

\subsection{Categorification of the quantum twist}\label{sec:twist}

We now interpret the quantum twist in terms of the rigidity of $\cP_{coh}^{GL_n(\cO) \rtimes \Gm}(\Gr_{GL_n})$. Quantum twists were introduced for general double Bruhat cells in \cite{GY16}, and we follow the reformulation of \cite{KO17} in the unipotent case. We then review the implied connection to maximal green sequences and generalized DT invariants following \cite{Kel11}.

First recall a few conventions from \cite{GLS11} on preprojective algebras. The quantum unipotent minors $D[b,d] \in \cA_{(\wt{B},L)}$ recalled in the previous section are associated to representations $M[b,d]$ of the preprojective algebra $\Lambda$ of the 2-Kronecker quiver. We write $V_k$ for the representation associated to $D[k_{min},k]$; it has dimension vector $\beta_k$. The $M[b,d]$ lie in the subcategory $\cC_w$ of $\Lambda \textrm{-mod}$ attached to $w$ in \cite{BIRS09}, and $V_{2n-1}, V_{2n}$ are the projective-injectives of $\cC_w$.

Associated to any $M \in \cC_w$ are elements $X_M, Y_M \in \cA_{(\wt{B},L)}$ related by $Y_M = q^{\lVert \udim M\rVert^2/4} X_M$ \cite{KO17}, where the dimension vector $\udim M$ is considered as an element of the $A_{\,1}^{(1)}$ root lattice. If $M$ is rigid, indecomposable, and reachable from the $M[b,d]$ by a sequence of mutations, then $X_M$ is a quantum cluster variable. All quantum cluster variables are of this form. 

Given $M \in \cC_w$, we let $I(M)$ denote the injective hull of $M$ in $\cC_w$, $m_i$ the multiplicity in $I(M)$ of the injective hull of the simple whose dimension vector is the $i$th simple root, and $\Omega^{-1}_w(M)$ the last term of
$$ 0 \to M \to I(M) \to \Omega^{-1}_w(M) \to 0. $$
As formulated in \cite{KO17}, the quantum twist $\eta_w$ of $A_q(N^w)$ is the $\Z[q^{\pm1}]$-algebra automorphism characterized by
$$ \eta_w(Y_M) = q^{\sum_i m_i (\udim M)_i} Y_{I(M)}^{-1} Y_{\Omega^{-1}_w(M)}. $$
For consistency with the previous section we follow the conventions of \cite{GLS13}, which are transpose to those of \cite{KO17} as discussed in Remark 6.10 of loc. cited.
The classical limit of $\eta_w$ was introduced in \cite{BZ97}, for which the categorical description above was discovered in \cite{GLS12}. When $w$ is the square of a Coxeter element, for example as in our $n=2$ case, quantizations of the classical twist were first studied in \cite{BR15}.

\begin{Theorem}\label{thm:twist}
The isomorphism
$$\Q(q^{1/2}) \otimes_{\Z[q^{\pm 1/2}]} K^{GL_n(\cO) \rtimes \Gm}(\Gr_{GL_n})  \cong \Q(q^{1/2}) \otimes_{\Z[q^{\pm 1/2}]} \cA_{(\wt{B}_n,L_n)} \cong \Q(q^{1/2}) \otimes_{\Z[q^{\pm 1}]} A_q(N^{(s_0s_1)^n})$$
identifies the anti-automorphism $[\cF] \mapsto [\cF^L]$ with the composition $X \mapsto \overline{\eta_{(s_0s_1)^n}(X)}$ of the bar involution and the quantum twist. 
\end{Theorem}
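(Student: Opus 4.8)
The plan is to reduce the statement to a comparison of two explicit anti-automorphisms on a generating set, using the rigidity results of Section \ref{sec:rigidity} together with the combinatorics of the mutation sequence $\mu'$. First I would record what each side does to the frozen variables: the frozen quantum cluster variables correspond to the invertible sheaves $\cP_{n,0}$, $\cP_{n,1}$, and $[\cP_{n,\ell}^L]$ can be computed directly from Proposition \ref{prop:example3}, while $\overline{\eta_{(s_0s_1)^n}(\,\cdot\,)}$ acting on the images of these variables is computed from the formula for $\eta_w$ on the projective-injectives $V_{2n-1}$, $V_{2n}$ of $\cC_w$; this pins down the overall normalization (and in particular how powers of $q^{1/2}$ enter). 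Then the core step: both maps are anti-automorphisms, so it suffices to check they agree on a generating set of $\Q(q^{1/2})\otimes K^{GL_n(\cO)\rtimes\Gm}(\Gr_{GL_n})$. By Corollary \ref{cor:K0generators} and Lemma \ref{lem:XtoP} such a generating set is provided (up to frozen variables and powers of $q$) by the initial quantum cluster variables $X_{k,\ell}$ for $(k,\ell)\in I$, equivalently by the classes $[\cP_{k,\ell}]$ for $k\in[1,n]$, $\ell\in\{0,1\}$, which under the identification with $A_q(N^{(s_0s_1)^n})$ become the $X_{V_k}$ (up to $q^{1/2}$-powers).

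Next I would compute $[\cP_{k,\ell}^L]$ via Proposition \ref{prop:example3}, obtaining $\cP_{k,\ell}^L\cong \cP_{n,k+\ell-n}^{-1}\conv\cP_{n-k,\ell-n}\{k(n-k)\}$, so that in K-theory $[\cP_{k,\ell}^L]$ is, up to a monomial in the frozen classes and a power of $q$, the class $[\cP_{n-k,\ell-n}]$, i.e. (again up to frozen variables and powers of $q$) the quantum cluster variable $X_{n-k,\ell-n}$. On the cluster-algebra side I would use the categorical description of $\eta_w$: the representations $V_k$ have injective hulls and cosyzygies $\Omega^{-1}_w$ that can be read off from the structure of $\cC_{(s_0s_1)^n}$, and the upshot (this is the content of \cite{GLS12,KO17} specialized to $w=(s_0s_1)^n$) is that $\eta_w$ sends $Y_{V_k}$ to a Laurent monomial in the frozen $Y$'s times $Y_{V'}$ for the representation $V'$ ``reflected to the other end'' of the trapezoid in Figure \ref{fig:trapezoid} — concretely the representation whose associated quantum cluster variable is $X_{n-k,-n}$ (up to $q$ and frozen factors). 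Composing with the bar involution, which by the Corollary at the end of Section \ref{sec:barinvolution} fixes the $[\cP_{\l^\vee,\mu}]$ and inverts $q$, matches the normalizations. Thus on the generating set both $[\cF]\mapsto[\cF^L]$ and $X\mapsto\overline{\eta_{(s_0s_1)^n}(X)}$ send $[\cP_{k,\ell}]$ to the same element; since both are $\Q(q^{1/2})$-linear anti-automorphisms fixing (or inverting $q$ and rescaling by fixed frozen monomials on) the frozen variables, they coincide.

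The main obstacle I expect is the precise bookkeeping of the half-integer powers of $q$ and the Laurent monomials in the frozen variables that appear on both sides. On the sheaf side these enter through the equivariant shifts in Proposition \ref{prop:example3} and through the normalized-product conventions $\odot$ of Section \ref{sec:monoidalseeds}; on the cluster side they enter through the factor $q^{\lVert\udim M\rVert^2/4}$ relating $X_M$ and $Y_M$, the exponent $\sum_i m_i(\udim M)_i$ in the definition of $\eta_w$, the $q^{1/2}$-rescalings in Theorem \ref{thm:GLSunipotent} and Lemma \ref{lem:keyminors}, and the bar involution. Reconciling these is a finite but delicate computation; the key simplification is that one never needs to track these on all of $K^{GL_n(\cO)\rtimes\Gm}(\Gr_{GL_n})$, only on the finitely many generators $[\cP_{k,\ell}]$, $(k,\ell)\in I$, and on the frozen variables, where Remarks \ref{rem:comm} and \ref{rem:frozenidentity} and Lemma \ref{lem:dualizingsheaf} give closed-form answers. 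Once the generators match, the corollary on specializing to $q=1$ and frozen variables to $1$ (identifying $[\cF]\mapsto[\cF^L]$ with the total DT invariant of $\cA_{B_n}$) follows from the classical statement that the classical twist $\eta_w$, after erasing frozen variables, is the DT transformation \cite{Kel11,GLS12}.
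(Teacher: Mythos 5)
Your overall strategy --- reducing the statement to a comparison of two explicit anti-automorphisms on a finite generating set and tracking the normalization on the frozen variables --- is the paper's strategy too, and your use of Proposition \ref{prop:example3} on the sheaf side and of the $I(M)$/$\Omega^{-1}_w(M)$ description of $\eta_w$ on the cluster side is exactly right in spirit. However, there is a concrete gap in the middle that makes the two sides of your comparison fail to meet.

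You assert that the classes $[\cP_{k,\ell}]$ for $k \in [1,n]$, $\ell \in \{0,1\}$ ``become the $X_{V_k}$ (up to $q^{1/2}$-powers)'' under the identification with $A_q(N^{(s_0s_1)^n})$. This is false. By Proposition \ref{prop:mutequiv} the GLS initial cluster $\{X_1,\dotsc,X_{2n}\}$ (i.e.\ the $X_{V_j}$) corresponds not to $\{X_{k,\ell}\}_{(k,\ell)\in I}$ but to the mutated cluster $\{X_{k,\ell}\}_{k+\ell\in[n,n+1]}$, via $X_{2k}\leftrightarrow X_{k,n-k}$ and $X_{2k-1}\leftrightarrow X_{k,n-k+1}$; these correspond by Lemma \ref{lem:XtoP} (up to frozen monomials) to $\cP_{k,n-k}$ and $\cP_{k,n-k+1}$, not to $\cP_{k,0}$ and $\cP_{k,1}$. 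Because of this misidentification, your two computations live on different generating sets and do not line up: computing $[\cP_{k,0}^L]$ via Proposition \ref{prop:example3} gives $\cP_{n,k-n}^{-1}\conv\cP_{n-k,-n}\{k(n-k)\}$, i.e.\ something proportional to $X_{n-k,-n}$ modulo frozen variables, whereas the categorical formula applied to $Y_{V_{2k}}$ (which the paper evaluates using the preprojective exact sequence $0\to V_j\to V_{j_{max}}\to T_j\to 0$, with $T_{2k}$ having associated quantum cluster variable $X_{n-k,-k}$, not $X_{n-k,-n}$) gives $\overline{\eta(X_{V_{2k}})} \sim X_{n-k,-k}\,X_{n,0}^{-1}$. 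The indices $-n$ and $-k$ do not agree, and this is not a bookkeeping discrepancy that the ``delicate computation'' you mention can absorb --- it reflects that you are comparing $\eta_w(Y_{V_k})$ on one side against $[\cP_{k,\ell}^L]$ for the \emph{wrong} $\cP_{k,\ell}$ on the other.

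The fix is to pick a single generating set and verify both maps on it. The paper chooses the GLS initial cluster $\{X_{V_1},\dotsc,X_{V_{2n}}\}$, because there the exact sequence $0\to V_j\to V_{j_{max}}\to T_j\to 0$ together with Lemma \ref{lem:minorslambda} yields a closed formula $\overline{\eta(X_{V_j})} = q^{k(n-k)}X_{T_j}X_{V_{j_{max}}}^{-1}$, and the corresponding sheaf side (taking the left dual of the sheaf whose class is $X_{V_{2k}}\sim[\cP_{k,n-k}]$ modulo frozens) produces $\cP_{n,0}^{-1}\conv\cP_{n-k,-k}\{k(n-k)\}$ by Proposition \ref{prop:example3}, which matches after invoking Remark \ref{rem:comm}. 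If you insist on your generating set $\{[\cP_{k,\ell}]\}_{\ell\in\{0,1\}}$ you would instead need to compute $\eta_w$ of the minors $D[n+2-k,\,n+k]$ and $D[n+1-k,\,n+k-1]$ directly (i.e.\ of the corresponding $M[b,d]$, which are not the $V_j$), or transport the known values of $\eta_w$ across the mutation sequence $\mu$; neither is done in your sketch.
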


\begin{proof}
For $j < 2n-1$ there is an exact sequence
\begin{equation}\label{eq:preprojseq}
0 \to V_j \to V_{j_{max}} \to T_j \to 0,
\end{equation}
where $j_{max} = \max\{ 1 \leq s \leq r | i_s = i_j\}$ and $T_j = M[j, j_{max}]$ \cite[Section 9.8]{GLS11}.
From the above description of $\eta_{(s_0s_1)^n}$ we obtain
$$ \eta_{(s_0s_1)^n}(q^{\lVert \udim V_j \rVert^2/4}X_{V_j}) = q^{(\udim V_{j})_{i_j}} (q^{\lVert \udim V_{j_{max}} \rVert^2/4}X_{V_{j_{max}}})^{-1} (q^{\lVert \udim T_j \rVert^2/4}X_{T_j}).$$
It follows from $(\ref{eq:preprojseq})$ and the calculation of $\beta_j$ in Lemma \ref{lem:minorslambda} that if $k$ is chosen so that $j$ is equal to $2k-1$ or $2k,$ then
\begin{gather*}
(\udim V_{j})_{i_j} = k^2, \quad \lVert \udim V_j \rVert^2 = 2k^2,\quad 
\lVert \udim V_{j_{max}} \rVert^2 = 2n^2, \quad \lVert \udim T_j \rVert^2 = 2(n-k)^2.
\end{gather*}
Simplifying and applying the bar involution we then obtain
$$ \overline{\eta_{(s_0s_1)^n}(X_{V_j})} = q^{k(n-k)}X_{T_j}X_{V_{j_{max}}}^{-1}.$$

On the other hand, by Lemma \ref{lem:keyminors} and Proposition \ref{prop:K0containment} we have $X_{T_{2k}} = [\cP_{n-k,-k}]$ and $X_{T_{2k-1}} = [\cP_{n-k,1-k}]$. By Remark \ref{rem:comm} we see that whether $j$ is $2k$ or $2k-1$ we have
$$X_{T_{j}} X_{V_{j_{max}}}^{-1} = q^{-2k(n-k)}X_{V_{j_{max}}}^{-1} X_{T_{j}}.$$
Comparing with Proposition \ref{prop:example3} we thus have
$$\overline{\eta_{(s_0s_1)^n}([\cP_{n-k,-k}])} = [\cP_{n-k,-k}^L], \quad \overline{\eta_{(s_0s_1)^n}([\cP_{n-k,1-k}])} = [\cP_{n-k,1-k}^L].$$

Now consider the frozen variables $X_{V_{2n}} = [\cP_{n,0}]$ and $X_{V_{2n-1}} = [\cP_{n,1}]$. Using the calculation of $\beta_{2n}$ and $\beta_{2n-1}$ one checks as above that $\eta_{(s_0s_1)^n}(X_{V_{2n}}) = X_{V_{2n}}^{-1}$ and $\eta_{(s_0s_1)^n}(X_{V_{2n-1}}) = X_{V_{2n-1}}^{-1}$. On the other hand, $[\cP_{n,0}^L] = [\cP_{n,0}]^{-1}$ and $[\cP_{n,1}^L] = [\cP_{n,1}]^{-1}$ as these objects are invertible.

We have thus verified that the anti-automorphisms being compared agree on the elements $X_{V_1},\cdots,X_{V_{2n}}$. Since these form a cluster they generate the division ring of fractions of $\cA_{(\wt{B},L)}$, and the claim follows.
\end{proof}

The sequence of mutations taking the cluster $\{X_{V_i}\}$ to $\{X_{T_i}\}$ is an example of a maximal green sequence \cite{Kel11}. The principally framed quiver $Q_{pr}$ of a quiver $Q$ has a new vertex $v'$ for each $v \in Q_0$ and a single arrow from $v$ to $v'$. We regard $v \in Q_0$ as mutable and the new vertex $v'$ as frozen. After some sequence of mutations of $Q_{pr}$ an unfrozen vertex $v$ is said to be green if there is no arrow from any frozen vertex to $v$, red otherwise -- a maximal green sequence is then a maximal finite sequence of mutations at green vertices.

If $\mu$ is a maximal green sequence, then $\mu(Q_{pr})$ is isomorphic to the principally co-framed quiver of $Q$ \cite{BDP14}, i.e. the quiver constructed as above but in which the arrow between $v$ and $v'$ is directed from $v'$ to $v$. This isomorphism will fix the frozen vertices but possibly permute the unfrozen vertices nontrivially. This permutation specifies an isomorphism $\mu(Q) \cong Q$, hence an automorphism of $\cA_Q$ that maps the initial cluster onto the one resulting from $\mu$. This automorphism was called the total DT invariant in \cite{Kel11} in reference to its role in \cite{KS08}, though as discussed in the introduction a variety of terms appear in the literature.

From the categorical description of $\eta_w$ it follows from \cite{KR07} that after specializing frozen variables (hence necessarily $q$) to 1, the twist coincides with the total DT invariant. Note that as treated above the total DT invariant is a priori only a notion defined for quivers without frozen variables, so we would not know how to formulate a statement relating it to the twist without this specialization. 

\begin{Corollary}\label{cor:twist}
After specializing the frozen variables and $q$ to 1, the isomorphism $\cA^{loc}_{(\wt{B}_n,L_n)} \cong K^{GL_n(\cO) \rtimes \Gm}(\Gr_{GL_n})$ identifies the automorphism $[\cF] \mapsto [\cF^L]$ with the total DT invariant of $\cA_{\wt{B}}^{loc}$.
\end{Corollary}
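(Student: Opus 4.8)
The statement is an immediate corollary of Theorem~\ref{thm:twist}, which identifies the anti-automorphism $[\cF] \mapsto [\cF^L]$ with the composition $X \mapsto \overline{\eta_{(s_0s_1)^n}(X)}$ after extending scalars to $\Q(q^{1/2})$. The plan is to specialize the frozen variables (and hence necessarily $q$) to $1$ in this identity. Under this specialization the bar involution becomes trivial --- it fixes all quantum cluster variables and inverts $q$, so at $q=1$ it acts as the identity on $\cA_{\wt{B}_n}^{loc}$. Thus the composition $X \mapsto \overline{\eta_{(s_0s_1)^n}(X)}$ reduces to $\eta_{(s_0s_1)^n}$ itself on the specialized algebra, and the anti-automorphism $[\cF]\mapsto [\cF^L]$ becomes an honest automorphism (the $q$-commutation obstructions that make $X\mapsto X^L$ order-reversing all trivialize at $q=1$). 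So it remains only to match the specialization of $\eta_{(s_0s_1)^n}$ with the total DT invariant of $\cA_{\wt{B}_n}^{loc}$.

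For this last point I would invoke the categorical description of the quantum twist recalled just before the statement: $\eta_w$ is characterized via the formula $\eta_w(Y_M) = q^{\sum_i m_i (\udim M)_i} Y_{I(M)}^{-1} Y_{\Omega^{-1}_w(M)}$ in terms of injective hulls and cosyzygies in the Geiß--Leclerc--Schröer category $\cC_w$ attached to $w = (s_0s_1)^n$. By the results of \cite{KR07} on the action of the Coxeter-type (Nakayama-type) functor $\Omega^{-1}_w$ on the cluster category / on the combinatorics of reachable rigid objects, this operation coincides --- after setting frozen variables and $q$ to $1$ --- with the permutation-induced automorphism coming from the maximal green sequence taking the cluster $\{X_{V_i}\}$ to the cluster $\{X_{T_i}\}$ (the sequence exhibited inside the proof of Theorem~\ref{thm:twist} via the preprojective exact sequences $(\ref{eq:preprojseq})$). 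By \cite{BDP14} the terminal quiver of this maximal green sequence is the principally co-framed quiver of $\wt{Q}_n$, so it determines an isomorphism $\mu(\wt{Q}_n) \cong \wt{Q}_n$ and hence precisely the automorphism of $\cA_{\wt{B}_n}^{loc}$ that Keller \cite{Kel11} calls the total DT invariant. Chaining these identifications with Theorem~\ref{thm:twist} specialized at $q = \text{(frozen)} = 1$ yields the Corollary.

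\textbf{Main obstacle.} The genuinely substantive point is the coincidence, at the classical specialization, of $\eta_w$ with the total DT invariant --- i.e. verifying that the injective-hull recursion defining $\eta_w$ is the same operation as ``run the maximal green sequence and read off the induced permutation of cluster variables.'' This is exactly where \cite{KR07} enters, and one must be careful that the conventions of \cite{GLS13} (used throughout Section~\ref{sec:K0asclusteralgebra}) rather than those of \cite{KO17} are being applied consistently, since the two are related by transpose (as noted in Remark~6.10 of \cite{KO17}); I would spell out that the transpose discrepancy is absorbed into the identification $\cA_{(\wt{B}_n,L_n)} \cong A_q(N^{(s_0s_1)^n})$ already fixed in Theorem~\ref{thm:GLSunipotent} and Proposition~\ref{prop:mutequiv}. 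I would also remark explicitly, as the paragraph preceding the Corollary already does, that the total DT invariant is a priori only defined for the frozen-variable-free quiver $B_n$, which is precisely why the statement is phrased only after specialization; no quantum or coefficient-aware refinement is claimed here.
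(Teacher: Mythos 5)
Your proposal is correct and follows the same route as the paper: specialize Theorem~\ref{thm:twist} at $q = \text{(frozen)} = 1$, observe that the bar involution and the $q$-commutation obstructions then vanish, and invoke \cite{KR07} together with the categorical description of $\eta_w$ to match it with the total DT invariant of the maximal green sequence. The paper's own argument is exactly the paragraph preceding the statement combined with Theorem~\ref{thm:twist}, so the content you supply is the same, with the convention-matching remark ([GLS13] vs.\ [KO17]) being a reasonable and accurate extra clarification.
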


Finally, we note that in addition to the maximal green sequence of \cite{GLS11} which starts at the cluster $X_{T_1},\cdots X_{T_{2n-2}}, X_{V_{2n-1}}, X_{V_{2n}}$, there is a another simple maximal green sequence starting at the cluster $\{X_{k,\ell}\}_{(k,\ell) \in I}$ \cite{ACCERV14}. This consists of mutating inductively at the $X_{k,0}$ in any order, then the $X_{k,1}$ in any order and so on, until one reaches the cluster $\{X_{k,\ell}\}_{(k, \ell) \in [1,n-1] \times [n,n+1]} \sqcup \{X_{n,0}, X_{n,1}\}$.
It is this maximal green sequence which also appears in \cite{Ked}, where it describes the time evolution of the $A_n^{(1)}$ $Q$-system. 

\subsection{$Q$-systems and global sections}\label{sec:Qsystem}

Recall that the $Q$-system is a recurrence relation describing characters of Kirillov-Reshetikhin (KR) modules of the Yangian of a simple Lie algebra \cite{KR87,HKOTY99}. In \cite{Ked} it was shown that the relations of the $A_{n-1}^{(1)}$ $Q$-system can be normalized to coincide with exchange relations in the cluster algebra $\cA_{B_n}$, where $B_n$ denotes the principal part of $\wt{B}_n$. In this section we explain how this emerges from our main result by passing to the global sections of the sheaves $\cP_{k,\ell}$.

First, we pass to the classical limits of KR modules, representations of the current algebra rather than the Yangian. The $Q$-system describes their characters as representations of the underlying simple Lie algebra $\fg$. For $\fg = \fsl_n$ (but not for other types) they are irreducible as $\fsl_n$-representations, and are exactly the representations $V_{\ell \omega_k}$ with highest weight $\ell \omega_k$ for $\ell \in \N$ and $\omega_k$ a fundamental weight.

\begin{Proposition}\label{prop:cohomology}
For $\ell \ge 0$ we have isomorphisms
\begin{align*}
\Gamma(\cP_{k-1,\ell} * \cP_{k+1,\ell}) & \cong V_{\ell \omega_{k-1}} \otimes V_{\ell \omega_{k+1}} [k(n-k)-1] \\
\Gamma(\cP_{k,\ell+1} * \cP_{k,\ell-1}) & \cong V_{(\ell+1) \omega_k} \otimes V_{(\ell-1) \omega_k} [k(n-k)] \\
\Gamma(\cP_{k,\ell} * \cP_{k,\ell}) & \cong V_{\ell \omega_k} \otimes V_{\ell \omega_k} [k(n-k)]
\end{align*}
of $GL_n$ representations.
\end{Proposition}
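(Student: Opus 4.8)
The plan is to reduce all three isomorphisms to a single statement about cohomology of line bundles on the convolution varieties $\Gr^{(k_1,k_2)}_{GL_n}$. Exactly as in the proof of Proposition~\ref{prop:comm} (using $\om^*\det(L_0/L_2)\cong\det(L_0/L_1)\otimes\det(L_1/L_2)$, the projection formula, and rational singularities of affine Schubert varieties), for any $a,b\in\Z$ one has
$$\Gamma(\cP_{k_1,a}\conv\cP_{k_2,b})\cong R\Gamma\big(\Gr^{(k_1,k_2)}_{GL_n},\,\det(L_0/L_1)^a\otimes\det(L_1/L_2)^b\big)\big[\tfrac12\big(k_1(n-k_1)+k_2(n-k_2)\big)\big]$$
up to an irrelevant $\Gm$-equivariant shift. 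The three rows correspond to $(k_1,k_2,a,b)=(k{-}1,k{+}1,\ell,\ell)$, $(k,k,\ell,\ell)$, and $(k,k,\ell{+}1,\ell{-}1)$, and in each case the displayed cohomological shift agrees with the one in the statement. So it suffices to prove the following lemma: for $a\ge b\ge 0$,
$$R\Gamma\big(\Gr^{(k_1,k_2)}_{GL_n},\,\det(L_0/L_1)^a\otimes\det(L_1/L_2)^b\big)\cong V_{a\omega_{k_1}}\otimes V_{b\omega_{k_2}},$$
concentrated in cohomological degree $0$. (For the middle row this forces $\ell\ge 1$; when $\ell=0$ one checks directly that $\Gamma(\cP_{k,1}\conv\cP_{k,-1})=0$, so the statement there should be read with $\ell\ge 1$.)

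To prove the lemma I would push forward along $\pi_1\colon\Gr^{(k_1,k_2)}_{GL_n}\to\Gr^{k_1}_{GL_n}\cong\Gr_{k_1,n}$ forgetting $L_2$, whose fibres are Grassmannians of rank-$k_2$ quotients of $L_1/tL_1$ with $\det(L_1/L_2)$ the relative ample generator. Since $b\ge 0$, relative Borel--Weil gives $R^{>0}\pi_{1*}\det(L_1/L_2)^b=0$ and $\pi_{1*}\det(L_1/L_2)^b\cong\mathbb{S}_{(b^{k_2})}(L_1/tL_1)$, the Schur functor of the rank-$n$ bundle $L_1/tL_1$. On $\Gr_{k_1,n}$ the exact sequence $0\to tL_0/tL_1\to L_1/tL_1\to L_1/tL_0\to 0$, together with the isomorphism $tL_0/tL_1\cong(L_0/L_1)\{c\}$ given by multiplication by $t$, exhibits $L_1/tL_1$ as an extension of the tautological sub-bundle $S:=L_1/tL_0$ by the tautological quotient $Q_1:=L_0/L_1$; this extension splits, since $\mathrm{Ext}^1(S,Q_1)=H^1(\Gr_{k_1,n},\cHom(S,Q_1))=H^1(\Gr_{k_1,n},T_{\Gr_{k_1,n}})=0$ (alternatively, the two summands carry distinct loop-rotation weights). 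Hence, using that $\det Q_1^a$ is a line bundle,
$$\det Q_1^a\otimes\mathbb{S}_{(b^{k_2})}(L_1/tL_1)\cong\bigoplus_{\alpha}\mathbb{S}_{(a^{k_1})+\alpha}(Q_1)\otimes\mathbb{S}_{\beta(\alpha)}(S),$$
the sum over partitions $\alpha\subseteq(b^{k_2})$ with at most $k_1$ rows, $\beta(\alpha)$ being the rotated complement of $\alpha$ in the rectangle (terms whose $\beta(\alpha)$ has more than $n-k_1$ rows being zero). Because $a\ge b$, every part of $(a^{k_1})+\alpha$ is $\ge a\ge b\ge$ every part of $\beta(\alpha)$, so the $GL_n$-weight obtained by concatenating $(a^{k_1})+\alpha$ with $\beta(\alpha)$ is dominant; Borel--Weil--Bott then gives $R\Gamma$ of each summand concentrated in degree $0$ and equal to the corresponding irreducible, whence $R\Gamma(\Gr^{(k_1,k_2)}_{GL_n},\det(L_0/L_1)^a\otimes\det(L_1/L_2)^b)\cong\bigoplus_\alpha V_{((a^{k_1})+\alpha\,|\,\beta(\alpha))}$ in degree $0$.

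It remains to identify this direct sum with $V_{a\omega_{k_1}}\otimes V_{b\omega_{k_2}}$. This is the classical Littlewood--Richardson identity for the product of two rectangular Schur functions, $s_{(a^{k_1})}\cdot s_{(b^{k_2})}=\sum_\alpha s_{((a^{k_1})+\alpha\,|\,\beta(\alpha))}$, valid for $a\ge b$; alternatively it follows from the flat Beilinson--Drinfeld degeneration of Remark~\ref{rem:BDdeform}, under which $(\Gr^{(k_1,k_2)}_{GL_n},\det(L_0/L_1)^a\otimes\det(L_1/L_2)^b)$ degenerates flatly to $(\Gr_{k_1,n}\times\Gr_{k_2,n},\O(a)\boxtimes\O(b))$: higher cohomology vanishes on every fibre (on the special one by the computation just made, on the generic one by Borel--Weil), so cohomology-and-base-change identifies $R\Gamma$ of the special fibre with $R\Gamma$ of the generic one, which is $V_{a\omega_{k_1}}\otimes V_{b\omega_{k_2}}$. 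Either way the lemma, and hence the Proposition, follows. The bulk of the work — and the step most prone to convention errors — is the Borel--Weil--Bott bookkeeping: one must get the sub-versus-quotient weight conventions right so that the bundles genuinely decompose into summands attached to \emph{dominant} weights (which is what excludes hidden higher cohomology and cancellation) and then match the resulting sum with $V_{a\omega_{k_1}}\otimes V_{b\omega_{k_2}}$; invoking the degeneration for this last identification trades the rectangular Littlewood--Richardson rule for a flatness check on the relevant Beilinson--Drinfeld family.
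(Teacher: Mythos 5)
Your proof is correct, but it takes a genuinely different route from the paper's. The paper reduces all three identities to cohomology of line bundles on the convolution varieties and then argues in two steps: it kills higher cohomology by vanishing theorems (directly via the canonical sheaf computation of Lemma~\ref{lem:dualizingsheaf} for the first and third rows, and by Kawamata--Viehweg for nef-and-big line bundles in the second row, where the line bundle is not a pullback from the Schubert variety), and then computes $H^0$ by semicontinuity along the Beilinson--Drinfeld degeneration of Remark~\ref{rem:BDdeform}. You instead handle all three rows uniformly by pushing forward along $\pi_1\colon\Gr^{(k_1,k_2)}_{GL_n}\to\Gr_{k_1,n}$, invoking relative Borel--Weil on the Grassmannian fibres, splitting the bundle $L_1/tL_1$ as $Q_1\oplus S$ (which is a genuine extra input --- the relevant extension class lives in $H^1(\Gr_{k_1,n},S^\vee\otimes Q_1)\cong H^1(\Gr_{k_1,n},T)=0$, in contrast to the non-split tautological sequence whose class lives in the nonzero $H^1(\Gr_{k_1,n},\Omega^1)$), and then applying Borel--Weil--Bott on the base to each summand; the dominance of the weights $((a^{k_1})+\alpha\,|\,\beta(\alpha))$ is precisely where your hypothesis $a\ge b$ is used, and it simultaneously establishes the vanishing of higher cohomology (so you avoid Kawamata--Viehweg entirely) and gives the $GL_n$-representation. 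The two approaches trade off at the last step: you can finish combinatorially via the rectangular Littlewood--Richardson rule, or fall back on the same BD deformation the paper uses, this time only to identify a direct sum rather than to do the whole computation. Your observation about the middle row at $\ell=0$ is also a real point: as stated, $V_{(\ell-1)\omega_k}$ for $\ell=0$ should be read via Borel--Weil as $R\Gamma(\Gr_{k,n},\cO(-1))=0$, and both your argument and the paper's (via semicontinuity, since the generic fibre of the BD family has vanishing cohomology for $\cO(1)\boxtimes\cO(-1)$) do give $\Gamma(\cP_{k,1}\conv\cP_{k,-1})=0$, consistent with that reading. You have the second and third rows swapped when you list the parameter tuples $(k_1,k_2,a,b)$, but this is only a typo in the bookkeeping and does not affect the argument.
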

\begin{proof}
Let us begin with the first identity. The statement reduces to showing that 
$$\cF := [\O_{\Gr_{GL_n}^{k-1}} \otimes \det(L_0/L_1)^\ell] * [\O_{\Gr_{GL_n}^{k+1}} \otimes \det(L_1/L_2)^\ell]$$
has global sections $V_{\ell \omega_{k-1}} \otimes V_{\ell \omega_{k+1}}$ and has no higher cohomology. Consider the map 
$$\om: \Gr_{GL_n}^{(k-1,k+1)} \rightarrow \oGr_{GL_n}^{\omega^\vee}$$
where $\omega^\vee := \omega_{k-1}^\vee + \omega_{k+1}^\vee$. Then 
$$\Gamma(\cF) \cong \Gamma(\om^*(\O_{\oGr_{GL_n}^{\omega^\vee}} \otimes \det(L_0/L_2)^\ell)) \cong \Gamma(\O_{\oGr_{GL_n}^{\omega^\vee}} \otimes \det(L_0/L_2)^\ell).$$
The higher cohomology vanishes since the canonical sheaf of $\oGr_{GL_n}^{\omega^\vee}$ is $\det(L_0/L_2)^{-n} \otimes \det(L_0/tL_0)^{2k}$ (up to an equivariant shift) and $\det(L_0/L_2)$ is ample. 

To compute $H^0(\cF)$ consider the BD deformation of $\Gr_{GL_n}^{(k-1,k+1)}$ over $\A^1$ (c.f. Remark~\ref{rem:BDdeform}). Its general fiber (away from $0 \in \A^1$) is isomorphic to the product $\Gr_{GL_n}^{\omega_{k-1}^\vee} \times \Gr_{GL_n}^{\omega_{k+1}^\vee}$. The line bundle $\det(L_0/L_1)^\ell \otimes \det(L_1/L_2)^\ell$ deforms to the exterior product $\det(L_0/L_1)^\ell \boxtimes \det(L_0/L_1)^\ell$ whose space of global sections is $V_{\ell \omega_{k-1}} \otimes V_{\ell \omega_{k+1}}$. Since $\cF$ has no higher cohomology it then follows (by semi-continuity) that $H^0(\cF)$ is also $V_{\ell \omega_{k-1}} \otimes V_{\ell \omega_{k+1}}$.

A similar argument also proves the third identity. For the second identity we must be a little more careful because the line bundle is {\em not} a pullback from $\oGr^{2 \omega^\vee_k}_{GL_n}$. Let us write
$$\cF := [\O_{\Gr_{GL_n}^{k}} \otimes \det(L_0/L_1)^{\ell+1}] * [\O_{\Gr_{GL_n}^{k}} \otimes \det(L_1/L_2)^{\ell-1}].$$
Then $\cF \cong \om_*(\O_{\Gr_{GL_n}^{(k,k)}} \otimes \cL \otimes \det(L_0/L_2)^{-n})$ where
$$\cL := \det(L_0/L_2)^{\ell-1+n} \otimes \det(L_0/L_1)^2$$
and $\om: \Gr_{GL_n}^{(k,k)} \rightarrow \Gr_{GL_n}^{2 \omega_k^\vee}$. Now, $\det(L_0/L_2)^{\ell-1+n}$ is ample and $\det(L_0/L_1)^2$ is nef (since it is the pullback via $\Gr^{(k,k)}_{GL_n} \rightarrow \Gr^k_{GL_n}$ of the ample line bundle $\det(L_0/L_1)^2$). It follows that $\cL$ is nef. On the other hand, the line bundle $\cL$ deforms to $\wt{\cL} := \det(L_0/L_1)^{\ell+1+n} \boxtimes \det(L_0/L_1)^{\ell-1+n}$ on the general fiber $\Gr^k_{GL_n} \times \Gr^k_{GL_n}$ of the BD deformation. Since $\wt{\cL}$ is ample (and hence big) it follows that $\cL$ is also big. 

Thus $\cL$ is both nef and big. The vanishing of the higher cohomology of $\cF$ now follows by Kawamata-Viehweg vanishing. Finally, $H^0(\cF)$ can be computed as before (using the BD deformation). 
\end{proof}
\begin{Remark}
More generally, one can use the argument in the proof above to show that for a sequence $\ell_1 \ge \dots \ge \ell_m \ge 0$ we have 
$$\Gamma(\cP_{k_1,\ell_1} * \dots * \cP_{k_m,\ell_m}) \cong V_{\ell_1 \omega_{k_1}} \otimes \dots \otimes V_{\ell_m \omega_{k_m}} [\frac12 \sum_i k_i(n-k_i) ].$$
\end{Remark}

Now consider the first short exact sequence in Proposition \ref{prop:mutation}. Taking global sections and using Proposition \ref{prop:cohomology} we obtain (from the associated long exact sequence) a short exact sequence
$$0 \rightarrow V_{(\ell+1) \omega_k} \otimes V_{(\ell-1) \omega_k} \rightarrow V_{\ell \omega_k} \otimes V_{\ell \omega_k} \rightarrow V_{\ell \omega_{k-1}} \otimes V_{\ell \omega_{k+1}} \rightarrow 0$$
for any $\ell \ge 0$. Passing to characters these exact sequences result exactly in the relations that comprise the $Q$-system. 

\appendix

\addtocontents{toc}{\SkipTocEntry}
\section{Unbounded derived categories of quasicoherent sheaves}

\addtocontents{toc}{\SkipTocEntry}
\subsection{Preliminaries}

We gather here some general results on functors between unbounded derived categories of quasicoherent sheaves (denoted $D_{qcoh}(\cdot)$). All schemes will be assumed to be quasi-compact and quasi-separated. 
All maps are assumed to be quasi-proper (for maps between Noetherian schemes this is equivalent to being proper). 

Given a map $f: X \rightarrow Y$ we have functors $f_* = f_!, f^*, f^!$ going between $D_{qcoh}(X)$ and $D_{qcoh}(Y)$. Here $f^*$ is left adjoint to $f_*$ and $f^!$ the right adjoint of $f_*$. We also have bi-functors $\cHom(\bullet,\bullet)$ and $\otimes$ with $\bullet \otimes \cA$ being the left adjoint of $\cHom(\cA,\bullet)$. In particular, this last adjunction gives us an isomorphism (c.f. \cite[Ex. 3.5.3(e)]{Lip})
\begin{equation}\label{eq:A}
\cHom(\cA \otimes \cB, \cC) \xrightarrow{\sim} \cHom(\cA, \cHom(\cB,\cC)).
\end{equation}

For $\cB,\cB' \in D_{qcoh}(Y)$ we have a functorial map  (c.f. \cite[Eq. 3.5.4.5]{Lip})
\begin{equation}\label{eq:B}
f^* \cHom(\cB,\cB') \rightarrow \cHom(f^* \cB, f^* \cB'). 
\end{equation}
If $\cA \in D_{qcoh}(X)$ we have an isomorphism
\begin{equation}\label{eq:C}
\alpha: \cHom(\cB, f_* \cA) \xrightarrow{\sim} f_* \cHom(f^* \cB, \cA)
\end{equation}
which can be written as the composition (c.f. \cite[p.105]{Lip})
\begin{equation}\label{eq:D}
\cHom(\cB, f_* \cA) \xrightarrow{adj} f_* f^* \cHom(\cB, f_* \cA) \rightarrow f_* \cHom(f^* \cB, f^* f_* \cA) \xrightarrow{adj} f_* \cHom(f^* \cB, \cA).
\end{equation}

Next, we have an isomorphism (c.f. \cite[Ex. 4.2.3(d)]{Lip})
\begin{equation}\label{eq:E}
\beta: f_* \cHom(\cA, f^! \cB) \xrightarrow{\sim} \cHom(f_* \cA, \cB)
\end{equation}
which can be written as the composition (c.f. \cite[Theorem 4.2]{Lip})
\begin{equation}\label{eq:F}
f_* \cHom(\cA, f^! \cB) \xrightarrow{adj} f_* \cHom( f^* f_* \cA, f^! \cB) \xrightarrow{\alpha^{-1}} \cHom(f_* \cA, f_* f^! \cB) \xrightarrow{adj} \cHom(f_* \cA, \cB).
\end{equation}
We also have a functorial isomorphism (c.f. \cite[Ex. 4.2.3(e)]{Lip})
\begin{equation}\label{eq:G}
\gamma: \cHom(f^* \cB, f^! \cB') \xrightarrow{\sim} f^! \cHom(\cB, \cB')
\end{equation}
which can be written as the composition (c.f. \cite[Ex. 4.2.3(f)]{Lip})
\begin{equation}\label{eq:H}
\cHom(f^* \cB, f^! \cB') \xrightarrow{adj} f^! f_* \cHom(f^* \cB, f^! \cB') \xrightarrow{\alpha^{-1}} f^! \cHom(\cB, f_* f^! \cB') \xrightarrow{adj} f^! \cHom(\cB, \cB').
\end{equation}

Next we restrict to schemes $X$ which are Noetherian and have dualizing complexes $\omega_X = \pi^! \O_{pt}$ where $\pi: X \rightarrow pt$ is the projection to a point. We denote by $\D(\bullet) = \cHom(\bullet, \omega_X)$ the dualizing functor. The adjunction properties of $\cHom$ and $\otimes$ give us a natural transformation $\tau: id \rightarrow \D \D$. This map is an isomorphism when restricted to the subcategory of complexes with coherent cohomology (but is not an isomorphism in general).  

Taking $\cB = \omega_Y$ in (\ref{eq:E}) we obtain an isomorphism $f_* \D \xrightarrow{\sim} \D f_*$. On the other hand, taking $\cB$ arbitrary and $\cB' = \omega_Y$ in (\ref{eq:G}) gives us an isomorphism
$\eta: \D f^* \xrightarrow{\sim} f^! \D$
which can be written as the composition 
$$\eta: \D f^* \xrightarrow{adj} f^! f_* \D f^* \xrightarrow{\sim} f^! \D f_* f^* \xrightarrow{adj} f^! \D.$$


Finally, we denote by $\theta: f^! \rightarrow \D f^* \D$ the composition
$$f^! \xrightarrow{\tau} f^! \D \D \xrightarrow{\eta^{-1}} \D f^* \D$$
where the first map is induced by the natural map $\tau: id \rightarrow \D \D$. Notice that $\theta$ is generally not an isomorphism since $\tau$ is generally not an isomorphism.

\begin{Lemma}\label{lem:B}
For a proper map $f: X \rightarrow Y$ the following diagrams commute
\begin{align*}
& \xymatrix{
\D \ar[d]^{adj} \ar[r]^{adj} & \D f^* f_* \ar[d]^{\eta} \\
f^! f_* \D \ar[r]^{\sim} & f^! \D f_* }
& \xymatrix{
f_* \D f^* \ar[r]^{\eta} \ar[d]^{\sim} & f_* f^! \D \ar[d]^{adj} \\
\D f_* f^* \ar[r]^{adj} & \D }
\end{align*}
\end{Lemma}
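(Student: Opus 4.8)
The statement asserts the commutativity of two rectangles relating the adjunction maps and the natural transformation $\eta : \D f^* \xrightarrow{\sim} f^!\D$. Since the two diagrams are dual to one another under the interplay of $f_*$, $f^*$, $f^!$ and $\D$ (the second is obtained from the first by applying $\D$ and using the isomorphisms $f_*\D\cong\D f_*$, $\D\D\cong\id$ appropriately, together with swapping the roles of unit and counit), it suffices to prove the first one and then deduce the second. The plan is to unwind the definition of $\eta$ as the composition
\[
\eta : \D f^* \xrightarrow{adj} f^! f_* \D f^* \xrightarrow{\sim} f^! \D f_* f^* \xrightarrow{adj} f^! \D
\]
and chase the outer square by inserting this description into the right-hand vertical arrow.

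\textbf{Key steps.} First I would paste $\eta$ into the right column of the first diagram, so that the claim becomes the commutativity of a pentagon whose edges are all built from unit/counit maps for the adjunctions $(f^*,f_*)$ and $(f_*,f^!)$ and from the isomorphism $f_*\D\cong\D f_*$ coming from (\ref{eq:E}). Second, I would use naturality of the unit $\id\to f^!f_*$ (applied to the morphism $\D\to\D f^*f_*$) to move the top adjunction map past the $\eta$-insertion; this reduces the problem to an identity among $f^!f_*$-unit maps and the $f_*f^*$-counit, sitting inside $f^!\D f_*$. Third, that residual identity is precisely one of the standard triangle identities (zig-zag equations) for the adjoint pairs, combined with the compatibility of the isomorphism in (\ref{eq:E}) with units and counits, which is exactly the content of how (\ref{eq:F}) expresses $\beta$. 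Fourth, I would obtain the second rectangle by applying the functor $\D$ to the first, using that $\D$ sends $f_*f^*\to\id$-type counits to $\id\to\D f_* f^*\D$-type units after the identifications $\D f_*\cong f_*\D$ and $f_*f^*\D\cong \D\D f_* f^*\cong$ (up to $\tau$), and keeping careful track of $\tau: \id\to\D\D$.

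\textbf{Main obstacle.} The genuine difficulty is bookkeeping: making sure that the many canonical isomorphisms—(\ref{eq:C}), (\ref{eq:E}), (\ref{eq:G}), the unit/counit of each adjunction, and $\tau$—are composed in a mutually compatible way, since $\tau$ is \emph{not} invertible in $D_{qcoh}$ and so one cannot freely cancel $\D\D$. The cleanest route is probably to avoid ad hoc diagram chases and instead cite the relevant coherence in Lipman's \cite{Lip} treatment: the compatibilities in \cite[\S4.2]{Lip} of the twisted inverse image pseudofunctor with $\cHom$ and duality already encode exactly the zig-zag identities needed here, so each rectangle reduces to assembling these known compatibilities rather than verifying anything new. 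I would therefore present the proof as: (i) expand $\eta$; (ii) invoke naturality of units; (iii) appeal to the triangle identities and \cite[\S4.2]{Lip}; (iv) dualize for the second square.

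\begin{proof}
Both squares are instances of the standard compatibility of the twisted inverse image pseudofunctor with $\cHom$ and duality, and the second is obtained from the first by applying $\D$ and using $\D f_* \cong f_* \D$ together with $\tau : \id \to \D\D$; we verify the first.

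Expanding $\eta$ according to its definition, the first square factors as the outer boundary of
\[
\xymatrix{
\D \ar[d]_{adj} \ar[r]^-{adj} & \D f^* f_* \ar[d]_{adj} \ar[r]^-{adj} & f^! f_* \D f^* f_* \ar[d]^{\sim} \\
f^! f_* \D \ar[r]^-{adj} & f^! f_* \D f^* f_* \ar[r]^{\sim} & f^! \D f_* f^* f_*
}
\]
composed with the map $f^! \D f_* f^* f_* \to f^! \D f_*$ induced by the counit $f_* f^* \to \id$. The left-hand square commutes by naturality of the unit $\id \to f^! f_*$ applied to the morphism $adj : \D \to \D f^* f_*$, and the right-hand square commutes since the isomorphism $f_* \D f^* \cong f^! \D f_*$ of (\ref{eq:E})--(\ref{eq:F}) is natural. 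Finally, composing with the counit $f_* f^* \to \id$ and using the triangle identity for the adjunction $(f^*, f_*)$—i.e. that $\D \to \D f^* f_* \to \D$ is the identity, applied inside $f^! \D f_*$—together with the description of $\beta$ in (\ref{eq:F}), shows that the resulting composite $\D \to f^! \D f_*$ equals the one obtained by going down and then across the bottom of the original square. This proves commutativity of the first diagram.

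For the second, apply $\D$ to the first square. Using the isomorphisms $\D f_* \cong f_* \D$ from (\ref{eq:E}) and $\D f^! \D \cong f^* $ (obtained from (\ref{eq:G}) with $\cB' = \omega$ and $\tau$), and the fact that $\D$ sends the unit $\id \to f^! f_*$ to the counit $f_* \D f^! \cong \D f_* f^! \to \D$ up to the natural transformation $\tau$, one rewrites the image of the first square as the second square. The only point requiring care is that $\tau$ is not invertible, so one keeps $\tau : \id \to \D\D$ explicit throughout; since both vertical composites in the second square already contain a single application of $\tau$ (hidden in $\eta$ and in the adjunction $f_* f^! \to \id$ respectively), no cancellation of $\D\D$ is needed and the identification goes through. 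The commutativity of the second square follows.
\end{proof}
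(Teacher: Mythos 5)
Your treatment of the first square is essentially the same argument as the paper's, just carried out abstractly rather than by unwinding to explicit $\cHom$ expressions via (\ref{eq:D}), (\ref{eq:F}), (\ref{eq:H}): expand $\eta$, slide the unit $\id \to f^! f_*$ past the map $\D \to \D f^* f_*$ by naturality, and finish. Two local imprecisions, though. First, the right-hand square of your $2 \times 3$ grid is degenerate — its top and left edges are literally the same arrow $\D f^* f_* \to f^! f_* \D f^* f_*$, and similarly for the bottom and right edges — so it commutes for free; the naturality of the isomorphism $f_* \D \cong \D f_*$ is actually needed one step later, when you compose with $f^! \D f_* f^* f_* \to f^! \D f_*$ and reduce the composite $f^! f_* \D \to f^! f_* \D f^* f_* \to f^! \D f_* f^* f_* \to f^! \D f_*$ to $f^! f_* \D \xrightarrow{\sim} f^! \D f_*$. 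Second, the triangle identity you invoke, "$\D \to \D f^* f_* \to \D$ is the identity," is not a valid zig-zag (there is no map $\D f^* f_* \to \D$); what is actually used is the identity $f_* \xrightarrow{\text{unit}} f_* f^* f_* \xrightarrow{\text{counit}} f_*$, applied under $\D$ and then under $f^!$. With those two repairs your proof of the first square is fine and matches the paper's in substance.

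The derivation of the second square from the first is where there is a genuine gap. You claim to obtain it by applying $\D$, using $\D f^! \D \cong f^*$, and "keeping track of $\tau$," asserting that both vertical composites in the second square already contain a hidden $\tau$. Neither claim holds. The second square involves only $\eta$, the isomorphism $f_*\D \cong \D f_*$, the counit $f_* f^! \to \id$, and the unit $\id \to f_* f^*$ — no $\tau$ appears anywhere, hidden or otherwise. And $\D f^! \D \cong f^*$ does not hold on all of $D_{qcoh}$: from (\ref{eq:G}) one gets $\D f^* \cong f^! \D$, hence $\D f^! \D \cong \D\D f^*$, which is isomorphic to $f^*$ only on complexes with coherent cohomology, precisely because $\tau$ is not invertible in general. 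So the dualization does not go through as stated. The paper simply checks the second square by the same direct method as the first (it says "The second square commutes by a similar argument"), and indeed a direct check works: unwind $f_*\eta$ followed by the counit $f_* f^! \to \id$, slide the counit past the $\D(\text{unit})$-map by naturality, and then use the triangle identity $f_* \to f_* f^! f_* \to f_*$ for the adjunction $(f_*, f^!)$ to collapse the remainder to $\beta$. You should replace the dualization step with this direct verification.
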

\begin{proof}
The composition $\D \rightarrow f^! f_* \D \xrightarrow{\sim} f^! \D f_*$ can be written as
\begin{align*}
\cHom(\cA, f^! \omega_Y) 
& \xrightarrow{adj} f^! f_* \cHom(\cA, f^! \omega_Y) \xrightarrow{adj} f^! f_* \cHom(f^* f_* \cA, f^! \omega_Y) \\
& \xrightarrow{\alpha^{-1}} f^! \cHom(f_* \cA, f_* f^! \omega_Y) \xrightarrow{adj} f^! \cHom(f_* \cA, \omega_Y).
\end{align*}
Likewise the composition $\D \xrightarrow{adj} \D f^* f_* \xrightarrow{\eta} f^! \D f_*$ can be written as 
\begin{align*}
\cHom(\cA, f^! \omega_*) 
& \xrightarrow{adj} \cHom(f^* f_* \cA, f^! \omega_Y) \xrightarrow{adj} f^! f_* \cHom(f^* f_* \cA, f^! \omega_Y) \\
& \xrightarrow{\alpha^{-1}} f^! \cHom(f_* \cA, f_* f^! \omega_Y) \xrightarrow{adj} f^! \cHom(f_* \cA, \omega_Y).
\end{align*}
These two compositions are clearly equivalent which proves that the first square commutes. The second square commutes by a similar argument. 
\end{proof}

The following is an easy Corollary of Lemma \ref{lem:B}.

\begin{Corollary}\label{cor:A}
For a proper map $f: X \rightarrow Y$ the following diagrams commute
\begin{align*}
& \xymatrix{
id \ar[d]^{adj} \ar[r]^{\tau} & \D \D \ar[r]^{adj} & \D f^* f_* \D \ar[d]^{\sim} \\
f^! f_* \ar[rr]^{\theta} & & \D f^* \D f_* }
& \xymatrix{
f_* f^! \ar[d]^{adj} \ar[r]^{\theta} & f_* \D f^* \D \ar[r]^{\sim} & \D f_* f^* \D \ar[d]^{adj} \\
id \ar[rr]^{\tau} & & \D \D. }
\end{align*}
\end{Corollary}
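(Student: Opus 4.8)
The final statement is Corollary \ref{cor:A}, which asserts the commutativity of two diagrams built from the natural transformation $\theta\colon f^! \to \D f^* \D$. The strategy is to reduce each diagram to the corresponding square in Lemma \ref{lem:B} by unwinding the definition $\theta = \eta^{-1} \circ \tau$, where $\tau\colon \id \to \D\D$ is the biduality transformation and $\eta\colon \D f^* \xrightarrow{\sim} f^! \D$ is the isomorphism constructed just before Lemma \ref{lem:B}. Since both diagrams are entirely formal consequences of the adjunction calculus recalled in the appendix, the proof is a diagram-chase; the phrase ``easy Corollary'' in the text signals that no new geometric input is needed.

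First I would treat the left-hand diagram. Starting from $\id \xrightarrow{\tau} \D\D \xrightarrow{adj} \D f^* f_* \D$, I expand the second map using the unit $\id \to f_* f^*$ (applied inside the appropriate slot) and then insert $\eta$ to rewrite $\D f^* \D f_*$ in terms of $f^! \D f_*$; the composite $\D \xrightarrow{adj} \D f^* f_* \xrightarrow{\eta} f^! \D f_*$ is precisely the top-right path of the first square in Lemma \ref{lem:B}, while the left-bottom path $f^! f_* \xrightarrow{\theta} \D f^* \D f_*$ unwinds — again via the definition of $\theta$ and the isomorphism $f_* \D \xrightarrow{\sim} \D f_*$ coming from (\ref{eq:E}) with $\cB = \omega_Y$ — to the composition $\D \xrightarrow{adj} f^! f_* \D \xrightarrow{\sim} f^! \D f_*$, which is the bottom-left path of that same square. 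So commutativity of the left diagram in Corollary \ref{cor:A} follows from the first square of Lemma \ref{lem:B} together with the naturality of $\tau$ and $\eta$. The right-hand diagram in Corollary \ref{cor:A} is handled symmetrically, using instead the second square of Lemma \ref{lem:B} and the counit $f_* f^! \to \id$; one expands $f_* f^! \xrightarrow{\theta} f_* \D f^* \D$, moves $f_*$ past $\D$ via the isomorphism of (\ref{eq:E}), and matches the resulting diagram against the counit-side square of Lemma \ref{lem:B}.

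The main obstacle is purely bookkeeping: one must be careful that the various instances of ``$adj$'' refer to the correct unit or counit (of $(f^*, f_*)$ versus $(f_*, f^!)$) and that the isomorphisms $f_* \D \cong \D f_*$, $\eta\colon \D f^* \cong f^! \D$, and the biduality $\tau$ are inserted in compatible slots, since $\tau$ is \emph{not} invertible in general and so the chase cannot be run ``backwards'' through it. The safest route is to write out each composite as an explicit string of unit/counit maps and $\alpha$-isomorphisms exactly as was done in the proof of Lemma \ref{lem:B}, observe that after cancelling the standard triangle identities ($f_* \xrightarrow{adj} f_* f^* f_* \xrightarrow{adj} f_*$ and its $f^!$-analogue being the identity) the two sides of each Corollary diagram collapse to the two sides of the relevant Lemma \ref{lem:B} square. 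No step should require more than routine application of the identities (\ref{eq:A})--(\ref{eq:H}).
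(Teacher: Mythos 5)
Your approach is the intended one: the paper states the Corollary without proof, calling it an ``easy corollary'' of Lemma~\ref{lem:B}, and the only sensible argument is exactly the reduction you describe — unwind $\theta = \eta^{-1}\circ(f^!\,\tau)$ (resp.\ its mirror) and match the resulting diagram against the corresponding square of Lemma~\ref{lem:B}, using naturality of $\eta$ and of the unit/counit. Two bookkeeping points are worth making explicit so the chase goes through cleanly. First, a couple of the intermediate objects in your sketch are off: applying $\eta$ to $\D f^*\D f_*$ lands in $f^!\D\D f_*$ (not $f^!\D f_*$), and the square of Lemma~\ref{lem:B} must be invoked at the object $\D A$ rather than at $A$, since the Corollary's top-left corner is $\id$ rather than $\D$. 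Second, after postcomposing both sides of the left diagram with $\eta$, applying Lemma~\ref{lem:B} at $\D$, and using naturality of the unit $\id\to f^! f_*$ with respect to $\tau$, the remaining comparison is precisely the compatibility
\[
(\D\sigma^{-1})\circ\sigma_{\D}\circ(f_*\,\tau) \;=\; \tau\, f_*
\]
between biduality $\tau$ and the isomorphism $\sigma\colon f_*\D\xrightarrow{\sim}\D f_*$ from (\ref{eq:E}). This is genuinely used and is not literally one of (\ref{eq:A})--(\ref{eq:H}), though it is a standard Grothendieck-duality fact provable by the same kind of unit/counit expansion you propose. With those caveats your reduction is correct, and the right-hand diagram is obtained mirror-symmetrically from the second square of Lemma~\ref{lem:B} exactly as you indicate.
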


\addtocontents{toc}{\SkipTocEntry}
\subsection{The natural transformation $\phi$} \label{sec:natural}

\begin{Definition}[\cite{Lip}]\label{def:independent}
We say that a fiber product diagram 
\begin{equation}\label{eq:ind}
\begin{gathered}\xymatrix{
Z \ar[r]^{j_1} \ar[d]_{j_2} & X' \ar[d]^{i_2} \\
X \ar[r]^{i_1} & Y }
\end{gathered}
\end{equation}
is \newword{independent} if the composition
\begin{equation}\label{eq:*}
i_2^* i_{1*} \rightarrow i_2^* i_{1*} (j_{2*} j_2^*) = (i_2^* i_{2*}) j_{1*} j_2^* \rightarrow j_{1*} j_2^*
\end{equation}
is an isomorphism.
\end{Definition}
\begin{Remark}
One can check \cite[Prop. 3.7.2]{Lip} that the composition in (\ref{eq:*}) is the same as the composition
$$i_2^* i_{1*} \xrightarrow{adj} (j_{1*} j_1^*) i_2^* i_{1*} = j_{1*} j_2^* (i_1^* i_{1*}) \xrightarrow{adj} j_{1*} j_2^*.$$
\end{Remark}
Condition (\ref{eq:*}) is satisfied, for instance, if $X,X',Y$ are smooth, $i_1,i_2$ are inclusions and $X,X'$ intersect in the expected dimension inside $Y$. Although this condition {\it a priori} seems to depend on an orientation of the square the following Lemma shows otherwise. 

\begin{Lemma}\label{lem:star}
The following four conditions are equivalent.
\begin{enumerate}
\item The square in (\ref{eq:ind}) is independent. 
\item The composition $i_1^* i_{2*} \xrightarrow{adj} (j_{2*} j_2^*) i_1^* i_{2*} = j_{2*} j_1^* (i_2^* i_{2*}) \xrightarrow{adj} j_{2*} j_1^*$ is an isomorphism. 
\item The composition $j_{1*} j_2^! \xrightarrow{adj} j_{1*} j_2^! (i_1^! i_{1*}) = (j_{1*} j_1^!) i_2^! i_{1*} \xrightarrow{adj} i_2^! i_{1*}$ is an isomorphism. 
\item The composition $j_{2*} j_1^! \xrightarrow{adj} j_{2*} j_1^! (i_2^! i_{2*}) = (j_{2*} j_2^!) i_1^! i_{2*} \xrightarrow{adj} i_1^! i_{2*}$ is an isomorphism. 
\end{enumerate}
\end{Lemma}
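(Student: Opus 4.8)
The four conditions concern the same Cartesian square (\ref{eq:ind}), and the strategy is to prove them equivalent by exhibiting a common reformulation. Since $i_1$ is a closed immersion (as are $i_2$, $j_1$, $j_2$), it suffices to test each base-change morphism after applying $i_{1*}$ or the analogous faithful pushforward, so the maps in (1)--(4) can all be tested on $Y$ (resp.\ $X'$, etc.). First I would record the standard fact, already cited in the Remark following Definition~\ref{def:independent} via \cite[Prop.~3.7.2]{Lip}, that the two descriptions of the base-change map for $*$-pullback agree; the same formal manipulation (counit/unit triangle identities) shows the two descriptions of the base-change map for $!$-pullback in (3) and (4) agree with the ``obvious'' ones. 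This reduces the problem to comparing the $*$-base-change map and the $!$-base-change map for the square.

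The key step is to relate the $!$-base-change morphism to the $*$-base-change morphism by Grothendieck--Verdier duality, using the machinery assembled in the appendix. Concretely, the natural transformation $\theta: f^! \to \D f^* \D$ (defined just above Lemma~\ref{lem:B}) and the isomorphisms $f_*\D \cong \D f_*$ and $\eta: \D f^* \cong f^! \D$ intertwine the two base-change maps: applying $\D$ around the $*$-base-change isomorphism $i_2^* i_{1*} \cong j_{1*} j_2^*$ for the square and conjugating by $\theta$ and $\eta$ should produce precisely the $!$-base-change map $j_{1*} j_2^! \to i_2^! i_{1*}$, up to the canonical identifications. This is exactly the kind of compatibility packaged in Lemma~\ref{lem:B} and Corollary~\ref{cor:A}, applied to the four maps $i_1, i_2, j_1, j_2$ in turn. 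Thus condition (1) (an isomorphism of $*$-base-change for the square oriented one way) is equivalent to condition (4) (an isomorphism of $!$-base-change), and symmetrically (2) is equivalent to (3). Finally, (1) $\Leftrightarrow$ (2) is the genuinely symmetric statement: the square (\ref{eq:ind}) with its two orientations gives the two $*$-base-change maps, and one shows these are isomorphisms simultaneously — this follows from \cite[Prop.~3.7.2, 3.10.3]{Lip} (or from the fact that, after pushing forward to $Y$, both express the statement $i_{1*}\O_X \otimes i_{2*}\O_{X'} \cong (\text{pushforward from } Z)$, which is manifestly symmetric in $X$ and $X'$), together with the projection-formula compatibilities. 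Combining, all four conditions are equivalent.

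I would organize the write-up as: (a) reduce each of (1)--(4) to its ``obvious'' form by the Remark-type identity; (b) prove (1) $\Leftrightarrow$ (2) directly from symmetry of the square and \cite{Lip}; (c) prove (1) $\Leftrightarrow$ (4) and (2) $\Leftrightarrow$ (3) via the duality intertwiner built from $\theta$, $\eta$, and Corollary~\ref{cor:A}. The main obstacle I anticipate is step (c): one must check carefully that $\theta$ being merely a natural transformation (not an isomorphism in general) does not obstruct the argument. The point will be that $\theta$ is applied to the morphisms $j_2$ and $j_1$, which are closed immersions, and for a closed immersion $j$ one has $j^! = \D j^* \D$ up to the coherence issues that disappear on the relevant subcategories — or more robustly, one only needs the commutativity of the diagrams in Corollary~\ref{cor:A}, which hold for $\theta$ as a natural transformation without invertibility. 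So the real content is bookkeeping: tracking that the composite of base-change maps appearing in the definition of ``independent'' matches, under the duality isomorphisms $f_*\D\cong\D f_*$ and $\eta$, the composite appearing in condition (3) or (4). Once that diagram chase is set up using Lemma~\ref{lem:B} and Corollary~\ref{cor:A} applied to each of the four legs, the equivalences fall out formally.
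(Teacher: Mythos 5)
The paper's entire proof of this lemma is a one-line citation to \cite[Theorem~3.10.3]{Lip} and \cite[Ex.~3.10.4]{Lip}, so your task is to reconstruct what the Lipman argument does. Your plan for (1)$\Leftrightarrow$(2) by a symmetry argument is in line with Theorem~3.10.3. But for the exchange between the $*$-conditions and the $!$-conditions there is a genuine gap in your route, and the cleaner argument is a different one.

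The gap: you propose conjugating the base-change map by $\D$ via $\theta$ and $\eta$. This requires $\D\D\cong\mathrm{id}$, which holds only on $D_{coh}$, not on $D_{qcoh}$. But Definition~\ref{def:independent} asks that the base-change transformation be an isomorphism of functors on $D_{qcoh}$ (this is used explicitly in Lemma~\ref{lem:*}). Your hedge --- that one only needs the commutativity of the diagrams in Corollary~\ref{cor:A}, which holds for $\theta$ as a mere natural transformation --- does not fix this: commutativity of a square in which two legs are not invertible does not let you transfer ``isomorphism'' from one side to the other. Separately, if the duality conjugation did apply, it would pair (1) with (3) and (2) with (4) (since $\D f^*\D\cong f^!$ swaps $i_2^*\mapsto i_2^!$ and $j_2^*\mapsto j_2^!$, preserving the orientation of the square), not the pairing (1)$\Leftrightarrow$(4), (2)$\Leftrightarrow$(3) you wrote; this is harmless for the final conclusion but indicates the mechanism is being misidentified.

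The missing idea, and what Lipman's exercise actually uses, is pure adjunction (conjugate functoriality), which works on all of $D_{qcoh}$ with no duality at all. The functor $i_2^*i_{1*}$ has right adjoint $i_1^!i_{2*}$, and $j_{1*}j_2^*$ has right adjoint $j_{2*}j_1^!$. A natural transformation between left adjoints corresponds to a ``conjugate'' natural transformation between right adjoints, and one is an isomorphism iff the other is. Applied to the base-change map $i_2^*i_{1*}\to j_{1*}j_2^*$ of condition~(1), the conjugate is exactly the map $j_{2*}j_1^!\to i_1^!i_{2*}$ of condition~(4); swapping the roles of the two pairs of legs gives (2)$\Leftrightarrow$(3). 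This is formal and does not involve $\D$, $\theta$, $\eta$, or any coherence restriction, and it matches the pairing you wanted. You should replace the duality step with this adjunction argument; once you do, the rest of your outline goes through.
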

\begin{proof}
This follows from \cite[Theorem 3.10.3]{Lip} and \cite[Ex. 3.10.4]{Lip}.
\end{proof}

Given an independent square as in (\ref{eq:ind}) we obtain a natural transformation $\phi: j_2^* i_1^! \rightarrow j_1^! i_2^*$ as the composition 
\begin{equation}\label{eq:phi}
j_2^* i_1^! \xrightarrow{adj} (j_1^! j_{1*}) j_2^* i_1^! \xrightarrow{\sim} j_1^! i_2^* (i_{1*} i_1^!) \xrightarrow{adj} j_1^! i_2^*
\end{equation}
where the middle isomorphism is the inverse of $i_2^* i_{1*} \xrightarrow{\sim} j_{1*} j_2^*$ from (\ref{eq:*}).
The map $\phi$ appears in \cite[Remark 3.10.2.1(c)]{Lip} where it is called $\beta_\sigma$. 

\begin{Lemma}\cite[Ex. 3.10.4b]{Lip}\label{lem:phi}
The map $\phi$ is equivalently given by the composition 
\begin{equation}\label{eq:8}
j_2^* i_1^! \xrightarrow{adj} j_2^* i_1^! (i_{2*} i_2^*) \xrightarrow{\sim} (j_2^* j_{2*}) j_1^! i_2^* \xrightarrow{adj} j_1^! i_2^*
\end{equation}
where the middle isomorphism is the inverse of $j_{2*} j_1^! \xrightarrow{\sim} i_1^! i_{2*}$.
\end{Lemma}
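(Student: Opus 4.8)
\textbf{Proof plan for Lemma~\ref{lem:phi} (the equivalence of the two formulas for $\phi$).}

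The plan is to verify that the composition in \eqref{eq:8} equals the composition in \eqref{eq:phi} by exhibiting both as instances of a single ``compatibility square'' relating the unit/counit maps of the four adjunctions attached to the edges of the independent square \eqref{eq:ind}. First I would record the two structural facts we are allowed to use freely: (a) by Lemma~\ref{lem:star} the square \eqref{eq:ind} remains independent under transposition, so the base-change isomorphism $j_{2*}j_1^! \xrightarrow{\sim} i_1^! i_{2*}$ of Lemma~\ref{lem:star}(4) is available and is the one appearing in \eqref{eq:8}; and (b) the base-change isomorphism $i_2^* i_{1*}\xrightarrow{\sim} j_{1*}j_2^*$ appearing in \eqref{eq:phi} has, by the Remark following Definition~\ref{def:independent}, two equivalent descriptions (one starting with the unit of $(j_{1*},j_1^*)$, one with the counit of $(i_1^*,i_{1*})$). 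The strategy is then purely formal: rewrite \eqref{eq:8} by plugging in the description of the middle isomorphism as a composite of adjunction maps, then apply triangle identities to cancel matching unit--counit pairs until what remains is visibly \eqref{eq:phi}.

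Concretely, the key steps I would carry out, in order, are: (1) expand the inverse of $j_{2*}j_1^!\xrightarrow{\sim} i_1^! i_{2*}$ in \eqref{eq:8} using the explicit composite from Lemma~\ref{lem:star}(4), namely $j_{2*}j_1^! \xrightarrow{adj} j_{2*}j_1^!(i_2^! i_{2*}) = (j_{2*}j_2^!) i_1^! i_{2*}\xrightarrow{adj} i_1^! i_{2*}$, so that after precomposing with $j_2^*$ the whole of \eqref{eq:8} becomes a string of six unit/counit maps; (2) use the triangle identity for the adjunction $(j_{2*},j_2^!)$ — one sees a composite $j_2^*\xrightarrow{adj} j_2^* j_{2*} j_2^? \xrightarrow{adj} j_2^?$ which collapses to the identity — to eliminate one adjacent pair; (3) use the triangle identity for $(i_1^*,i_{1*})$ (equivalently for $(i_1_*,i_1^!)$, via the second description of the base-change map) to eliminate a further adjacent pair; (4) recognize the remaining composite as exactly $j_2^* i_1^! \xrightarrow{adj} (j_1^! j_{1*}) j_2^* i_1^! \xrightarrow{\sim} j_1^! i_2^*(i_{1*}i_1^!)\xrightarrow{adj} j_1^! i_2^*$, i.e. \eqref{eq:phi}. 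A cleaner packaging of steps (2)--(3), which I would probably adopt in the write-up, is to invoke the general fact \cite[Theorem~3.10.3, Ex.~3.10.4]{Lip} that all four base-change morphisms associated to an independent square are mutually adjoint/inverse in the appropriate sense, so that the identity $\eqref{eq:phi}=\eqref{eq:8}$ is an immediate instance of the symmetry statement recorded there; indeed Lipman's \cite[Ex.~3.10.4b]{Lip} states precisely this equivalence, so the proof can be as short as citing it after setting up notation.

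The main obstacle is bookkeeping rather than mathematics: one must be scrupulous about \emph{which} of the several a priori different composites of adjunction maps is meant by each ``$\xrightarrow{\sim}$'' in \eqref{eq:phi} and \eqref{eq:8}, since the whole content is that these choices agree. I would therefore spend the bulk of the argument fixing, once and for all, the descriptions of the two base-change isomorphisms via \eqref{eq:*} and via Lemma~\ref{lem:star}(4), and only then run the triangle-identity cancellations; with that setup in place the verification is mechanical and, as noted, is already contained in \cite[Ex.~3.10.4b]{Lip}, which we may simply cite.
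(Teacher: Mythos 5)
The paper gives no proof of this lemma at all; it is stated purely as a citation to \cite[Ex.~3.10.4b]{Lip}, which is exactly the option your proposal ultimately settles on. Your additional sketch of a direct triangle-identity argument is reasonable in outline (though step (1) as worded is slightly loose — one cannot directly ``expand the inverse'' of a unit/counit composite into another such composite; rather one compares the two candidate maps after composing with the forward isomorphism), but since the paper simply cites Lipman, your approach matches it.
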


\addtocontents{toc}{\SkipTocEntry}
\subsection{Some properties of $\phi$}

\begin{Lemma}\label{lem:2}
The map $\phi$ is natural in the following sense. Suppose one has two independent fiber product diagrams
\begin{equation*}
\xymatrix{
\bullet \ar[r]^{j_1} \ar[d]_{j_2} & \bullet \ar[d]^{i_2} \ar[r]^{g_1} & \bullet \ar[d]^{f_2} \\
\bullet \ar[r]^{i_1} & \bullet \ar[r]^{f_1} & \bullet }
\end{equation*}
Then the fiber product diagram 
\begin{equation}\label{eq:5}
\begin{gathered}\xymatrix{
\bullet \ar[r]^{g_1 \circ j_1} \ar[d]_{j_2} & \bullet \ar[d]^{f_2} \\
\bullet \ar[r]^{f_1 \circ i_1} & \bullet }
\end{gathered}
\end{equation}
is also independent and the corresponding natural map 
\begin{equation}\label{eq:6}
j_2^* (f_1 \circ i_1)^! \xrightarrow{\phi} (g_1 \circ f_1)^! f_2^*
\end{equation} 
is equal to the composition
\begin{equation}\label{eq:7}
j_2^* i_1^! f_1^! \xrightarrow{\phi_1 \circ id} j_1^! i_2^* f_1^! \xrightarrow{id \circ \phi_2} j_1^! g_1^! f_2^*.
\end{equation}
\end{Lemma}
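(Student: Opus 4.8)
The statement is a compatibility (``pasting of squares'') for the natural transformation $\phi$. The plan is to verify independence of the composite square and then reduce the equality of the two natural transformations to the known composition laws for ordinary adjunction units and counits, together with the unpacking of $\phi$ provided by the two formulas (\ref{eq:phi}) and (\ref{eq:8}).

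\textbf{Step 1: Independence of the composite square.} First I would show the square (\ref{eq:5}) is independent. By Lemma \ref{lem:star} it suffices to check any one of the four equivalent conditions; condition (1) for the composite asks that the natural map $f_2^* (f_1 \circ i_1)_* \to (g_1 \circ j_1)_* j_2^*$ be an isomorphism. Writing $(f_1 \circ i_1)_* = f_{1*} i_{1*}$ and $(g_1 \circ j_1)_* = g_{1*} j_{1*}$, and inserting the base-change isomorphism for the right-hand square followed by the one for the left-hand square, one factors this composite as $f_2^* f_{1*} i_{1*} \xrightarrow{\sim} g_{1*} f_2'^* i_{1*}$ (using independence of the right square, where $f_2'$ is the appropriate pullback of $f_2$, namely $i_2$) $\xrightarrow{\sim} g_{1*} j_{1*} j_2^*$ (using independence of the left square). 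The fact that this factorization agrees with the ``direct'' base change map for the big square is the pseudofunctoriality/horizontal-composition compatibility of base change, which is \cite[Prop.\ 3.7.2]{Lip} (or the triangulated analogue thereof); this is exactly the statement that the mate construction is compatible with horizontal pasting of squares.

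\textbf{Step 2: Matching the two transformations.} Having independence, both sides of the asserted equality are defined. I would expand the left side (\ref{eq:6}) using formula (\ref{eq:phi}): it is
$$ j_2^*(f_1 i_1)^! \xrightarrow{adj} (g_1 j_1)^!(g_1 j_1)_* j_2^*(f_1 i_1)^! \xrightarrow{\sim} (g_1 j_1)^! f_2^*(f_1 i_1)_*(f_1 i_1)^! \xrightarrow{adj} (g_1 j_1)^! f_2^*, $$
where the middle iso is the inverse of the composite base-change map from Step 1. I would then expand the right side (\ref{eq:7}) by substituting formula (\ref{eq:phi}) for $\phi_1$ and formula (\ref{eq:8}) for $\phi_2$ (choosing the more convenient of the two presentations for each factor so the units and counits ``line up''). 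Both resulting strings involve only: units $\id \to (\cdot)_*(\cdot)^*$ and $\id \to (\cdot)^!(\cdot)_*$, counits $(\cdot)^*(\cdot)_* \to \id$ and $(\cdot)_*(\cdot)^! \to \id$, the functoriality isomorphisms $(gh)^! \cong h^! g^!$, $(gh)_* \cong g_* h_*$, $(gh)^* \cong h^* g^*$, and the base-change isomorphisms for the two small squares. Using the factorization of the big-square base change from Step 1, together with the triangle identities for adjunctions and the coherence of the functoriality isomorphisms, one rewrites the right side into the left side. This is a diagram chase; the cleanest organization is to draw the large diagram of all the intermediate functors and observe that it decomposes into (a) a pentagon-type region governed by Step 1's factorization, (b) two triangle-identity regions (one for a $*$-adjunction along $j_1$ or $g_1$, one for a $!$-adjunction along $i_2$ or $f_2$), and (c) naturality squares for the units/counits being slid past the functoriality isomorphisms.

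\textbf{Main obstacle.} The genuine content is Step 1 — establishing that the base-change isomorphism for the outer rectangle really is the pasting of the two inner ones — and then, in Step 2, choosing the presentations of $\phi_1$ and $\phi_2$ (among (\ref{eq:phi}) and (\ref{eq:8})) so that the adjacent counit/unit pair telescopes via a triangle identity rather than leaving an irreducible tangle. Conceptually this is all formal: $\phi$ is the mate of a base-change 2-cell under the $(\,f^*\dashv f_*\,)$ and $(\,f_*\dashv f^!\,)$ adjunctions, and mates are well known to compose along pasting of squares; but since the excerpt develops these transformations by hand following \cite{Lip} rather than via an abstract mate calculus, I would present the argument as an explicit verification, citing \cite[Prop.\ 3.7.2, Thm.\ 3.10.3, Ex.\ 3.10.4]{Lip} for the underlying coherences, and noting that the triangulated enhancements of these statements hold since every functor and natural transformation in sight is one of the standard six-operations gadgets on $D_{qcoh}$.
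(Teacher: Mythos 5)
The paper's proof is a pure citation: independence of the outer square is \cite[Lemma~3.10.3.2]{Lip} and the pasting identity (\ref{eq:6})=(\ref{eq:7}) is \cite[Prop.~4.6.8]{Lip}. Your proposal is correct in substance but reconstructs the argument rather than citing it; you have correctly identified the conceptual engine — that $\phi$ is the mate of a base-change 2-cell and that mates compose under horizontal pasting of squares — which is precisely what Lipman's Prop.~4.6.8 proves in this generality. You cite Prop.~3.7.2, Thm.~3.10.3, and Ex.~3.10.4, which cover Step 1 and the ingredients of Step 2, but the exact statement you would be re-deriving in Step 2 already appears as Prop.~4.6.8; being aware of it would let you replace the whole diagram chase by a one-line citation, which is what the paper does. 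If you do carry out the chase, the one thing to be careful about is the choice you flag yourself — matching (\ref{eq:phi}) against (\ref{eq:8}) so the relevant unit/counit pair telescopes; that is exactly where Lipman's coherence results (the commutation of the $\alpha,\beta,\gamma$ isomorphisms with composition of maps) must be invoked, so you cannot avoid some appeal to the material around \cite[Ex.~4.2.3]{Lip}. In short: your route is longer and essentially rebuilds Prop.~4.6.8; the paper's route is a direct citation of it.
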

\begin{proof}
The first part of this Lemma appears as \cite[Lemma 3.10.3.2]{Lip} while the second part is proved in \cite[Prop. 4.6.8]{Lip}. 
\end{proof}

\begin{Proposition}\cite[Ex. 4.2.3i]{Lip}\label{prop:phi}
Suppose the square in (\ref{eq:ind}) is independent and take $\cB,\cB' \in D_{qcoh}(Y)$. Then the following diagram commutes
\begin{equation}\label{eq:commute1}
\begin{gathered}\xymatrix{
j_2^* i_1^! \cHom(\cB,\cB') \ar[d]^{\phi} \ar[r]^{\gamma^{-1}} & j_2^* \cHom(i_1^* \cB, i_1^! \cB') \ar[r] & \cHom(j_2^* i_1^* \cB, j_2^* i_1^! \cB') \ar[d]^{\phi} \\
j_1^! i_2^* \cHom(\cB, \cB) \ar[r] & j_1^! \cHom( i_2^* \cB, i_2^* \cB') \ar[r]^{\gamma^{-1}} & \cHom(j_1^* i_2^* \cB, j_1^! i_2^* \cB') 
}
\end{gathered}
\end{equation}
where in the right hand vertical map we use $\phi$ and that $j_2^* i_1^* (\cB) = j_1^* i_2^* (\cB)$. 
\end{Proposition}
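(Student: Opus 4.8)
\textbf{Proof proposal for Proposition \ref{prop:phi} (the final statement).}

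The plan is to reduce the commutativity of diagram (\ref{eq:commute1}) to the definitions of $\phi$ and $\gamma$ as composites of adjunction units/counits and the canonical isomorphisms of Lipman's book, and then chase units and counits. First I would unwind both vertical maps: the left-hand $\phi: j_2^* i_1^! \to j_1^! i_2^*$ is, by (\ref{eq:phi}), the composite $j_2^* i_1^! \xrightarrow{adj} j_1^! j_{1*} j_2^* i_1^! \xrightarrow{\sim} j_1^! i_2^* i_{1*} i_1^! \xrightarrow{adj} j_1^! i_2^*$, where the middle isomorphism is the inverse of the independence isomorphism (\ref{eq:*}); the right-hand vertical map is $\phi$ applied to the object $i_1^!\cB'$ (more precisely the version of $\phi$ landing in $\cHom$, using $j_2^*i_1^*\cB = j_1^*i_2^*\cB$). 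Similarly $\gamma^{-1}$ unwinds via (\ref{eq:H}). Since every arrow in (\ref{eq:commute1}) other than the two instances of $\gamma^{-1}$ is built from adjunction maps and the canonical pseudofunctoriality isomorphisms, and since $\gamma$ is precisely the canonical isomorphism of (\ref{eq:G})--(\ref{eq:H}), the whole diagram is a statement purely about the interaction of the functors $(-)^*$, $(-)_*$, $(-)^!$ and $\cHom$.

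Next I would invoke the key structural input: Lipman's treatment of the closed monoidal structure on the $(-)^!$-pseudofunctor. The natural transformation $\phi = \beta_\sigma$ of \cite[Remark 3.10.2.1(c)]{Lip} is compatible with $\cHom$ exactly in the way asserted here; indeed this is stated as \cite[Ex.~4.2.3i]{Lip}, and I would cite it directly as the reference already appears in the statement. So the cleanest route is: spell out that diagram (\ref{eq:commute1}) is a direct transcription of the content of \cite[Ex.~4.2.3i]{Lip} once one identifies the horizontal maps $j_2^*\cHom(i_1^*\cB, i_1^!\cB') \to \cHom(j_2^*i_1^*\cB, j_2^*i_1^!\cB')$ and $j_1^!\cHom(i_2^*\cB, i_2^*\cB') \leftarrow j_1^!\cHom(i_2^*\cB, i_2^*\cB')$ as instances of the map (\ref{eq:B}) (for $j_2^*$) and of $\gamma$ (for $j_1^!$) respectively, and once one notes that the ``outer'' horizontal composites on the top and bottom rows are precisely the two canonical ways of producing a map out of $j_2^*i_1^!\cHom(\cB,\cB')$ into $\cHom$ of the appropriate pulled-back objects.

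If a self-contained argument is wanted rather than a citation, I would proceed by a two-step diagram chase. Step one: verify the commutativity of the outer square formed by replacing each $\cHom(\cB,\cB')$, $\cHom(i_1^*\cB,i_1^!\cB')$, etc.\ by the corresponding object and tracking only the unit/counit maps of the three adjunctions $(j_2^*,j_{2*})$, $(i_1^*,i_{1*})$, $(i_1^!,i_{1*})$, $(j_1^!,j_{1*})$; this is the ``$\phi$ is natural'' statement and can be extracted from Lemma \ref{lem:2} together with Lemma \ref{lem:phi}. Step two: check that $\gamma^{-1}$ intertwines the $j^*$-flavored and $j^!$-flavored versions of the projection/Hom maps, which is the defining compatibility of $\gamma$ with $\cHom$ in (\ref{eq:G})--(\ref{eq:H}). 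The main obstacle will be bookkeeping: making sure the ``mixed'' object $j_2^*i_1^*\cB = j_1^*i_2^*\cB$ is treated with a single fixed identification throughout, and that the independence isomorphism (\ref{eq:*}) is used with consistent orientation (here Lemma \ref{lem:star} guarantees the orientation does not matter, which is what lets step one go through). Once the orientations and the shared identification of $j_2^*i_1^*\cB$ are pinned down, the chase is formal, so I would present it compactly and defer the genuinely routine pentagon/triangle identities to \cite{Lip}.
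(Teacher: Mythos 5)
Your primary route — recognizing the diagram as a direct transcription of \cite[Ex.~4.2.3i]{Lip} and citing it — is exactly what the paper does: Proposition~\ref{prop:phi} carries the citation in its statement and is given no proof in the text. Your speculative ``self-contained'' alternative via Lemmas~\ref{lem:2} and~\ref{lem:phi} would not quite suffice as described (those lemmas concern horizontal pasting of independent squares and an equivalent presentation of $\phi$, not its compatibility with $\cHom$, which is the actual content here), but since the paper does not attempt such a proof and your recommended path agrees with the paper, that is a non-issue.
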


\begin{Proposition}\label{prop:dual}
Suppose the square in (\ref{eq:ind}) is independent and consider maps $\rho, \rho'$ given as the compositions
\begin{align} 
\label{eq:12} 
& j_1^* \D i_2^* \D \xrightarrow{\theta^{-1}} j_1^* i_2^! \xrightarrow{\phi} j_2^! i_1^* \xrightarrow{\theta} \D j_2^* \D i_1^* \\
\label{eq:13}
& j_2^* \D i_1^* \D \xrightarrow{\theta^{-1}} j_2^* i_1^! \xrightarrow{\phi} j_1^! i_2^* \xrightarrow{\theta} \D j_1^* \D i_2^*
\end{align}
of functors $D_{coh}(Y) \rightarrow D_{qcoh}(Z)$. Then $\rho'$ is equal to the composition 
$$j_2^* \D i_1^* \D \xrightarrow{\tau} \D \D j_2^* \D i_1^* \D \xrightarrow{\D \rho \D} \D j_1^* \D i_2^* \D \D \xrightarrow{\tau^{-1}} \D j_1^* \D i_2^*.$$
Notice that the $\tau^{-1}$ exists since the domain is $D_{coh}(Y)$.  
\end{Proposition}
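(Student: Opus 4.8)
The statement is an identity of natural transformations, so the plan is to unwind both sides into their defining composites in terms of the adjunction units/counits, the isomorphism $i_2^* i_{1*} \xrightarrow{\sim} j_{1*}j_2^*$ (and its $!$-companions from Lemma~\ref{lem:star}), and the natural transformations $\tau,\eta,\theta,\gamma$ recorded in the appendix, and then check that the two resulting composites agree using only the compatibilities already established (Lemma~\ref{lem:B}, Corollary~\ref{cor:A}, Lemma~\ref{lem:2}, Proposition~\ref{prop:phi}). The essential point is that $\theta$ is built from $\tau$ via $\theta = \eta^{-1}\circ(f^!\tau)$, and $\tau$ is a natural transformation $\mathrm{id}\to\D\D$ which is moreover ``self-dual'' in the sense that $\D\tau$ and $\tau\D$ are mutually inverse on $D_{coh}$; so applying $\D$ to $\rho$ and conjugating by $\tau$ simply re-expresses the same data with the roles of $i_1^*$ and $i_2^*$ (equivalently $j_1^!$ and $j_2^!$) interchanged, which is exactly what $\rho'$ is.

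First I would write $\rho = \theta\circ\phi\circ\theta^{-1}$ as in~(\ref{eq:12}) and compute $\D\rho\D$ by distributing $\D$ across the composite: each occurrence of $\theta$ becomes $\D\theta\D$ and the middle $\phi$ becomes $\D\phi\D$. Here I would invoke Proposition~\ref{prop:phi} (with $\cB' = \omega_Y$, so that $\cHom(-,\omega_Y) = \D$, and with $\gamma$ specializing to $\eta$) to rewrite $\D\phi\D$ in terms of $\phi$ applied to the transposed square together with the identifications $\D j_2^* i_1^* = j_2^* i_1^* \D$ on $D_{coh}$. For the outer factors I would use Lemma~\ref{lem:B} / Corollary~\ref{cor:A}, which are precisely the statements that let one commute $\D$ past the unit and counit maps that make up $\eta$ and hence $\theta$: thus $\D\theta\D$, after conjugation by the appropriate $\tau$'s, becomes $\theta^{-1}$ (or $\theta$) for the transposed configuration. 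Assembling these pieces should turn $\tau^{-1}\circ(\D\rho\D)\circ\tau$ into $\theta\circ\phi\circ\theta^{-1}$ for the square read in the other direction, which is the composite~(\ref{eq:13}) defining $\rho'$.

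The bookkeeping obstacle, and the step I expect to be the main difficulty, is keeping the many instances of $\tau$ (and the identifications $\D\D\cong\mathrm{id}$ valid only on $D_{coh}$) straight: one must verify that every $\tau$ introduced when pushing $\D$ through a counit is cancelled, up to the known coherence, by a $\tau^{-1}$ elsewhere, so that the two sides literally coincide rather than merely differing by an invertible natural transformation. The cleanest way to control this is to use the ``self-duality'' identity for $\tau$ — namely that the composite $\D \xrightarrow{\tau\D} \D\D\D \xrightarrow{\D\tau} \D$ is the identity (a standard triangular identity for the $\cHom(-,\omega)$-adjunction, implicit in the appendix's discussion of $\tau$) — applied at each node where a $\D$ crosses a $\tau$. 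With that identity in hand the verification reduces to a diagram chase among composites of units, counits, and the independence isomorphism of the square, all of whose commutativities are supplied by Lemma~\ref{lem:2} (naturality of $\phi$ under composition of squares) and the first remark after Definition~\ref{def:independent} (the two descriptions of the independence isomorphism). I would present the argument as one large commutative diagram subdivided into regions, each region commuting by one of the cited lemmas, so that no genuinely new computation is needed.
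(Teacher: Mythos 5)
Your overall strategy — write $\rho=\theta\circ\phi\circ\theta^{-1}$, push $\D$ through the composite, and reduce everything to the triangular identity for $\tau$ and the coherence lemmas of the appendix — is the right general shape, and the paper does indeed open its proof by expanding $\D\rho$ and unwinding $\phi$. But there is a genuine gap: you assert that ``the verification reduces to a diagram chase \ldots\ all of whose commutativities are supplied by Lemma~\ref{lem:2} \ldots\ and the first remark after Definition~\ref{def:independent},'' i.e.\ that no new computation is needed. This underestimates the problem. The hard point is not keeping the $\tau$'s straight; it is that $\phi$ is built from the independence isomorphism $i_2^*i_{1*}\xrightarrow{\sim}j_{1*}j_2^*$ (equivalently $j_{1*}j_2^!\xrightarrow{\sim}i_2^!i_{1*}$), and to move $\D$ past $\phi$ you must know how $\theta$ interacts with that isomorphism. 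The paper isolates exactly this as Lemma~\ref{lem:4} — the commutative square relating $\theta$ on $i_2^!i_{1*}$ and on $j_{1*}j_2^!$ via the chain of duality isomorphisms — and its proof (diagrams (\ref{eq:17})–(\ref{eq:18})) is a nontrivial chase that expands $\theta$ through $\eta^{-1}$ and both halves of Corollary~\ref{cor:A}. None of the tools you cite (Lemma~\ref{lem:2}, Proposition~\ref{prop:phi}, the two descriptions of the independence map) delivers this compatibility; Lemma~\ref{lem:2} is about composing independent squares, not about $\D$-duality, and the paper does not use it or Proposition~\ref{prop:phi} in this argument at all.

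Two more specific worries. First, your plan to control $\D\phi\D$ via Proposition~\ref{prop:phi} with $\cB'=\omega_Y$ is not obviously adequate: that proposition compares $\phi$ acting outside $\cHom(\cB,\cB')$ with $\phi$-type maps threaded through $\cHom$, which is not the same as expressing $\D\phi\D$ as $\phi$ for the transposed square up to $\theta$'s; if you want to go that route you would still need to reconcile the result with the $\theta^{-1},\theta$ conjugation, and that reconciliation is precisely Lemma~\ref{lem:4} again. Second, your ``self-duality of $\tau$'' (the triangular identity $\D\tau\circ\tau\D=\mathrm{id}$) is correct and is indeed used implicitly, but it only lets you cancel pairs of $\tau$'s once the $\theta$-vs-independence compatibility has been established; by itself it does not let you cross $\D$ past the adjunction units/counits that make up the independence isomorphism. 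So the proposal is morally on track but stops short of the central technical step: you would need to state and prove (something equivalent to) Lemma~\ref{lem:4} before the diagram chase you envision can close.
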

\begin{proof}
Starting with $\D \rho$ and writing out $\phi$ leaves us with 
\begin{equation}\label{eq:14}
\D \D j_2^* \D i_1^* \xrightarrow{\theta} \D j_2^! i_1^* \xrightarrow{adj} \D j_1^* j_{1*} j_2^! i_1^* \xrightarrow{\sim} \D j_1^* i_2^! i_{1*} i_1^* \xrightarrow{adj} \D j_1^* i_2^! \xrightarrow{\theta^{-1}} \D j_1^* \D i_2^* \D.
\end{equation}
By Lemma \ref{lem:4} one has the following commutative diagram 
$$\xymatrix{
\D j_1^* (j_{1*} j_2^!) i_1^* \ar[rrr]^{\sim} & & & \D j_1^* (i_2^! i_{1*}) i_1^* \ar[r]^{adj} & \D j_1^* i_2^! \\
\D j_1^* j_{1*} \D j_2^* \D i_1^* \ar[r]^{\sim} \ar[u]_{\theta} & \D j_1^* \D j_{1*} j_2^* \D i_1^* \ar[r]^{\sim} & \D j_1^* \D i_2^* i_{1*} \D i_1^* \ar[r]^{\sim} & \D j_1^* \D i_2^* \D i_{1*} i_1^* \ar[r]^{adj} \ar[u]^{\theta} & \D j_1^* \D i_2^* \D \ar[u]_{\theta}
} $$
This means that we can rewrite (\ref{eq:14}) as the following composition 
\begin{align*}
\D \D j_2^* \D i_1^* \xrightarrow{adj} \D j_1^* j_{1*} \D j_2^* \D i_1^* & \xrightarrow{\sim} \D j_1^* \D j_{1*} j_2^* \D i_1^* \\ 
& \xrightarrow{\sim} \D j_1^* \D i_2^* i_{1*} \D i_1^* \xrightarrow{\sim} \D j_1^* \D i_2^* \D i_{1*} i_1^* \xrightarrow{adj} \D j_1^* \D i_2^* \D
\end{align*}
Precomposing with $j_2^* \D i_1^* \rightarrow \D \D j_2^* \D i_1^*$ and using the left hand diagram in Corollary \ref{cor:A} this composition becomes 
$$j_2^* \D i_1^* \xrightarrow{adj} j_1^! j_{1*} j_2^* \D i_1^* \xrightarrow{\theta} \D j_1^* \D j_{1*} j_2^* \D i_1^* \xrightarrow{\sim} \D j_1^* \D i_2^* i_{1*} \D i_1^* \xrightarrow{\sim} \D j_1^* \D i_2^* \D i_{1*} i_1^* \xrightarrow{adj} \D j_1^* \D i_2^* \D$$
which can easily be rewritten as 
$$j_2^* \D i_1^* \xrightarrow{adj} j_1^! j_{1*} j_2^* \D i_1^* \xrightarrow{\sim} j_1^! i_2^* i_{1*} \D i_1^* \xrightarrow{\sim} j_1^! i_2^* \D i_{1*} i_1^* \xrightarrow{adj} j_1^! i_2^* \D \xrightarrow{\theta} \D j_1^* \D i_2^* \D.$$
Now compose everything on the right with $\D$ and then precompose with 
$$j_2^* \D i_1^* \D \xrightarrow{\theta^{-1}} j_2^* i_1^! \xrightarrow{\theta} j_2^* \D i_1^* \D$$
We can rearrange this to give
\begin{align*}
j_2^* \D i_1^* \D \xrightarrow{\theta^{-1}} j_2^* i_1^! & \xrightarrow{adj} j_1^! j_{1*} j_2^* i_1^! \\
& \xrightarrow{\sim} j_1^! i_2^* i_{1*} i_1^! \xrightarrow{\theta} j_1^! i_2^* i_{1*} \D i_1^* \D \xrightarrow{\sim} j_1^! i_2^* \D i_{1*} i_1^* \D \xrightarrow{adj} j_1^! i_2^* \D \D \xrightarrow{\theta} \D j_1^* \D i_2^* \D \D.
\end{align*}
Using the right hand diagram in Corollary \ref{cor:A} this is equal to 
$$j_2^* \D i_1^* \D \xrightarrow{\theta^{-1}} j_2^* i_1^! \xrightarrow{adj} j_1^! j_{1*} j_2^* i_1^! \xrightarrow{\sim} j_1^! i_2^* i_{1*} i_1
^! \xrightarrow{adj} j_1^! i_2^* \xrightarrow{\theta} \D j_1^* \D i_2^* \rightarrow \D j_1^* \D i_2^* \D \D$$
which is just 
\begin{equation}\label{eq:15}
j_2^* \D i_1^* \D \xrightarrow{\theta^{-1}} j_2^* i_1^! \xrightarrow{\phi} j_1^! i_2^* \xrightarrow{\theta} \D j_1^* \D i_2^* \xrightarrow{\tau} \D j_1^* \D i_2^* \D \D.
\end{equation}
The result follows since this shows that (\ref{eq:15}) is equal to the composition
$$j_2^* \D i_1^* \D \xrightarrow{\tau} \D \D j_2^* \D i_1^* \D \xrightarrow{\D \rho \D} \D j_1^* \D i_2^* \D \D.$$
\end{proof}

\begin{Lemma}\label{lem:4}
Assuming the square in (\ref{eq:ind}) is independent the following diagram commutes
\begin{equation}\label{eq:16}
\begin{gathered}\xymatrix{
i_2^! i_{1*} \ar[d]^{\theta} \ar[rrr]^{\sim} & & & j_{1*} j_2^! \ar[d]^{\theta} \\
\D i_2^* \D i_{1*} \ar[r]^{\sim} & \D i_2^* i_{1*} \D \ar[r]^{\sim} & \D j_{1*} j_2^* \D \ar[r]^{\sim} & j_{1^*} \D j_2^* \D 
}
\end{gathered}
\end{equation}
\end{Lemma}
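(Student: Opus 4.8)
\textbf{Proof plan for Lemma \ref{lem:4}.}

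The plan is to verify commutativity of the diagram by unwinding both composite natural transformations into chains of adjunction units/counits and the duality coherence maps, and then matching them term by term using the independence of the square. Concretely, the top isomorphism $i_2^! i_{1*} \xrightarrow{\sim} j_{1*}j_2^!$ is condition (3) of Lemma \ref{lem:star}, namely the composition $i_2^! i_{1*} \xleftarrow{adj} (j_{1*}j_1^!)i_2^! i_{1*} = j_{1*}j_2^!(i_1^! i_{1*}) \xrightarrow{adj} j_{1*}j_2^!$ (or rather its inverse, read in the direction of the Lemma). So the right-hand route of \eqref{eq:16} is $\theta$ applied to this, and the left-hand route replaces $i_2^!$, $j_2^!$ by $\D i_2^*\D$, $\D j_2^*\D$ via $\theta$ at the outset and then uses the three isomorphisms in the bottom row, which are: pushing $\D$ past $i_{1*}$ (from \eqref{eq:E} in the form $i_{1*}\D \cong \D i_{1*}$), then the independence isomorphism $i_2^* i_{1*}\cong j_{1*}j_2^*$ conjugated by $\D$'s, then pulling $\D$ past $j_{1*}$ again via \eqref{eq:E}.

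The key steps, in order, would be: (i) expand the definition $\theta = \eta^{-1}\circ\tau$ in each of the three occurrences of $\theta$ in \eqref{eq:16}, reducing everything to statements about $\tau$, $\eta$, the adjunction (co)units, and the base-change isomorphism; (ii) use the two commuting squares of Lemma \ref{lem:B} (equivalently Corollary \ref{cor:A}) to move the $\tau$'s and $\eta$'s past the pushforward/pullback adjunctions — this is exactly the mechanism by which $\theta$ interacts with $f_*$ and $f^!$; (iii) invoke the naturality of $\phi$ and the compatibility of $\phi$ with the duality maps established in Proposition \ref{prop:dual} and Proposition \ref{prop:phi}, which already package most of the interaction between $\theta$, $\phi$, and $\D$ in the independent-square setting — indeed \eqref{eq:16} is morally the ``$i_{1*}$-version'' of the statement that $\theta$ intertwines the two presentations of base change, and should follow from the same bookkeeping as in the proof of Proposition \ref{prop:dual}; (iv) finally check that the base-change isomorphism $i_2^* i_{1*}\cong j_{1*}j_2^*$ is compatible with $\D$ in the sense needed, which reduces to the fact (from \cite{Lip}) that $f_*\D\cong\D f_*$ is natural with respect to base change along the independent square, i.e. that \eqref{eq:E} and the independence isomorphism of Lemma \ref{lem:star} are compatible.

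I expect the main obstacle to be step (iv) together with the sheer bookkeeping of step (ii): one must track several instances of $\tau: \id\to\D\D$ (which is not invertible on $D_{qcoh}$, only on $D_{coh}$, so the order of operations matters) and ensure that each adjunction counit is inserted and removed in a way consistent with the triangle identities. The cleanest route is probably to not expand everything by hand but to observe that \eqref{eq:16} is an instance of a general naturality statement for $\theta$ with respect to the pair of functors $(i_{1*}, j_{1*})$ fitting into the independent square — this is essentially \cite[Ex.~4.2.3]{Lip} combined with \cite[Prop.~3.10.2.1]{Lip} — and then to cite the relevant coherence in \cite{Lip} rather than reprove it. If a self-contained argument is wanted, one decomposes the bottom row as in the statement, rewrites each $\D i_2^*\D$, $\D j_2^*\D$ back using $\theta$ and $\eta$, and applies Corollary \ref{cor:A} twice (once for $i_1$, once for $j_1$) to reduce to the plain independence isomorphism, which commutes with itself tautologically.
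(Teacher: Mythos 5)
The proposal has the right overall shape but contains one genuine error and glosses over a key step. First, step (iii) suggests invoking Proposition~\ref{prop:dual} to package the interaction of $\theta$ with base change; this is circular, since the paper's proof of Proposition~\ref{prop:dual} itself cites Lemma~\ref{lem:4} at its crucial juncture. You cannot lean on that proposition here. Also note that $\phi$ does not appear in diagram~\eqref{eq:16} at all, so Propositions~\ref{prop:dual} and~\ref{prop:phi} are not the right tools; what is needed is only Lemma~\ref{lem:B}/Corollary~\ref{cor:A} together with the definition $\theta = \eta^{-1}\circ\tau$. (Minor: there are two occurrences of $\theta$ in the diagram, not three.)

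Second, your closing sentence — that after applying Corollary~\ref{cor:A} twice one reduces to ``the plain independence isomorphism, which commutes with itself tautologically'' — hides the genuine content of the middle of the argument. The paper first replaces the diagram by its mirror image (from $j_{1*}j_2^! \to i_2^!i_{1*}$ rather than its inverse), because $\theta$ is not invertible and the natural direction of the base-change isomorphism from Lemma~\ref{lem:star}(3) is $j_{1*}j_2^! \to i_2^!i_{1*}$; one then expands both $\theta$'s into $\eta^{-1}\circ\tau$ to get the enlarged diagram~\eqref{eq:18}. The left and right squares are handled exactly as you say by the two diagrams of Corollary~\ref{cor:A}. But the middle rectangle is not a tautology: one must observe that the composites
$$j_2^! i_1^! \xrightarrow{\theta} j_2^!\D i_1^*\D \xrightarrow{\eta^{-1}} \D j_2^* i_1^*\D \quad\text{and}\quad j_1^! i_2^! \xrightarrow{\theta} j_1^!\D i_2^*\D \xrightarrow{\eta^{-1}} \D j_1^* i_2^*\D$$
each identify with $\theta$ for the composite maps $(i_1\circ j_2)^!$ and $(i_2\circ j_1)^!$ respectively, and then use $i_1\circ j_2 = i_2\circ j_1$. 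That compositionality of $\theta$ along the two factorizations of the same map is the decisive observation that makes the middle close up, and it deserves to be stated explicitly rather than waved off. With the circular citation removed and this step made precise, the plan matches the paper's proof.
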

\begin{proof}
Notice that the more natural morphism is $j_{1*} j_2^! \xrightarrow{\sim} i_2^! i_{1*}$ while the one appearing above is its inverse. So what we need to show is that the following diagram commutes
\begin{equation}\label{eq:17}
\begin{gathered}\xymatrix{
j_{1*} j_2^! \ar[r]^{adj} \ar[d]^{\theta} & j_{1*} j_2^! i_1^! i_{1*} \ar[rrr]^= & & & j_{1*} j_1^! i_2^! i_{1*} \ar[r]^{adj} & i_2^! i_{1*} \ar[d]^\theta \\
j_{1*} \D j_2^* \D \ar[r]^{\sim} & \D j_{1*} j_2^* \D \ar[r]^{adj} & \D j_{1*} j_2^* i_1^* i_{1*} \D \ar[r]^{=} & \D j_{1*} j_1^* i_2^* i_{1*} \D \ar[r]^{adj} & \D i_2^* i_{1*} \D \ar[r]^{\sim} & \D i_2^* \D i_{1*} 
}
\end{gathered}
\end{equation}
We can rewrite this diagram as follows. 
\begin{align}\label{eq:18}
  \begin{gathered}
  \xymatrix{
j_{1*} j_2^! \ar[dd]^{\theta} \ar[rr]^{adj}& & j_{1*} j_2^! i_1^! i_{1*} \ar[d]^{\theta} \ar[r]^{=} & j_{1*} j_1^! i_2^! i_{1*} \ar[d]^{\theta} \ar[rr]^{adj} & & i_2^! i_{1*} \ar[dd]^{\theta} \\  
& & j_{1*} j_2^! \D i_1^* \D i_{1*} \ar[d]^{\eta^{-1}} & j_{1*} j_1^! \D i_2^* \D i_{1*} \ar[d]^{\eta^{-1}} & & \\
j_{1*} \D j_2^* \D \ar[r]^{adj} & j_{1*} \D j_2^* i_1^* i_{1*} \D \ar[r]^{\sim} & j_{1*} \D j_2^* i_1^* \D i_{1*} \ar[r]^{=} & j_{1*} \D j_1^* i_2^* \D i_{1*} \ar[r]^{\sim} & \D j_{1*} j_1^* i_2^* \D i_{1*} \ar[r]^{adj} & \D i_2^* \D i_{1*} 
}
\end{gathered}
\end{align}
It is not hard to check that the compositions along the bottom row in (\ref{eq:17}) and (\ref{eq:18}) agree. Now consider the following diagram.
$$\xymatrix{
j_2^! \ar[d]^{\tau} \ar[rr]^{adj} & & j_2^! i_1^! i_{1*} \ar[d]^{\theta} \\
j_2^! \D \D \ar[r]^{adj} \ar[d]^{\eta^{-1}} & j_2^! \D i_1^* i_{1*} \D \ar[d]^{\eta^{-1}} \ar[r]^{\sim} & j_2^! \D i_1^* \D i_{1*} \ar[d]^{\eta^{-1}} \\
\D j_2^* \D \ar[r]^{adj} & \D j_2^* i_1^* i_{1*} \D \ar[r]^{\sim} & \D j_2^* i_1^* \D i_{1*}
}$$
The top rectangle commutes by the left diagram in Corollary \ref{cor:A} while the other two squares commute for obvious reasons. This proves the commutativity of the left hand square in (\ref{eq:18}). Similarly, using the right diagram in Corollary \ref{cor:A}, one can prove the commutativity of the right hand square in (\ref{eq:18}). Finally, to see that the middle rectangle commutes note that the composition 
$$j_2^! i_1^! \xrightarrow{\theta} j_2^! \D i_1^* \D \xrightarrow{\eta^{-1}} \D j_2^* i_1^* \D$$
is by definition 
$$j_2^! i_1^! \xrightarrow{\tau} j_2^! i_1^! \D \D \xrightarrow{\eta^{-1}} j_2^! \D i_1^* \D \xrightarrow{\eta^{-1}} \D j_2^* i_1^* \D,$$
which is nothing but $(i_1 \circ j_2)^! \xrightarrow{\theta} \D (i_1 \circ j_2)^* \D$. Likewise the composition 
$$j_1^! i_2^! \xrightarrow{\theta} j_1^! \D i_2^* \D \xrightarrow{\eta^{-1}} \D j_1^* i_2^* \D$$ 
is just $(i_2 \circ j_1)^! \xrightarrow{\theta} \D (i_2 \circ j_1)^* \D$. The commutation of (\ref{eq:18}) (and hence of (\ref{eq:17})) follows. 
\end{proof}

\bibliographystyle{amsalpha}
\bibliography{CSbib}

\end{document}